\renewcommand\part{%
	\if@noskipsec \leavevmode \fi
	\par
	\addvspace{4ex}%
	\@afterindentfalse
	\secdef\@part\@spart}
\def\@part[#1]#2{%
	\ifnum \c@secnumdepth >\m@ne
	\refstepcounter{part}%
	\addcontentsline{toc}{part}{\thepart\hspace{1em}#1}%
	\else
	\addcontentsline{toc}{part}{#1}%
	\fi :
	{\parindent \z@ \raggedright
		\interlinepenalty \@M
		\normalfont
		\ifnum \c@secnumdepth >\m@ne
		\Large\bfseries \partname\nobreakspace\thepart
		\par\nobreak
		\fi
		\huge \bfseries #2%
		\par}%
	\nobreak
	\vskip 3ex
	\@afterheading}
\def\@spart#1{%
	{\parindent \z@ \raggedright
		\interlinepenalty \@M
		\normalfont
		\huge \bfseries #1\par}%
	\nobreak
	\vskip 3ex
	\@afterheading}
\date{\today}
\theoremstyle{definition} \newtheorem{definition}{Definition}[section]
\theoremstyle{definition} \newtheorem{remark}[definition]{Remark}
\theoremstyle{plain} \newtheorem{lemma}[definition]{Lemma}
\theoremstyle{plain} \newtheorem{proposition}[definition]{Proposition}
\theoremstyle{plain} \newtheorem{theorem}[definition]{Theorem}
\theoremstyle{plain} \newtheorem{corollary}[definition]{Corollary}
\theoremstyle{definition} 
\theoremstyle{plain} 
\theoremstyle{definition} 
\theoremstyle{definition}
\DeclareMathOperator{\conv}{conv}
\DeclareMathOperator{\BV}{BV}
\DeclareMathOperator{\clos}{clos}
\newcommand{\R}{\mathbb{R}}
\newcommand{\C}{\mathbb{C}}
\newcommand{\TV}{\text{\rm Tot.Var.}}
\newcommand{\loc}{\text{\rm loc}}
\newcommand{\ind}{1\!\!\mathrm{I}}
\newcommand{\sing}{\mathrm{sing}}
\newcommand{\jump}{\mathrm{jump}}
\newcommand{\cont}{\mathrm{cont}}
\newcommand{\n}{\mathbf n}
\newcommand{\Diracd}{\mathfrak{d}}
\newcommand{\smed}{\medskip}
\newcommand{\sbig}{\bigskip}
\renewcommand{\H}{\mathscr H}
\newcommand{\rest}{\llcorner}
\renewcommand{\i}{\imath}
\numberwithin{equation}{section} 
\theoremstyle{plain} \newtheorem*{theorem*}{Theorem}
\theoremstyle{plain} 
\theoremstyle{plain} \newtheorem*{mthm*}{Main Theorem}
\theoremstyle{plain} \newtheorem*{conjecture*}{Conjecture}
\theoremstyle{plain} \newtheorem{conjecture}[definition]{Conjecture}
\theoremstyle{plain} \newtheorem*{problem*}{Problem}
\title[Spiral strategies for blocking fire]{Existence of spiral strategies for blocking fire spreading}
\author{Stefano Bianchini}
\address{S. Bianchini: S.I.S.S.A., via Bonomea 265, 34136 Trieste, Italy}
\email{bianchin@sissa.it}
\author{Martina Zizza}
\address{M. Zizza: Max Planck Institute for Mathematics in the Sciences, Leipzig}
\email{martina.zizza@mis.mpg.de}
\subjclass[2000]{49Q20, 34A60,  93B03}
\begin{document}

\begin{abstract}
In this paper we address the problem for blocking fire by constructing a wall $\zeta$ whose shape is spiral-like. This is supposed to be the best strategy when a single firefighter is constructing the wall with a finite construction speed $\sigma$: the barriers which satisfy this bound on the construction speed are called admissible.

We prove a sharp version of Bressan's Fire Conjecture \cite{Bressan_conj} in this case, i.e. when admissible barriers are spiral-like curves: namely, there exists a spiral-like barrier confining the fire in a bounded region of $\R^2$ if and only if the speed of construction of the barrier $\sigma$ is strictly larger than a critical speed $\bar \sigma = 2.614...$.

The existence of confining spiral barriers for $\sigma > \bar \sigma$ is already known \cite{Bressan_friends,firefighter}, while we concentrate on the negative side, i.e. if $\sigma \leq \bar \sigma$ no admissible spiral blocks the fire.

The proof of these results relies on:
\begin{enumerate}
\item the precise definition of spiral barrier and its representation;
\item the analysis of saturated spiral barriers as a Retarded Differential Equation (RDE) in the spirit of \cite{firefighter};
\item the equivalent reformulation of the conjecture as a minimum problem of a functional  for a prescribed functional;
\item the construction of the optimal closing spiral;
\item \label{Point_5:abstract} the analysis of a differentiable path of admissible spirals along which the functional is differentiable, and in particular increasing when moving from the optimal spiral to any other one (homotopy argument).
\end{enumerate}
Due to the complexity of the solution, the evaluation of the quantities needed to prove that the functional is increasing is performed numerically.
\end{abstract}

\bigskip

\maketitle

\begin{center}
	Preprint SISSA: 09/2025/MATE
\end{center}

\tableofcontents

\section{Introduction}
\label{S:intro}

We study a dynamic blocking problem first proposed by A. Bressan in \cite{Bressan_2007}. The problem is concerned with the model of wild fire spreading in a region of the plane $\R^2$ and the possibility to block it by constructing \emph{barriers}, i.e. rectifiable curves which cannot be crossed by the fire. The local fire spreading is described by giving a set valued function
$$
F : \R^2 \to \big\{ K \subset \R^2 \ \text{compact, closed} \big\}.
$$
such that
\begin{enumerate}
\item there exists $r>0$ such that $B_r(0)\subset F(x)\quad\forall x\in\R^2$;
\item $x\rightarrow F(x)$ is Lipschitz-continuous in the Hausdorff topology.
\end{enumerate}
This function represents the infinitesimal fire evolution when barriers are not present, i.e. $x + \delta t F(x)$ are the points reached by the fire in the infinitesimal time $\delta t$. In other words, if barriers are not present, the \emph{reachable set $R(t)$} is the solution to the differential inclusion
\begin{equation*}
R(t) = \Big\lbrace x(t), \quad x(\cdot) \text{ abs. cont.},\ x(0) \in R_0,\ \dot x(\tau)\in F(x(\tau))\text{ for a.e. }\tau\in[0,t] \Big\rbrace,
\end{equation*}
where the set $R_0\subset\R^2$ is the open region burnt by the fire at the initial time $t=0$.

%
%
%

When the fire starts spreading, a fireman can construct some barriers, modeled by a one-dimensional rectifiable set $\zeta\subset\R^2$, in order to block the fire. More in detail, we consider a continuous function $\psi: \R^2 \rightarrow \R^+$ together with a positive constant $\psi_0>0$ such that $\psi\geq\psi_0$. If we denote by $\zeta(t)\subset\R^2$ the portion of the barrier constructed within the time $t\geq 0$, we say that  $\zeta$ is an admissible barrier (or admissible strategy) if
\begin{enumerate}
\item{(H1)}
$\zeta(t_1)\subset \zeta(t_2)$, $\forall  t_1\leq t_2$;
\item{(H2)}
$\int_{\zeta(t)}\psi d\H^1\leq t,\quad \forall t\geq 0$,
\end{enumerate}
where $\H^1$ denotes the one-dimensional Hausdorff measure. Once we have an admissible strategy $\zeta$, then we define the reachable set for $\zeta$ at time $t$ the set
\begin{equation*}
R^\zeta(t) = \Big\lbrace x(t): x\text{ abs. cont.},\ x(0) \in R_0, \ \dot x(\tau)\in F(x(\tau))\text{ for a.e. } \tau\in[0,t],\ x(\tau)\not\in\zeta(\tau) \ \forall\tau\in[0,t] \Big\rbrace.
\end{equation*}

\begin{definition}
Let $t\rightarrow\zeta(t)$ be an admissible strategy. We say that it is a \emph{blocking} strategy if
\begin{equation*}
R^\zeta_\infty\doteq\bigcup_{t\geq 0} R^\zeta(t)
\end{equation*}
is a bounded set.
\end{definition}

We call \emph{isotropic} the case in which the fire is assumed to propagate with unit speed in all directions, while the barrier is constructed at a constant speed $\sigma>0$, namely
\begin{equation}
\label{isotropic case}
F \equiv \overline{B_1(0)},\quad R_0=B_1(0), \quad \psi\equiv\frac{1}{\sigma},
\end{equation}
where $\overline{B_1(0)}$ denotes the closure of the unit ball of the plane centered at the origin. We remark that in \cite{Bressan_friends} there are comparison results between more general choices of the data $R_0$ and $F$ and the isotropic problem for the study of the fire problem in a more general setting.

The existence of admissible blocking (or winning) strategies for the isotropic blocking problem is a very challenging open problem and it has been addressed mainly in \cite{Bressan_2007},\cite{Bressan_friends}.\footnote{One can prove that the existence of blocking strategy does not depend on the starting set $R_0$ but only on the speed $\sigma$ \cite{Bressan_survey}, see Proposition \ref{Prop:limit_speed}.} This is one of the main motivations for which the problem has been studied in the isotropic case. In particular, the following theorems hold:

\begin{theorem}
\label{thm:existence:2}
Assume that \eqref{isotropic case} hold. Then if $\sigma>2$ there exists an admissible blocking strategy.
\end{theorem}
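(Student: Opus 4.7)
Given $\sigma>2$, the plan is to exhibit an explicit admissible blocking strategy. The natural approach is a symmetric two-arm construction: build two arms simultaneously, emanating from the antipodal points $(\pm 1,0)\in\partial B_1(0)$, each using half of the available budget (at speed $\sigma/2$), so that the admissibility constraint (H2) is saturated at every time.

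For each arm I would try a logarithmic spiral $r(\theta)=e^{a\theta}$ with pitch $a>0$ to be chosen. From $ds = r\sqrt{1+a^2}\,d\theta$, growing the arm at speed $\sigma/2$ yields tip radius
\[
r(t)=1+\frac{a\sigma}{2\sqrt{1+a^2}}\,t,
\]
which strictly dominates the fire front $r=1+t$ iff $\sigma>2\sqrt{1+a^2}/a$. The right-hand side is monotone decreasing in $a$ and tends to $2$ as $a\to\infty$, so under the hypothesis $\sigma>2$ I can fix $a$ large enough that each arm is built with a strictly positive radial margin over the fire front, uniformly in time; in particular the growing spiral is never touched by the fire during the spiral phase. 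After an initial spiral phase long enough to accumulate a large margin, I would then splice onto each arm a closing curve bringing the two arms to a common meeting point, producing a closed Jordan curve which, together with a portion of $\partial R_0$, bounds a compact region containing $R_0$; this forces $R^\zeta_\infty$ to be bounded.

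The main obstacle is precisely the closing estimate. A naive half-circle closing at tip radius $R$ would cost time $\pi R/\sigma$, whereas the accumulated time margin over the fire only scales like $R\bigl(1-2\sqrt{1+a^2}/(a\sigma)\bigr)$, giving the very conservative requirement $\sigma>\pi+2\sqrt{1+a^2}/a>\pi+2$, well above $2$. So the closing piece cannot be a plain circular arc: one must instead use a non-circular closing whose geometry exploits both the radial margin accumulated during the spiral phase and the fact that, once the two arms stop growing, the full budget $\sigma$ (rather than $\sigma/2$ per arm) can be redirected onto a single closing piece. Balancing the spiral pitch $a$, the threshold time at which the spiral phase terminates, and the shape of the closing curve so that the whole construction just succeeds for every $\sigma>2$ is the delicate quantitative point of the proof.
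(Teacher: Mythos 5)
Your quantitative core is right: for a logarithmic spiral $r=e^{a\theta}$ one has $ds=\sqrt{1+a^2}\,e^{a\theta}\,d\theta$, so an arm built at speed $\sigma/2$ has tip radius $1+\tfrac{a\sigma}{2\sqrt{1+a^2}}t$, and the threshold $2\sqrt{1+a^2}/a\searrow 2$ as $a\to\infty$ is exactly the mechanism behind the bound $\sigma>2$. (The paper does not reprove this theorem; it cites \cite{Bressan_2007}, and what follows compares your argument with that classical construction.) But as written your proposal does not prove the theorem: you explicitly leave the closing step as "the delicate quantitative point", and that is precisely the step that needs an argument. Acknowledging that a half-circle closing forces $\sigma>\pi+2$ and then asserting that some unspecified non-circular closing will work for all $\sigma>2$ is a genuine gap, not a detail.

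The gap is an artifact of your geometry. Starting the two arms at the antipodal points $(\pm1,0)$ leaves you with two outward-spiralling curves that never meet, so a separate closing piece is unavoidable and its cost is of the same order as the accumulated margin. The classical construction instead starts \emph{both} arms at the single point $(1,0)$ and lets them be mirror images, $r=e^{a|\theta|}$ with $\theta\in[0,\pi]$ for one arm and $\theta\in[-\pi,0]$ for the other. The two arms then meet automatically at $(-e^{a\pi},0)$, their union is already a closed Jordan curve enclosing $B_1(0)$, and no closing phase exists: the only estimate needed is the pointwise one you already have. Concretely, since each point of the barrier at radius $\rho$ is built by time $\tfrac{2\sqrt{1+a^2}}{a\sigma}(\rho-1)<\rho-1$, the fire never reaches the growing tips from outside, the burned portion of $Z$ at time $t$ lies in $\{r\le 1+t\}$, and hence $\H^1(Z\cap\overline{R^Z(t)})\le \tfrac{2\sqrt{1+a^2}}{a}\,t\le\sigma t$ once $a$ is chosen with $2\sqrt{1+a^2}/a\le\sigma$. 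With that modification your argument closes; without it, it does not.
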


\begin{theorem}
\label{thm:nonexistence:1}
Assume that \eqref{isotropic case} hold. Then if $\sigma\leq 1$ no admissible blocking strategy exists.
\end{theorem}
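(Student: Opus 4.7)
The plan is to argue by contradiction: suppose $\zeta$ is an admissible blocking strategy with construction speed $\sigma\le 1$. Since $R^\zeta_\infty$ is bounded and contains $B_1(0)$, the closure $\overline{\zeta_\infty}$ separates $B_1(0)$ from infinity; I would extract from it a rectifiable simple Jordan curve $\Gamma$ encircling $B_1(0)$, chosen so that for each $p\in\Gamma$ the straight radial segment from $B_1(0)$ to $p$ is unobstructed by other parts of $\zeta_\infty$ (the natural choice is the \emph{inner envelope} $\theta\mapsto\min\{r:(r\cos\theta,r\sin\theta)\in\zeta_\infty\}$). Parametrize $\Gamma$ by arc length, $\gamma:[0,L]\to\R^2$ with $|\gamma'|\equiv 1$, and in polar coordinates $\gamma=(\rho\cos\theta,\rho\sin\theta)$; the arc-length condition $(\rho')^2+\rho^2(\theta')^2=1$ gives $|\rho'|\le 1$.

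Three estimates then combine to produce a contradiction. The first is a \emph{winding inequality}: since $\Gamma$ encloses the origin, $\int_0^L\theta'(s)\,ds=\pm 2\pi$, and using $|\theta'|=\sqrt{1-(\rho')^2}/\rho\le 1/\rho$ one gets $\int_0^L ds/\rho\ge 2\pi$, hence $L\ge 2\pi\rho_{\min}$. The second is a \emph{diameter bound}: any closed rectifiable curve of length $L$ satisfies $\diam(\Gamma)\le L/2$, so $\rho_{\max}-\rho_{\min}\le L/2$. The third is a \emph{causality--admissibility} estimate: along the unobstructed radial ray the fire reaches each $p\in\Gamma$ at time exactly $|p|-1$, so the barrier at $p$ must be constructed by then, i.e.\ $\tau(p)\le|p|-1$ (with $\tau$ the construction-time function); combined with admissibility $\H^1(\{p\in\Gamma:\tau(p)\le t\})\le\sigma t$, this yields
\[
\H^1\bigl(\Gamma\cap B_{1+t}(0)\bigr)\le\sigma t\qquad\forall\,t\ge 0,
\]
so the non-decreasing rearrangement of $\rho$ satisfies $\rho^\ast(s)\ge 1+s/\sigma$ and in particular $\rho_{\max}\ge 1+L/\sigma$.

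Chaining the three estimates,
\[
L\;\ge\;2\pi\rho_{\min}\;\ge\;2\pi\Bigl(\rho_{\max}-\tfrac{L}{2}\Bigr)\;\ge\;2\pi\Bigl(1+\tfrac{L}{\sigma}-\tfrac{L}{2}\Bigr),
\]
which rearranges to $L\bigl(1+\pi-2\pi/\sigma\bigr)\ge 2\pi$. For $\sigma\le 1$ the coefficient on the left satisfies $1+\pi-2\pi/\sigma\le 1-\pi<0$, while the right-hand side is strictly positive---a contradiction. Hence no admissible blocking strategy can exist when $\sigma\le 1$.

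The main obstacle is the extraction step: for a generic admissible $\zeta$ the inner envelope of $\zeta_\infty$ need not be a connected rectifiable Jordan curve, so the causality bound $\tau(p)\le|p|-1$ may fail along a globally chosen $\Gamma$. One must therefore invoke a topological selection argument---or pass to a sufficiently regular limit strategy by compactness of the admissible class and lower semicontinuity of the relevant functionals---to guarantee that $\Gamma$ can be chosen with the inner-envelope property (or that the causality fails only on an $\H^1$-negligible set that can be absorbed into the final inequality). Carrying out this selection rigorously, while also checking that the fire indeed reaches every point of $\Gamma$ and that all the geometric bounds are preserved under the passage to the limit, is the technical heart of the proof.
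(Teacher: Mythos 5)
Your proposal has a genuine, and in fact fatal, gap, and it is precisely the step you defer to the end. The three estimates are never simultaneously available for a single set $\Gamma$. The causality bound $u(p)\le |p|-1$ and the winding bound $\H^1(\Gamma)\ge 2\pi\rho_{\min}$ do hold for the inner envelope, but the inner envelope is the polar graph of a function $\theta\mapsto\rho(\theta)$ that is in general only lower semicontinuous, with jump discontinuities: it is neither connected nor a closed rectifiable curve, and the diameter bound $\diam(\Gamma)\le L/2$ — a property of closed curves (or, in the weaker form $\diam\le\H^1$, of continua) — fails completely for disconnected sets. Conversely, the natural connected separating object, namely the boundary of the unbounded complementary component of $R^Z_\infty$, is a continuum contained in $\overline Z$ and does satisfy the winding and diameter bounds, but not the causality bound: to reach a point $p$ of that continuum the fire may have to detour around interior barriers, so $u(p)$ can exceed $|p|-1$ by an amount comparable to the length of those internal walls. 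Controlling exactly this delay is the reason Conjecture \ref{conj:bressan} is open, so no topological selection or compactness argument will produce one $\Gamma$ enjoying all three properties at once.

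There is also a decisive numerical warning sign that the gap cannot be routine: your final inequality $L(1+\pi-2\pi/\sigma)\ge 2\pi$ is contradictory for every $\sigma<2\pi/(1+\pi)\approx 1.517$, not merely for $\sigma\le 1$. A correct completion of your scheme would therefore strictly improve the threshold $\sigma\le 1$, which has been the best known general bound since \cite{Bressan_2007} and whose improvement toward $\sigma=2$ is a well-known open problem. Note finally that the present paper does not prove Theorem \ref{thm:nonexistence:1}; it quotes it from \cite{Bressan_2007}, where the argument is a direct dynamic comparison between the expansion of the reachable set and the total length of wall constructed up to time $t$, rather than a static isoperimetric argument on a barrier curve selected a posteriori.
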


 The two theorems are proved in  \cite{Bressan_2007} and they motivate the following Fire Conjecture \cite{Bressan_conj}:
\begin{conjecture}
\label{conj:bressan}
Let \eqref{isotropic case} hold. Then if $\sigma\leq 2$ no admissible blocking strategy exists.
\end{conjecture}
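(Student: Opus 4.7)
The natural path toward the full conjecture is to show that spiral-like barriers are essentially optimal, thereby reducing the problem to a one-parameter geometric analysis. Heuristically, for the isotropic problem any blocking strategy must eventually wrap around the initial set $R_0$, and the most efficient way to do so is a tight spiral whose construction endpoint tracks the fire front. I would first try to establish this reduction by a replacement/symmetrization argument: given any admissible blocking barrier $\zeta$, construct a spiral-like admissible barrier $\tilde\zeta$ blocking the same fire, at no larger construction cost. This is the conceptual heart of the conjecture, and I expect it to be the main obstacle.

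Granted the reduction, the plan for the spiral class itself follows the strategy announced in the abstract. I would parametrize a saturated spiral barrier through a retarded differential equation in the spirit of \cite{firefighter}: the endpoint of $\zeta$ must sit on the boundary of the current reachable set $R^\zeta(t)$, and the speed constraint (H2) fixes the rate at which arc length is added, while the constraint that the endpoint touches the fire front produces a functional relation between the position at time $t$ and the position at a retarded time $t-\tau$. This yields a family of admissible spirals parametrized by, say, an initial opening or entry angle, and existence of a blocking strategy becomes solvability of a closure condition, which can be repackaged as non-negativity of a scalar functional $\Phi$ on the parameter space.

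The next step is to prove $\Phi>0$ on every admissible spiral whenever $\sigma\leq\bar\sigma$. I would first identify a candidate optimal closing spiral by a direct method (using compactness of the parameter set and lower semicontinuity of $\Phi$), and then run a homotopy argument: connect any admissible closing spiral to the optimizer along a differentiable one-parameter path, show $\Phi$ is differentiable along this path, and prove the derivative has constant sign, so that $\Phi$ is monotone from the optimum outward. The sign of the derivative is the technical crux; because the RDE does not admit closed-form solutions, this estimate will almost certainly have to be verified numerically by evaluating explicit integrals, as the abstract itself signals in point~(\ref{Point_5:abstract}).

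Putting the two pieces together would yield the conjecture. The serious obstacle is the first paragraph: I do not see a clean symmetrization that forces the minimizing barrier to be spiral-like, and without it the RDE/homotopy machinery only rules out \emph{spirals} up to the critical speed $\bar\sigma=2.614\ldots$, leaving the regime $\sigma\in(2,\bar\sigma]$ untouched for general rectifiable strategies. My expectation, therefore, is that an unconditional proof of Conjecture~\ref{conj:bressan} requires a genuinely new ingredient beyond the spiral analysis carried out in this paper, namely a variational or topological argument forcing any hypothetical blocking barrier to be spiral-like (or at least no more efficient than the optimal spiral) in the limit as $\sigma\searrow 2$.
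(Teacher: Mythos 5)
The statement you are addressing is Conjecture \ref{conj:bressan}, which the paper does not prove: it is stated as an open conjecture, and the paper's actual result (Theorem \ref{thm:main}) is the restriction to spiral-like strategies, where the sharp threshold is $\bar\sigma = 2.614\ldots$ rather than $2$. Your second through fourth paragraphs accurately reproduce the paper's strategy for the spiral class — angle-ray parametrization, the RDE for saturated spirals, existence of an optimal closing spiral by a direct method, and a homotopy/monotonicity argument with numerically verified sign conditions — so as a description of what can currently be done, your plan is sound. Your honest assessment that the reduction from general rectifiable barriers to spirals is the missing ingredient is also correct: no such symmetrization or replacement argument exists in the paper or, to date, in the literature, so your proposal does not constitute a proof of the conjecture, and neither does the paper claim one.

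One factual correction to your last paragraph: the regime left open for general strategies is $\sigma\in(1,2]$, not $(2,\bar\sigma]$. By Theorem \ref{thm:existence:2}, for $\sigma>2$ admissible blocking strategies (with two construction branches) do exist, so the conjecture asserts nothing there; the spiral non-blocking result on $(2,\bar\sigma]$ merely says that \emph{spirals} are not among them. The content of the conjecture is entirely in $(1,2]$ (the case $\sigma\le 1$ being Theorem \ref{thm:nonexistence:1}), and there the paper's spiral analysis covers spirals but leaves general barriers — in particular strategies with internal barriers or multiple simultaneously constructed arcs, which are precisely the ones the heuristic ``a single firefighter cannot outrun the fire on two fronts'' is meant to exclude — completely untouched. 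Making that heuristic rigorous is the genuinely new ingredient required, as you correctly suspect.
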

For a survey of results related to the previous conjecture, see \cite{Bressan_survey}.

\medskip

\textbf{Equivalent formulation.} Throughout the paper we will use the following equivalent formulation of the dynamic blocking problem for the isotropic case (for a proof of the equivalence, see \cite{Bressan_equivalent}): let $Z\subset\R^2$ be a rectifiable set. We denote by
\begin{equation}
\label{eq:burned:region_intro}
R^Z(t) = \Big\lbrace  x(t): x\text{ abs. cont.},\ \dot x(\tau) \in \overline{B_1(0)} \text{ for a.e. } \tau \in [0,t],\ x(\tau)\not\in Z \ \forall \tau \in[0,t] \Big\rbrace.
\end{equation}
We say that the strategy $Z$ is \emph{admissible} if $\forall t\geq 0$
\begin{equation*}
\H^1(Z\cap\overline{R^Z(t)})\leq\sigma t.
\end{equation*}
Similarly to the previous formulation we denote by
\begin{equation}
\label{eq:burned:region}
R^Z_\infty = \bigcup_{t\geq 0} R^Z(t)
\end{equation}
the burned region and we say that $Z$ is an admissible blocking strategy if $R^Z_\infty$ is bounded. The advantage of this description is that the barrier is fixed and it does not grow while the time evolves. Here we can interpret $Z \cap \overline{R^Z(t)}$ as the amount of barrier $Z$ burned by the fire within the time $t$.

\medskip

 \textbf{Minimum time function and Hamilton-Jacobi formulation.} The propagation of fire can be described also in terms of the minimum time function
\begin{equation}\label{eq:min:time:function}
u(x) \doteq \inf \big\lbrace t \geq 0:\ x \in \overline{R^Z(t)} \big\rbrace.
\end{equation}
The function $u$ is the time needed for the fire to reach the point $x$ in the burned region, without crossing the barrier. The minimum time function can be computed by solving an Hamilton-Jacobi equation with obstacles, namely
\begin{equation}
\label{eq:hamilton:jacobi}
\begin{cases}
|\nabla u(x)|\leq 1 & x\not\in Z, \\
u(x)=0 & x\in R_0.
\end{cases}
\end{equation}
For the properties of the solution of \eqref{eq:hamilton:jacobi} we refer to \cite{delellisrobyr}. We conjecture that $|\nabla u(x)|=1$ for every $x\not\in Z$ as in the classic case of Eikonal equation without obstacles. Given the minimum time function $u$, and an admissible barrier $Z$, one defines the following \emph{Admissibility Functional}
\begin{equation*}
\mathcal A(x) = u(x) - \frac{1}{\sigma} \H^1 \big( Z\cap\overline{R^Z(u(x))} \big) \geq 0, \quad \forall x \in Z.
\end{equation*}
The inequality follows by the admissibility of the barrier $Z$.
In particular, one defines the \emph{saturated set} as
\begin{equation*}
\mathcal{S} = \big\lbrace x \in Z:\ \mathcal A(x) = 0 \big\rbrace,
\end{equation*}
corresponding to those points on the barrier where the firefighters are using all the barrier available in order to block the fire.

One can easily prove that if the strategy $Z$ consists of a simple closed curve, then it is not admissible for $\sigma\leq 2$ \cite{Bressan_2007} (the key idea is taking the last point $P$ on the curve reached by the fire and observing that $\mathcal{A}(P)<0$), but there are only partial results in the literature if the strategy has more complicated structures, as for example the presence of internal barriers that slow down the fire.
\medskip

 \textbf{Optimization problem}. A possible approach to attack Bressan's Fire Conjecture \eqref{conj:bressan} would be to look at optimal strategies for a given minimization problem and prove that they can not be admissible blocking strategies, reducing the analysis to concrete cases. In this spirit, one  can give an optimization problem among blocking barriers as follows: one introduces the following cost functional
\begin{equation}\label{eq:opt:problem}
    J(Z)=\int_{R^Z_\infty} \kappa_1 d\mathcal{L}^2 +\int_{Z}\kappa_2 d\H^1,
\end{equation}
among all possible admissible blocking strategies, where $\kappa_1,\kappa_2:\R^2\rightarrow \R^+$ are two non-negative continuous functions. In \cite{Bressan_deLellis} it is proved that, if the class of admissible blocking strategies is non-empty, then there exists an optimal strategy. Moreover, there exists an optimal strategy $Z^*$ which is \emph{complete}:
\begin{definition}
Let $Z\subset\R^2$ be a rectifiable set. $Z$ is complete if it contains all its points of positive upper density,  that is, let $x\in\R^2$ such that
\begin{equation*}
\limsup_{r\to 0^+} \frac{\H^1(Z\cap B_r(x))}{2r}>0 \Longrightarrow x\in Z.
\end{equation*}
\end{definition}
 The following corollary also holds \cite{Bressan_equivalent}:
\begin{corollary}
If there exists an admissible blocking strategy $Z$ with $\H^1(Z)<\infty$, then there exists an optimal blocking strategy $Z^*$ such that
\begin{equation*}
Z^* = \bigg( \bigcup_i Z_i \bigg) \cup \mathcal{N},
\end{equation*}
where $Z_i$ are countably many compact rectifiable connected components and $\H^1(\mathcal{N})=0$.
\end{corollary}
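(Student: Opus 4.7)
The plan is to build on the optimal complete strategy produced by the preceding result and extract the structural decomposition from the completeness property together with standard structure theorems for rectifiable sets. Since $Z$ itself is an admissible blocking strategy with $\H^1(Z)<\infty$, the class of finite-length admissible blocking strategies is non-empty, and specializing $J$ in \eqref{eq:opt:problem} (for instance with $\kappa_1\equiv 0$, $\kappa_2\equiv 1$) yields an optimal complete strategy $Z^{*}$ with $\H^1(Z^{*})\le\H^1(Z)<\infty$.

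Enumerate the connected components of $Z^{*}$ of positive length as $\{Z_i\}_{i\in I}$. Countability of $I$ is immediate from $\sum_{i}\H^1(Z_i)\le\H^1(Z^{*})<\infty$. The heart of the argument is to show that each such $Z_i$ is compact. Boundedness follows from the elementary fact that connected sets $C\subset\R^2$ satisfy $\diam C\le\H^1(C)$. For closedness I would argue by contradiction: assume $x\in\overline{Z_i}\setminus Z_i$. Since $\H^1(Z_i)>0$ implies $\diam Z_i>0$, and $x$ is a limit point of $Z_i$, a standard density estimate for connected sets (obtained via the Eilenberg coarea inequality applied to the distance from $x$: for every small $s$ the set $Z_i$ must meet $\partial B_s(x)$ by connectivity) gives
\begin{equation*}
\H^1(Z^{*}\cap B_r(x))\;\ge\;\H^1(Z_i\cap B_r(x))\;\ge\;r
\end{equation*}
for every sufficiently small $r>0$. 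The upper linear density of $Z^{*}$ at $x$ is therefore at least $1/2$, and completeness of $Z^{*}$ forces $x\in Z^{*}$. But then $Z_i\cup\{x\}\subseteq\overline{Z_i}$ is a connected subset of $Z^{*}$ strictly larger than $Z_i$, contradicting the maximality of the connected component. Rectifiability of each $Z_i$ then follows from the classical result that a compact connected set of finite $\H^1$-measure in the plane is $1$-rectifiable.

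To control the leftover set $\mathcal N\doteq Z^{*}\setminus\bigcup_{i}Z_i$ I would invoke the structure theorem for $1$-rectifiable sets of finite measure: write $Z^{*}=\bigcup_{k}\gamma_k\cup E$ with each $\gamma_k$ a Lipschitz injective arc and $\H^1(E)=0$. Each $\gamma_k$ is connected with positive length, hence lies entirely inside some $Z_i$. Consequently $\mathcal N\subseteq E$ modulo an $\H^1$-null set, so $\H^1(\mathcal N)=0$.

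The main obstacle I expect is the closedness step: one must marry the purely topological notion of a connected component with the measure-theoretic notion of upper density, bridged by the completeness of $Z^{*}$. Once the density lower bound at a putative limit point is secured, the remainder of the argument is essentially bookkeeping with classical tools for $1$-rectifiable sets.
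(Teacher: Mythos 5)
The paper does not actually prove this corollary; it quotes it from \cite{Bressan_equivalent}, so I am comparing your argument with the standard one in that reference. Most of your proof is sound and follows the expected route: the existence of a complete minimizer for $\kappa_1\equiv 0$, $\kappa_2\equiv 1$ with $\H^1(Z^*)\le\H^1(Z)<\infty$, the countability of the positive-length components, their compactness via the Eilenberg-type density bound $\H^1(Z_i\cap B_r(x))\ge r$ combined with completeness and maximality, and the rectifiability of compact connected sets of finite $\H^1$-measure are all correct.

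The genuine gap is the last step, $\H^1(\mathcal N)=0$. Rectifiability of $Z^*$ gives a \emph{cover} $Z^*\subset\bigcup_k\Gamma_k\cup E$ by Lipschitz curves $\Gamma_k$ with $\H^1(E)=0$; it does not give Lipschitz arcs \emph{contained in} $Z^*$. So the assertion that ``each $\gamma_k$ is connected with positive length, hence lies entirely inside some $Z_i$'' is unjustified: $\Gamma_k\cap Z^*$ may be totally disconnected. In fact the conclusion is false for general complete rectifiable sets: a fat Cantor set $C\subset[0,1]$ gives $C\times\{0\}$, which is closed, complete (points off the set have zero density, the set being closed), rectifiable, of positive finite $\H^1$-measure, and totally disconnected, so \emph{all} of it would sit in $\mathcal N$. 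Hence completeness and rectifiability alone cannot yield $\H^1(\mathcal N)=0$; you must use optimality. The missing ingredient is the (nontrivial) lemma of \cite{Bressan_equivalent} that the totally disconnected part of a wall of finite length does not obstruct any admissible fire trajectory, so that $R^{Z^*\setminus\mathcal N}(t)=R^{Z^*}(t)$ for all $t$; then $Z^*\setminus\mathcal N$ is still an admissible blocking strategy, and minimality of $\H^1(Z^*)$ forces $\H^1(\mathcal N)=0$. Without this lemma the bookkeeping at the end does not close.
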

We mention also the recent result in \cite{Bressan_Chiri} where it is proved that the optimal strategy is nowhere dense.  
What really makes the optimization problem hard is that the class of admissible strategy could be empty (and it is conjectured to be empty if $\sigma\leq 2$, Conjecture \ref{conj:bressan}).

Giving necessary conditions for optimality is a hard open problem. Some necessary optimality conditions are derived in \cite{Bressan_optnec} assuming further regularity on the optimal strategy. In \cite{FireSegment} some necessary optimality condition are given to characterize the boundary $\partial R^Z_\infty$ in the case $Z$ is an optimal strategy (for the problem $\kappa_1=0$, $\kappa_2=1$) without assuming any regularity assumption on the barrier. There, some properties of the optimal barrier are derived, as the connectedness in the case the $Z=Z^2\cup Z^1$ where $Z^2=\partial R^Z_\infty$ and $Z^1$ is a single internal curve, and some concrete cases are analyzed. The major issue, however, remains deriving necessary conditions for internal barriers.

\subsection{Non admissibility of spiral-like strategies}
\label{Ss:spiral_intro}

In this paper we study spiral-like strategies: namely, admissible barriers that are constructed putting all the effort on a single curve. The study of spiral strategies is of key importance in the  complete solution of Bressan's Fire conjecture, indeed it is conjectured that these strategies are the "best" possible barriers that can be constructed when $\sigma\leq 2$. The heuristic reason is that, if $\sigma\leq 2$, then we can not have two firefighters (constructing simultaneously two branches of the strategy) with speed strictly greater than the speed of the fire, so that they will be engulfed by the fire: thus the best strategy should be a single curve. The case $\sigma>2$ is different, indeed we expect that the optimal strategies in this case are simple closed curves. In \cite{Bressan_isotropic} the authors construct the best strategy for the optimization problem \ref{eq:opt:problem} among simple closed curves.

\medskip

We start giving the definition of spiral strategies.

\begin{definition}[Spiral strategies]
\label{Def:spiral_barrier_intro}
Let $Z = \zeta([0,S]) \subset \R^2$ be a strategy, where $\zeta$ is a Lipschitz curve parametrized by arc-length. We say that it is a \emph{spiral strategy} if it satisfies:
\begin{enumerate}
\item $\zeta(0) = (1,0)$ and $\zeta \rest_{[0,S)}$ is simple,
\item $s \mapsto u \circ \zeta(s)$ is increasing,
\item $\zeta$ is locally convex, i.e. locally it is the graph of a convex function,
\item \label{Point:4_intro_def_spiral} 
$\zeta$ is rotating counterclockwise about $0$. For a precise definition, see Section \ref{Ss:spiral_barrier_intro}.
\end{enumerate}
The spiral $\zeta$ is \emph{admissible} if it is also an admissible barrier.
\end{definition}

The assumptions that $\zeta$ is locally convex yields a direction of rotation to the curve, and then Point \eqref{Point:4_intro_def_spiral} selects a direction of rotation (Proposition \ref{thm:param_1}).

%
%

A deeper discussion is needed for the assumption 
\begin{equation*}
s \rightarrow u\circ\zeta(s) \quad \text{ is increasing},
\end{equation*}
which corresponds to the fact that either a portion of the barrier lies on the level set $\lbrace u=t\rbrace$ (so that the previous function is constant), or the fire can not burn two portions of the barrier simultaneously. 
Here we just observe that, given the barrier up to time $\bar t$ (or equivalently length $\bar s$), we can ask which is the optimal barrier for a given optimization problem with the same freedom as for the initial problem. This final observation will be of fundamental importance in our homotopy argument (Point \eqref{Point_5:abstract} in the abstract).

%

\medskip

The only results known on this family of barriers can be found in \cite{Bressan_friends} and \cite{firefighter}. In the two papers it is proved independently and with different techniques the following result:

\begin{theorem}
\label{bressan,klein}
Let $\sigma > 2.6144..$ (critical speed). Then there exists a spiral-like strategy which confines the fire.
\end{theorem}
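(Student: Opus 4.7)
The plan is to construct an explicit saturated spiral and verify it blocks the fire. Working in polar coordinates, write the spiral as $\zeta(\theta) = r(\theta)(\cos\theta, \sin\theta)$ for $\theta \in [0,\Theta]$. Saturation of the admissibility constraint amounts to
\begin{equation*}
\int_0^\theta \sqrt{r(s)^2 + r'(s)^2}\, ds = \sigma \, u(\zeta(\theta)),
\end{equation*}
where $u(\zeta(\theta))$ is the first arrival time of the fire at $\zeta(\theta)$. Because the spiral rotates counterclockwise and is locally convex, the fire reaches $\zeta(\theta)$ only after sweeping around the interior of the previous turn, so that $u(\zeta(\theta))$ depends on the values of $\zeta$ at angles roughly $\theta - 2\pi$. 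Differentiating in $\theta$ then produces a Retarded Differential Equation for $r$ with delay $2\pi$, in the spirit of \cite{firefighter}.

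First I would specialise to the logarithmic ansatz $r(\theta) = r_0 e^{\mu\theta}$, whose self-similarity collapses the retarded term: arc length, radial distance, and time-to-reach from the previous turn all scale by the same factor $e^{2\pi\mu}$. Substituting into the saturation identity reduces it to a single transcendental equation $\sigma = \Phi(\mu)$. One then shows by elementary analysis that $\Phi$ attains a strictly positive minimum $\bar\sigma = 2.6144\ldots$ at some $\mu_* > 0$, and that for every $\sigma > \bar\sigma$ there exist two roots $\mu_- < \mu_* < \mu_+$ of $\Phi(\mu) = \sigma$. Picking either root yields a self-similar saturated spiral defined for all $\theta \in \R$.

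Next I would truncate this infinite logarithmic spiral to a bounded piece $\theta \in [\theta_1,\theta_2]$ and close it off by two short arcs joining the endpoints to the initial circle. For $\sigma$ strictly greater than $\bar\sigma$ the saturation inequality becomes \emph{strict} after the truncation, creating a length budget that can be spent on the closing arcs while still respecting the admissibility bound; this yields an admissible, compactly supported strategy $Z$. Confinement is then immediate: by construction the fire cannot cross $Z$, and the closing arcs together with the spiral enclose a bounded region of $\R^2$.

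The principal obstacle is to justify the identification of $u(\zeta(\theta))$ with the geometric ``go around the previous turn'' formula used to derive the RDE. This amounts to verifying that no shortcut through the unburnt region exists: any minimising path for the Hamilton--Jacobi problem \eqref{eq:hamilton:jacobi} must lie in the strip bounded by two consecutive turns of the spiral. This uses the local convexity of $\zeta$ together with a comparison argument for the Eikonal equation with obstacles, as developed in \cite{delellisrobyr}. Once this identification is in place, the construction above produces the desired admissible blocking spiral for every $\sigma > \bar\sigma$.
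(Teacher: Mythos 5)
There is a fatal error at the heart of your argument: the direction of the transcendental condition is reversed. Writing the saturation condition for the self-similar ansatz $r(\theta)=r_0e^{\mu\theta}$ as $\sigma=\Phi(\mu)$ is equivalent to substituting $r=e^{\lambda\phi}$ into the saturated RDE \eqref{eq:ODE:saturated:1}, which yields the characteristic equation $\lambda+e^{-\lambda}=a(\alpha)$ of \eqref{eq:eigenvalues}. Since $\lambda\mapsto\lambda+e^{-\lambda}$ has minimum $1$ at $\lambda=0$ and $\alpha\mapsto a(\alpha)$ is strictly decreasing in $\alpha=\arccos(1/\sigma)$, real roots $\mu_-\le\mu_+$ exist precisely when $\sigma\le\bar\sigma$: the function $\Phi$ attains a \emph{maximum} $\bar\sigma$, and $\Phi(\mu)=\sigma$ has \emph{no} real solution when $\sigma>\bar\sigma$ (Corollary \ref{Cor:eigen_angle}). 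So the self-similar saturated logarithmic spiral on which your whole construction rests does not exist in the regime of the theorem; it exists exactly in the complementary regime, where it escapes to infinity and is the obstruction to confinement (and even where it exists, $r_0e^{\mu\theta}>0$ for all $\theta$, so it never closes). The actual mechanism is the opposite of your plan: for $\sigma>\bar\sigma$ all eigenvalues of the delay equation are complex conjugate, every solution oscillates, hence $r_{\mathrm{sat}}(\phi)$ vanishes at some finite angle and the spiral closes onto itself (Proposition \ref{prop:at:crit:speed:fire:dies}).

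Two further problems. First, the delay is $2\pi+\alpha$, not $2\pi$: the base point of the optimal ray reaching $\zeta(\phi)$ is the tangency point $\zeta(s^-(\phi))$ at angle $\phi-2\pi-\beta$, and for the saturated spiral $\beta\equiv\alpha=\arccos(1/\sigma)$; with delay $2\pi$ the transcendental equation, and hence the critical value, would come out wrong. Second, the truncate-and-close step is unsupported: truncating a saturated spiral creates no slack in the admissibility functional on the part already built, and the "short closing arcs" would have to span the gap between consecutive turns, whose width grows like $e^{\mu\theta}$, so there is no finite length budget to spend on them. The argument cannot be repaired within the logarithmic ansatz; one must analyze the genuine (non-self-similar) saturated solution of the RDE and use the absence of real eigenvalues to force $r$ to hit zero.
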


The proof of this result is obtained by the study of a particular spiral, that we will call \emph{saturated spiral}. Let $Z$ be an admissible spiral. We say that $Z=Z_{\text{sat}}$ is a saturated spiral if
\begin{equation*}
\mathcal{S}(Z_{\text{sat}}) = \big\lbrace t \in[0,T]:\ \H^1 \big( Z_{\text{sat}} \cap \overline{R^{Z_{\text{sat}}}(t)}\big) = \sigma t \big\rbrace = [0,T].
\end{equation*}
In terms of the admissibility functional previously defined, every point $P\in Z$ on a saturated spiral is \emph{saturated}, that is $\mathcal{A}(P)=0$.

Saturated spirals (or compact and connected subsets of saturated spirals) are thought to be building blocks for optimal strategies. Indeed, it has been proved in \cite{Bressan_isotropic} that, if $\sigma>2$, the optimal strategy (that minimizes the cost functional \eqref{eq:opt:problem}) among simple closed curves is the union of an arc of circle and two branches of saturated logarithmic spirals in polar coordinates. As we will see, there are intimate relations between logarithmic spirals and saturated spirals. Also, in \cite{FireSegment}, it is proved that for $\sigma<2$, if a strategy $Z$ is optimal ($\kappa_1=0,\kappa_2=1$) and its internal barrier is a segment, then the boundary $\partial R^Z_\infty$ is the union of a segment and a branch of logarithmic spiral.

The proof of Theorem \ref{bressan,klein} relies on the following fact: a saturated spiral $Z_{\text{sat}}$ can be represented by means of polar like coordinates $(\phi,r_{\text{sat}}(\phi)) \in [0,+\infty) \times \R^+$ (see Theorem \ref{Cor:angle_preprest}), and moreover $r_{\text{sat}}(\phi)$ solves the following \emph{Retarded Differential Equation (RDE)} 
\begin{equation}
\label{eq:ODE:saturated:1}
\frac{d}{d\phi} r_{\text{sat}}(\phi) = \cot(\alpha) r_{\text{sat}}(\phi) - \frac{r_{\text{sat}}(\phi - (2\pi+\alpha))}{\sin(\alpha)}
\end{equation}
with initial data
\begin{equation}
\label{Equa:init_satur}
r_{\text{sat}}(\phi) = \begin{cases}
e^{\cot\alpha\phi} & \forall\phi\in[0,2\pi), \\
(e^{2\pi\cot\alpha}-1) e^{\cot\alpha(\phi-2\pi)} & \forall \phi \in [2\pi,2\pi+\alpha].
\end{cases}
\end{equation}
The angle $\alpha \in [0,\frac{\pi}{2})$ is given by
\begin{equation*}
\alpha = \arccos \left( \frac{1}{\sigma} \right),
\end{equation*}
(this is precisely the approach followed by \cite{firefighter}). The angle $\alpha$ is the constant angle made by the fire ray with the spiral. Here the initial fire is the set $R_0 = B_1(0)$ and the spiral starts in a point $P=(1,0)$. Observe that for $\phi<2\pi$ the solution is an arc of the logarithmic spiral in polar coordinates $r_{\text{sat}}(\phi) = e^{\cot\alpha\phi}$.

A study of the eigenvalues of the operator associated to the previous RDE gives the proof of Theorem \ref{bressan,klein}: there is a critical value ($\bar \sigma = 2.6144...$, solution of a trascendental equation) such that if $\sigma > \bar \sigma$ then all eigenvalues of the associated linear operator are complex conjugate: in particular the solution oscillates, and thus there is some first angle $\bar \phi$ such that $r_{\text{sat}}(\bar \phi) = 0$. This corresponds exactly to the fact that $Z_{\text{sat}}$ blocks the fire.


However, in \cite{Bressan_friends} it is not proved that, instead, for $\sigma \leq \bar \sigma$ the saturated spiral does not confine the fire, that is $r_{\text{sat}}(\phi) > 0$ for every angle $\phi$: this fact can be deduced from \cite{firefighter}, Theorem $3$, which gives the estimate
\begin{equation*}
\text{minimal closing angle}\sim(\text{imaginary part of the largest eigenvalue})^{-1}.
\end{equation*}
Their result motivates the following conjecture for general spirals.

\begin{conjecture}
\label{conj:spiral}
If $\sigma \leq \bar \sigma$ then no spiral-like strategy blocks the fire.
\end{conjecture}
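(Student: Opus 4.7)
The plan is to reformulate the statement as a minimization problem over admissible spirals, show that the minimizer in the class of \emph{closing} admissible spirals is (essentially) the saturated spiral, and then deduce from the known RDE analysis that this minimizer fails to close whenever $\sigma\leq\bar\sigma$. The contradiction is transported to every other candidate admissible spiral through a homotopy along which a suitable admissibility functional is strictly monotone.

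First, using the parametrization of saturated spirals by polar-like coordinates $(\phi,r(\phi))$, I would extend the representation to an arbitrary admissible spiral $Z=\zeta([0,S])$ by adding a non-negative ``slack'' function $\a(\phi)$ which records the pointwise defect from saturation, namely $\sigma\, u\circ\zeta(s)-\H^1(Z\cap\overline{R^Z(u(\zeta(s)))})\geq 0$. The RDE \eqref{eq:ODE:saturated:1} then becomes a controlled equation in which $\a$ appears as a source term; the case $\a\equiv 0$ recovers the saturated solution with initial datum \eqref{Equa:init_satur}. In this formulation, $Z$ closes the fire if and only if $r$ vanishes at some finite angle $\bar\phi<+\infty$.

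Next, I would associate to every admissible closing spiral the functional
\begin{equation*}
\mathcal J(Z)\doteq \bar\phi(Z)\quad\text{or equivalently}\quad \mathcal J(Z)\doteq\int_0^{\bar\phi(Z)} w(\phi)\,\a(\phi)\,d\phi,
\end{equation*}
for a suitable weight $w$ encoding the cost of delaying saturation, and reformulate Conjecture \ref{conj:spiral} as: for $\sigma\leq\bar\sigma$, the infimum of $\mathcal J$ over admissible closing spirals is not attained in the admissible class. I would then construct the optimal closing spiral $Z^*$ as the (limit of) saturated spiral; a compactness argument, together with the analysis of the linear RDE operator, gives existence and the fact that $Z^*$ satisfies \eqref{eq:ODE:saturated:1}--\eqref{Equa:init_satur}. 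By Theorem \ref{bressan,klein} and its sharp version, $Z^*$ closes in finite angle precisely when $\sigma>\bar\sigma$.

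The decisive step is the homotopy: given a hypothetical admissible closing spiral $Z$ with $\sigma\leq\bar\sigma$, I would build a one-parameter family $\{Z_\lambda\}_{\lambda\in[0,1]}$ with $Z_0=Z^*$ and $Z_1=Z$, each $Z_\lambda$ admissible with the same $\sigma$, such that $\lambda\mapsto\mathcal J(Z_\lambda)$ is differentiable with $\frac{d}{d\lambda}\mathcal J(Z_\lambda)>0$. Concretely, I would perturb the slack profile $\a$ by a compactly supported nonnegative variation $\delta\a$, propagate it through the linearized RDE to get the corresponding variation $\delta r$, and compute $\frac{d}{d\lambda}\bar\phi$ via the implicit relation $r(\bar\phi;\lambda)=0$: by the maximum principle / positivity properties of the resolvent of the linearized RDE (whose largest eigenvalue is real for $\sigma\leq\bar\sigma$), the variation $\delta r$ stays positive near $\bar\phi$, forcing $\bar\phi$ to strictly increase. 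Hence $\mathcal J(Z)>\mathcal J(Z^*)$; but for $\sigma\leq\bar\sigma$ we have $\mathcal J(Z^*)=+\infty$, a contradiction.

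The main obstacle is the monotonicity in the homotopy step: the linearized operator of \eqref{eq:ODE:saturated:1} is nonlocal (a retardation of $2\pi+\alpha$), so standard comparison arguments do not apply, and the sign of the resolvent kernel $K(\phi,\phi')$ must be controlled on the whole interval $[0,\bar\phi]$. I expect this to be the step where the explicit numerical evaluation announced in the abstract is needed: one has to verify $K\geq 0$ on a bounded window near $\sigma=\bar\sigma$ and then extend the estimate to all $\sigma\leq\bar\sigma$ by a perturbation argument based on the eigenvalue structure.
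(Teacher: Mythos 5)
There is a genuine gap, and it sits exactly where you expect the difficulty to be — but the failure is worse than a missing verification. Your plan rests on two claims: (a) the minimizer of the closing problem is (a limit of) the saturated spiral satisfying \eqref{eq:ODE:saturated:1}--\eqref{Equa:init_satur}, and (b) the linearized RDE has a positivity/maximum-principle property ("$K\geq 0$") that makes the functional monotone along the homotopy. Both are false. The paper shows (Proposition \ref{Prop:regions_pos_neg_segm} and Proposition \ref{Prop:neg_region}) that the derivative of the final ray value with respect to a perturbation of the saturated-type spiral has a genuine \emph{negativity region} $N_{\mathrm{segm}}$ (respectively $N_{\mathrm{arc}}$) concentrated near the end of the last round: moving off the saturated spiral in the direction $\bar\phi$ strictly \emph{decreases} $r(\bar\phi)$ for certain relative positions of the perturbation angle. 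Consequently the saturated spiral is not even a local minimizer; the true optimal candidate is the fastest saturated spiral corrected by a ``tent'' (two consecutive segments) in the last round, and one has $r_{\mathrm{opt}}(\bar\phi)\simeq 0.976\,\tilde r(\bar\phi)$ (Proposition \ref{Prop:postiv_aympt}), a strict improvement. So the sign condition you hope to verify numerically on the resolvent kernel is provably violated, and the contradiction you want to derive from ``$\delta r$ stays positive near $\bar\phi$'' cannot be obtained from the linearization around the saturated spiral alone.

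Two further structural issues. First, minimizing $\mathcal J(Z)=\bar\phi(Z)$ over \emph{closing} admissible spirals is ill-posed when the class is empty (which is what must be proved); the paper instead fixes the rotation angle $\bar\phi$ and minimizes $r(\bar\phi)$ over admissible continuations (Definition \ref{Def:minimi_sect_4}, Theorem \ref{Theo:exists_min}), so that non-closure is equivalent to strict positivity of the minimum for every $\bar\phi$. Second, the homotopy cannot be a convex interpolation of slack (or control) profiles: the delay in the RDE is $2\pi+\beta(\phi^-)$ and hence itself depends on the control, and interpolating controls need not preserve convexity or admissibility of the curve. The paper's homotopy parameter is instead the arc-length $s_0$ at which one abandons the given spiral for the explicitly constructed optimal continuation $\check r(\cdot;s_0)$, and the monotonicity of $s_0\mapsto\check r(\bar\phi;s_0)$ is established by a case-by-case analysis (Sections \ref{S:segment_case} and \ref{S:arc_1}) precisely designed to absorb the negativity region via the tent correction. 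Your outline identifies the correct high-level strategy (reformulation as a minimum problem plus a monotone homotopy), but the specific minimizer, the specific functional, and the positivity mechanism you propose all have to be replaced before the argument can go through.
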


We remark here that due to the constraints on the barrier, the critical speed $\bar\sigma$ is larger than the speed $\sigma=2$ for the original conjecture \ref{conj:bressan}.

%
%
%
%
%

A partial answer to this conjecture has been given in \cite{firefighter} where the authors use a geometric argument to  prove the following result:

\begin{theorem}
\label{thm:klein}
If $\sigma\leq\frac{1+\sqrt{5}}{2}$ then no spiral-like strategy is admissible. 
\end{theorem}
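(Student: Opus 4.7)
The plan is to argue by contradiction: assume there exists an admissible spiral $Z=\zeta([0,S])$ blocking the fire for some $\sigma\leq(1+\sqrt{5})/2$. The core of the argument is a geometric comparison over one full revolution of the spiral, yielding a recurrence between successive radii that is incompatible with confinement when $\sigma$ is below the golden ratio.

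First I would reduce to saturated spirals. Using the increasing character of $s\mapsto u\circ\zeta(s)$ (condition (2) of Definition \ref{Def:spiral_barrier_intro}), any non-saturated portion $\{\mathcal{A}(\zeta(s))>0\}$ can be replaced by a saturated continuation starting from the same point and initial tangent, and the resulting barrier is still admissible and blocking (using that saturation gives the maximal barrier-length budget per unit time). With $Z$ saturated, Theorem \ref{Cor:angle_preprest} provides a polar-like representation $Z=\{r_{\mathrm{sat}}(\phi)e^{i\phi}:\phi\in[0,\Phi]\}$, with $\alpha=\arccos(1/\sigma)$; blocking the fire requires $r_{\mathrm{sat}}(\Phi)=0$ at some finite $\Phi$.

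For the geometric comparison, I would select two consecutive points of $Z$ at the same polar angle: $P=\zeta(s_P)$ and $Q=\zeta(s_Q)$ with $\arg Q-\arg P=2\pi$ and $s_P<s_Q$. From local convexity of $\zeta$ and its counterclockwise rotation around $0$, one extracts a lower bound $\mathcal{H}^1(\zeta([s_P,s_Q]))\geq L(|P|,|Q|)$: the curve must sweep a full $2\pi$ of angle while remaining outside the already-built portion $\zeta([0,s_P])$, forcing a length at least $|Q|-|P|$ plus a contribution proportional to $|P|$. On the other hand, $u(Q)-u(P)$ admits an upper bound $U(|P|,|Q|)$ obtained by constructing an explicit competitor path in $\R^2\setminus\zeta([0,s_Q))$: a short detour around the tip $\zeta(s_P)$ followed by a radial step of length $|Q|-|P|$. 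Combining with admissibility $\mathcal{H}^1(\zeta([s_P,s_Q]))\leq\sigma(u(Q)-u(P))$ yields a recurrence $|Q|\leq c(\sigma)\,|P|+d(\sigma)$. The stability threshold $c(\sigma)<1$ reduces to a self-consistency equation of the form $x=1+1/x$, whose positive root is exactly $(1+\sqrt{5})/2$; for $\sigma\leq(1+\sqrt{5})/2$ the recurrence is incompatible with $|P|,|Q|$ remaining bounded and the spiral eventually confining the fire inside $\overline{B_1(0)}$.

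The main obstacle is producing sharp enough geometric bounds on $L$ and $U$ so that they balance exactly at the golden ratio; each estimate must be tight in the sense that the competitor curve giving the upper bound on $u(Q)-u(P)$ has to thread between the already-built segments without obstruction, and the convexity-based lower bound on arc length must control for the possibility of the spiral dipping inward before completing its revolution. The argument is elementary in spirit but deliberately not sharp: the true critical speed $\bar\sigma\approx 2.614$ requires the full spectral analysis of the retarded ODE (\ref{eq:ODE:saturated:1}), i.e., information across arbitrarily many revolutions, which the single-revolution comparison cannot capture.
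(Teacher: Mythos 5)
You should first note that the paper contains no proof of Theorem \ref{thm:klein}: it is quoted from \cite{firefighter}, and the authors only remark that the bound there ``is obtained in the assumption that only some points on the spiral are admissible.'' So there is no in-paper argument to match; your proposal must stand on its own. Its overall flavour --- test admissibility only at a discrete sequence of points one revolution apart, compare a length lower bound with a time upper bound, and extract a recurrence whose consistency condition is $\sigma^2=\sigma+1$ --- is indeed the spirit of the geometric argument of \cite{firefighter}. But as written the proposal has two fatal gaps.

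First, the reduction to saturated spirals is both unjustified and unnecessary. Replacing a non-saturated arc by a saturated continuation from the same point and tangent changes the whole downstream geometry and the time function $u$; nothing guarantees the modified barrier is still blocking, let alone blocking at the same angle. Indeed the main body of this paper is devoted to showing that one \emph{cannot} reduce to the saturated spiral: the optimal closing spiral is not saturated (it contains a ``tent''), and the comparison between a general admissible spiral and the saturated one is exactly the hard homotopy argument of Sections \ref{S:family}--\ref{S:arc_1}. The golden-ratio bound, by contrast, only needs admissibility at the chosen crossing points, so this step should simply be dropped. Second, and more seriously, the concluding logic points the wrong way. A recurrence $|Q|\leq c(\sigma)|P|+d(\sigma)$ with $c(\sigma)<1$ forces the successive radii to stay \emph{bounded}; that is compatible with (indeed necessary for) confinement and yields no contradiction. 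To conclude one must show the opposite: the geometry forces the successive round lengths $L_i$ (or radii) to grow --- typically via a two-term, Fibonacci-type inequality such as $L_{i+1}\geq L_i+L_{i-1}$, because each round must wrap around the previous ones --- while admissibility together with $u(p_{i+1})-u(p_i)\leq L_i$ caps the growth by a factor $\sigma$ per round; eliminating gives $(\sigma-1)L_i\geq L_{i-1}$, and for $\sigma<\frac{1+\sqrt{5}}{2}$ one has $\sigma-1<1/\sigma$, which is incompatible with the admissibility budget. Finally, neither of your two key estimates is actually proved: the arc-length lower bound and, especially, the upper bound on $u(Q)-u(P)$ via a ``detour around the tip $\zeta(s_P)$'' are asserted, not established --- the optimal ray to $Q$ must leave the spiral along a supporting direction of $\partial^-\zeta$ (Theorem \ref{Cor:angle_preprest}) and need not pass anywhere near $\zeta(s_P)$. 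As it stands the proposal is a plausible sketch of the cited argument with the decisive inequalities missing and the final step inverted.
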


This bound is obtained in the assumption that only some points on the spiral are admissible, while assuming the admissibility of the whole barrier requires more effort.

The aim of this paper is to prove the previous conjecture: 

\begin{theorem}
\label{thm:main}
No admissible spiral-like strategy confines the fire if $\sigma \leq \bar \sigma$.
\end{theorem}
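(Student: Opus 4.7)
The strategy is to reduce the non-existence of closing admissible spirals when $\sigma \leq \bar\sigma$ to the already-established fact that the saturated spiral $Z_{\mathrm{sat}}$ does not close: indeed the eigenvalue analysis of the RDE \eqref{eq:ODE:saturated:1} recalled just before the statement gives $r_{\mathrm{sat}}(\phi) > 0$ for every $\phi \geq 0$ in this regime. If I can show that $Z_{\mathrm{sat}}$ is \emph{optimal} for the goal of closing the fire---i.e.\ no other admissible spiral reaches the origin at a smaller angle---then Theorem \ref{thm:main} follows immediately.

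To make this precise, I would parametrize every admissible spiral as $\phi \mapsto r(\phi)$ in the polar-like coordinates provided by the representation theorem, rewrite the admissibility condition $\mathcal{A}\geq 0$ as an integral inequality of retarded type (saturated by $r_{\mathrm{sat}}$, in which case the inequality collapses to \eqref{eq:ODE:saturated:1}), and encode the blocking condition as ``$r(\phi^{\ast})=0$ for some finite $\phi^{\ast}$''. This naturally defines a scalar functional $\mathcal{F}(\zeta)$---for instance the closing angle, set to $+\infty$ when $r > 0$ throughout---whose minimum over admissible spirals characterizes the optimal closing spiral. A local variational argument then shows that, on any interval where $\mathcal{A}(\zeta(s)) > 0$, one can redistribute the slack budget to bend $\zeta$ more sharply toward the origin and strictly decrease $\mathcal{F}$, so any minimizer must be saturated everywhere, hence coincides with $Z_{\mathrm{sat}}$.

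Global optimality of $Z_{\mathrm{sat}}$ is then obtained by a homotopy argument: given an arbitrary admissible spiral $Z$, I would construct a differentiable path $\lambda \mapsto Z_{\lambda}$ in the admissible class with $Z_{0}=Z_{\mathrm{sat}}$ and $Z_{1}=Z$, and prove that $\lambda \mapsto \mathcal{F}(Z_{\lambda})$ is differentiable and nondecreasing in $\lambda$. Evaluating at $\lambda=1$ yields $\mathcal{F}(Z) \geq \mathcal{F}(Z_{\mathrm{sat}}) = +\infty$ for $\sigma \leq \bar\sigma$, so $Z$ cannot close the fire, which is the conclusion.

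The main obstacle is the proof of monotonicity of $\mathcal{F}$ along the homotopy. The retarded structure of the admissibility inequality---reflecting the delay $2\pi+\alpha$ appearing in \eqref{eq:ODE:saturated:1}---couples the variation of $r$ at angle $\phi$ with its variation at $\phi-(2\pi+\alpha)$, so that the first variation of $\mathcal{F}$ is a non-local integral expression whose sign is not manifest; in addition, the homotopy must be engineered so that admissibility is preserved at every $\lambda$. As anticipated in the abstract, the explicit evaluation of the resulting integral identity is carried out numerically, and this is the technical heart of the proof.
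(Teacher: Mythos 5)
Your overall skeleton (reformulate closing as a minimization over admissible spirals in angle--ray coordinates, identify the minimizer, connect it to an arbitrary admissible spiral by a differentiable homotopy along which the functional is monotone) is indeed the architecture of the paper's proof. But the central identification step is wrong: the minimizer is \emph{not} the saturated spiral $Z_{\mathrm{sat}}$, and the claim that ``any minimizer must be saturated everywhere'' fails. The paper shows (Theorem \ref{Cor:curve_cal_R_sat_spiral}) that already in the first round the optimal continuation is an arc of a level set of $u$, then a segment, and only then a saturated arc --- the \emph{fastest saturated spiral} $\tilde r$, which does not coincide with $Z_{\mathrm{sat}}$. Worse, even $\tilde r$ is not optimal: the derivative $\delta\tilde r(\bar\phi;s_0)$ has a genuine negativity region near the end of the last round (Proposition \ref{Prop:regions_pos_neg_segm}), which forces the true optimal closing spiral $\check r$ to end with a non-saturated ``tent'' of two segments; quantitatively, $r_{\mathrm{opt}}(\bar\phi)\approx 0.976\,\tilde r_{\mathtt a=0}(\bar\phi)$ (Proposition \ref{Prop:postiv_aympt}). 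Your local variational argument (``redistribute the slack to bend more sharply'') is exactly what the delay structure defeats: sharpening the curve at angle $\phi$ increases the curvature sink $r(\phi-2\pi-\beta)/\sin\beta$ felt one round later \emph{and} shifts the delay interval itself, so the first variation is not sign-definite, and on the last round (which has no successor) saturation is not optimal at all.

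Two further issues. First, your functional $\mathcal F$ (the closing angle, set to $+\infty$ when the spiral never closes) is identically $+\infty$ on the admissible class in the regime $\sigma\le\bar\sigma$ you are trying to establish, so ``strictly decrease $\mathcal F$'' is vacuous and existence of a minimizer cannot be extracted from it; the paper instead fixes a final angle $\bar\phi$ and minimizes $r(\bar\phi)$ (Theorem \ref{Theo:exists_min}), which makes the problem well posed and lets positivity of the minimum value for every $\bar\phi$ imply non-closure. Second, even with the correct base point, the monotonicity of the homotopy is not a single integral identity to be checked numerically: because the optimal spiral changes structure (segment vs.\ arc at the start, tent vs.\ arc at the end) as $s_0$ and $\bar\phi$ vary, the paper must verify positivity of $\delta\check r(\bar\phi;s_0)$ case by case (Sections \ref{S:segment_case} and \ref{S:arc_1}), each case reducing to explicit kernel expressions built from $G$ and $M$. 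So the gap is not cosmetic: identifying the correct minimizer --- and in particular recognizing that it is strictly better than every saturated spiral --- is the missing idea without which the homotopy cannot be set up.
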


\medskip

We conclude this first part of the introduction by giving a very much simplified idea of our approach. We will neglect some technical issue, which will be further addressed in the section below.

\begin{figure}
\resizebox{.75\textwidth}{!}{\includegraphics{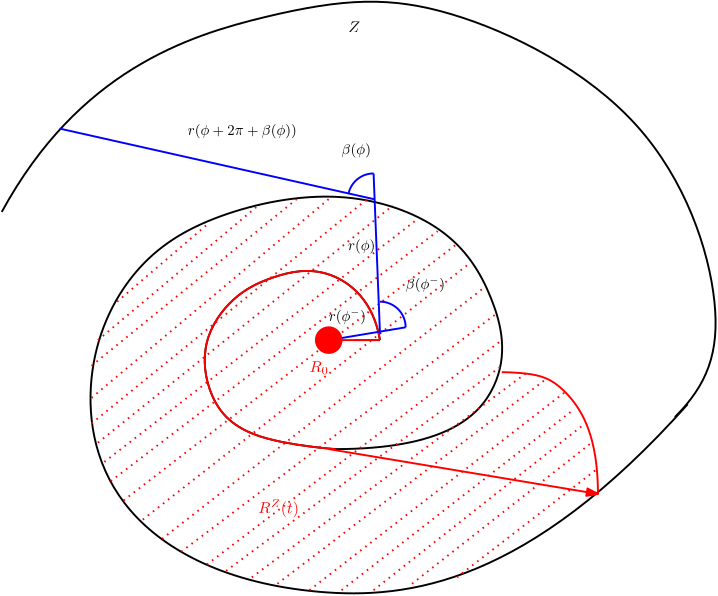}}
\caption{The angle-ray parametrization $(\phi,r(\phi))$ of a spiral barrier: the optimal fire ray is red, and the red curve is the level set of the minimum time function $u$. In blue, the angle-ray coordinates.}
\label{Fig:angle_spiral_intro}
\end{figure}

First, a spiral strategy admits a polar-like representation $(\phi,r(\phi)) \in [0,+\infty) \times \R^+$ as in Fig. \ref{Fig:angle_spiral_intro} (we will call this representation the \emph{angle-ray coordinates}):
\begin{itemize}
\item $r$ is the length of the segment starting from one point of $\zeta$ and with direction $\dot \zeta$ and ending in the next round of $\zeta$;
\item $\phi$ is the rotation angle of $r$;
\item $\beta(\phi)$ is the angle between $\dot \zeta$ and the direction of $r(\phi)$.
\end{itemize}
We can also think of $r(\phi)$ as the last part of the optimal ray needed to compute the minimum time function $u$ in the point $\zeta(\phi)$, where we used $\phi$ instead of $s$.

The parameter $\beta(\phi)$ is our control parameter: once it is chosen, the RDE satisfied by $r(\phi)$ is
\begin{equation*}
\frac{d}{d\phi} r(\phi) = \cot(\beta(\phi)) r(\phi) - \frac{r(\phi^-)}{\sin(\beta(\phi^-){\color{red})}},
\end{equation*}
where the angle $\phi^-$ is given by (see Fig. \ref{Fig:angle_spiral_intro})
\begin{equation*}
\phi = \phi^- + 2\pi + \beta(\phi^-).
\end{equation*}
Using this representation, and observing that $r(\bar \phi) = 0$ means that the spiral closes into itself and thus blocks the fire, we replace the closure problem with the following minimum problem:
\begin{equation}
\label{Equa:minimum_rphi_intro}
\text{{\bf Optimization Problem}: given a rotation angle $\bar \phi$, choose $\phi \mapsto \beta(\phi)$ in order to minimize $r(\bar \phi)$.}
\end{equation}
Differently from the original problem (where we have to show that the set of admissible strategies is empty), we prove that the one above has an optimal solution.

We observe that while the RDE for $r$ is linear, the choice of the control parameter affects the RDE by changing the coefficients at the next round and also by varying the delay interval $2\pi + \beta(\phi^-)$: hence it seems quite difficult to find a positive lower bound for the optimization problem \eqref{Equa:minimum_rphi_intro}, or even more to prove that a choice of the control $\beta(\phi)$ is optimal. Notice also that $\beta$ depends on the angle $\bar \phi$, i.e. the optimal spiral is not the same for all angles (and in particular we will prove it is not the saturated spiral).

In order to overcome these difficulties, we use a homotopy approach as follows. We will refer to Figure \ref{Fig:fire:intro:2}.

\begin{figure}
\resizebox{.60\textwidth}{!}{\includegraphics{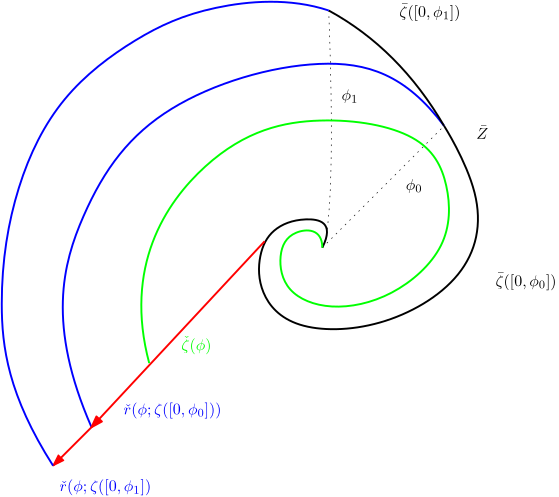}}
\caption{The barrier $\zeta([0,\phi_0])$ is fixed. Given any angle $\phi\geq \phi_0$, we construct a new spiral $\check\zeta(\phi;\bar\zeta([0,\phi_0])$ (blue). The family of continuations of $\bar\zeta$ (black spiral) is represented in blue. The base spiral (green) is $\check\zeta(\phi;\bar\zeta(\lbrace 0\rbrace))$.}
\label{Fig:fire:intro:2}
\end{figure}

 Let $\bar \zeta(\phi)$ be a given spiral strategy, parametrized by the rotation angle: for every $\phi_0$, we construct a new spiral $\check\zeta(\phi;\bar \zeta([0,\phi_0]))$ with the following properties:
\begin{enumerate}
\item $\check\zeta(\phi;\bar \zeta([0,\phi_0])) = \bar \zeta(\phi)$ for all $\phi \in [0,\phi_0]$;
\item \label{Point:hat_spiral_2_intro} if $\check r(\phi;\bar \zeta([0,\phi_0]))$ is the representation of the spiral in the angle-ray coordinates, the final value $\phi_0 \mapsto \check r(\bar \phi;\bar \zeta([0,\phi_0]))$ is differentiable and has positive derivative;
\item \label{Point:hat_spiral_3_intro} the spiral $\check \zeta (\phi)= \check \zeta(\phi;\bar \zeta(\{0\}))$ is such that $\check r(\bar \phi) > 0$.
\end{enumerate}
Since the spiral $\bar \zeta$ is fixed during the homotopy argument, we shorten the notation to $\check r(\cdot,\phi_0)$ or also $\check r(\cdot;s_0)$ when using the length parametrization for $\bar \zeta$. 

The existence of such a family for every spiral $\bar\zeta$ gives thus the expanded version of the main Theorem \ref{thm:main} (which now becomes a corollary):

\begin{theorem}
\label{Theo:main_extended}
For each spiral strategy $\bar \zeta$ and rotation angle $\bar \phi$, the spiral $\check \zeta(\cdot;\phi_0)$ is the optimal solution to the minimization problem \eqref{Equa:minimum_rphi_intro} given the initial spiral arc $\bar \zeta([0,\phi_0])$. In particular, $\check \zeta(\phi;0)$ is optimal among all spirals.
\end{theorem}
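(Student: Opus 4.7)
The plan is to derive Theorem \ref{Theo:main_extended} as an immediate consequence of the three structural properties of the family $\check\zeta(\phi;\bar\zeta([0,\phi_0]))$ announced in the introduction: the matching condition, the monotone differentiability of the final value (property \ref{Point:hat_spiral_2_intro}), and the strict positivity of the base-spiral value (property \ref{Point:hat_spiral_3_intro}). These three properties form the technical heart of the paper; once they are available, the theorem reduces to a monotonicity argument along the one-parameter homotopy in the cutoff $\phi_0$.

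To prove optimality given a fixed initial arc, fix $\bar\zeta$, $\bar\phi$, $\phi_0\in[0,\bar\phi]$, and let $\tilde\zeta$ be any competitor spiral with $\tilde\zeta|_{[0,\phi_0]}=\bar\zeta|_{[0,\phi_0]}$. I compare the two final radii by sliding the cutoff $s$ from $\phi_0$ to $\bar\phi$ along the family $s\mapsto \check\zeta(\,\cdot\,;\tilde\zeta([0,s]))$. The matching condition gives $\check\zeta(\,\cdot\,;\tilde\zeta([0,\phi_0])) = \check\zeta(\,\cdot\,;\bar\zeta([0,\phi_0]))$ at $s=\phi_0$. At $s=\bar\phi$, the same matching condition forces $\check\zeta(\,\cdot\,;\tilde\zeta([0,\bar\phi]))$ to agree with $\tilde\zeta$ on $[0,\bar\phi]$; since $r(\bar\phi)$ is determined by the spiral on $[0,\bar\phi]$ through the causal RDE \eqref{eq:ODE:saturated:1}, this gives $\check r(\bar\phi;\tilde\zeta([0,\bar\phi]))=\tilde r(\bar\phi)$. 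Property \ref{Point:hat_spiral_2_intro}, now invoked with $\tilde\zeta$ playing the role of the base spiral, asserts that $s\mapsto\check r(\bar\phi;\tilde\zeta([0,s]))$ is differentiable with strictly positive derivative on $[\phi_0,\bar\phi]$, and integration yields
\[
\check r(\bar\phi;\bar\zeta([0,\phi_0])) \;\leq\; \tilde r(\bar\phi),
\]
which is the required optimality. The \emph{in particular} clause of the theorem is the $\phi_0=0$ specialization.

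The main Theorem \ref{thm:main} follows by a further specialization: for an arbitrary admissible spiral $\bar\zeta$, optimality at $\phi_0=0$ gives $\bar r(\bar\phi)\geq \check r(\bar\phi;\bar\zeta(\{0\}))$, and the right-hand side is strictly positive by property \ref{Point:hat_spiral_3_intro}. Hence no admissible spiral achieves $r(\bar\phi)=0$ at any angle $\bar\phi$, so no admissible spiral confines the fire when $\sigma\leq\bar\sigma$.

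The main obstacle, as one would expect, is the verification of property \ref{Point:hat_spiral_2_intro}. It requires first constructing the optimal continuation $\check\zeta(\,\cdot\,;\bar\zeta([0,\phi_0]))$ explicitly as a solution of the RDE with a suitable control $\beta(\phi)$, then deriving a first-variation formula for $\phi_0\mapsto\check r(\bar\phi;\bar\zeta([0,\phi_0]))$, and finally showing that this variation has a definite positive sign. The perturbation calculus is delicate because varying $\phi_0$ alters both the linear coefficient $\cot\beta$ and the variable delay $2\pi+\beta$ at the subsequent round, so the resulting sensitivity kernel admits no closed form; consequently the final sign check is performed numerically, as the abstract already anticipates. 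Property \ref{Point:hat_spiral_3_intro} is a refinement of the non-closure of the saturated spiral below the critical speed recorded after Theorem \ref{bressan,klein}, and is of a similar analytic character.
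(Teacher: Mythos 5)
Your proposal is correct and follows essentially the same route as the paper: the theorem is reduced to the homotopy argument in the cutoff parameter, i.e.\ $\check r(\bar\phi;\tilde\zeta([0,\bar\phi]))=\tilde r(\bar\phi)$, $\check r(\bar\phi;\tilde\zeta([0,\phi_0]))=\check r(\bar\phi;\bar\zeta([0,\phi_0]))$ by the matching condition, and monotonicity of the cutoff map (Proposition \ref{Prop:deltacheckr_intro}, proved case by case in Sections \ref{S:segment_case}--\ref{S:arc_1}), with all the analytic weight carried by the construction of $\check r(\cdot;s_0)$ and the sign of $\delta\check r$. The only slip is that the derivative is nonnegative rather than strictly positive (it vanishes when the competitor is tangent to the optimal continuation), but the integrated inequality $\check r(\bar\phi;\bar\zeta([0,\phi_0]))\leq\tilde r(\bar\phi)$ is unaffected.
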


Given the optimality of $\check \zeta(\phi;0)$, we will refer to this spiral as $Z_\mathrm{opt} = \zeta_{\text{opt}}([0,\bar \phi];0)$.
Point \eqref{Point:hat_spiral_3_intro} and Point \eqref{Point:hat_spiral_2_intro} above implies that
\begin{equation*}
\check r(\bar\phi;s_0)=\check r(\bar\phi;0)+\int_0^{\phi_0} \frac{d}{d\phi} \check r(\bar \phi;\phi_0) d\phi \geq \check r(\bar\phi;0)
\end{equation*}
but since this holds for every $s_0$, the previous equation is also valid for the last parameter $\bar s$, so that $\bar r(\bar\phi)\geq\check r(\bar\phi;0)$,
where we used $(\phi,\bar r(\phi))$ as the angle-ray coordinated of the fixed spiral $\bar Z$. In particular Theorem \ref{thm:main} holds.

We will also prove that the only case when the derivative $\frac{d}{d\phi_0} \check r(\bar \phi;s_0)$ is $0$ is by following exactly the spiral $\check r(\cdot;\phi_0)$, so that the above chain of inequalities become strict as soon as we perturb the spiral $Z_{\text{opt}}$. Hence we have the following

\begin{corollary}
\label{Cor:uniqueness}
The optimal spiral strategy for the angle $\bar \phi$ is $Z_{\text{opt}}$, parametrized by $\check r(\phi;0) = \check r(\phi)$.
\end{corollary}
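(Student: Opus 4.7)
The plan is to leverage the three properties of the homotopy family $\{\check\zeta(\cdot;\phi_0)\}_{\phi_0\in[0,\bar s]}$ established in Theorem \ref{Theo:main_extended} together with the rigidity statement announced just before the corollary: the former gives optimality of $Z_{\text{opt}}$, and the latter upgrades it to uniqueness.

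First I would run the chain of inequalities already sketched in the text. Fix an arbitrary admissible spiral $\bar\zeta$ with angle-ray coordinates $\bar r$ and arc-length domain $[0,\bar s]$. By the defining property of the homotopy family, $\check\zeta(\cdot;\bar s)\equiv\bar\zeta$, hence $\check r(\bar\phi;\bar s)=\bar r(\bar\phi)$; by construction $\check r(\bar\phi;0)$ is the final radius of $Z_{\text{opt}}$. The fundamental theorem of calculus applied to $s_0\mapsto\check r(\bar\phi;s_0)$ gives
\[
\bar r(\bar\phi)-\check r(\bar\phi;0)=\int_0^{\bar s}\frac{d}{ds_0}\check r(\bar\phi;s_0)\,ds_0,
\]
whose integrand is nonnegative by property (2) of the homotopy family. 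This already shows $\bar r(\bar\phi)\geq\check r(\bar\phi;0)$, so $Z_{\text{opt}}$ is an optimal spiral for the angle $\bar\phi$.

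To conclude uniqueness, I would invoke the rigidity lemma asserting that $\tfrac{d}{ds_0}\check r(\bar\phi;s_0)=0$ only if $\bar\zeta$ coincides with $\check\zeta(\cdot;s_0)$ at $s_0$, i.e.\ the control parameter $\beta$ chosen by $\bar\zeta$ agrees with the one selected by the optimal continuation. Then whenever $\bar\zeta\not\equiv\check\zeta(\cdot;0)$ the two controls must differ on a set of positive measure, making the integrand strictly positive there and forcing $\bar r(\bar\phi)>\check r(\bar\phi;0)$; this yields Corollary \ref{Cor:uniqueness}. The main obstacle lies in this rigidity step, which in turn rests on an explicit adjoint-type representation of $\tfrac{d}{ds_0}\check r(\bar\phi;s_0)$ obtained by linearizing the retarded differential equation \eqref{eq:ODE:saturated:1} along the optimal trajectory, together with strict positivity of the resulting sensitivity kernel on $(s_0,\bar\phi)$. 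Verifying this positivity — ruling out degenerate cancellations coming from the time-retarded coupling in the RDE — is the analytic core of the argument and is precisely where the numerical estimates announced in the abstract must intervene.
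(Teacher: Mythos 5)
Your proposal is correct and follows essentially the same route as the paper: the fundamental theorem of calculus along the homotopy parameter $s_0$, the nonnegativity of $\delta\check r(\bar\phi;s_0)$ from Theorem \ref{Theo:main_extended}, and the rigidity statement (Proposition \ref{Prop:deltacheckr_intro}) that the derivative vanishes only when the perturbation is tangent to the optimal continuation, which forces strict inequality for any spiral differing from $Z_{\text{opt}}$. You also correctly locate the analytic core in the positivity of the linearized RDE sensitivity, which is exactly where the paper's case analysis and numerical estimates enter.
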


We remark that by changing the angle $\bar \phi$, the optimal spiral changes. Therefore we will often refer to the optimal spiral at a fixed angle $\bar\phi$. We also remark that the optimal spiral is not regular, but only Lipschitz at the last round. This can be heuristically explained by observing that the last round is not having influence on the next evolution (being the last one!), so that the regularizing effect of the RDE is not acting.

Being able to describe the optimal spiral, it is possible to give estimates, in particular one can prove the asymptotic divergence of the optimal spiral:

\begin{corollary}
\label{Cor:divergence}
The spiral $Z_{\text{opt}}$, optimal at the angle $\bar \phi$, satisfies
\begin{equation*}
\check r(\bar \phi) \simeq \bar \phi e^{\bar c \bar \phi},
\end{equation*}
where $\bar c$ is the only positive eigenvalue of the RDE.
\end{corollary}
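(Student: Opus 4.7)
The plan is to combine the explicit description of the optimal spiral provided by Theorem \ref{Theo:main_extended} and Corollary \ref{Cor:uniqueness} with a spectral analysis of the linear retarded differential equation \eqref{eq:ODE:saturated:1} that governs its bulk behaviour.

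\smed

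\textbf{Step 1: the optimal spiral is saturated on its bulk.} First, I would show that for every angle $\phi$ lying at distance at least $2\pi+\alpha$ from the terminal angle $\bar\phi$, the control parameter $\beta^{*}(\phi)$ attached to the optimal spiral $\check\zeta(\cdot;0)$ coincides with the constant $\alpha$, so that $\check r(\cdot;0)$ satisfies the saturated RDE \eqref{eq:ODE:saturated:1} with the initial datum \eqref{Equa:init_satur}. This should follow from the uniqueness statement in Corollary \ref{Cor:uniqueness} together with the homotopy argument of Theorem \ref{Theo:main_extended}: any deviation from saturation on the bulk admits a local modification that, through the homotopy, strictly decreases $\check r(\bar\phi;0)$ and hence contradicts optimality.

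\smed

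\textbf{Step 2: spectral analysis of the RDE.} The characteristic equation associated with \eqref{eq:ODE:saturated:1} is
\begin{equation*}
\lambda = \cot\alpha - \frac{e^{-\lambda(2\pi+\alpha)}}{\sin\alpha}.
\end{equation*}
Under the hypothesis $\sigma\le\bar\sigma$ it admits $\bar c$ as its only root with positive real part, and at the threshold $\sigma=\bar\sigma$ this root is a double one, so that both $e^{\bar c \phi}$ and $\phi e^{\bar c \phi}$ occur as fundamental solutions. Standard Laplace-transform/residue techniques for scalar delay equations then provide the asymptotic expansion
\begin{equation*}
r_{\text{sat}}(\phi) = (A+B\phi)\, e^{\bar c \phi} + o\!\left(\phi\, e^{\bar c \phi}\right), \qquad \phi\to+\infty,
\end{equation*}
with coefficients $A,B$ determined explicitly by \eqref{Equa:init_satur}. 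By Step~1, the same expansion applies to $\check r(\cdot;0)$ on its bulk.

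\smed

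\textbf{Step 3: matching on the last round.} On the terminal round $\phi\in (\bar\phi-(2\pi+\alpha),\bar\phi]$ the RDE reduces to a first-order linear ODE for $\check r$ driven by the previously computed values $\check r(\phi-2\pi-\beta^{*}(\phi^-))$. Substituting the bulk asymptotics of Step~2, the upper bound $\check r(\bar\phi;0)\lesssim \bar\phi\, e^{\bar c \bar\phi}$ follows from the comparison $\check r(\cdot;0)\le r_{\text{sat}}$, while the matching lower bound $\check r(\bar\phi;0)\gtrsim \bar\phi\, e^{\bar c \bar\phi}$ is obtained by projecting the last-round solution onto the (resonant) eigenmode $\phi e^{\bar c \phi}$ and checking that no admissible pointwise optimisation of $\beta^{*}$ can annihilate this projection.

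\smed

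\textbf{Main obstacle.} The delicate point is Step~1: the delay in the RDE makes the optimisation genuinely non-local, as a perturbation of $\beta^{*}$ at angle $\phi$ alters both the instantaneous growth of $\check r$ and, one full round later, the source term of the RDE. One therefore has to show that the homotopy argument of Theorem \ref{Theo:main_extended} really yields a strict decrease of $\check r(\bar\phi;0)$ whenever $\beta^{*}(\phi)\neq\alpha$ on the bulk, so that saturation is not merely an optimality condition but the unique one. Once this is in place, Steps~2 and 3 are routine asymptotic analysis of a scalar linear delay equation.
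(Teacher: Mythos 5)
Your Step 1 is where the argument goes wrong. The optimal spiral is \emph{not} the saturated spiral with initial datum \eqref{Equa:init_satur}: in the setting of the corollary ($R_0=\{0\}$, barrier outside $B_1(0)$) the point $(1,0)$ is not saturated, and the bulk of $Z_{\mathrm{opt}}$ is the \emph{fastest saturated} spiral of Section \ref{S:case:study} — an arc of $\partial B_1(0)$ of opening $\tan(\bar\alpha)$, then a segment, and only afterwards a saturated branch — while the last round carries an additional ``tent'' correction (two consecutive segments), see Section \ref{Ss:optimal_cand_4}. So neither the initial datum nor the terminal round is what you assume, and the asymptotic coefficient is $0.976359\,K_{\mathrm{asympt}}$ rather than the one coming from \eqref{Equa:init_satur} (Theorem \ref{thm:case:a} and Proposition \ref{Prop:postiv_aympt}). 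Your heuristic that ``any deviation from saturation on the bulk strictly decreases $\check r(\bar\phi;0)$'' is also not an argument one can run locally: it is exactly the content of the homotopy theorem (positivity of $\delta\check r$), which the paper proves case by case in Sections \ref{S:segment_case}--\ref{S:arc_1}, and which fails for the last round (there saturation is \emph{not} optimal).

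The second gap is in your Step 3. The upper bound $\check r(\bar\phi)\lesssim\bar\phi e^{\bar c\bar\phi}$ is indeed free by comparison with any explicit admissible spiral. But the lower bound is the whole difficulty: one must show that the projection of the optimal solution onto the resonant mode $\phi e^{\bar c\phi}$ is strictly positive, and ``checking that no admissible optimisation of $\beta^{*}$ can annihilate this projection'' is precisely the statement to be proved, not a routine verification. In the paper this is done by first identifying $Z_{\mathrm{opt}}$ explicitly (arc $+$ segment $+$ saturated spiral $+$ tent), writing it through the Green kernels $G,M$ of Section \ref{Sss:green_kernels}, and then computing the coefficient of $\bar\phi e^{\bar c\bar\phi}$ via the tent formulas of Lemma \ref{Lem:relation_admissibility_tent} and the comparison \eqref{eq:comparaison:Saturated}; the positivity of the relevant combinations is established with the criterion of Lemma \ref{lem:key} after a numerical check over a few rounds (Propositions \ref{Prop:e_opt_large_phi} and \ref{Prop:postiv_aympt}). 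Without the explicit structure of the optimal solution there is no way to compute, let alone bound from below, that resonant projection; your proposal therefore presupposes the conclusion of the paper's main construction rather than deriving it.
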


We are also able to give a precise value to the constant in front of $\phi e^{\bar c \phi}$, as we will point out in the extended introduction below.


Summarizing, we build up a new differential tool to treat spiral-like strategies. The choice of the angle-ray coordinates for a given spiral is one of the key ideas of this paper. In particular, in these coordinates the spiral solves a retarded differential equation (RDE) which allows to set up a new optimization problem, admitting a differential structure. Given any spiral $\bar Z$, it is indeed possible to find a differential family of spirals and to use an homotopy-type approach, in order to prove that any spiral-like strategy does not confine the fire for $\sigma\leq\bar\sigma=2.6144...$. Here, the main point is to guess what is the correct expression of the family of spirals, to prove that it is differentiable and compute the derivative, and finally to show that this derivative is positive. This will require the detailed study of Sections \ref{S:family}, \ref{S:optimal_solut}, \ref{S:optimal_sol_candidate}, \ref{S:segment_case}, \ref{S:arc_1}, together with the use of the Software Mathematica for the evaluation of trascendental functions. Each single building-block is fundamental in the proof of Bressan Fire Conjecture for spirals in its sharp form. We point out that the importance of this work is not only the proof of the conjecture in the setting of spiral-like strategies, but the implementation of a framework that could be applied in order to study more general situations, hopefully leading to the full solution of the conjecture. 

\medskip

We remind that all the notations are contained in the Glossary at the end of the paper.

\subsection{Extended introduction and collection of all principal results}
\label{Ss:extend_more_intro}

In this section we present an extended introduction to the proof of Theorem \ref{Theo:main_extended}, and all the steps required to construct the family $\check r(\cdot;\phi_0)$. Many of the results obtained along the proof are interesting on their own, and we will report their statement.

\subsubsection{Section \ref{s:admissible:barriers}}
\label{Sss:admiss_barr_intro}

This section introduces the basic objects considered in this paper: the most important ones are the set of admissible curves \gls{Gamma}, Equation \eqref{eq:admissible:curves} (recall also \eqref{eq:burned:region_intro}), the minimum time function \gls{utimefunct}, Equation \ref{eq:solution:HJ} (recall \eqref{eq:min:time:function}), the admissibility functional \gls{Acaladmisx}, Equation \ref{eq:admiss:funct}, and the saturated set \gls{Scal}, Equation \eqref{eq:sat:set}. The limit of a minimizing sequence of admissible curves in $\Gamma$ are the \emph{optimal rays}, Definition \ref{def:optimal:ray}. These Lipschitz curves are not admissible in general, but are the ones needed to compute the value of the minimum time function $u(x)$.

The first result is in Section \ref{Ss:prelimnaries}, where the existence of a closing barrier for every initial burning set $R_0$ is reduced to the study when $R_0 = \{0\}$ and the barrier is constructed outside the unit ball $B_1(0)$, Proposition \ref{Prop:limit_speed}. The equivalence is not exact as stated in the proposition, but since for $\sigma \leq \bar \sigma$ we show that it is not possible to block the fire even in this situation, we conclude that in our case the equivalence is complete.

Section \ref{Ss:spiral_barrier_intro} can be thought as a detailed description of the notion of spiral barrier of Definition \ref{Def:spiral_barrier_intro}: in particular Remark \ref{Rem:comment_single_curve} explains the importance of the monotonicity of $u(\zeta(s))$. Some notation for convex barriers are also introduced, like the left and right tangent vectors \gls{t-t+} , the local radius of curvature \gls{Rcurva}, the subdifferential \gls{subdiffzeta} (Definition \ref{Def:supporting_direction}). \\
The most important result of this section is that for spiral barrier one can use the angle-ray representation as considered below Theorem \ref{thm:main}. This result is obtained in two steps. First, Proposition \ref{thm:param_1} constructs this representation round-by-round, starting with the initial round about the origin $(0,0) \in \R^2$ (where it corresponds to polar coordinates), and then recursively constructing the angle at the next round. The idea is that the direction of $r(\phi)$ is determined by the previous round, and then the angle $\beta(\phi)$ is deduced by knowing the tangent $\mathbf t = \dot \zeta$ at that point: a look at Fig. \ref{fig:param:spiral} is probably better than any explanation. \\
A version of Proposition \ref{thm:param_1} avoiding the explicit reference to rounds is Theorem \ref{Cor:angle_preprest}. Together with the function \gls{rphi}, we also introduce the functions \gls{s-phi}, \gls{s+phi}, which are the length parametrization of the starting/ending point of the segment $r(\phi)$. Their regularity properties ($s^-(\phi)$ BV, $s^+(\phi)$ Lipschitz) are also proved. We conclude this section by using the angle-ray representation to rewrite the convexity of $\zeta$ in terms of the angle \gls{beta}, the admissibility functional $\mathcal A = \mathcal A(\phi)$ and the burning rate \gls{b(t)} of the barrier $\zeta$, Proposition \ref{Lem:burnign_rate}.

As a warning, because of Theorem \ref{Cor:angle_preprest} we can use indifferently the length $s$ or the angle $\phi$ as parameter for a spiral barrier: in the following we will often choose the most convenient one, without underlining it, in order to simplify notation and shorten the analysis.

\subsubsection{Section \ref{S:ODE}}
\label{Sss:ODE_intro}

The first part of the section is devoted to obtain the RDE satisfied by a spiral barrier: Lemma \ref{lem:ODE:spiral} offers some differential relations among the various quantities introduced in the angle-ray description of a spiral. Equation \eqref{eq:spiral:angle} is the distributional RDE for the spiral barrier: it is nothing else than the relation
\begin{equation*}
\big\{ \text{variation of $r$} \big\} \ = \ \big\{ \text{increase due to $\beta$ at the end point} \big\} \ - \ \big\{ \text{decrease due to curvature at the base point} \big\}.
\end{equation*}
Proposition \ref{Prop:construct_spiral} is the inverse statement: assume that $r(\phi),\beta(\phi)$ satisfies the differential relations of Lemma \ref{lem:ODE:spiral}, then, up to fixing the initial point, there is a unique spiral barrier such that its angle-ray coordinates are the given ones. The proof is based on the round decomposition.

In Section \ref{Sss:equation_satur} we prove that the equation of the saturated spiral is \eqref{eq:ODE:saturated:1} with initial data \eqref{Equa:init_satur}.

The next section contains the analysis of the linear RDE
\begin{equation}
\label{Equa:satur_spiral_intro}
\frac{d}{d\phi} r(\phi) = \cot(\alpha) r(\phi) - \frac{r(\phi - 2\pi - \alpha)}{\sin(\alpha)}
\end{equation}
with $\alpha$ constant. This section contains many results which have already been proved in \cite{firefighter}: we present them for completeness. The main one is that there is a critical angle $\bar \alpha$ such that for $\alpha < \bar \alpha$ the characteristic equation of the above RDE has two distinct positive real eigenvalues, for $\alpha = \bar \alpha$ the two eigenvalues coincide and the geometric multiplicity is $1$, for $\alpha > \bar \alpha$ all eigenvalues are complex conjugate, Corollary \ref{Cor:eigen_angle}. This gives in particular that for $\alpha > \bar \alpha$ the saturated spiral confines the fire, Proposition \ref{prop:at:crit:speed:fire:dies}.

Section \ref{ss:subset:saturated} is concerned with the converse result for the saturated spiral. Observe that even if there are positive real eigenvalues, if the initial data is $0$ when projected on the corresponding eigenspace then the solution oscillates and thus the fire is blocked: thus the idea is to give a condition ensuring the exponential blow-up of the solution and checking that the saturated spiral satisfies this condition. \\
The starting point is to rescale the variables 
\begin{equation*}
\gls{rhogrowt} = r((2\pi+\bar \alpha) \tau) e^{-\bar c (2\pi + \bar \alpha) \tau},
\end{equation*}
with $\bar c$ equal to the only real eigenvalue (see Corollary \ref{Cor:divergence}), and rewrite the RDE in the critical case as
\begin{equation}
\label{Equa:retarde_intro}
\dot \rho(\tau) = \rho(\tau) - \rho(\tau-1).
\end{equation}
We remark that in this parametrization the integer part of $\tau$ represents the number of rounds of the spiral.
Lemma \ref{lem:key} gives a simple-to-check criterion to verify if the solution diverges:

\begin{lemma}[Second part of Lemma \ref{lem:key}]
\label{lem:key_intro}
%
In the critical case $\sigma=\bar\sigma$, if $\rho(1) > \rho(\tau) > 0$, $\tau \in [0,1)$, then the solution $\rho(\tau)$ diverges linearly.
%
\end{lemma}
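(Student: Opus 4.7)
\medskip
\noindent\textbf{Proof proposal.} The strategy is to exhibit a conserved quantity for the RDE $\dot\rho(\tau)=\rho(\tau)-\rho(\tau-1)$, use the hypothesis to fix its sign, and then propagate positivity round by round.

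First I would observe that by the RDE
\[
\frac{d}{d\tau}\int_{\tau-1}^{\tau}\rho(s)\,ds \;=\; \rho(\tau)-\rho(\tau-1) \;=\; \dot\rho(\tau),
\]
so the function $\Phi(\tau):=\int_{\tau-1}^{\tau}\rho(s)\,ds-\rho(\tau)$ is constant on $[1,\infty)$. Its value at $\tau=1$ equals $\int_0^1\rho(s)\,ds-\rho(1)$, and the hypothesis $\rho(1)>\rho(\tau)$ for every $\tau\in[0,1)$ (together with continuity of $\rho$) yields $\int_0^1\rho(s)\,ds<\rho(1)$. Setting $D:=\rho(1)-\int_0^1\rho(s)\,ds>0$ I can thus rewrite
\[
\rho(\tau)\;=\;D+\int_{\tau-1}^{\tau}\rho(s)\,ds \qquad \forall\,\tau\ge 1.
\]

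The second step is an inductive lower bound on the infima over successive unit intervals. Let $\mu_k:=\inf_{[k,k+1]}\rho$; since $\rho>0$ on $[0,1]$ by hypothesis (and $\rho$ is continuous), $\mu_0>0$. For any $k\ge 1$ and any $\tau\in[k,k+1]$ the interval $[\tau-1,\tau]$ is contained in $[k-1,k+1]$, hence $\int_{\tau-1}^{\tau}\rho(s)\,ds\ge\min(\mu_{k-1},\mu_k)$. Substituting into the conservation identity and taking the infimum over $\tau\in[k,k+1]$ gives $\mu_k\ge D+\min(\mu_{k-1},\mu_k)$. The case $\mu_k\le\mu_{k-1}$ would force the impossible $\mu_k\ge D+\mu_k$, so necessarily $\mu_k\ge\mu_{k-1}+D$. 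By induction $\mu_k\ge\mu_0+kD$ and therefore
\[
\rho(\tau)\;\ge\;\mu_0+(\lfloor\tau\rfloor-1)D \qquad \forall\,\tau\ge 1,
\]
which is the desired linear lower bound.

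The complementary estimate $\rho(\tau)=O(\tau)$ follows from the spectral analysis of the preceding section: in the critical case $\lambda=0$ is a double root of the characteristic equation $\lambda=1-e^{-\lambda}$ and every other root has strictly negative real part, so solutions of the rescaled RDE are asymptotic to $a\tau+b$. Combined with the lower bound this yields genuine linear divergence. The only delicate point in the plan is securing strict positivity of the slope $D$; the conservation-law route avoids any explicit projection onto the generalized kernel of the RDE, and the hypothesis enters precisely at the strict inequality $\int_0^1\rho<\rho(1)$.
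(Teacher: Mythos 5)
Your proof is correct, and for the crucial lower bound it takes a genuinely different route from the paper. The paper argues that under the hypothesis the solution is (strictly) increasing for $\tau>2$, observes that all complex roots of $\lambda+e^{-\lambda}=1$ have negative real part, and concludes that the only surviving non-decaying mode is the generalized eigensolution $\tau$; the strict positivity of the linear coefficient is left somewhat implicit there. You instead exploit the identity
\[
\frac{d}{d\tau}\Big(\textstyle\int_{\tau-1}^{\tau}\rho(s)\,ds-\rho(\tau)\Big)=0 ,
\]
which is special to the critical case $a=1$ (for $a\neq 1$ the extra term $(a-1)\rho$ destroys the conservation), and the hypothesis enters exactly once, through $D=\rho(1)-\int_0^1\rho>0$; the induction $\mu_k\geq\mu_{k-1}+D$ then yields an explicit linear lower bound with slope $D$, which both forces divergence and pins down that the coefficient of $\tau$ in the asymptotic expansion is strictly positive (in fact one can read off from your identity that $\rho(\tau)/\tau\to 2D$). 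Your appeal to the spectral picture is needed only for the matching upper bound, exactly as in the paper. Two small caveats: the continuity of $\rho$ on $[0,1]$ that you invoke for $\mu_0>0$ is not guaranteed in the applications (the saturated spiral's rescaled initial data has a downward jump at $\phi=2\pi$), but your argument survives because the identity already gives $\rho\geq D>0$ on $[1,2]$, so one may start the induction at $\mu_1$; and strictly speaking one should note that the infima $\mu_k$ for $k\geq1$ are finite and attained because $\rho$ is continuous on $[1,\infty)$.
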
 

Even if it seems trivial, it gives immediately that the saturated spiral is diverging:

\begin{proposition}[Proposition \ref{prop:saturated:not:closed_1}]
\label{prop:not_closed_intro}
Let $Z_{\text{sat}}$ be the saturated spiral. Then it does not confine the fire for $\sigma\leq \bar \sigma$. 
\end{proposition}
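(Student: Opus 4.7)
The plan is to apply the divergence criterion of Lemma~\ref{lem:key_intro} to the saturated spiral. The first step is to rescale the saturated RDE \eqref{eq:ODE:saturated:1} via
$$\rho(\tau) \doteq r_{\text{sat}}((2\pi+\alpha)\tau) \, e^{-\bar c(2\pi+\alpha)\tau},$$
with $\bar c$ the leading positive real root of the associated characteristic equation $\cos\alpha - \lambda \sin\alpha = e^{-\lambda(2\pi+\alpha)}$. In the critical case $\alpha = \bar\alpha$, $\bar c$ is the double root and the rescaled equation collapses to the normal form $\dot\rho(\tau) = \rho(\tau) - \rho(\tau-1)$. It then suffices to verify the hypothesis $\rho(1) > \rho(\tau) > 0$ for $\tau \in [0,1)$ to conclude linear divergence of $\rho$, hence exponential divergence (and in particular non-vanishing) of $r_{\text{sat}}$.

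The second step is the verification on the initial interval $[0,1]$, using the explicit saturated data \eqref{Equa:init_satur}. Setting $T = 2\pi + \bar\alpha$, a direct substitution yields
$$\rho(\tau) = \begin{cases} e^{(\cot\bar\alpha - \bar c) T\tau} & \tau \in [0, 2\pi/T), \\ (1-e^{-2\pi\cot\bar\alpha}) \, e^{(\cot\bar\alpha - \bar c) T\tau} & \tau \in [2\pi/T, 1]. \end{cases}$$
Positivity is immediate. The characteristic equation evaluated at $\bar c$ forces $\bar c < \cot\bar\alpha$, so both pieces are strictly increasing exponentials, and $\rho$ has a single downward jump at $\tau = 2\pi/T$ by the factor $1-e^{-2\pi\cot\bar\alpha} \in (0,1)$. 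Hence $\sup_{\tau\in[0,1)} \rho(\tau) = \rho((2\pi/T)^-) = e^{2\pi(\cot\bar\alpha - \bar c)}$, and the hypothesis $\rho(1) > \sup \rho$ reduces to
$$(1 - e^{-2\pi\cot\bar\alpha}) \, e^{\bar\alpha(\cot\bar\alpha - \bar c)} > 1.$$
The double-root structure at the critical angle yields the two identities $\cos\bar\alpha - \bar c \sin\bar\alpha = e^{-\bar c T}$ and $\sin\bar\alpha = T \, e^{-\bar c T}$, hence the clean relation $\cot\bar\alpha - \bar c = 1/T$, reducing the inequality to an elementary relation among $\bar\alpha$, $e^{-2\pi\cot\bar\alpha}$, and $e^{\bar\alpha/T}$ that one checks to hold at $\bar\alpha$.

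For the subcritical case $\sigma < \bar\sigma$ the characteristic equation has two distinct positive real roots $c_1 < c_2$; decomposing along the corresponding eigenmodes and showing that the saturated initial arc projects with a strictly positive coefficient onto the dominant one (by the analogous direct computation using the piecewise explicit form of \eqref{Equa:init_satur}) gives $r_{\text{sat}}(\phi) \sim C e^{c_2 \phi}$ with $C > 0$; alternatively one can argue by continuity in $\sigma$, noting that the rescaling recipe above deforms continuously and the non-vanishing of $\rho$ is an open condition. The main obstacle is the final inequality at $\bar\alpha$: while numerically it is comfortably strict, an analytic derivation requires squeezing exactly the right identities out of the double-root conditions — this is the only step where the critical value $\bar\alpha$ enters in an essential way, and it is precisely what prevents a softer, purely qualitative argument from closing the proof.
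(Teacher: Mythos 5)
Your treatment of the critical case $\sigma=\bar\sigma$ is correct and is essentially the paper's argument: the same rescaling (at $\alpha=\bar\alpha$ your eigenvalue $\bar c$ coincides with the paper's normalizing exponent $c(\bar\alpha)$ of \eqref{Equa:c_gene_defi}), the same reduction of the hypothesis of Lemma \ref{lem:key} to the single scalar inequality $(1-e^{-2\pi\cot\bar\alpha})\,e^{\bar\alpha(\cot\bar\alpha-\bar c)}>1$ via the double-root identity $\cot\bar\alpha-\bar c=\tfrac{1}{2\pi+\bar\alpha}$, and the same numerical verification at the end (the paper does not derive this inequality analytically either; it checks \eqref{Equa:intern_prop_3.4} numerically).

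The gap is the subcritical range $\sigma<\bar\sigma$, which the statement requires and which neither of your two sketches closes. The continuity argument fails: non-vanishing of $\rho$ on $[0,\infty)$ is not an open condition in $\sigma$, and even where it is (on compact time intervals) openness only yields a neighborhood of $\bar\sigma$, not all of $(1,\bar\sigma)$. The eigenmode-projection route could in principle work but is not carried out, requires the adjoint/bilinear-form machinery for RDEs to extract the coefficient of the dominant mode, and moreover your rescaling recipe breaks down there: rescaling by the leading real eigenvalue $c_2$ produces $\dot\rho=b\rho-b\rho(\tau-1)$ with $b=\frac{(2\pi+\alpha)e^{-c_2(2\pi+\alpha)}}{\sin\alpha}\neq 1$, which is not the normal form $\dot\rho=a\rho-\rho(\tau-1)$ to which Lemma \ref{lem:key} applies. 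The paper avoids all of this by rescaling with $c(\alpha)=\frac{\ln((2\pi+\alpha)/\sin\alpha)}{2\pi+\alpha}$ — deliberately \emph{not} an eigenvalue for $\alpha<\bar\alpha$ — chosen so that the delayed coefficient normalizes to $-1$ and the local coefficient becomes $a(\alpha)\geq 1$; then case (1) of Lemma \ref{lem:key} applies verbatim, and the one $\alpha$-dependent inequality $(e^{2\pi\cot\alpha}-1)e^{-2\pi\cot\alpha+a(\alpha)}-e^{a(\alpha)\frac{2\pi}{2\pi+\alpha}}>0$, checked to be decreasing in $\alpha$ and positive for $\alpha<1.2>\bar\alpha$, settles every $\sigma\leq\bar\sigma$ at once. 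You should adopt this uniform normalization rather than patch the subcritical case separately.
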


The optimal solution $\check r(\cdot;\phi_0)$ and its derivatives have a more complicated structure than the saturated spiral: hence Section \ref{Sss:green_kernels} introduces the Green kernel \gls{gkernel} for the RDE \eqref{Equa:retarde_intro} (Lemma \ref{Lem:explic_kernel_RDE}), and there it is studied its asymptotic behavior (Lemma \ref{Lem:aympt_g}). It is essentially standard analysis for a linear functional equation, and for completeness we write also the explicit forms of the kernel \gls{Gkernel} for the RDE of the saturated spiral \eqref{Equa:satur_spiral_intro}.

Since it will be useful to study also the evolution of the length of a spiral barrier, this section ends with some useful estimates for the primitives of $G(\phi)$ (Section \ref{Sss:length_saturated}).

\subsubsection{Section \ref{S:exist_opti_traj}}
\label{Sss:exists_opti_intro}

In this section we prove that the optimization problem \eqref{Equa:minimum_rphi_intro} has a solution in the domain of admissible spiral barriers with bounded length $\bar L$, Theorem \ref{Theo:exists_min}:

\begin{theorem}[Theorem \ref{Theo:exists_min}]
\label{Theo:exists_min_intro}
Fix a rotation angle ${\bar \phi} \geq {\phi_0}$. Then either there exists an admissible spiral in $\mathcal A_S(\bar Z,{\phi_0})$ with $\bar L \gg 1$ blocking the fire before or at ${\bar \phi}$ or there exists an admissible spiral $\zeta$ such that $r({\bar \phi})$ is minimal among all admissible spirals of length bounded by $\bar L$.
\end{theorem}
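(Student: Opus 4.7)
\medskip
\noindent\textbf{Proof proposal.} The plan is to apply the direct method of the calculus of variations to the minimization problem \eqref{Equa:minimum_rphi_intro} restricted to $\mathcal A_S(\bar Z,\phi_0) \cap \{\mathcal H^1(\zeta) \leq \bar L\}$, and to dispose of the degenerate case ``$\inf r(\bar\phi) = 0$'' by the blocking alternative.

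\emph{Step 1: a minimizing sequence and its compactness.} Let $\{\zeta_n\}_{n \in \N} \subset \mathcal A_S(\bar Z,\phi_0)$ be a minimizing sequence for $r(\bar\phi)$, each of length at most $\bar L$. Parametrize every $\zeta_n$ by arc length on $[0,L_n]$ with $L_n \leq \bar L$, extend it constantly after $L_n$, so that each $\zeta_n : [0,\bar L] \to \R^2$ is $1$-Lipschitz and agrees with $\bar\zeta$ on the initial arc $[0,\bar s_0]$ (corresponding to $[0,\phi_0]$). By Arzel\`a--Ascoli we can extract a subsequence (not relabeled) converging uniformly to some $1$-Lipschitz curve $\zeta_\infty$. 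Up to a further subsequence, the tangent fields $\dot\zeta_n$ converge weakly-$*$ in $L^\infty$ to $\dot\zeta_\infty$, and $L_n \to L_\infty \leq \bar L$.

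\emph{Step 2: the limit is a spiral barrier.} I would now check that $\zeta_\infty$ still satisfies Definition \ref{Def:spiral_barrier_intro}. Local convexity and counterclockwise rotation pass to uniform limits of locally convex arcs rotating in a fixed direction (the local graphs of convex functions are stable under uniform convergence on compact subsets, as is the sign of the angular velocity). The monotonicity $s \mapsto u \circ \zeta(s)$ passes to the limit once one proves the stability of the minimum time function under uniform convergence of the obstacle, which follows from the Hamilton--Jacobi formulation \eqref{eq:hamilton:jacobi} and the stability results in \cite{delellisrobyr}. The initial arc is preserved because all $\zeta_n$ agree on $[0,\bar s_0]$.

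\emph{Step 3: the admissibility constraint passes to the limit.} Admissibility requires $\H^1(Z_n \cap \overline{R^{Z_n}(t)}) \leq \sigma t$ for all $t$. Using Hausdorff convergence of $Z_n$ to $Z_\infty$ (guaranteed by the uniform convergence of the simple curves $\zeta_n$ and the length bound), together with the upper semicontinuity of $Z \mapsto \H^1(Z \cap \overline{R^Z(t)})$ in the Hausdorff topology on curves of bounded length and the lower semicontinuity of $Z \mapsto R^Z(t)$ (stability of the minimum-time function), one obtains the inequality in the limit. The translation between the angle-ray coordinates $(\phi,r_n(\phi),\beta_n(\phi))$ and the curve is continuous outside the last round by the round decomposition of Proposition \ref{thm:param_1}, hence the representation passes to the limit as well.

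\emph{Step 4: lower semicontinuity of $r(\bar\phi)$ and the dichotomy.} Since $r(\bar\phi)$ is the length of the segment from $\zeta(\bar\phi)$ to the previous round along the direction $\dot\zeta(\bar\phi)$, and both the curve and its tangent converge (the tangent at least in a weak sense and pointwise $\H^1$-a.e.\ thanks to the one-sided tangents $\mathbf t^\pm$ granted by convexity), we have $r_\infty(\bar\phi) \leq \liminf_n r_n(\bar\phi)$. If this $\liminf$ is strictly positive, $\zeta_\infty$ is the sought minimizer. Otherwise some subsequence satisfies $r_n(\bar\phi) \to 0$: geometrically the last segment degenerates, the spiral closes back on the previous round, and for $n$ large enough $\zeta_n$ already blocks the fire in the sense of Definition \ref{Def:spiral_barrier_intro}, i.e.\ we land in the first alternative of the statement.

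\emph{Main obstacle.} The delicate point is Step 3: the admissibility constraint is nonlocal in time and couples the barrier with the reachable set $R^{Z_n}(t)$ it itself generates. The stability of the minimum time function under Hausdorff convergence of the obstacle is classical only when one controls the density of $Z_n$ (to avoid e.g.\ a wall of zero measure that blocks the fire disappearing in the limit). For spirals, the local convexity and the bound on the length prevent such pathologies: along a convex arc the one-dimensional Hausdorff measure is lower semicontinuous under Hausdorff convergence, and the burned region is naturally monotone in $Z$ in the class of locally convex curves rotating in a fixed direction. This is where the particular structure of the spiral class pays off, and where I would spend the bulk of the technical work.
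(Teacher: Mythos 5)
Your overall architecture (direct method on a minimizing sequence, closure of the admissible class, dichotomy between a positive minimizer and a blocking spiral) is exactly the paper's strategy, and Steps 1, 2 and 4 match Proposition \ref{Prop:compact_curves}, Corollary \ref{Cor:convergs} and the proof of Theorem \ref{Theo:exists_min} in substance. The one real difference is \emph{where} the compactness is taken: the paper works in the angle-ray coordinates $(\phi, r(\phi), s^\pm(\phi), \beta(\phi))$, where convexity becomes the one-sided Lipschitz bound $D\beta^n + \mathscr L^1 \geq 0$ (hence uniform BV bounds on $\beta^n$, $r^n$, $s^{-,n}$ and locally uniform convergence of $s^{+,n}$ and $r^n + s^{-,n}$), whereas you take arc-length parametrizations and Arzel\`a--Ascoli. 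Both work; the angle-ray route is what makes the next step painless.

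The gap is in your Step 3, and you have correctly identified it as the main obstacle, but the tool you reach for is the wrong one. Upper semicontinuity of $Z \mapsto \H^1(Z \cap \overline{R^Z(t)})$ under Hausdorff convergence is not something you can invoke off the shelf: $\H^1$ on connected compacta is only \emph{lower} semicontinuous (Go\l\c{a}b), the set being measured is the intersection of the barrier with a reachable set that itself depends nonlocally on the barrier, and these intersections need not converge in Hausdorff distance even when the curves do. The paper never faces this problem because, for a spiral, the monotonicity of $u \circ \zeta$ and the structure of optimal rays give the closed formula of Proposition \ref{Lem:burnign_rate}, $\mathcal A(\phi) = 1 + s^-(\phi) + r(\phi) - \frac{1}{\sigma} s^+(\phi)$: the burned part of the barrier at time $u(\zeta(s^+(\phi)))$ is exactly $\zeta([0,s^+(\phi)])$, so the nonlocal-in-time constraint collapses to a pointwise inequality between three scalar functions that converge uniformly by Step 1. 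To close your argument you should either prove this identity (it is the content of the round decomposition of Proposition \ref{thm:param_1} plus the structure of optimal rays) or import it, rather than attempt a general semicontinuity statement. A second, smaller omission: along your minimizing sequence $r_n$ may degenerate at an \emph{intermediate} angle $\phi < \bar\phi$, not only at $\bar\phi$; this is why the paper's compactness proposition assumes $r^n \geq \delta$ on $(\epsilon, \bar\phi - \epsilon)$ and why the first alternative of the statement reads ``blocking the fire \emph{before} or at $\bar\phi$''. Your Step 4 should absorb that case into the blocking alternative as well.
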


Here $\mathcal{A}_S(\bar Z,\phi_0)$ is the set of admissible continuations (Definition \ref{def:adm:spirals}), that is the set of admissible spirals that coincide with a given spiral $\bar Z$ up to the rotation angle $\phi_0$.
Note that in the above statement we consider the more general case where a first arc of $r(\phi)$, $\phi \leq \phi_0$, is fixed, and we are allowed to choose the remaining part. This is perfectly in line with the statement of Theorem \ref{Theo:main_extended}, where the optimality is proved given the initial arc $\bar \zeta([0,\phi_0])$.

A solution means that either there is an admissible spiral blocking the fire before the angle $\bar \phi$, or there is a minimizing one, and the length $\bar L$ is sufficiently large to guarantee that the set of admissible spirals arriving at $\bar\phi$ with length shorter than $\bar L$ is not empty. The approach is the standard one: first one proves the compactness of the set of admissible spirals, Proposition \ref{Prop:compact_curves} and Corollary \ref{Cor:convergs}, and then passes to the limit to a minimizing sequence, being the functional $\zeta \mapsto r(\bar \phi)$ continuous. We just remark that as a corollary we have that the admissibility functional is u.s.c. for the Hausdorff convergence, hence giving the closure of the family of admissible spirals, and also observe that a key ingredient is the decomposition by rounds of Proposition \ref{thm:param_1}. \\
The parameter $\bar L$ will not play any role in the future: indeed once we show that the spiral $\check r$ is optimal, we have an estimate for all spirals. It is introduced just to ensure that the curves $\zeta$ live in a compact set of $\R^2$.

When studying the derivative $\frac{d}{ds_0} \check r(\cdot;s_0)$ of $\check r(\phi;s_0)$ (see the discussion at the end of Section \ref{Sss:admiss_barr_intro} for the equivalence of $\phi$ and $s$), it is important to know the structure of the minimal time function $u$ generated by $\bar \zeta([0,s_0])$. Lemma \ref{Lem:convexity} and Corollary \ref{lem:C:1:1} give explicit computations of the first and second derivatives of $u$: roughly speaking, we are solving the Eikonal equation, because of the assumptions on the fire spreading speed $F(x) = \bar B_1(0)$, and thus $\nabla u$ is BV and the level sets of $u$ are $C^{1,1}$-curves.

\begin{figure}
\resizebox{.75\textwidth}{!}{\input{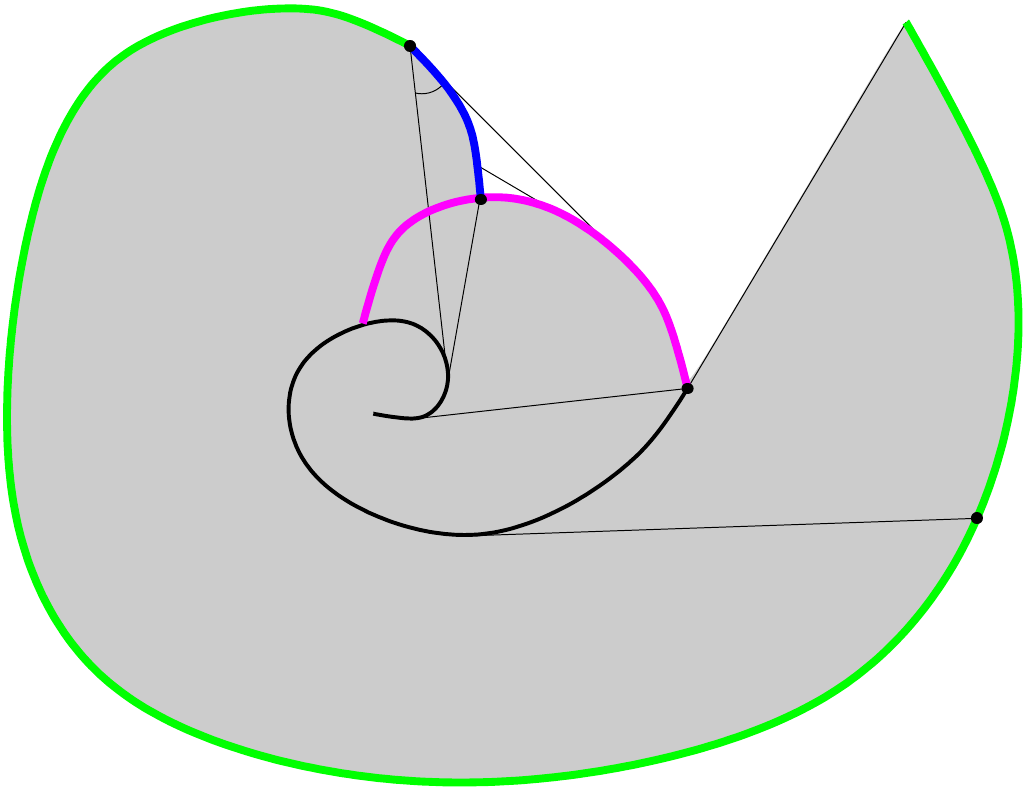_t}}
\caption{The solution to the optimization problem at the last round. The gray region is the region which cannot be reached by any admissible spiral.}
\label{Fig:optimal_last_intro}
\end{figure}

The first glimpse on the structure of the optimal solution $\check r$ is given in Section \ref{Ss:minimal_reach}: if we restrict the optimality to just one round after $\phi_0$, then $\check r$ can be explicitly constructed without further analysis. \\
To describe the structure of such a solution, we refer to Fig. \ref{Fig:optimal_last_intro}. Starting from the angle $\phi_0$, the solution is made by three components. In the following, we will mean by \emph{arc of the level set}, or simply \emph{arc}, a compact and connected subset of $\lbrace u^{-1}(t)\rbrace$.
\begin{enumerate}
\item We follow the level set of the minimal time function $u$ until the point $\zeta(\phi_1)$, which is saturated. At this point it is not possible to proceed along this level set, because the spiral would not be admissible. This arc (the magenta one between the points $\zeta(\phi_0)$ and $\zeta(\phi_1)$ in the picture) is the optimal solution for angles $\bar \phi \in [\phi_0,\phi_1]$, since the monotonicity of $s \mapsto u(\zeta(s))$ gives that we cannot cross the level set of $u(\zeta(\phi_0))$. In the case $\zeta(\bar \phi_0)$ is saturated, then this regions reduces to a point.

The optimal spiral barrier coincides here with the arc of the level set.

\item If $\bar \phi > \phi_1$, then we consider the spirals made by a portion of the magenta arc and a segment: the segment ends in the first point that is saturated. We show that such a construction defines a convex curve (the blue one in Fig. \ref{Fig:optimal_last_intro}) starting orthogonally to the level set of $u$ and ending in a saturated point $\zeta(\phi_2)$ such that the angle formed with the direction of the optimal ray is exactly $\bar \alpha$ (this argument in reality holds for any $\sigma\leq\bar\sigma$, but here we prove it for the critical case). Also in this case, if the initial point is saturated, it holds $\phi_2 = \phi_0$.

The optimal spiral barrier is an arc of the level set and a segment ending in the blue curve.

\item For angles $\bar \phi > \phi_2$ one follows the saturated spiral starting from $\zeta(\phi_2)$ (green spiral in the picture). The optimal spiral barrier is an arc of the level set, the segment arriving in $\zeta(\phi_2)$ and an arc of a saturated spiral.
\end{enumerate}

\begin{figure}
\resizebox{\textwidth}{!}{\input{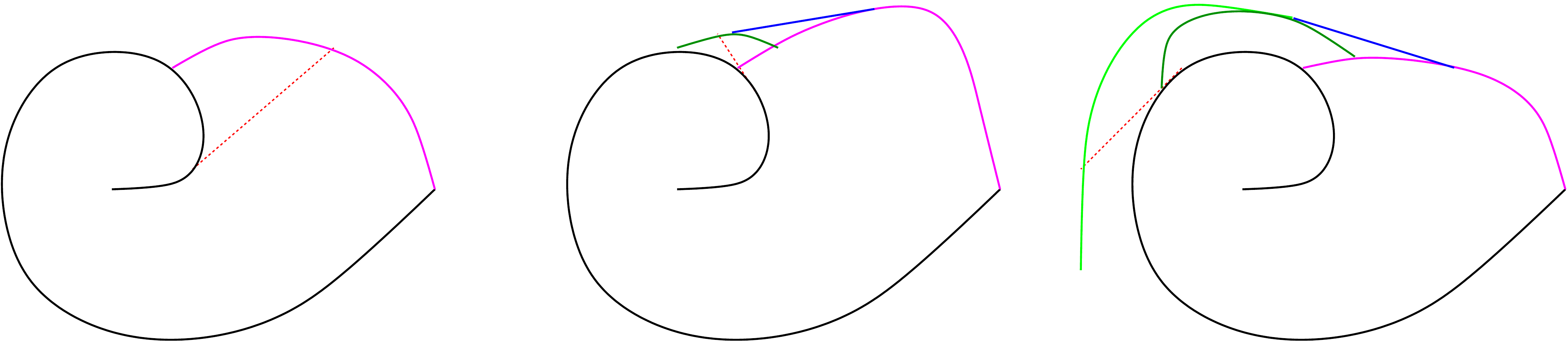_t}}
\caption{The three cases of Theorem \ref{Cor:curve_cal_R_sat_spiral_intro}: in the first picture all level set $\xi$ is admissible, and then for every angle $\geq {\phi_0}$ the optimal solution is an arc of the level set; in the second case all curve $\mathtt R$ is admissible, and then the optimal solutions for point on $\mathtt R$ is an arc of $\xi$ and a segment; in the third case for large angles the minimal point lies on a saturated spiral, and the optimal solution is an arc of $\xi$, a segment and an arc of saturated spiral.}
\label{Fi:three_cases_intro}
\end{figure}

It is important to observe that the curve formed by the magenta arc up to $\phi_1$, the blue convex curve and the green saturated spiral is not a spiral barrier: indeed it is not convex. It gives the optimal solution for $\phi_0 \leq \bar \phi \leq \phi_0 + 2\pi + \beta^-(\phi_0)$, where $\beta^-(\phi_0) = \lim_{\phi \nearrow \phi_0} \mathbf t(\phi)$, i.e. the left tangent vector: for every angle $\bar \phi$ there exists an explicit spiral barrier ending in the corresponding point of the curve constructed above. The three situations above are drawn in Fig. \ref{Fi:three_cases_intro}. The main result is then the following

\begin{theorem}(Theorem \ref{Cor:curve_cal_R_sat_spiral})
\label{Cor:curve_cal_R_sat_spiral_intro}
The curves constructed above are the unique solutions to the minimum problem considered in Theorem \ref{Theo:main_extended} for ${\bar \phi} \in [{\phi_0},{\phi_0} + 2\pi + \beta^-({\phi_0}))$.
\end{theorem}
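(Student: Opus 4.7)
The key structural observation is that for $\bar\phi \in [\phi_0,\,\phi_0 + 2\pi + \beta^-(\phi_0))$ the retardation argument $\phi^-(\phi)$ satisfies $\phi^-(\phi) < \phi_0$, so the RDE of Lemma \ref{lem:ODE:spiral} collapses on this range to the linear inhomogeneous ODE
\begin{equation*}
\dot r(\phi) \;=\; \cot\beta(\phi)\, r(\phi) \;-\; f(\phi), \qquad f(\phi) \;:=\; \frac{\bar r(\phi^-(\phi))}{\sin\bar\beta(\phi^-(\phi))},
\end{equation*}
in which $f$ is determined entirely by the fixed arc $\bar\zeta([0,\phi_0])$. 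The optimization therefore reduces to a non-retarded optimal-control problem, with control $\beta(\phi)\in(0,\pi/2]$, subject to the state constraint $\mathcal{A}(\phi)\ge 0$ and the convexity and monotonicity-of-$u$ constraints of a spiral barrier. Existence of a minimizer is supplied by Theorem \ref{Theo:exists_min_intro}; what remains is to identify it with the concatenation of the three pieces (arc of $\xi$, segment, arc of saturated spiral) and to establish strict optimality.

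The analysis then splits by the three sub-ranges. On $[\phi_0,\phi_1]$, following the level set $\xi=\{u=u(\zeta(\phi_0))\}$ is admissible by the very definition of $\phi_1$ as the first saturation angle along $\xi$, and it yields $r(\bar\phi)=r(\phi_0)$, the smallest value compatible with point $(2)$ of Definition \ref{Def:spiral_barrier_intro}: any other admissible continuation has $u(\zeta(\bar\phi))\ge u(\zeta(\phi_0))$ and, since $r(\phi)$ equals the terminal-segment length of the optimal fire-ray at $\zeta(\phi)$, this lower bound is realized only on $\xi$. On $[\phi_1,\phi_2]$, admissibility forces any continuation to leave $\xi$ at some departure angle $\phi'\in[\phi_0,\phi_1]$ because $\mathcal A=0$ on $\xi$ past $\phi_1$; I would then exploit the linearity of the ODE and the convexity of the level sets of $u$ from Corollary \ref{lem:C:1:1} to show that, among convex continuations, the extremal (hence optimal) choice is a straight segment, and that the one-dimensional map $\phi'\mapsto r(\bar\phi;\phi')$ is strictly convex with its unique minimum attained exactly where the segment endpoint first becomes saturated, i.e.\ lands on $\mathtt R$. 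The threshold $\phi_2$ is then characterized as the unique angle where this optimal segment meets $\mathtt R$ with tangent angle $\bar\alpha$.

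For $\bar\phi\in[\phi_2,\,\phi_0+2\pi+\beta^-(\phi_0))$, extending further along a segment would carry the spiral into the forbidden region $\{\mathcal A<0\}$, so the optimal continuation past $\mathtt R$ must be a convex curve on which $\mathcal A\equiv 0$. Combining Proposition \ref{Lem:burnign_rate} with the non-retarded form of the ODE fixes the curvature law along such a curve uniquely, and matching the initial angle $\bar\alpha$ at the endpoint of $\mathtt R$ identifies this curve with the branch of the saturated spiral solving \eqref{eq:ODE:saturated:1}. \textbf{The main obstacle} is step two: ruling out ``compound'' strategies (leaving $\xi$, curving back, emitting a non-straight segment, re-entering the level set, etc.) and producing \emph{strict} minimality. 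The crux is that, absent retardation on this range, the cost $r(\bar\phi)$ depends monotonically on the control $\beta(\phi)$ through the integrating factor $\exp\!\int\cot\beta$, so a pointwise-optimal control can be read off against the state constraint $\mathcal A\ge 0$; the convexity requirement for the spiral then eliminates the ``back-and-forth'' continuations and reduces the admissible set to the one-parameter family handled in step two. Uniqueness follows from the strict monotonicity of $\frac{d}{d\phi'}r(\bar\phi;\phi')$ away from the prescribed configuration, which is a direct computation using the explicit segment/level-set geometry.
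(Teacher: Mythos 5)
Your opening observation is correct and matches what the paper uses implicitly (Remark \ref{Rem:other_angles}): for $\bar\phi<\phi_0+2\pi+\beta^-(\phi_0)$ the base points $\zeta(s^-(\phi))$ lie in the fixed arc, so the RDE degenerates to a non-retarded linear ODE with prescribed source, and the comparison/Gronwall monotonicity of $r(\bar\phi)$ in $\cot\beta$ does identify $\beta\equiv\pi/2$ (level set), the transition segment (where $D\beta=-\mathscr L^1$ saturates the convexity constraint), and $\beta\equiv\alpha$ (saturated spiral) as the natural candidate pieces. The gap is in the optimality and uniqueness step. The constraint $\mathcal A\ge 0$ is not a pointwise constraint on $\beta$: it involves $s^+(\phi)=\int r/\sin\beta$, i.e.\ the whole control history, so ``a pointwise-optimal control can be read off against the state constraint'' is not an argument. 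More seriously, your reduction of the competitor class to the one-parameter family of arc-plus-straight-segment continuations is asserted, not proved, and it is exactly the hard point: a general admissible convex continuation need not have this form (the saturated spiral itself does not), and convexity alone does not eliminate it. Your strict-convexity claim for $\phi'\mapsto r(\bar\phi;\phi')$ therefore only yields minimality within a family you have not shown to be exhaustive.

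The paper closes this gap by a different mechanism. It first constructs the curve $\mathtt R=\{u-\frac{u_\xi}{\sigma}=\mathrm{const}\}$ of saturated endpoints and proves (Proposition \ref{Prop:der_calR}) that $\mathtt R$ is transversal to the fire rays, convex, that the angle with the ray decreases monotonically to $\alpha$ at the last admissible point $x_{\mathrm{sl}}$, and that beyond $\mathtt R$ the functional $\mathcal A$ is negative. Since the arc-plus-tangent-segment is the \emph{shortest} admissible path from $\zeta(\phi_0)$ to each point of $\mathtt R$, every point strictly beyond $\mathtt R$ is unreachable by \emph{any} admissible barrier, which is what pins down the minimum on each ray. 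Uniqueness is then obtained by a crossing argument: if another admissible spiral $\check\zeta$ dipped below the candidate $\hat\zeta$, convexity forces $L(\hat\zeta)\le L(\check\zeta)$ on the crossing interval, which rules out crossings in the saturated part (where $\beta\le\alpha$ is forced), Proposition \ref{Prop:der_calR} rules out crossings of $\mathtt R$, and a Jordan-curve argument on the remaining segment contradicts convexity of $\check\zeta$. To repair your proposal you would need to supply an analogue of Proposition \ref{Prop:der_calR} (non-admissibility of the region beyond the candidate for arbitrary barriers) and an argument excluding general convex competitors, not only the straight-segment family.
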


A question which arises naturally is the following: if we continue the saturated spiral also for angles larger that $\phi_0 + 2\pi + \beta^-(\phi_0)$, is this spiral barrier optimal? The short answer is no, but the proof of this fact and the construction of the correct optimal spiral require much more analysis and it is the central part of this work. Due to its importance, we will refer to the spiral constructed by the procedure above and obtained by prolonging the saturated spiral after the angle $\phi_ 0 + 2\pi + \beta^-(\phi_0)$ as the \emph{fastest saturated spiral}.

\subsubsection{Section \ref{S:case:study}}
\label{Sss:case_study_intro}

The construction above when $\phi_0 = 0$ is a particular spiral which is important for the next analysis, since the optimal one will be obtained as a perturbation of this spiral: the study of its properties is in Section \ref{S:case:study}. For the seek of generality, we consider the case where $R_0 = B_{\mathtt a}(0)$, and show explicitly the form of the fastest saturated spiral: depending on the radius $\mathtt a$, it is either a an arc + segment + saturated spiral (if $\mathtt a \in (0,\sin(\bar \alpha))$), a segment + saturated spiral (if $\mathtt a \in [\sin(\bar \alpha),1)$) or just the saturated spiral when $\mathtt a = 1$. We will use the denomination
\begin{description}
\item[Arc case] the fastest saturated spiral starts with a non-trivial arc of the level set, i.e. this arc is not just a point;
\item[Segment case] The fastest saturated spiral starts with a segment, possibly made of just a point, i.e. the initial point is saturated and the spiral coincides with the saturated one.
\end{description}
These various cases will be also encountered when considering as a starting point the spiral arc $\zeta([0,s_0])$, so that they are worth studying in this setting where the functions are explicit.

The study of the behavior of this spiral gives the following

\begin{theorem}[Theorem \ref{thm:case:a}]
\label{thm:case:a_intro}
For any value of $\mathtt a \in (0,1]$, the fastest saturated spiral $S_{\mathtt a}$ with initial burning region $B_\mathtt a(0)$ does not confine the fire: more precisely, the spiral diverges exponentially as $r(\phi) \sim \phi e^{\bar c \phi}$,
being $\bar c = \ln(\frac{2\pi+\bar \alpha}{\sin(\bar \alpha)})$ the only real eigenvalue for the characteristic equation of the RDE for saturated spirals.
\end{theorem}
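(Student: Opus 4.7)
The plan is to reduce the statement to an application of Lemma \ref{lem:key_intro} after the rescaling $\rho(\tau) = r((2\pi+\bar\alpha)\tau)e^{-\bar c(2\pi+\bar\alpha)\tau}$ introduced in Section \ref{ss:subset:saturated}. By the description of the fastest saturated spiral given just before the theorem, for every $\mathtt a \in (0,1]$ the spiral $S_{\mathtt a}$ eventually coincides with a saturated spiral, so past the (possibly empty) initial arc and segment portion the radial function $r(\phi)$ solves the critical RDE \eqref{Equa:satur_spiral_intro} with $\alpha = \bar\alpha$. After rescaling, $\rho$ therefore satisfies the autonomous delay equation $\dot\rho(\tau) = \rho(\tau) - \rho(\tau-1)$ for all $\tau \geq \tau_0(\mathtt a)$, where $\tau_0$ is determined by the angular length of the prepended arc + segment.

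First I would dispose of the base case $\mathtt a = 1$: here $S_1$ is the pure saturated spiral with initial data \eqref{Equa:init_satur}, so the non-confinement statement is already a corollary of Proposition \ref{prop:not_closed_intro}; the explicit exponential form of $r$ on the first round gives, after rescaling, a function $\rho$ with $\rho(1) > \rho(\tau) > 0$ on $[0,1)$, and Lemma \ref{lem:key_intro} then yields linear divergence of $\rho$, which in the original variables reads $r(\phi) \sim \phi e^{\bar c\phi}$.

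For $\mathtt a \in (0,1)$ I would write down the initial portion of $S_{\mathtt a}$ in angle-ray coordinates, distinguishing the two geometric regimes. In the segment case $\mathtt a \in [\sin\bar\alpha, 1)$ the spiral begins with a tangent segment leaving $\partial B_{\mathtt a}(0)$, whose endpoint is the first saturated point and from which the saturated RDE takes over; in the arc case $\mathtt a \in (0, \sin\bar\alpha)$ an additional arc of the initial level set $\{u = \mathrm{const}\}$ is prepended, and then glued to the segment and to the saturated spiral. In either regime, plugging the explicit first-round profile into the rescaling produces a function $\rho$ on $[0,1]$ for which one must verify $\rho(1) > \rho(\tau) > 0$. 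Once this is established, Lemma \ref{lem:key_intro} delivers the linear divergence of $\rho$, and the asymptotic constant in $r(\phi) \sim \phi e^{\bar c \phi}$ is obtained by projecting the rescaled initial data onto the generalized eigenspace associated to $\bar c$, using the Green kernel estimates of Lemma \ref{Lem:aympt_g}.

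The main obstacle is the verification of the strict inequalities $\rho(1) > \rho(\tau)$ and $\rho(\tau) > 0$ on $[0,1]$ in the arc case, since there the initial datum is a concatenation of an arc piece and a segment piece and is not purely exponential. This amounts to a monotonicity analysis of the rescaled function on $[0,1]$ with a careful matching at the two glue points (arc-to-segment and segment-to-saturation), together with a check that these transitions preserve the required ordering at $\tau = 1$. I expect the inequality to reduce, via the defining trascendental equation of $\bar\alpha$, to an elementary inequality in $\bar\alpha$ and $\mathtt a$ that can be verified in closed form, with a numerical check on the transcendental value $\bar\alpha$ itself if necessary, in the spirit of the computational style adopted elsewhere in the paper.
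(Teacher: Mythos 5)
Your overall strategy (rescale to $\rho(\tau)=r((2\pi+\bar\alpha)\tau)e^{-\bar c(2\pi+\bar\alpha)\tau}$, split into the arc and segment regimes, apply the divergence criterion of Lemma \ref{lem:key}, and read off the asymptotic constant from the kernel expansions) is the same as the paper's, and your treatment of $\mathtt a=1$ via Proposition \ref{prop:saturated:not:closed_1} is correct. However, the central verification step fails as you have set it up. For every $\mathtt a\in(0,1)$ the initial datum of the fastest saturated spiral has a \emph{second} downward jump, of size $|P_2-P_1|$ ($=\cot(\bar\alpha)$ in the arc case, $=\sin(\bar\theta_{\mathtt a}-\bar\alpha)/\sin(\bar\alpha)$ in the segment case), occurring when the segment $[P_1,P_2]$ becomes the base of the optimal rays, i.e.\ at the angle $\bar\phi_0(\mathtt a)+2\pi+\bar\alpha$ — exactly the right endpoint of the first delay interval. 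Hence, with the right-continuous representative, $\rho(1)=\rho(1-)-|P_2-P_1|e^{-\bar c(2\pi+\bar\alpha)}<\rho(1-)$, so the hypothesis $\rho(1)>\rho(\tau)$ for $\tau\in[0,1)$ of Lemma \ref{lem:key} is violated on the first delay interval for all $\mathtt a<1$ (it only degenerates away at $\mathtt a=1$, where $|P_2-P_1|=0$). One must instead propagate the solution through several rounds — which the paper does via the explicit Green-kernel representation \eqref{Equa:expl_sol_sat_or} — until the smoothing effect of the RDE makes $\rho$ positive and increasing on a full delay interval, and only then apply the lemma with that later interval as the reference interval.

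A second, smaller divergence: you propose to check the inequality separately for each $\mathtt a$ and expect it to collapse to a closed-form elementary inequality in $(\bar\alpha,\mathtt a)$. Because the quantity to be controlled after several rounds is a linear combination of the kernels $G$ and $M$ (piecewise sums of terms $e^{a(\tau-k)}(\tau-k)^k/k!$), no such closed form is available; the paper instead differentiates the kernel representation with respect to $\mathtt a$ (respectively $\bar\theta_{\mathtt a}$), shows that this derivative has a definite sign by another application of Lemma \ref{lem:key} (verified numerically on later rounds), and thereby reduces the whole family to the single worst case $\mathtt a=0$ ($\Delta\phi_0=\tan(\bar\alpha)$), which is then checked once. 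Either this reduction or a genuinely uniform multi-round estimate in $\mathtt a$ is needed to complete your argument.
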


The proof is also interesting, because it is the prototype of the line of proof of many of the subsequent statements. The explicit form of the spiral barrier in the angle-ray coordinates can be given explicitly using the kernel \gls{Gkernel}: in the segment case, where the explicit forms are simpler, the solution takes the form (see Equation \ref{Equa:expl_sol_sat})
\begin{equation*}
r(\phi) = \frac{\sin(\bar \theta_{\mathtt a})}{\sin(\bar \alpha)} G \big( \phi - \bar \phi_0(\mathtt a) \big) - G(\phi - 2\pi) - \frac{\sin(\bar \theta_{\mathtt a} - \bar \alpha)}{\sin(\bar \alpha)} G \big( \phi - \bar \phi_0(\mathtt a) - 2\pi - \bar \alpha \big),
\end{equation*}
with
\begin{equation*}
\bar \phi_0 = \bar \theta_{\mathtt a} - \bar \alpha, \quad \bar \theta_a = \arccos(\mathtt a) + \bar \alpha.
\end{equation*}
By studying the derivative w.r.t. $\mathtt a$ (Step 2 of the proof of Theorem \ref{thm:case:a}, pag. \pageref{Page:step_2_case_study_segment}), one proves that $r(\phi) = r(\phi,\mathtt a)$ is monotonically increasing w.r.t. $\mathtt a$ increases, and then that the worst case happens for $\mathtt a = \sin(\bar \alpha)$, which means $\bar \theta_a = \frac{\pi}{2}$, $\bar \phi_0 = \frac{\pi}{2} - \bar \alpha$. Then one plots numerically the rescaled function
$$
\rho(\tau) = r(\bar \phi_0 + (2\pi + \bar \alpha) \tau) e^{-\bar c (2\pi + \bar \alpha) \tau}
$$
which is explicitly given by
\begin{equation}
\label{Equa:base_sol_intro}
\rho(\tau) = \frac{1}{\sin(\bar \alpha)} g(\tau) - e^{-\bar c(2\pi + \bar \alpha - \frac{\pi}{2})} g \bigg( \tau - \frac{\frac{\pi}{2}}{2\pi + \bar \alpha} \bigg) - \cot(\bar \alpha) e^{-\bar c (2\pi + \bar \alpha)} g(\tau - 1).
\end{equation}

\begin{figure}
\resizebox{.75\textwidth}{!}{\includegraphics{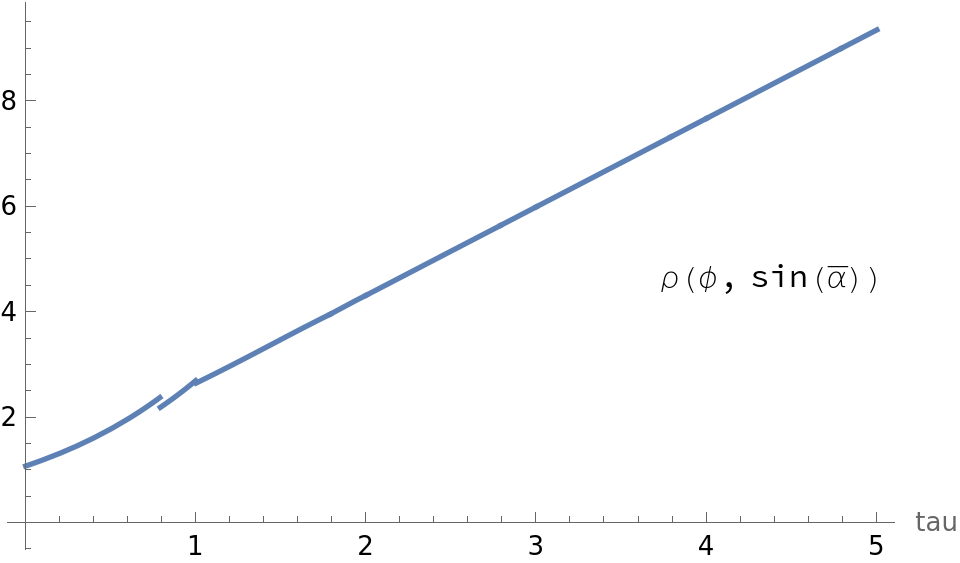}}
\caption{Plot of the first 5 rounds of the function \eqref{Equa:base_sol_intro}.}
\label{Fig:base_sol_intro}
\end{figure}

A numerical plot of the function above is in Fig. \ref{Fig:base_sol_intro}: one observes that the function is positive, and its derivative can be numerically computed to be strictly positive for $\tau \in [4,5]$ (which corresponds to the fifth round of the spiral). Then one can apply Lemma \ref{lem:key_intro} to deduce that $r(\phi)$ is positive, and actually exponentially increasing as $\phi \to \infty$. The asymptotic behavior is a consequence of the asymptotic expansion of the kernel $G(\tau)$ as $\tau \to \infty$.

The last result in this section is an estimate on the ratio between $r(\phi)$ and the length $L(\phi)$ of the spiral:

\begin{proposition}[Proposition \ref{Prop:length_compar}]
\label{Prop:length_compar_intro}
It holds for $\phi \geq \tan(\bar \alpha) + \frac{\pi}{2} - \bar \alpha$
\begin{equation*}
r(\phi) - 2.08 L(\phi - 2\pi - \bar \alpha) \geq 0.67 e^{\bar c(\phi - \tan(\bar \alpha) - \frac{\pi}{2} + \bar \alpha)}.
\end{equation*}
\end{proposition}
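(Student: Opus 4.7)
The strategy mirrors the one used in the proof of Theorem \ref{thm:case:a_intro}: derive an explicit Green's-kernel representation of the length $L(\phi)$ analogous to the one already obtained for $r(\phi)$, reduce to the worst-case radius $\mathtt a = \sin(\bar\alpha)$, and finally establish the inequality by combining a numerical check of a rescaled statement on a fundamental interval with an asymptotic extension based on Lemma \ref{Lem:aympt_g}.

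Concretely, the representation of $r(\phi)$ for the fastest saturated spiral is the linear combination of three translates of $G$ recalled right after the statement of Theorem \ref{thm:case:a_intro}. On the saturated part of the spiral the infinitesimal arc-length is related to $r(\phi)$ by an explicit linear formula, which is a direct consequence of the saturated RDE \eqref{eq:ODE:saturated:1} together with the angle-ray geometry (the tangent to the barrier makes the constant angle $\bar \alpha$ with the ray of length $r(\phi)$). Integrating term-by-term then produces a representation of $L(\phi)$ as the \emph{same} linear combination applied to the primitives of $G$ studied in Section \ref{Sss:length_saturated}. The initial arc + segment portion built before the saturated part contributes only a bounded constant to $L$, independent of $\phi$, which can be computed explicitly in the worst case.

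After dividing by $e^{\bar c \phi}$ and passing to the rescaled time $\tau = (\phi - \bar \phi_0)/(2\pi+\bar \alpha)$, the claimed inequality becomes
$$
\rho(\tau) \;-\; 2.08\, e^{-\bar c(2\pi+\bar \alpha)}\, \lambda(\tau-1) \;\geq\; 0.67\, e^{-\bar c (\tan(\bar\alpha)+\frac{\pi}{2}-\bar\alpha - \bar\phi_0)},
$$
where $\rho$ is the rescaled $r$ of \eqref{Equa:base_sol_intro} and $\lambda$ is the analogously rescaled $L$. I would verify this rescaled inequality numerically via Mathematica on a fundamental interval starting at the activation threshold (corresponding to the first few rounds, e.g. $\tau \in [1,5]$ as in Figure \ref{Fig:base_sol_intro}), and then propagate it to all larger $\tau$ using Lemma \ref{Lem:aympt_g}: both $\rho(\tau)$ and $\lambda(\tau-1)$ admit the same asymptotic $\sim c_\infty \tau$, with explicit leading coefficients whose ratio one can compute in closed form from the residues of $G$ at the dominant eigenvalue. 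One then checks that $2.08$ strictly majorizes this asymptotic ratio (times $e^{\bar c(2\pi+\bar\alpha)}$), so the left-hand side eventually grows linearly in $\tau$ and a fortiori dominates the constant right-hand side.

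The main technical obstacle is calibrating the two constants $0.67$ and $2.08$ \emph{simultaneously}, since they are coupled: shrinking $2.08$ breaks the asymptotic propagation, while enlarging it deteriorates the lower bound available on the fundamental interval. The numerical verification must therefore be performed with a gap, so that the residual on the initial interval is comfortably above $0.67\, e^{\cdots}$ and the asymptotic slope remains strictly positive. The reduction to the worst case $\mathtt a = \sin(\bar \alpha)$ (segment case) follows as in Step 2 of the proof of Theorem \ref{thm:case:a_intro}: a direct monotonicity-in-$\mathtt a$ computation shows that the additional arc piece present for $\mathtt a < \sin(\bar \alpha)$ increases $r(\phi)$ while contributing only a controllable extra term to $L(\phi-2\pi-\bar \alpha)$, so the inequality in the general case follows from the one at $\mathtt a = \sin(\bar\alpha)$.
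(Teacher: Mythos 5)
Your overall architecture (explicit kernel representation of $L$ via the primitives of $G$, a numerical verification on the first few rounds, then a propagation argument to all $\phi$) matches the paper's, and your observation that $ds/d\phi = r(\phi)/\sin(\bar\alpha)$ on the saturated part, so that $L$ is the same linear combination of primitives $\mathcal G,\mathfrak G$ plus a bounded initial contribution, is exactly Equation \eqref{Equa:Lbase_01}. Two remarks before the main issue. First, the statement concerns only the spiral $\tilde r_{\mathtt a=0}$ (the threshold $\phi \geq \tan(\bar\alpha)+\frac{\pi}{2}-\bar\alpha$ is $\bar\phi_0(\mathtt a=0)$), so no reduction over $\mathtt a$ is needed; your proposed reduction to $\mathtt a = \sin(\bar\alpha)$ imports the wrong worst case from Theorem \ref{thm:case:a} and, if one did want a statement uniform in $\mathtt a$, one would have to track the $\mathtt a$-dependence of \emph{both} $r$ and $L$, which your sketch does not do.

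The genuine gap is in the propagation step. You propose to extend the numerically verified inequality to large $\tau$ by comparing the asymptotic leading coefficients of $\rho(\tau)$ and $\lambda(\tau-1)$ obtained from Lemma \ref{Lem:aympt_g}. The asymptotic ratio is indeed favorable — Remark \ref{Rem:asymtp_consta_for_comp} computes $r(\phi)/L(\phi-2\pi-\bar\alpha) \to e^{\bar c(2\pi+\bar\alpha)}\bar c \sin(\bar\alpha) = 2.08884 > 2.08$ — but Lemma \ref{Lem:aympt_g} only asserts that the remainders are \emph{exponentially decaying}, with no explicit constants. Since the margin $2.08884 - 2.08$ is tiny, you cannot determine from the stated asymptotics alone the angle beyond which the linear growth provably dominates, and hence cannot close the window between the end of your fundamental interval and the asymptotic regime. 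The paper avoids this entirely: it derives the RDE satisfied by $\tilde L$, whose constant source for large $\phi$ is $S_L = \frac{1}{\sin(\bar\alpha)} - 1 - \tan(\bar\alpha) - \cot(\bar\alpha) = -2.7473 < 0$ (Equation \eqref{Equa:SL_const}), so that the combination $\tilde r(\phi) - 2.08\,\tilde L(\phi-2\pi-\bar\alpha)$ satisfies the one-sided differential inequality
\begin{equation*}
\frac{d}{d\phi}\big(\tilde r - 2.08\,\tilde L(\cdot-2\pi-\bar\alpha)\big) \geq \cot(\bar\alpha)\big(\tilde r - 2.08\,\tilde L(\cdot-2\pi-\bar\alpha)\big) - \frac{\tilde r(\phi-2\pi-\bar\alpha) - 2.08\,\tilde L(\phi-2(2\pi+\bar\alpha))}{\sin(\bar\alpha)},
\end{equation*}
and then the numerical fact that the rescaled combination is positive with minimum $>0.67$ and increasing over one full delay interval lets Lemma \ref{lem:key} together with Remark \ref{rmk:exponentially:explod} force divergence for all subsequent $\tau$. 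To repair your argument you should replace the asymptotic comparison by this RDE comparison, which in particular requires computing the sign of the source $S_L$ — a step absent from your proposal.
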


Besides having an interest on its own, this estimate is fundamental to show that the optimal spiral $\check r(\phi;\phi_0)$ may have an arc only for angles $\phi$ close to $\phi_0$: this means that there is a regularizing effect in the evolution of $\check r$, because it can be proved that one can choose the control parameter $\beta$ in order to have an arbitrarily number of arcs at arbitrarily large angles, see Remark \ref{Rem:inital_tent}. The constant $2.08$ is almost optimal, see Remark \ref{Rem:asymtp_consta_for_comp}.

\subsubsection{Section \ref{S:family}}
\label{Sss:family_intro}

This section is the core of the paper: we construct a differentiable path from the fastest saturated spiral of Section \ref{S:case:study} (Section \ref{Sss:case_study_intro}) to any admissible spiral. The construction is based on the results of Theorem \ref{Cor:curve_cal_R_sat_spiral_intro} which gives an explicit construction for the first round starting from $\zeta([0,s_0])$, and this construction is then extended to every angle by prolonging as a saturated spiral. We denote this barrier as $\tilde r(\phi) = \tilde r(\phi;s_0)$.

As we said the fastest saturated spiral $\tilde r$ is not the optimal spiral after the first round starting in $\phi_0$, and indeed the derivative w.r.t. $s_0$ will be a function with some negative regions. There re however two main advantages in studying this one-parameter family of spirals.
\begin{enumerate}
\item There are only two main cases to be considered (for the optimal case we need to work on about 20 cases): we can start with a segment or with an arc+segment. It is actually possible to reduce the analysis of the derivative $\delta \tilde r = \frac{d\tilde r}{ds_0}$ to a single case (the segment is the arc when the latter reduces to a single point, Remark \ref{Rem:same_as_segment}, yielding as a consequence that the family is actually $C^1$ in a suitable topology), but since the segment case is much easier we prefer to keep it. In Fig. \ref{Fig:segment_intro_1} the geometric situation to compute the derivative $\delta \tilde r(\phi;s_0)$ is represented.

\item The optimal solution $\check r$ will be a perturbation of the spiral $\tilde r$: this perturbation will be added to the solution computed in this section, and it will be shown that the analysis is fundamentally different only in the first round, more precisely in the regions where the derivative $\delta \tilde r$ is negative. In the other region a simple adaptation of the estimates obtained in this section is sufficient to prove the positivity of the derivative $\delta \check r = \frac{d\check r}{ds_0}$.
\end{enumerate}

The main results of this section are:
\begin{itemize}
\item Propositions \ref{Prop:equa_delta_tilde_r_segm} and \ref{Prop:equa_delta_tilde_r_arc}, which compute the RDE satisfied be the derivative $\delta \tilde r(\phi)$: for example, in the segment case this derivative satisfies

\begin{proposition}(Proposition \ref{Prop:equa_delta_tilde_r_segm})
\label{Prop:equa_delta_tilde_r_segm_intro}
For $\phi > \phi_0 + \bar \theta - \bar \alpha$, the derivative 
$$
\delta \tilde r(\phi;s_0) = \lim_{\delta s_0 \searrow 0} \frac{\tilde r(\phi;s_0 + \delta s_0) - \tilde r(\phi;s_0)}{\delta s_0}
$$
satisfies the RDE on $\R$
\begin{equation*}
\frac{d}{d\phi} \delta \tilde r(\phi;s_0) = \cot(\bar \alpha) \delta \tilde r(\phi;s_0) - \frac{\delta \tilde r(\phi - 2\pi - \bar \alpha)}{\sin(\bar \alpha)} + S(\phi;s_0),
\end{equation*}
with source
\begin{align*}
S(\phi;s_0) &= \cot(\bar \alpha) \frac{1 - \cos(\theta)}{\sin(\bar \alpha)} \Diracd_{\phi_0 + \bar \theta - \bar \alpha} - \Diracd_{\phi_0 + 2\pi + \theta_0} \\
& \quad + \frac{\cos(\theta) - \cos(\bar \alpha)^2}{\sin(\bar \alpha)^2} \Diracd_{\phi_0 + 2\pi + \bar \theta} + \frac{\cos(\bar \alpha - \theta) - \cos(\bar \alpha)}{\sin(\bar \alpha)} \Diracd'_{\phi_0 + 2\pi + \bar \theta},
\end{align*}
where $\Diracd_0$ is the Dirac-delta measure in $0$, and $\Diracd_0'$ its distributional derivative.
\end{proposition}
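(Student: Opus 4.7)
The plan is to exploit the explicit representation of $\tilde r(\phi;s_0)$ via the Green kernel $G$ for the linear saturated RDE \eqref{Equa:satur_spiral_intro}. In the segment case, beyond the junction angle $\phi_0 + \bar\theta - \bar\alpha$ (where the terminal segment meets the prolonging saturated spiral) the function $\tilde r(\cdot;s_0)$ is a superposition of shifted kernels $c_i(s_0)\, G(\phi - \phi_i(s_0))$, where the excitation points $\phi_i(s_0)$ are determined by the geometry of the initial arc $\bar\zeta([0,s_0])$ together with its terminal segment. Concretely, using the formulas recalled in Section \ref{S:case:study} one can write $\tilde r(\phi;s_0)$ for $\phi$ in the first ``saturated round'' as a linear combination of $G(\phi - \phi_0 - \bar\theta + \bar\alpha)$, $G(\phi - \phi_0 - 2\pi - \theta_0)$ and $G(\phi - \phi_0 - 2\pi - \bar\theta)$, with explicit trigonometric coefficients in $\theta$, $\bar\theta$, $\bar\alpha$.

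The first step is to verify that, on any interval avoiding the discrete set $\{\phi_i(s_0)\}$, the function $\tilde r(\phi;s_0)$ solves the saturated RDE \eqref{Equa:satur_spiral_intro} classically; since the RDE is linear and autonomous, differentiation in $s_0$ commutes with it, hence $\delta \tilde r$ satisfies the same linear RDE away from the $\phi_i(s_0)$. The second step is to compute the source supported at these points. Two distinct mechanisms produce $\Diracd$-contributions: (i) at $\phi = \phi_0 + \bar\theta - \bar\alpha$ the kink between the segment and the saturated continuation produces a jump in $\tfrac{d}{d\phi}\tilde r$, and differentiating its magnitude in $s_0$ (keeping position fixed) yields $\cot(\bar\alpha)\tfrac{1-\cos(\theta)}{\sin(\bar\alpha)} \Diracd_{\phi_0 + \bar\theta - \bar\alpha}$; (ii) the retarded entries at $\phi_0 + 2\pi + \theta_0$ (the segment's starting point, whose motion along $\bar\zeta$ with unit speed in $s_0$ gives the coefficient $-1$) and at $\phi_0 + 2\pi + \bar\theta$ (the retarded re-entry of the kink) contribute via the term $-\tilde r(\phi - 2\pi - \bar\alpha)/\sin(\bar\alpha)$ of the RDE, and trigonometric simplification using the saturation relations between $\theta$, $\bar\theta$ and $\bar\alpha$ yields exactly the coefficient $\tfrac{\cos(\theta) - \cos(\bar\alpha)^2}{\sin(\bar\alpha)^2}$ for $\Diracd_{\phi_0 + 2\pi + \bar\theta}$.

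The main obstacle is the distributional derivative term $\Diracd'_{\phi_0 + 2\pi + \bar\theta}$. It arises because the retarded value $\tilde r(\cdot - 2\pi - \bar\alpha;s_0)/\sin(\bar\alpha)$ has a discontinuity in its derivative precisely at $\phi_0 + 2\pi + \bar\theta$ (the shift by $2\pi + \bar\alpha$ of the kink at $\phi_0 + \bar\theta - \bar\alpha$), and the \emph{position} of this discontinuity is itself an affine function of $s_0$ — indeed the kink angle is $\phi_0 + \bar\theta(s_0) - \bar\alpha$ and $\bar\theta$ varies with $s_0$. A shift in the location of a jump discontinuity, differentiated in the parameter, produces a $\Diracd'$ whose coefficient is the jump size times $d\phi_i/ds_0$; a careful expansion $\tilde r(\phi - 2\pi - \bar\alpha;s_0 + \varepsilon) - \tilde r(\phi - 2\pi - \bar\alpha;s_0)$ near this moving discontinuity (keeping in mind that the jump of $\tilde r$ at the kink has magnitude $\cos(\bar\alpha - \theta) - \cos(\bar\alpha)$ after the $1/\sin(\bar\alpha)$ factor is absorbed) yields the stated coefficient $\tfrac{\cos(\bar\alpha - \theta) - \cos(\bar\alpha)}{\sin(\bar\alpha)}$. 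Once these four contributions are collected and their signs checked against the orientation of the parametrization and of the ray $r(\phi)$, the identity claimed in the proposition follows; the remaining verification away from the source is the straightforward differentiation of the kernel representation.
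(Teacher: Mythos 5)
Your identification of the four source contributions is essentially correct, and in particular you have the right mechanism for the hardest term: the $\Diracd'_{\phi_0 + 2\pi + \bar\theta}$ does come from the fact that the location of the downward jump of size $|P_2-P_1|$ moves with $s_0$ (because $\bar\theta$ does), so that differentiating a jump at a moving position produces a derivative of a Dirac delta with coefficient equal to the jump size times $\delta\bar\theta/\delta s_0 = \frac{\cos(\bar\alpha-\theta)-\cos(\bar\alpha)}{\sin(\bar\alpha)}\,\frac{1}{|P_2-P_1|}$. This matches the paper's computation.

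There is, however, a genuine gap in your first step. You claim that $\tilde r(\phi;s_0)$ is, beyond $\phi_0+\bar\theta-\bar\alpha$, a finite superposition of shifted Green kernels $c_i(s_0)\,G(\phi-\phi_i(s_0))$. That representation holds for the case-study spiral of Section \ref{S:case:study}, where the base is a point or a circle, but it is false for a general base spiral $\bar\zeta([0,s_0])$: by Lemma \ref{Lem:equation_segment_tilde_r}, on the first round after $\phi_0+\bar\theta-\bar\alpha$ the equation for $\tilde r$ contains the curvature term $-R(\phi)=-Ds^-$ of the \emph{given} spiral, an arbitrary BV measure, so $\tilde r$ carries a convolution of $G$ against that measure (a continuum of excitation points), not a finite combination. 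Consequently the step "differentiation in $s_0$ commutes with the linear RDE, hence $\delta\tilde r$ solves the homogeneous equation away from the $\phi_i$" is not available in the form you state it. The correct route — and the one the paper takes — is to expand the difference $\tilde r(\phi;s_0+\delta s_0)-\tilde r(\phi;s_0)$ to first order directly: the contributions of the common base spiral cancel except on the arc between $\zeta(s_0)$ and $\zeta(s_0+\delta s_0)$ (which in the limit produces exactly the $-\Diracd_{\phi_0+2\pi+\theta_0}$), and between the singular angles the difference solves the pure ODE $\frac{d}{d\phi}\delta\tilde r=\cot(\bar\alpha)\,\delta\tilde r$ because $R(\phi)$ is unchanged. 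You should also note that the coefficients you invoke are not free: they require deriving $\delta\bar\theta/\delta s_0$ and $\delta|P_2-P_1|/\delta s_0$ from the saturation condition at $P_2'$ and the tangency condition $\beta=\bar\alpha$ there (Equations \eqref{Equa:delta_theta_segm_1} and \eqref{Equa:delta_P_2P_1_segm}); with those in hand your identification of all four coefficients goes through.
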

The angle $\theta$ is the angle formed by the tangent to the spiral $\zeta$ and the initial segment of $\tilde r$ at the angle $\phi_0$. Note the similarity with the fastest saturated spiral studied in the previous section.

\item Corollaries \ref{Cor:evolv_r_after_phi_2} and \ref{Cor:evolv_r_after_phi_2:arc}, which gives the explicit solution in terms of the kernel $G$;

\item Propositions \ref{Prop:regions_pos_neg_segm} and \ref{Prop:neg_region}, where the positivity properties and asymptotic behavior of $\delta \tilde r$ is analyzed. We present here the segment case, where we use the change of variable $\phi = \phi_0 + \bar \theta - \bar \alpha + (2\pi + \bar \alpha) \tau$, $\bar \theta$ being the angle of the ray $r(\phi_0)$ and the initial segment starting at $\zeta(\phi_0)$: using the rescaled function
\begin{equation*}
\rho(\tau) = \delta \tilde r \big( \phi_0 + \bar \theta - \bar \alpha + (2\pi + \bar \alpha) \tau \big) e^{-\bar c (2 \pi + \bar \alpha) \tau}
\end{equation*}

\begin{proposition}(Proposition \ref{Prop:regions_pos_neg_segm})
\label{Prop:regions_pos_neg_segm_intro}
The function $\rho(\tau) = \rho(\tau,\theta)$ is strictly positive outside the region
\begin{equation*}
\gls{Nsegm} := \bigg\{ 1 - \frac{\theta}{2\pi + \bar \alpha} \leq \tau \leq 1, 0 \leq \theta \leq \hat \theta \bigg\} \cup \big\{ \theta = 0 \} \cup \{ \tau = 1, \theta \in [2\bar \alpha,\pi] \big\} \subset \R \times [0,\pi],
\end{equation*}
where the angle \gls{thetahat} is determined by the unique solution to
\begin{equation*}
\cot(\bar \alpha) \frac{1 - \cos(\hat \theta)}{\sin(\bar \alpha)} -  e^{- \cot(\bar \alpha) (2\pi + \bar \alpha - \hat \theta)} = 0, \qquad \hat \theta \in [0,\pi].
\end{equation*}
Moreover, as $\tau \to \infty$, the function $\rho(\tau)$ diverges like
\begin{equation*}
\lim_{\tau \to \infty} \frac{\rho(\tau) e^{-\bar c \tau}}{\tau} = 2 (S_0 + S_1 + S_2 + S_3), \quad 2 (S_0 + S_1 + S_2 + S_3) \in \theta^2 (0.08,028).
\end{equation*}
\end{proposition}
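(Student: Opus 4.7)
The plan is to start from the closed-form expression for $\delta \tilde r$ given by Corollary \ref{Cor:evolv_r_after_phi_2}, which represents $\delta \tilde r$ as a linear combination of shifts of the Green kernel $G$ (and its distributional derivative) with coefficients read off from the four source deltas in Proposition \ref{Prop:equa_delta_tilde_r_segm_intro}. After applying the rescaling $\phi = \phi_0 + \bar\theta - \bar\alpha + (2\pi + \bar\alpha)\tau$, the function $\rho(\tau)$ becomes an explicit linear combination of shifts of the rescaled kernel $g(\tau)$ (defined in Section \ref{Sss:green_kernels}), with shifts located at $\tau = 0$, $\tau = 1 - \theta/(2\pi + \bar\alpha)$, and $\tau = 1$. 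This is the object I will analyze for both the sign and the asymptotics.

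For the sign analysis, I would split the real line into the three intervals separated by the source-term locations. On $\tau \in [0,\, 1 - \theta/(2\pi+\bar\alpha))$ only the first source $\cot(\bar\alpha)\tfrac{1-\cos(\theta)}{\sin(\bar\alpha)}\Diracd_{\phi_0 + \bar\theta - \bar\alpha}$ contributes, whose coefficient is strictly positive for $\theta \in (0,\pi]$; here positivity follows directly from the sign of $g$. The interval $[1 - \theta/(2\pi+\bar\alpha),\, 1)$ is where the second delta $-\Diracd_{\phi_0 + 2\pi + \theta_0}$ (with negative sign) enters, and this is exactly the region $\mathcal N_{\text{segm}}$: by comparing the growth of the first-source contribution $\cot(\bar\alpha)\tfrac{1-\cos(\theta)}{\sin(\bar\alpha)} g(\tau)$ against the newly subtracted kernel at the shifted argument, the condition that defines $\hat\theta$ (the unique zero of $\cot(\bar\alpha)\tfrac{1-\cos(\hat\theta)}{\sin(\bar\alpha)} - e^{-\cot(\bar\alpha)(2\pi + \bar\alpha - \hat\theta)}$) is precisely the threshold at which the positive contribution dominates the negative one at $\tau = 1^-$. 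For $\theta > \hat\theta$ positivity is preserved throughout, while for $\theta \le \hat\theta$ the combination can become negative until $\tau = 1$. At $\tau = 1$ the last two sources (the second delta and the derivative delta) fire; the jump produced by $\Diracd'_{\phi_0 + 2\pi + \bar\theta}$ with positive coefficient $\tfrac{\cos(\bar\alpha - \theta) - \cos(\bar\alpha)}{\sin(\bar\alpha)}$ lifts $\rho$ strictly above the value just before, producing positivity immediately after $\tau = 1$ except at the isolated points with $\theta \in [2\bar\alpha,\pi]$ where the combined residual vanishes. Finally, to propagate positivity past $\tau = 1$ for all subsequent rounds I would invoke the key Lemma \ref{lem:key_intro}: the homogeneous RDE $\dot\rho(\tau) = \rho(\tau) - \rho(\tau - 1)$ preserves the property $\rho(1) > \rho(\tau) > 0$ on $[0,1)$, hence $\rho$ stays positive and grows (at least) linearly from the second round onward.

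For the asymptotic statement, I would insert the asymptotic expansion of $g$ at infinity established in Lemma \ref{Lem:aympt_g}, which provides $g(\tau) \sim c_0 \tau + O(1)$ with an explicit $c_0$. Applying this to each of the four shifted kernels in the closed form of $\rho$ yields four contributions $S_0, S_1, S_2, S_3$ (one per source) to the coefficient of $\tau e^{\bar c\tau}$ in the expansion of $\delta \tilde r$. Each $S_i$ is an explicit transcendental function of $\theta, \bar\alpha$; a Taylor expansion at $\theta = 0$ shows that the first delta contributes order $\theta^2$ (since its coefficient $\cot(\bar\alpha)\tfrac{1-\cos(\theta)}{\sin(\bar\alpha)}$ is $O(\theta^2)$) while the delta $-\Diracd_{\phi_0 + 2\pi + \theta_0}$ and the jump/derivative combination at $\phi_0 + 2\pi + \bar\theta$ also contribute $O(\theta^2)$ terms after expansion. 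Collecting and evaluating $2(S_0 + S_1 + S_2 + S_3)/\theta^2$ at the relevant value of $\bar\alpha$ using Mathematica (in the spirit of the paper's numerical approach declared in the abstract) gives the announced bracket $(0.08, 0.28)$.

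The main obstacle is the refined analysis of the interval near $\tau = 1$ together with the isolated points $\theta \in [2\bar\alpha,\pi]$: one must keep careful track of the jump produced by $\Diracd'_{\phi_0 + 2\pi + \bar\theta}$ in distributional sense (its action on $G$ yields both a boundary term and a shift) in order to determine exactly when the right-limit of $\rho$ is strictly positive and when it vanishes. A secondary difficulty is that the threshold $\hat\theta$ must be characterized as the unique solution of the stated transcendental equation on $[0,\pi]$, which requires a monotonicity argument in $\theta$ for the quantity $\cot(\bar\alpha)\tfrac{1-\cos(\theta)}{\sin(\bar\alpha)} - e^{-\cot(\bar\alpha)(2\pi + \bar\alpha - \theta)}$; this is elementary calculus but must be done carefully since both summands depend monotonically on $\theta$ but with opposite signs, and the numerical plot (as in Fig. \ref{Fig:base_sol_intro}) is the quickest way to visualize the conclusion.
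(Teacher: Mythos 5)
Your overall strategy (closed form via the kernel $g$, interval-by-interval sign analysis separated by the source locations, asymptotics from the expansion of $g$) is the same as the paper's, and your treatment of the first round, of the threshold equation for $\hat\theta$, and of the limit $2(S_0+S_1+S_2+S_3)$ all match. But there is one genuine gap: you propose to propagate positivity past $\tau=1$ by invoking Lemma \ref{lem:key_intro} directly at $\tau = 1$. That lemma requires $\rho(1) > \rho(\tau) > 0$ for \emph{all} $\tau\in[0,1)$, and this hypothesis fails here precisely because of the negativity region you have just identified: for $\theta\le\hat\theta$ the solution is negative on $[\tau_1,1)$, and even for larger $\theta$ it is neither monotone nor smooth on the first round (the sources produce jumps and a Dirac mass at $\tau=1$). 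The paper is forced to compute the solution explicitly through rounds two and three ($\tau\in[1,2)$ and $[2,3]$), verify positivity of $\delta\rho/\theta^2$ there (including a Taylor expansion at $\theta=0$, where the solution degenerates quadratically, to control the quotient), and then check numerically that $\partial_\tau(\delta\rho/\theta^2)>0$ on $[3,4]$ before Lemma \ref{lem:key} can legitimately be applied. Without these intermediate rounds your argument does not close.

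A secondary inaccuracy: the exceptional set $\{\tau=1,\ \theta\in[2\bar\alpha,\pi]\}$ does not arise because "the combined residual vanishes" after a positive jump. The source term $\Diracd'_{\phi_0+2\pi+\bar\theta}$ produces a genuine Dirac mass $D(\theta)\Diracd_{\tau=1}$ \emph{in the solution itself}, with coefficient $D(\theta)=\frac{\cos(\bar\alpha-\theta)-\cos(\bar\alpha)}{2\pi+\bar\alpha}$; since $\bar\alpha>\pi/4$ this coefficient is $\le 0$ exactly for $\theta\in\{0\}\cup[2\bar\alpha,\pi]$, which is why that slice belongs to $N_\mathrm{segm}$. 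Your claim that the coefficient is positive is only true for $\theta\in(0,2\bar\alpha)$. Finally, the uniqueness of $\hat\theta$ should not be left to a plot: the paper proves it by showing $f'(\theta)=\frac{\cos(\bar\alpha-\theta)-\cos(\bar\alpha)}{\sin(\bar\alpha)}e^{-\cot(\bar\alpha)\theta}>0$ for $\theta>0$ (again using $\bar\alpha>\pi/4$) together with the sign change between $\theta=0$ and $\theta=\pi/2$; you should include this monotonicity computation rather than defer to a figure.
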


The arc case is more complicated, we refer the reader to Proposition \ref{Prop:neg_region}. In Fig. \ref{Fig:segment_intro_2} we plot numerically the solution of Proposition \ref{Prop:equa_delta_tilde_r_segm_intro} with $\theta = .3$ for the first 3 rounds. Observe that there is a negative region $\tau \in [1 - \frac{\theta}{2\pi + \bar \alpha},1]$ in the first round, then the solution becomes positive. Note that the derivative is not smooth, and this forces us to study numerically the first 5 rounds in order to apply Lemma \ref{lem:key_intro}.
\end{itemize}

\begin{figure}
\begin{subfigure}{.475\textwidth}
\resizebox{\textwidth}{!}{\input{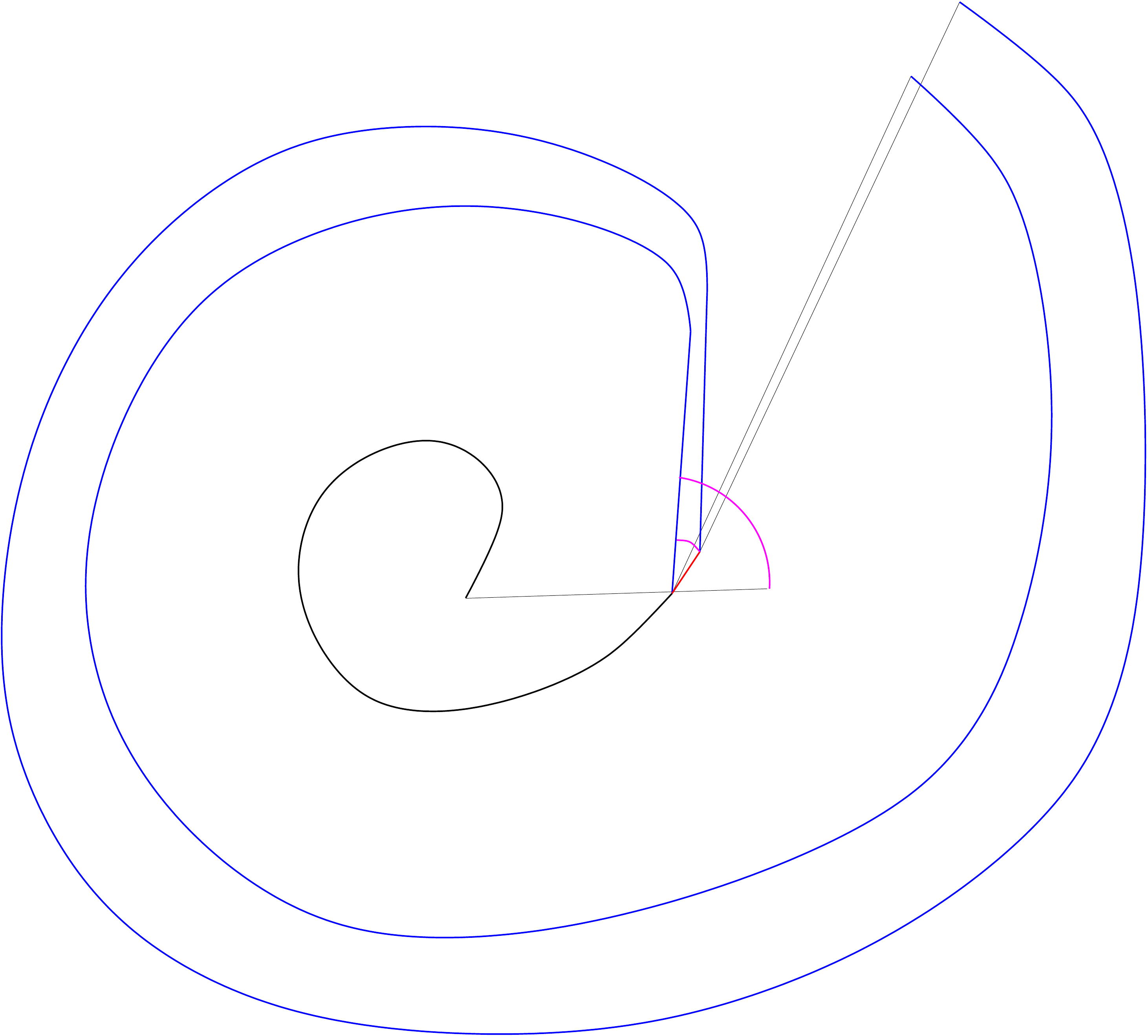_t}}
\caption{Geometric structure of the spiral $\tilde r$ and its perturbation.}
\label{Fig:segment_intro_1} \hfill
\end{subfigure}
\begin{subfigure}{.475\textwidth}
\resizebox{\textwidth}{!}{\includegraphics{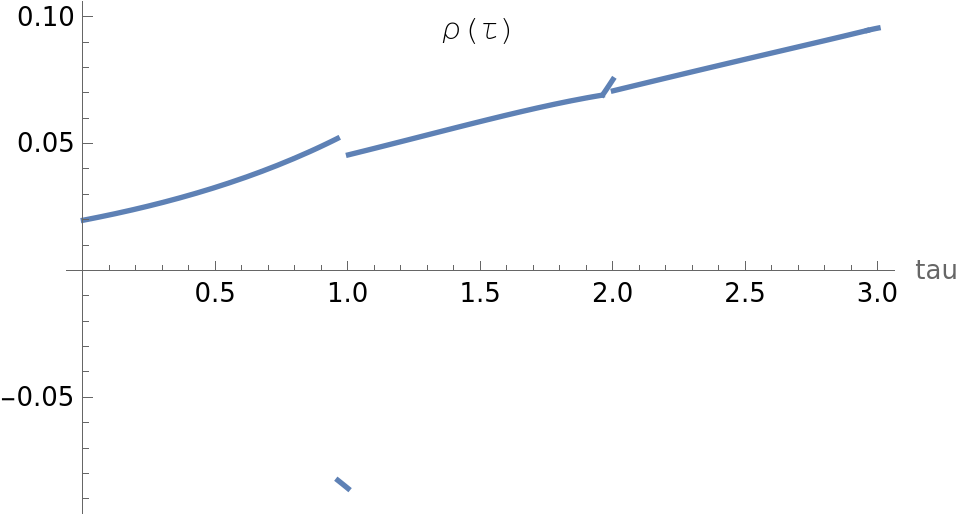}}
\caption{Numerical plot of the function $\rho(\tau)$ for the first 3 rounds. Observe the negativity region for $\tau\in N_{\text{segm}}$.}
\label{Fig:segment_intro_2}
\end{subfigure}
\caption{The segment case of Section \ref{Sss:family_intro}}
\label{Fig:segment_intro}
\end{figure}

The fact that there is a negative region at about the end of the first round for some angles $\theta$ means in particular that the fastest saturated spiral is \emph{not optimal}: indeed, at the angle $\bar \phi - 2\pi - \bar \alpha$ by moving the barrier $\zeta$ along the direction $\bar \phi$ instead of following the spiral one obtains that $r(\bar \phi)$ is decreasing (it has negative derivative by Proposition \ref{Prop:regions_pos_neg_segm_intro}. However, since this negative region is for a relatively short interval of angles, the correction to the fastest saturated spiral $\tilde r$ will be localized only at the last round. Thus we have another argument to affirm that the fastest saturated spiral is in some sense the building block for the optimal spiral $\check r$.

For the proof of the above propositions, the approach is similar to the one used for Theorem \ref{thm:case:a_intro}: the only difference is that since the structure of the derivative is more complicate, we split the first rounds into pieces in order to avoid discontinuities. After $3$ rounds the regularizing effect of the RDE has smoothen the solution, so that we can prove the monotonicity of the solution. Numerically, we plot the function
\begin{equation*}
\frac{\delta \tilde r(\phi)}{\theta (\theta + \Delta \phi)}, \quad \Delta \phi \ \text{being the opening of the initial arc and $\theta$ as above}, 
\end{equation*}
and observe the uniform positivity or monotonicity. As observed before, we require the computer only computations on explicit functions (polynomials, exponential, trigonometric functions and their combinations), we never ask to solve a differential equation. We have used the Software Mathematica for the explicit evaluation of functions.


The fact that the negativity region is limited to the first round gives that we can extend the length estimate of Proposition \ref{Prop:length_compar_intro} to a general spiral:

\begin{proposition}(Corollary \ref{Cor:bound_length_gen})
\label{Cor:bound_length_gen_intro}
Consider the spiral $\tilde r(\phi;\phi_0)$: if
$$
\phi \geq \begin{cases}
\phi_0 + \bar \theta + 2\pi & \text{segment case}, \\
\phi_0 + \Delta \phi + \frac{\pi}{2} + 2\pi & \text{arc case},
\end{cases}
$$
then
\begin{equation*}
\tilde r(\phi) - 2.08 \tilde L(\phi - 2\pi - \bar \alpha) \geq r(\phi) - 2.08 L(\phi - 2\pi - \bar \alpha) > 0.67 e^{\bar c (\phi - \tan(\bar \alpha) - \frac{\pi}{2} + \bar \alpha)},
\end{equation*}
\end{proposition}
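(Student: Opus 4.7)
The proof strategy is a monotonicity argument in the starting parameter $s_0$. The second inequality in the display is exactly the content of Proposition \ref{Prop:length_compar_intro}, applied at $s_0 = 0$ where $\tilde r(\,\cdot\,;0) = r(\,\cdot\,)$ and $\tilde L(\,\cdot\,;0) = L(\,\cdot\,)$ by the construction of the fastest saturated spiral in Section \ref{S:case:study}. Hence it suffices to establish the first inequality, namely that
\[
\Phi(s_0) := \tilde r(\phi;s_0) - 2.08\, \tilde L(\phi - 2\pi - \bar\alpha;s_0)
\]
is non-decreasing in $s_0 \in [0, \bar s_0]$ for every $\phi$ in the stated range. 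Differentiating yields
\[
\frac{d\Phi}{ds_0}(s_0) \;=\; \delta \tilde r(\phi;s_0) \;-\; 2.08\, \delta \tilde L(\phi - 2\pi - \bar\alpha;s_0),
\]
and the task reduces to showing nonnegativity of the right-hand side.

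First I would check that the hypotheses on $\phi$ place us outside the negativity regions identified in Propositions \ref{Prop:regions_pos_neg_segm} and \ref{Prop:neg_region}. In the segment case, under the rescaling $\tau = (\phi - \phi_0 - \bar\theta + \bar\alpha)/(2\pi+\bar\alpha)$ the assumption $\phi \geq \phi_0 + \bar\theta + 2\pi$ gives $\tau \geq 1 + \bar\alpha/(2\pi+\bar\alpha) > 1$, which lies past the support of $N_{\mathrm{segm}}$, whence $\delta \tilde r(\phi;s_0) > 0$; the arc-case threshold $\phi \geq \phi_0 + \Delta\phi + \tfrac{\pi}{2} + 2\pi$ plays the analogous role via Proposition \ref{Prop:neg_region}. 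This already secures positivity of $\delta \tilde r$ but not the quantitative comparison with $\delta \tilde L$.

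For the quantitative step I would represent both $\delta \tilde r$ and $\delta \tilde L$ through the Green kernel $G$ and its primitives from Section \ref{Sss:length_saturated}, starting from the explicit solutions in Corollaries \ref{Cor:evolv_r_after_phi_2} and \ref{Cor:evolv_r_after_phi_2:arc}. The length derivative obeys essentially the same RDE as $\delta \tilde r$ with sources supported in the first round (whose geometric contribution is controlled by Lemma \ref{lem:ODE:spiral}, the factor $\cot(\bar\alpha)$ arising from the arc-length/ray relation), so both quantities share the same leading asymptotics of the form $c(\theta,\Delta\phi)\,\phi\,e^{\bar c \phi}$. The aim is then to compare the coefficients: Proposition \ref{Prop:length_compar_intro} already delivers the constant $2.08$ in the base case $s_0 = 0$, and what one gains by moving $s_0 > 0$ is a derivative source that feeds \emph{more} into $\tilde r$ than into the delayed length $\tilde L(\phi - 2\pi - \bar\alpha)$.

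The main obstacle is the tightness of the constant $2.08$, emphasized in Remark \ref{Rem:asymtp_consta_for_comp}: no slack is available for rough bounds. I would split the argument into two regimes. For $\phi$ at the lower end of the admissible range (the first few rounds past the threshold) I would apply the same numerical evaluation strategy used throughout the paper, computing the explicit combinations of $G$ and its primitive via Mathematica and verifying $\delta \tilde r \geq 2.08\,\delta \tilde L$ pointwise. For large $\phi$ I would use the asymptotic expansion of $G$ from Lemma \ref{Lem:aympt_g} to extract the leading coefficients of $\delta \tilde r(\phi)$ and $\delta \tilde L(\phi - 2\pi - \bar\alpha)$ and verify the ratio condition in the limit, invoking the smoothing of the RDE to pass from a pointwise limit to a uniform estimate beyond a finite number of rounds. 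Integrating $\frac{d\Phi}{ds_0} \geq 0$ from $0$ to $s_0$ then gives $\Phi(s_0) \geq \Phi(0) \geq 0.67\, e^{\bar c (\phi - \tan(\bar\alpha) - \pi/2 + \bar\alpha)}$, completing the chain.
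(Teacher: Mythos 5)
Your proposal follows the paper's argument: the paper writes $\tilde r(\phi) - 2.08\,\tilde L(\phi-2\pi-\bar\alpha)$ as the base case $\tilde r_{\mathtt a=0} - 2.08\,\tilde L_{\mathtt a=0}$ (bounded below by Proposition \ref{Prop:length_compar}) plus $\int_0^{\phi_0}\bigl(\delta\tilde r - 2.08\,\delta\tilde L\bigr)\,ds_0$, and the nonnegativity of the integrand is exactly Propositions \ref{Prop:lemgth_segm_pert} and \ref{Prop:lemgth_arc_pert}, which are proved as you outline — explicit kernel formulas, numerical verification of positivity and monotonicity on the early rounds, and the RDE comparison of Lemma \ref{lem:key} thereafter. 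The only slip is arithmetic: the segment-case threshold gives $\tau\geq 1$, not $\tau\geq 1+\bar\alpha/(2\pi+\bar\alpha)$, which is immaterial since $\tau\geq 1$ is precisely the hypothesis of the length-perturbation propositions.
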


This is a regularity result: after one round the spirals $\tilde r$ starts to be similar to the fastest saturated spiral of the previous section. It will be used to simplify the expression of the optimal spiral $\check r(\cdot;s_0)$.

\subsubsection{Section \ref{S:optimal_solut}}
\label{Sss:optimal_solut_intro}

In this section we construct the optimal solution candidate $\check r(\cdot;s_0)$, when the spiral $\zeta([0,s_0])$ is fixed. We recall that we are given an angle $\bar\phi$ and we ask if, by changing the spiral at an angle $\phi_0$, corresponding to the point $\zeta(s_0)$, the ray $r(\bar\phi)$ becomes smaller (see the optimization problem \eqref{Equa:minimum_rphi_intro}). In order to avoid additional technicalities, we explain here the construction in the simple case $\zeta([0,s_0])$ is the saturated spiral, i.e. $\mathcal{A}(\zeta(\phi))=0$ and $\beta(\phi)=\bar\alpha$ for every $\phi\in[0,\phi_0]$ (see Figure \ref{Fig:optimal_intro}). By the computations of the previous section, in the case of the family of fastest saturated spirals, we proved that there is a negativity region for the derivative $\delta\tilde r(\cdot;s_0)$. We also proved that this region is included in the first round (see Proposition \ref{Prop:regions_pos_neg_segm_intro} and Figure \ref{Fig:segment_intro}).

\medskip
Observe that
\begin{itemize}
\item If $\phi_0 + 2\pi + \bar \alpha < \bar \phi$, corresponding to the parameter $\bar\tau>1$ of the previous parametrization, then we are  outside the negativity region $N_\mathrm{segm}$ and Proposition \ref{Prop:regions_pos_neg_segm_intro} yields that any perturbation to $\zeta$ at $\phi_0$ has a positive derivative, and then the ray $r$ increases.
\item  If $\phi_0+2\pi+\bar\alpha=\bar\phi$, corresponding to the case of Figure \ref{Fig:optimal_intro}, then we are entering the negativity region: indeed $\bar\tau = 1$ is the boundary of $N_\mathrm{segm}$ for $\theta = 0$: this last condition on the angle $\theta$ means that at the angle $\phi_0$ the perturbation is tangent to the saturated spiral. Therefore, if we continue our spiral along the segment tangent to the saturated spiral at $P_0$, it follows from the computations of Section \ref{S:family} that $\theta$ is increasing ($\theta$ is the angle made by the perturbation and the segment), and then it is convenient to construct this segment up to the point $P_1$ where $\theta = \hat \theta$ (the angle constructed in Proposition \ref{Prop:regions_pos_neg_segm_intro}), i.e. we are exiting the negativity region.   In order to minimize the quantity $r(\bar\phi)$ according to the optimization problem, we need to prolong the barrier precisely along the angle $\bar\phi$. The length of barrier we construct along the direction $\bar\phi$ is prescribed by the negativity region we found from the former computations.

From this point onward we can apply Theorem \ref{Cor:curve_cal_R_sat_spiral_intro} to prescribe the remaining part of the last round: it is the fastest saturated spiral starting from the point $P_1$ such that the segment originating in $P_1$ form the critical angle $\hat \theta$ with the direction $\bar \phi$.
\end{itemize}

\begin{figure}
\resizebox{.75\textwidth}{!}{\input{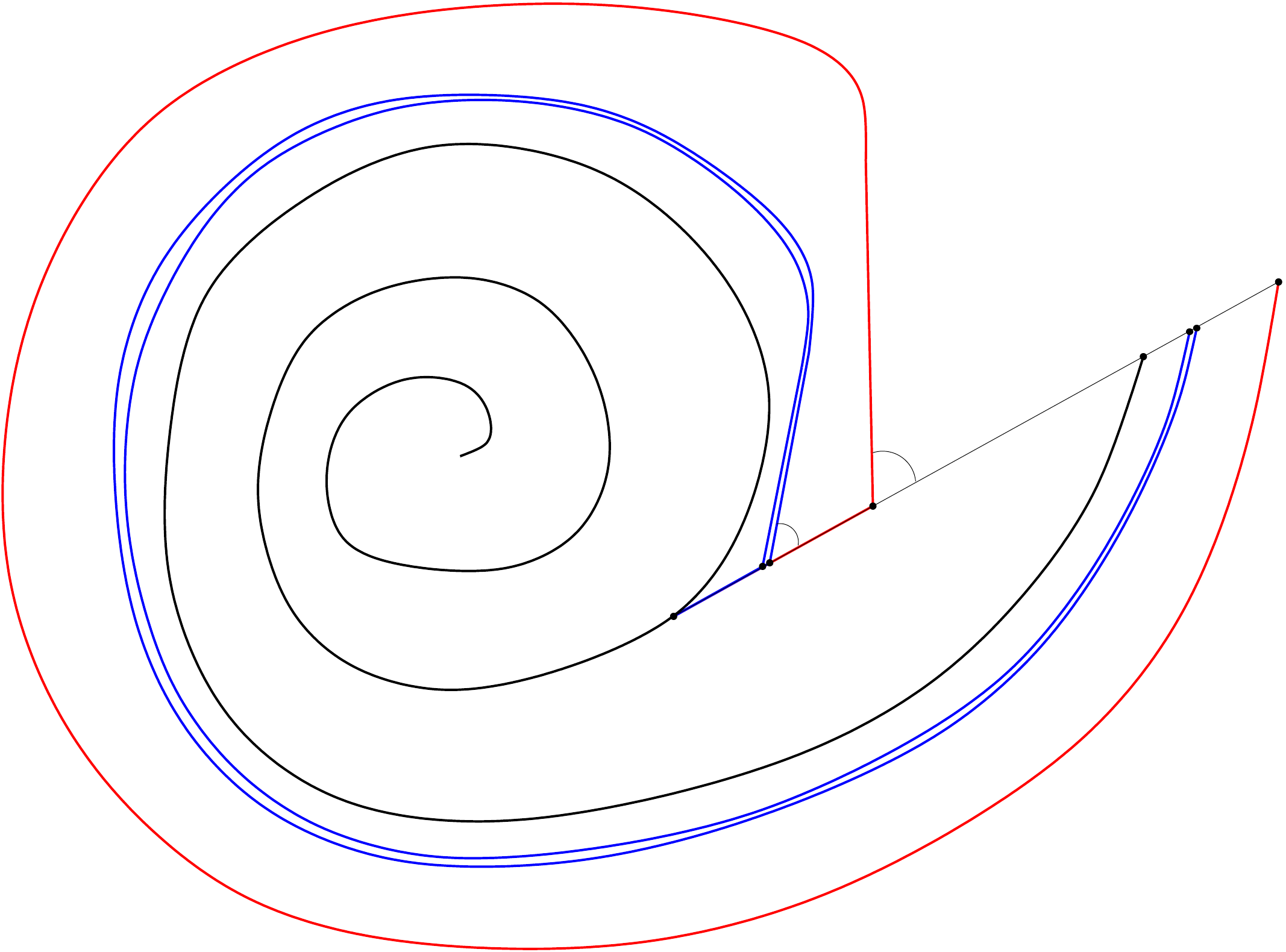_t}}
\caption{The construction of the optimal solution for the saturated spiral.}
\label{Fig:optimal_intro}
\end{figure}

The first main result of this section is the definition of the optimal closing spiral $\check r(\phi;s_0)$ given the spiral arc $\zeta([0,s_0])$ and the final angle $\bar \phi$. This is the done in the first part of Section \ref{Ss:optimal_sol_def}, which contains also the proof that such a definition yields a unique spiral $\check r(\cdot;s_0)$ (Proposition \ref{Prop:unique_optimal_guess}{\color{red})}:

\begin{proposition}(Proposition \ref{Prop:unique_optimal_guess})
\label{Prop:unique_optimal_guess_intro}
Given $s_0 = s(\phi_0) \geq 0$ and an angle $\bar \phi \geq \phi_0$, there exists a unique optimal spiral candidate $\check r(\phi;s_0)$.
\end{proposition}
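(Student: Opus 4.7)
The plan is to carry out the construction of $\check r(\cdot;s_0)$ explicitly, phase by phase, verifying at each step that the geometric object produced is uniquely determined and yields a valid spiral barrier.

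For $\bar\phi \in [\phi_0,\phi_0+2\pi+\beta^-(\phi_0))$ the statement will reduce to Theorem~\ref{Cor:curve_cal_R_sat_spiral_intro}, which already provides the unique arc+segment+saturated-spiral minimizer on the first round. The substantive case is $\bar\phi \geq \phi_0+2\pi+\beta^-(\phi_0)$, where I would build the candidate as the concatenation of three pieces. First, I would take the initial first-round arc $\tilde r(\cdot;s_0)$ of Section~\ref{Sss:family_intro} issued from $\zeta(s_0)$, continued until the deviation angle $\theta$ reaches the critical value $\hat\theta$ of Proposition~\ref{Prop:regions_pos_neg_segm_intro} at a point $P_1$. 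Second, I would glue a straight segment from $P_1$ along the direction of the target ray $r(\bar\phi)$, ending at a saturated point $P_2$. Finally, I would append the fastest saturated spiral of Section~\ref{S:case:study} issued from $P_2$, so chosen that it reaches the angle $\bar\phi$ with the correct matching at its first ray.

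Existence of each piece will follow from results already established in the excerpt: the linear RDE of Section~\ref{S:ODE} determines the saturated portions uniquely; the critical angle $\hat\theta$ is the unique solution of the transcendental equation of Proposition~\ref{Prop:regions_pos_neg_segm_intro} in $[0,\pi]$; and the length of the middle segment is fixed by the single geometric condition that the saturated spiral from $P_2$ aligns its first ray with the direction of $r(\bar\phi)$, a condition that I expect to be equivalent to the matching angle at $P_2$ being again $\hat\theta$. Monotonicity of the corresponding matching function, traceable to monotonicity properties of the kernel $G$ of Section~\ref{Sss:green_kernels}, will yield uniqueness of the segment length, and uniqueness of the whole construction will then follow piece by piece from the uniqueness of each individual step.

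The hard part will be checking that this concatenation actually defines an admissible spiral in the sense of Definition~\ref{Def:spiral_barrier_intro}: one must verify local convexity at $P_1$ and $P_2$, monotonicity of $s\mapsto u\circ\zeta(s)$ across the junctions, and non-negativity of the admissibility functional $\mathcal A$ globally. The essential ingredient will be saturation at both junction points: by construction $P_1$ and $P_2$ are to be saturated, so $\mathcal A(P_i)=0$, and the angle conditions imposed at $P_1$ (deviation $\hat\theta$) and at $P_2$ (matching angle $\hat\theta$) are exactly the ones that I expect to be compatible with local convexity and with the angle-ray representation of Theorem~\ref{Cor:angle_preprest}. Establishing these compatibilities rigorously, and thereby confirming that the glued object is a genuine spiral barrier whose angle-ray coordinates are the advertised $\check r(\cdot;s_0)$, is where the geometric bookkeeping is expected to become delicate.
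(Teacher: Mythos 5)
Your candidate has the right overall shape (fastest saturated spiral, then a segment in the direction $\bar\phi$, then the fastest saturated spiral again for the last round), but the argument you give for uniqueness is not the one that actually carries the weight, and it skips the only delicate case. The paper's proof of Proposition \ref{Prop:unique_optimal_guess} reduces everything to a single unknown, the length $\ell_0$ of the intermediate segment, and then studies the curve $\ell_0 \mapsto \nu(\ell_0) = (\theta(\ell_0),\omega(\ell_0))$ of angles seen by the fastest saturated spiral issued from the moving endpoint $\check P^-$. Uniqueness is exactly the statement that this curve crosses the boundary of the negativity region $N_{\mathrm{segm}} \cup N_{\mathrm{arc}}$ of Propositions \ref{Prop:regions_pos_neg_segm} and \ref{Prop:neg_region} precisely once. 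That requires computing $\tfrac{d\theta}{d\ell_0}$ and $\tfrac{d\omega}{d\ell_0}$ from the saturation and tangency relations, checking $\tfrac{d\theta}{d\ell_0}>0$ so that $\nu$ is a graph over $\theta$, and — in the arc case — comparing $\tfrac{d\Delta\check\phi}{d\theta}$ with the slope of the boundary curve $h_1(\Delta\check\phi)$ from Lemma \ref{Lem:study_g_1_arc} to show the two curves are transversal and that every crossing goes from the negative to the positive side. None of this appears in your proposal; your appeal to ``monotonicity properties of the kernel $G$'' is not the mechanism, since the kernel plays no role in this argument.

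Concretely, the gap is the arc case. Your exit condition ``the matching angle at $P_2$ is again $\hat\theta$'' is correct only when the fastest saturated spiral from $\check P^-$ starts with a segment; when it starts with a level-set arc the exit condition is $\theta = h_1(\Delta\check\phi)$ with $h_1(\Delta\check\phi) < \hat\theta$ for $\Delta\check\phi>0$, and since $h_1$ is a nonconstant curve, monotonicity of $\theta(\ell_0)$ alone does not rule out that $\nu(\ell_0)$ re-enters the negativity region after leaving it — this is exactly what the transversality computation excludes. Finally, you place the ``hard part'' in verifying convexity, monotonicity of $u\circ\zeta$, and admissibility of the glued object; in the paper those are established separately (Lemma \ref{Lem:when_tent}, Proposition \ref{Prop:tent_admissible}), and the entire content of Proposition \ref{Prop:unique_optimal_guess} is the single-crossing property you have left unproved.
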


Such a statement is not elementary, indeed one has to rule out the possibility that when $P_1$ moves along the direction $\bar \phi$ it may enter again the negativity region. In the segment case this is easy, but in the arc case (i.e. when in the point $P_1$ the fastest saturated spiral starts with an arc) it is not, due to the more complicated structure of the negativity region \gls{Narc} for the arc perturbation.

\medskip 

The most common geometric situation for the perturbation is as in Fig. \ref{Fig:optimal_intro}: the last round starts with a segment in the direction $\bar \phi$ followed by the fastest saturated spiral starting again with a segment. We say it is the most common because the other case (i.e. the fastest saturated spiral starts with an arc in $P_1$) requires that the point $P_0$ is close to the last point of the given spiral arc $\zeta(s_0)$: roughly speaking, this is a regularity property of the fastest saturated spiral $\tilde r(\cdot;s_0)$, i.e. after less than 1 round after $s_0$ the spiral is already close to the saturated spiral. \\
We call \emph{tent} these two consecutive segments configuration. In Section \ref{Ss:comput_tent_new} we study the equations describing a spiral with a tent, obtaining the formulas which allow to compute how much the final tent in $\check r(\cdot;s_0)$ is going to perturb the fastest saturated spiral $\tilde r(\cdot;s_0)$ in the last round. Lemma \ref{Lem:relation_admissibility_tent} explicitly computes these formulas, and being linear relations the same formulas hold for the perturbation (Corollary \ref{Cor:tent_corol_for_pert}). \\
Next, with these relations, we can check which conditions are required in order  not to be the admissible solution: in other words, when the tent is not an admissible spiral, which in this particular case means that $s \mapsto u(\zeta(s))$ is not monotone (and then we are forced to follow an arc of the level set of $u$). It turns out that, if the the initial point $P_0$ of the tent is saturated, a necessary condition is that the length of the spiral at the previous round is much larger than the estimate of Proposition \ref{Cor:bound_length_gen_intro} (Lemma \ref{Lem:when_tent}). One thus concludes that if $P_0$ is saturated, then the optimal spiral has a tent:

\begin{proposition}(Proposition \ref{Prop:tent_admissible})
\label{Prop:tent_admissible_intro}
The tent is admissible (and then is the optimal candidate) in the saturated part of $\tilde r(\cdot;s_0)$.
\end{proposition}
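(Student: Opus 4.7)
The plan is an argument by contradiction that combines Lemma \ref{Lem:when_tent} with the universal length estimate of Corollary \ref{Cor:bound_length_gen_intro}. Suppose that at a saturated base point $P_0 = \zeta(s_0)$ the tent fails to be admissible. Admissibility of a spiral requires that $s \mapsto u(\zeta(s))$ be (weakly) increasing along the whole barrier; a failure therefore means that the second segment of the tent re-enters a sublevel set $\{u\leq t\}$ already swept by the preceding round of $\tilde\zeta$. Lemma \ref{Lem:when_tent} converts this geometric obstruction, at a saturated vertex, into a quantitative lower bound on the length $\tilde L(\phi - 2\pi - \bar\alpha)$ of the previous round: to block the monotonicity of $u\circ\zeta$ along the tent, this length must be strictly larger than a fixed multiple of $\tilde r(\phi)$, the multiple being $1/2.08$ up to the explicit numerical constants recorded there.

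Now I invoke Corollary \ref{Cor:bound_length_gen_intro} applied to the spiral $\tilde r(\cdot;s_0)$: for every $\phi$ in the admissible range there prescribed one has
\[
\tilde r(\phi) - 2.08\, \tilde L(\phi - 2\pi - \bar\alpha) \;\geq\; 0.67\, e^{\bar c (\phi - \tan(\bar\alpha) - \pi/2 + \bar\alpha)} \;>\; 0,
\]
which is strictly incompatible with the lower bound on $\tilde L$ forced by Lemma \ref{Lem:when_tent}. The contradiction establishes admissibility of the tent on the entire saturated portion of $\tilde r(\cdot;s_0)$. Optimality then follows from the construction of Section \ref{Ss:comput_tent_new} combined with the negativity analysis of Proposition \ref{Prop:regions_pos_neg_segm_intro}: by design, the tent is the unique admissible continuation whose second segment leaves the closed negativity region $N_{\mathrm{segm}}$ at the critical angle $\hat\theta$. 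A shorter tent keeps the perturbation inside $N_{\mathrm{segm}}$, where $\delta\tilde r(\bar\phi;\cdot) \leq 0$ so $r(\bar\phi)$ can still be decreased by extending the tangent segment; a longer tent places the perturbation in the strict positivity region of $\delta\tilde r$ and is therefore suboptimal. Hence among admissible continuations the tent realizes the minimum of $r(\bar\phi)$.

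The anticipated main obstacle is the quantitative content of Lemma \ref{Lem:when_tent}: one has to upgrade the qualitative statement ``$u\circ\zeta$ fails to be monotone along the tent'' into an explicit lower bound on $\tilde L(\phi - 2\pi - \bar\alpha)$ whose constant precisely matches the $2.08$ appearing in Corollary \ref{Cor:bound_length_gen_intro}. This requires a careful geometric accounting of the level sets of $u$ near a saturated point, using the $C^{1,1}$-regularity derived in Corollary \ref{lem:C:1:1} together with the angle-ray representation of the spiral and the relations for tents collected in Lemma \ref{Lem:relation_admissibility_tent}. Once the two constants are aligned, the contradiction argument closes the proof immediately; the remainder of the work is the already-established bookkeeping of Section \ref{Ss:comput_tent_new}.
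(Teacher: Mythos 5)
Your overall strategy — contradiction between the non-admissibility criterion of Lemma \ref{Lem:when_tent} and the length bound of Corollary \ref{Cor:bound_length_gen} — is exactly the paper's, but the quantitative bridge between the two is missing, and you have attributed to Lemma \ref{Lem:when_tent} a statement it does not contain. The lemma says that non-admissibility of a tent based at a saturated point $\hat P_0$ forces $\hat L \geq K_\mathrm{tent}\, \hat r_0$ with $K_\mathrm{tent} \approx 0.967$, where $\hat L$ is only the short arc of the \emph{previous} round subtended by the tent (between $\hat Q_0$ and $\hat Q_2$) and $\hat r_0 = \tilde r(\hat \phi_0)$ is the ray at the tent's base angle. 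Corollary \ref{Cor:bound_length_gen} instead compares $\tilde r(\phi)$ with the \emph{full accumulated} length $\tilde L(\phi - 2\pi - \bar\alpha)$, with the constant $2.08$. These are different quantities evaluated at shifted angles; the inequalities $\hat L \geq 0.967\,\hat r_0$ and $\tilde r(\phi) > 2.08\, \tilde L(\phi - 2\pi - \bar\alpha)$ are not in direct conflict, so the contradiction does not "close immediately" once you note both. (In particular $1/2.08 \approx 0.48 \neq 0.967$; the constant $2.08$ never appears in Lemma \ref{Lem:when_tent}.)

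The missing step — which you correctly flag as the "main obstacle" but then dismiss as geometric bookkeeping — is actually a \emph{dynamical propagation} estimate. One must integrate the spiral equation forward from the tent base: using $\frac{d}{d\phi}\tilde r \leq \cot(\bar\alpha)\tilde r$ together with the delayed curvature deduction coming from the arc $\hat L$, one obtains for all $\hat\phi_3 \geq \hat\phi_2$
\begin{equation*}
\tilde r(\hat\phi_3) \leq \tilde L(\hat\phi_3 - 2\pi - \bar\alpha)\left(\frac{e^{\cot(\bar\alpha)(\bar\alpha + \hat\theta - \frac{\pi}{2})}}{K_\mathrm{tent}} - 1\right) e^{\cot(\bar\alpha)(\hat\phi_3 - \hat\phi_0 + \frac{\pi}{2} - \bar\alpha - \hat\theta)},
\end{equation*}
and the right-hand bracket only reaches $2.08$ once $\hat\phi_3 - \hat\phi_0 \geq 7.86$, i.e.\ $\hat\tau_3 - \hat\tau_0 \geq 1.054$ (Equation \eqref{Equa:hattau3_cond}). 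Only now does the contradiction appear: the tent base lies at $\hat\tau_0 \geq 0$ of the saturated part while Corollary \ref{Cor:bound_length_gen} is already in force at $\tau = 1$, so non-admissibility would force the corollary to fail at some $\tau > 1$. Without this forward propagation over a full round your two constants never meet, and the proof as written has a genuine gap. The optimality discussion in your second paragraph is consistent with the construction of Proposition \ref{Prop:unique_optimal_guess} and needs no change.
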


We remark that in the non saturated part of $\tilde r(\cdot;s_0)$, i.e. when $\phi$ is close to $\phi_0$, the tent is in general not the admissible spiral solution, see the example of Remark \ref{Rem:inital_tent}.

The last result of this section is a formula based on a simple observation, Lemma \ref{Lem:final_value_satu}, which allows to prove that the derivative $\delta \check r(\bar \phi;s_0) = \frac{d}{ds_0} \check r(\bar \phi;s_0)$ is positive if $\delta \tilde r(\bar \phi;s_0) = \frac{d}{ds_0} \tilde r(\bar \phi;s_0)$ is. This formula can be applied only after 2 rounds, so that in the following we have to address what happens in these initial part of $\check r$ when $\bar \phi$ is close to $\phi_0$. One can rightly suspect that the non smoothness of $\delta \tilde r$ is the problem.

\bigskip

The last three sections address the homotopy argument starting from the optimal solution candidate and proving that the derivative $\delta \check r(\bar \phi;s_0)$ is increasing.

\subsubsection{Section \ref{S:optimal_sol_candidate}}
\label{Sss:optimal_sol_cand_intro}

\begin{figure}
\resizebox{.75\textwidth}{!}{\includegraphics{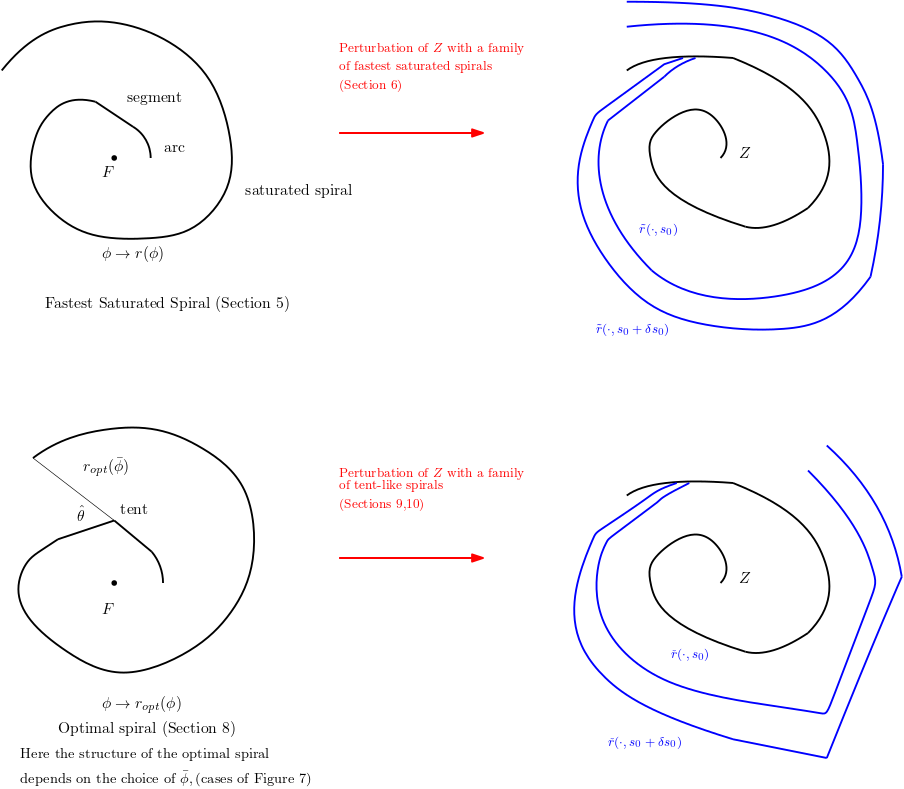}}
\caption{In this figure, we explain the general idea of our proof and in which section the results are contained.}
\label{Fig:optimal_intro:2}
\end{figure}

This section is devoted to the construction of the optimal solution candidate for any given $\bar \phi$.  The analysis carried out in this section corresponds to the one performed in Section \ref{Sss:case_study_intro} where it was proposed as a solution the fastest saturated spiral $S_a$ (Theorem \ref{thm:case:a_intro}), which is  the base element of the family $\tilde r(\phi;s_0)$ for $s_0=0$, or $(r(\phi),\phi)$ in angle-ray coordinates. We recall that the analysis of the positivity of $\delta \tilde r(\phi;s_0)$ was the content of Section \ref{S:family}. In Figure \ref{Fig:optimal_intro:2} a summary of the results and the corresponding sections where the analysis is carried out.  The solution \gls{roptim} will depend on the angle $\bar \phi$, therefore the analysis on the structure of $r_{\text{opt}}(\phi)$ will be more articulated than the fastest saturated spiral of Section \ref{Sss:case_study_intro}. We refer to Fig. \ref{Fig:opt_spiral_intro} for the description of the solution $r_\mathrm{opt}$.

\begin{figure}
\begin{subfigure}{.475\textwidth}
\resizebox{\textwidth}{!}{\input{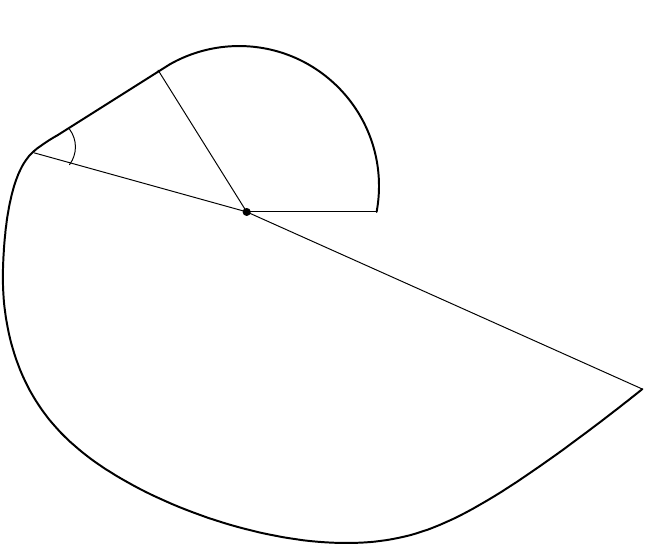_t}}
\caption{The optimal solution for $\bar \phi \in [0,2\pi + \frac{\pi}{2} - h_1]$.}
\label{Fig:spiral_opt_first}
\end{subfigure} \hfill
\begin{subfigure}{.475\textwidth}
\resizebox{\textwidth}{!}{\input{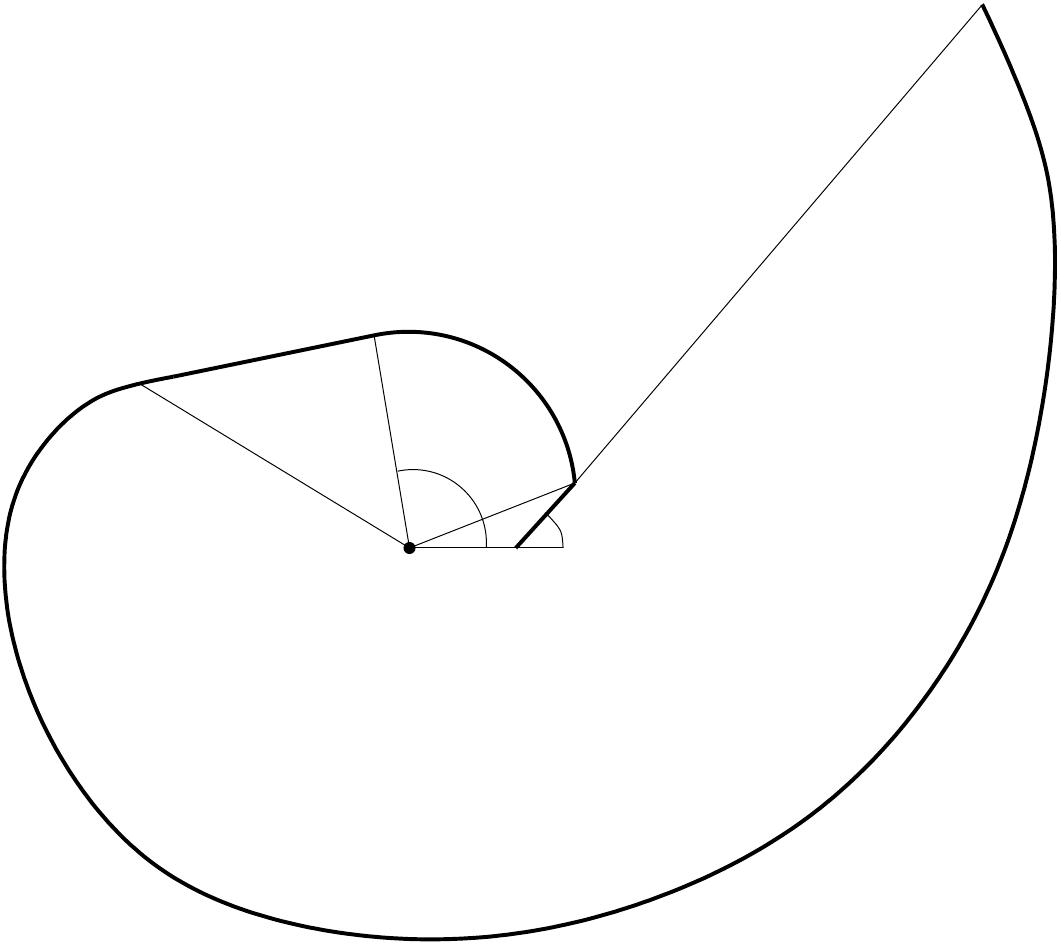_t}}
\caption{The optimal solution for $\bar \phi \in 2\pi + \frac{\pi}{2} + [- h_1,0]$.}
\label{Fig:spiral_opt_first_2}
\end{subfigure}
\begin{subfigure}{.475\textwidth}
\resizebox{\textwidth}{!}{\input{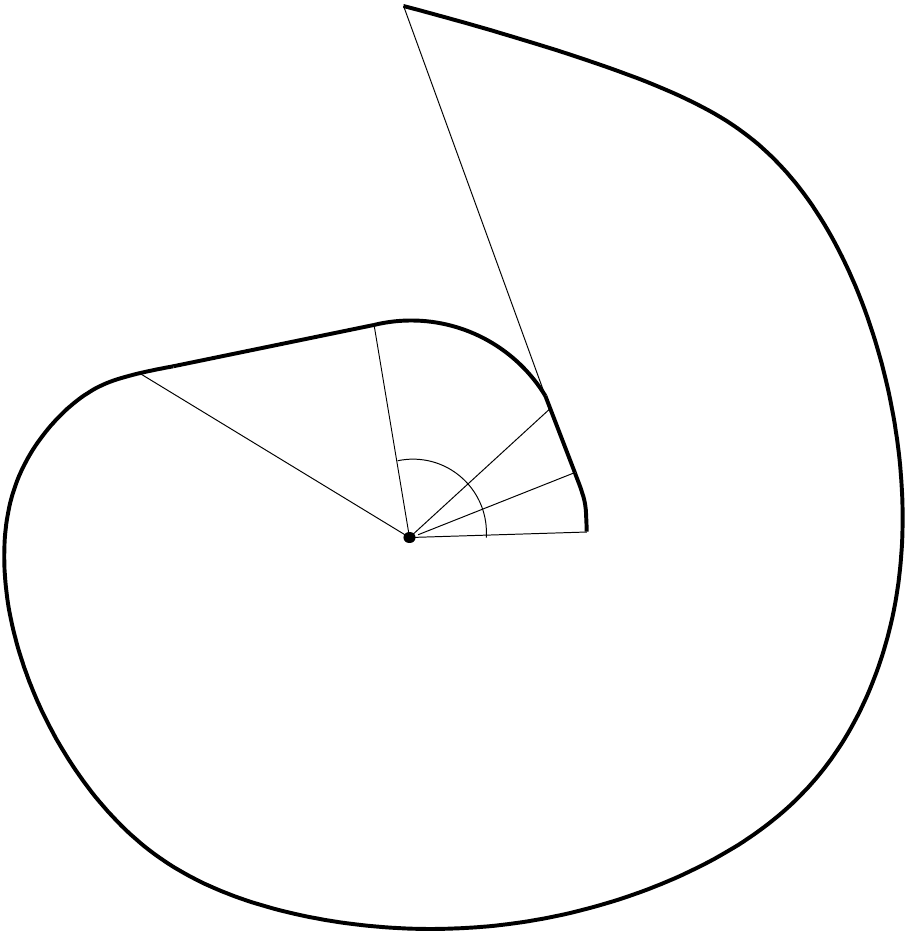_t}}
\caption{The optimal solution for $\bar \phi \in 2\pi + \frac{\pi}{2} + [0,h_2]$.}
\label{Fig:spiral_opt_first_3}
\end{subfigure} \hfill
\begin{subfigure}{.475\textwidth}
\resizebox{\textwidth}{!}{\input{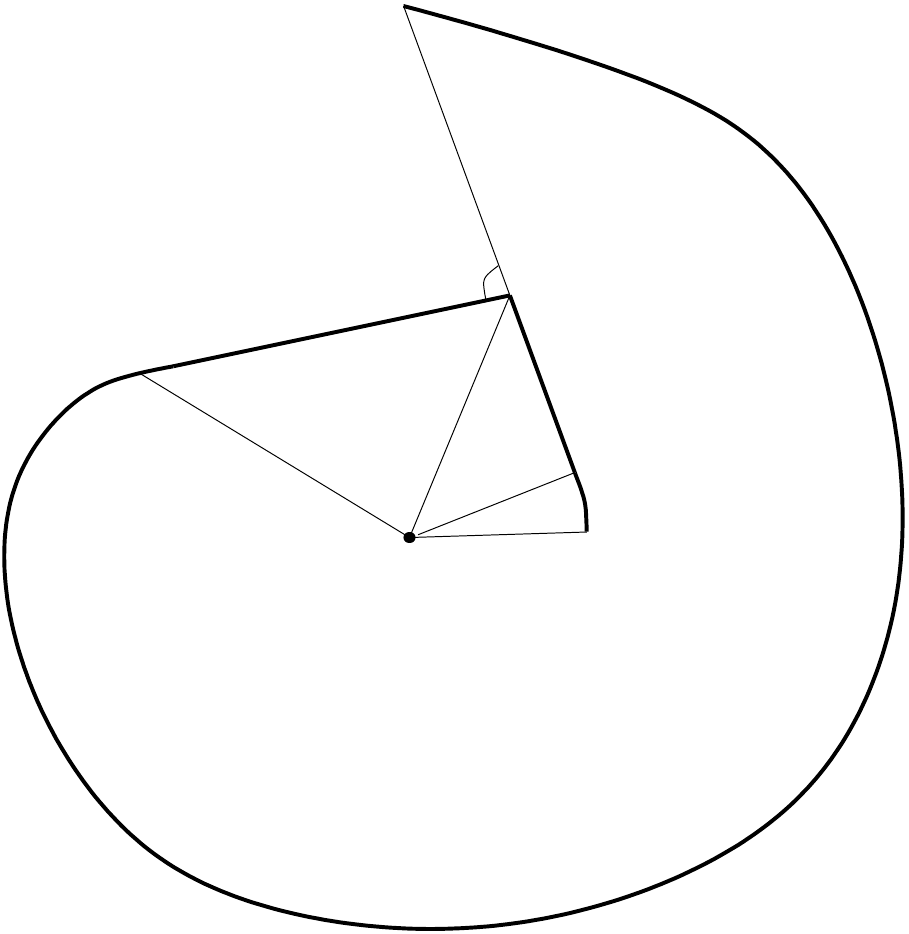_t}}
\caption{The optimal solution for $\bar \phi \in 2\pi + \frac{\pi}{2} + [h_2,\tan(\bar \alpha)]$.}
\label{Fig:spiral_opt_first_4}
\end{subfigure}
\caption{The optimal solution depending on the final angle $\bar \phi$ for the first round.}
\label{Fig:opt_spiral_intro}
\end{figure}

\begin{itemize}
\item The first situation is the $\bar \phi$ is the first round, more precisely $\bar \phi \in [0,2\pi + \frac{\pi}{2} - h_1]$, where the constant $h_1$ is computed explicitly by the results of Section \ref{S:family}: it is $\theta$ coordinates of the unique point on the border of the negativity region \gls{Narc} for $\Delta \phi = \tan(\bar \alpha)$. In this case the optimal solution coincides with the fastest saturated spiral of Section \ref{Sss:case_study_intro}: see Fig. \ref{Fig:spiral_opt_first}. 
\item After the angle $2\pi + \frac{\pi}{2} - h_1$, the derivative $\delta \tilde r(\bar \phi;0)$ is negative: hence the optimal solution has a initial segment of length $h$ in the direction $\bar \phi$, then it is the fastest saturated solution starting with an arc of the level set, Fig. \ref{Fig:spiral_opt_first_2}.
\item For $\bar \phi \in 2\pi + \frac{\pi}{2} + [0,h_2]$, the optimal solution candidate starts with an arc of the level set $\partial B_1(0)$, then a segment in the direction $\bar \phi$ and next the last round is the fastest saturated spiral, Fig. \ref{Fig:spiral_opt_first_3}. The value $h_2$ is computed explicitly in Section \ref{Ss:optimal_cand_3}, and corresponds to the transition from the arc case to the segment case for the fastest saturated spiral of the last round, Fig. \ref{Fig:spiral_opt_first_4}.
\item Finally, for $\bar \tau > 2\pi + \tan(\bar \alpha) + \frac{\pi}{2}$, the solution is the fastest saturated spiral up to the angle $\bar \phi - 2\pi - \bar \alpha$, followed by the tent case as described in Section \ref{Sss:optimal_solut_intro}, see Fig. \ref{Fig:opt_sol_3_tent_intro}.
\end{itemize}

\begin{figure}
\resizebox{.75\textwidth}{!}{\input{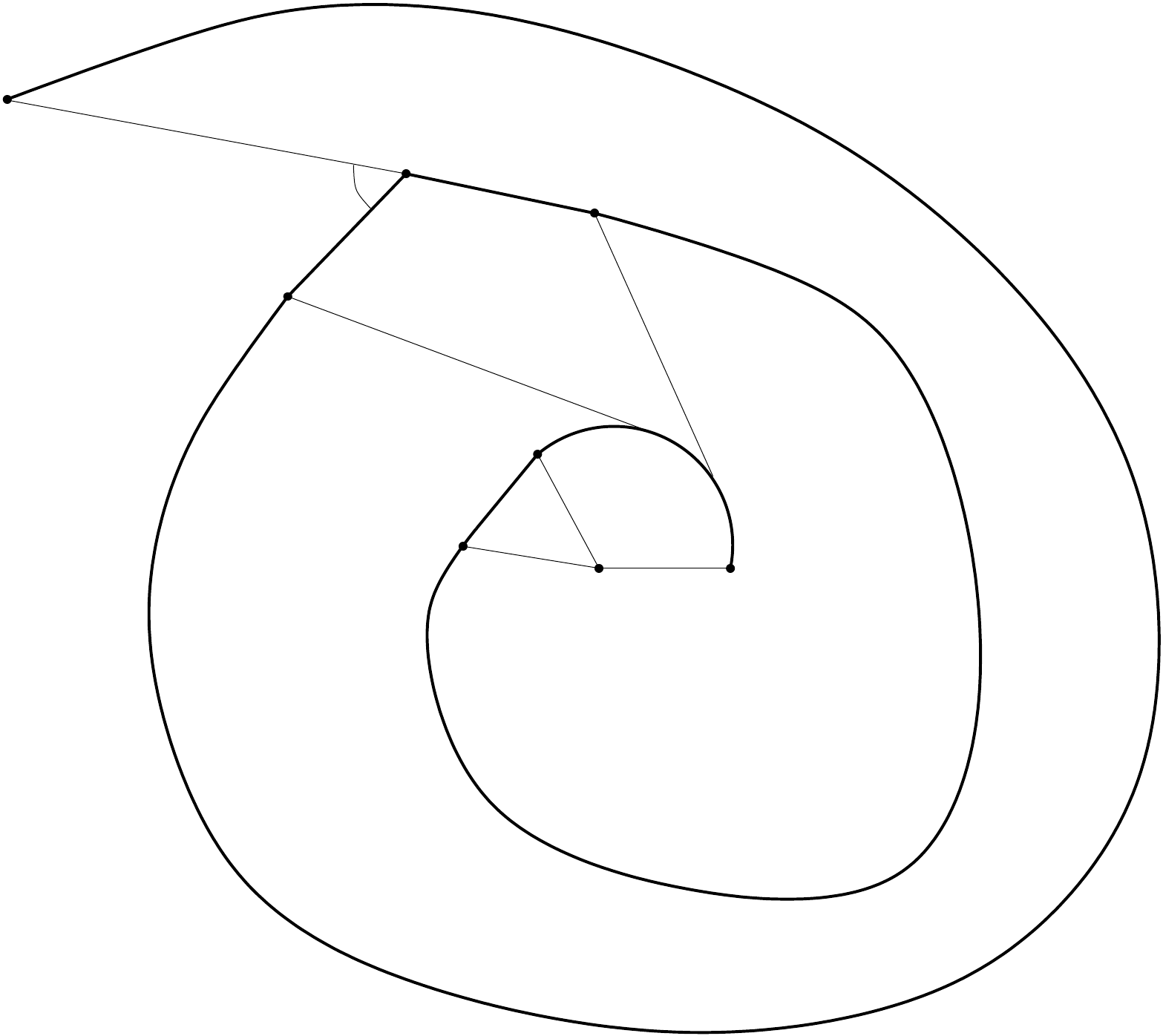_t}}
\caption{The optimal candidate $r_\mathrm{opt}(\phi)$ for $\bar \phi \geq 2\pi + \tan(\bar \alpha) + \frac{\pi}{2}$.}
\label{Fig:opt_sol_3_tent_intro}
\end{figure}

The main result of this section is that this construction yields a unique spiral barrier which is positive:

\begin{proposition}[Propositions \ref{Prop:e_opt_large_phi} and \ref{Prop:postiv_aympt}]
\label{Prop:e_opt_large_phi_intro}
The optimal solution $r_\mathrm{opt}$ is positive for all $\bar \phi$ and for large angles can be estimated as 
\begin{equation*}
r_\mathrm{opt}(\bar \phi) = 0.976359 r(\bar \phi) + \mathcal O(1)
\end{equation*}
where $r(\bar \phi)$ is the fastest saturated spiral
\end{proposition}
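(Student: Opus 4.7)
The plan is to reduce the assertion to a case-by-case verification following the four regimes of $\bar\phi$ drawn in Figure \ref{Fig:opt_spiral_intro}, and to extract the asymptotic constant from an explicit tent formula in the last regime. For the first three regimes ($\bar\phi \leq 2\pi+\tan(\bar\alpha)+\pi/2$), the construction of $r_\mathrm{opt}$ involves only a bounded range of angles past the initial data, so positivity follows from the explicit kernel representation: in each subcase the candidate is an arc of a level set plus a segment plus an arc of the fastest saturated spiral $S_{\mathtt a}$, with $\mathtt a \in [\sin(\bar\alpha),1]$, and I would read the positivity directly from the formula of the type \eqref{Equa:base_sol_intro} obtained by specializing Theorem \ref{thm:case:a_intro} with the corresponding initial data. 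The endpoint matching between regimes (i.e.\ continuity at $\bar\phi = 2\pi+\pi/2-h_1$, $2\pi+\pi/2$, $2\pi+\pi/2+h_2$ and $2\pi+\tan(\bar\alpha)+\pi/2$) is ensured by Proposition \ref{Prop:unique_optimal_guess_intro}, so the four pieces patch into a single positive function on $[0,2\pi+\tan(\bar\alpha)+\pi/2]$.

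For the fourth regime $\bar\phi \geq 2\pi+\tan(\bar\alpha)+\pi/2$, I would use the tent decomposition of Section \ref{Ss:comput_tent_new}. Applying the linear relations of Lemma \ref{Lem:relation_admissibility_tent} (and its corollary on the perturbation), $r_\mathrm{opt}(\bar\phi)$ can be written in closed form as the value of the fastest saturated spiral $\tilde r(\bar\phi;0)$ corrected by an explicit perturbation supported around the angle $\bar\phi-2\pi-\bar\alpha$; this correction is controlled by the geometry of the critical angle $\hat\theta$ introduced in Proposition \ref{Prop:regions_pos_neg_segm_intro}. Combining this with the explicit representation of $\tilde r(\cdot;0)$ via the kernel $G$ of Section \ref{Sss:green_kernels} gives $r_\mathrm{opt}(\bar\phi)$ as a finite linear combination of shifts of $G$.

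To extract the asymptotic, I would plug the asymptotic expansion of $G$ from Lemma \ref{Lem:aympt_g}, namely $G(\phi)\sim \phi e^{\bar c\phi}$ up to explicit constants, into the tent formula. The shifts on the $G$ factors (by $0$, by $2\pi+\bar\alpha$ minus the tent parameters, etc.) contribute fixed exponential prefactors; summing them produces a scalar multiple of $r(\bar\phi)$ to leading order, plus lower-order terms of size $\mathcal O(e^{\bar c\bar\phi})$ which are absorbed into the $\mathcal O(1)$ relative to $r(\bar\phi)\sim \bar\phi e^{\bar c\bar\phi}$. The precise coefficient $0.976359$ would be obtained by numerically evaluating the resulting transcendental combination at the values $\hat\theta$, $\bar\alpha$, $\bar c$, using the Software Mathematica as the rest of the paper does. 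Positivity for large $\bar\phi$ is then automatic from the positivity of this leading term.

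The main obstacle I expect is twofold. First, verifying that the tent correction does not cancel the leading $\bar\phi e^{\bar c\bar\phi}$ behavior of $\tilde r$: one must check that the linear combination of shifts of the kernel projects nontrivially on the eigenspace associated to the real eigenvalue $\bar c$ of the characteristic equation. Second, in the intermediate regimes (Figures \ref{Fig:spiral_opt_first_2}--\ref{Fig:spiral_opt_first_4}) the piecewise construction must be shown to match smoothly enough for the explicit evaluation of $r_\mathrm{opt}$ to stay positive uniformly in $\bar\phi$; here I would rely on the continuity of the parameters $h,\check\ell,\check\phi,\Delta\check\phi,\ell_0,\ell_1$ in $\bar\phi$, established implicitly by Proposition \ref{Prop:unique_optimal_guess_intro}, and perform a numerical check of positivity on the compact transition intervals $[2\pi+\pi/2-h_1,2\pi+\tan(\bar\alpha)+\pi/2]$ exactly as done in Proposition \ref{prop:not_closed_intro} for the saturated spiral.
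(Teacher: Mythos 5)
Your proposal follows essentially the same route as the paper: a case analysis over the four regimes of $\bar\phi$ with explicit formulas and numerical positivity checks in the bounded regimes, the tent relations of Lemma \ref{Lem:relation_admissibility_tent} combined with the asymptotic expansion of the kernel to extract the constant $0.976359$, and Lemma \ref{lem:key_intro} to propagate positivity from a finite numerical verification to all angles in the unbounded regime. The only streamlining you miss is that for positivity in the saturated regime the paper does not expand the full tent but invokes Lemma \ref{Lem:final_value_satu} to reduce everything to the two-term comparison $\rho(\bar\tau)-\tfrac{1-\cos\hat\theta}{\cos(\bar\alpha-\hat\theta)-\cos\bar\alpha}e^{-\bar c(2\pi+\bar\alpha)}\rho(\bar\tau-1)$, which is then handled exactly as you describe.
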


A remark is natural here: the gain of optimality is about 2.25\% w.r.t. the fastest saturated spiral, so one may suspect that there are some simpler proofs of this result. Besides the fact that knowing the optimal solution is not a trivial fact, the difficulty in obtaining estimates is due to the RDE satisfied by $r(\phi)$, which makes comparisons difficult. One the other hand this method  can be a powerful tool, since it can be applied also for speeds which are not the critical one, and also to other geometric situations.

\subsubsection{Sections \ref{S:segment_case} and \ref{S:arc_1}}
\label{Sss:final_case_opt_intro}

At this point, having proved that for any given spiral barrier $\zeta$ then one can construct the one parameter family of spirals $\check r(\cdot;s_0)$ starting from $r_\mathrm{opt}$ and ending in $\zeta$, we are left to prove that $s_0 \mapsto \check r(\bar \phi;s_0)$ is increasing for all $\bar \phi$, thus effectively removing the negative regions of Proposition \ref{Prop:regions_pos_neg_segm_intro} (and Proposition \ref{Prop:neg_region} in the arc case) because of the additional perturbation to $\tilde r(\cdot;s_0)$. Since the structure of $\check r(\cdot;s_0)$ is more complicated, we have to consider several geometric situations, depending on the relative distance of $\bar \phi$ from $\phi_0$ and the structure of $\tilde r(\cdot;s_0)$.

\begin{figure}
\begin{subfigure}{.475\textwidth}
\resizebox{\textwidth}{!}{\input{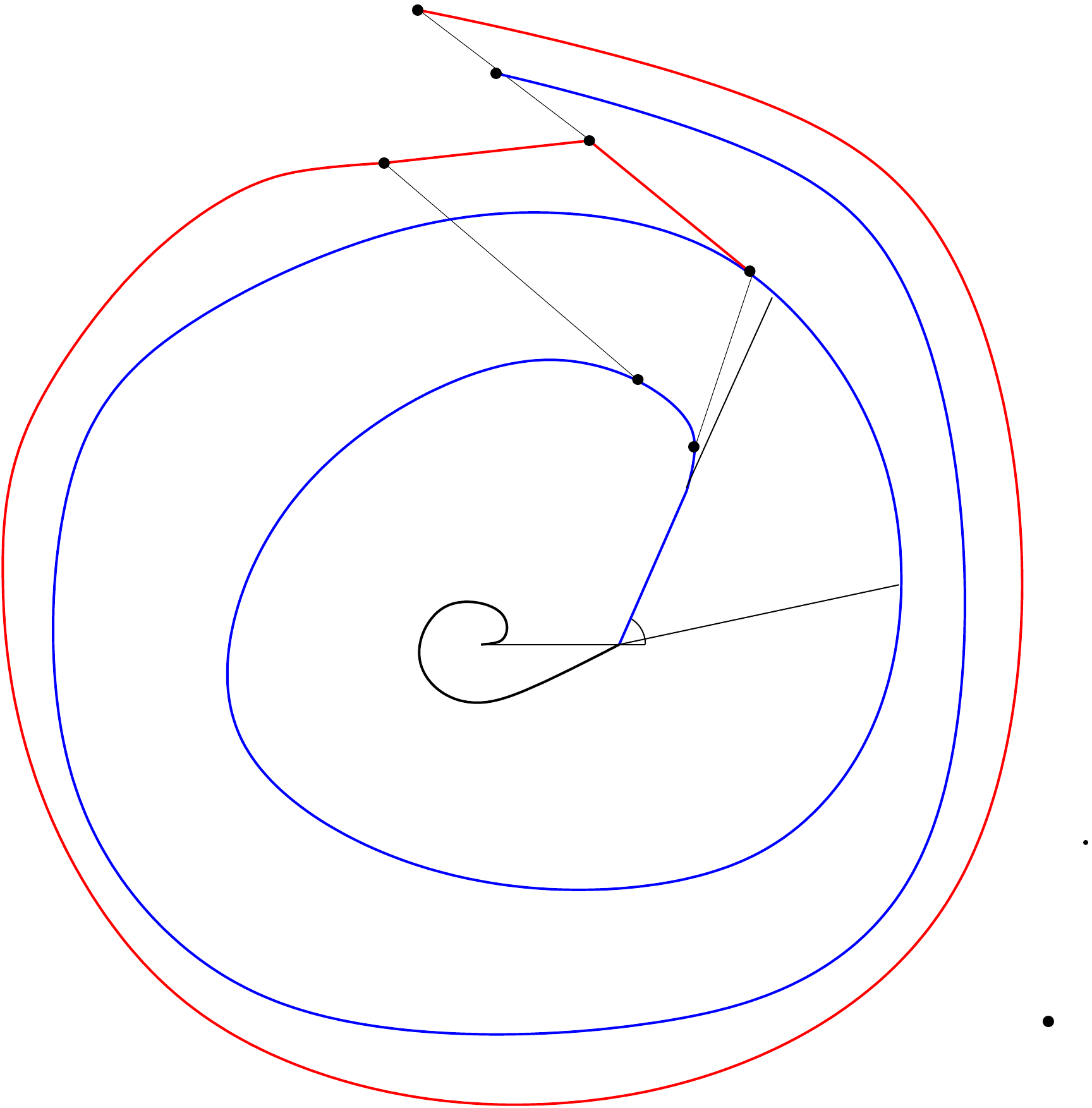_t}}
\caption{Segment-segment case.}
\label{Fig:deltacheckr_1}
\end{subfigure} \hfill
\begin{subfigure}{.475\textwidth}
\resizebox{\textwidth}{!}{\input{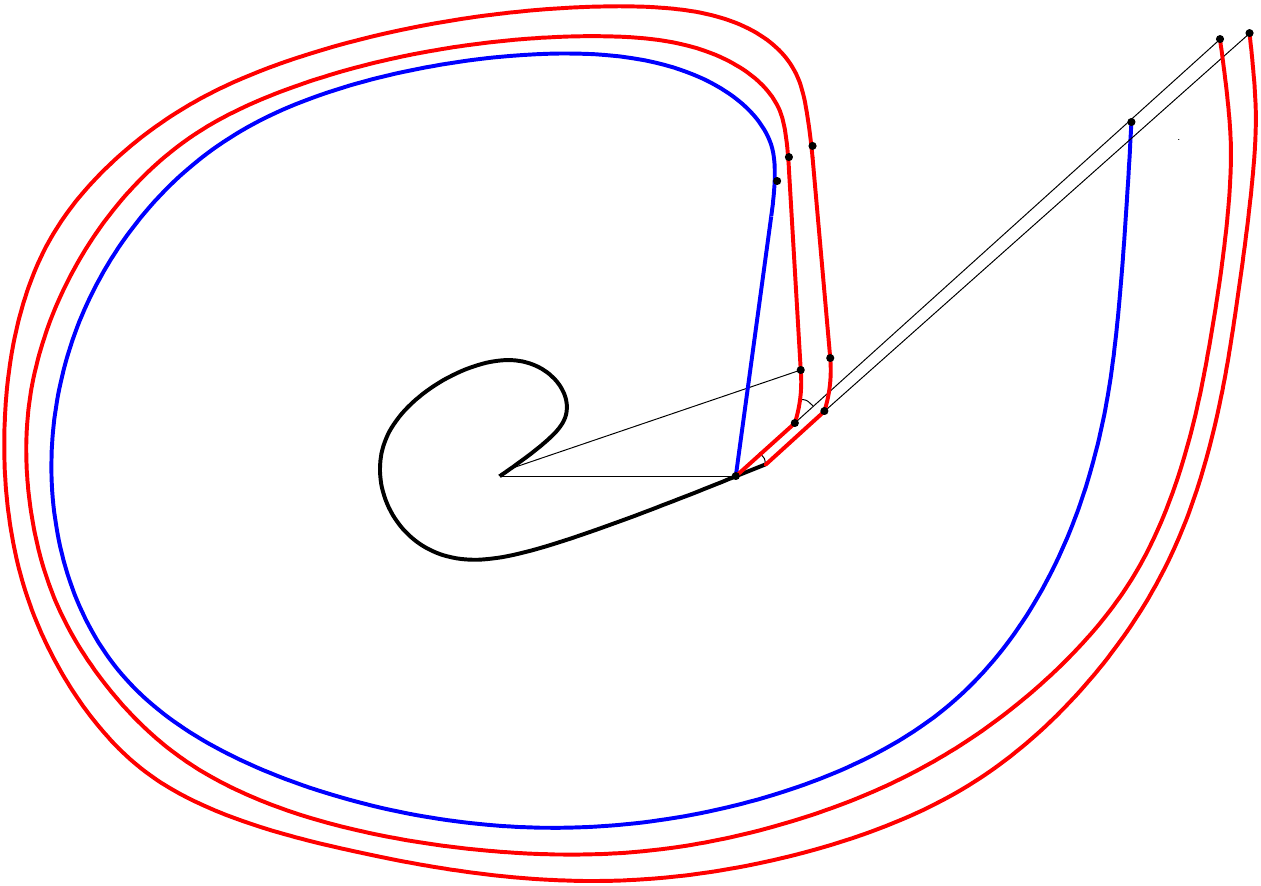_t}}
\caption{Segment-arc case.}
\label{Fig:deltacheckr_2}
\end{subfigure}
\begin{subfigure}{.475\textwidth}
\resizebox{\textwidth}{!}{\input{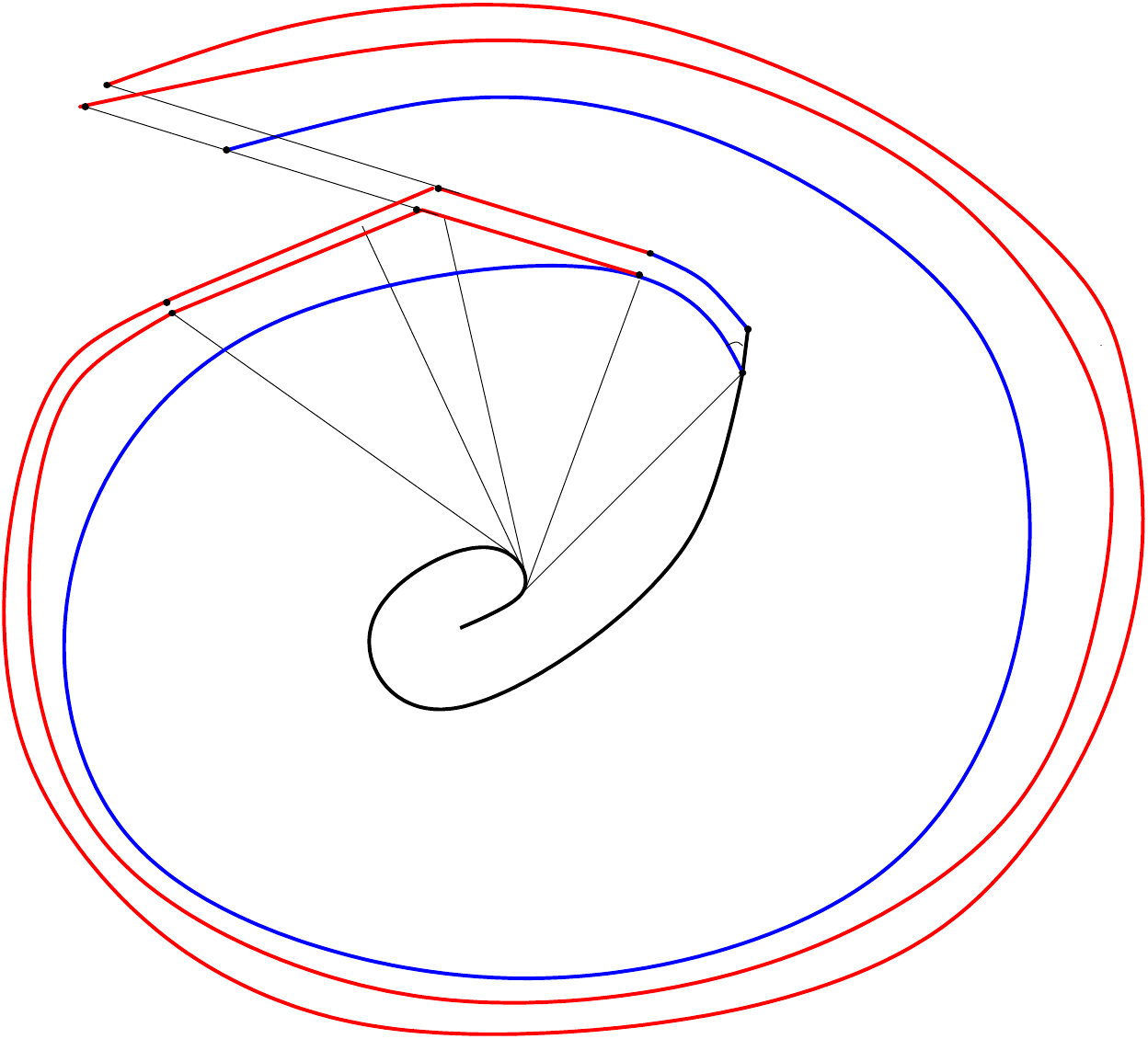_t}}
\caption{Arc-segment case.}
\label{Fig:deltacheckr_3}
\end{subfigure} \hfill
\begin{subfigure}{.475\textwidth}
\resizebox{\textwidth}{!}{\input{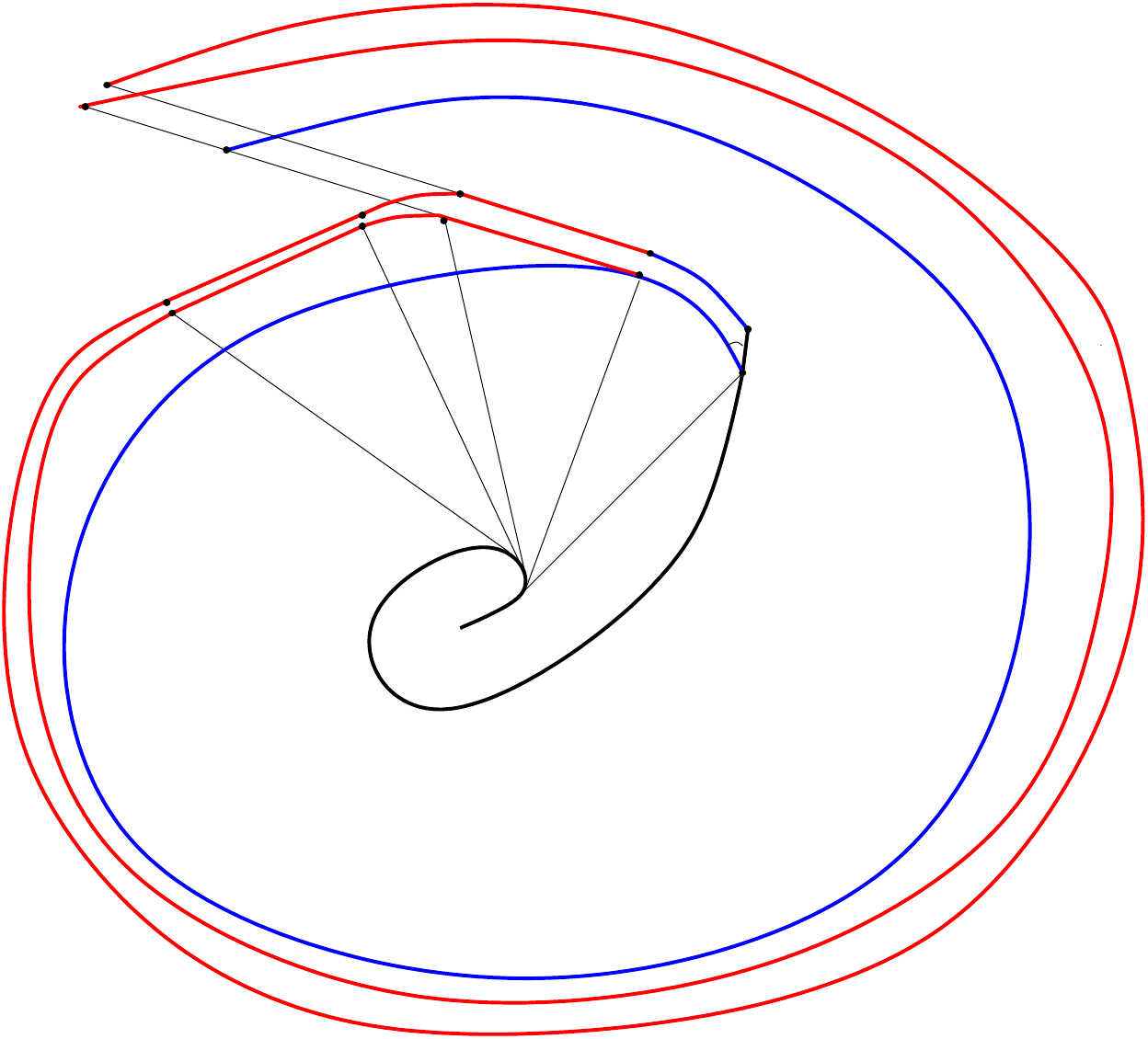_t}}
\caption{Arc-arc case..}
\label{Fig:deltacheckr_4}
\end{subfigure}
\caption{The 4 main geometric situations for the optimal solution $\check r$.}
\label{Fig:deltacheckr}
\end{figure}

There are 4 main cases, each one with several subcases:
\begin{enumerate}
\item $\tilde r(\cdot;s_0)$ starts with a segment and its perturbation $\check r(\cdot;s_0)$ has an additional tent, Fig. \ref{Fig:deltacheckr_1}: we call it the \emph{segment-segment case};
\item $\tilde r(\cdot;s_0)$ starts with a segment and its perturbation $\check r(\cdot;s_0)$ end with an arc, Fig. \ref{Fig:deltacheckr_2}: we call it the \emph{segment-arc case};
\item $\tilde r(\cdot;s_0)$ starts with an arc and its perturbation $\check r(\cdot;s_0)$ has an additional tent, Fig. \ref{Fig:deltacheckr_3}: we call it the \emph{arc-segment case};
\item $\tilde r(\cdot;s_0)$ starts with an arc and its perturbation $\check r(\cdot;s_0)$ ends with an arc, Fig. \ref{Fig:deltacheckr_4}: we call it the \emph{arc-arc case}.
\end{enumerate}

The almost 20 subscases are due to the relative position of the final segment-arc and the initial segment-arc: referring for example to Fig. \ref{Fig:deltacheckr_1}, the tent may start inside the cone of opening $\theta$, instead that afterwards as depicted. Since this region is exactly when $\tilde r$ is discontinuous (it has also a Dirac-delta!), we did not find a general way of reducing these cases to a general situation, and we analyzed them one by one.

It is important to notice that, because of Proposition \ref{Prop:tent_admissible_intro}, the segment-segment or arc-segment case are the only possibilities after some small initial angle of rotation, and this fact greatly simplifies the analysis: for example, we do not need to study the arc-arc case for large angles.

The main results are a series of lemmata, which state that the derivative $\delta \check r(\bar \phi;s_0)$ is positive: we collect their statements into the following:

\begin{proposition}
\label{Prop:deltacheckr_intro}
For a.e. every choice of $0 \leq \phi_0 < \bar \phi$ it holds $\delta \check r(\bar \phi;\phi_0) \geq 0$, with the equality only if $\dot \zeta(\phi_0+)$ is tangent to the optimal spiral $\check \zeta(\phi_0+;\phi_0)$.
\end{proposition}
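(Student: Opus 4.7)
The plan is to reduce the proposition to an explicit positivity statement for a linear RDE with source, in the spirit of the analysis of $\delta\tilde r$ in Section \ref{S:family}. Writing $\check r(\phi;s_0) = \tilde r(\phi;s_0) + \eta(\phi;s_0)$, where $\eta$ encodes the tent correction of Section \ref{Ss:comput_tent_new}, one has $\delta\check r = \delta\tilde r + \delta\eta$. The first term is already controlled by Propositions \ref{Prop:regions_pos_neg_segm_intro} and \ref{Prop:neg_region}: it is positive outside the negativity regions $N_\mathrm{segm}$ and $N_\mathrm{arc}$, and diverges like $\theta(\theta+\Delta\phi)e^{\bar c\tau}\tau$ in the rescaled variable. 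The correction $\delta\eta$ is computed from the linear relations of Lemma \ref{Lem:relation_admissibility_tent} and Corollary \ref{Cor:tent_corol_for_pert}, so it also solves the same RDE with a source given explicitly in terms of the tent parameters $\check\ell_0,\check\ell_1,\check\theta,\Delta\check\phi$.

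The case split follows the four configurations of Figure \ref{Fig:deltacheckr}. In the segment-segment and arc-segment cases (Figures \ref{Fig:deltacheckr_1} and \ref{Fig:deltacheckr_3}), Proposition \ref{Prop:tent_admissible_intro} guarantees that the tent is admissible whenever $P_0$ is saturated, so for $\bar\phi$ at distance $\geq 2\pi+\tan(\bar\alpha)+\pi/2$ from $\phi_0$ the structure of $\check r$ is exactly the one built in Section \ref{Sss:optimal_solut_intro}. In this regime I would invoke Lemma \ref{Lem:final_value_satu}, which converts the sign of $\delta\check r(\bar\phi;s_0)$ after two rounds into a weighted positivity test on $\delta\tilde r$ in the region outside $N_\mathrm{segm}\cup N_\mathrm{arc}$, where positivity is already established. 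The tent source has precisely the right sign to cancel the Dirac-delta contributions that create the negative subregion of $\delta\tilde r$ at $\tau\in[1-\theta/(2\pi+\bar\alpha),1]$.

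The delicate work is in the segment-arc and arc-arc cases (Figures \ref{Fig:deltacheckr_2} and \ref{Fig:deltacheckr_4}), and in the initial angles $\bar\phi$ close to $\phi_0$ where the tent has not yet settled into the admissible regime and Lemma \ref{Lem:final_value_satu} does not apply. Here I would treat each of the roughly twenty subcases according to whether $\check P_1$ lies inside or outside the opening cone of angle $\theta$ at $P_0$, parametrising each configuration by $(\theta,\Delta\phi,\check\ell_0,\check\ell_1)$ and writing $\delta\check r$ as a linear combination of shifted Green kernels $G$ with coefficients that are explicit trigonometric functions of these parameters. After rescaling by $e^{-\bar c\tau}$ and dividing by $\theta(\theta+\Delta\phi)$, the resulting function is smooth for $\tau\geq 3$ (by the regularising effect of the RDE) and can be bounded below by positive constants via the asymptotics of $g$ from Lemma \ref{Lem:aympt_g}; the first three rounds are handled by Lemma \ref{lem:key_intro} together with the numerical verification in Mathematica, analogous to the plot in Figure \ref{Fig:base_sol_intro} and the strategy of the proof of Theorem \ref{thm:case:a_intro}.

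For the equality statement, vanishing of $\delta\check r(\bar\phi;\phi_0)$ forces the source of the RDE for $\delta\check r$ to be zero, which by the explicit form of the tent source (the Dirac-delta and its derivative coefficients) forces $\theta=0$ and the tent parameters to coincide with those of $\check\zeta(\phi_0+;\phi_0)$. This is exactly the statement that $\dot\zeta(\phi_0+)$ is tangent to the optimal continuation. The main obstacle will be the bookkeeping of the subcases in the arc-arc configuration together with the verification that the Dirac-delta coefficients in the tent source always match in sign the singular contributions of $\delta\tilde r$; once this is checked case by case, the positivity of the remaining absolutely continuous part follows uniformly from the asymptotic estimates already in place.
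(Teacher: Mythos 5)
Your plan is essentially the paper's own proof: the decomposition $\delta\check r = \delta\tilde r + (\text{tent correction})$ via Lemma \ref{Lem:relation_admissibility_tent} and Corollary \ref{Cor:tent_corol_for_pert}, the reduction to the weighted comparison of Lemma \ref{Lem:final_value_satu} once the base of the tent lies where $\delta\tilde r \geq 0$, the explicit case-by-case treatment of the remaining configurations as combinations of shifted Green kernels with numerical verification of the first rounds followed by Lemma \ref{lem:key}, and the equality statement deduced from lower bounds of order $\theta^2$ (forcing $\theta = 0$, i.e.\ tangency) are exactly what Sections \ref{S:segment_case} and \ref{S:arc_1} carry out. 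The only imprecision is the threshold for invoking Lemma \ref{Lem:final_value_satu}: convexity of the tent's base requires roughly two rounds ($\bar\tau \geq 2$), not merely admissibility of the tent ($\bar\tau \geq 1$), so the explicit subcase analysis must also cover the band $1 \leq \bar\tau \leq 2$ in the segment-segment and arc-segment configurations, which your fallback to the twenty subcases already accommodates.
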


This gives the proof of Theorem \ref{Theo:main_extended}, and concludes the paper.

\subsubsection{Appendix \ref{S:num_code}}
\label{Sss:num_code_intro}

The Mathematica numerical code is collected in Appendix \ref{S:num_code}.

\section{Notation and definition of admissible spirals}
\label{s:admissible:barriers}

In this section we give the definition of admissible spiral barriers (Definition \ref{def:adm:spirals}), and we construct the angle-ray representation (Theorem \ref{Cor:angle_preprest}). Other preliminary results are the justification of the simplified setting $\Omega_0 = \{(0,0)\}$ and that the barrier lies outside $B_1(0)$ (Proposition \ref{Prop:limit_speed}) and the round-to-round decomposition of spirals of Proposition \ref{thm:param_1}.

\subsection{Notation}
\label{Ss:notation_general}

We will use the following notations:
\begin{itemize}
\item \newglossaryentry{Rreal}{name=\ensuremath{\R},description={real numbers}} \newglossaryentry{Ccomplex}{name=\ensuremath{\C},description={complex numbers}} the field of real/complex numbers are denoted by \gls{Rreal},\gls{Ccomplex} respectively, the imaginary unit is \gls{iimagi};

\item \newglossaryentry{Rscal}{name=\ensuremath{rR},description={scales set $\{rx,x\in R\}$}} \newglossaryentry{S1}{name=\ensuremath{\mathbb S^1},description={unit cicrle in $\R^2$}} if $R \subset \R^2$ and $r \in \R$, we write
\begin{equation*}
rR = \{ rx, x \in R\}.
\end{equation*}
The unit circle in $\R^2$ is denoted by \gls{S1}.

\item \newglossaryentry{Rez}{name=\ensuremath{\Re(z)},description={real part of the complex number $z$}}\newglossaryentry{Imz}{name=\ensuremath{\Im(z)},description={imaginary part of the complex number $z$}} We will often make use of the identification of $\R^2$ with $\C$ for shortening notation: \gls{Rez}, \gls{Imz} are the real, imaginary part of the complex number $z \in \C$; we will also use $e^{\i \phi} \in \C$ to denote the unit vector $(\cos(\phi),\sin(\phi)) \in \mathbb S^1$;

\item \newglossaryentry{Z}{name = \ensuremath{Z}, description={admissible strategy (i.e. a rectifiable subset of the plane $\R^2$ with finite length)}} \gls{Z} is an admissible strategy (i.e. a rectifiable subset of the plane $\R^2$ with finite length);

\item \newglossaryentry{RZt}{name=\ensuremath{R^Z(t)},description={burned region at time $t$ by the fire constrained by the barrier $Z$}} \gls{RZt} is the burned region at time $t$ by the fire constrained by the barrier $Z$, see Equation \eqref{eq:burned:region};

\item \newglossaryentry{R0}{name=\ensuremath{R_0},description={initial burning region}} $\gls{R0} = R^Z(0)$ is an open set such that its boundary has empty interior: during the analysis we will assume that $R_0 = B_r(0)$, $r \in (0,1)$ (the \emph{starting set}), and we will let $r \searrow 0$ to make the results independent on $r$, i.e. $R(0) = \{0\}$ (see Proposition \ref{Prop:limit_speed});

\item \newglossaryentry{sigma}{name=\ensuremath{\sigma},description={speed of construction of the barrier}} \newglossaryentry{barsigma}{name=\ensuremath{\bar \sigma},description={critical speed, Corollary \ref{Cor:eigen_angle}}} \newglossaryentry{alphabar}{name=\ensuremath{\bar \alpha},description={critical angle, corresponding to the critical speed $\bar \sigma = \frac{1}{\cos(\bar \alpha)}$}} \gls{sigma} is the construction speed of the barrier; we will indicate as \gls{barsigma} the critical speed $2.6144..$ and the corresponding critical angle $\gls{alphabar} =\arccos\left(\frac{1}{\bar\sigma}\right)$ (see Corollary \ref{Cor:eigen_angle});

\item \newglossaryentry{Gamma}{name=\ensuremath{\Gamma},description = {set of admissible curves for the fire spreading}} \newglossaryentry{gamma}{name=\ensuremath{\gamma},description={Lipschitz curves in $\R^2$}} the set \gls{Gamma} of admissible curves (admissible trajectories of the fire) is given by
\begin{equation}
\label{eq:admissible:curves}
\Gamma = \Big\lbrace \gls{gamma} \in \mathrm{Lip}([0,1],\R^2), \gamma((0,1)) \cap Z = \emptyset 
\Big\rbrace;
\end{equation}

\item \newglossaryentry{distxy}{name=\ensuremath{d(x,y)},description={distance between points by admissible curves}} \newglossaryentry{Lgamma}{name=\ensuremath{L(\gamma)},description={length of the curve $\gamma$}} the distance of two points is
\begin{equation*}
\gls{distxy} = \inf \Big\lbrace \gls{Lgamma}, \gamma \in \Gamma, \gamma(0) = x,\gamma(1)=y \Big\rbrace,
\end{equation*}
where we indicated by \gls{Lgamma} the length of the curve;

\item \newglossaryentry{vwvectors}{name=\ensuremath{\mathbf v,\mathbf w},description={generic vectors in $\R^2$}} \newglossaryentry{vperp}{name=\ensuremath{\mathbf v^\perp},description={vector rotated by $\frac{\pi}{2}$ counterclockwise}} if $\mathbf v$ is any vector in the plane, \gls{vperp} is its counterclockwise rotation of $\frac{\pi}{2}$ such that $\lbrace \textbf v,\textbf v^{\perp}\rbrace$ is oriented as the canonical base;

\item \newglossaryentry{anglevw}{name=\ensuremath{\angle( \mathbf v,\mathbf w)},description={angle between the vectors $\mathbf v,\mathbf w$}} \newglossaryentry{vcdotwscalarprod}{name=\ensuremath{\mathbf v \cdot \mathbf w},description={scalar product in $\R^2$}} \newglossaryentry{vcdotwexternalprod}{name=\ensuremath{\mathbf v \wedge \mathbf w},description={vector product in $\R^2$}} given two vectors \gls{vwvectors} of the plane, \gls{anglevw} will indicate the angle between the two vectors and \gls{vcdotwscalarprod} their scalar product, \gls{vcdotwexternalprod} their vector product;

\item \newglossaryentry{A:B}{name=\ensuremath{A:B},description={scalar product of matrices}} \newglossaryentry{votimesv}{name=\ensuremath{\mathbf v \otimes \mathbf{v'}},description={tensor product of two vectors}} the scalar product of real matrices is
\begin{equation*}
\gls{A:B} = \sum_{ij} a_{ij} b_{ij},
\end{equation*}
the tensor product of two vectors is \gls{votimesv}.

\item \newglossaryentry{Admspiral}{name=\ensuremath{\mathcal{A}_S},description={set of admissible spirals, Definition \ref{def:adm:spirals}}} \gls{Admspiral} denotes the admissible spirals, see Definition \ref{def:adm:spirals};

\item \newglossaryentry{pointPQ}{name=\ensuremath{P,Q},description={generic points in $\R^2$}} if $Z$ is a simple rectifiable Lipschitz curve and $P \in Z$, i.e. $P = \zeta(s)$ for some $s \in [0,\infty)$, we will often use the notation $\zeta(P),\dot\zeta(P),\ddot\zeta(P),\mathbf t(P),\mathbf t^{\pm}(P)$ for the corresponding $\zeta(s),\dot\zeta(s),\ddot \zeta(s)$, $\mathbf t(s),\mathbf t^{\pm}(s)$, where the latest are the (left/right) tangent vectors to $\zeta$; 

\item \newglossaryentry{PQopen}{name=\ensuremath{(P,Q)},description={open segment with endpoints $P,Q$}} \newglossaryentry{PQclos}{name=\ensuremath{[P,Q]},description={close segment with endpoints $P,Q$}} \newglossaryentry{PQsemi}{name=\ensuremath{[P,Q)},description={segment with endpoints $P,Q$ open in $Q$}} given $P,Q \in \R^2$, we write 
\begin{equation*}
\gls{PQopen} = \big\{ (1 - \ell) P + \ell Q, \ell \in (0,1) \big\}
\end{equation*}
to be the open segment with endpoints $P,Q$, while \gls{PQclos} will denote the closed one. We will also use the notation $\gls{PQsemi}=[P,Q]\setminus\lbrace Q\rbrace$ or $(P,Q] = [P,Q] \setminus \{P\}$ with obvious meaning.
\end{itemize}

\newglossaryentry{utimefunct}{name=\ensuremath{u},description={minimum time function}} The \emph{minimum time function} \gls{utimefunct} is defined as
\begin{equation}\label{eq:solution:HJ}
\gls{utimefunct}(x) = \inf \Big\{ L(\gamma) : \gamma \in \Gamma, \gamma(0) \in \gls{R0}, \gamma(1) = x \Big\} = d(x,R_0).
\end{equation}
It is the the time the fire needs to reach a point $x\in\R^2$ from the starting set $R_0$; it is also the solution to the Eikonal equation \eqref{eq:hamilton:jacobi}. In the case $R_0 = \{0\}$ then $u(x) = d(x,0)$.

\begin{definition}[Optimal ray]
\label{def:optimal:ray}
Let us fix a point $x \in \R^2$. An \emph{optimal ray} \newglossaryentry{bargammax}{name=\ensuremath{\bar{\gamma}_x},description={optimal ray for the point $x$}} \gls{bargammax} from $R_0$ to the point $x$ is a Lipschitz path $\bar\gamma : [0,1] \rightarrow \R^2$, $\gamma \in \text{Lip}([0,1];\R^2)$ with the following property: there exists a sequence $\lbrace\gamma_n \rbrace \subset \Gamma$ minimizing \eqref{eq:solution:HJ}, with $\gamma(0) \in R_0$ and $\gamma(1) = x$ such that $\gamma_n\to\bar{\gamma}_x$ uniformly. \\ 
We call \newglossaryentry{barGamma}{name=\ensuremath{\bar{\Gamma}},description={set of optimal rays}} \gls{barGamma} the \emph{set of optimal rays}.
\end{definition}

We say that, given a strategy $Z\subset\R^2$ and given the starting set $R_0\subset \R^2$, the barrier $Z$ is \emph{admissible} if
\begin{equation}
\label{Equa:admissbbd}
\H^1(Z \cap \overline{R^Z(t)}) \leq \sigma t,\quad\forall t\geq 0.
\end{equation}
Moreover, it is an \emph{admissible blocking strategy} if it is admissible and \newglossaryentry{RZinfty}{name=\ensuremath{R^Z_\infty},description={burned set for a given strategy $Z$}}
\begin{equation*}
\gls{RZinfty} = \bigcup_{t\geq 0} R^Z(t)
\end{equation*}
is bounded. We define the \emph{admissibility functional} as \newglossaryentry{Acaladmisx}{name=\ensuremath{\mathcal A(x)},description={admissibility functional of the point $x$}}
\begin{equation}
\label{eq:admiss:funct}
\gls{Acaladmisx} = u(x)-\frac{1}{\sigma} \H^1 \big( Z\cap\overline{R^Z(u(x))} \big), \quad\forall x\in Z,
\end{equation}
and we observe that it is positive iff $Z$ is admissible. We will denote by \newglossaryentry{Scal}{name=\ensuremath{\mathcal S},description={set of saturated points}}
\begin{equation}
\label{eq:sat:set}
    \gls{Scal} = \big\lbrace x\in Z: \mathcal{A}(x)=0 \big\rbrace,
\end{equation}
the \emph{saturated set of $Z$}, and $x \in \mathcal{S}$ will be called \emph{saturated points}.

\subsection{Some preliminaries}
\label{Ss:prelimnaries}

We collect here some simple observation which allows to simplify the problem and just consider a precise initial geometric situation: $R_0 = \{0\}$ and $Z \cap B_1(0) = \emptyset$.

We first observe that, if the strategy $Z$ is admissible for the initial set $R_0$, then for all $r > 0$ the strategy $\check Z = rZ$ is admissible for $\check R_0 = r R_0$: indeed, it holds
\begin{equation*}
\check R_{rt} = r R_t, 
\end{equation*}
and then
\begin{equation*}
\mathcal H^1 \big( \check Z \cap \overline{\check R^{\check Z}(rt)} \big) = r \mathcal H^1 \big( Z \cap \overline{R^{Z}(t)} \big) \leq \sigma (r t).
\end{equation*}
We deduce that to prove existence of blocking strategies it is sufficient to construct a confining admissible barriers when $R_0$ is contained in a ball.

Clearly if $\check R \subset R$, then $Z$ admissible for $R$ is also admissible for $\check R$: hence it is enough to consider \newglossaryentry{B1(0)}{name=\ensuremath{B_1(0)},description={unit ball centered in the origin}} $R = \gls{B1(0)}$.

The previous observation gives that if $Z$ is admissible for $B_1(0)$, then $Z$ is also admissible for $R_0 = \{(0,0)\}$. Since  constructing barriers inside $R_0$ does not change $u(x)$ (they should have $\H^1$-measure $0$ for admissibility), we conclude that if there are no blocking admissible barriers outside $B_1(0)$ for $R_0 = \{0\}$, then there are no blocking admissible barriers for all $R_0 \subset \R^2$ open bounded.

We can thus pass to the limit and consider $R_0 = \{(0,0)\}$ with $Z \cap B_1(0) = \emptyset$: if no blocking barriers $Z$ are admissible in this configuration, then the same holds for all $R_0$. Note that in this setting the problem is invariant for scaling and rotations of the barrier, so that we can assume that $(1,0) \in Z$.

On the other hand, if a barrier is admissibile for $R_0 = \{(0,0)\}$ with the speed $\sigma$, then it is admissible for $B_\epsilon(0)$ with speed $\frac{\sigma}{1-\epsilon}$: indeed, denoting by $R^Z(t)$ the burned region for $R_0 = \{(0,0)\}$ and $R^Z_\epsilon(t)$ the one with $R_0 = B_\epsilon(0)$, it holds
$$
R_{\epsilon}^Z(t-\epsilon) = R^Z(t)
$$
and then
\begin{equation*}
\mathcal H^1(Z \cap \overline{R^Z_\epsilon
(t-\epsilon)}) = \mathcal H^1(Z \cap R^Z(t)) \leq \sigma t \leq \sigma \frac{t}{t-\epsilon} (t - \epsilon) \leq \frac{\sigma}{1 - \epsilon} (t-\epsilon),
\end{equation*}
being $t \geq 1$ because $Z \cap B_1(0) = \emptyset$. We thus conclude that if the fire can be blocked when $R_0 = \{(0,0)\}$ with velocity $\sigma$, then it can be blocked for any $R_0$ open bounded with any speed $> \sigma$.

Since the same scaling argument shows that the set of speeds $\sigma$ for which there is a blocking barrier is an open unbounded interval $(\bar \sigma,+\infty)$, we have proved the following

\begin{proposition}
\label{Prop:limit_speed}
There exists a value $\bar \sigma$ such that:
\begin{enumerate}
\item if $\sigma < \bar \sigma$ it is not possible to block the fire $R_0 = \{(0,0)\}$, and then also for all open bounded initial $R_0$;
\item if $\sigma > \bar \sigma$ then it is possible to block the fire for all $R_0$ and then for $R_0 = \{(0,0)\}$;
\item in the critical value it is maybe possible to block the fire with $R_0 = \{(0,0)\}$, but not for any other $R_0$ open bounded.
\end{enumerate}
\end{proposition}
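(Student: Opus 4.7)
The plan is to combine the two scaling and monotonicity observations already isolated just before the statement into a single definition of $\bar\sigma$, and then to read off the three alternatives by keeping careful track of two slightly different sets of admissible speeds. First I would formalise the two invariance properties of the admissibility inequality \eqref{Equa:admissbbd}: under the homothety $(Z,R_0,t)\mapsto(rZ,rR_0,rt)$ both sides scale by the same factor $r$, so admissibility at a fixed speed $\sigma$ is preserved, and under shrinking the initial set $\check R_0\subset R_0$ the reachable set $R^Z$ is monotone, so any admissible blocking strategy for $R_0$ is also one for $\check R_0$ at the same speed. Sandwiching an arbitrary open bounded $R_0$ between balls $B_\epsilon(x_0)\subset R_0\subset B_R(x_1)$ and rescaling, this reduces everything to two families of speeds: the set $\Sigma_1$ of $\sigma$ for which blocking is possible for $R_0=B_1(0)$, and the set $\Sigma_0$ of $\sigma$ for which blocking is possible in the degenerate case $R_0=\{0\}$ with $Z\cap B_1(0)=\emptyset$.

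Next I would show that both $\Sigma_0$ and $\Sigma_1$ are upward closed, which is immediate because enlarging $\sigma$ only relaxes \eqref{Equa:admissbbd}. The monotonicity statement gives the inclusion $\Sigma_1\subset\Sigma_0$; for the converse direction I would invoke the computation performed just before the statement, which produces, from any admissible blocking strategy $Z$ for $R_0=\{0\}$ at speed $\sigma$, an admissible blocking strategy for $B_\epsilon(0)$ at speed $\sigma/(1-\epsilon)$ for every $\epsilon\in(0,1)$. Rescaling $B_\epsilon(0)$ back to $B_1(0)$ by a factor $1/\epsilon$ yields $\sigma/(1-\epsilon)\in\Sigma_1$ for every $\epsilon\in(0,1)$, hence $(\sigma,+\infty)\subset\Sigma_1$ whenever $\sigma\in\Sigma_0$. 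Consequently $\Sigma_1$ is open, and together with upward closure this forces $\Sigma_1=(\bar\sigma,+\infty)$ for some threshold $\bar\sigma\in[0,+\infty]$; Theorems \ref{thm:existence:2} and \ref{thm:nonexistence:1} then ensure that $1<\bar\sigma\leq 2$.

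Finally I would conclude by reading off the three alternatives. Parts (1) and (2) of the statement reduce, via the identity $\Sigma_{R_0}=\Sigma_1$ for every open bounded $R_0$, to the single assertion $\Sigma_1=(\bar\sigma,+\infty)$ just established: for $\sigma>\bar\sigma$ a blocking barrier exists for $B_1(0)$ and hence for every $R_0$, while for $\sigma<\bar\sigma$ no blocking barrier exists for $B_1(0)$, and by monotonicity and the implication $\Sigma_0\subset[\bar\sigma,+\infty)$ (the contrapositive of $\sigma\in\Sigma_0\Rightarrow(\sigma,+\infty)\subset\Sigma_1$) none exists for $\{0\}$ either. Part (3) is the residual boundary case: the chain of inclusions $\Sigma_1\subset\Sigma_0\subset[\bar\sigma,+\infty)$ leaves exactly two possibilities, $\Sigma_0=(\bar\sigma,+\infty)$ or $\Sigma_0=[\bar\sigma,+\infty)$, and in either case the same implication shows that for $\sigma=\bar\sigma$ no blocking strategy exists for any $R_0$ with nonempty interior, while the singleton case $R_0=\{0\}$ may or may not be blockable.

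I do not foresee a genuine obstacle here, since every quantitative ingredient has already been isolated in the paragraphs preceding the statement. The only delicate point, and the reason for the hedged formulation in (3), is to keep $\Sigma_0$ and $\Sigma_1$ rigorously distinct in the critical case: the perturbation $\sigma\mapsto\sigma/(1-\epsilon)$ that converts $\Sigma_0$ into $\Sigma_1$ only gives strict inclusion, so openness is lost precisely at $\sigma=\bar\sigma$. A minor bookkeeping issue is to verify that the monotonicity $\check R_0\subset R_0\Rightarrow R^Z_{\check R_0}\subset R^Z_{R_0}$ transfers to the admissibility functional \eqref{eq:admiss:funct} and not merely to the reachable set, but this is immediate from the definition of $R^Z(t)$ in \eqref{eq:burned:region_intro}.
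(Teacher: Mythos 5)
Your proposal is correct and follows essentially the same route as the paper: the paper's argument is precisely the combination of the homothety invariance of \eqref{Equa:admissbbd}, the monotonicity in $R_0$, and the estimate converting a barrier admissible for $R_0=\{(0,0)\}$ at speed $\sigma$ into one admissible for $B_\epsilon(0)$ at speed $\sigma/(1-\epsilon)$, which after rescaling shows the set of blocking speeds for open bounded sets is an open half-line $(\bar\sigma,+\infty)$. Your bookkeeping with the two sets $\Sigma_0,\Sigma_1$ is just a cleaner formalisation of the same chain of inclusions.
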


Therefore we will assume the following geometric setting:
\begin{description}
\item[Initial configuration] the fire starts spreading in the point $(0,0)$, the barrier contains the point $(1,0)$ and it lies outside $B_1(0)$.
\end{description}

\begin{remark}
\label{Rem:critical_value_origin}
We will show that for spiral barriers also in the critical case $R_0 = \{(0,0)\}$ the fire cannot be blocked with the critical speed $\bar \sigma$. We believe this is true in general.
\end{remark}

\subsection{Spiral barriers}
\label{Ss:spiral_barrier_intro}

Among admissible strategies, we will consider spiral-like strategies: namely admissible barriers where the effort of construction is put on a single wall that rotates on one direction around $R_0$.

We start with the definition of single wall strategy.

\begin{definition}[Single-barrier strategy]\label{def:spiral:strategy}
Let $Z=\zeta([0,S])\subset\R^2$ be a strategy, where \newglossaryentry{zeta}{name={\ensuremath{\zeta}},description={parametrization of the barrier $Z$}} $\gls{zeta} : [0,S] \to \R^2$ is a Lipschitz curve parametrized by length. We say that it is a \emph{single barrier strategy} if it satisfies:
\begin{itemize}
\item $\zeta(0) = (1,0)$ and $\zeta \rest_{[0,S)}$ is simple;

\item $s \mapsto u(\zeta(s))$ is increasing.
\end{itemize}
\end{definition}

\begin{remark}
\label{Rem:comment_single_curve}
The first condition implies that $\zeta$ does not have internal loops: we believe that these loops are not optimal, but allowing them would alter the spiral structure of the barriers considered here (see Definition \ref{def:convexity:spiral} below). Also notice that every continuum can be written as the graph of a simple connected curve, so without this assumption we would allow all connected barriers.

The second condition has a deeper meaning: indeed, it allows some independence of the strategy $\zeta((s,S])$ from $\zeta([0,s])$. To explain better this fact, assume that there is an arc $\zeta((s_1,s_2))$ such that
\begin{equation*}
u(\zeta(s_1)) = u(\zeta(s_2)) < u(\zeta(s)) \qquad \forall s \in (s_1,s_2).
\end{equation*}
Then, the strategy $\zeta(s)$, $s > s_2$ needs to consider that the fire is burning part of the previous built barrier for $t \in u(\zeta(s_1,s_2))$, and it may even be not possible to continue the curve $\zeta(s)$, $s \geq s_2$ (for example if $\sigma \leq 2$ and $u(\zeta(s))$, $s \in (s_1,s_2)$, is saturated, then no additional barriers can be added on the level set $(u \circ \zeta)^{-1}(s)$). Instead with our assumption it is always possible to continue the curve in some way, respecting the bound \eqref{Equa:admissbbd}. These considerations are in line with the idea that at every time $t$ the firefighter chooses the best strategy by evaluating the configuration at time $t$, where the only unmovable barriers are $\zeta \cap u^{-1}((0,t])$, i.e. the burned ones (if we alter these, then $R^Z(t)$ would change!).

Notice also that this definition implies that if $u(\zeta(s_1)) = u(\zeta(s_2))$, then $u(\zeta(s))$ is constant for $s \in [s_1,s_2]$, excluding thus from our analysis those strategies that are touched simultaneously in more than one point by the fire and the spiral arc $\zeta \rest_{[s_1,s_2]}$ is not contained in a level set of $u$. We also exclude curves $\zeta$ which are not simple: if our strategies blocks the fire, only the last point $\zeta(S)$ belongs to $\zeta([0,S))$.
%
\end{remark}

We give a definition of (local) convexity for strategies: since this local convexity implies that the tangent $\dot \zeta(s)$ rotates either clockwise or counterclockwise, w.l.o.g. we assume the second situation (in the first one we assume that $(\mathbf v,\mathbf v^\perp)$ is oriented clockwise).

\begin{definition}
\label{def:convexity:spiral}
We say that a single-barrier strategy $Z=\zeta([0,S])$ is \emph{locally convex} if for every $x\in Z\setminus \zeta(S)$ there exists a line 
$H = x + \lambda \mathbf{v}$, $\lambda \in \R$, constant $\epsilon>0,\delta>0$ and a function $f:[-\delta,\delta]\rightarrow [0,+\infty)$, $f(0) = 0$, convex Lipschitz such that, in the framework $\lbrace \mathbf v,\mathbf v^\perp\rbrace$ 
oriented as the canonical base and centered in $x$, \newglossaryentry{Br(x)}{name={\ensuremath{B_r(x)}},description={open ball of radius $r$ centered in $x$}}
\begin{equation*}
Z \cap \overline{B}_\epsilon(x)= \big\lbrace (z,y): y= f(z), z\in[-\delta,\delta] \big\rbrace.
\end{equation*}
and $s \mapsto z(s) = \mathbf v \cdot (\zeta(s) - x)$ is locally Lipschitz strictly increasing.
\end{definition}

From this definition it follows immediately that the function $s \rightarrow \dot\zeta(s)$ defined above is $\BV_{\loc}$: in particular there exist the left and right limits \newglossaryentry{t-t+}{name={\ensuremath{\mathbf t^-(s),\mathbf t^+(s)}},description={left, rigth tangent to $\zeta(s)$}} \gls{t-t+}:
\begin{equation*}
\mathbf {t}^-(s)=\dot \zeta(s-) = \lim_{s' \nearrow s} \dot \zeta(s'), \quad \mathbf {t}^+(s)=\dot \zeta(s+) = \lim_{s' \searrow s} \dot \zeta(s').
\end{equation*}
Let \newglossaryentry{Dsing}{name={\ensuremath{D^\mathrm{sing}}},description={singular part of the derivative of a BV function}}
\begin{equation*}
D \dot \zeta = \gls{Dsing} \dot \zeta + \ddot \zeta \mathscr L^1
\end{equation*}
be its measure derivative. By Radon-Nikodym Theorem and the Lebesgue Decomposition Theorem \cite{Royden:real} and the local convexity assumption, it follows that \newglossaryentry{Dcantor}{name={\ensuremath{D^\mathrm{cantor}}},description={Cantor part of the derivative of a BV function}}
\begin{align*}
D \dot \zeta = \mathbf w(s) |D^{\mathrm{sing}} \dot \zeta| + \frac{\dot \zeta(s)^\perp}{R(s)} \mathscr L^1 &= \sum_i (\dot \zeta(s_i+) - \dot \zeta(s_i-)) \Diracd_{s_i}(ds) + \dot \zeta(s)^\perp |\gls{Dcantor} \dot \zeta| + \frac{\dot \zeta(s)^\perp}{R(\zeta(s))} \mathscr L^1 \\
&= \sum_i (\mathbf t^+(s_i) - \mathbf t^-(s_i)) \Diracd_{s_i}(ds) + \dot \zeta(s)^\perp |\gls{Dcantor} \dot \zeta| + \frac{\dot \zeta(s)^\perp}{R(\zeta(s))} \mathscr L^1,
\end{align*}
where \newglossaryentry{Rcurva}{name=\ensuremath{R(s) = R(\zeta(s)) = R(\zeta;s)},description={curvature of the line $\zeta$ in the point $\zeta(s)$}} \gls{Rcurva} is the local radius of curvature and \newglossaryentry{Diracdeltas}{name=\ensuremath{\Diracd_s},description={Dirac's delta measure centered in $s$}} \gls{Diracdeltas} is the Dirac delta measure. Hence there exists a unit vector $\mathbf v$ in the cone
\begin{equation}\label{Equa:cone_subcone}
\mathbf v \in \R^+ \conv \big\{ \dot \zeta(s-),\dot \zeta(s+) \big\}
\end{equation}
so that
\begin{equation*}
\mathbf v \wedge \mathbf w(s) = 1,
\end{equation*}
i.e. the vectors $(\mathbf v,\frac{\mathbf w}{|\mathbf w|})$ are a possible base $(\mathbf v,\mathbf v^\perp)$ for representing $\zeta$ locally as a convex function as in Definition \ref{def:convexity:spiral}: the only case to consider is the singular part, and we only observe that the vectors
\begin{equation*}
\mathbf w(s_i) = \dot \zeta(s_i+) - \dot \zeta(s_i-), \quad \mathbf v = \frac{\dot \zeta(s_i+) + \dot \zeta(s_i-)}{|\dot \zeta(s_i+) + \dot \zeta(s_i-)|}
\end{equation*}
do the job.

\begin{definition}
\label{Def:supporting_direction}
A unit vector $\mathbf v$ is called \emph{supporting direction} in $\zeta(s)$ if $\mathbf v$ satisfies \eqref{Equa:cone_subcone}. The arc of the unit ball made of vectors satisfying \eqref{Equa:cone_subcone} will be called \emph{subdifferential of $\dot \zeta$}, and denoted by \newglossaryentry{subdiffzeta}{name={\ensuremath{\partial^- \zeta(s)}},description={subdifferential of the curve $\zeta$}} \gls{subdiffzeta}.
\end{definition}

\begin{remark}
\label{Rem:equiv_convex}
The considerations above show that an equivalent definition is that $s \mapsto \dot \zeta(s)$ is locally BV, and that (assuming that $\dot \zeta(s)$ is right continuous)
\begin{equation*}
D\dot \zeta(s) \wedge \dot \zeta(s) \geq 0.
\end{equation*}
If the above product is negative, then the rotation is clockwise.
\end{remark}

We introduce now the rotation angle of the vector $\dot \zeta(s)$ as the monotone increasing right continuous function \newglossaryentry{varphitan}{name=\ensuremath{\varphi(s)},description={rotation angle of the tangent vector $\dot \xi(s) = \mathbf t(s) = e^{i\varphi(s)}$}} $s \mapsto \gls{varphitan}$ such that \newglossaryentry{iimagi}{name=\ensuremath{\i},description={imaginary unit}}
\begin{equation*}
\mathbf{t}^+(s) = e^{\gls{iimagi} \varphi(s)}.
\end{equation*}
This function is uniquely defined once we fix the initial angle, and in this paper we set
\begin{equation*}
\varphi(0) = 2\pi + \angle (\textbf{t}^+(0),\textbf{e}_1).
\end{equation*}
The interval $[0,2\pi)$ will be used as the initial parametrization for the first rotation, see Proposition \ref{thm:param_1}.

We make the following additional assumption:
\begin{equation}
\label{eq:assumption:spiral}
0 \leq \angle (\textbf{t}^+(0),\textbf{e}_1) \leq \frac{\pi}{2},
\end{equation}
where $\textbf{e}_1$ is the horizontal vector of the canonical base. This assumption is used to parametrize the spiral in polar coordinate for a first interval. By convention we assume that
\begin{equation*}
\mathbf t^-(0) = \mathbf e_1 = (1,0).
\end{equation*}

\begin{remark}
\label{Rem:another_def}
It is fairly easy to see that another equivalent definition to Definition \ref{def:convexity:spiral} is that there exists such a monotone increasing function $s \mapsto \varphi(s)$ such that $\varphi(s+) - \varphi(s-) < \pi$ and $\dot \zeta(s) = e^{i\varphi(s)}$: in this case
\begin{equation*}
\partial^- \zeta = \big\{ e^{\i \phi}, \phi \in [\varphi(s-),\varphi(s+)] \big\}.
\end{equation*}
\end{remark}

We can now give the definition of the class of barriers which we consider in this paper.

\begin{definition}
\label{def:adm:spirals}
A single-barrier (not necessarily blocking) convex strategy satisfying Assumption \eqref{eq:assumption:spiral} will be called a \emph{spiral}. The set of \emph{admissible spirals} (i.e. is the set of admissible single-barriers (not necessarily blocking) convex strategies satisfying assumption \eqref{eq:assumption:spiral}) is denoted by \gls{Admspiral}.
\end{definition}

\subsubsection{Angle-ray parametrization of spirals}
\label{Sss:alternative_angles}

From now on we will fix an admissible spiral $Z \in \mathcal{A}_S$. We will extend the alternative parametrization introduced first in \cite{firefighter} for the self-similar saturated spiral (see Section \ref{Sss:equation_satur} for definitions) to a general spiral strategy $Z$. We will call such a parametrization \emph{angle-ray parametrization} or \emph{angle-ray representation}.

The next proposition contains the core results of this section. In the statement the choice of the sequence of rounds is fixed (see Equation \ref{Equa:next_angle_ell+2}): a simple adaptation of the proof allows some flexibility in the choice of the sequence of angles $\bar \phi_\ell$, see Remark \ref{Rem:other_choice_rounds}.

\begin{proposition}
\label{thm:param_1}
Let $\zeta$ be a spiral strategy. Then there exist two sequences of real positive numbers \newglossaryentry{skbar}{name=\ensuremath{\bar s_k},description={partition of the spirals into rounds, Proposition \ref{thm:param_1}}} \newglossaryentry{phikbar}{name=\ensuremath{\bar \phi_k},description={rotation angles for the partition of a spiral in rounds, Proposition \ref{thm:param_1}}}
$$
0 \leq \bar s_1 < \bar s_2 < \dots < \gls{skbar} < \bar s_{k+1} \leq S, \quad 0 < \bar \phi_1 = 2\pi < \bar \phi_2 < \dots < \gls{phikbar} \leq \bar \phi,
$$
for some angle $\bar \phi$, with the following properties.

\begin{enumerate}
\item For $0 \leq s < \bar s_1$ the spiral $\zeta$ coincides with the oriented segment
\begin{equation*}
\zeta(s) = (1,0) + s \mathbf e_1,
\end{equation*}
and the value $\bar s_1$ is given by
\begin{equation*}
\bar s_1 = \inf \big\{ \zeta^{-1}(\R^2 \setminus \R \mathbf e_1) \big\} = \max \big\{ s : \zeta([0,s]) \subset \R \mathbf e_1 \big\}.
\end{equation*}

\item For $0 \leq \phi < \bar \phi_1 = 2\pi$ the spiral can be represented in polar coordinates as
\begin{equation}
\label{Equa:maps_1}
\zeta(s^+_0(\phi)) = r_0(\phi) e^{\i \phi}, 
\end{equation}
with $r_0(\phi)$ a Lipschitz function in $[0,2\pi]$. Defining the value
\begin{equation*}
\bar s_2 = \lim_{\phi \nearrow 2\pi} s^+_0(\phi) = \min \big\{ s > \bar s_1: \zeta(s) \in \R^+ \mathbf e_1 \big\} 
\end{equation*}
as the value $s$ of the next intersection with the line $\zeta(\bar s_1) + \mathbf e_1 \R^+ = \zeta(\bar s_1) + \mathbf t^-(\bar s_1) \R^+$, the map \eqref{Equa:maps_1} defines a continuous bijection
$$
[0,2\pi] \ni \phi \mapsto s^+_0(\phi) \in [\bar s_1,\bar s_2],
$$
Lipschitz in $(\epsilon,2\pi]$ for every $\epsilon > 0$ and with Lipschitz inverse, where the final value is $\bar s_2 = s_0^+(2\pi)$. 
The optimal rays are the segments $[(0,0),\zeta(\phi)] = r_0(\phi) e^{\i \phi}$, where $\zeta(\phi)=\zeta(s_0^+(\phi))$ and the region $S_1$ bounded by the Jordan curve $\zeta([\bar s_1,\bar s_2)) \cup [\zeta(\bar s_2),\zeta(\bar s_1))$ has convex boundary apart from the point $\zeta(\bar s_1)$ and it is star-shaped w.r.t. the point $\zeta(\bar s_1)$.

\item \label{Point_3:round_dec} For $\ell = 1,\dots,k-1$ the angles $\bar\phi_\ell,\bar\phi_{\ell+1}$ satisfy \newglossaryentry{deltaphiell}{name=\ensuremath{\delta \phi_\ell},description={variation of angle between round, Proposition \ref{thm:param_1}}}
\begin{equation}
\label{Equa:next_angle_ell+2}
\bar \phi_{\ell+1} = 2\pi + \bar \phi_\ell + \gls{deltaphiell}, \quad \delta \phi_\ell = \angle (\mathbf t^-(\bar s_{\ell+1}), e^{\i \bar \phi_\ell}) 
\in \bigg( 0,\frac{\pi}{2} \bigg),
\end{equation}
and there exist three functions \newglossaryentry{rellphi}{name=\ensuremath{r_\ell(\phi)},description={length of the last segment of an optimal ray at the $\ell$-th round, round decomposition of Proposition \ref{thm:param_1}}} \newglossaryentry{s-ellphi}{name=\ensuremath{s^-_\ell(\phi)},description={initial point of the last segment of an optimal ray at the $\ell$-th round, round decomposition of Proposition \ref{thm:param_1}}} \newglossaryentry{s+ellphi}{name=\ensuremath{s^+_\ell(\phi)},description={final point of the last segment of an optimal ray at the $\ell$-th round, round decomposition of Proposition \ref{thm:param_1}}}
\begin{equation*}
[\bar \phi_{\ell},\bar \phi_{\ell+1}] \ni \phi \mapsto (\gls{rellphi},\gls{s-ellphi},\gls{s+ellphi}) \in (0,\infty) \times [\bar s_{\ell},\bar s_{\ell+1}] \times [\bar s_{\ell+1},\bar s_{\ell+2}],
\end{equation*}
with $s^-_\ell$ increasing right continuous, $r_\ell(\phi)$ right continuous one-sided Lipschitz with negative singular part of the derivative (i.e. $D^\sing r \leq 0$), and $s^+_\ell$ Lipschitz increasing, bijective with Lipschitz inverse, such that
\begin{equation*}
\zeta(s^+_\ell(\phi)) = \zeta(s^-_\ell(\phi)) + r_\ell(\phi) e^{\i \phi}.
\end{equation*}
The final point $\bar s_{\ell+2}$ is given by
\begin{equation}
\label{Equa:final_bar_s_ell+2}
\bar s_{\ell+2} = \lim_{\phi \nearrow \bar \phi_{\ell+1}} s^+_\ell(\phi) = \min \big\{ s > \bar s_{\ell+1}: \zeta(s) \in \zeta(\bar s_{\ell+1}) + \mathbf t^-(\bar s_{\ell+1}) \R^+  \big\},
\end{equation}
Moreover the optimal ray ending in the point $\zeta(s^+_\ell(\phi))$ is the union of the initial segment $[(0,0),(1,0))$, the spiral arc $\zeta([0,s^-_\ell(\phi)))$ and the final segment $[\zeta(s^-_\ell(\phi)),\zeta(s^+_\ell(\phi))] = r_\ell(\phi) e^{\i \phi}$. Finally the boundary of the open bounded region \newglossaryentry{Sell+1}{name=\ensuremath{S_{\ell+1}},description={star shaped region for the round decomposition}} \gls{Sell+1} is the Jordan curve 
\begin{equation*}
\zeta([\bar s_{\ell+1},\bar s_{\ell+2})) \cup [\zeta(\bar s_{\ell+2}),\zeta(\bar s_{\ell+1}))
\end{equation*}
which is convex outside the point $\zeta(\bar s_{\ell+1})$ and it is star shaped w.r.t. the point $\zeta(\bar s_{\ell+1})$.

\item For $\phi \geq \bar \phi_k$ there are three functions
\begin{equation*}
[\bar \phi_k,\bar \phi] \ni \phi \mapsto (r_k(\phi),s^-_k(\phi),s^+_k(\phi)) \in [0,\infty) \times [\bar s_k,\bar s_{k+1}] \times [\bar s_{k+1},S],
\end{equation*}
with $s^-_k$ increasing right continuous, $r_k(\phi)$ right continuous one-sided Lipschitz with negative singular part of the derivative and $s^+_k$ Lipschitz increasing, bijective and with Lipschitz inverse, such that
\begin{equation*}
\zeta(s^+_k(\phi)) = \zeta(s^-_k(\phi)) + r_k(\phi) e^{\i \phi}.
\end{equation*}
Moreover $r_k(\phi) = 0$ only if $\phi = \bar \phi$ (and in this case $\zeta(S) \in \zeta([0,S))$,
and the optimal ray ending in the point $\zeta(s^+_k(\phi))$ is the union of the initial segment $[(0,0),(1,0))$, the spiral arc $\zeta([0,s^-_k(\phi)))$ and the final segment $[\zeta(s^-_k(\phi)),\zeta(s^+_k(\phi))] = r_k(\phi) e^{\i \phi}$.
\end{enumerate}
\end{proposition}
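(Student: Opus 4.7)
The plan is to construct the parametrization inductively, round by round, exploiting that $\zeta$ is locally convex, lies outside $B_1(0)$, and that $s \mapsto u \circ \zeta(s)$ is strictly increasing. First I handle the initial segment (Point 1): by the convention $\mathbf t^-(0) = \mathbf e_1$ together with right-continuity of $\dot\zeta$ and the arc-length parametrization, I set $\bar s_1 = \sup\{s : \zeta([0,s]) \subset \R\mathbf e_1\}$ (possibly $0$); on $[0,\bar s_1]$ the strict monotonicity of $u \circ \zeta$ forces $\zeta$ to move along $\mathbf e_1$ without backtracking, so $\zeta(s) = (1,0) + s\mathbf e_1$. For the first round (Point 2) I parametrize by the polar angle $\phi(s) = \arg(\zeta(s))$ from the origin. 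Using $\dot\phi(s) = |\zeta(s)|^{-1}\sin(\varphi(s) - \phi(s))$ where $\varphi$ is the tangent angle, together with the convexity-equivalent inequality $D\dot\zeta \wedge \dot\zeta \geq 0$ (Remark \ref{Rem:equiv_convex}) and $|\zeta(s)| \geq 1$, one verifies $\varphi(s) - \phi(s) \in (0,\pi)$ on $(\bar s_1, \bar s_2)$ and hence strict monotonicity of $\phi$; defining $\bar s_2$ as the first return to $\R^+\mathbf e_1$ yields the homeomorphism $\phi \mapsto s^+_0(\phi)$, Lipschitz away from $\phi = 0$, and the optimal ray is just the segment from the origin with $r_0(\phi) = |\zeta(s^+_0(\phi))|$. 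Star-shapedness of $S_1$ at $\zeta(\bar s_1)$ follows because the global convexity of the arc, combined with the closing segment lying on $\R\mathbf e_1$, allows every chord from $\zeta(\bar s_1)$ to a boundary point to remain in $\overline{S_1}$.

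For the inductive step (Points 3 and 4), assume rounds $1,\dots,\ell$ are in place. For $s$ slightly past $\bar s_{\ell+1}$, the optimal ray of Definition \ref{def:optimal:ray} to $\zeta(s)$ is a taut string from the origin constrained not to cross $\zeta([0,s))$: by local convexity it follows the initial segment, hugs the spiral along some final sub-arc, and detaches tangentially before shooting straight to $\zeta(s)$. I define $\zeta(s^-_\ell(\phi))$ to be the detachment point, $\phi$ the direction of the final segment, and $r_\ell(\phi) = |\zeta(s) - \zeta(s^-_\ell(\phi))|$, recovering the identity $\zeta(s^+_\ell(\phi)) = \zeta(s^-_\ell(\phi)) + r_\ell(\phi) e^{\i\phi}$. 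The angle $\phi$ starts at $\bar\phi_\ell$ (detachment at $\zeta(\bar s_\ell)$) and ends at $\bar\phi_{\ell+1} = \bar\phi_\ell + 2\pi + \delta\phi_\ell$; the extra $\delta\phi_\ell = \angle(\mathbf t^-(\bar s_{\ell+1}), e^{\i\bar\phi_\ell})$ is exactly the angular lag caused by the pivot advancing from $\zeta(\bar s_\ell)$ to $\zeta(\bar s_{\ell+1})$ while $\phi$ also rotates once around, and $\bar s_{\ell+2}$ is the first intersection of $\zeta$ with the half-line in \eqref{Equa:final_bar_s_ell+2}. Point 4 is obtained by the same construction, stopped at $\phi = \bar\phi$.

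Regularity of $s^\pm_\ell, r_\ell$ follows from convexity and the subdifferential structure (Definition \ref{Def:supporting_direction}): $s^-_\ell$ is nondecreasing and right continuous because the detachment point can only advance with $\phi$, and it jumps exactly where $\partial^-\zeta(s^-_\ell(\phi))$ is a proper arc of $\mathbb S^1$; these jumps of $s^-_\ell$ produce $D^\sing r_\ell \leq 0$, since the length of the final segment drops when the pivot leaps forward. The map $s^+_\ell$ is Lipschitz with Lipschitz inverse by the same angular-derivative estimate used in the first round, applied now to the star-shaped region $S_{\ell+1}$ with pivot $\zeta(\bar s_{\ell+1})$. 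The main obstacle is justifying the taut-string description of the optimal ray, and in particular ruling out re-attachment of the string after it has detached; I would handle this by approximating the optimal ray by Lipschitz curves in $\Gamma$ (Definition \ref{def:optimal:ray}), applying the classical taut-string theory around a convex smoothing of $\zeta \cap B_R(0)$ with $R$ large, and passing to the limit via uniform convergence. The strict monotonicity of $u \circ \zeta$ then prevents pathological oscillations of the string between detachments, and the subdifferential calculus of Definition \ref{Def:supporting_direction} identifies the detachment point unambiguously even at corners of $\zeta$.
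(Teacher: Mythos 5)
Your overall strategy coincides with the paper's: an inductive, round-by-round construction in which the optimal ray consists of the spiral arc up to a detachment point determined by the subdifferential, followed by a straight segment, with the regularity of $s^\pm_\ell$ and the sign of $D^{\mathrm{sing}} r_\ell$ read off from the convexity. Two points, however, are handled very differently, and one of them is where your argument has a genuine gap.

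First, the circularity of the induction. The optimal rays reaching points of the $(\ell+1)$-th round a priori depend on the \emph{whole} barrier $\zeta([0,s))$, including the arc you are in the middle of parametrizing. The paper breaks this circularity by replacing $\zeta$ after $\bar s_{\ell+1}$ with the auxiliary barrier $\hat\zeta_\ell$ obtained by prolonging $\mathbf t^-(\bar s_{\ell+1})$ as a half-line, computing $u$ for $\hat\zeta_\ell$ from the value on the segment $[\zeta(\bar s_\ell),\zeta(\bar s_{\ell+1})]$, and only then checking that this $u$ agrees with the true time function on the next round. Your proposal treats the optimal ray directly as a taut string around $\zeta([0,s))$ and never addresses why the still-unparametrized part of the spiral does not interfere; you should make the frozen-barrier device (or an equivalent one) explicit.

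Second, and more importantly, the step you yourself flag as ``the main obstacle'' --- ruling out re-attachment of the optimal ray after detachment --- is exactly the paper's Step 2, and the paper disposes of it with a two-line contradiction that your sketch misses: if the spiral touched the open segment $(\zeta(\upsilon(\zeta(\bar s))),\zeta(\bar s))$ at some $\zeta(s')$ with $s'>\bar s$, then the structure of the optimal ray would force $u(\zeta(s')) - u(\zeta(\bar s)) = -|\zeta(\bar s)-\zeta(s')| < 0$, contradicting the monotonicity of $s\mapsto u(\zeta(s))$ required by Definition \ref{Def:spiral_barrier_intro}. Your proposed substitute --- ``classical taut-string theory around a convex smoothing of $\zeta\cap B_R(0)$'' plus a limit --- does not work as stated: the spiral is only \emph{locally} convex and wraps around the origin many times, so there is no convex body to smooth it to, and a compactness/limit argument by itself cannot exclude re-attachment; the exclusion is precisely the content of the monotonicity hypothesis. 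Replace that paragraph with the direct contradiction above and the argument closes.
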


We will refer to the decomposition of $[0,S] = \cup_\ell [\bar s_\ell,\bar s_{\ell+1}), [0,\bar \phi] = \cup_\ell [\bar \phi_\ell,\bar phi_{\ell+1})$ of the above proposition as the \emph{decomposition in rounds}: indeed, Point (2) describes the first round about ${(0,0)}$, which in polar coordinates corresponds to a rotation of $2\pi$, Point (3) describes the next rotations of angles $>2\pi$ about $\zeta(\bar s_\ell)$, and the final Point (4) is the last part of the curve, which may close or remain open. For a picture of this representation, see Figure \ref{fig:decomposition:rounds}.

\begin{figure}
	\includegraphics[scale=0.45]{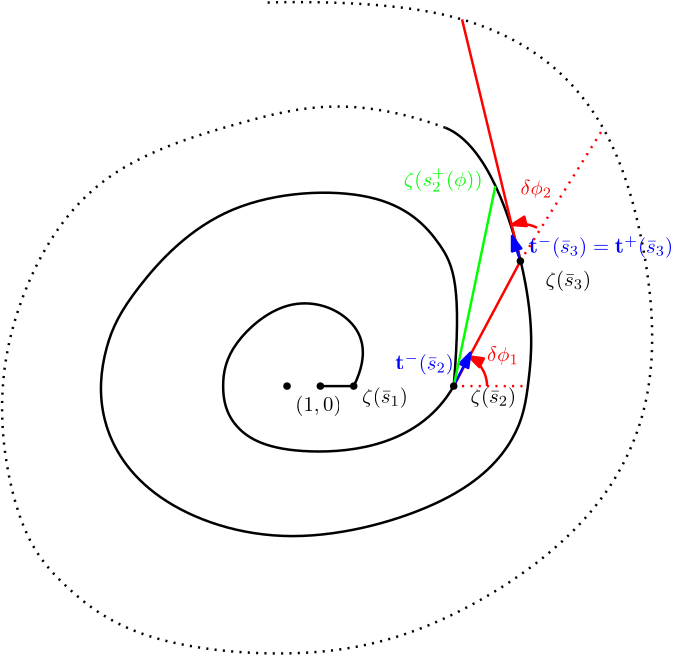}
	\caption{ Decomposition in rounds of any admissible spiral $Z$. The notation is the one of Proposition \ref{thm:param_1}. Observe that in the points of non-differentiability the angle is computed following the direction $\mathbf{t^{-}(\bar s_\ell)}$}
	\label{fig:decomposition:rounds}
	\end{figure}

\begin{proof}
The first two and the last cases can be seen as a particular case of Point (\ref{Point_3:round_dec}) in the statement, hence we will only address it.

The iterative assumptions are that
\begin{enumerate}
\item the open region $S_\ell$, bounded by the Jordan curve $\zeta([\bar s_\ell,\bar s_{\ell+1})) \cup [\zeta(\bar s_{\ell+1}),\zeta(\bar s_\ell))$, is a star shaped region about $\zeta(\bar s_\ell)$, with convex boundary outside the point $\zeta(\bar s_\ell)$;
\item $\zeta(\bar s_{\ell+1}) \in \zeta(\bar s_\ell) + \mathbf t^-(\bar s_\ell) \R^+$.
\end{enumerate}
These assumptions are clearly satisfied for $\ell = 0$.

\medskip

\noindent{\it Step 1.} We first consider the barrier
\begin{equation*}
\hat \zeta_\ell(s) = \begin{cases}
\zeta(s) & s \in [\bar s_\ell,\bar s_{\ell+1}), \\
\zeta(\bar s_{\ell+1}) + \mathbf t^-(\bar s_{\ell+1}) (s - \bar s_{\ell+1}) & s \geq \bar s_{\ell+1},
\end{cases}
\end{equation*}
obtained prolonging the tangent $\mathbf t^-(\bar s_{\ell+1})$ indefinitely. A consequence of the analysis below is that if the half-line intersects the curve $\zeta([\bar s_\ell,\bar s_{\ell+1}))$, then we are in the situation of Point (4) of the statement, so it is not the case considered here: it is nevertheless easy to adapt the next steps to this last case.

We consider next the minimum time function for the new barrier $\hat Z_\ell = \hat \zeta([\bar s_\ell,+\infty))$ (see Fig. \ref{Fig:S_ell}):
\begin{equation}
\label{Equa:u_ell_constr}
\R^2 \setminus S_\ell \ni x \mapsto u(x) = u(\zeta(\bar s_\ell)) + \inf \Big\{ L(\gamma), \gamma(0) = \zeta(\bar s_\ell), \gamma(1) = x \ \text{and} \ \gamma((0,1)) \cap \hat Z_\ell = \emptyset \Big\}.
\end{equation}
In the above representation we have observed that every optimal ray has to cross the segment $[\zeta(\bar s_\ell),\zeta(\bar s_{\ell+1})]$, \eqref{Equa:u_ell_constr} gives the correct time function on the segment $[\zeta(\bar s_\ell),\zeta(\bar s_{\ell+1})]$, and an optimal ray is a segment when not touching the barrier (because it must be of minimal length): hence it must pass through $\zeta(\bar s_\ell)$.

\begin{figure}
\begin{subfigure}{.475\textwidth}
\resizebox{\textwidth}{!}{\input{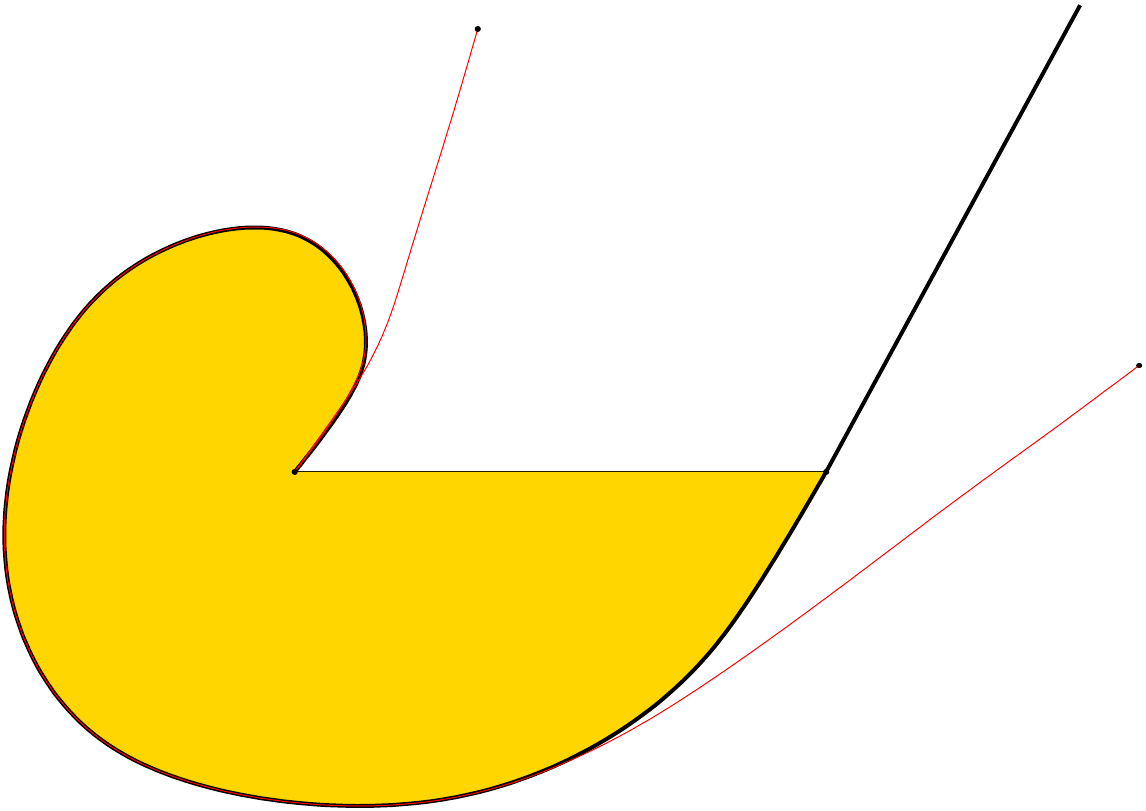_t}}
\caption{The barrier for constructing $u(x)$, Equation \ref{Equa:u_ell_constr}.}
\label{Fig:S_ell}
\end{subfigure} \hfill
\begin{subfigure}{.475\textwidth}
\resizebox{\textwidth}{!}{\input{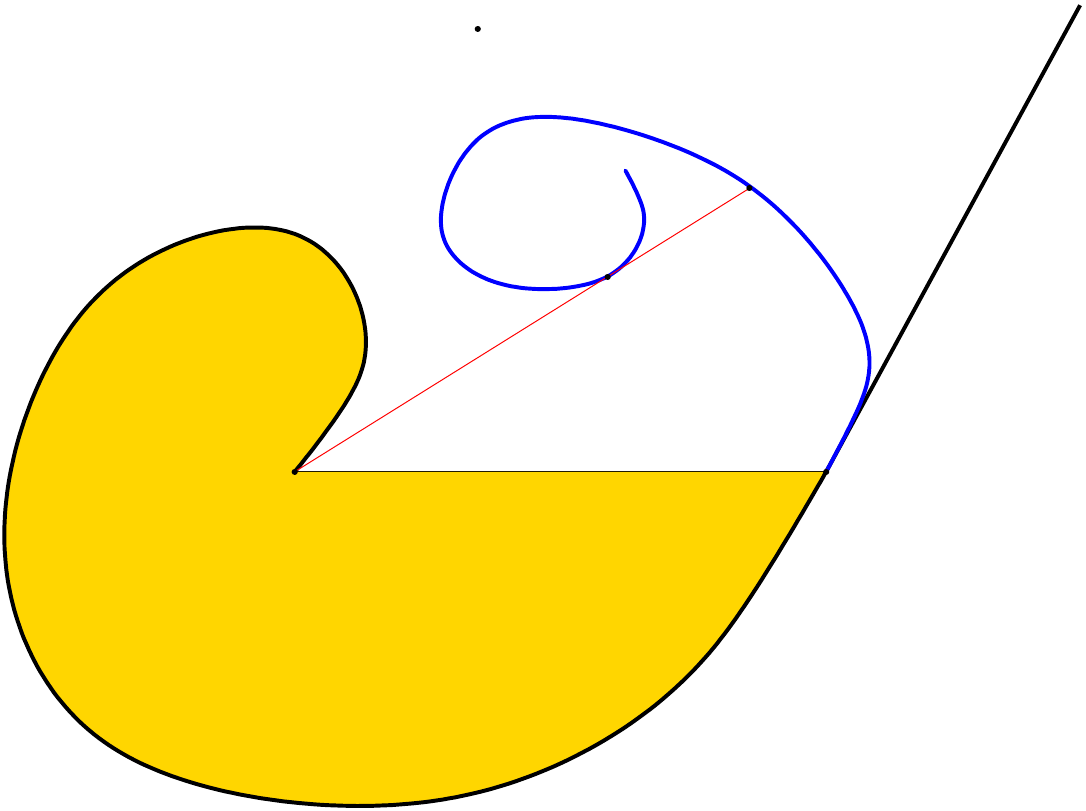_t}}
\caption{Step 2 of the proof of Proposition \ref{thm:param_1}.}
\label{Fig:S_ell_2}
\end{subfigure}
\caption{Proof of Proposition \ref{thm:param_1}.}
\label{Fig:S_ell_0}
\end{figure}

Elementary geometric considerations show that for every point $x \in \R^2 \setminus \clos\, S_\ell$ there are at most $2$ points $\zeta(s)$ with $s \in [\bar s_\ell,\bar s_{\ell+1})$ such that
\begin{equation*}
\frac{x - \zeta(s)}{|x - \zeta(s)|} \in \partial^- \zeta(s), \quad \zeta(s) + \frac{x - \zeta(s)}{|x - \zeta(s)|} \epsilon \notin \zeta([\bar s_\ell,\bar s_{\ell+1})) \ \text{for $\epsilon \ll 1$},
\end{equation*}
and in case there are effectively two points, the only one for which the segment $(\zeta(s),x)$ is not crossing the half line $\zeta(\bar s_{\ell+1}) + \mathbf t^-(\bar s_{\ell+1}) \R^+$ is the larger one: we denote this value by the function
$$
\R^2 \setminus \big( S_\ell \cup \big\{ \zeta(\bar s_{\ell+1}) + \mathbf t^-(\bar s_{\ell+1}) \R^+ \big\} \big) \ni x \mapsto \upsilon(x).
$$
Since the optimal ray must be in the supporting direction when touching the barrier, we deduce that the structure of the optimal ray is given by
\begin{equation}
\label{Equa:struct_optimal_ray_ell}
\gamma_x = \zeta([\bar s_\ell,\upsilon(x))) \cup [\zeta(\upsilon(x)),x],
\end{equation}
where we used the convexity of $\zeta([\bar s_\ell,\bar s_{\ell+1}))$ to deduce that if the optimal ray touches $Z$ in two different points $\zeta(s_1),\zeta(s_2)$, it coincides with $\zeta((s_1,s_2))$ in between.

\medskip

\noindent{\it Step 2.} In this step we prove that the structure of the optimal rays \eqref{Equa:struct_optimal_ray_ell} remains unchanged for the points $\zeta(s)$, $\bar s_{\ell+1} \leq s \leq \bar s_{\ell+2}$, where the last point will be defined later.
Let $\bar s > \bar s_{\ell+1}$ be the first value such that
$$
\text{there is} \ \zeta(s') \in \big( \zeta(\upsilon(\zeta(\bar s))), \zeta(\bar s) \big), \ \text{i.e. the barrier touches the optimal ray}.
$$
This cannot happen before $\bar s_{\ell+1}$ by the iterative assumption, see Fig: \ref{Fig:S_ell_2}. In particular, the point
\begin{equation*}
\zeta(s') = \text{arg-min} \Big\{ \big| \zeta(s) - \zeta(\upsilon(\zeta(\bar s))) \big|, \zeta(s) \in \big( \zeta(\upsilon(\zeta(\bar s))),\zeta(\bar s) \big) \Big\}
\end{equation*}
satisfies $s' > \bar s$ (the latter being the first point), and hence by the structure of the optimal ray one has
\begin{equation*}
u(\zeta(s')) - u(\zeta(\bar s)) = - |\zeta(\bar s) - \zeta(s')| < 0,
\end{equation*}
contradicting the monotonicity of the function $u \circ \zeta$. Hence $u(x)$ defined in \eqref{Equa:u_ell_constr} is the time function on $\zeta([\bar s_{\ell+1},\bar s_{\ell+2}])$ because for $s \in (\bar s_{\ell+1},\bar s_{\ell+2})$ the optimal rays are touching $\zeta(s)$ only in the last point In particular the optimal rays are given by the construction of the previous point.

Using the iterative assumption on the structure of optimal rays (i.e. the optimal ray arriving in $\zeta(\bar s_{\ell+1})$ is the initial segment plus the spiral arc), we have proved the statement on the structure of the optimal rays: the initial segment, the spiral arc $\zeta([0,\upsilon(s)])$ and the final segment $[\zeta(\upsilon(s)),\zeta(s)]$.

\medskip

\noindent{\it Step 3.} We then obtain that \eqref{Equa:struct_optimal_ray_ell} is valid for all points $\zeta(s)$, and elementary geometric considerations give that
\begin{equation}
\label{Equa:angle_ray}
\angle \big( \zeta(s) - \zeta(\upsilon(\zeta(s))), p \big) \in \bigg( 0,\frac{\pi}{2} \bigg], \quad p \in \partial^- \zeta(s).
\end{equation}
The upper bound is the monotonicity, while the lower bound is because
\begin{itemize}
\item the angle is $>0$ at $\zeta(\bar s_{\ell+1})$ by convexity and the iterative assumption on $S_\ell$,
\item the derivative of the angle $\phi$ (with $\mathbf t(\upsilon) = e^{\i \phi(\upsilon)}$) w.r.t. $s$ satisfies the distributional bound
\begin{equation}\label{Equa:convex_phi_lower}
\frac{d^2\phi}{ds^2} + 2 \frac{d\phi}{ds} \geq 0,
\end{equation}
obtained by comparing $\zeta$ with a supporting line in the point $\zeta(s)$.
\end{itemize}

Using the local convexity about $\upsilon(x)$, again by elementary geometry one can show that
\begin{equation*}
s \mapsto \phi(s) \quad \text{such that} \quad e^{\i \phi(s)} = \frac{\zeta(s) - \zeta(\upsilon(\zeta(s)))}{|\zeta(s) - \zeta(\upsilon(\zeta(s)))|}
\end{equation*}
is locally Lipschitz, with right derivative
\begin{equation}
\label{Equa:inverse_der_r_phi}
\frac{d\phi}{ds} = \frac{(\zeta(s) - \zeta(\upsilon(\zeta(s)))) \wedge \mathbf t^+(s)}{|\zeta(s) - \zeta(\upsilon(\zeta(s)))|^2}.
\end{equation}
Hence, if \eqref{Equa:angle_ray} holds, $\phi(s)$ is locally invertible and monotone strictly increasing.

Therefore the next angle $\bar \phi_{\ell+2}$ is given by \eqref{Equa:next_angle_ell+2} and the last point $\bar s_{\ell+2}$ is given by \eqref{Equa:final_bar_s_ell+2}.

\medskip

{\it Step 4.} The function $s^+_\ell(\phi)$ is defined as the inverse of the function $s \mapsto \phi(s)$ above, and the previous point shows that it is Lipschitz and strictly increasing, bijective with Lipschitz inverse. Here we notice that because of \eqref{Equa:angle_ray}, we have that it has Lipschitz inverse in the whole interval $[\bar \phi_{\ell+1},\bar \phi_{\ell+2}]$, which would correspond to Point (2) of the statement.

The function $s^-_\ell(\phi)$ is given by
\begin{equation}
\label{Equa:s_-_def_prof}
s^-_\ell(\phi) = \upsilon(\zeta(s^+(\phi))) = \max \big\{ s : e^{\i \phi} \in \partial^- \zeta(s) \big\}.
\end{equation}
The latter representation shows that $s^-_\ell(\phi)$ is monotone increasing, right continuous, coinciding locally with the right inverse of $s \mapsto \{\phi, e^{\i \phi} \in \partial^- \zeta(s)\}$, and then BV.

Finally
\begin{equation*}
r_\ell(\phi) = \big| \zeta(s^+_\ell(\phi)) - \zeta(s^-_\ell(\phi)) \big|
\end{equation*}
is clearly BV, right continuous and
\begin{equation*}
\begin{split}
D^\sing r_\ell(\phi) &= -\frac{\zeta(s^+_\ell(\phi)) - \zeta(s^-_\ell(\phi))}{|\zeta(s^+_\ell(\phi)) - \zeta(s^-_\ell(\phi))|} \cdot D^{\mathrm{cantor}} \zeta(s^-(\phi)) \\
& \quad + \sum_{s^-_\ell(\phi) \ \text{jumps}} \Big( \big| \zeta(s^+_\ell(\phi)) - \zeta(s^-_\ell(\phi+)) \big| - \big| \zeta(s^+_\ell(\phi)) - \zeta(s^-_\ell(\phi-)) \big| \Big) \Diracd_{\phi}.
\end{split}
\end{equation*}
Formula \eqref{Equa:s_-_def_prof} gives that the singular parts of $\zeta(s^-_\ell(\phi))$ have direction $e^{\i \phi}$ and are positive, so that $D^\sing r_\ell \leq 0$.

\medskip

{\it Step 5.} It remains to prove the structure of the set $S_{\ell+1}$. The previous points shows that the convex curve $\zeta \rest_{[\bar s_{\ell+1}, \bar s_{\ell+2}]}$ rotates about $\zeta(\bar s_{\ell+1})$ until it crosses the half-line $\zeta(\bar s_{\ell+1}) + \mathbf t^-(\bar s_{\ell+1}) \R^+$ in the point $\zeta(\bar s_{\ell+2})$: its star shaped structure follows by elementary considerations.
\end{proof}

\begin{remark}
\label{Rem:other_choice_rounds}
We observe that the choice of the intervals $\bar \phi_{\ell+1} = \bar{\phi}_\ell + 2\pi + \angle(\mathbf t^-(\bar s_\ell),e^{\i \bar{\phi}_\ell})$ can be changed into
\begin{equation}
\label{Equa:round_freedom}
\bar \phi_{\ell+1} \in \bar \phi_\ell + 2\pi + \big( \angle(\mathbf t^-(\bar s_\ell),e^{\i \bar\phi_\ell}),\angle(\mathbf t^+(\bar s_\ell),e^{\i \bar{\phi}_\ell} )\big)
\end{equation}
without any variation in the proof, the idea being that in any case we know the evolution of the spiral for
\begin{equation*}
\phi \in \bar \phi_\ell + 2\pi + \big( \angle(\mathbf t^-(\bar s_\ell),e^{\i \phi_\ell},\angle(\mathbf t^+(\bar s_\ell),e^{\i \phi_\ell})\big).
\end{equation*}
More precisely, one has used in Step 1 of the previous proof the barrier
\begin{equation*}
\hat \zeta_\ell(s) = \begin{cases}
\zeta(s) & s \in [\bar s_\ell,\bar s_{\ell+1}), \\
\zeta(\bar s_{\ell+1}) + \mathbf v (s - \bar s_{\ell+1}) & s \geq \bar s_{\ell+1},
\end{cases} \mathbf v \in \partial^- \zeta(\bar s_{\ell+1}).
\end{equation*}
This observation is useful when taking limits of sequences of spirals, in which cases every round converges to a round satisfying \eqref{Equa:round_freedom}.
\end{remark}

\begin{figure}
\centering
\includegraphics[scale=0.6]{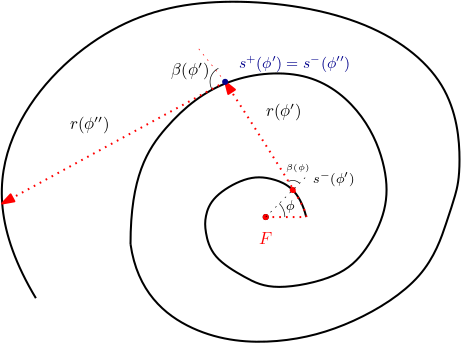}
\caption{Parametrization of a spiral-like strategy. Here the parametrization is given by the geometry of the fire rays. The angle $\phi'$ 
is obtained by the angle $\phi$ through Equation \eqref{eq:subs:angle}, while $r(\phi')$ represents the length of the last segment of the fire ray.}
\label{fig:param:spiral}
\end{figure}

By piecing together the function $s^-_\ell,s^+_\ell, r_\ell$ of the previous proposition we obtain the following

\begin{theorem}
\label{Cor:angle_preprest}
There exist \newglossaryentry{s1bar}{name=\ensuremath{\bar s_1},description={first point where an optimal ray leaves the spiral, Theorem \ref{Cor:angle_preprest}}} \newglossaryentry{sbar}{name=\ensuremath{\bar s},description={last point where an optimal ray leaves the spiral, Theorem \ref{Cor:angle_preprest}}} \newglossaryentry{s2bar}{name=\ensuremath{\bar s_2},description={first point where an optimal ray arrives after leaving the spiral, Theorem \ref{Cor:angle_preprest}}} $0 \leq \gls{s1bar} < \gls{s2bar},\gls{sbar} < S$, \newglossaryentry{phibar}{name=\ensuremath{\bar \phi},description={last rotation angle, Theorem \ref{Cor:angle_preprest}}} $\gls{phibar} \geq 2\pi$ and $3$ functions \newglossaryentry{rphi}{name=\ensuremath{r(\phi)},description={length of the last segment of an optimal ray, Theorem \ref{Cor:angle_preprest}}} \newglossaryentry{s-phi}{name=\ensuremath{s^-(\phi)},description={initial point of the last segment of an optimal ray, Theorem \ref{Cor:angle_preprest}}} \newglossaryentry{s+phi}{name=\ensuremath{s^+(\phi)},description={final point of the last segment of an optimal ray, Theorem \ref{Cor:angle_preprest}}}
\begin{equation*}
[2\pi,\bar \phi] \ni \phi \mapsto \big( \gls{rphi}, \gls{s-phi}, \gls{s+phi} \big) \in [0,+\infty) \times [\bar s_1, \bar s] \times [\bar s_2, S]
\end{equation*}
such that the following hold:
\begin{enumerate}
\item $r(\phi)$ is one-sided Lipschitz and right continuous, positive for all $\phi < \bar 
\phi$ and the spiral $\zeta$ is closed iff $r(\bar \phi) = 0$;
\item $s^-(\phi)$ is monotone increasing, right continuous, and $s^-(2\pi) = \bar s_1$, $s^-(\bar \phi) = \bar s$;
\item $s^+(\phi)$ is uniformly Lipschitz, bijective with Lipschitz inverse and increasing, and $s^+(2\pi) = s_2$, $s^+(\bar \phi) = S$;
\item it holds
\begin{equation*}
e^{\i \phi} \in \partial^- \zeta(s^-(\phi)), \quad \zeta(s^+(\phi)) = \zeta(s^-(\phi)) + r(\phi) e^{\i \phi};
\end{equation*}
\item if $s^+(\phi) = s^-(\phi')$, then
\begin{equation*}
\phi' - \phi - 2\pi \in \bigg( 0, \frac{\pi}{2} \bigg]. 
\end{equation*}
\end{enumerate}
\end{theorem}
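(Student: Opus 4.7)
The plan is to obtain the three functions $r, s^-, s^+$ by gluing together, round-by-round, the analogous functions $r_\ell, s^-_\ell, s^+_\ell$ produced by Proposition \ref{thm:param_1}, and then to upgrade each of the local regularity statements (which hold inside one round) to global statements on $[2\pi,\bar \phi]$. Concretely, I would set $\bar s_1, \bar s_2$ equal to the quantities denoted by the same symbols in Proposition \ref{thm:param_1}, choose $\bar \phi$ to be either $\bar \phi_k$ (if the spiral closes at the final round) or the terminal angle of the last partial round of Point (4) of Proposition \ref{thm:param_1}, set $\bar s := s^-(\bar \phi)$ as defined below, and define, for $\phi \in [\bar \phi_\ell,\bar \phi_{\ell+1})$ (and with the obvious amendment on the last round),
\[
r(\phi) := r_\ell(\phi), \qquad s^-(\phi) := s^-_\ell(\phi), \qquad s^+(\phi) := s^+_\ell(\phi).
\]

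The first step is to check that this piecewise definition is consistent at the matching angles $\phi = \bar \phi_\ell$. From Proposition \ref{thm:param_1} Point (\ref{Point_3:round_dec}), at such an angle the final point of the $\ell$-th round segment coincides with $\zeta(\bar s_{\ell+1})$ and with the initial point of the next round segment, so the identity $\zeta(s^+(\phi)) = \zeta(s^-(\phi)) + r(\phi) e^{\i \phi}$ continues to hold and property (4) of the statement follows. Right continuity in $\phi$ of $r,s^-$ is inherited directly because each $r_\ell, s^-_\ell$ is already right continuous on its own interval, while $s^+_\ell$ is Lipschitz up to the endpoint. The monotonicity of $s^-$ and the strict monotonicity of $s^+$ across the gluing points are immediate since $s^+(\bar \phi_{\ell+1}^-) = \bar s_{\ell+2} = s^+(\bar \phi_{\ell+1})$ and $s^-(\bar \phi_{\ell+1}^-) \leq \bar s_{\ell+1} = s^-(\bar \phi_{\ell+1})$.

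Next I would upgrade the regularity statements. The one-sided Lipschitz character and $D^\sing r \leq 0$ for $r$ follow round-by-round from Proposition \ref{thm:param_1}, and at each joint $\bar \phi_\ell$ the value $r(\bar \phi_\ell^-) = 0$ at worst by construction, which gives a non-positive jump and is compatible with $D^\sing r \leq 0$. For $s^+$ the delicate point is \emph{uniform} Lipschitz continuity across infinitely many rounds; I would derive it from formula \eqref{Equa:inverse_der_r_phi}, which reads $ds^+/d\phi = |\zeta(s^+) - \zeta(s^-)|^2/((\zeta(s^+) - \zeta(s^-)) \wedge \mathbf t^+(s^+)) = r(\phi)/\cos(\beta(\phi))$ with $\beta \in (0,\pi/2]$ the angle between the ray and the tangent. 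The uniform upper bound on $ds^+/d\phi$ then boils down to a uniform upper bound for $r(\phi)$, which follows from $\zeta$ being a rectifiable curve of finite length (so $\mathrm{diam}\, Z < \infty$). Bijectivity onto $[\bar s_2, S]$ comes from Proposition \ref{thm:param_1} together with the fact that, between two consecutive rounds, the endpoint of one round coincides with the starting value of the next. The property $r(\phi) > 0$ for $\phi < \bar \phi$ and the characterization of closure by $r(\bar \phi) = 0$ are essentially definitional: the spiral closes exactly when $\zeta(S) = \zeta(s^-(\bar \phi))$, i.e. when the final ray has length $0$.

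Finally, for property (5), if $s^+(\phi) = s^-(\phi')$ then $\phi$ and $\phi'$ lie in consecutive rounds, and by Point (\ref{Point_3:round_dec}) of Proposition \ref{thm:param_1} we have $\phi' - \phi = 2\pi + \angle(\mathbf t^-(s^+(\phi)), e^{\i \phi})$; the bounds $\phi' - \phi - 2\pi \in (0,\pi/2]$ are exactly \eqref{Equa:next_angle_ell+2}, combined with the upper bound $\pi/2$ coming from \eqref{Equa:angle_ray} (the monotonicity of $u \circ \zeta$). The main obstacle I expect is purely bookkeeping: making sure that the ``last'' round (Point (4) of Proposition \ref{thm:param_1}, where the spiral may end without closing) is handled consistently with the generic rounds, so that one obtains a single pair of endpoints $(\bar s, S)$ for $s^-, s^+$ and a single terminal angle $\bar \phi$, and that the global Lipschitz constant for $s^+$ is controlled by quantities depending only on $\zeta$, not on the number of rounds.
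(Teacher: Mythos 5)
Your proposal is correct and follows exactly the route the paper takes: the theorem is stated as an immediate consequence of Proposition \ref{thm:param_1}, obtained by piecing together the round-by-round functions $r_\ell, s^-_\ell, s^+_\ell$, which is precisely your gluing argument. The consistency checks at the joint angles $\bar\phi_\ell$ and the uniform bounds you supply are the (routine) details the paper leaves implicit.
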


The structure of optimal rays given in the previous theorem motivates the following

\begin{definition}
\label{def:starting:point}
The point $\zeta(s^-(\phi))$ will be called \emph{base point} of the optimal ray arriving in $\zeta(s^+(\phi))$. 
\end{definition}

We define the two following functions: \newglossaryentry{beta-}{name=\ensuremath{\beta^-(\phi)},description={minimal angle between the optimal ray and the subdifferential of the spiral at its end point}} \newglossaryentry{beta+}{name=\ensuremath{\beta^+(\phi)},description={maximal angle between the optimal ray and the subdifferential of the spiral at its end point}}
\begin{equation}
\label{Equa:beta_pm_phi_def}
\gls{beta-} = \angle \big( \mathbf t^-(s^+(\phi)),e^{\i \phi} \big), \quad \gls{beta+} = \angle \big( \mathbf t^+(s^+(\phi)),e^{\i \phi} \big).
\end{equation}
Being $s^+(\phi)$ Lipschitz and $\mathbf t(s)$ a BV function, it follows that $\beta^\pm(\phi)$ are the right/left representative of the BV function \newglossaryentry{beta}{name=\ensuremath{\beta(\phi)},description={angle between the optimal ray and the tangent to the spiral at its end point}}
\begin{equation}\label{Equa:beta_phi_def}
\gls{beta} = \angle \big( \dot \zeta(s^+(\phi)), e^{\i \phi} \big). 
\end{equation}
The range of these functions is $( 0,\frac{\pi}{2}]$ in the interval $\phi > 0$ by \eqref{Equa:angle_ray}, and clearly $D^\sing \beta \geq 0$, using the same estimate as \eqref{Equa:convex_phi_lower}: more precisely we have the one-sided Lipschitz estimate
\begin{equation}
\label{Equa:convex_beta}
D \beta + \mathscr L^1 \geq 0,
\end{equation}
obtained again by comparison with a supporting line at $\zeta(s^+(\phi))$. We will equivalently use the notation
\begin{equation*}
\beta^\pm(s) = \beta^\pm(\phi) \quad \text{where} \quad s = s^+(\phi).
\end{equation*}

Define the \emph{instantaneous burning rate} as \newglossaryentry{b(t)}{name=\ensuremath{b(t)},description={istantaneous burning rate}}
\begin{equation}
\label{def:burning:rate}
\gls{b(t)} = D_t \H^1(Z\cap\overline{R^Z(t)}) = D_t (u \circ \zeta)^{-1}([0,t]).
\end{equation}
The following proposition provides a formula for the admissibility functional \eqref{eq:admiss:funct} in the case of spiral strategies, due to the previous geometric properties of these barriers. Since it is a fairly easy consequence of the previous results, we will not provide a proof.

\begin{proposition}
\label{Lem:burnign_rate}
The admissibility functional (see Equation \eqref{eq:admiss:funct}) $\mathcal A$ as a function of $\phi$ can be written as
\begin{equation*}
\mathcal A(\phi) = 1 + s^-(\phi) + r(\phi) - \frac{1}{\sigma} s^+(\phi),
\end{equation*}
and the right continuous representative of the a.c. part of the burning rate function $b(s)$ is
\begin{equation}
\label{Equa:burning_rate}
b^+(t) = \frac{1}{\cos(\beta^+(s))}, \quad u(\zeta(s)) = t.
\end{equation}
\end{proposition}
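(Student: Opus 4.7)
The plan is to read off the formula for $\mathcal A(\phi)$ directly from the length decomposition of the optimal ray given by Theorem \ref{Cor:angle_preprest}, and then obtain the burning rate by differentiating the closure identity $\zeta(s^+(\phi)) = \zeta(s^-(\phi)) + r(\phi) e^{\i \phi}$ and projecting on the direction $e^{\i \phi}$.

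For the admissibility functional I would compute $u(\zeta(s^+(\phi)))$ explicitly. In the setting fixed in Section \ref{Ss:prelimnaries} one has $R_0 = \{(0,0)\}$ and $Z$ outside $B_1(0)$, so Theorem \ref{Cor:angle_preprest} guarantees that the optimal ray reaching $\zeta(s^+(\phi))$ is the concatenation of the initial segment $[(0,0),(1,0)]$ of length $1$, the spiral arc $\zeta([0,s^-(\phi)])$ of length $s^-(\phi)$ (since $\zeta$ is parametrized by arclength), and the terminal segment of length $r(\phi)$. By \eqref{eq:solution:HJ} this yields $u(\zeta(s^+(\phi))) = 1 + s^-(\phi) + r(\phi)$. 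On the other hand, the monotonicity of $s\mapsto u(\zeta(s))$ built into Definition \ref{def:spiral:strategy} together with the definition of $s^+$ imply $Z\cap \overline{R^Z(u(\zeta(s^+(\phi))))} = \zeta([0,s^+(\phi)])$, whose $\H^1$ measure equals $s^+(\phi)$. Substituting both expressions into \eqref{eq:admiss:funct} gives the claimed formula $\mathcal A(\phi) = 1 + s^-(\phi) + r(\phi) - \sigma^{-1} s^+(\phi)$.

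For the burning rate, the identity $\mathcal H^1(Z\cap\overline{R^Z(t)}) = s^+(\phi)$ with $t = 1 + s^-(\phi) + r(\phi)$ converts \eqref{def:burning:rate} into the differentiation of $s^+$ with respect to $t$. Since $s^+$ is Lipschitz while $s^-$ and $r$ are only BV, the absolutely continuous part of $b$ is controlled by the pointwise a.e.\ derivatives. At a point $\phi$ where all functions involved are differentiable, I would differentiate the geometric identity to obtain
\[
\dot\zeta(s^+(\phi))\,\dot s^+(\phi) = \dot\zeta(s^-(\phi))\,\dot s^-(\phi) + \dot r(\phi)\,e^{\i\phi} + \i\, r(\phi)\,e^{\i\phi}.
\]
Since $e^{\i\phi}\in\partial^-\zeta(s^-(\phi))$ and $s^-$ is differentiable at $\phi$, we must have $\dot\zeta(s^-(\phi)) = e^{\i\phi}$; moreover $\dot\zeta(s^+(\phi)) = e^{\i(\phi+\beta(\phi))}$ by the counterclockwise convention \eqref{eq:assumption:spiral} and the definition \eqref{Equa:beta_phi_def}. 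Dividing through by $e^{\i\phi}$ and taking real and imaginary parts yields the two scalar identities
\[
\cos(\beta(\phi))\,\dot s^+(\phi) = \dot s^-(\phi) + \dot r(\phi), \qquad \sin(\beta(\phi))\,\dot s^+(\phi) = r(\phi).
\]
The first identity gives $ds^{+,\mathrm{a.c.}} = \cos(\beta(\phi))\,dt^{\mathrm{a.c.}}$, so the a.c.\ burning rate is $1/\cos(\beta(\phi))$. Passing to the right continuous representative $\beta^+(s)$ and the variable $s = s^+(\phi)$ with $t = u(\zeta(s))$ gives the stated formula.

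The main subtlety I anticipate is the bookkeeping of the singular parts: $s^-$ may jump where $\zeta$ has a straight segment transverse to the current ray direction, $r$ carries a nonpositive singular part ($D^{\sing} r \leq 0$), and at these same locations $s^+$ may jump as well. These contributions must be shown to conspire so that only the a.c.\ derivative survives in the formula for $b^+$. Concretely one checks that at each jump of $s^-$ the Dirac contributions in $dt = ds^- + dr$ cancel against those produced by differentiating the geometric identity in the distributional sense, which is geometrically transparent: the base point slides along a straight segment of $Z$, so $\dot\zeta(s^-)$ is still aligned with $e^{\i\phi}$ there and the jump does not affect the projection along $e^{\i\phi}$. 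The auxiliary identity $r(\phi) = \sin(\beta(\phi))\,\dot s^+(\phi)$ obtained above also serves as a useful consistency check and will reappear when deriving the RDE satisfied by the spiral in Section \ref{S:ODE}.
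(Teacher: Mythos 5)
Your proposal is correct and is exactly the argument the paper intends but omits (the text preceding the proposition explicitly declines to give a proof, calling it a fairly easy consequence of Proposition \ref{thm:param_1} and Theorem \ref{Cor:angle_preprest}): the formula for $\mathcal A$ follows from the ray decomposition $u(\zeta(s^+(\phi))) = 1 + s^-(\phi) + r(\phi)$ together with $\H^1(Z\cap\overline{R^Z(t)}) = s^+(\phi)$, and the burning rate from projecting the differentiated closure identity onto $e^{\i\phi}$ and $\i e^{\i\phi}$. The singular-part "subtlety" you flag is already settled in Step 4 of the proof of Proposition \ref{thm:param_1}, where it is shown that the singular parts of $\zeta(s^-(\phi))$ point along $e^{\i\phi}$ so that $D^\sing r = -D^\sing s^-$; hence $t = 1+s^-+r$ is absolutely continuous in $\phi$ and only the a.c.\ computation is needed, as your argument assumes.
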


As a simple application, we have:

\begin{corollary}
\label{Cor:sat_burning}
If $\zeta(s)$ is saturated, then $\beta^+(s) \leq \arccos(\frac{1}{\sigma})$.
\end{corollary}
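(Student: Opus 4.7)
The plan is to translate the saturation condition into a pointwise bound on the burning rate $b^+$ and then read off the angular constraint from formula \eqref{Equa:burning_rate}.

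Set $t_0 = u(\zeta(s))$. Saturation means $\mathcal A(\zeta(s)) = 0$, i.e.
\begin{equation*}
\H^1 \big( Z \cap \overline{R^Z(t_0)} \big) = \sigma t_0.
\end{equation*}
For every $t > t_0$, admissibility \eqref{Equa:admissbbd} gives
\begin{equation*}
\H^1 \big( Z \cap \overline{R^Z(t)} \big) - \H^1 \big( Z \cap \overline{R^Z(t_0)} \big) \leq \sigma (t - t_0).
\end{equation*}
Since $t \mapsto \H^1(Z \cap \overline{R^Z(t)})$ is non-decreasing, its absolutely continuous density $b$ is non-negative, and $\int_{t_0}^t b(\tau) \, d\tau$ is bounded above by the left-hand side of the displayed inequality. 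Dividing by $t - t_0$ and letting $t \searrow t_0$, using that $b^+$ is the right-continuous representative of the a.c. part (Proposition \ref{Lem:burnign_rate}), we obtain
\begin{equation*}
b^+(t_0) \leq \sigma.
\end{equation*}

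Plugging \eqref{Equa:burning_rate} into this inequality yields $1/\cos(\beta^+(s)) \leq \sigma$, hence $\cos(\beta^+(s)) \geq 1/\sigma$, which together with $\beta^+(s) \in (0, \pi/2]$ gives $\beta^+(s) \leq \arccos(1/\sigma)$, as claimed. The argument is essentially a right-derivative comparison at the saturation point; the only slightly delicate step is controlling the singular part of the BV function $t \mapsto \H^1(Z \cap \overline{R^Z(t)})$, which is handled by monotonicity (so that the total variation on $[t_0,t]$ dominates the integral of the a.c. density) together with the choice of the right-continuous representative of the a.c. density at $t_0$.
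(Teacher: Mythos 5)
Your proof is correct and follows essentially the same route as the paper: the paper's one-line argument is precisely that saturation forces $b^+ \leq \sigma$, which combined with \eqref{Equa:burning_rate} gives the angular bound. Your write-up merely makes explicit the right-difference-quotient step that the paper leaves implicit, and that elaboration is sound.
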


\begin{proof}
By \eqref{Equa:burning_rate} we must have for saturated points $b^+ \leq \sigma$, which is the statement.
\end{proof}

\section{RDE description of a spiral}
\label{S:ODE}

In this section we will give a representation of a spiral strategy $Z$ by means of a Retarded Differential Equation (RDE), based on the angle-ray representation of Theorem \ref{Cor:angle_preprest}. This is a generalization of the RDE obtained in \cite{firefighter}, where the angle $\beta(\phi)$ is equal to $\alpha = \arccos(\frac{1}{\sigma})$. If one considers $\beta(\phi)$ as a control parameter, the spiral is described by the linear RDE \eqref{eq:spiral:angle} with control $\beta$, which in the case $\beta(\phi)$ is continuous becomes 
\begin{equation*}
\frac{d}{d\phi} r(\phi') = \cot(\beta(\phi') r(\phi') - \frac{r(\phi)}{\sin(\beta(\phi))}, \quad \phi' = \phi + 2\pi + \beta(\phi).
\end{equation*}
The term $\frac{r(\phi)}{\sin(\beta(\phi))}$ is exactly the curvature of the spiral at the base point of $r(\phi')$ (see Definition \ref{def:starting:point}). Observe moreover that the delay interval
\begin{equation*}
\phi' - \phi = 2\pi + \beta(\phi)
\end{equation*}
depends on the control parameter at the previous round, unless $\beta$ is constant.

The main result of this section is the equivalence between the solution to the RDE above and convex spiral barriers (Proposition \ref{Prop:construct_spiral}) and the analysis of the solution for $\beta(\phi) = \alpha$ constant: in particular the key criterion (Lemma \ref{lem:key}) to check if the solution $r(\phi)$ is diverging and its application to the saturated spiral (Proposition \ref{prop:saturated:not:closed_1}). In addition, we compute some Green kernels for the linear RDE and study some of their properties: they will be useful in the next chapters.

We start now by writing the RDE for spiral barriers.

\begin{lemma}
\label{lem:ODE:spiral}
Let $Z$ be a spiral (Definition \ref{def:adm:spirals}) and let $\phi \rightarrow \beta(\phi)$ (alternatively $s \rightarrow \beta(s)$) as in \eqref{Equa:beta_phi_def}. Then the following formulas hold:
\begin{enumerate}
\item The relation $s^-(\phi') = s^+(\phi)$ has solutions (see Figures \ref{fig:param:spiral}) 
\begin{equation}
\label{eq:subs:angle}
\phi' \in \phi + 2\pi + [\beta^-(\phi),\beta^+(\phi)].
\end{equation}
\item \label{Point_2:lem:ODE:spiral} It holds
\begin{equation}
\label{eq:var:length:1}
\frac{d}{d\phi} s^+(\phi) = \frac{r(\phi)}{\sin(\beta(\phi))}.
\end{equation}
\item \label{Point_3:lem:ODE:spiral} If $\zeta(s^-(\phi))$ is a differentiability points of $Z$ with $\ddot\zeta(s^-(\phi)) \not= 0$, the curvature \newglossaryentry{kappacurv}{name=\ensuremath{\kappa},description={curvature of a curve in the plane, $\kappa = \frac{1}{R}$ being $R$ the radius of curvature}} \gls{kappacurv} is given by 
\begin{equation*}
\kappa = \frac{1}{R} = \frac{d\phi}{ds^-(\phi)},
\end{equation*}
and, for $\phi > 2\pi$, the spiral satisfies the equation
\begin{equation}
\label{eq:spiral:angle}
\frac{d}{d\phi} r(\phi) = \cot(\beta(\phi)) r(\phi) - \frac{d}{d\phi} s^-(\phi)
\end{equation}
in the sense of distributions.
\end{enumerate}
\end{lemma}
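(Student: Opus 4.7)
The plan is to derive the three statements directly from the geometric identity
\[
\zeta(s^+(\phi)) = \zeta(s^-(\phi)) + r(\phi) e^{\i \phi}
\]
together with the characterization $e^{\i \phi} \in \partial^-\zeta(s^-(\phi))$ from Theorem \ref{Cor:angle_preprest}.

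For part (1), the relation $s^-(\phi') = s^+(\phi)$ says that $\zeta(s^+(\phi))$ serves as a base point for an optimal ray at angle $\phi'$. By the definition of the base point (Definition \ref{def:starting:point}) this happens iff $e^{\i \phi'} \in \partial^-\zeta(s^+(\phi))$, which by Remark \ref{Rem:another_def} is the arc of $\mathbb S^1$ between $\mathbf t^-(s^+(\phi))$ and $\mathbf t^+(s^+(\phi))$. The signed angular opening of this arc with respect to $e^{\i \phi}$ is exactly $[\beta^-(\phi),\beta^+(\phi)]$ by \eqref{Equa:beta_pm_phi_def}; adding the $2\pi$ needed to advance into the next round yields \eqref{eq:subs:angle}.

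For parts (2) and (3), I would first argue at a $\phi$ which is simultaneously a Lebesgue point of the Lipschitz function $s^+$ and of the BV functions $r$, $s^-$, and for which $\zeta$ is differentiable at $s^-(\phi)$ with nonzero $\ddot\zeta$. There $\dot\zeta(s^-(\phi)) = e^{\i \phi}$ because this tangent is the unique supporting direction. Differentiating the identity above in $\phi$ gives
\[
\dot\zeta(s^+(\phi)) \frac{ds^+}{d\phi} = e^{\i \phi}\left(\frac{ds^-}{d\phi} + \frac{dr}{d\phi}\right) + \i\, r(\phi) e^{\i \phi}.
\]
Projecting onto the orthonormal frame $\{e^{\i \phi}, \i e^{\i \phi}\}$ and using that $\dot\zeta(s^+(\phi))$ makes signed angle $\beta(\phi)$ with $e^{\i \phi}$, the imaginary part yields $\frac{ds^+}{d\phi} = r(\phi)/\sin(\beta(\phi))$, which is \eqref{eq:var:length:1}, and combining it with the real part $\cos(\beta)\frac{ds^+}{d\phi} = \frac{ds^-}{d\phi} + \frac{dr}{d\phi}$ gives the RDE \eqref{eq:spiral:angle} pointwise a.e. The curvature formula comes from the parametrisation $\dot\zeta(s) = e^{\i \varphi(s)}$: at a point where $\zeta$ is $C^2$ one has $\kappa = |\ddot\zeta| = d\varphi/ds$ and, since $\varphi(s^-(\phi)) = \phi$, this equals $d\phi/ds^-$.

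The main obstacle is upgrading the a.e.\ pointwise identity to the distributional RDE on $(2\pi,\bar\phi)$, because $r$ and $s^-$ are only BV and their derivatives carry jump and Cantor parts. Using the BV decomposition recalled in Section \ref{Ss:spiral_barrier_intro}, a jump of $s^-$ at $\phi_0$ corresponds to a straight segment of $\zeta$ with tangent $e^{\i \phi_0}$, on which $r$ has a matching negative jump so that $\zeta(s^+(\phi))$ stays continuous; a Cantor increment of $s^-$ comes from a Cantor-type rotation of the tangent of $\zeta$. In both cases the chain rule for BV functions applied to $\zeta(s^+(\phi)) - \zeta(s^-(\phi)) = r(\phi) e^{\i \phi}$ transports the real-part identity onto the corresponding singular measure, giving $D r = \cot(\beta)\, r\, d\phi - D s^-$ as measures. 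I expect this BV bookkeeping to be the real technical step, since one has to verify that no spurious Dirac contribution appears from the jumps of $\beta(\phi)$ located at the endpoints of straight portions of the spiral; this uses in an essential way the one-sided Lipschitz bound \eqref{Equa:convex_beta} coming from local convexity.
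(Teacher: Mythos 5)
Your proposal is correct and follows essentially the same route as the paper: Point (1) from the subdifferential characterization of the base point, and Points (2)--(3) by differentiating $\zeta(s^+(\phi)) = \zeta(s^-(\phi)) + r(\phi)e^{\i\phi}$ and projecting onto the frame $\{e^{\i\phi},\i e^{\i\phi}\}$ (the paper obtains (2) as the inverse of \eqref{Equa:inverse_der_r_phi}, which is exactly your imaginary-part projection). Your extra care with the BV/distributional bookkeeping --- jumps of $s^-$ matched by downward jumps of $r$ along straight segments --- is consistent with what the paper establishes in Step~4 of the proof of Proposition~\ref{thm:param_1} and Remark~\ref{rmk:constant:angle}, and is a sound way to justify the equation in the sense of measures.
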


Since the functions involved are BV by Theorem \ref{Cor:angle_preprest}, we will use also the notation
\begin{equation*}
Dr = \cot(\beta) r \mathscr L^1 + Ds^-
\end{equation*}
in the sense of measures.

The next definition corresponds to a generalization of the linkage introduced in \cite{firefighter}.

\begin{definition}[Subsequent angles]
\label{Def:subsequent_angles}
Given the rotation angle $\phi$, we will refer to the angles $\phi'$ given by \eqref{eq:subs:angle} as \emph{subsequent angles}.
\end{definition}

\begin{remark}
In the case of the first round  the equation for spirals reads as follows
\begin{equation}\label{eq:spiral:first:round}
\frac{dr}{ds}=\cos(\beta(s)),
\end{equation}
or, in the variable $\phi$,
\begin{equation*}
\frac{d}{d\phi}r(\phi)=\cot(\beta(\phi))r(\phi).
\end{equation*}
Note that \eqref{eq:spiral:angle} holds for all $\phi$ by extending $s^-([0,2\pi)) = 0$.
\end{remark}


\begin{proof}[Proof of Lemma \ref{lem:ODE:spiral}]
The first point is a consequence of Points (4),(5) of Theorem \ref{Cor:angle_preprest} and the definition of $\beta^\pm$ in \eqref{Equa:beta_pm_phi_def} (see Figure  \ref{fig:param:spiral}).

Point \eqref{Point_2:lem:ODE:spiral} is the inverse of \eqref{Equa:inverse_der_r_phi}.

Finally Point \eqref{Point_3:lem:ODE:spiral} follows by projecting the derivative of
\begin{equation*}
\zeta(s^+(\phi)) = \zeta(s^-(\phi)) + e^{\i \phi} r(\phi)
\end{equation*}
on the direction $e^{\i \phi}$ and using Point \eqref{Point_2:lem:ODE:spiral}.
\end{proof}

\begin{figure}
\centering
\includegraphics[scale=0.5]{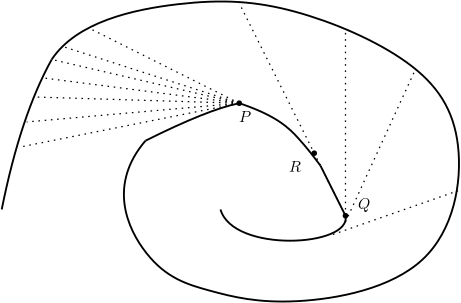}
\caption{In a point of non-differentiability $P$ the variation of the radius is computed as $\frac{d}{d\phi}r(\phi)=\cot\beta(\phi)r(\phi)$. The opposite situation occurs at the poin $R$, where the $r$ has a downward jump of size equal to the length of the segment $[Q,R]$.}
\label{fig:point:non:diff}
\end{figure}

\begin{remark}
\label{rmk:constant:angle}
In the case of $\frac{ds^-}{d\phi} = R(\phi) \mathscr L^1$, $R(\phi)$ being the curvature of $\zeta(s^-(\phi))$, we obtain the RDE
\begin{equation}
\label{eq:spiral:angle_2}
\frac{d}{d\phi} r(\phi) = \cot \beta(\phi) r(\phi) - R(\phi).
\end{equation}

When $s^-$ jumps (or it has a singular part), then $r(\phi)$ has the same singular part with opposite sign, in particular it jumps down, as observed in the proof of Proposition \ref{thm:param_1}.

If instead $s^-$ is constant, i.e. there is a corner point in $\zeta(s^-(\phi))$, then the ODE becomes
\begin{equation*}
\frac{d r(\phi)}{d\phi} = \cot\beta(\phi) r(\phi).
\end{equation*}
See Figure \ref{fig:point:non:diff}.
\end{remark}

In the next proposition we show that the one-sided Lipschitz bound on $\beta(\phi)$ given by \eqref{Equa:convex_beta} suffices to construct the spiral $\zeta$.

\begin{proposition}
\label{Prop:construct_spiral}
Assume that the function $\beta : (0,\infty) \to (0,\frac{\pi}{2}]$ satisfies
\begin{equation}
\label{Equa:beta_convex_1}
D\beta + \mathscr L^1 \geq 0, \quad \text{and} \quad \beta((0,\epsilon)) > 0, \int_0^\epsilon \frac{d\phi}{\beta(\phi)} < \infty \ \text{for some } \ \epsilon \ll 1.
\end{equation}
Then, there exists an angle $\bar \phi \in (0,+\infty]$ depending only on $\beta$ with the following properties: for every given $r(0)$, there is a unique spiral $\zeta$ such that the functions $r(\phi), s^-(\phi),s^+(\phi)$, $\phi \in [0,\bar \phi)$ constructed in Theorem \ref{thm:param_1} satisfy the equations
\begin{equation}
\label{Equa:eq_for_contr_spir}
\zeta(s^+(\phi)) = \zeta(s^-(\phi)) + r(\phi) e^{\i \phi}, \quad \frac{dr}{d\phi} = \cot(\beta) r + \frac{ds^-}{d\phi}, \quad r(0) = \lim_{\phi \searrow 0} r(\phi).
\end{equation}
If $\bar \phi < +\infty$, then as $\phi \nearrow \bar \phi$ either $r(\phi) \searrow 0$ or $r(\phi) \nearrow \infty$ with asymptotic direction $e^{\i \bar \phi}$.
\end{proposition}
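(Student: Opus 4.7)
The plan is to construct the spiral round-by-round, inverting the decomposition of Proposition \ref{thm:param_1}. The key observation is that, given $\beta$ on the previous round, the subsequent-angle relation $s^-(\phi')=s^+(\phi)$ with $\phi'=\phi+2\pi+\beta(\phi)$ from Lemma \ref{lem:ODE:spiral} already determines $s^-$ on the current round from the $s^+$ just built on the previous round; the RDE then produces $r$, hence $s^+$ via \eqref{eq:var:length:1}, and finally $\zeta$ via $\zeta(s^+(\phi))=\zeta(s^-(\phi))+r(\phi)e^{\i\phi}$. Thus no data beyond the scalar $r(0)$ is required, and the whole construction depends on $\beta$ alone once $r(0)$ is fixed.

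On the first round $\phi\in[0,2\pi)$ I would set $s^-(\phi)\equiv 0$, so that $\frac{dr}{d\phi}=\cot\beta(\phi)\,r(\phi)$ integrates explicitly to $r(\phi)=r(0)\exp\!\bigl(\int_0^\phi\cot\beta\,d\psi\bigr)$; integrability at $0$ follows from $\cot\beta\le 1/\beta$ together with the assumption $\int_0^\epsilon d\phi/\beta<\infty$. Then $s^+(\phi)=\int_0^\phi r(\psi)/\sin\beta(\psi)\,d\psi$ and $\zeta(s^+(\phi))=r(\phi)e^{\i\phi}$ determine $\zeta$ on the first round. For the induction step from round $\ell$ to round $\ell+1$, one defines $s^-$ on $[\bar\phi_\ell,\bar\phi_{\ell+1}]$ by $s^-(\phi'):=s^+(\phi)$ whenever $\phi+2\pi+\beta(\phi)=\phi'$: the hypothesis $D\beta+\mathscr L^1\ge 0$ makes $\phi\mapsto\phi+2\pi+\beta(\phi)$ nondecreasing, yielding a well-defined monotone right-continuous $s^-$, whose singular part is generated exactly by the atoms of $d(\phi+\beta(\phi))$ on the previous round. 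The BV-measure equation $Dr=\cot\beta\cdot r\,d\phi+Ds^-$ is then solved with initial value $r(\bar\phi_\ell^-)$, the absolutely continuous part being a linear first-order ODE and the singular parts of $r$ being inherited from those of $s^-$ as in Remark \ref{rmk:constant:angle}. Finally $s^+$ and $\zeta$ on the new round are built as in the first step.

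The hardest point will be verifying that the object $\zeta$ produced this way is indeed a spiral in the sense of Definition \ref{def:adm:spirals}: local convexity, counterclockwise rotation, simplicity, and continuity at the round transitions $s=\bar s_{\ell+1}$. Local convexity and the direction of rotation reduce to monotonicity of the argument of $\mathbf t^+(s^+(\phi))$, which equals $\phi+\beta(\phi)$ up to a fixed $\pm\pi/2$ shift and is nondecreasing precisely by $D\beta+\mathscr L^1\ge 0$. Continuity at $s=\bar s_{\ell+1}$ is automatic since the two one-sided limits $s^-_{\ell-1}(\bar\phi_\ell^-)$ and $s^-_\ell(\bar\phi_\ell^+)$ both equal $\bar s_\ell$, and the corresponding $r$-values agree by the geometric meaning of $r$. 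Simplicity should be checked round-by-round, reusing in the inverse direction the star-shaped structure of the sets $S_\ell$ established in the proof of Proposition \ref{thm:param_1}. For the final dichotomy, linearity of the RDE in $r$ implies that rescaling $r(0)$ rescales the entire construction, so the angle at which $r$ vanishes or blows up depends only on $\beta$ and defines the maximal $\bar\phi$. If $\bar\phi<\infty$ and $r$ stayed in a compact subset of $(0,\infty)$, one more round could be constructed by the same procedure, contradicting maximality; hence either $r\to 0$ (the spiral closes) or $r\to\infty$, and in the latter case the base points $\zeta(s^-(\phi))$ remain inside the already-constructed bounded arc while $r(\phi)e^{\i\phi}$ escapes along the direction $e^{\i\bar\phi}$.
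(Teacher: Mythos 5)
Your proposal follows essentially the same route as the paper's proof: explicit integration of $\dot r=\cot(\beta)r$ in polar coordinates on the first round (with the same use of $\int_0^\epsilon d\phi/\beta<\infty$), an inductive round-by-round construction in which $s^-$ on the new round is determined by the data of the previous one, solution of the measure ODE for $r$, convexity read off from $D\beta+\mathscr L^1\ge 0$ via monotonicity of the tangent argument, and linearity of the equations to make $\bar\phi$ independent of $r(0)$. The only cosmetic difference is that you define $s^-$ through the subsequent-angle relation $\phi'=\phi+2\pi+\beta(\phi)$ whereas the paper takes the right-continuous inverse of the tangent map $s\mapsto \mathbf t(s)=e^{\i\phi}$; these are the same object.
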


The integrability of $\beta^{-1}$ near $\phi = 0$ is needed to construct an initial arc of the spiral, since the one-sided Lipschitz assumptions on $\beta$ will take care of the integrability at the next angles. The positivity assumption is just the requirement that we are rotating counterclockwise. We remark that every $\beta$ which is locally 1-sided Lipschitz in $(0,\epsilon)$ can be extended to a 1-sided Lipschitz function on $(0,\infty)$, so that $\beta $ defined on $(0,+\infty)$ is not a requirement.

\begin{proof}
We start with the first round, where the angle-ray parametrization corresponds to the polar coordinates: the ODE is then \eqref{eq:spiral:first:round}, whose solution is
\begin{equation}
\label{Equa:polar_spiral}
r(\phi) = r(0) e^{\int_0^\phi \cot(\beta) \mathscr L^1}.
\end{equation}
Due to the assumption that $\int_0^\epsilon \frac{d\phi}{\beta(\phi)} < \infty$, the above formula is locally meaningful.

The curve is given by
\begin{equation*}
\zeta(\phi) = r(\phi) e^{\i \phi},
\end{equation*}
and it is a fairly easy computation to show that \eqref{Equa:beta_convex_1} implies the convexity of $\zeta$. Note that if $\beta$ converges to $0$ for $\phi \nearrow \bar \phi$, then by the 1-sided Lipschitz condition \eqref{Equa:beta_convex_1} we get $\beta(\phi) \leq (\bar \phi - \phi)$, so that $r(\phi)$ given by \eqref{Equa:polar_spiral} diverges as $\phi \to \bar \phi$ with asymptotic direction $e^{\i \bar \phi}$, and in this case the construction procedure stops. Note that for this case $r(\phi) \geq r(0)$, so that if $r$ is not blowing up the polar representation stops when $\phi = 2\pi$. For coherence with the notation of Proposition \ref{thm:param_1}, we will set
\begin{equation*}
\bar s_1 = 0, \quad \bar \phi_1 = 2\pi, \quad \bar s_2 = \int_0^{2\pi} \frac{r(\phi)}{\sin(\beta(\phi))} d\phi,
\end{equation*}
the latter being the length of the spiral in the first round.

\medskip

Assume that the spiral has been constructed up to the rotation angle $\bar \phi_\ell$, with length $\bar s_{\ell+1}$: more precisely,
\begin{itemize}
\item the curve $\zeta(\phi)$, $\phi \in [\bar \phi_{\ell-1},\bar \phi_\ell]$, is locally convex, and it can be represented as in Proposition \ref{thm:param_1}: we will write $\zeta(s)$, $s \in [\bar s_{\ell},\bar s_{\ell+1}]$, for its length parametrization, and we will use the notation of convex spirals;
\item $\zeta(\bar \phi_\ell) \in \zeta(\bar \phi_{\ell-1}) + \mathbf t^-(\phi_{\ell-1}) \R^+$.
\end{itemize}
In particular, by defining the next angle
\begin{equation*}
e^{\i \bar \phi_{\ell+1}} = \mathbf t^-(\phi_\ell), \quad \bar \phi_{\ell+1} \in \bar \phi_\ell + 2\pi + \bigg(0,\frac{\pi}{2} \bigg],
\end{equation*}
the function
$$
[\bar \phi_\ell,\bar \phi_{\ell+1}) \ni \phi \mapsto s^-(\phi) \in [\bar s_{\ell},\bar s_{\ell+1}]
$$
is defined as in \eqref{Equa:s_-_def_prof}: it is the right continuous inverse of the monotone function $[\bar s_\ell,\bar s_{\ell+1}] \ni s \mapsto \mathbf t(s) = e^{\i \phi}$, $\phi \in [\bar \phi_\ell,\bar \phi_{\ell+1}]$. 
We can also apply Step 1 of the proof of Proposition \ref{thm:param_1} to construct the time function $u$ for the next round, see \eqref{Equa:u_ell_constr}.

Given the function $\beta(\phi)$, $\phi \in [\bar \phi_\ell,\bar \phi_{\ell+1}]$, we now solve the ODE \eqref{eq:spiral:angle}, namely
\begin{equation*}
D r = \cot(\beta) r \mathscr L^1 - D s^-,
\end{equation*}
yielding a unique right continuous BV function $r(\phi)$, $\phi \in [\bar \phi_\ell,\bar \phi_{\ell+1}]$. The domain of $r(\phi)$ will be the set $[\bar \phi_\ell,\bar \phi) = \big\{ r(\phi) \in (0,\infty) \big\}$. 

We can prolong the spiral $\zeta$ for $\phi \in [\bar \phi_\ell,\bar \phi)$ by defining the curve
\begin{equation*}
\zeta(\phi) = \zeta(s^-(\phi)) + r(\phi) e^{\i \phi}.
\end{equation*}
We need to show that this is actually a curve and it is Lipschitz, locally convex, and $\phi \mapsto u(\zeta(\phi))$ is monotone, where $u$ is well defined as in the proof of Proposition \ref{thm:param_1}, see Formula \eqref{Equa:struct_optimal_ray_ell} for the explicit computation of the time function at every round.

Taking the derivative and observing that $D\zeta(s^-(\phi)) = e^{\i \phi} |D\zeta(s^-(\phi))|$ by definition, we have
\begin{equation*}
D \zeta = e^{\i \phi} \big( D s^- + D r \big) + \i r e^{\i \phi} = (\cot(\beta) + \i) r e^{\i \phi},
\end{equation*}
where we recall that \gls{iimagi} is the imaginary unit. This shows that $\zeta$ is Lipschitz whenever $r < \infty$, and moreover that
\begin{equation*}
\frac{d}{d\phi} u \circ \zeta = \cot(\beta) r(\phi) > 0,
\end{equation*}
giving the monotonicity.

Being $\phi \mapsto \beta(\phi)$ a BV function by \eqref{Equa:beta_convex_1}, we can further differentiate $D\zeta$ obtaining
\begin{equation*}
\begin{split}
D \frac{D \zeta}{|D \zeta|} &= D [ (\cos(\beta) + \i \sin(\beta)) e^{\i \phi} ] = (D \cos(\beta) + \i D\sin(\beta)) e^{\i \phi} + (\i \cos(\beta) - \sin(\beta)) e^{\i \phi}.
\end{split}
\end{equation*}
Taking the scalar product with
\begin{equation*}
\frac{D\zeta^\perp}{|D\zeta|} = \bigg( - \sin \bigg( \frac{\beta^+(\phi) + \beta^-(\phi)}{2} \bigg) + \i \cos \bigg( \frac{\beta^+(\phi) + \beta^-(\phi)}{2} \bigg) \bigg) e^{\i \phi},
\end{equation*}
we get
\begin{equation*}
\begin{split}
\frac{D\zeta^\perp}{|D\zeta|} \cdot D \frac{D\zeta}{|D\zeta|} &= 1 + \cos \bigg( \frac{\beta^+ + \beta^-}{2} \bigg) D \sin(\beta) - \sin \bigg( \frac{\beta^+ + \beta^-}{2} \bigg) D \cos(\beta) \\
&= \frac{\sin( \frac{\beta^+ - \beta^-}{2})}{\frac{\beta^+ - \beta^-}{2}} D^\jump \beta + \mathscr L^1 + D^{\cont} \beta.
\end{split}
\end{equation*}
Hence \eqref{Equa:beta_convex_1} implies $\zeta$ is convex. We thus conclude that it is a spiral according to Definition \ref{def:adm:spirals}, once we reparametrized it by length.

If $\beta(\phi) \searrow 0$ as $\phi \nearrow \bar \phi$, the same conclusion as for the first round applies, i.e. the spirals blow up with asymptotic direction $e^{\i \check \phi}$. If $r(\phi) \searrow 0$, then the curve $\zeta(\phi)$ closes at the angle $\bar \phi$. Otherwise, we can prolong the spiral to the next round $\phi \in [\bar \phi_\ell,\bar \phi_{\ell+1}]$.

It is clear that such a construction gives a unique solution $r(\phi)$ and curve $\zeta(\phi)$, and by construction at each round the angle-ray representation coincides with the one of Proposition \ref{thm:param_1}.

\medskip

The last steps in the proof is to prove that $\bar \phi$ does not depend on $r(0)$: for this, we just observe that the equations \eqref{Equa:eq_for_contr_spir} are linear w.r.t. $r,\zeta$, so that $r(\phi) = r(0) r_{r(0)=1}(\phi)$ and similarly $\zeta(\phi) = r(0) \zeta_{r(0) = 1}(\phi)$. 
%
\end{proof}

We conclude this section by observing that in case of constant angle $\beta(\phi) \equiv \beta$, the radius of curvature is
\begin{equation*}
R(\phi')=\frac{r(\phi)}{\sin(\beta)},
\end{equation*}
and then the equation \eqref{eq:spiral:angle_2} reads as
\begin{equation*}
\frac{dr}{d\phi'}(\phi') = \cot(\beta) r(\phi') - \frac{r(\phi)}{\sin(\beta)}, \quad \phi' = 2\pi + \phi + \beta.
\end{equation*}
This is a linear RDE with constant coefficients introduced first by \cite{firefighter}, written as
\begin{equation}
\label{Equa:linear_retarded_1}
\frac{d}{d\phi}r(\phi) = \cot(\beta) r(\phi) - \frac{r(\phi - (2\pi + \beta))}{\sin(\beta)},
\end{equation}
which we are going to study in the next section.

\subsection{Analysis of the Saturated Spiral}
\label{Sss:equation_satur}

We start with the definition of saturated spiral, with the initial set of the fire $R_0 = B_1(0)$: even if it is not the initial fire set we consider, it will serve as a simpler situation where we develop one of the building blocks of our analysis. The analysis for the "saturated spiral" when $R_0 = B_r(0)$, $r \in (0,1)$, or $R_0 = \{(0,0)\}$ will be done in Section \ref{S:case:study}. In particular, it is a simple observation that given an admissible spiral $\bar \zeta$ up to the angle $\phi_0$ where $\zeta(\phi_0)$ is saturated, continuing $\bar \zeta$ as a saturated spiral (i.e. solving \eqref{eq:spiral:angle} with $\beta = \arccos(\frac{1}{\sigma})$ constant for $r(\phi)$ is the angle-ray coordinates) gives an admissible spiral up to the angle where $r(\phi)$ may become $0$: indeed by definition the rate of construction is equal to $\sigma$.

\begin{definition}
\label{Def:satur_spiral}
We define the \emph{saturated spiral} as the admissible spiral $Z_\mathrm{sat}$ with the following property:
\begin{equation*}
\mathcal{S}(Z_\mathrm{sat}) = \big\{ t \in [0,T] : \H^1(Z_\mathrm{sat} \cap \overline{R^{Z_\mathrm{sat}}(t)}) = \sigma t \big\} = [0,T].
\end{equation*}
\end{definition}

Clearly the above definition implies that $\Omega_0 = B_1(0)$, otherwise $(1,0) \in Z_\mathrm{sat}$ is not saturated.

Saturated spirals are strategies built assuming the instantaneous speed of construction is constant and takes the maximum value $\sigma$, that is the burning rate function $b(t)$ is constantly equal to $\sigma$.

The formula for burning rates \eqref{Equa:burning_rate} tells us that, in case of saturated spirals, the angle between the fire ray and the spiral is constantly equal to
$$
\beta^+(\phi) = \beta^-(\phi) = \arccos \bigg( \frac{1}{\sigma} \bigg),
$$
which will be denoted by \newglossaryentry{alphaspeed}{name=\ensuremath{\alpha},description={angle between the optimal ray and the tangent of a saturated spiral}} $\gls{alphaspeed} = \arccos(\frac{1}{\sigma})$. 

The first lemma addresses the first round: it is just the integration of the linear ODE \eqref{eq:spiral:first:round}, which is now with constant coefficient $\cot(\beta(\phi)) = \cot(\alpha)$, and it has been obtained already in \cite{firefighter}.

%

\begin{lemma}
\label{lem:initial:data:saturated}
Let $Z_\mathrm{sat}$ be a saturated spiral. Then, in the angle-ray representation $\zeta(\phi) = \zeta(\phi - 2\pi - \bar \alpha) + r_\mathrm{sat}(\phi) e^{\i \phi}$, the ray \newglossaryentry{rsatur}{name=\ensuremath{r_\mathrm{sat}(\phi)},description={saturated spiral barrier}} $\gls{rsatur}$ satisfies the RDE
\begin{equation}
\label{eq:ODE:saturated}
\frac{d}{d\phi} r_\mathrm{sat}(\phi) = r_\mathrm{sat}(\phi) \cot(\alpha) - \frac{r_\mathrm{sat}(\phi - (2\pi + \alpha))}{\sin(\alpha)},
\end{equation}
with initial data
\begin{equation*}
r_{\mathrm{sat},0}(\phi) = \begin{cases}
e^{\cot(\alpha) \phi} & \forall \phi \in [0,2 \pi), \\
(e^{2 \pi \cot(\alpha)} - 1) e^{\cot(\alpha) (\phi - 2 \pi)} & \forall \phi \in [2\pi,2\pi+\alpha].
\end{cases}
\end{equation*}
\end{lemma}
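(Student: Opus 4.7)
The plan is to extract both the RDE and the initial data from two ingredients already developed in the paper: the saturation condition, which pins down the angle $\beta$, and the angle-ray formalism of Theorem \ref{Cor:angle_preprest} combined with the round decomposition of Proposition \ref{thm:param_1}.

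\textbf{Deriving $\beta\equiv\alpha$ and the RDE.} By the burning rate formula \eqref{Equa:burning_rate} in Proposition \ref{Lem:burnign_rate}, saturation ($b\equiv\sigma$ a.e.) forces $\cos(\beta^+(\phi))=1/\sigma$, so $\beta^+(\phi)=\alpha$. The one-sided Lipschitz bound \eqref{Equa:convex_beta} rules out downward jumps in $\beta$, so $\beta^-(\phi)=\alpha$ as well, hence $\beta\equiv\alpha$. Substituting this constant value into Lemma \ref{lem:ODE:spiral}, the subsequent-angle relation \eqref{eq:subs:angle} becomes $s^+(\phi)=s^-(\phi+2\pi+\alpha)$, which combined with $ds^+/d\phi=r(\phi)/\sin(\alpha)$ (Point (2) of the lemma) gives $ds^-$ at the angle $\phi+2\pi+\alpha$ equal to $r(\phi)\,d\phi/\sin(\alpha)$. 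Plugging this into the general ODE \eqref{eq:spiral:angle} and shifting the variable produces exactly \eqref{eq:ODE:saturated}.

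\textbf{Initial data on $[0,2\pi)$.} For the first round, Point (2) of Proposition \ref{thm:param_1} represents $\zeta$ in polar coordinates about the origin, so $s^-\equiv 0$. Equation \eqref{eq:spiral:angle} reduces to the homogeneous linear ODE $r'(\phi)=\cot(\alpha)\,r(\phi)$; the starting condition $\zeta(0)=(1,0)$ and the polar identification $\zeta(\phi)=r(\phi)e^{\i \phi}$ give $r(0^+)=1$, whence $r(\phi)=e^{\cot(\alpha)\phi}$.

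\textbf{Initial data on $[2\pi,2\pi+\alpha]$ and main obstacle.} The key observation is the corner of $\zeta$ at its starting point: the convention $\mathbf t^-(0)=\mathbf e_1$ together with the previous step (which gives $\mathbf t^+(0)=e^{\i \alpha}$) yields $\partial^-\zeta(0)=\{e^{\i \theta}:\theta\in[0,\alpha]\}$. For $\phi\in[2\pi,2\pi+\alpha]$ the direction $e^{\i \phi}$ lies in this arc, so $\zeta(0)=(1,0)$ is an admissible base point for the ray; a direct geometric check rules out the competing candidate on the first round at polar angle $\phi-\alpha$, since that point lies in the fourth quadrant while the nascent second-round arc lies in the upper half-plane, so no segment from it in the direction $e^{\i \phi}$ can reach any reachable target. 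Consequently $s^-\equiv 0$ throughout $[2\pi,2\pi+\alpha]$, and \eqref{eq:spiral:angle} again reduces to $r'=\cot(\alpha)\,r$. The initial value at $\phi=2\pi$ is fixed by continuity of $\zeta\circ s^+$: since $\zeta(s^+(2\pi))=\zeta(\bar s_2)=(e^{2\pi\cot(\alpha)},0)$ and the base point is $(1,0)$, the relation $\zeta(s^+)=\zeta(s^-)+r(\phi)e^{\i \phi}$ forces $r(2\pi)=e^{2\pi\cot(\alpha)}-1$, and integration yields the stated formula. The delicate step throughout is this base-point identification, which is the main obstacle: without the corner at $\zeta(0)$ one would naively place the base point at the smooth first-round position where the tangent matches $e^{\i \phi}$, which the above geometric argument excludes.
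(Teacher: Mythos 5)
Your proposal is correct and follows essentially the same route as the paper: saturation forces $\beta\equiv\alpha$ via the burning-rate formula, the general spiral equation of Lemma \ref{lem:ODE:spiral} with constant $\beta$ yields \eqref{Equa:linear_retarded_1}, the first round is the polar ODE $r'=\cot(\alpha)r$ with $r(0)=1$, and on $[2\pi,2\pi+\alpha]$ the base point stays at $(1,0)$ so the spiral is the logarithmic spiral centered at $P=(1,0)$ with initial radius $e^{2\pi\cot(\alpha)}-1$. Your treatment of the corner at $\zeta(0)$ and the exclusion of a competing base point is a more explicit justification of what the paper asserts directly, but it is the same argument.
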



\begin{proof}
For $\phi\in[0,2\pi]$ it is a consequence of the RDE which reduces to the ODE $\dot r = \cot(\alpha) r$. Assume $\phi \in [2\pi,2\pi + \alpha]$, one easily computes that, calling $P=(1,0)$, the saturated spiral is a logarithmic spiral centered at $P$ with initial radius $r(2\pi)-1$. Finally, \eqref{eq:ODE:saturated} follows by \eqref{Equa:linear_retarded_1} for which the angle of the spiral is constantly equal to $\alpha$.
\end{proof}


In the next section we study the RDE of the saturated spiral.

\subsection{Formulation as a retarded ODE}
\label{ss:dde}

In this subsection we study in detail the RDE \eqref{eq:ODE:saturated}, and we find for which angle $\alpha$ (or equivalently for which burning rate) the associated spiral confines the fire. Part of these results have been already obtained in \cite{firefighter} (we repeat them for completeness using a different approach based on RDE analysis), with also a new direct proof that the saturated spiral constructed with speed $\sigma \leq \bar \sigma = 2.6144..$ does not confine the fire (Proposition \ref{prop:saturated:not:closed_1}): the speed \gls{barsigma} is called \emph{critical speed}, and it is computed as the solution to an eigen-equation.

The proof of this result exploits a change of variable in the equation \eqref{eq:ODE:saturated}, which will be of key importance also in the next sections (Lemma \ref{lem:key}). we refer mainly to \cite{RDE} for the theory of RDEs.

\subsubsection{Change of variables}
\label{Sss:change_variables}

Define the constant \newglossaryentry{cexp}{name=\ensuremath{c},description={exponent for the change of variable \eqref{Equa:c_gene_defi}}}
\begin{equation}
\label{Equa:c_gene_defi}
\gls{cexp} = \frac{\ln(\frac{2\pi+\alpha}{\sin(\alpha)})}{2\pi+\alpha},
\end{equation}
and rescale the solution $r(\phi)$ to \eqref{eq:ODE:saturated} as \newglossaryentry{rhogrowt}{name=\ensuremath{\rho(\tau)},description={rescaled variable $r$, Equation \eqref{Equa:rho_deff}}}
\begin{equation}
\label{Equa:rho_deff}
\gls{rhogrowt} = r((2\pi + \alpha) \tau) e^{-c (2\pi+\alpha) \tau}.
\end{equation}
By direct computations \newglossaryentry{acost}{name=\ensuremath{a(\alpha)},description={constant for the rescaled spiral equation \eqref{Equa:rescal_RDE}}}
\begin{align*}
\dot \rho &= (2\pi + \alpha) e^{-c (2\pi + \alpha) \tau} \bigg( \frac{d}{d\phi} r((2\pi + \alpha) \tau) - c r(2\pi + \alpha) \bigg) \\
&= (2\pi + \alpha) (\cot(\alpha) - c) \rho(\tau) - \frac{(2\pi + \alpha) e^{- c(2\pi + \alpha)}}{\sin(\alpha)} \rho(\tau-1) \\
&= \gls{acost} \rho(\tau) - \rho(\tau - 1),
\end{align*}
with
\begin{equation}
\label{Equa:a_const}
\gls{acost} = (2\pi + \alpha) (\cot(\alpha) - c(\alpha)) = (2\pi + \alpha) \cot(\alpha) - \ln \bigg( \frac{2\pi + \alpha}{\sin(\alpha)} \bigg).
\end{equation}
We will now study the spectrum of the RDE
\begin{equation}
\label{Equa:rescal_RDE}
\dot \rho(\tau) = a \rho(\tau) - \rho(\tau-1).
\end{equation}
The characteristic equation for the eigenvalues \newglossaryentry{lambda}{name=\ensuremath{\lambda},description={eigenvalues of the RDE}} \gls{lambda} is
\begin{equation}
\label{eq:eigenvalues}
\lambda + e^{-\lambda} - a = 0.
\end{equation}

We start by studying the real eigenvalues: the following result has been obtained also by \cite{Bressan_friends,firefighter}.

\begin{lemma}[Real Eigenvalues]
\label{lem:real:Eigenvalues}
The following hold.
\begin{enumerate}
\item If $a > 1$, there are two real eigenvalues $\lambda_0^-,\lambda_0^+$ with $\lambda_0^- < 0 < \lambda_0^+$ with algebraic multiplicity $1$.
\item If $a = 1$, there exists a unique real eigenvalue $\lambda_0 = 0$ with algebraic multiplicity $2$.
\item If $a < 1$ there are no real eigenvalues.
\end{enumerate}
\end{lemma}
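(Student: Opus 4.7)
The plan is to reduce the problem to an elementary calculus exercise by studying the function $f(\lambda) = \lambda + e^{-\lambda} - a$ on the real line. Since real eigenvalues are by definition the real roots of \eqref{eq:eigenvalues}, and algebraic multiplicity of a root of the characteristic equation of a scalar RDE equals the order of vanishing of $f$ at that root, everything reduces to locating the zeros of $f$ and checking the order of the first non-vanishing derivative there.

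First I would observe that $f'(\lambda) = 1 - e^{-\lambda}$ vanishes only at $\lambda = 0$, is negative for $\lambda < 0$ and positive for $\lambda > 0$; consequently $f$ is strictly convex (indeed $f''(\lambda) = e^{-\lambda} > 0$) and attains its global minimum at $\lambda = 0$, with $f(0) = 1 - a$. Moreover $f(\lambda) \to +\infty$ as $\lambda \to \pm \infty$ (linearly in one direction, exponentially in the other). With these facts all three cases follow at once:
\begin{itemize}
\item If $a > 1$, then $f(0) = 1 - a < 0$, so by the intermediate value theorem and strict monotonicity of $f$ on $(-\infty,0]$ and $[0,+\infty)$ there is exactly one root $\lambda_0^- < 0$ and exactly one root $\lambda_0^+ > 0$. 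At each such root $f'(\lambda_0^\pm) = 1 - e^{-\lambda_0^\pm} \neq 0$, so the algebraic multiplicity is $1$.
\item If $a = 1$, then $f(0) = 0 = f'(0)$ while $f''(0) = 1 \neq 0$, so $\lambda_0 = 0$ is a root of multiplicity exactly $2$; strict convexity rules out any other real root.
\item If $a < 1$, then $f(\lambda) \geq f(0) = 1 - a > 0$ for all real $\lambda$, so there are no real roots.
\end{itemize}

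The main (and only) obstacle is a bookkeeping issue, namely to make sure that the multiplicity of $\lambda$ as a root of the characteristic equation coincides with what the statement calls ``algebraic multiplicity'' of the eigenvalue of the RDE. For a scalar linear RDE with a single delay this identification is standard (see the general theory of retarded functional differential equations cited in the paper), so I would just invoke it rather than reprove it. Everything else is a one-variable calculus argument and requires no computation beyond differentiating $f$ twice.
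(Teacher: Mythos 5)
Your argument is correct and is essentially the paper's own proof: the paper likewise notes that $\lambda \mapsto \lambda + e^{-\lambda}$ is strictly convex with global minimum value $1$ at $\lambda = 0$, and leaves the rest as elementary. Your additional remarks on identifying the order of vanishing of the characteristic function with the algebraic multiplicity are a reasonable (and standard) elaboration of what the paper takes for granted.
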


\begin{proof}
It is elementary to see that the function $\lambda \mapsto \lambda + e^{-\lambda}$ has a global minimum at $\lambda = 0$ with value $1$ and it is strictly convex.
\end{proof}

From the point of view of solutions we thus have \cite[Theorem 4.1]{RDE} the following

\begin{corollary}
\label{Cor:rho_fund_sol_exp}
The following holds:
\begin{enumerate}
\item If $a > 1$, there are two solutions to \eqref{Equa:rescal_RDE} of the form
\begin{equation*}
\rho^-(\tau) = e^{\lambda_0^- \tau}, \quad \rho^+(\tau) = e^{\lambda_0^+ \tau},
\end{equation*}
the first exponentially decreasing while the second exponentially increasing. The two 
real eigenvalues $\lambda_0^-,\lambda_0^+$ with $\lambda_0^- < 0 < \lambda_0^+$ have algebraic multiplicity $1$.
\item If $a = 1$, there exists two solutions of the form
\begin{equation*}
\rho(\tau) = 1, \quad \rho(\tau) = \tau.
\end{equation*} 
\end{enumerate}
\end{corollary}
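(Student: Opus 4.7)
The plan is to verify the claim by direct substitution, using Lemma \ref{lem:real:Eigenvalues} for the existence and sign of the real eigenvalues. This is essentially a routine application of the standard theory for linear RDEs with constant coefficients, where exponential solutions correspond to roots of the characteristic equation, and roots of algebraic multiplicity $m$ give rise to solutions of the form $\tau^k e^{\lambda \tau}$ for $k = 0, \dots, m-1$.

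For the case $a > 1$, I would first observe that plugging the ansatz $\rho(\tau) = e^{\lambda \tau}$ into the RDE \eqref{Equa:rescal_RDE} gives
\begin{equation*}
\lambda e^{\lambda \tau} = a e^{\lambda \tau} - e^{\lambda(\tau-1)},
\end{equation*}
which after dividing by $e^{\lambda \tau}$ reduces to the characteristic equation \eqref{eq:eigenvalues}, namely $\lambda + e^{-\lambda} - a = 0$. By Lemma \ref{lem:real:Eigenvalues}(1), this equation admits exactly two real roots $\lambda_0^- < 0 < \lambda_0^+$, yielding the two claimed solutions $\rho^\pm(\tau) = e^{\lambda_0^\pm \tau}$, with the claimed monotonic behavior in $\tau$.

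For the case $a = 1$, the eigenvalue $\lambda_0 = 0$ has algebraic multiplicity $2$ by Lemma \ref{lem:real:Eigenvalues}(2). Standard RDE theory (see for instance \cite{RDE}) then provides two independent solutions of the form $\tau^k e^{\lambda_0 \tau}$ for $k = 0, 1$, i.e.\ $\rho(\tau) = 1$ and $\rho(\tau) = \tau$. Since this is a brief statement, I would simply double-check both by direct substitution: for $\rho \equiv 1$ we have $\dot\rho = 0 = 1 - 1 = a\rho(\tau) - \rho(\tau-1)$; for $\rho(\tau) = \tau$ we have $\dot \rho = 1$ and $a \rho(\tau) - \rho(\tau - 1) = \tau - (\tau-1) = 1$, so both are indeed solutions.

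The main (and only) conceptual point is the appearance of the secondary solution $\rho(\tau) = \tau$ in the critical case $a = 1$, which is a consequence of the algebraic multiplicity $2$ of the eigenvalue at $\lambda_0 = 0$; this is standard for linear constant-coefficient (R)DEs, and the direct verification above makes it entirely transparent. No substantial obstacle is expected.
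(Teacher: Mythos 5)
Your proposal is correct and follows essentially the same route as the paper, which simply derives the characteristic equation, invokes Lemma \ref{lem:real:Eigenvalues}, and cites standard RDE theory for the secondary solution $\rho(\tau)=\tau$ at the double eigenvalue; your direct substitution checks are a harmless (and correct) addition.
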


Since the function equation
\begin{equation*}
\alpha \mapsto a(\alpha) = (2\pi +\alpha) \bigg( \cot(\alpha) + \frac{\ln(\frac{2\pi + \alpha}{\sin(\alpha)})}{2\pi + \alpha} \bigg)
\end{equation*}
has derivative
\begin{equation*}
- \frac{2\pi + \alpha}{\sin^2(\alpha)} + (2\pi + \alpha) \cot(\alpha) = \frac{2\pi + \alpha}{\sin^2(\alpha)} \bigg( \frac{1}{2} \sin(2\alpha) - 1 \bigg) < 0,
\end{equation*}
by inverting it we obtain the following (see \cite[Lemma 6]{firefighter}):

\begin{corollary}
\label{Cor:eigen_angle}
Let \gls{alphabar} be such that
\begin{equation*}
(2\pi +\alpha) \bigg( \cot(\alpha) + \frac{\ln(\frac{2\pi + \alpha}{\sin(\alpha)})}{2\pi + \alpha} \bigg) = 1, \quad \bar \alpha = 1.1783..,
\end{equation*}
and
\begin{equation*}
\gls{barsigma} = \frac{1}{\cos(\bar \alpha)} = 2.61443....
\end{equation*}
Then the following hold.
\begin{itemize}
\item 
If $ \alpha < \bar \alpha$ ($\sigma > \bar \sigma)$, there are two exponential solutions to \eqref{eq:ODE:saturated}
\begin{equation*}
r^-(\phi) = e^{(c(\alpha) + \frac{\lambda^-_0}{2\pi + \alpha}) \phi}, \quad r^-(\phi) = e^{(c(\alpha) + \frac{\lambda^+_0}{2\pi + \alpha}) \phi}
\end{equation*}
with
$$
c(\alpha) + \frac{\lambda^-_0}{2\pi + \alpha} < \bar c = \cot(\bar \alpha) - \frac{1}{2\pi + \bar \alpha} < c(\alpha) + \frac{\lambda^+_0}{2\pi + \alpha}.
$$
\item If $\alpha = \bar \alpha$ ($\sigma = \bar \sigma$), there exist two solutions to \eqref{eq:ODE:saturated}
\begin{equation*}
r(\phi) = e^{\bar c \phi}, \quad r(\phi) = \phi e^{\bar c \phi},
\end{equation*}
with \newglossaryentry{cbar}{name=\ensuremath{\bar c},description={real eigenvalue in the critical case}}
$$
\gls{cbar} = \cot(\bar \alpha) - \frac{1}{2\pi + \bar \alpha} = 0.27995...
$$
\item If $\alpha > \bar \alpha$ ($\sigma > \bar \sigma$), there are no purely exponential solutions to  \eqref{eq:ODE:saturated}, i.e. every solution oscillates.
\end{itemize}
\end{corollary}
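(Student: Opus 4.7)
The plan is to invert the monotonicity of $\alpha \mapsto a(\alpha)$ already established in the excerpt to transfer the spectral trichotomy of Lemma \ref{lem:real:Eigenvalues} from the parameter $a$ to the physical parameter $\alpha$, and then unwind the change of variables \eqref{Equa:rho_deff} to convert exponential solutions of $\dot\rho=a\rho(\tau)-\rho(\tau-1)$ back into exponential (or exponential times polynomial) solutions of the saturated RDE \eqref{eq:ODE:saturated}.

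First I would define $\bar\alpha$ implicitly as the unique solution of $a(\bar\alpha)=1$ in $(0,\pi/2)$. Uniqueness is guaranteed because the derivative computation in the excerpt gives $a'(\alpha)=\frac{2\pi+\alpha}{\sin^2(\alpha)}\bigl(\tfrac12\sin(2\alpha)-1\bigr)<0$, while the boundary behavior $a(\alpha)\to+\infty$ as $\alpha\searrow 0$ and $a(\alpha)\to -\infty$ as $\alpha\nearrow \pi/2$ guarantees existence. Then the three regimes of Lemma \ref{lem:real:Eigenvalues} correspond respectively to $\alpha<\bar\alpha$ (i.e.\ $a(\alpha)>1$), $\alpha=\bar\alpha$ (i.e.\ $a(\alpha)=1$), and $\alpha>\bar\alpha$ (i.e.\ $a(\alpha)<1$). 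The equivalent statement in terms of $\sigma=1/\cos(\alpha)$ follows because $\sigma\mapsto\alpha$ is monotone.

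Second, at $\alpha=\bar\alpha$, the defining identity $a(\bar\alpha)=1$ reads
\begin{equation*}
(2\pi+\bar\alpha)\cot(\bar\alpha)-\ln\!\Bigl(\tfrac{2\pi+\bar\alpha}{\sin(\bar\alpha)}\Bigr)=1,
\end{equation*}
which, after dividing by $2\pi+\bar\alpha$, is exactly the stated formula
\begin{equation*}
\bar c \;=\; \cot(\bar\alpha)-\frac{1}{2\pi+\bar\alpha} \;=\; c(\bar\alpha) \;=\; \frac{\ln(\frac{2\pi+\bar\alpha}{\sin(\bar\alpha)})}{2\pi+\bar\alpha}.
\end{equation*}
Solving $a(\alpha)=1$ numerically (e.g.\ by bisection on $[1,\pi/2]$, using the explicit formula for $a$ and its derivative) yields $\bar\alpha=1.1783\ldots$, from which $\bar\sigma=1/\cos(\bar\alpha)=2.61443\ldots$ and $\bar c=0.27995\ldots$ follow by direct evaluation.

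Third, I translate the fundamental solutions back to the $r,\phi$ variables using \eqref{Equa:rho_deff}, namely $r(\phi)=\rho(\phi/(2\pi+\alpha))\,e^{c(\alpha)\phi}$. In the subcritical case $\alpha<\bar\alpha$, Corollary \ref{Cor:rho_fund_sol_exp} gives $\rho^\pm(\tau)=e^{\lambda_0^\pm\tau}$, hence $r^\pm(\phi)=e^{(c(\alpha)+\lambda_0^\pm/(2\pi+\alpha))\phi}$; the inequality $c(\alpha)+\lambda_0^-/(2\pi+\alpha)<\bar c<c(\alpha)+\lambda_0^+/(2\pi+\alpha)$ follows by monotonicity in $\alpha$ of the exponents, together with the fact that at $\alpha=\bar\alpha$ both coalesce to $\bar c$ (this is essentially continuous dependence of the roots of the characteristic equation on $a$, together with the relation $c(\bar\alpha)=\bar c$ established above). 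In the critical case $\alpha=\bar\alpha$, the double root $\lambda=0$ produces $\rho\equiv 1$ and $\rho(\tau)=\tau$, which, after multiplication by $e^{\bar c\phi}$ and the reparametrization $\tau=\phi/(2\pi+\bar\alpha)$, yield the two independent solutions $r(\phi)=e^{\bar c\phi}$ and $r(\phi)=\phi e^{\bar c\phi}$ (the constant $1/(2\pi+\bar\alpha)$ is absorbed). In the supercritical case $\alpha>\bar\alpha$, the absence of real eigenvalues for the $\rho$-equation is preserved by the affine change of variable, so \eqref{eq:ODE:saturated} admits no purely exponential solution, which is equivalent to saying every solution oscillates.

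The only non-routine step is the numerical evaluation giving the three constants $\bar\alpha, \bar\sigma, \bar c$; the existence/uniqueness of $\bar\alpha$ and the transfer of the spectral structure from $\rho$ to $r$ are both immediate from the monotonicity of $a(\alpha)$ and the linearity of the change of variables, so I expect no conceptual obstacle.
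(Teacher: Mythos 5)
Your proposal is correct and follows essentially the same route as the paper: the trichotomy of Lemma \ref{lem:real:Eigenvalues}, the strict monotonicity of $\alpha\mapsto a(\alpha)$, and the change of variables \eqref{Equa:rho_deff} (note your use of the sign $(2\pi+\alpha)\cot(\alpha)-\ln(\tfrac{2\pi+\alpha}{\sin\alpha})=1$, consistent with \eqref{Equa:a_const}, is the correct reading of the defining equation for $\bar\alpha$). The only point you assert rather than prove is the monotonicity of the exponents giving $c(\alpha)+\tfrac{\lambda_0^-}{2\pi+\alpha}<\bar c<c(\alpha)+\tfrac{\lambda_0^+}{2\pi+\alpha}$; this can be closed immediately by observing that the characteristic function $\mu\mapsto \mu-\cot\alpha+\tfrac{e^{-\mu(2\pi+\alpha)}}{\sin\alpha}$ is strictly convex with minimum at $\mu=c(\alpha)$, so the inequality is equivalent to its negativity at $\mu=\bar c$ for $\alpha<\bar\alpha$, which follows from its strict monotonicity in $\alpha$ and its vanishing at $\alpha=\bar\alpha$.
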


We now turn the attention to the complex eigenvalues $\lambda = x + \i y$: 
\begin{equation*}
x + \i y + e^{-x} \big( \cos(y) - \i \sin(y) \big) = a,
\end{equation*}
which gives
\begin{equation}
\label{Equa:real_complex_eig}
x = \ln \bigg( \frac{\sin(y)}{y} \bigg), \quad |y| \in (-\pi,\pi) + 2k\pi,
\end{equation}
and
\begin{equation}
\label{Equa:eigenvl_ima_1}
f(y) = \frac{e^{y \cot y} \sin y}{y} = e^a > 0.
\end{equation}

The function $y \mapsto f(y) = e^{y \cot y} \frac{\sin y}{y}$ has the properties:

\begin{figure}
\centering
\includegraphics[width=.7\textwidth]{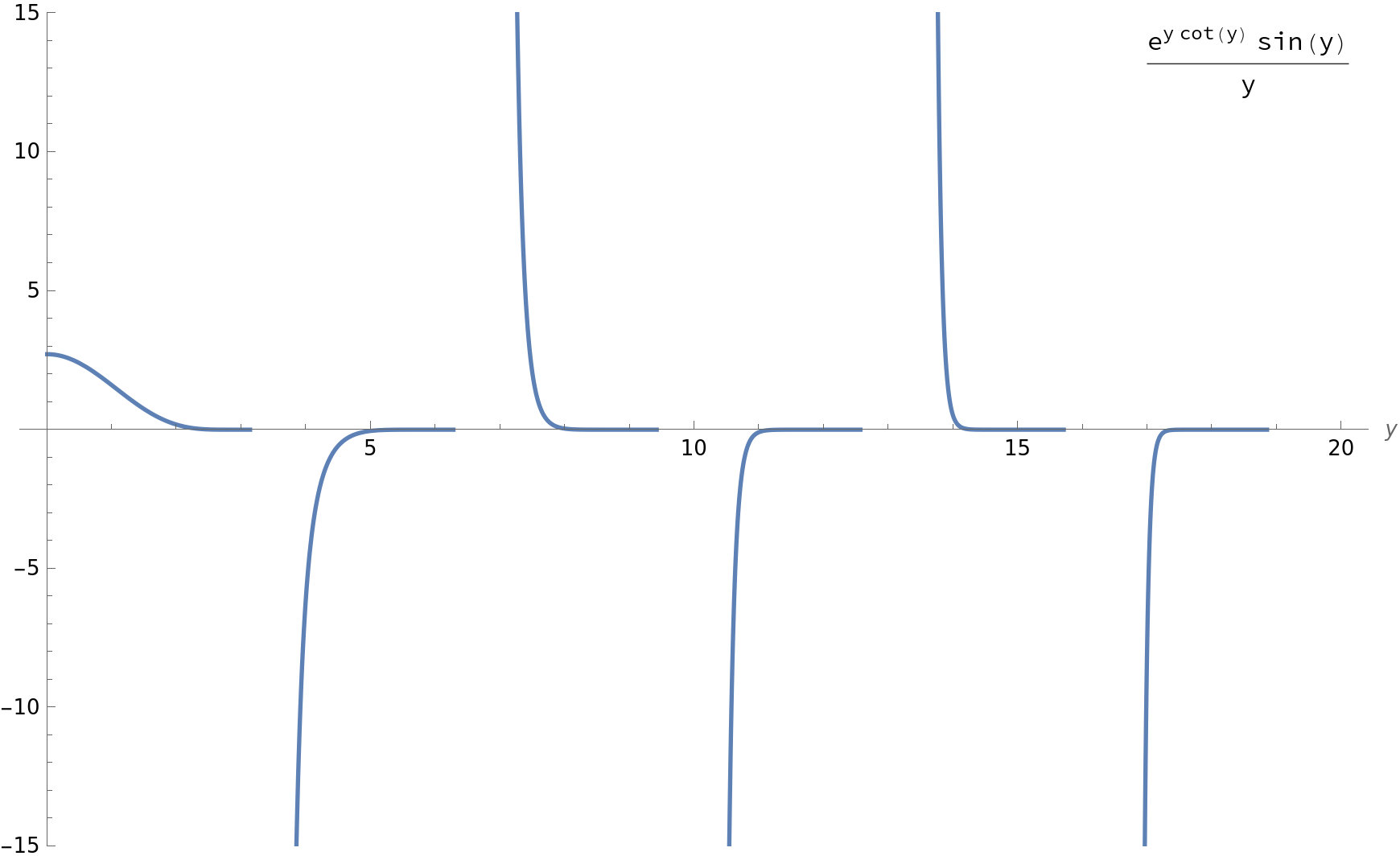}
\caption{Plot of the function $f(y)$, Equation \eqref{Equa:eigenvl_ima_1}.}
\label{Fig:complex_eigen_f}
\end{figure}

\begin{itemize}
\item it is symmetric, so we need to study only the case $y > 0$ (positive real part), and $f(0) = e$ (which is the critical case for the real eigenvalues);

\item it is positive when $\sin(y) > 0$, i.e. for
$$
y \in (0,\pi) + 2 k \pi, \quad k = 0,1,\dots,
$$
negative in
$$
y \in (\pi,2\pi) + 2k \pi, \quad k = 0,1,\dots,
$$
and
\begin{equation*}
\lim_{y \to k \pi-} f(y) = 0, \quad \lim_{y \to k \pi+} f(y) = \begin{cases}
- \infty & k \ \text{odd}, \\
+ \infty & k \ \text{even};
\end{cases}
\end{equation*}
moreover it is strictly monotone in each interval.

\item For each $k = 1,2,\dots$, let \newglossaryentry{ykappa}{name=\ensuremath{y_k},description={solutions to the eigenvalue equation \eqref{Equa:eigenvl_ima_1}}} \gls{ykappa} be the unique solution to
\begin{equation*}
\frac{e^{y \cot y} \sin y}{y} = e^a, \quad y \in (2k\pi,(2k+1)\pi].
\end{equation*}
Writing \newglossaryentry{deltakappa}{name=\ensuremath{\delta_k},description={reparametrization of $\lambda_k$}} $y_k = 2k \pi + \frac{\pi}{2} - \gls{deltakappa}$, we obtain the estimate
\begin{equation*}
e^a = \frac{e^{(2k+\frac{1}{2}) \pi \delta_k} \cos(\delta_k)}{(2k + \frac{\pi}{2}) - \delta_k} \sim \frac{e^{(2k+\frac{1}{2}) \pi \delta_k}}{(2k + \frac{\pi}{2})},
\end{equation*}
which shows that $\delta_k \sim \frac{\ln(k)}{2 \pi k}$ as $k \to \infty$.

\item The corresponding eigenvalues are \newglossaryentry{lambdak}{name=\ensuremath{\lambda_k},description={complex eigenvalues of the RDE}}
\begin{equation*}
\gls{lambdak} = \ln \bigg( \frac{\sin(y_k)}{y_k} \bigg) + \i y_k = a - \frac{y_k}{\sin(y_k)} \big( \cos(y_k) - \i \sin(y_k) \big), \quad |\lambda_k - a| = \frac{y_k}{\sin(y_k)}.
\end{equation*}
By means of the asymptotic estimate of $y_k$ we get
\begin{equation*}
\lambda_k - a = - \frac{(2 k + \frac{1}{2}) \pi}{\cos(\delta_k)} (\sin(\delta_k) - \i \cos(\delta_k)) \simeq - \ln(k) + \i k.
\end{equation*}
Hence they asymptotically lie on the curve
\begin{equation*}
x \simeq - \ln(y).
\end{equation*}

\item Since every $0$ of \eqref{Equa:eigenvl_ima_1} is simple, the complex eigenvalues are simple. 
\end{itemize}

Reconstructing the eigenexponent for the original RDE \eqref{eq:ODE:saturated:1}, we have proved the following lemma (see \cite[Lemma 6]{firefighter}).

\begin{lemma}[Complex eigenvalues]
\label{lem:complex:eigenvalues}
The following hold.
\begin{enumerate}
\item There is a sequence of complex conjugate exponents indexed by $k = 1,2,\dots$ \newglossaryentry{omegak}{name=\ensuremath{\omega_k},description={complex eigenvalues of the RDE \eqref{eq:ODE:saturated}}}
\begin{equation}
\label{Equa:omega_exp}
\gls{omegak} = c(\alpha) + \frac{\lambda_k}{2\pi + \alpha} = \cot(\alpha) - \frac{y_k}{2\pi + \alpha} (\cot(y_k) \pm \i) , \quad y_k = 2k \pi + \frac{\pi}{2} - \delta_k, \ \delta_k \simeq \frac{\ln(k)}{2k\pi},
\end{equation}
such that the corresponding eigensolution to \eqref{eq:ODE:saturated} is
\begin{equation*}
r_k(\phi) = e^{\omega^\pm_k \phi};
\end{equation*}
\item if $\alpha > \bar \alpha$, there is a couple of complex conjugate exponents $\omega_0$ computed according to \eqref{Equa:omega_exp} with $y_0 \in (0,\pi)$;
\item for $\alpha = \bar \alpha$ the couple at $|y| < \pi$ of the previous point merges to the exponent
$$
\bar c = \cot(\bar \alpha) - \frac{1}{2\pi + \bar \alpha}.
$$
\end{enumerate}
\end{lemma}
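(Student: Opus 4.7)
The plan is to reduce everything to the rescaled RDE \eqref{Equa:rescal_RDE} via the change of variable \eqref{Equa:rho_deff} from Section \ref{Sss:change_variables}: an exponential solution $\rho(\tau)=e^{\lambda\tau}$ of $\dot\rho=a\rho(\tau)-\rho(\tau-1)$ corresponds precisely to $r(\phi)=e^{\omega\phi}$ of \eqref{eq:ODE:saturated} with $\omega=c(\alpha)+\lambda/(2\pi+\alpha)$. So it suffices to locate the complex roots of the characteristic equation $\lambda+e^{-\lambda}=a$ and transport them back through this formula.

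First I would separate real and imaginary parts by writing $\lambda=x+\i y$. Matching imaginary parts gives $y=e^{-x}\sin(y)$, i.e.\ $x=\ln(\sin(y)/y)$ on each component where $\sin(y)/y>0$, namely $|y|\in(0,\pi)\cup\bigcup_{k\geq 1}(2k\pi,(2k+1)\pi)$. Substituting into the real part collapses the two equations into the single one-variable equation \eqref{Equa:eigenvl_ima_1}, $f(y):=e^{y\cot y}\sin(y)/y=e^a$. Because one can check by direct differentiation that $f-e^a$ has only simple zeros on the admissible intervals, the corresponding eigenvalues of the characteristic equation are algebraically simple, hence so is $\omega$ for the original RDE.

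The core step is then the analysis of $f$ already partly carried out in the excerpt: $f$ is even, has global minimum $f(0)=e$, and on each interval $(2k\pi,(2k+1)\pi)$ with $k\geq 1$ it is strictly monotone from $+\infty$ to $0$. This produces a unique $y_k\in(2k\pi,(2k+1)\pi)$ for every $a$; writing $y_k=2k\pi+\pi/2-\delta_k$ and expanding $\cot(\pi/2-\delta_k)=\tan(\delta_k)\sim\delta_k$ yields $e^a\sim e^{(2k+1/2)\pi\delta_k}/(2k\pi+\pi/2)$, whence $\delta_k\sim\ln(k)/(2k\pi)$, giving (1). For the component $|y|<\pi$ I would dichotomize by comparing $e^a$ with $f(0)=e$: by the monotonicity of $a(\alpha)$ recorded before Corollary \ref{Cor:eigen_angle}, $\alpha>\bar\alpha$ is equivalent to $a<1$, in which case the strict monotonicity of $f$ on $(0,\pi)$ produces a unique $y_0\in(0,\pi)$ with $f(y_0)=e^a$, yielding the complex pair in (2). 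In the critical case $\alpha=\bar\alpha$, i.e.\ $a=1$, the only preimage of $e$ under $f$ in $[0,\pi)$ is $y=0$, which corresponds to the double real eigenvalue $\lambda=0$ of Lemma \ref{lem:real:Eigenvalues}, so the pair merges into $\omega=c(\bar\alpha)=\bar c$, proving (3).

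The only delicate point is the asymptotic refinement giving $\delta_k\sim\ln(k)/(2k\pi)$; the existence, uniqueness and simplicity of the eigenvalues, together with the trichotomy in $\alpha$, all reduce to elementary monotonicity properties of $f$ combined with the computations already performed in Section \ref{Sss:change_variables} and Corollary \ref{Cor:rho_fund_sol_exp}.
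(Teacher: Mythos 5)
Your proof is correct and follows essentially the same route as the paper: the paper's argument is exactly the reduction to the characteristic equation $\lambda+e^{-\lambda}=a$ of the rescaled RDE, the separation $x=\ln(\sin(y)/y)$, the one-variable equation $f(y)=e^a$ with the monotonicity analysis on each interval $(2k\pi,(2k+1)\pi)$, the expansion $\delta_k\sim\ln(k)/(2k\pi)$, and the trichotomy in $a(\alpha)$ for the component $|y|<\pi$. The only (harmless) slip is calling $f(0)=e$ a global minimum of $f$ — on $(0,\pi)$ the function decreases from $e$ to $0$, so $e$ is the supremum there — but your dichotomy $e^a<e\iff a<1$ and the merging at $y_0=0$ when $a=1$ are used correctly.
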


\subsubsection{Application to saturated spiral strategies}
\label{ss:subset:saturated}

Aim of this section is to prove that the saturated spiral confines the fire for $\sigma > \bar \sigma$ and does not confine the fire for $\sigma \leq \bar \sigma$. The first statement is quite easy and it has been already proved in \cite{Bressan_friends} and \cite[Theorem 1]{firefighter}, while the second is a consequence of the estimate on the number $n$ of rounds needed to close the saturated spiral, i.e. $n \sim \Im(\omega_0)^{-1}$ given in \cite[Theorem 4]{firefighter}. In this section we present a different approach for this second result which can be adapted to general spirals, and obtain the asymptotic behavior. 

We assume first that $\sigma > \bar\sigma$ and consider as initial data for the RDE \eqref{eq:ODE:saturated:1} any admissible spiral. This is the case where a firefighter constructs any admissible spiral with construction speed $\sigma$ up to some point $Q$, with the property that in the point $Q$ it holds $\beta^-(Q) \leq \bar \alpha$, and then it starts constructing a spiral with constant angle $\bar \alpha$. We will call \emph{saturated branches} the arcwise connected subsets of spiral-like strategies constructed with constant angle $\bar \alpha$.

The new spiraling strategy obtained in this way is still convex and it is admissible, being the point $Q$ admissible: indeed, if $Z$ is this particular spiral strategy, then
\begin{equation*}
\H^1(S\cap\overline{R^Z(t)})= \H^1(Z\cap\overline{R^Z(s_Q)})+\sigma (t-s_Q)\leq \sigma t,\quad\forall t\geq s_Q,
\end{equation*}
with $s_Q=u(Q)$ ($u$ is the minimum time function), where we have used that the burning rate \eqref{def:burning:rate} is constantly equal to $\sigma$ and the fact that $Q$ is admissible.

The previous discussion on the complex spectrum of the operator justifies the following statement.

\begin{proposition}[\cite{Bressan_friends,firefighter}]
\label{prop:at:crit:speed:fire:dies}
If $\sigma>\bar\sigma$, then for any initial data the saturated spiral confines the fire.
\end{proposition}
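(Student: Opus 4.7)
The plan is to prove the statement by analyzing the RDE \eqref{eq:ODE:saturated} satisfied by $r_\mathrm{sat}(\phi)$ past the angle where the construction switches to constant angle $\bar\alpha$. Admissibility of the continuation is already settled in the discussion preceding the statement, so the problem reduces to showing that the solution of the linear RDE, starting from initial data determined by the previously constructed admissible spiral on an interval of length $2\pi+\alpha$, must reach $r_\mathrm{sat}(\phi)=0$ at some finite $\phi=\bar\phi$. By Point (1) of Theorem \ref{Cor:angle_preprest}, this exactly corresponds to the spiral closing on itself, and therefore to the confinement of the fire.

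My main tool will be the spectral analysis of Section \ref{ss:dde}. Passing to the rescaled unknown $\rho(\tau)=r((2\pi+\alpha)\tau)e^{-c(\alpha)(2\pi+\alpha)\tau}$ I reduce the equation to $\dot\rho(\tau)=a(\alpha)\rho(\tau)-\rho(\tau-1)$, where $a(\alpha)<1$ because $\alpha>\bar\alpha$ (Corollary \ref{Cor:eigen_angle}). By Lemma \ref{lem:real:Eigenvalues} the characteristic equation $\lambda+e^{-\lambda}=a(\alpha)$ has \emph{no} real roots in this regime, so the entire spectrum of the RDE consists of the complex conjugate pairs $\{\lambda_k,\bar\lambda_k\}_{k\ge 0}$ described in Lemma \ref{lem:complex:eigenvalues}, each with nonzero imaginary part. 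By standard spectral theory for linear RDEs with a single discrete delay (cf.\ \cite{RDE}), the solution $\rho$ admits, on suitable weighted function spaces, a convergent generalized eigenfunction expansion of the form
\begin{equation*}
\rho(\tau) \ = \ \sum_{k\ge 0} \Re\bigl(c_k\,e^{\lambda_k\tau}\bigr),
\end{equation*}
with the asymptotic behavior for large $\tau$ controlled by the pair $\{\lambda_0,\bar\lambda_0\}$ of largest real part (corresponding, by \eqref{Equa:omega_exp}, to the eigenexponents $\omega_0$ with $y_0\in(0,\pi)$).

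Suppose, for contradiction, that $\rho(\tau)>0$ for all $\tau\ge 0$. If the leading projection satisfies $c_0\neq 0$, the asymptotic expansion yields $\rho(\tau)=|c_0|e^{\Re(\lambda_0)\tau}\cos(y_0\tau+\theta)+O(e^{(\Re(\lambda_0)-\varepsilon)\tau})$ for some $\theta$ and $\varepsilon>0$, and since $y_0\neq 0$ the leading term changes sign on any interval of length $2\pi/y_0$, contradicting positivity. If $c_0=0$, I pass to the next nonzero coefficient $c_{k^*}$: since every eigenvalue has nonzero imaginary part, the same oscillation argument applies. The only remaining loophole is that \emph{all} projections vanish; but then $\rho\equiv 0$, which would force the initial data to vanish identically on $[0,2\pi+\alpha]$, contradicting the fact that the initial data comes from a genuine spiral arc (for which $r$ is strictly positive, since the curve does not close before reaching the switch point). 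I will therefore conclude that the assumption $\rho>0$ everywhere is untenable, so there is a first $\tau^*$ with $\rho(\tau^*)=0$, corresponding to a finite closing angle $\bar\phi$ for $r_\mathrm{sat}$.

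The main obstacle I expect is justifying the generalized eigenfunction expansion and, in particular, the statement that no nontrivial initial data can have vanishing projection onto every eigenspace. I will handle this either by invoking completeness of the generalized eigenspaces for this class of RDEs, or by a cleaner alternative: assuming $r_\mathrm{sat}(\phi)>0$ for all $\phi\ge 0$, take the Laplace transform of \eqref{eq:ODE:saturated}, obtaining an algebraic identity $\widehat{r_\mathrm{sat}}(\lambda)\,\chi(\lambda)=P(\lambda)$ where $\chi(\lambda)=\lambda-\cot(\alpha)+e^{-\lambda(2\pi+\alpha)}/\sin(\alpha)$ is the characteristic function and $P$ encodes the initial data. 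Positivity of $r_\mathrm{sat}$ forces $\widehat{r_\mathrm{sat}}(\lambda)$ to be positive and decreasing for large real $\lambda$; combined with the fact that $\chi$ has no real zeros for $\alpha>\bar\alpha$, one reaches a contradiction with the growth of $P(\lambda)$ as $\lambda\to\infty$. This Laplace-transform route avoids the subtleties of the eigenfunction expansion and yields directly the sought-after $\bar\phi<\infty$.
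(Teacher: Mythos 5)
Your strategy is the same as the paper's at its core: rescale the saturated RDE, observe that for $\sigma>\bar\sigma$ (i.e.\ $\alpha>\bar\alpha$, $a(\alpha)<1$) the characteristic equation has no real roots, and conclude that every solution oscillates and hence $r_\mathrm{sat}$ vanishes at some finite angle. The difference is in how that last implication is justified. The paper uses the rescaling $\rho(\tau)=r((2\pi+\alpha)\tau)e^{-(2\pi+\alpha)\cot(\alpha)\tau}$, which eliminates the undelayed term entirely and turns the equation into the pure delay equation $\dot f(\tau)+cf(\tau-1)=0$ with $c>0$; it then simply cites the classical oscillation theorem (every solution oscillates iff the characteristic equation has only complex roots). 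You keep the undelayed term and attempt to \emph{reprove} that oscillation theorem, which is legitimate but is where the only real work of the proposition hides.

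Two of your steps deserve care. First, in the eigenfunction route, the claim that vanishing of all spectral projections forces $\rho\equiv 0$ is exactly the statement that the scalar equation $\dot\rho=a\rho(\tau)-\rho(\tau-1)$ admits no nontrivial \emph{small solutions} (solutions decaying faster than every exponential); this is true because the delayed coefficient is nonzero, but it is a theorem, not a triviality, and must be cited or proved. Second, your Laplace-transform alternative ends with the wrong contradiction: there is no useful clash with ``the growth of $P(\lambda)$ as $\lambda\to\infty$'' (both sides of $\widehat{r}(\lambda)\chi(\lambda)=P(\lambda)$ behave tamely for large real $\lambda$). The correct mechanism is the Landau-type theorem: if $r_\mathrm{sat}$ were eventually of one sign, the abscissa of convergence $\lambda_0$ of its Laplace transform would be a genuine real singularity of $\widehat{r}$, hence a real zero of the characteristic function $\chi$, contradicting the absence of real roots. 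With that correction the Laplace route is precisely the standard proof of the theorem the paper cites, and your argument closes.
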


\begin{proof}
We consider the change of variables
\begin{equation*}
\rho(\tau) = r((2\pi+\alpha) \tau) e^{-(2\pi + \alpha) \cot(\alpha) \tau},
\end{equation*}
then the function $\rho$ satisfies the following RDE:
\begin{equation*}
\dot \rho(\tau) + \frac{(2\pi + \alpha)}{\sin(\alpha)} e^{ -(2\pi+\alpha) \cot(\alpha)} \rho(\tau-1) = 0.
\end{equation*}
It is a well known fact that every solution of a RDE of the form $\dot f(\tau) + c f(\tau - r) = 0$ with $c > 0$ is oscillating iff the characteristic equation $\lambda + c e^{- \lambda r} = 0$ has only complex eigenvalues (see for example \cite{RDErussi}).
\end{proof}

\begin{remark}
\label{Rem:numer_rounds}
Observing that by \eqref{Equa:omega_exp}
\begin{equation*}
\Re(\omega_k) \leq \Re(\omega_1) < \cot(\alpha) - \frac{2\pi}{2\pi + \alpha} \leq \cot(\bar \alpha) - \frac{2\pi}{2\pi + \bar \alpha} = - 0.428111,
\end{equation*}
so that for $\alpha \searrow \bar \alpha$ the only surviving eigenvectors are the ones corresponding to $\omega_0$, which takes about $\frac{\pi}{y_0}$ to close. A more precise argument using the projection of the initial data gives the estimate of \cite[Theorem 4]{firefighter}.
\end{remark}

For the analysis of the second case, we consider \eqref{Equa:rescal_RDE} and use the following simple lemma.

\begin{lemma}
\label{lem:key}
The following holds:
\begin{enumerate}
\item If $a > 1$ and $\rho(1) \geq \rho(\tau) > 0$ for every $\tau \in [0,1]$, then $\rho(\tau)$ diverges exponentially.

\item If $a = 1$ and $\rho(1) > \rho(\tau) > 0$, $\tau \in [0,1)$, then the solution $\rho(\tau)$ diverges linearly.

\item If $a < 1$, the solution $\rho(\tau)$ oscillates.
\end{enumerate}
\end{lemma}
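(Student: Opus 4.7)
The plan is to derive each conclusion directly from the RDE $\dot\rho(\tau)=a\rho(\tau)-\rho(\tau-1)$ via elementary monotonicity and positivity arguments, bypassing the spectral expansion of Corollary \ref{Cor:rho_fund_sol_exp}.

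For case (1), the first observation is $\dot\rho(1^+)=a\rho(1)-\rho(0)\geq(a-1)\rho(1)>0$. I would then bootstrap by contradiction to obtain $\dot\rho>0$ on all of $(1,\infty)$: at a first putative zero $T$ of $\dot\rho$ after $1$, strict monotonicity of $\rho$ on $[1,T]$ combined with the hypothesis on $[0,1]$ forces $\rho(T-1)\leq\rho(T)$ in both subcases $T-1\leq 1$ (where $\rho(T-1)\leq\rho(1)\leq\rho(T)$) and $T-1>1$ (where $\rho(T-1)<\rho(T)$ by monotonicity), so that $\dot\rho(T)=a\rho(T)-\rho(T-1)\geq(a-1)\rho(T)>0$, contradicting $\dot\rho(T)=0$. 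With $\rho$ increasing on $(1,\infty)$, for $\tau\geq 2$ the delayed term satisfies $\rho(\tau-1)\leq\rho(\tau)$, hence $\dot\rho\geq(a-1)\rho$, and Gronwall gives the exponential lower bound $\rho(\tau)\geq\rho(2)e^{(a-1)(\tau-2)}$.

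For the critical case $a=1$, differentiating in $\tau$ shows that $C:=\rho(\tau)-\int_{\tau-1}^{\tau}\rho(s)\,ds$ is conserved. Evaluating at $\tau=1$ and using the strict inequality $\rho(s)<\rho(1)$ on $[0,1)$, one gets $C=\rho(1)-\int_0^1\rho>0$. Positivity of $\rho$ on $[0,\infty)$ then follows immediately from the representation $\rho(\tau)=C+\int_{\tau-1}^{\tau}\rho$, since any first zero of $\rho$ would give $\rho(T)\geq C>0$. Setting $m_n:=\min_{[n,n+1]}\rho$, the same representation yields $\rho(\tau)\geq C+\min(m_{n-1},m_n)$ on $[n,n+1]$, which rules out $m_n\leq m_{n-1}$ (it would force $C\leq 0$) and gives the recursion $m_n\geq m_{n-1}+C$, hence $m_n\geq m_0+nC$: linear divergence.

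For case (3), the change of variable $w(\tau)=\rho(\tau)e^{-a\tau}$ reduces the RDE to $\dot w(\tau)=-c\,w(\tau-1)$ with $c=e^{-a}>0$, whose characteristic equation $\lambda+ce^{-\lambda}=0$ admits a real root iff $c\leq 1/e$, i.e.\ iff $a\geq 1$. For $a<1$ every characteristic root is strictly complex and the classical oscillation criterion for delay equations of the form $\dot w+cw(\tau-r)=0$ (see \cite{RDErussi}, already invoked in the proof of Proposition \ref{prop:at:crit:speed:fire:dies}) implies that every solution, and hence $\rho$ itself, oscillates. The main delicate point of the whole lemma is the monotonicity bootstrap of Case (1), since it must simultaneously handle the delayed argument $\tau-1$ lying in the initial interval and lying in the region where monotonicity has just been deduced; Cases (2) and (3) are comparatively routine consequences of the conservation law and of the classical oscillation result.
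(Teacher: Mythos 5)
Your proof is correct, but it follows a genuinely different route from the paper's. The paper handles the lemma spectrally: it asserts the monotonicity of $\rho$ (for $\tau\geq 2$) as immediate, recalls from \eqref{Equa:real_complex_eig} that every complex root of $\lambda+e^{-\lambda}=a$ has negative real part, and concludes that the only non-decaying eigensolutions are $e^{\lambda_0^+\tau}$ (resp.\ $\tau$ when $a=1$), which therefore dictate the asymptotics; the third case is dispatched exactly as you do, via the classical oscillation criterion already used in Proposition \ref{prop:at:crit:speed:fire:dies}. Your argument replaces the spectral step with self-contained estimates: the bootstrap at a first zero of $\dot\rho$ (which makes explicit precisely the monotonicity the paper calls ``immediate'', and which is indeed the only delicate point, since the delayed argument must be tracked across the junction $\tau=2$), followed by the differential inequality $\dot\rho\geq(a-1)\rho$ and Gronwall for $a>1$, and the conserved quantity $C=\rho(\tau)-\int_{\tau-1}^{\tau}\rho$ together with the recursion $m_n\geq m_{n-1}+C$ on running minima for $a=1$. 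This buys you two things: explicit quantitative lower bounds, and an argument that transfers verbatim to the supersolution setting $\dot\rho\geq a\rho(\tau)-\rho(\tau-1)$ of Remark \ref{rmk:exponentially:explod} (under which the lemma is actually applied later), where the conservation law simply becomes a favourable inequality; the spectral route would instead need a positivity statement for the Green kernel. The only thing you lose is the matching upper bound in case (2): your recursion shows $\rho$ grows at least linearly, whereas the expansion in Corollary \ref{Cor:rho_fund_sol_exp} shows the growth is exactly linear; since the lemma is only ever invoked for positivity and divergence, this is immaterial. One small caveat: your ``first zero of $\dot\rho$'' argument implicitly assumes enough regularity of the initial datum that $\dot\rho$ has no downward jumps on $(1,\infty)$; for the initial data actually used (e.g.\ the saturated spiral) the jumps of $\rho$ on $[0,1]$ are downward, hence those of $\dot\rho$ are upward, and your contradiction survives, but this deserves a sentence.
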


In particular, the solution $r(\phi)$ is diverging when $\rho(\tau)$ satisfies the first two conditions.

\begin{proof}
It is immediate to see that in the case $a > 1$ the solution $\rho(\tau)$ is non decreasing  for $\tau \geq 2$. Similarly when $a = 1$ the solution $\rho(\tau)$ is strictly increasing for $\tau > 2$ under the assumptions of the statement. Moreover it is fairly easy to see from \eqref{Equa:real_complex_eig} that all complex eigenvalues have negative real part.

Thus if $a > 1$ the unique non decreasing eigensolution is $e^{\lambda_0^+ \tau}$, which is the asymptotic limit of $\rho(\tau)$. The same for $a = 1$, where the unique increasing solution is $\tau$.

The third point is due to the fact that there are only complex eigenvalues.
%
%
\end{proof}

\begin{remark}
\label{rmk:exponentially:explod}
Observe that the first two cases clearly hold also if
\begin{equation*}
\dot \rho(\tau) \geq a \rho(\tau) - \rho(\tau - 1), \quad a \geq 1.
\end{equation*}
\end{remark}

We now show how this lemma can be applied to a concrete case, namely the saturated spiral \gls{rsatur} of Lemma \ref{lem:initial:data:saturated}, to prove that the strategy cannot confine the fire.

\begin{proposition}
\label{prop:saturated:not:closed_1}
Let $Z$ be the saturated spiral. Then it does not confine the fire for $\sigma\leq \bar \sigma$.
\end{proposition}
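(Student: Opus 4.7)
The plan is to rescale the RDE \eqref{eq:ODE:saturated} via the substitution introduced in Section \ref{Sss:change_variables} and then verify by a direct computation on the first round that the hypotheses of Lemma \ref{lem:key} are met. Set
$$
\rho(\tau) := r_{\mathrm{sat}}\bigl((2\pi+\alpha)\tau\bigr)\, e^{-c(2\pi+\alpha)\tau}, \qquad \tau \geq 0,
$$
with $c = c(\alpha)$ given by \eqref{Equa:c_gene_defi}. By the computation performed in Section \ref{Sss:change_variables}, $\rho$ satisfies $\dot\rho(\tau) = a\rho(\tau) - \rho(\tau-1)$ with $a = a(\alpha)$ as in \eqref{Equa:a_const}. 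Since $\alpha \mapsto a(\alpha)$ is strictly decreasing, Corollary \ref{Cor:eigen_angle} shows that $\sigma \leq \bar\sigma$ is equivalent to $\alpha \leq \bar\alpha$, which is in turn equivalent to $a \geq 1$, with equality exactly in the critical case $\sigma = \bar\sigma$.

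Next I would write $\rho$ explicitly on $[0,1]$ using the initial data of Lemma \ref{lem:initial:data:saturated}. Setting $\tau_0 := \tfrac{2\pi}{2\pi+\alpha} \in (0,1)$, a direct substitution yields
$$
\rho(\tau) = \begin{cases} e^{a\tau}, & \tau \in [0,\tau_0),\\[1mm] \bigl(1-e^{-2\pi\cot\alpha}\bigr) e^{a\tau}, & \tau \in [\tau_0,1]. \end{cases}
$$
Both branches are strictly positive (since $\cot\alpha>0$ for $\alpha\in(0,\pi/2)$) and strictly increasing (since $a\geq 1>0$), and $\rho$ has a single downward jump at $\tau_0$, of amplitude $e^{a\tau_0-2\pi\cot\alpha} = e^{-2\pi c(2\pi+\alpha)/(2\pi+\alpha)}\cdot e^{a\tau_0 - 2\pi\cot\alpha}$. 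Hence $\sup_{[0,1)}\rho = \max\{e^{a\tau_0},\,\rho(1^-)\}$, and since on $[\tau_0,1]$ monotonicity gives $\rho(\tau)<\rho(1)$ for $\tau<1$, the only substantive condition to apply Lemma \ref{lem:key} is
\begin{equation}\label{Equa:key_ineq_propos}
(1-e^{-2\pi\cot\alpha})\,e^{a} \;\geq\; e^{a\tau_0}, \qquad \text{equivalently} \qquad 1-e^{-2\pi\cot\alpha} \;\geq\; e^{-\frac{a\alpha}{2\pi+\alpha}}.
\end{equation}
Because $\rho$ is right-continuous and the value $e^{a\tau_0}$ is only the left limit at $\tau_0$ (not attained), once \eqref{Equa:key_ineq_propos} is verified as a non-strict inequality the hypothesis $\rho(1)\geq\rho(\tau)>0$ of part (1) of Lemma \ref{lem:key} holds in the case $a>1$, and the strict form $\rho(1)>\rho(\tau)>0$ on $[0,1)$ required by part (2) in the critical case $a=1$ also holds.

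The only real obstacle is the elementary but non-transparent inequality \eqref{Equa:key_ineq_propos}. I would establish it by studying the one-variable function
$$
F(\alpha) := \bigl(1-e^{-2\pi\cot\alpha}\bigr)\, e^{\frac{a(\alpha)\,\alpha}{2\pi+\alpha}}-1, \qquad \alpha\in(0,\bar\alpha],
$$
verifying the two end-point values: $F(\bar\alpha)>0$ by direct substitution of the numerical values $\bar\alpha=1.1783\ldots$ and $\cot(\bar\alpha)=\bar c+1/(2\pi+\bar\alpha)$ from Corollary \ref{Cor:eigen_angle}, and $\lim_{\alpha\searrow 0}F(\alpha) = e-1>0$ from the asymptotics $a(\alpha)\sim 2\pi/\alpha$, $\cot\alpha\to+\infty$, which give $(1-e^{-2\pi\cot\alpha})\to 1$ and $a\alpha/(2\pi+\alpha)\to 1$. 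A uniform lower bound on $(0,\bar\alpha]$ then follows either from monotonicity of $F$, or from the crude estimates $a(\alpha)\geq 1$ and $\cot\alpha\geq\cot\bar\alpha$ which already force $F(\alpha)\geq F(\bar\alpha)>0$.

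Once \eqref{Equa:key_ineq_propos} is in hand, Lemma \ref{lem:key} applies: $\rho(\tau)$ diverges linearly if $a=1$ and exponentially if $a>1$. Undoing the rescaling, $r_{\mathrm{sat}}(\phi) = \rho\bigl(\phi/(2\pi+\alpha)\bigr)\,e^{c\phi}\to+\infty$ as $\phi\to\infty$, so in particular $r_{\mathrm{sat}}(\phi)>0$ for every $\phi$ and the saturated spiral never closes. Geometrically, inequality \eqref{Equa:key_ineq_propos} is the quantitative statement that the unit downward jump of $r$ produced at $\phi=2\pi$ by the initial circle $B_1(0)$ is too small to compensate the growth enforced by $a\geq 1$; this is the precise failure mechanism for blocking the fire when $\sigma\leq\bar\sigma$.
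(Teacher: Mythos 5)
Your argument is correct and follows the paper's proof essentially verbatim: the same rescaling $\rho(\tau)=r_{\mathrm{sat}}((2\pi+\alpha)\tau)e^{-c(2\pi+\alpha)\tau}$, the same observation that on $[\tau_0,1]$ monotonicity (from $a\geq 1$) settles the comparison so that everything reduces to the single inequality $(1-e^{-2\pi\cot\alpha})e^{a}\geq e^{a\,2\pi/(2\pi+\alpha)}$ at the downward jump, and the same application of Lemma \ref{lem:key}; the paper disposes of that inequality by numerically checking that $\alpha\mapsto(e^{2\pi\cot\alpha}-1)e^{-2\pi\cot\alpha+a(\alpha)}-e^{a(\alpha)2\pi/(2\pi+\alpha)}$ is decreasing and positive for $\alpha<1.2>\bar\alpha$, which is the same one-variable verification you propose. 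One caveat: your fallback claim that the crude bounds $a(\alpha)\geq 1$ and $\cot\alpha\geq\cot\bar\alpha$ \emph{alone} force $F(\alpha)\geq F(\bar\alpha)$ is not justified as stated, since $a\geq1$ only gives the exponent $\frac{a(\alpha)\alpha}{2\pi+\alpha}\geq\frac{\alpha}{2\pi+\alpha}$, which for small $\alpha$ is below $\frac{\bar\alpha}{2\pi+\bar\alpha}$ --- so you must actually carry out the monotonicity/numerical check of $F$, exactly as the paper does.
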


\begin{figure}
\centering
\includegraphics[width=.7\textwidth]{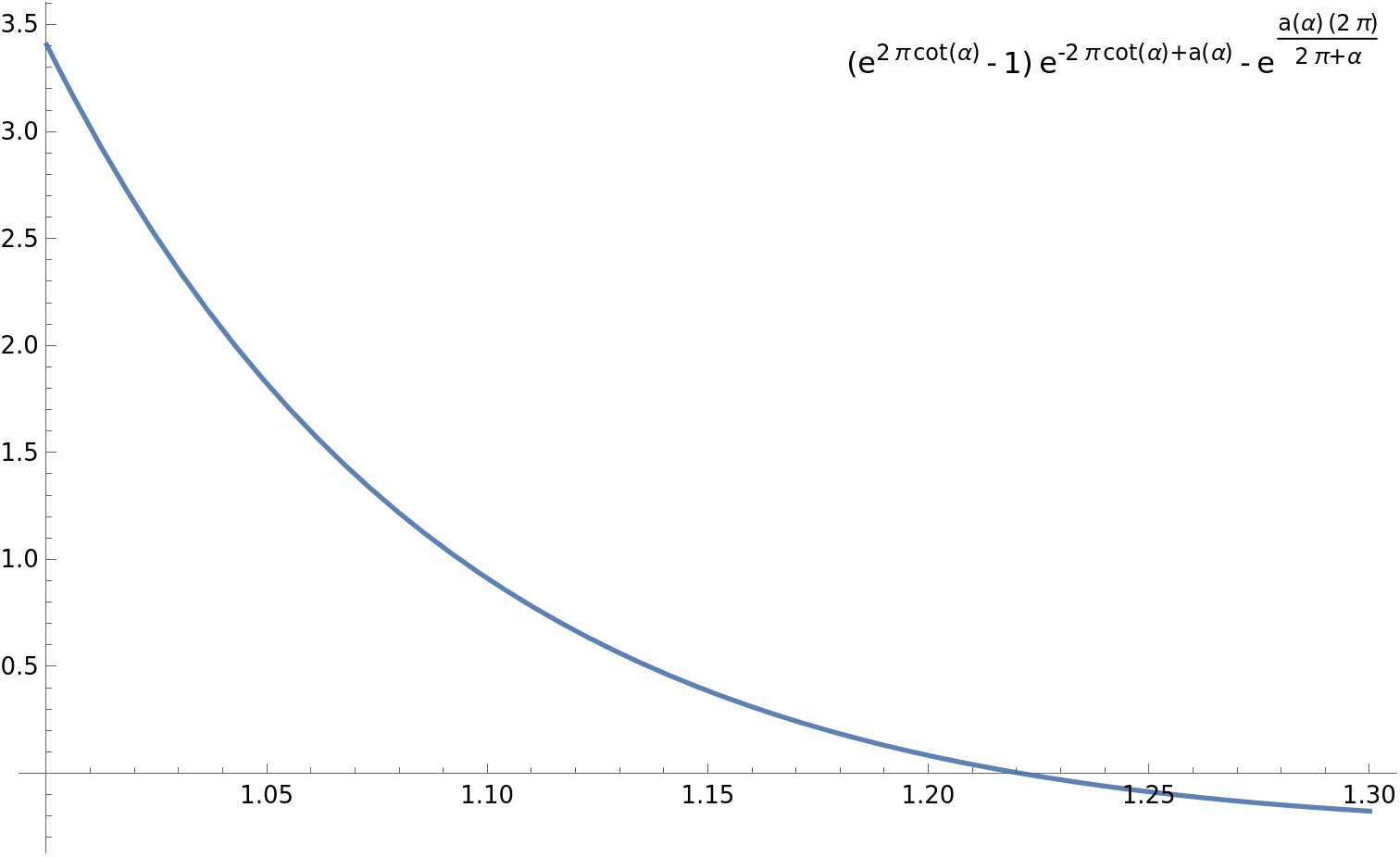}
\caption{Plot of the function \eqref{Equa:intern_prop_3.4}.}
\label{Fig:proposition_3.4_proof}
\end{figure}

\begin{proof}
Recall the initial data for the saturated spiral is given in Lemma \ref{lem:initial:data:saturated}: it is precisely
\begin{equation*}
r_\mathrm{sat}(\phi) = \begin{cases}
e^{\cot(\alpha) \phi} & \phi \in [0,2\pi), \\
(e^{2\pi \cot(\alpha)} - 1) e^{\cot(\alpha) (\phi-2\pi)} & \phi \in [2\pi,2\pi + \alpha].
\end{cases}
\end{equation*}

According to Lemma \ref{lem:key}, we have to prove that for all $\phi \in [0,2\pi)$
\begin{equation*}
r_\mathrm{sat}(2\pi + \alpha) e^{-c (2\pi + \alpha)} > r_\mathrm{sat}(\phi) e^{-c \phi} = \begin{cases}
e^{a(\alpha) \frac{\phi}{2\pi+\alpha}} & \phi \in [0,2\pi), \\
(e^{2\pi \cot(\alpha)} - 1) e^{- 2\pi \cot(\alpha) + a(\alpha) \frac{\phi}{2\pi+\alpha}} & \phi \in [2\pi,2\pi + \alpha);
\end{cases}
\end{equation*}
it is clear that the assumption of the lemma is verified for $\phi \in [2\pi,2\pi+\bar \alpha)$, being $r_\mathrm{sat}(\phi) e^{a(\alpha) \frac{\phi}{2\pi + \alpha}}$ increasing in that interval since $a(\alpha) \geq 1$. It is also clear from the formula that we need only to check that
\begin{align*}
&(e^{2\pi \cot(\alpha)} - 1) e^{- 2\pi \cot(\alpha) + a(\alpha)} - e^{a(\alpha) \frac{2\pi}{2\pi + \alpha}} > 0.
\end{align*}
Using \eqref{Equa:a_const} one can numerically verify that the function (see Fig. \ref{Fig:proposition_3.4_proof})
\begin{equation}
\label{Equa:intern_prop_3.4}
\alpha \mapsto (e^{2\pi \cot(\alpha)} - 1) e^{- 2\pi \cot(\alpha) + a(\alpha)} - e^{a(\alpha) \frac{2\pi}{2\pi + \alpha}}
\end{equation}
is decreasing w.r.t. $\alpha$, and it is positive for $\alpha < 1.2$. Being the critical angle $\bar \alpha = 1.17.. < 1.2$, then we can apply Lemma \ref{lem:key} and deduce that the saturated spirals does not confine the fire.
\end{proof}

\subsubsection{Green kernels of the RDE}
\label{Sss:green_kernels}

We write now explicitly the kernel for the RDE \eqref{Equa:rescal_RDE}, i.e. a functions $g : \R \to \R$ satisfying
\begin{equation*}
\dot g(\tau) = a g(\tau)-g(\tau-1) + \Diracd_0
\end{equation*}
in the sense of distributions, where \newglossaryentry{Diracdelta}{name=\ensuremath{\Diracd_0},description={Dirac's delta measure in $0$}} \gls{Diracdelta} is the Dirac's delta measure.

\begin{lemma}
\label{Lem:explic_kernel_RDE}
The kernel for the RDE \eqref{Equa:rescal_RDE} is the function \newglossaryentry{gkernel}{name=\ensuremath{g(\tau)},description={kernel for the linear Delay Differential Equation \eqref{Equa:rescal_RDE}}}
\begin{equation*}
\gls{gkernel} = \sum_{k=0}^\infty (-1)^k e^{a (\tau - k)} \frac{(\tau - k)^k}{k!} \ind_{[k,\infty)}(\tau).
\end{equation*}
\end{lemma}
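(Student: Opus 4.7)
The plan is to verify the claimed formula by the method of steps, exploiting the fact that for each fixed $\tau$ the series is actually a \emph{finite} sum: only the indices $k \le \lfloor \tau \rfloor$ contribute, so $g$ is a well-defined piecewise-smooth function on $\R$ with jumps only possibly at integers.

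First I would handle the initialization on $[0,1)$. There only the term $k=0$ survives, giving $g(\tau)=e^{a\tau}$; since $g(\tau)=0$ for $\tau<0$, there is a jump of height $1$ at $\tau=0$, which accounts for the source $\Diracd_0$, and on $(0,1)$ one has $g(\tau-1)=0$ so the ODE reduces to $\dot g = ag$, clearly satisfied. Next, I would proceed by induction on $k$: assuming that on each $[j,j+1)$, $j<k$, the restriction of $g$ coincides with $\sum_{i=0}^{j} (-1)^i e^{a(\tau-i)} (\tau-i)^i/i!$, and assuming continuity at $\tau=k$ (the only jump is at $0$), solve the linear inhomogeneous ODE
\begin{equation*}
\dot g(\tau)-a g(\tau)=-g(\tau-1),\qquad \tau\in(k,k+1),
\end{equation*}
via variation of constants:
\begin{equation*}
g(\tau)=e^{a(\tau-k)}g(k^-)-\int_k^\tau e^{a(\tau-s)}\,g(s-1)\,ds.
\end{equation*}
Substituting the inductive expression for $g(s-1)$ term by term, each elementary integral
\begin{equation*}
\int_k^\tau e^{a(\tau-s)} e^{a(s-1-i)}\frac{(s-1-i)^i}{i!}\,ds=e^{a(\tau-1-i)}\Big[\tfrac{(\tau-1-i)^{i+1}-(k-1-i)^{i+1}}{(i+1)!}\Big]
\end{equation*}
produces exactly two contributions: a term of the form $(-1)^{i+1}e^{a(\tau-(i+1))}(\tau-(i+1))^{i+1}/(i+1)!$ (advancing the index) and a boundary term at $\tau=k$ which telescopes against $e^{a(\tau-k)}g(k^-)$. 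Collecting everything yields precisely $\sum_{i=0}^{k}(-1)^i e^{a(\tau-i)}(\tau-i)^i/i!$ on $[k,k+1)$, closing the induction.

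An alternative route, which might be cleaner to present, is through the Laplace transform: applying $\mathcal{L}$ to the RDE formally gives $G(s)=1/(s-a+e^{-s})$; expanding the geometric series
\begin{equation*}
G(s)=\frac{1}{s-a}\sum_{k=0}^{\infty}\frac{(-1)^k e^{-ks}}{(s-a)^k}=\sum_{k=0}^{\infty}\frac{(-1)^k e^{-ks}}{(s-a)^{k+1}},
\end{equation*}
valid for $\Re s$ large enough, and inverting term by term using $\mathcal{L}^{-1}\bigl[(s-a)^{-(k+1)}\bigr]=\tau^k e^{a\tau}/k!\,\ind_{[0,\infty)}$ together with the shift rule recovers the stated formula. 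I would prefer to present the method-of-steps proof as the main argument, since it avoids justifying term-by-term inversion and makes transparent that the sum is finite at each $\tau$.

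The only nontrivial piece is the bookkeeping in the induction step, in particular the telescoping between the boundary contribution at $s=k$ of the integral and the homogeneous part $e^{a(\tau-k)}g(k^-)$; once one writes out the two polynomial factors $(\tau-1-i)^{i+1}$ and $(k-1-i)^{i+1}$ side by side and uses $g(k^-)=\sum_{i=0}^{k-1}(-1)^i e^{a(k-i)}(k-i)^i/i!$ the cancellation is clean, but this is the step where a careful computation is needed.
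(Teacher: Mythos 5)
Your argument is correct, but it goes in the opposite direction from the paper's proof. The paper simply takes the claimed series, observes that $g(0+)-g(0-)=1$ (which produces the $\Diracd_0$), and differentiates term by term for $\tau>0$: the derivative of the $k$-th summand splits into $a$ times that summand plus $(-1)^k e^{a(\tau-k)}(\tau-k)^{k-1}/(k-1)!\,\ind_{[k,\infty)}$, and reindexing the second family gives exactly $-g(\tau-1)$; no deltas appear at integers $k\geq 1$ because the factor $(\tau-k)^k$ vanishes there. That is a two-line verification. You instead \emph{derive} the formula by the method of steps, solving $\dot g - ag = -g(\tau-1)$ on each $[k,k+1)$ by variation of constants and checking that the boundary contribution at $s=k$ telescopes against $e^{a(\tau-k)}g(k^-)$ — I checked the telescoping and it does close (the $j=1,\dots,k-1$ terms cancel in pairs and the $j=k$ boundary term vanishes since $(k-k)^k=0$), leaving exactly the $j=0$ term $e^{a\tau}$. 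Your route is longer and requires the bookkeeping you flag, but it is constructive: it explains where the formula comes from rather than just certifying it, and it makes manifest that the sum is finite for each $\tau$. The Laplace-transform expansion you sketch as an alternative is essentially how the paper later analyzes the asymptotics of $g$ (Lemma \ref{Lem:aympt_g}), so it is consistent with the paper's toolkit, though for the identity itself the direct differentiation is the most economical choice.
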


\begin{proof}
Indeed
\begin{equation*}
\lim_{\tau \nearrow 0} g(\tau) = 0, \quad \lim_{\tau \searrow 0} g(\tau) = 1
\end{equation*}
and by differentiation for $\tau > 0$ we obtain
\begin{align*}
\dot g(\tau) &= \sum_{k=0}^\infty (-1)^k a e^{a (\tau - k)} \frac{(\tau - k)^k}{k!} \ind_{[k,\infty)}(\tau) + \sum_{k=1}^\infty (-1)^k e^{a (\tau - k)} \frac{(\tau - k)^{k-1}}{(k-1)!} \ind_{[k,\infty)}(\tau) \\
&= a g(\tau) - g(\tau - 1). \qedhere
\end{align*}
\end{proof}

Next we give the asymptotic behavior of $g$.

\begin{lemma}
\label{Lem:aympt_g}
It holds:
\begin{enumerate}
\item if $a > 1$, then the kernel $g(\tau)$ grows exponentially as
$$
\frac{e^{\lambda_0^+ \tau}}{1 - e^{-\lambda_0^+}}, 
$$
where $\tilde \lambda_0^-$ is the unique positive real eigenvalue of \eqref{eq:eigenvalues}, and moreover $\tau \mapsto g(\tau) e^{-  \lambda_0+ \tau} - \frac{1}{1 - e^{-\lambda_0^+}}$ is exponentially decaying for $\tau \to \infty$;

\item \label{Point_2:expli_exp_kern} if $a = 1$, then
\begin{equation*}
\tau \mapsto g(\tau) - 2 \tau + \frac{2}{3}
\end{equation*}
is exponentially decaying as $\tau \to \infty$;

\item if $a < 1$, defining the quantity
$$
U_0^+ = \frac{1}{1 - \tilde \lambda_0^+} = |U_0^+| e^{i \ln(\Im(U_0^+))},
$$
where $\tilde \lambda_0^+$ is the eigenvalue of \eqref{eq:eigenvalues} with minimal real part and positive imaginary part, we obtain that
\begin{equation*}
\tau \mapsto \big[ g(\tau) e^{- \Re(\lambda_0^+) \tau} - 2 |U_0^+| \cos(\Im(s_0^+) \tau + \ln(\Im(U_0^+)) \big) \Big]
\end{equation*}
is exponentially decaying as $\tau \to \infty$.
\end{enumerate}
\end{lemma}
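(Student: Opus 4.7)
The strategy is the Laplace transform. Since $g$ is supported in $[0,\infty)$ with $g(0^+)=1$ and satisfies the distributional equation $\dot g - ag + g(\cdot-1) = \Diracd_0$, its Laplace transform is
\begin{equation*}
\hat g(\lambda) \doteq \int_0^\infty e^{-\lambda\tau} g(\tau)\,d\tau = \frac{1}{\lambda - a + e^{-\lambda}},
\end{equation*}
a meromorphic function on $\C$ whose poles are exactly the roots of the characteristic equation \eqref{eq:eigenvalues}, i.e.\ the eigenvalues catalogued in Lemmas \ref{lem:real:Eigenvalues} and \ref{lem:complex:eigenvalues}. At any simple eigenvalue $\lambda_j$, the residue of $e^{\lambda\tau}\hat g(\lambda)$ equals $e^{\lambda_j\tau}/(1-e^{-\lambda_j})$, since the denominator has derivative $1-e^{-\lambda}$; this already explains the constants $(1-e^{-\lambda_0^+})^{-1}$ in (1) and $U_0^+$ in (3).

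I would then recover $g$ from the Bromwich representation
\begin{equation*}
g(\tau) = \frac{1}{2\pi\i}\int_{c-\i\infty}^{c+\i\infty} \frac{e^{\lambda\tau}}{\lambda - a + e^{-\lambda}}\,d\lambda,
\end{equation*}
with $c$ large, by pushing the contour leftward across the dominant eigenvalue(s), collecting residues, and bounding the remaining integral. In case (1), $\lambda_0^+$ is the unique pole of largest real part, since $\lambda_0^-<0<\lambda_0^+$ and every complex eigenvalue has $\Re\lambda = \ln(\sin(\Im\lambda)/\Im\lambda)<0$ by \eqref{Equa:real_complex_eig}; extracting its residue gives the announced main term. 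In case (2), $\lambda=0$ is a double pole: the Laurent expansion $\lambda-1+e^{-\lambda} = \frac{\lambda^2}{2}\bigl(1-\lambda/3+O(\lambda^2)\bigr)$ yields $\hat g(\lambda) = 2\lambda^{-2} + \tfrac{2}{3}\lambda^{-1}+O(1)$, and a direct evaluation of $\mathrm{Res}_{\lambda=0}[e^{\lambda\tau}\hat g(\lambda)]$ produces the claimed affine-in-$\tau$ main term; all other eigenvalues have strictly negative real part. In case (3), there are no real poles, and the complex pair $\tilde\lambda_0^\pm$ with smallest $|\Im|$ is dominant because $y\mapsto\ln(\sin y/y)$ is strictly decreasing on $(0,\pi)$; summing the residues at $\tilde\lambda_0^+$ and its conjugate produces exactly the claimed real oscillatory expression with $U_0^+ = 1/(1-e^{-\tilde\lambda_0^+})$.

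The main technical obstacle is the rigorous justification of the contour shift. One needs a uniform lower bound of the form $|\lambda-a+e^{-\lambda}|\gtrsim|\Im\lambda|$ on the new vertical line $\Re\lambda = c_1$, chosen strictly between the real parts of the dominant and subdominant eigenvalues; since $|e^{-\lambda}|$ stays bounded on any fixed vertical strip, this is just the triangle inequality provided $\Im\lambda$ is kept away from the imaginary parts $y_k$ of Lemma \ref{lem:complex:eigenvalues}, which are isolated and separated by $\sim 2\pi$. Closing the contour on horizontal segments through these gaps, the horizontal contributions tend to zero as $|\Im\lambda|\to\infty$, and the residue theorem yields an exponentially small remainder $O(e^{c_1\tau})$. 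In case (1) one could alternatively compare the series representation of Lemma \ref{Lem:explic_kernel_RDE} term-by-term with the leading exponential, but the Laplace transform framework treats all three regimes uniformly and naturally produces the correct asymptotic constants without case-by-case bookkeeping.
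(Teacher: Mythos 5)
Your proposal is correct and follows essentially the same route as the paper: the paper's proof also computes the Laplace transform $\hat g(s) = 1/(s + e^{-s} - a)$, identifies the poles with the eigenvalues of \eqref{eq:eigenvalues}, extracts the residue $1/(1-e^{-\lambda_j})$ at the dominant (simple) pole(s) and the Laurent expansion $2s^{-2} + \tfrac{2}{3}s^{-1} + \mathcal O(1)$ at the double pole in the critical case. Your additional care about justifying the contour shift (separation of the $y_k$ and the lower bound on the denominator along the shifted vertical line) fills in a step the paper leaves implicit, but the argument is the same.
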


\begin{figure}
\centering
\includegraphics[width=.45\textwidth]{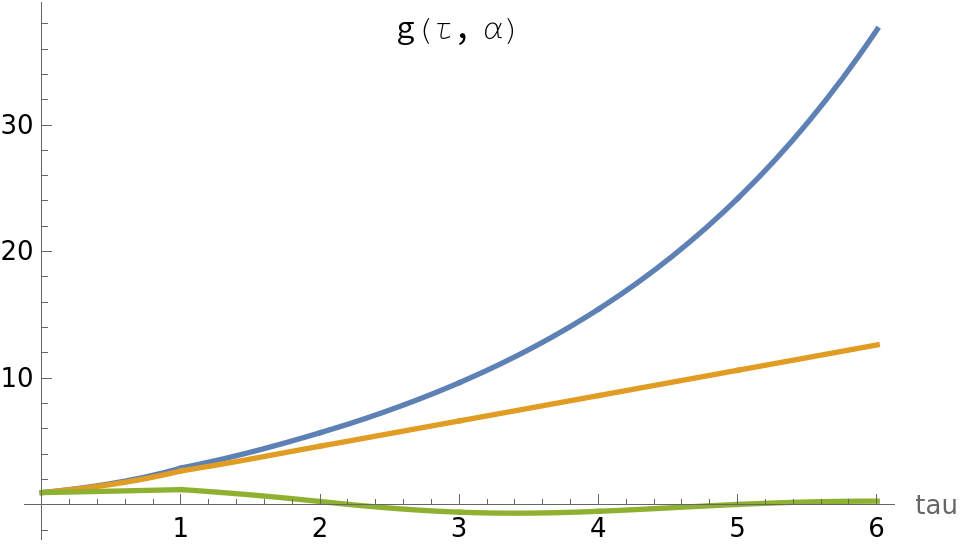}\hfill\includegraphics[width=.45\textwidth]{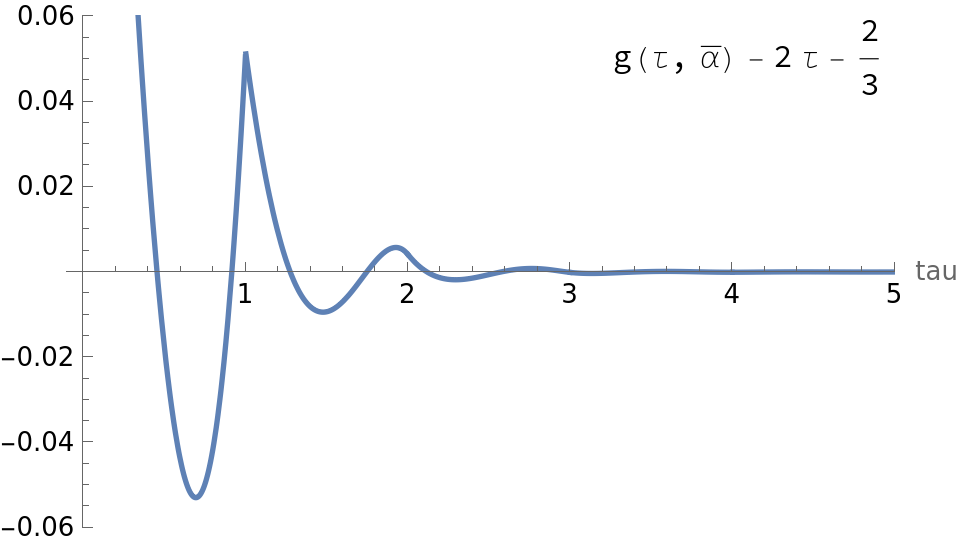}
\caption{Plot of $g(\tau)$ with $\alpha = \bar \alpha- 0.01$ (green), $\alpha = \bar \alpha$ (orange), $\alpha = \bar \alpha + 0.1$ (blue), and the asymptotic behavior of Point (2) of Lemma \ref{Lem:aympt_g}.}
\label{Fig:g_kernel_sympt}
\end{figure}

\begin{proof}
Take the Laplace transform \newglossaryentry{gkernelhat}{name=\ensuremath{\hat g(s)},description={Laplace transform of the kernel $g$}} \gls{gkernelhat} of $g(\tau)$: by using \eqref{Equa:rescal_RDE} we obtain
\begin{equation*}
- g(0) + s \hat g(s) = a \hat g(s) - e^{-s} \hat g(s) \quad \Longrightarrow \quad \hat g(s) = \frac{1}{s + e^{-s} - a}.
\end{equation*}
If $a \not= 1$, then $\hat g$ has a simple pole when 
\begin{equation*}
s + e^{-s} = a \quad \Longrightarrow \quad s = \lambda_k,
\end{equation*}
where $\lambda_k$ are the eigenvalues computed in the previous section. 

For $a < 1$, there is a couple of complex conjugate poles which have the least negative real part $\lambda_0^\pm = - x_0 \pm i y_0$, $x_0,y_0 > 0$: the residue about them is \newglossaryentry{U0pm}{name=\ensuremath{U_0^\pm},description={residue about the eigenvalues $s_0^\pm$ with largest real part}}
\begin{equation*}
\gls{U0pm} = \frac{1}{\frac{d}{ds} \big( s + e^{-s} - a \big) \rest_{\lambda_0^\pm}} = \frac{1}{1 - e^{-\lambda_0^\pm}}. 
\end{equation*}
Defining \newglossaryentry{V0tau}{name=\ensuremath{V_0(\tau)},description={inverse Laplace transform with poles in $s_0^\pm$}}
\begin{align*}
\gls{V0tau} &= U_0^- e^{\lambda_0^- \tau} + U_0^+ e^{\lambda_0^+ \tau} \\
&= 2 |U_0^+| e^{\Re(\lambda_0^+) \tau} \cos(\Im(\lambda_0^+) \tau + \Im(\ln(U_0^+))),
\end{align*}
where
\begin{equation*}
U_0^+ = |U_0^+| e^{i\Im(\ln(U_0^+))},
\end{equation*}
we conclude that
\begin{equation*}
\tau \mapsto (g(\tau) - V_0(\tau)) e^{- \Re(\lambda_0^+) \tau}
\end{equation*}
is exponentially decaying as $\tau \to +\infty$.

Same reasoning for $a > 1$: in this case there is only one pole $\lambda_0^+$ with positive real part, and then the residue is
\begin{equation*}
U_0^+ = \frac{1}{1 - e^{-\lambda_0^+}}.
\end{equation*}
We thus conclude as before that
\begin{equation*}
\tau \mapsto g(\tau) e^{-\lambda_0^+ \tau} - U_0^+
\end{equation*}
is exponentially decaying as $\tau \to \infty$.

In the critical case, the Laplace transform has a pole in $0$ of order $2$ and can be expanded as
\begin{equation*}
\hat g(s) = \frac{2}{s^2} \bigg( 1 + \frac{s}{3} + \mathcal O(s^2) \bigg) = \frac{2}{s^2} + \frac{2}{3s} + \mathcal O(1).
\end{equation*}
Hence
$$
g(\tau) - 2 \tau - \frac{2}{3}
$$
decays exponentially as the first eigenvalue with negative real part for $\tau \to \infty$.
\end{proof}

In the following we need also to compute the solution $m(\tau,\tau_1,\tau_2)$ for a Cauchy problem with a diffuse source: it will be useful when studying the fastest saturated spiral and its perturbations.

\begin{lemma}
\label{Lem:expli_source_RDE}
The unique solution to the problem
\begin{equation}
\label{Equa:kernel_source}
\dot \rho(\tau) = a \rho(\tau)-\rho(\tau-1) + (2\pi+\alpha) \ind_{[\tau_1,\tau_2)} e^{-c(2\pi+\alpha)\tau}, \quad \rho(\tau) = 0 \ \text{for $\tau < \tau_1$},
\end{equation}
where $c = c(\alpha)$ is given by \eqref{Equa:c_gene_defi}, is \newglossaryentry{mkernel}{name=\ensuremath{m(\tau,\tau_1,\tau_2)},description={kernel for the diffuse source of Lemma \ref{Lem:expli_source_RDE}}}
\begin{align*}
\gls{mkernel} &= \sum_{k=0}^\infty \Bigg[ \frac{e^{-c (2\pi + \alpha) \tau_1}}{\cot(\alpha)} \sum_{\ell = 1}^k (-1)^\ell e^{a (\tau - \tau_1 - k)} \frac{(\tau - \tau_1 - k)^\ell}{\ell! ((2\pi + \alpha) \cot(\alpha))^{k-\ell}} \ind_{[0,\infty)}(\tau - \tau_1 - k) \\
& \qquad \qquad - \frac{e^{-c(2\pi + \alpha) \tau_2}}{\cot(\alpha)} \sum_{\ell = 1}^k (-1)^\ell e^{a (\tau - \tau_2 - k)} \frac{(\tau - \tau_2 - k)^\ell}{\ell! (\cot(\alpha) (2\pi + \alpha))^{k-\ell}} \ind_{[0,\infty)}(\tau - \tau_2 - k) \\
& \qquad \qquad + \frac{\frac{e^{- c (2\pi+\alpha) \tau_1}}{\cot(\alpha)} [ e^{a (\tau - \tau_1 - k)} - 1 ]^+ - \frac{e^{-c(2\pi+\alpha) \tau_2}}{\cot(\alpha)} [ e^{a (\tau - \tau_2 - k)} - 1 ]^+}{(\cot(\alpha) (2\pi + \alpha))^{k}} \Bigg].
\end{align*}
\end{lemma}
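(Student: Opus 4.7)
The equation is linear with constant coefficients, and by Lemma \ref{Lem:explic_kernel_RDE} the fundamental solution of the homogeneous part is $g(\tau)$. Since we require $\rho(\tau)=0$ for $\tau<\tau_1$ (so all histories on intervals of length $1$ starting below $\tau_1$ vanish), a standard variation-of-constants argument for RDEs gives the representation
\begin{equation*}
\rho(\tau) \;=\; (2\pi+\alpha)\int_{\tau_1}^{\tau_2\wedge\tau} g(\tau-s)\, e^{-c(2\pi+\alpha)s}\, ds,
\end{equation*}
and this is the only plausible closed form, so the whole task is to evaluate this integral and match it to the stated formula.

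The plan is to insert the series expansion for $g$ from Lemma \ref{Lem:explic_kernel_RDE} and swap the sum and the integral. The $k$-th term in the series gives an integrand supported where $\tau - s \geq k$, i.e. $s \leq \tau - k$. After the change of variable $u = \tau - s - k$ and factoring out $e^{-c(2\pi+\alpha)(\tau-k)}$, each term becomes
\begin{equation*}
\frac{e^{-c(2\pi+\alpha)(\tau-k)}(-1)^k}{k!}\int_{[\tau-\tau_2-k]^+}^{[\tau-\tau_1-k]^+} e^{\lambda u}\, u^k\, du,\qquad \lambda := a+c(2\pi+\alpha) = (2\pi+\alpha)\cot(\alpha),
\end{equation*}
where the upper and lower bounds come from the $s$-endpoints $s=\tau_1$ and $s=\min(\tau_2,\tau)$, and the positive part encodes the constraint $s\leq \tau-k$. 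The key algebraic identity is that $a+c(2\pi+\alpha) = (2\pi+\alpha)\cot(\alpha)$, which follows directly from \eqref{Equa:c_gene_defi} and \eqref{Equa:a_const}; this is exactly why the denominators in the statement feature $\cot(\alpha)$ and $(2\pi+\alpha)\cot(\alpha)$.

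The remaining step is a straightforward repeated integration by parts: since
\begin{equation*}
\int e^{\lambda u}\frac{u^k}{k!}\, du \;=\; \frac{e^{\lambda u}}{\lambda}\sum_{\ell=0}^{k}(-1)^{\ell}\frac{u^{k-\ell}}{(k-\ell)!\,\lambda^{\ell}}+C,
\end{equation*}
evaluating between the bounds produces, after relabelling $\ell \mapsto k-\ell$, precisely the two ``$\tau_1$'' and ``$\tau_2$'' inner sums in the stated formula, in which the polynomial factor $(\tau-\tau_i-k)^\ell/\ell!$ appears weighted by $((2\pi+\alpha)\cot(\alpha))^{k-\ell}$. The $\ell=0$ contribution is the constant piece $1/\lambda$ coming from the antiderivative at $u=0$, and this is what is collected in the third $[\,\cdot\,]^+$ summand: it contributes only when the respective bound is positive, which is exactly what $[e^{a(\tau-\tau_i-k)}-1]^+$ enforces (when the relevant bound is clipped to $0$, this term vanishes because the two endpoint contributions cancel, matching the positive-part convention).

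The main obstacle is purely bookkeeping: one must be careful with the three distinct regimes $\tau<\tau_1+k$, $\tau\in[\tau_1+k,\tau_2+k)$, $\tau\geq \tau_2+k$ for every $k$, and check that the positive parts $[\cdot]^+$ implement the correct clipping of the integration endpoints. Once that matches, one may additionally verify a posteriori that the formula satisfies \eqref{Equa:kernel_source} term-by-term (using $\dot g = a g - g(\cdot-1)+\Diracd_0$ commutes through the convolution), and uniqueness of the Cauchy problem for RDEs with prescribed zero history (see \cite{RDE}) closes the argument.
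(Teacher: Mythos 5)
Your proposal is correct and follows essentially the same route as the paper: Duhamel's formula gives $m = g \ast \rho_0$, one inserts the series for $g$, exchanges sum and integral, and evaluates each term explicitly using the identity $a + c(2\pi+\alpha) = (2\pi+\alpha)\cot(\alpha)$ and the closed-form antiderivative of $e^{\lambda u}u^k/k!$, with the positive parts implementing the clipping of the integration endpoints exactly as you describe.
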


\begin{proof}
Using Duhamel's formula we get that the solution is
$$
m(\tau,\tau_1,\tau_2) = g \ast \rho_0(\tau), \quad \rho_0(\tau) = (2\pi+\alpha)e^{-c(2\pi+\alpha)\tau}\ind_{[\tau_1,\tau_2)}(\tau),
$$
that is
\begin{align*}
m(\tau,&\tau_1,\tau_2) = (2\pi + \alpha) \int_{\tau_1}^{\tau_2} g(\tau-\tau') e^{-c (2\pi+\alpha) \tau'} d\tau' \\
&= (2\pi + \alpha) \int_{\tau_1}^{\tau_2} \sum_{k=0}^\infty (-1)^k e^{a (\tau - \tau' - k) - (\cot(\alpha) (2\pi + \alpha) - a) \tau'} \frac{(\tau - \tau' - k)^k}{k!} \ind_{[k,\infty)}(\tau - \tau') d\tau' \\
&= (2\pi + \alpha) \sum_{k=0}^\infty e^{-c (2\pi + \alpha) (\tau-k)} \int_{\tau_1}^{\tau_2} (-1)^k e^{\cot(\alpha) (2\pi + \alpha) (\tau - \tau' - k)} \frac{(\tau - \tau' - k)^k}{k!} \ind_{[k,\infty)}(\tau - \tau') d\tau' \\
&= \sum_{k=0}^\infty \frac{(2\pi + \alpha) e^{-c (2\pi + \alpha) (\tau-k)}}{(\cot(\alpha) (2\pi + \alpha))^{1+k}} \int_{\cot(\alpha) (2\pi + \alpha) (\tau - \tau_2 - k)}^{\cot(\alpha) (2\pi + \alpha) (\tau - \tau_1 - k)} (-1)^k e^{\sigma} \frac{\sigma^k}{k!} \ind_{[0,\infty)}(\sigma) d\sigma \\
&= \sum_{k=0}^\infty \frac{(2\pi + \alpha) e^{-c (2\pi + \alpha) (\tau - k)}}{(\cot(\alpha) (2\pi + \alpha))^{1+k}} \bigg[ \sum_{\ell=1}^k (-1)^\ell e^{\cot(\alpha) (2\pi + \alpha) (\tau - \tau_1 - k)} \frac{(\cot(\alpha) (2\pi + \alpha) (\tau - \tau_1 - k))^\ell}{\ell!} \ind_{[0,\infty)}(\tau - \tau_1 - k) \\
& \qquad \qquad \qquad \qquad \qquad \qquad \qquad + \big[ e^{\cot(\alpha) (2\pi + \alpha) (\tau - \tau_1 - k)} - 1 \big]^+ \\
& \qquad \qquad \qquad \qquad \qquad - \sum_{\ell=1}^k (-1)^\ell e^{\cot(\alpha) (2\pi + \alpha) (\tau - \tau_2 - k)} \frac{(\cot(\alpha) (2\pi + \alpha) (\tau - \tau_2 - k))^\ell}{\ell!} \ind_{[0,\infty)}(\tau - \tau_2 - k) \\
& \qquad \qquad \qquad \qquad \qquad \qquad \qquad - \big[ e^{\cot(\alpha) (2\pi + \alpha) (\tau - \tau_2 - k)} - 1 \big]^+ \bigg] \\
&= \sum_{k=0}^\infty \bigg[ \frac{e^{-c(2\pi + \alpha) \tau_1}}{\cot(\alpha)} \sum_{\ell = 1}^k (-1)^\ell e^{a (\tau - \tau_1 - k)} \frac{(\tau - \tau_1 - k)^\ell}{\ell! ((2\pi + \alpha) \cot(\alpha))^{k-\ell}} \ind_{[0,\infty)}(\tau - \tau_1 - k) \\
& \qquad \qquad - \frac{e^{-c(2\pi + \alpha) \tau_2}}{\cot(\alpha)} \sum_{\ell = 1}^k (-1)^\ell e^{a (\tau - \tau_2 - k)} \frac{(\tau - \tau_2 - k)^\ell}{\ell! (\cot(\alpha) (2\pi + \alpha))^{k-\ell}} \ind_{[0,\infty)}(\tau - \tau_2 - k) \\
& \qquad \qquad + \frac{\frac{e^{- c (2\pi+\alpha) \tau_1}}{\cot(\alpha)} [ e^{a (\tau - \tau_1 - k)} - 1 ]^+ - \frac{e^{-c(2\pi+\alpha) \tau_2}}{\cot(\alpha)} [ e^{a (\tau - \tau_2 - k)} - 1 ]^+}{(\cot(\alpha) (2\pi + \alpha))^{k}} \bigg],
\end{align*}
which is the formula in the statement.
\end{proof}

We compute the asymptotic behavior of $m$, which is the integral of the convolution of the asymptotic formula of $g$ with the source $\rho_0$. 

\begin{figure}
\centering
\includegraphics[width=.45\textwidth]{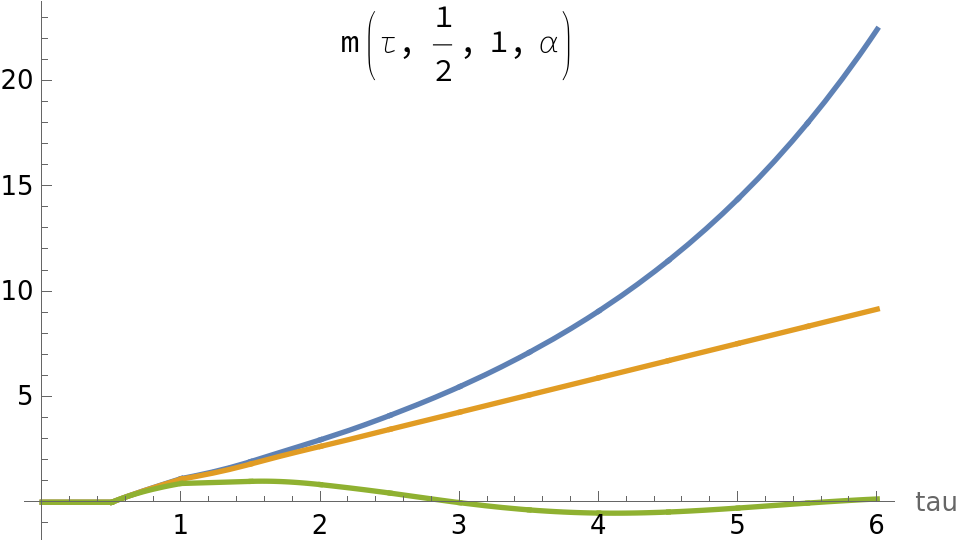}\hfill\includegraphics[width=.45\textwidth]{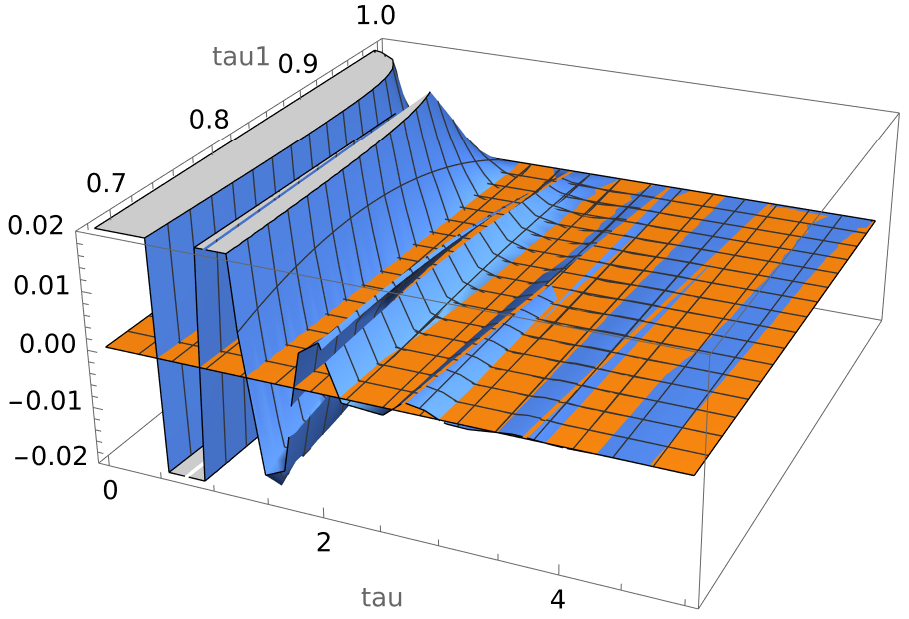}
\caption{The kernel $m(\tau,\frac{1}{2},1)$ with $\alpha = \bar \alpha- 0.1$ (green), $\alpha = \bar \alpha$ (orange), $\alpha = \bar \alpha + 0.1$ (blue), and its oscillations about the asymptotic expansion of $m(\tau,\tau_1,1)$ for $\tau_1 \in [\frac{1}{2},1]$.}
\label{Fig:m_m_sympt}
\end{figure}

\begin{lemma}
\label{Lem:m_aymptotic}
Denoting with \newglossaryentry{gkernelasympt}{name=\ensuremath{g^\mathrm{asympt}(\tau)},description={asymptotic expansion as $\tau \to \infty$ of the kernel $g$}} \gls{gkernelasympt} the asymptotic expansions of $g(\tau)$ in Lemma \ref{Lem:aympt_g}, it holds \newglossaryentry{mkernelasympt}{name=\ensuremath{m^\mathrm{asympt}(\tau,\tau_1,\tau_2)},description={asymptotic expansion as $\tau \to \infty$ of the kernel $m(\tau,\tau_1,\tau_2)$}}
\begin{equation*}
\gls{mkernelasympt} = \int_{\tau_1}^{\tau_2} \rho_0(\tau') \ast g^\mathrm{asympt}(\tau - \tau') d\tau'.
\end{equation*}
In particular, it is blowing up exponentially for $a > 1$, decaying exponentially and oscillating for $a < 1$ and in the critical case $a = 1$ we have the asymptotic linear growth
\begin{align*}
m^{\mathrm{asympt}}(\tau,\tau_1,\tau_2) &\sim \bigg( 2 \tau + \frac{2}{3} \bigg) \frac{e^{-\bar c(2\pi + \bar \alpha) \tau_1} - e^{-\bar c(2\pi + \bar \alpha) \tau_2}}{\bar c} \\
& \quad - 2 \frac{[\bar c(2\pi + \alpha) \tau_1 + 1] e^{-\bar s(2\pi + \bar \alpha) \tau_1} - [\bar c(2\pi + \alpha) \tau_2 + 1] e^{-\bar c(2\pi + \bar \alpha) \tau_2}}{\bar c^2} + \mathcal O(e^{- \Re(s_1^\pm) \tau}),
\end{align*}
with $s_1^\pm$ the first non $0$ eigenvalues.
\end{lemma}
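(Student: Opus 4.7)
The plan is to derive the asymptotics of $m$ by Duhamel's representation and then use the known asymptotics of $g$ from Lemma~\ref{Lem:aympt_g}. Recall from the proof of Lemma~\ref{Lem:expli_source_RDE} that
\begin{equation*}
m(\tau,\tau_1,\tau_2) \;=\; (g \ast \rho_0)(\tau) \;=\; \int_{\tau_1}^{\tau_2} g(\tau-\tau')\,\rho_0(\tau')\,d\tau',
\end{equation*}
with $\rho_0(\tau') = (2\pi+\alpha)e^{-c(2\pi+\alpha)\tau'}\ind_{[\tau_1,\tau_2)}(\tau')$. Since $\rho_0$ is bounded and supported on the fixed compact interval $[\tau_1,\tau_2]$, for $\tau \geq \tau_2 + 1$ we have $\tau - \tau' \geq 1$ throughout the integration, so the asymptotic expansion of $g$ can be inserted in the convolution.

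First I would write $g(\tau) = g^{\mathrm{asympt}}(\tau) + r(\tau)$, where by Lemma~\ref{Lem:aympt_g} the remainder $r(\tau)$ decays as $e^{-\Re(s_1^\pm)\tau}$, with $s_1^\pm$ the eigenvalues with next-to-largest real part (second largest in the $a>1$ case, largest among the nonzero ones in the $a=1$ case, second largest among the complex conjugate pairs in the $a<1$ case). Convolving the decomposition with $\rho_0$ gives
\begin{equation*}
m(\tau,\tau_1,\tau_2) \;=\; \underbrace{\int_{\tau_1}^{\tau_2} g^{\mathrm{asympt}}(\tau-\tau')\rho_0(\tau')\,d\tau'}_{=:\,m^{\mathrm{asympt}}(\tau,\tau_1,\tau_2)} \;+\; \int_{\tau_1}^{\tau_2} r(\tau-\tau')\rho_0(\tau')\,d\tau',
\end{equation*}
and since $\rho_0$ is uniformly bounded on $[\tau_1,\tau_2]$ of bounded length, the remainder term inherits the decay rate $\mathcal O(e^{-\Re(s_1^\pm)\tau})$.

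For the three cases: if $a>1$, inserting $g^{\mathrm{asympt}}(\tau) = \frac{e^{\lambda_0^+\tau}}{1-e^{-\lambda_0^+}}$ and integrating the exponential in $\tau'$ yields pure exponential growth of $m^{\mathrm{asympt}}$ at the rate $\lambda_0^+$; if $a<1$, the oscillatory-decaying form of $g^{\mathrm{asympt}}$ gives the analogous oscillatory-decaying behavior of $m^{\mathrm{asympt}}$. The only case requiring an explicit computation is the critical $a=1$ case: here $g^{\mathrm{asympt}}(\tau-\tau') = 2(\tau-\tau') + \tfrac{2}{3}$ and, setting $\kappa := \bar c(2\pi+\bar\alpha)$, we get
\begin{align*}
m^{\mathrm{asympt}}(\tau,\tau_1,\tau_2) &= (2\pi+\bar\alpha)\int_{\tau_1}^{\tau_2}\!\Bigl(2\tau+\tfrac{2}{3}\Bigr)e^{-\kappa\tau'}d\tau' \;-\; 2(2\pi+\bar\alpha)\int_{\tau_1}^{\tau_2}\!\tau'\,e^{-\kappa\tau'}d\tau',
\end{align*}
and evaluating the first integral as $\kappa^{-1}(e^{-\kappa\tau_1}-e^{-\kappa\tau_2})$ and the second one by parts as $\kappa^{-2}\bigl[(\kappa\tau_1+1)e^{-\kappa\tau_1}-(\kappa\tau_2+1)e^{-\kappa\tau_2}\bigr]$ reproduces the stated formula, after dividing the $(2\pi+\bar\alpha)$ prefactor by $\kappa$ to produce $\bar c^{-1}$ and $\bar c^{-2}$.

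The only real step requiring care is the justification that the remainder $\int r(\tau-\tau')\rho_0(\tau')d\tau'$ is genuinely of order $e^{-\Re(s_1^\pm)\tau}$ and not slower: this follows because $\tau-\tau' \geq \tau-\tau_2$, so factoring out $e^{-\Re(s_1^\pm)(\tau-\tau_2)}$ from $|r(\tau-\tau')|$ and bounding $\rho_0$ and the Laplace-type remaining factor on the compact interval gives a uniform bound. No further obstacle is expected, since all the asymptotics were already established for $g$ in Lemma~\ref{Lem:aympt_g}.
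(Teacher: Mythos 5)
Your proof follows the paper's own argument exactly: both insert the asymptotic expansion of $g$ from Lemma \ref{Lem:aympt_g} into the Duhamel convolution $m = g \ast \rho_0$ and evaluate the resulting elementary integrals, with your handling of the remainder term being, if anything, slightly more explicit than the paper's. One bookkeeping remark: since $(2\pi+\bar \alpha)/(\bar c(2\pi+\bar \alpha))^2 = \bar c^{-2}(2\pi+\bar \alpha)^{-1}$ and not $\bar c^{-2}$, your (correctly computed) second integral actually differs from the displayed formula by a factor $(2\pi+\bar \alpha)$ — the same substitution slip occurs in the paper's own proof, and it only affects the bounded lower-order term, not the leading linear growth.
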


\begin{proof}
Integrating
\begin{align*}
m(\tau,\tau_1,\tau_2) &\sim \int_{\tau_1}^{\tau_2} (2\pi + \bar \alpha) e^{-\bar c (2\pi + \bar \alpha) \tau'} \bigg( 2 (\tau - \tau') + \frac{2}{3} + \mathcal O(e^{- \Re(s_1^\pm) (\tau - \tau')}) \bigg) d\tau' \\
&= \bigg( 2 \tau + \frac{2}{3} \bigg) \int_{(2\pi + \bar \alpha) \tau_1}^{(2\pi + \bar \alpha) \tau_2} e^{-\bar c \sigma} d\sigma - 2 \int_{(2\pi + \bar \alpha) \tau_1}^{(2\pi + \bar \alpha) \tau_2} \sigma e^{-\bar c \sigma} d\sigma + \mathcal O(e^{- \Re(s_1^\pm) \tau}) \\
&= \bigg( 2 \tau + \frac{2}{3} \bigg) \frac{e^{-\bar s(2\pi + \bar \alpha) \tau_1} - e^{-\bar c(2\pi + \bar \alpha) \tau_2}}{\bar c} \\
& \quad - 2 \frac{[\bar c(2\pi + \alpha) \tau_1 + 1] e^{-\bar s(2\pi + \bar \alpha) \tau_1} - [\bar c(2\pi + \alpha) \tau_2 + 1] e^{-\bar c(2\pi + \bar \alpha) \tau_2}}{\bar c^2} + \mathcal O(e^{- \Re(s_1^\pm) \tau}),
\end{align*}
which is the statement.
\end{proof}

\begin{remark}
\label{Rem:oringal_phi_kernel}
In the original variables $r,\phi$, by \eqref{Equa:rho_deff} the kernel \newglossaryentry{Gkernel}{name=\ensuremath{G(\phi)},description={Green kernel for the equation \eqref{eq:ODE:saturated}}} \gls{Gkernel} is computed as
\begin{align*}
\gls{Gkernel} &= g \bigg( \frac{\phi}{2\pi + \alpha} \bigg) e^{c \phi} \\
&= \sum_{k=0}^\infty (-1)^k e^{\cot(\alpha) \phi - a k} \frac{(\frac{\phi}{2\pi+\alpha} - k)^k}{k!} \ind_{(0,\infty]}\bigg( \frac{\phi}{2\pi+\alpha} - k \bigg) \\
&= \sum_{k=0}^\infty (-1)^k e^{\cot(\alpha)(\phi - (2\pi + \alpha) k)} \frac{(\phi - (2\pi + \alpha) k)^k}{k! \sin(\alpha)^k} \ind_{[0,\infty)}(\phi - (2\pi + \alpha)k),
\end{align*}
where we used \eqref{Equa:c_gene_defi}. We recall that this means in distributions
\begin{equation*}
\dot G(\phi) = \cot(\alpha) G(\phi) - \frac{G(\phi - (2\pi + \alpha))}{\sin(\alpha)} + \Diracd_0.
\end{equation*}

In the original variables $r,\phi$ the Cauchy problem with source \eqref{Equa:kernel_source} corresponds to
\begin{equation}
\label{Equa:diffuse_source}
\dot r(\phi) = \cot(\alpha) r(\phi) - \frac{r(\phi-2\pi - \alpha)}{\sin(\alpha)} + \ind_{[\phi_1,\phi_2)},
\end{equation}
and the solution is \newglossaryentry{Mkernel}{name=\ensuremath{M(\phi,\phi_1,\phi_2)},description={kernel for the diffuse source \eqref{Equa:diffuse_source}}}
\begin{align*}
\gls{Mkernel} &= e^{c \phi} m \bigg( \frac{\phi}{2\pi + \alpha}, \frac{\phi_1}{2\pi + \alpha}, \frac{\phi_2}{2\pi + \alpha} \bigg) \\
&= \sum_{k=0}^\infty \bigg[ \sum_{\ell = 1}^k (-1)^\ell e^{\cot(\alpha) (\phi - \phi_1 - (2\pi + \alpha) k)} \frac{(\phi - \phi_1 - (2\pi + \alpha) k)^\ell}{\ell! \sin(\alpha)^k \cot(\alpha)^{k+1-\ell}} \ind_{[0,\infty)}(\phi - \phi_1 - (2\pi + \alpha) k) \\
& \qquad \qquad - \sum_{\ell = 1}^k (-1)^\ell e^{\cot(\alpha) (\phi - \phi_2 - (2\pi + \alpha) k)} \frac{(\phi - \phi_2 - (2\pi + \alpha) k)^\ell}{\ell! \sin(\alpha)^k \cot(\alpha)^{k+1-\ell}} \ind_{[0,\infty)}(\phi - \phi_2 - (2\pi + \alpha) k) \\
& \qquad \qquad + \frac{[ e^{\cos(\alpha) (\phi - \phi_1 - (2\pi + \alpha) k)} - 1 ]^+ - [ e^{\cot(\alpha) (\phi - \phi_2 - (2\pi + \alpha) k)} - 1 ]^+}{\cot(\alpha)} \bigg].
\end{align*}
\end{remark}

\begin{remark}
\label{Rem:diff_relations}
The following differential relations are elementary, but used very often in the next sections: we will thus collect them here as reference.

\begin{description}
\item[Derivative of $g,G$] it holds
\begin{equation*}
\frac{d}{d\tau} \big[ \partial_{\phi_0} g(\tau - \tau_0) \big] = a \big[ \partial_{\tau_0} g(\tau-\tau_0) \big] - \big[ \partial_{\tau_0} g(\tau - \tau_0 - 1) \big] - \frac{d}{d\tau_0} \Diracd_{\tau_0},
\end{equation*}
\begin{equation*}
\partial_{\tau_0} g(\tau-\tau_0) = - \Diracd_{\tau_0} - a g(\tau - \tau_0) + g(\tau - \tau_0 - 1).
\end{equation*}

Similarly for $G$
\begin{equation*}
\frac{d}{d\phi} \big[ \partial_{\phi_0} G(\phi - \phi_0) \big] = \cot(\alpha) \big[ \partial_{\phi_0} G(\phi - \phi_0) \big] - \frac{\big[ \partial G(\phi - \phi_0 - 2\pi - \alpha) \big]}{\sin(\alpha)} + \frac{d}{d\phi_0} \Diracd_{\phi_0},
\end{equation*}
\begin{equation}
\label{Equa:ker_der_G}
\partial_{\phi_0} G(\phi-\phi_0) = - \Diracd_{\phi_0} - \cot(\alpha) G(\phi - \phi_0) + \frac{G(\phi - \phi_0 - 2\pi - \alpha)}{\sin(\alpha)}.
\end{equation}

\item[Derivative of $m,M$] it holds
\begin{equation*}
\frac{d}{d\tau} \big[ \partial_{\tau_1} m(\tau,\tau_1,\tau_2) \big] = a \big[ \partial_{\tau_1} m(\tau,\tau_1,\tau_2) \big] - \big[ \partial_{\tau_1} m(\tau,\tau_1,\tau_2) \big] - (2\pi + \alpha) e^{-c(2\pi + \alpha) \tau_1} \Diracd_{\tau_1},
\end{equation*}
\begin{equation*}
\partial_{\tau_1} m(\tau,\tau_1,\tau_2) = - (2\pi + \alpha) e^{-c(2\pi + \alpha) \tau_1} g(\tau - \tau_1),
\end{equation*}
\begin{equation*}
\frac{d}{d\tau} \big[ \partial_{\tau_2} m(\tau,\tau_1,\tau_2) \big] = a \big[ \partial_{\tau_2} m(\tau,\tau_1,\tau_2) \big] - \big[ \partial_{\tau_2} m(\tau,\tau_1,\tau_2) \big] + (2\pi + \alpha) e^{-c(2\pi + \alpha) \tau_2} \Diracd_{\tau_2},
\end{equation*}
\begin{equation*}
\partial_{\tau_2} m(\tau,\tau_1,\tau_2) = (2\pi + \alpha) e^{-c(2\pi + \alpha) \tau_2} g(\tau - \tau_2).
\end{equation*}

Similarly for $M$
\begin{equation*}
\frac{d}{d\phi} \big[ \partial_{\phi_1} M(\phi,\phi_1,\phi_2) \big] = \cot(\alpha) \big[ \partial_{\phi_1} M(\phi,\phi_1,\phi_2) \big] - \frac{\big[ \partial_{\phi_1} M(\phi,\phi_1,\phi_2) \big]}{\sin(\alpha)} - \Diracd_{\phi_1},
\end{equation*}
\begin{equation}
\label{Equa:der_M_phi1}
\big[ \partial_{\phi_1} M(\phi,\phi_1,\phi_2) \big] = - G(\phi - \phi_1),
\end{equation}
\begin{equation*}
\frac{d}{d\phi} \big[ \partial_{\phi_2} M(\phi,\phi_1,\phi_2) \big] = \cot(\alpha) \big[ \partial_{\phi_2} M(\phi,\phi_1,\phi_2) \big] - \frac{\big[ \partial_{\phi_2} M(\phi,\phi_1,\phi_2) \big]}{\sin(\alpha)} + \Diracd_{\phi_2},
\end{equation*}
\begin{equation*}
\big[ \partial_{\phi_2} M(\phi,\phi_1,\phi_2) \big] = G(\phi - \phi_2).
\end{equation*}
\end{description}

\end{remark}

For the sake of completeness, in the critical case $\alpha = \bar \alpha$ we give the asymptotic expansions of $G(\phi - \phi_0)$ and $M(\phi,\phi_1,\phi_2)$.

\begin{corollary}
\label{Cor:crit_G_M}
When $\alpha = \bar \alpha$ the following expansions hold
\begin{equation*}
G(\phi - \phi_0) = (\phi - \bar \phi_0) e^{\bar c (\phi - \bar \phi_0)} \bigg( 1 + \mathcal O \bigg( \frac{1}{\phi - \bar \phi_0} \bigg) \bigg),
\end{equation*}
\begin{equation*}
M(\phi,\phi_1,\phi_2) = 2 \frac{1 - e^{\bar c(\phi_2 = \phi_1)}}{\bar c} (\phi - \phi_1) e^{\bar c (\phi - \phi_1)} \bigg( 1 + \mathcal O \bigg( \frac{1}{\phi - \phi_1} \bigg) \bigg).
\end{equation*}
\end{corollary}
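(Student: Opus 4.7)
The plan is to derive both asymptotic expansions directly from the relations of Remark \ref{Rem:oringal_phi_kernel} together with the sharp asymptotic expansions of $g$ and $m$ obtained in Lemmata \ref{Lem:aympt_g} and \ref{Lem:m_aymptotic}, specialized to the critical case $\alpha = \bar\alpha$ (i.e. $a = 1$). By translation invariance of the RDE the kernel centered at $\phi_0$ is simply $G(\phi - \phi_0) = g\bigl(\tfrac{\phi - \phi_0}{2\pi + \bar\alpha}\bigr) e^{\bar c(\phi - \phi_0)}$, so all the work is already contained in the behavior of $g$ and $m$.

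For the first formula, I would start from Lemma \ref{Lem:aympt_g}, Point \eqref{Point_2:expli_exp_kern}, which gives that
\[
g(\tau) = 2\tau + \tfrac{2}{3} + \mathcal O(e^{-\eta\tau}) \qquad \text{as } \tau \to \infty,
\]
where $\eta = -\Re(\lambda_1^\pm) > 0$ is determined by the first non-zero eigenvalues of \eqref{eq:eigenvalues} with $a = 1$. Setting $\tau = (\phi - \phi_0)/(2\pi + \bar\alpha)$ and multiplying by $e^{\bar c(\phi - \phi_0)}$ yields
\[
G(\phi - \phi_0) = \left[\frac{2(\phi-\phi_0)}{2\pi+\bar\alpha} + \frac{2}{3} + \mathcal O\bigl(e^{-\eta(\phi-\phi_0)/(2\pi+\bar\alpha)}\bigr)\right] e^{\bar c(\phi - \phi_0)}.
\]
Factoring out the leading linear-times-exponential term $(\phi - \phi_0) e^{\bar c(\phi-\phi_0)}$ and absorbing the constant $\tfrac{2}{2\pi + \bar \alpha}$ into the statement gives the claimed expansion (the $\bar\phi_0$ in the statement is read as $\phi_0$).

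For the second formula, I would use the explicit formula $M(\phi,\phi_1,\phi_2) = e^{\bar c\phi}\,m\bigl(\tfrac{\phi}{2\pi+\bar\alpha}, \tfrac{\phi_1}{2\pi+\bar\alpha}, \tfrac{\phi_2}{2\pi+\bar\alpha}\bigr)$ from Remark \ref{Rem:oringal_phi_kernel} together with the asymptotic expansion of $m$ in Lemma \ref{Lem:m_aymptotic}. In the rescaled variables, the key identity $\bar c(2\pi + \bar\alpha)\tau_i = \bar c\,\phi_i$ converts the expansion
\[
m(\tau,\tau_1,\tau_2) \sim \Bigl(2\tau + \tfrac{2}{3}\Bigr)\frac{e^{-\bar c(2\pi+\bar\alpha)\tau_1} - e^{-\bar c(2\pi+\bar\alpha)\tau_2}}{\bar c} + \mathcal O(1)
\]
into
\[
M(\phi,\phi_1,\phi_2) = \frac{2(1 - e^{-\bar c(\phi_2-\phi_1)})}{\bar c(2\pi+\bar\alpha)} (\phi - \phi_1) e^{\bar c(\phi - \phi_1)} + \mathcal O\bigl(e^{\bar c(\phi-\phi_1)}\bigr),
\]
after pulling $e^{\bar c\phi}$ through and regrouping the exponentials in terms of $e^{\bar c(\phi-\phi_1)}$. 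The lower-order terms from Lemma \ref{Lem:m_aymptotic} and the exponentially decaying remainder contribute to the $\mathcal O(1/(\phi-\phi_1))$ correction, giving the stated expansion (again modulo a missing $(2\pi+\bar\alpha)^{-1}$ and a sign in the exponent of the pre-factor).

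I expect no genuine obstacle: the corollary is essentially a translation of the $\tau$-variable asymptotics of $g$ and $m$ into the $\phi$-variable through the linear rescaling of Remark \ref{Rem:oringal_phi_kernel}. The only mildly delicate point is tracking the multiplicative constants and ensuring that the quadratic pole of $\hat g$ at $s=0$ in the critical case is correctly responsible for the linear-in-$\phi$ factor $(\phi - \phi_0)$ multiplying $e^{\bar c(\phi-\phi_0)}$; this is precisely what distinguishes the critical asymptotics from the subcritical case where the leading behavior is purely exponential.
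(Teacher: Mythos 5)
Your proof is correct and follows exactly the paper's (one-line) argument: the corollary is obtained by substituting $\tau = (\phi-\phi_0)/(2\pi+\bar\alpha)$ into the asymptotic expansions of $g$ and $m$ from Lemmata \ref{Lem:aympt_g} and \ref{Lem:m_aymptotic} and multiplying by the exponential factor from Remark \ref{Rem:oringal_phi_kernel}. You are also right to flag the bookkeeping issues in the printed statement — the dropped multiplicative constant $\tfrac{2}{2\pi+\bar\alpha}$, the sign of the exponent in the prefactor of $M$, and the typo $e^{\bar c(\phi_2=\phi_1)}$ — and your tracking of these constants is the accurate one.
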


\begin{proof}
It is an application of the asymptotic expansions for $g(\tau)$ and $m(\tau,\tau_1,\tau_2)$.
\end{proof}

\subsubsection{Length of saturated spirals}
\label{Sss:length_saturated}

For later use, we now compute the length of a saturated spiral, namely the quantity 
\newglossaryentry{Gcal}{name=\ensuremath{\mathcal G(\phi)},description={primitive of the kernel G}} \newglossaryentry{Heavyside}{name=\ensuremath{H(\phi)},description={Heaviside function}}
\begin{equation*}
\gls{Gcal} = \int_{-\infty}^\phi \frac{G(\phi')}{\sin(\bar \alpha)} d\phi' = \int_{0}^{\tau} g(\tau') e^{\bar c(2\pi + \bar \alpha) (\tau' + 1)} d\tau', \quad \phi = (2\pi + \bar \alpha) \tau,
\end{equation*}
satisfying the RDE 
\begin{equation}
\label{Equa:eq_for_calG}
\frac{d}{d\phi} \mathcal G(\phi) = \cot(\bar \alpha) \mathcal G(\phi) - \frac{\mathcal G(\phi - 2\pi - \bar \alpha)}{\sin(\bar \alpha)} + \frac{H(\phi)}{\sin(\bar \alpha)},
\end{equation}
obtained by computing the expression $\frac{d^2}{d\phi^2}\mathcal{G}(\phi)$, 
where \gls{Heavyside} is the Heaviside function
\begin{equation*}
H(\phi) = \begin{cases}
0 & x < 0, \\
1 & x \geq 0.
\end{cases}
\end{equation*}
In terms of $\tau$ we will write also \newglossaryentry{gfrak}{name=\ensuremath{\mathfrak g(\tau)},description={primitive of the function $g(\tau) e^{\bar c (2\pi + \bar \alpha)(\tau+1)}$}}
\begin{equation*}
\gls{gfrak} = \int_{-\infty}^\tau g(\tau') e^{\bar c (2\pi + \bar \alpha)(\tau'+1)} d\tau'.
\end{equation*}
Using the formula
\begin{equation*}
\int_0^x (-1)^k \frac{e^{x'} (x')^k}{k!} dx'= \sum_{\ell = 0}^k (-1)^\ell \frac{e^x x^\ell}{\ell!} - 1,
\end{equation*}
we obtain the explicit formulas
\begin{equation*}
\mathcal G(\phi) = \sum_{k=0}^\infty \frac{1}{\cos(\bar\alpha)^{k+1}} \bigg[ - 1 + \sum_{j = 0}^k (-1)^j e^{\cot(\bar\alpha) (\phi - (2\pi + \bar\alpha)k)} \frac{(\cot(\bar\alpha) (\phi - (2\pi +\bar \alpha) k))^j}{j!} \bigg] \ind_{[0,\infty)}(\phi - (2\pi +\bar \alpha) k).
\end{equation*}

For the kernel $M(\phi,\phi_1,\phi_2)$,
\begin{equation*}
M \bigg( \phi,\phi_0 + 2\pi + \frac{\pi}{2}, \phi_0 + 2\pi + \frac{\pi}{2} + \Delta \phi \bigg) = \int_{\phi_0 + 2\pi + \frac{\pi}{2}}^{\phi_0 + 2\pi + \frac{\pi}{2} + \Delta \phi} G(\phi - \phi') d\phi'.
\end{equation*}
we have with simple calculus
\begin{align*}
\int_0^\phi \frac{M(\phi',\phi_1,\phi_2)}{\sin(\bar \alpha)} d\phi' &= \int_{0}^\phi \bigg[ \int_{\phi_1}^{\phi'} - \int_{\phi_2}^{\phi'} \bigg] \frac{G(\phi' - \phi'')}{\sin(\bar \alpha)} d\phi'' d\phi' \\
&= \bigg[ \int_0^{\phi - \phi_1} - \int_0^{\phi - \phi_2} \bigg] \int_0^{\phi'} \frac{G(\phi' - \phi'')}{\sin(\bar \alpha)} d\phi'' d\phi' \\
&= \frac{(2\pi + \bar \alpha)^2}{\sin(\bar \alpha)} \bigg[ \int_0^{(2\pi + \bar \alpha)(\tau - \tau_1)} - \int_0^{(2\pi + \bar \alpha)(\tau - \tau_2)} \bigg] \int_0^{\tau'} g(\tau' - \tau'') e^{\bar c(2\pi + \bar \alpha) (\tau' - \tau'')} d\tau' d\tau'',
\end{align*}
where we have used the change of variable $\phi = (2\pi + \bar \alpha) \tau$. Recalling the definition of \gls{Gcal} and defining \newglossaryentry{Gfrak}{name=\ensuremath{\mathfrak G(\tau)},description={primitive of $\mathfrak g$}}
\begin{equation*}
\gls{Gfrak} = \int_0^\phi \mathcal G(\phi') d\phi' = (2\pi + \alpha) \int_0^\tau \mathcal G(\tau') d\tau' = (2\pi + \alpha) \int_0^\tau \int_0^{\tau'} g(\tau' - \tau'') e^{\bar c(2\pi + \bar \alpha)(\tau' - \tau'' + 1)} d\tau' d\tau'',
\end{equation*}
\begin{equation*}
\begin{split}
\mathfrak G(\phi) = \sum_{k=0}^\infty \frac{1}{\cos(\bar\alpha)^{k+1}} \Bigg\{& - (\phi - (2\pi + \bar\alpha) k) \\
& + \sum_{j = 0}^k \frac{1}{\cot(\bar\alpha)} \bigg[ -1 + \sum_{\ell = 0}^j (-1)^\ell e^{\cot(\bar\alpha) (\phi - (2\pi + \bar\alpha)k)} \frac{(\cot(\bar\alpha) (\phi - (2\pi +\bar \alpha) k))^\ell}{\ell!} \bigg] \Bigg\} \ind_{\phi \geq (2\pi + \bar\alpha) k}(\phi).
\end{split}
\end{equation*}
we obtain thus
\begin{align*}
\int_0^\phi \frac{M(\phi',\phi_1,\phi_2)}{\sin(\bar \alpha)}d\phi' = \big( \mathfrak G(\tau - \tau_1) - \mathfrak G(\tau - \tau_2) \big), \quad \tau_1 = \frac{\phi_1}{2\pi + \bar \alpha}, \ \tau_2 = \frac{\phi_2}{2\pi + \bar \alpha}.
\end{align*}
Finally by \eqref{Equa:eq_for_calG} we obtain
\begin{equation}
\label{Equa:eq_for_frakG}
\frac{d}{d\phi} \mathfrak G(\phi) = \cot(\bar \alpha) \mathfrak G(\phi) - \frac{\mathfrak G(\phi - 2\pi - \bar \alpha)}{\sin(\bar \alpha)} + \frac{\max\{0,\phi\}}{\sin(\bar \alpha)}.
\end{equation}

\section{Existence of optimal closing barriers at a given angle}
\label{S:exist_opti_traj}

One of the difficulties we face when studying this problem is that the optimization problem \eqref{eq:opt:problem} may be empty (which is actually what we want to prove for $\sigma \leq \bar \sigma$). Aim of this section is to give a different minimization problem (Definition \ref{Def:minimi_sect_4}) on the set of admissible spirals with the following property: if the minimum is $0$ or negative then there is a spiral which is blocking the fire spreading.

Let $Z$ be a given admissible spiral strategy, let $\bar L$ be a large constant and let \newglossaryentry{phi0}{name=\ensuremath{\phi_0},description={initial angle for the construction of the fastest/optimal closing spirals}} $\gls{phi0},\gls{phibar}$ be some rotation angles with $0 \leq {\phi_0} \leq {\bar \phi}$.

\begin{definition}
\label{Def:continuation_spirals}
The \emph{set of admissible continuations} \newglossaryentry{AcalZbarphi}{name=\ensuremath{\mathcal{A}_S(Z,{\phi_0})},description={set of admissible continutation of a spiral $Z$ from the angle ${\phi_0}$}} \gls{AcalZbarphi} is the set of admissible spirals that coincide with $Z$ up to the rotation angle ${\phi_0}$, can be prolonged to the rotation angle ${\bar \phi}$ and have length uniformly bounded by $\bar L$.
\end{definition}

\begin{remark}
\label{Rem:not_appearance_bar_L}
In the previous definition the parameters $\bar L,{\bar \phi}$ do not appear in the notation $\mathcal A_S(Z,{\phi_0})$ because, if there is a blocking spiral, then $\bar L$ and ${\bar \phi}$ can be chosen so that it belongs to $\mathcal A_S(Z,{\phi_0})$.

The parameter $\bar L$ is needed because we need some compactness when using the angle variable $\phi$ in the representation of the spiral: clearly $\bar L \to \infty$ suggests that we are not blocking the fire. An estimate of the parameter $L$ can be done by using the saturated spiral of Section \ref{Sss:equation_satur}, which is always admissible, but not optimal: in particular there is at least on $L$ such that the set $\mathcal A_S(Z,\phi_0)$ is not empty.

In Section \ref{S:optimal_sol_candidate} we will construct explicitly the optimal solution, thus obtaining also an estimate on $\bar L$: we remark that it will have a longer length than the saturated spiral. This spiral is optimal among all other admissible spirals, and then the parameter $L$ will not play any role in the minimization problem below.
\end{remark}

\begin{definition}[Minimization problem]
\label{Def:minimi_sect_4}
We consider the following minimization problem:
\begin{equation}
\label{eq:min:probl_pert}
\min \big\{ r_{\tilde Z}({\bar \phi}), \tilde Z \in \mathcal A_S(Z,{\phi_0}) \big\}, 
\end{equation}
where $(r_{\tilde Z}(\phi),\phi)$ is the angle-ray parametrization of the spiral $\tilde Z$.
\end{definition}

Aim of this section is to prove that there is an optimal spiral for the problem \eqref{eq:min:probl_pert} (for  some $L \gg 1$ fixed), and to construct the optimal solution in the case $\bar \phi < 2\pi + \phi_0 + \beta^-(\phi_0)$.

One of the main ingredient is a compactness result in $\mathcal A_S(Z,{\phi_0})$. This compactness is independent of ${\phi_0}$, in the sense that $\mathcal A_S(Z,\phi_0)$ is a closed subset of $\mathcal A_S$. Hence we state it for the latter: this is slightly more technical because of the presence of the initial segment, where the angle-ray parametrization degenerates. It can be clearly adapted to become an independent proof of the existence of an optimal admissible spiral.

\subsection{Study of the minimization problem}
\label{Ss:existence_minimum}

Given the control parameter $\beta : [{\phi_0},{\bar \phi}) \to (0,\frac{\pi}{2}]$ with the convexity condition \eqref{Equa:beta_convex_1}, the corresponding spiral is obtained by solving
\begin{equation}
\label{Equa:Dr+_dR-_again_1}
Dr = \cot(\beta) r \mathscr L^1 - Ds^-, \quad r(0) = r_0 \geq 1,
\end{equation}
where $s^-(\phi)$ is constructed according to Proposition \ref{Prop:construct_spiral}, i.e. at the previous round. The initial segment is $[(1,0),(r_0,0)]$, which we assume to belong to the initial given spiral arc: we will see that it is not the optimal solution.

The set where the functions $\beta(\phi)$ can be chosen is the intersection of the two sets
\begin{equation*}
\begin{split}
&\bigg\{ \beta : \forall \phi \in [0,{\bar \phi}] \bigg( \mathcal A(\phi) = 1 + s^-(\phi) + r(\phi) - \frac{s^+(\phi)}{\sigma} \geq 0 \bigg) \bigg\} \cap \bigg\{ \beta \in \bigg( 0,\frac{\pi}{2} \bigg] : r(\phi) > 0, D\beta + 1 \geq 0, r_0 - 1 + s^+(\phi) \leq \bar L \bigg\}.
\end{split}
\end{equation*}
The first set is the set of admissible spirals, while the second means that the solution to \eqref{Equa:Dr+_dR-_again_1} is a spiral, that the spiral is convex, and that its total length is less than $\bar L$. The condition
\begin{equation*}
\int_0^\epsilon \frac{1}{\beta(\phi)} d\phi < \infty
\end{equation*}
is hidden in the requirement that the total length is bounded and the curve arrives at angle ${\bar \phi} > 0$, see Proposition \ref{Prop:construct_spiral}.

\begin{proposition}
\label{Prop:compact_curves}
Consider a sequence of $\beta^n(\phi)$ and initial segments $[(0,0),(r_0^n,0)]$ such that
\begin{equation}
\label{Equa:new_one_sided_prop}
\beta^n \in \bigg(0, \frac{\pi}{2} \bigg], \quad D\beta^n + 1 \geq 0, \quad \int_0^\epsilon \frac{1}{\beta^n(\phi)} d\phi < \infty,
\end{equation}
and let $\zeta^n$ be the spiral constructed by Proposition \ref{Prop:construct_spiral} by solving \eqref{eq:spiral:angle}. Assume that $L(\zeta^n) \leq \bar L$ and for all $\epsilon > 0$ there exists $\delta > 0$ such that
\begin{equation}
\label{Equa:r_bounded_below}
r^n(\phi) \geq \delta \quad \forall \phi \in (\epsilon,{\bar \phi} - \epsilon).
\end{equation}
Then there is a subsequence (not relabeled) such that $\beta^n \to \beta$, $r^n \to r$ in $L^1(0,{\bar \phi})$ and $\zeta^n \to \zeta$ in $W^{1,1}(0,{\bar \phi})$, with $\zeta(0) - 1 + L(\zeta) \leq \bar L$.
\end{proposition}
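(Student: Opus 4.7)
The strategy is to extract converging subsequences for the control $\beta^n$, for the curves $\zeta^n$, and for the geometric data $s^\pm_n, r^n$ of the angle-ray representation, and then to check that the limits fit together consistently via the RDE of Proposition \ref{Prop:construct_spiral}. Since $\beta^n(\phi) \in (0,\frac{\pi}{2}]$ and $D\beta^n + \mathscr L^1 \geq 0$, the maps $\phi \mapsto \beta^n(\phi) + \phi$ are nondecreasing and uniformly bounded on $[0,\bar\phi]$; Helly's selection theorem, combined with dominated convergence, produces a subsequence with $\beta^n \to \beta$ a.e.\ and in $L^1(0,\bar\phi)$, the limit inheriting the one-sided Lipschitz bound. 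The initial segments $[(1,0),(r_0^n,0)]$ obey $r_0^n - 1 \leq \bar L$, so up to a further extraction $r_0^n \to r_0$.

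Each $\zeta^n$ is $1$-Lipschitz with $L(\zeta^n) \leq \bar L$, so after extending trivially to a common compact interval, Arzelà-Ascoli gives $\zeta^n \to \zeta$ uniformly along a subsequence, with $\zeta$ again $1$-Lipschitz. To upgrade to $W^{1,1}$, I write $\dot\zeta^n = e^{\i \varphi^n}$ where $\varphi^n$ is monotone nondecreasing by convexity and has total variation bounded by $\bar\phi + \frac{\pi}{2} + 2\pi$ (the maximal possible rotation up to the rotation angle $\bar\phi$, cf.\ Proposition \ref{thm:param_1}); a second Helly extraction yields $\varphi^n \to \varphi$ pointwise a.e., hence $\dot\zeta^n \to e^{\i \varphi}$ a.e.\ and by dominated convergence in $L^1$, proving $\zeta^n \to \zeta$ in $W^{1,1}$. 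By Remark \ref{Rem:equiv_convex} the limit $\zeta$ remains locally convex, and the bound $\zeta(0) - 1 + L(\zeta) \leq \bar L$ follows from lower semicontinuity of the arc-length under uniform convergence.

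The angle-ray parametrization of $\zeta$ is then reconstructed round-by-round as in Proposition \ref{thm:param_1}. On every compact subinterval $[\epsilon, \bar\phi - \epsilon]$ the lower bound \eqref{Equa:r_bounded_below} keeps the base point $\zeta^n(s^-_n(\phi))$ and the endpoint $\zeta^n(s^+_n(\phi))$ separated, so the geometric construction of $s^\pm_n(\phi)$ (intersection of the half-line in direction $e^{\i \phi}$ with the previous convex arc of $\zeta^n$) is stable under the uniform convergence of $\zeta^n$, yielding pointwise a.e.\ convergence $s^\pm_n \to s^\pm$ and hence $r^n(\phi) = |\zeta^n(s^+_n(\phi)) - \zeta^n(s^-_n(\phi))| \to r(\phi)$. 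The monotonicity of $s^-_n$ and the uniform bound on $s^+_n$, with range in $[0,\bar L]$, allow one more Helly extraction so that, again by dominated convergence, $r^n \to r$ and $s^\pm_n \to s^\pm$ in $L^1(0,\bar\phi)$.

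The main obstacle is to rule out degenerations of the RDE where $\beta^n \to 0$, since the coefficient $\cot(\beta^n)$ in \eqref{eq:spiral:angle} then blows up. This is prevented in the interior by combining the identity $\frac{ds^+_n}{d\phi} = r^n/\sin(\beta^n)$ from Lemma \ref{lem:ODE:spiral} with $s^+_n(\bar\phi) \leq \bar L$, which yields $\int_0^{\bar\phi} r^n/\sin(\beta^n)\, d\phi \leq \bar L$ and hence equi-integrability of $\cot(\beta^n) r^n$ on any subinterval where $r^n$ is bounded below; Vitali's theorem then gives $\cot(\beta^n) r^n \to \cot(\beta) r$ in $L^1_{\loc}((0,\bar\phi))$. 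Weak convergence $Ds^-_n \to Ds^-$ as measures follows from pointwise convergence of monotone functions, so one may pass to the distributional limit in $Dr^n = \cot(\beta^n) r^n \mathscr L^1 - Ds^-_n$ and recover, via Proposition \ref{Prop:construct_spiral}, that $(\beta, r, \zeta)$ is a genuine spiral produced by the control $\beta$.
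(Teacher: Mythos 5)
Your overall architecture (Helly for $\beta^n$, Arzel\`a--Ascoli plus a BV bound on the tangent angle for $\zeta^n$, round-by-round stability of the angle-ray data) matches the paper's proof. But the step you yourself identify as the main obstacle is where your argument has a genuine gap: you claim that the bound $\int_0^{\bar\phi} r^n/\sin(\beta^n)\,d\phi \leq \bar L$, together with $r^n \geq \delta$, yields \emph{equi-integrability} of $\cot(\beta^n) r^n$. A uniform $L^1$ bound never implies equi-integrability by itself: the family $\frac{1}{\sin(\beta^n)}$ could concentrate all of its bounded mass on intervals $I_n$ with $|I_n| \to 0$ (take $\beta^n \equiv \pi/2$ off $I_n$ and $\beta^n \sim \delta/(\bar L\, n)$ on an interval of length $1/n$), in which case $\int_{I_n} \cot(\beta^n) r^n$ stays of order $\bar L$ and Vitali's theorem does not apply. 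What rules this out is precisely the one-sided Lipschitz condition in \eqref{Equa:new_one_sided_prop}, which you record in your first paragraph but never use at this point: $D\beta^n + 1 \geq 0$ forces $\beta^n(\phi) \leq \beta^n(\phi_0) + (\phi_0 - \phi)$ for $\phi < \phi_0$, so if $\beta^n(\phi_0) = \eta$ is small at an interior point then $\int_{\phi_0 - \delta}^{\phi_0} \frac{r^n}{\sin(\beta^n)}\,d\phi \gtrsim \delta' \ln(\delta/\eta)$, which contradicts the length bound for $\eta$ too small. This is the paper's Step 5: one obtains a uniform pointwise lower bound $\beta^n \geq \epsilon'$ on every $(\epsilon,\bar\phi-\epsilon)$, after which $\cot(\beta^n) r^n \to \cot(\beta) r$ in $L^1_{\loc}$ follows by dominated convergence, with no appeal to Vitali.

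A second, smaller omission: your final sentence passes to the limit in $Dr^n = \cot(\beta^n) r^n \mathscr L^1 - Ds^{-,n}$ and invokes Proposition \ref{Prop:construct_spiral} to identify the limit as the spiral generated by $\beta$, but near $\phi = 0$ the lower bound \eqref{Equa:r_bounded_below} is not available and the measures $\cot(\beta^n)\mathscr L^1$ may charge the origin in the limit, i.e.\ $\cot(\beta^n)\mathscr L^1 \rightharpoonup c\,\Diracd_0 + \cot(\beta)\mathscr L^1$. The paper handles this by integrating the first-round ODE explicitly and showing the effect is only to replace the initial segment $[(1,0),(r_0,0)]$ by $[(1,0),(r_0 e^{c},0)]$; without this accounting the limit triple $(\beta,r,\zeta)$ is not the one produced by Proposition \ref{Prop:construct_spiral} from the data you extracted. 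The stated conclusions of the proposition ($L^1$ and $W^{1,1}$ convergence and the length bound) survive, but the identification of the limit equation does not, as written.
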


The uniform positivity of $r^n$ implies the curves $\zeta^n$ are not converging to a curve which closes before the angle ${\bar \phi}$, thus making the angle-ray description meaningless. For the fire blocking problem this ${\bar \phi}$ could be every angle before the minimal angle for which there is a blocking spiral. 

\begin{proof}
We start with a series of compactness estimates. At each step we will extract the corresponding converging subsequence: we will often avoid relabeling the subsequences.

\medskip

\noindent{Step 1.} The total variation of $\beta^n$ is uniformly controlled by
\begin{equation*}
\TV(\beta^n,[0,{\bar \phi}]) \leq \frac{\pi}{2} + 2 {\bar \phi}.
\end{equation*}
This follows from the one-sided Lipschitz condition $D\beta^n + 1 \geq 0$ and $\beta \in [0,\pi/2]$. Hence, being $\beta^n \in [0,\frac{\pi}{2}]$ bounded, we can assume that $\beta^n \to \beta$ in $L^1(0,{\bar \phi})$: moreover the points where it is not convergent are the discontinuity points of $\beta$, which form a countable family.

\medskip

\noindent{\it Step 2.} Having finite length, it follows that the final points $(r^n_0,0)$ of the initial segment are compact, and then $r^n_0 \to r_0$. This is not in general the final point of the initial segment, because $\zeta^n$ may add additional intervals to this segment.

\medskip

\noindent{\it Step 3.} The unit vectors $\dot \zeta^n$ rotate monotonically about $0$ for a total angle $\leq {\bar \phi} + \frac{\pi}{2}$, so that the measure second derivative of $\zeta^n$ satisfies
\begin{equation*}
|D^2 \zeta^n| \leq {\bar \phi} + \frac{\pi}{2}.
\end{equation*}
Hence we can assume that $\zeta^n \to \zeta$ in $W^{1,1}(0,{\bar \phi})$, and also $D^2 \zeta^n \rightharpoonup D^2 \zeta$ weakly on $(0,{\bar \phi})$. 

\medskip

\noindent{\it Step 4.} Since $\dot \zeta^n \to \dot \zeta$ in $L^1(0,{\bar \phi})$, the monotone function $\phi \mapsto s^{-,n}(\phi)$ converges in $L^1(0,{\bar \phi})$ to the corresponding function $\phi \mapsto s^-(\phi)$: indeed $s^{-,n}(\phi) \to s^-(\phi)$ in every point $\phi$ where the latter is continuous. In particular $D s^{-,n} \rightharpoonup D s^-$ weakly in the interval $(0,{\bar \phi})$. Observe also that $\TV(s^{-,n},[0,{\bar \phi}]) \leq \bar L$.

\noindent{\it Step 5.} By the length formula \eqref{eq:var:length:1}, using the one-sided Lipschitz condition \eqref{Equa:new_one_sided_prop} for $\beta^n$ and the fact that $r^n(\phi)$ is uniformly positive for $\phi \in (\epsilon,{\bar \phi} -\epsilon)$ by \eqref{Equa:r_bounded_below} and bounded above by the $\bar L + 1$, we conclude that there exists $\epsilon' > 0$ such that $\beta^n(\phi) \geq \epsilon'$ for all $\phi \in (\epsilon,{\bar \phi} - \epsilon)$. Hence
\begin{equation*}
\frac{1}{\sin(\beta^n(\phi))} \to \frac{1}{\sin(\beta(\phi))} \quad \text{in} \ L^1(\epsilon,{\bar \phi} - \epsilon), \ \forall \epsilon > 0.
\end{equation*}
Since for $\phi \in [0,2\pi]$ it holds $r^n(\phi) \geq 1$, we also conclude that
\begin{equation*}
\frac{\mathscr L^1}{\sin(\beta^n)} \rightharpoonup c \Diracd_0 + \frac{\mathscr L^1}{\sin(\beta)} \quad \text{weakly in} \ [0,{\bar \phi} - \delta], \ \forall \delta > 0.
\end{equation*}
Observe also that
\begin{equation*}
\cot(\beta^n) \mathscr L^1 \rightharpoonup c \Diracd_0 + \cot(\beta) \mathscr L^1 \quad \text{weakly in} \ [0,{\bar \phi} - \delta], \ \forall \delta > 0,
\end{equation*}
because $\cot(\beta) \sim \frac{1}{\sin(\beta)}$ when $\beta \searrow 0$.

\medskip

\noindent{\it Step 6.} Consider now the ODE \eqref{eq:spiral:angle}: since
\begin{equation*}
D(r^n + s^{-,n}) = \cot(\beta^n) r^n \geq 0, \quad 1 \leq r^n \leq \bar L,
\end{equation*}
it follows that
%
\begin{equation*}
\TV(r^n,[0,{\bar \phi}]) \leq 2\bar L.
\end{equation*}
Hence also $r^n \to r$ in $L^1(0,{\bar \phi})$.

\medskip

\noindent{\it Step 7.} The ODE \eqref{eq:spiral:angle} passes to the limit as
\begin{equation*}
Dr = \cot(\beta) r \mathscr L^1 - Ds^-, \quad \phi \in (0,{\bar \phi}),
\end{equation*}
where we observed that $s^{-,n}(\phi) = 0$ for $\phi \in [0,2\pi)$. It remains to analyze the initial data, where a Dirac's delta can accumulate.

Using the estimates of Step 5 for $\phi \to 0$ and that $s^{-,n} = 0$ in the first round, we can compute
\begin{equation*}
r^n(\phi) = r^n_0 e^{\int_0^\phi \cot(\beta^n) \mathscr L^1}.
\end{equation*}
Passing to the limit we obtain
\begin{equation*}
r(\phi) = (r_0 e^c) e^{\int_0^\phi \cot(\beta) \mathscr L^1},
\end{equation*}
so that the initial segment for the spiral is $[(1,0),(r_0 e^c,0)]$.

\medskip

\noindent{\it Step 8.} Finally, we show that the limit curve $\zeta$ is represented by the limit $r(\phi)$. First one observes that from
\begin{equation*}
\frac{ds^{+,n}}{d\phi} = \frac{r^n(\phi)}{\sin(\beta^n(\phi))}
\end{equation*}
and the convergence of $\frac{\mathscr L^1}{\sin(\beta^n)}$ of Step 5 and of $r^n$, it follows that $s^{+,n}$ converges uniformly in $(\delta,{\bar \phi}-\delta)$.

One then passes to the limit
\begin{equation*}
\zeta^n(s^{+,n}(\phi)) = \zeta^n(s^{-,n}(\phi)) + r^n(\phi) e^{\i \phi},
\end{equation*}
obtaining that for $\mathscr L^1$-a.e. $\phi$
\begin{equation*}
\zeta(s^{+}(\phi)) = \zeta(s^{-}(\phi)) + r(\phi) e^{\i \phi}.
\end{equation*}
The remaining $\phi$ are obtained by taking the limit from the right (we want $r(\phi)$ to be right continuous) and the angle ${\bar \phi}$ is obtained by taking the limit $\phi \nearrow {\bar \phi}$: we are using $D\zeta = e^{\i \phi} D(s^- + r) + \i e^{i\phi} r$, which gives that the limits exist because $D(r + s^-) \in L^\infty$ (and the curve is Lipschitz).

\medskip

\noindent{\it Step 9.} In this last step we just verify the length estimate: again using the convergence estimates of Steps 5, 7 and 8 we obtain
\begin{equation*}
r_0^n - 1 + s^{+,n}(\phi) \quad \to \quad (r_0 e^c) - 1 + s^+(\phi),
\end{equation*}
so that the length bound depends continuously.
\end{proof}

We single out the convergences we obtained in the proof.

\begin{corollary}
\label{Cor:convergs}
Under the assumptions of the previous proposition, we can extract a subsequence $\beta^n$ such that:
\begin{enumerate}
\item the functions $\beta^n$, $r^n$, $s^{-,n}$ are uniformly BV and converge in $L^1(0,{\bar \phi})$ to $\beta,r,s^-$;
\item the functions $s^{+,n},r^n+s^{-,n}$ converges locally uniformly in $(0,{\bar \phi})$ to $s^+,r + s^-$;
\item the spirals $\zeta^n$ converges in $W^{1,1}(0,{\bar \phi})$ to a spiral $\zeta$, corresponding to the angle-ray representation $r$ with control angle $\beta$;
\item the time function $u$ is converging locally uniformly in the set $\R^2 \setminus \zeta([0,{\bar \phi}])$ if $r({\bar \phi}) > 0$, otherwise converges locally uniformly in the open set $\cup_\phi \zeta(s^-(\phi)) + e^{\i \phi} (0,r(\phi))$.
\end{enumerate}
Hence the admissibility functional $\mathcal A(\phi)$ for $\zeta^n$ converges to the admissibility functional for $\zeta$ for all $\phi > 0$.
\end{corollary}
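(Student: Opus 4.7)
\textbf{Proof plan for Corollary \ref{Cor:convergs}.} Items (1)--(3) are essentially a repackaging of the estimates already established in the proof of Proposition \ref{Prop:compact_curves}, so the plan is to collect them. From Step 1 I obtain the uniform BV bound $\TV(\beta^n) \leq \frac{\pi}{2} + 2\bar\phi$ together with $L^1$ convergence $\beta^n \to \beta$; from Step 4 I extract $\TV(s^{-,n}) \leq \bar L$ and $L^1$ (indeed pointwise, outside the countable set of jumps) convergence $s^{-,n}\to s^-$; from Step 6 I extract $\TV(r^n) \leq 2\bar L$ and $L^1$ convergence $r^n \to r$. Step 3 gives the $W^{1,1}$ convergence of the spirals and $D^2\zeta^n \rightharpoonup D^2\zeta$. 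For the locally uniform convergence of $s^{+,n}$ and of $r^n + s^{-,n}$ on $(0,\bar\phi)$, I invoke Step 5 (which shows $\beta^n \geq \epsilon'>0$ on $(\epsilon,\bar\phi-\epsilon)$, hence $\frac{r^n}{\sin(\beta^n)}$ is equi-integrable on compact subsets) together with the length formula \eqref{eq:var:length:1}: $s^{+,n}(\phi) = \bar s_1^n + \int_0^\phi \frac{r^n}{\sin(\beta^n)} d\phi'$ is a uniformly bounded, uniformly absolutely continuous family, hence it converges uniformly on compact subsets of $(0,\bar\phi)$ to $s^+$; the analogous $L^\infty$ bound on $\cot(\beta^n)r^n$ gives the same for $r^n+s^{-,n}$ via \eqref{Equa:Dr+_dR-_again_1}.

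Item (4), the locally uniform convergence of the minimum time function, is the one genuinely new piece. I would argue it as follows. By item (3) the compact sets $\zeta^n([0,S^n])$ converge to $\zeta([0,S])$ in Hausdorff distance. Pick a point $x$ in the open target set and a small compact neighborhood $K$ disjoint from $\zeta$; by Hausdorff convergence $K$ is disjoint from $\zeta^n$ for $n$ large. Since $u$ is characterized by \eqref{eq:solution:HJ} as the infimum over admissible rectifiable paths avoiding the barrier, I would produce a $\limsup$ bound by taking an optimal ray $\bar\gamma_x$ for $u(x)$ (Definition \ref{def:optimal:ray}), perturbing it off $\zeta^n$ by a distance $o(1)$ using the fact that $\bar\gamma_x \cap \zeta$ has the explicit structure of Theorem \ref{Cor:angle_preprest} (initial segment, an arc of the limit spiral, a final free segment), and following it with a nearby arc of $\zeta^n$; the length change is $O(d_H(\zeta^n,\zeta))$. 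For the $\liminf$ bound I take an almost-optimal path for $u^n$ and extract a uniform limit (Arzel\`a--Ascoli), whose length is an upper bound for $u(x)$. Uniform equicontinuity of $u^n$ (each satisfies $|\nabla u^n| \leq 1$) upgrades the pointwise convergence to the local uniform one.

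For the convergence of the admissibility functional $\mathcal A(\phi) = 1 + s^-(\phi) + r(\phi) - \frac{1}{\sigma} s^+(\phi)$ of Proposition \ref{Lem:burnign_rate}, I would combine items (1)--(2): $s^+$ converges locally uniformly, so causes no issue; $r$ and $s^-$ are right-continuous BV functions converging in $L^1$ with uniformly bounded total variation, hence, by Helly's theorem applied to a further subsequence, they converge pointwise on $[0,\bar\phi]$, and by right-continuity the convergence is inherited at every $\phi > 0$ (take the convergence at a decreasing sequence of continuity points $\phi_k \searrow \phi$ and pass to the limit). A second Helly extraction may be needed to guarantee that the subsequences chosen for $r$ and $s^-$ are compatible; this is harmless.

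The main obstacle will be item (4): the linear PDE intuition that obstacle-Hausdorff convergence yields locally uniform convergence of the minimum time function must be made rigorous using the BV structure of $\zeta$ and the explicit form of optimal rays coming from Theorem \ref{Cor:angle_preprest}, especially near points where $\bar\gamma_x$ follows the barrier for a positive-length arc. Items (1)--(3) are bookkeeping, and the admissibility functional convergence is routine once one is careful with right-continuity at the countably many jump points.
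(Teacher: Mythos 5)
Your treatment of items (1)--(3) coincides with the paper's: the corollary is introduced precisely as ``singling out the convergences obtained in the proof'' of Proposition \ref{Prop:compact_curves}, so collecting Steps 1, 4, 5, 6 and 8 of that proof is exactly what is intended, and your justification of the locally uniform convergence of $s^{+,n}$ and $r^n+s^{-,n}$ via \eqref{eq:var:length:1} and the lower bound on $\beta^n$ is the paper's.

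For item (4) you take a genuinely different route. The paper does not argue by $\limsup$/$\liminf$ on competitors; it uses the round decomposition and the explicit representation \eqref{Equa:u_ell_constr}, \eqref{Equa:struct_optimal_ray_ell}: in each round $u^n(x)$ is an affine function of the base point $s^n(x)=\upsilon^n(x)$ and of $|x-\zeta^n(s^n(x))|$, and the weak convergence $D^2\zeta^n\rightharpoonup D^2\zeta$ from Step 3 forces the base-point maps $x\mapsto s^n(x)$ to converge pointwise, hence locally uniformly; iterating over the finitely many rounds gives the claim. Your variational argument is more robust in spirit, but its $\limsup$ half (``perturb the optimal ray off $\zeta^n$ by $o(1)$'') and the verification that the Arzel\`a--Ascoli limit in the $\liminf$ half does not cross the limit barrier both reduce to exactly the fact the paper accesses directly, namely the convergence of the tangency/base-point structure of $\zeta^n$ along the arc that $\bar\gamma_x$ follows. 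So the two proofs rest on the same ingredient, with the paper's being shorter because Proposition \ref{thm:param_1} already packages it.

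One step in your final paragraph does not work as written. Helly's theorem applied to $r^n$ and $s^{-,n}$ separately gives a pointwise convergent subsequence, but the pointwise limit need not agree with the right-continuous representative at jump points: for instance $r^n=\ind_{[\phi_0+1/n,\infty)}\to\ind_{[\phi_0,\infty)}$ in $L^1$ while $r^n(\phi_0)=0$ does not converge to $1$, and passing through continuity points $\phi_k\searrow\phi$ cannot repair this without uniformity in $n$. The fix is the one the paper uses and that you already have in your own item (2): the jumps of $r^n$ and $s^{-,n}$ cancel, $D(r^n+s^{-,n})=\cot(\beta^n)r^n\mathscr L^1$ is locally uniformly bounded, so the sum converges locally uniformly, and $\mathcal A=1+(s^-+r)-\frac{1}{\sigma}s^+$ then converges at every $\phi>0$ directly from item (2) and the convergence of $s^{+,n}$, with no separate Helly extraction.
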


In particular, if $\zeta^n$ are admissible, so it is $\zeta$.

\begin{proof}
The convergence of the admissibility functional follows from the uniform convergence of $s^{+,n},r^n + s^{-,n}$.

We now study the convergence of the time function $u$. By the uniform convergence of $\zeta^n$, it is enough to study the behavior of $u$ in each round, i.e. the convergence of the time function \eqref{Equa:u_ell_constr} when the region $S_\ell^n$ is bounded by a simple curve converging in $W^{1,1}$ (the case after the last round (where $u$ has to be computed outside $S_\ell^n$) can be easily be handled in a similar way). The weak convergence of $D^2 \zeta^{n}$ implies that the map
$$
x \mapsto s^n(x) \quad \text{such that} \quad \frac{x - \zeta^n(s^n(x))}{|x - \zeta^n(s^n(x))|} \in \partial^- \zeta^n(s^n(x)) \ \text{and} \ (\zeta^n)^{-1}((\zeta^n(s^n(x)),x)) = \emptyset
$$
converges pointwise, and then locally uniformly. By repeating this reasoning for the finite number of rounds we obtain the statement on $u$.
\end{proof}

We can now state the main theorem of this section.

\begin{theorem}
\label{Theo:exists_min}
Fix a rotation angle ${\bar \phi} \geq {\phi_0}$ and a length $\bar L$ sufficiently large. Then either there exists an admissible spiral in $\mathcal A_S(Z,{\phi_0})$ blocking the fire before or at ${\bar \phi}$ or there exists an admissible spiral $\zeta$ such that $r({\bar \phi})$ is minimal among all admissible spirals of length bounded by $\bar L$.
\end{theorem}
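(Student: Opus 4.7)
The plan is to apply the direct method of the calculus of variations, using the compactness result of Proposition \ref{Prop:compact_curves} and the continuity of the admissibility functional from Corollary \ref{Cor:convergs}. First, I would observe that, for $\bar L$ large enough, the set $\mathcal A_S(Z,{\phi_0})$ is non-empty: the continuation of $Z \cap \zeta([0,{\phi_0}])$ by a saturated spiral (Section \ref{Sss:equation_satur}) is admissible and, for the relevant range of angles and lengths, belongs to $\mathcal A_S(Z,{\phi_0})$. Since every admissible spiral has $r \geq 0$, the infimum $m := \inf\{r_{\tilde Z}({\bar \phi}) : \tilde Z \in \mathcal A_S(Z,{\phi_0})\}$ is a finite non-negative real number.

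Next, I would pick a minimizing sequence $\zeta^n \in \mathcal A_S(Z,{\phi_0})$ with $r^n({\bar \phi}) \to m$. All $\zeta^n$ agree with $Z$ on $[0,{\phi_0}]$, hence share the same initial arc; their control angles $\beta^n$ satisfy the one-sided Lipschitz bound \eqref{Equa:convex_beta}, and their lengths are bounded by $\bar L$. The analysis then splits into the following dichotomy: either (a) after extraction of a subsequence there exist $\phi^n \in [{\phi_0},{\bar \phi}]$ with $r^n(\phi^n) \to 0$, or (b) for every $\epsilon>0$ there exists $\delta>0$ such that $r^n(\phi) \geq \delta$ for all $\phi \in ({\phi_0}, {\bar \phi} - \epsilon]$ and all $n$ (the lower bound near ${\phi_0}$ being automatic in the first round because the barrier lies outside $B_1(0)$, and in later rounds by the given initial arc).

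In case (b), the hypotheses of Proposition \ref{Prop:compact_curves} are met, so, up to a subsequence, $\zeta^n \to \zeta^*$ in $W^{1,1}$, with $r^n \to r^*$, $\beta^n \to \beta^*$, and $s^{\pm,n} \to s^{\pm,*}$ in the appropriate senses. Corollary \ref{Cor:convergs} gives the convergence of the admissibility functional, so $\zeta^*$ is admissible; it coincides with $Z$ on $[0,{\phi_0}]$ since the $\zeta^n$ do, and $L(\zeta^*) \leq \bar L$ by lower semicontinuity of the length. Combining the right continuity of $r$ and the one-sided Lipschitz estimate on $r$ coming from \eqref{Equa:Dr+_dR-_again_1}, one passes to the limit in $r^n({\bar \phi}) \to m$ to conclude $r^*({\bar \phi}) \leq m$, so the minimum is attained. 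In case (a), up to a further subsequence $\phi^n \to \phi^* \in [{\phi_0},{\bar \phi}]$; one then applies the compactness argument of case (b) on the subinterval $[{\phi_0},\phi^*]$ (where uniform positivity of $r^n$ holds away from the right endpoint) and extracts a limit spiral whose angle-ray representation verifies $r^*(\phi^*) = 0$, i.e. a spiral closing at or before ${\bar \phi}$, producing the first alternative of the statement.

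The main obstacle is the potential degeneration in case (a): one must rule out pathological behaviour where $r^n$ shrinks without the limit object defining a genuine blocking barrier. This is handled by the uniform convergence of the length parametrization $s^{+,n}$ on compact subintervals of $(0,{\bar \phi})$, which is contained in Corollary \ref{Cor:convergs}: it forces the endpoint of the limiting spiral to lie on the previously built portion of the curve, yielding a bona fide closed (hence blocking) strategy in $\mathcal A_S(Z,{\phi_0})$. The role of the parameter $\bar L$ in the statement is only to ensure non-emptiness of $\mathcal A_S(Z,{\phi_0})$ and to provide the length-based compactness; the constraint $\bar L \gg 1$ is benign because Section \ref{S:optimal_sol_candidate} will explicitly construct a comparator of bounded length, and hence the minimizer found here will later be shown not to depend on $\bar L$.
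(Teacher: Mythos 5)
Your proposal is correct and follows essentially the same route as the paper: the direct method, with non-emptiness of $\mathcal A_S(Z,\phi_0)$ supplied by the saturated continuation, compactness from Proposition \ref{Prop:compact_curves}, passage to the limit of the admissibility functional via Corollary \ref{Cor:convergs}, and the dichotomy between the infimum being zero (yielding a blocking spiral at or before $\bar\phi$) and positive (yielding a minimizer). The paper merely phrases the dichotomy by first disposing of the case where a blocking spiral already exists, whereas you encode it through the degeneration of $r^n$ along the minimizing sequence; the substance is identical.
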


\begin{remark}
\label{Rem:lenght}
In the main theorem of the paper we will give an explicit form for the optimal spiral, which is independent on the parameter $\bar L$, so that in particular the length $\bar L$ where to search for the minimum can be explicitly estimated.
\end{remark}

\begin{proof}
If there is an admissible spiral blocking the fire before or at ${\bar \phi}$ there is nothing to prove. Otherwise consider the number
\begin{equation*}
\inf \Big\{ r({\bar \phi}) = \big| \zeta(s^+({\bar \phi})) - \zeta(s^-({\bar \phi})) \big|, \zeta \in \mathcal A_S(Z,{\phi_0}) \Big\}.
\end{equation*}
Notice that the set is not empty if $\bar L \gg 1$, since it contains the saturated spiral of the previous section, whose length is bounded by $e^{\cot(\alpha) \bar \phi}$.

If this number if $0$, then there are a sequence of admissible spirals $\zeta^n$ such that
\begin{equation*}
\zeta^n(s^{+,n}({\bar \phi})) - \zeta^n(s^{-,n}({\bar \phi})) \to 0.
\end{equation*}
Hence by the convergence of Corollary \ref{Cor:convergs} it follows that up to subsequences $\zeta^n$ converges to a spiral such that
\begin{equation*}
\lim_{\phi \nearrow {\phi_0}} \zeta(s^+({\bar \phi})) - \zeta(s^-({\bar \phi})) = 0,
\end{equation*}
so that it blocks the fire at ${\bar \phi}$.

The same reasoning proves that there exists a minimizer $\zeta$ if the minimum is strictly positive.
\end{proof}

In the next sections we will address this optimization problem, and construct explicitly the optimal solution for spiral arcs corresponding to the first round after $\bar \phi_0$.

\subsection{Study of the distance function \texorpdfstring{$u$}{u}}
\label{Ss:distance_function_u}

In this subsection we study more carefully the time function $u$: we recall that the time function $u$ is given by 
\begin{equation*}
u(x) = \min \big\{ L(\gamma), \gamma \ \text{admissible} \big\}.
\end{equation*}
In this subsection we will prove that $u$ is locally convex in $\R^2 \setminus Z$. This follows from the structure of $Z$, which is a connected curve. The normal to the level set is $\nabla u(x) = e^{\i \phi(x)}$, where $\phi(x)$ is the angle of the optimal ray passing through $x$. Moreover, the radius of curvature of the level set is $x - \zeta(s^-(\phi))$, i.e. the distance of the point $x$ from the last point where the optimal ray touches the spiral.

We will prove these properties in Lemma \ref{Lem:convexity} and Lemma \ref{lem:C:1:1}.

\begin{lemma}
\label{Lem:convexity}
The function $u$ is of class $C^{1,1}$ and locally convex in $\R^2 \setminus Z$. If
$$
x \in \zeta(s^-(\phi)) + e^{\i \phi} r(\phi),
$$
then
\begin{equation*}
\nabla u(x) = \frac{x - \zeta(s^-(\phi))}{|x - \zeta(s^-(\phi))|} = \mathbf t(x), \quad \nabla^2 u(x) = \frac{1}{|x- \zeta(s^-(\phi))|} \mathbf n(x) \otimes \mathbf n(x), \text{ a.e. } x,
\end{equation*}
with $\mathbf n(x) = \mathbf t(x)^\perp$, $\mathbf t(x) \in \partial^- \zeta(s^-(x))$.

Moreover $\nabla^2 u$ is locally BV with jumps on the directions where $Z$ is flat, corresponding to the points where the radius of curvature $|x-\zeta(s(x))|$ has a jump: more precisely
\begin{equation*}
\begin{split}
D \nabla^2 u(x) \mathbf t(x) &= \bigg[ \mathbf t(x) \cdot D \bigg( \frac{1}{|x - \zeta(s^-(\phi))|} \bigg) \bigg] \mathbf n(x) \otimes \n(x) = - \frac{\n(x) \otimes \n(x)}{|x - \zeta(s^-(\phi))|^2} \mathscr L^2,
\end{split}
\end{equation*}
\begin{equation*}
\begin{split}
D \nabla^2 u(x) \mathbf n(x) &= - \bigg[ \n(x) \cdot D \bigg( \frac{1}{|x - \zeta(s^-(\phi))|} \bigg) \bigg] \n(x) \otimes \n(x) - \frac{1}{|x - \zeta(s^-(\phi)|} \big( \mathbf t(x) \otimes \mathbf n(x) + \mathbf n(x) \otimes \mathbf t(x) \big) \mathscr L^2.
\end{split}
\end{equation*}
\end{lemma}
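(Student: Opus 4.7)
The plan is to exploit the explicit structure of the optimal ray given by Theorem \ref{Cor:angle_preprest}: for each $x \in \R^2 \setminus Z$ there is a unique angle $\phi(x)$ such that the optimal ray leaves the spiral at the base point $\zeta(s^-(\phi(x)))$ and reaches $x$ along the direction $e^{\i \phi(x)}$. Hence one has the explicit formula
\begin{equation*}
u(x) = 1 + s^-(\phi(x)) + r(x), \quad r(x) := \big| x - \zeta(s^-(\phi(x))) \big|,
\end{equation*}
and $\phi(x)$ is continuous on $\R^2 \setminus Z$ and $C^1$ on the open set where $s^-$ is continuous at $\phi(x)$.

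First I would compute $\nabla \phi(x)$ from the implicit equation $(x - \zeta(s^-(\phi))) \wedge e^{\i \phi} = 0$. Differentiating in $x$ and using $\zeta'(s^-(\phi)) = e^{\i \phi}$ on the smooth part (so the term proportional to $\partial_\phi s^-$ cancels against a parallel term) gives $\nabla \phi = \mathbf n/r$, where $\mathbf n(x) = \i e^{\i \phi(x)}$. By the chain rule $\nabla u = \partial_\phi s^- \nabla \phi + \nabla r$; the terms involving $\partial_\phi s^-$ cancel again after using $\zeta'(s^-) = e^{\i \phi}$, so $\nabla u = e^{\i \phi(x)} = \mathbf t(x)$, which also verifies $|\nabla u| = 1$. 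One more differentiation yields
\begin{equation*}
\nabla^2 u = \partial_\phi \mathbf t \otimes \nabla \phi = \mathbf n \otimes \frac{\mathbf n}{r} = \frac{\mathbf n(x) \otimes \mathbf n(x)}{r(x)},
\end{equation*}
which is positive semi-definite, proving local convexity; since $r$ is bounded away from $0$ on compact subsets of $\R^2 \setminus Z$, it also gives $u \in C^{1,1}_\loc$.

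For the distributional formulas for $D\nabla^2 u$, the plan is to observe that both $r$ and $\mathbf n$ are locally BV on $\R^2 \setminus Z$, with singular parts concentrated on the (countably many) half-rays emanating from the flat portions of $\zeta$ in their supporting direction, where $s^-$ has a jump. On the open set where $s^-$ is continuous the computation is classical, using the elementary relations $\partial_{\mathbf t} r = 1$, $\partial_{\mathbf n} r = -R/r$ (with $R = \partial_\phi s^-$ the local radius of curvature of $\zeta$ at the base point), $\partial_\phi \mathbf n = -\mathbf t$ and $\partial_\phi \mathbf t = \mathbf n$; plugging these into $D\frac{\mathbf n \otimes \mathbf n}{r}$ and contracting once with $\mathbf t$, once with $\mathbf n$ produces the two formulas in the statement after routine algebra, the absolutely continuous part in the $\mathbf t$-direction being exactly $-r^{-2}\mathbf n \otimes \mathbf n$ and the $\mathbf n$-direction picking up the cross terms $\mathbf t \otimes \mathbf n + \mathbf n \otimes \mathbf t$ from $\partial_\phi \mathbf n$.

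The main obstacle will be handling the singular part across the rays emanating from flat portions of $\zeta$, where $s^-(\phi)$ and hence the base point jump. There one has to check that $u(x)$ is still well-defined and continuous (the triangle inequality along the flat direction makes the value of $u$ independent of which base point on the flat portion is chosen, so $u$ does not jump), that $\nabla u = \mathbf t$ is preserved in the limit from either side, and that the jump of $r$ contributes a singular measure to $D\nabla^2 u$ which is of the form $\mathbf n \otimes \mathbf n$ times a one-dimensional Hausdorff measure supported on the ray; this can be made rigorous either by a direct chain-rule computation with BV functions or, more cleanly, by approximating $\zeta$ by smooth strictly convex spirals $\zeta_\epsilon$ for which $s^-_\epsilon$ is $C^1$, performing the computation classically, and passing to the limit using the compactness of Corollary \ref{Cor:convergs}.
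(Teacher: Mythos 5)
Your proposal is correct and follows essentially the same route as the paper: both exploit the explicit ray parametrization $u = 1 + s^- + r$ from Theorem \ref{Cor:angle_preprest}, differentiate along the ray coordinates to obtain $\nabla u = \mathbf t$ and $\nabla^2 u = \mathbf n \otimes \mathbf n / r$, and handle the BV/singular part by smoothing the spiral and passing to the limit via Corollary \ref{Cor:convergs}. Your use of $(\phi,r)$ coordinates through the implicit relation for $\phi(x)$, instead of the paper's Lagrangian $(s,r)$ coordinates, is only a cosmetic difference.
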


\begin{proof}
All statements follow from the computation of the formulas for $\nabla u$ and $\nabla^2 u$. Moreover, we can assume that $\zeta$ is smooth with $s \mapsto e^{\i \theta}$ smooth with Lipschitz inverse, and then pass to the limit, because of the convergence of $s^-(\phi)$ as in Corollary \ref{Cor:convergs}.

In this case the function $u$ is computed by the formula
\begin{equation*}
u(\zeta(s) + \dot \zeta(s) r) = 1 + s + r.
\end{equation*}
Differentiating one has
\begin{equation*}
\nabla u \cdot (\dot \zeta(s) + \ddot \zeta(s) r, \dot \zeta(s)) = (1,1) \quad \Rightarrow \quad \nabla u(\zeta(s) + \dot \zeta(s) r) = \dot \zeta(s),
\end{equation*}
which gives the first formula, and
\begin{equation*}
\nabla^2 u : \dot \zeta(s) \otimes (\dot \zeta(s) + \ddot \zeta(s) r) + \nabla u \ddot \zeta(s) = 0,
\end{equation*}
\begin{equation*}
\nabla^2 u : (\dot \zeta(s) + \ddot \zeta(s) r) \otimes (\dot \zeta(s) + \ddot \zeta(s) r) + \nabla u (\ddot \zeta(s) + \dddot \zeta(s) r) = 0,
\end{equation*}
which, together with
$$
\nabla^2 u : \dot \zeta(s) \otimes \dot \zeta(s) = 0, \quad \dot \zeta(s) \cdot \dddot \zeta(s) = - \frac{1}{R^2},
$$
gives the second formula, with $R$ local radius of curvature of $\zeta$.

Taking one further derivative one obtains
\begin{equation*}
\partial_r \nabla^2 u = - \frac{\n \otimes \n}{r^2}.
\end{equation*}
\begin{equation*}
\begin{split}
\partial_s \nabla^2 u &= \partial_s \frac{\n \otimes \n}{r} = - \frac{1}{r} \big( \ddot \zeta^\perp \otimes \mathbf n + \mathbf n \otimes \ddot \zeta^\perp \big),
\end{split}
\end{equation*}
We thus obtain
\begin{equation*}
\nabla^3 u \cdot \mathbf t = - \frac{\n \otimes \n}{r^2},
\end{equation*}
\begin{equation*}
\nabla^3 u \cdot \mathbf n = \frac{R}{r^3} \n \otimes \n - \frac{1}{r^2} (\mathbf t \otimes \n + \n \otimes \mathbf t).
\end{equation*}
Using that $\mathbf t = e^{\i \theta}$ and
\begin{equation*}
\frac{d\phi}{ds} = \frac{1}{R}, \quad R = \frac{ds}{d\phi},
\end{equation*}
\begin{equation*}
\begin{split}
\nabla \bigg( \frac{1}{|x - \zeta(s^-(\phi))|} \bigg) &= - \frac{\mathbf t(x)}{|x - \zeta(s^-(\phi))|^2} - \frac{R}{|x - \zeta(s^-(\phi))|^3} \mathbf n(x),
\end{split}
\end{equation*}
we get the formula in the statement, because when $\frac{ds}{d\theta}$ is singular then $\theta(s)$ is continuous and thus also both $\mathbf t,\n$ are continuous there.
\end{proof}

\begin{corollary}
\label{lem:C:1:1}
The level sets of the minimum time function $u$ are $\mathcal{C}^{1,1}$ curves, with second derivative BV.
\end{corollary}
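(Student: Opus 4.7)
The plan is to deduce the regularity of level sets of $u$ directly from Lemma \ref{Lem:convexity}. The key observation is that in $\R^2\setminus Z$ the gradient $\nabla u(x)=\mathbf t(x)$ is a unit vector, hence $u$ is a $C^{1,1}$ function with non-vanishing gradient. By the implicit function theorem applied to the $C^{1,1}$ version of the statement (or equivalently by Rademacher-type arguments, since $\nabla u$ is Lipschitz by Lemma \ref{Lem:convexity}), each level set $\{u=t\}$ in $\R^2\setminus Z$ is locally the graph of a $C^{1,1}$ function, and thus a $C^{1,1}$ curve.

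To get the BV bound on the second derivative, I would parametrize a connected arc $\xi$ of $\{u=t\}$ by arclength. Its unit tangent at $x$ is the rotation $\mathbf t(x)^\perp=\mathbf n(x)$, i.e.\ $\dot\xi(\tau)=\mathbf n(\xi(\tau))$, and hence the second derivative (the curvature vector) satisfies
\begin{equation*}
\ddot\xi(\tau)=D\mathbf n(\xi(\tau))\,\dot\xi(\tau)=\bigl(\nabla^2 u\bigr)^{\perp}(\xi(\tau))\,\mathbf n(\xi(\tau)).
\end{equation*}
Using the explicit form $\nabla^2 u=\frac{1}{|x-\zeta(s^-(\phi))|}\,\mathbf n\otimes\mathbf n$ from Lemma \ref{Lem:convexity}, the scalar curvature of the level set at $\xi(\tau)$ is $\kappa(\tau)=|x-\zeta(s^-(\phi))|^{-1}$ evaluated along $\xi$. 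The BV estimates on $D\nabla^2 u$ given in Lemma \ref{Lem:convexity} transfer, by restriction to the Lipschitz curve $\xi$, to BV control of $\tau\mapsto\kappa(\tau)$, and hence of $\ddot\xi$.

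The only subtle point is that the singular part of $D\nabla^2 u$ is concentrated on the rays in the direction $\mathbf t(x)$, i.e.\ transversal to the level set $\xi$; these jumps correspond precisely to jumps of $s^-(\phi)$, i.e.\ to angles where the spiral is flat. I would verify that along $\xi$, which moves in the $\mathbf n$-direction, the only way $\kappa$ can jump is when crossing such a flat portion's bounding ray, and in this event both one-sided limits exist and are positive. This yields $\ddot\xi\in\mathrm{BV}_{\loc}$, completing the proof. The main technical step is this last verification, which is immediate from the round-by-round description in Proposition \ref{thm:param_1} and the right-continuous monotone structure of $s^-(\phi)$.
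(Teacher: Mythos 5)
Your argument is correct and follows essentially the same route as the paper: parametrize the level set by arc length, differentiate $u\circ\xi=\mathrm{const}$ twice using the formulas of Lemma \ref{Lem:convexity} to identify the curvature as $|x-\zeta(s^-(\phi))|^{-1}$, and derive the BV bound from the structure of $\nabla^2 u$. The only cosmetic difference is that the paper handles the last step by directly computing $\dddot x=\frac{R}{r^3}\mathbf t-\frac{\mathbf n}{r^2}$ along the curve, whereas you argue via the transversality of the singular set of $D\nabla^2 u$ to the level set; both reduce to the same monotone BV structure of $s^-(\phi)$.
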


\begin{proof}Let $x(\upsilon)$ be a parametrization by arc length of the level set $u = \text{const}$. Then differentiating $u \circ x(\upsilon) = \text{const}$, under the assumption that $\upsilon \mapsto e^{\i \theta}$ is Lipschitz with Lipschitz inverse we obtain
\begin{equation*}
\nabla u \cdot \dot {x}(\upsilon) = 0 \quad \Longrightarrow \quad \dot{x}(\upsilon) = \mathbf n,
\end{equation*}
\begin{equation*}
\nabla^2 u : \dot{x} \times \dot{x} + \nabla u \ddot{x} = 0 \quad \Longrightarrow \quad \ddot{x} = - \frac{\mathbf t}{|x - \zeta(s^-(\phi))|},
\end{equation*}
\begin{equation*}
\dddot{x} = \nabla \bigg( - \frac{\mathbf t}{|x - \zeta(s^-(\phi))|} \bigg) \mathbf n = \frac{R}{r^3} \mathbf t - \frac{\n}{r^2}, 
\end{equation*}
which gives the statement as in the previous proof because $\nabla^2 u$ is locally BV.
\end{proof}

\subsection{Construction of the curve of minimal reachable points for the first round}
\label{Ss:minimal_reach}

In this section we will construct the optimal solution to Theorem \ref{Theo:exists_min} in the following situation: the spiral $Z = \zeta([0,S]) \in \mathcal{A}_S$ is fixed up to a rotation angle ${\phi_0}$, and we will assume that the angle ${\bar \phi}$ in Theorem \ref{Theo:exists_min} belongs to the first round after $\phi_0$, i.e. it satisfies
$$
0 \leq {\bar \phi} - {\phi_0} \leq 2\pi + \beta^-({\phi_0}). 
$$
This constraint is easily explained: we optimize $r(\phi)$ whenever the choice of $\beta(\phi)$, $\phi \in ({\phi_0},{\bar \phi})$, is not having influence on the behavior of $Z$ afterwards. In other words, we do not need to care about the delay behavior of the ODE, since $s^-(\phi)$ is assigned by the fixed part of the spiral.

In this situation, the only constraints we have to abide are the monotonicity of $u \circ \zeta$, the admissibility and the convexity of the curve: these two constraints are sufficient to determine uniquely the solution. Indeed, the optimal solution will be made of at most these three parts, in sequence:
\begin{itemize}
\item  an arc of the level set of $u$ passing through $\zeta({\phi_0})$,
\item a segment,
\item an arc of a saturated spiral.
\end{itemize}
These components are connected by requiring to be tangent at the junction points, and determine a family of convex curves. The curve of the end points ${\bar \phi} \mapsto \hat \zeta = \zeta({\bar \phi})$ is the \emph{optimality curve}, i.e. for each angle ${\bar \phi}$ the value $r({\bar \phi})$ (i.e. the length of the last segment of the optimal ray) is the minimal value of $r$ in Theorem \ref{Theo:exists_min}, and the corresponding sequence of (level-set arc)-(segment)-(arc of saturated spiral) is the unique optimal barrier achieving the minimal $r({\bar \phi})$. The optimality curve determines a non-admissibility region, where no admissible barrier can be constructed, namely the region
$$
\Big\{ \zeta(s^-(\phi)) + (0,r(\bar \phi)) e^{\i \bar \phi}, \bar \phi \in \phi_0 + (0,2\pi + \beta^-(\phi_0)) \Big\}.
$$

\begin{definition}
\label{Def:minmi_reach_points}
The set \newglossaryentry{Rcalhatphi}{name=\ensuremath{\mathcal R({\bar \phi})},description={curve of minimal reachable points}} {\gls{Rcalhatphi} of minimal reachable points} (Figure \ref{fig:best:first:1}) is the set of end points of curves made of a piece of fire level set $\lbrace u=u(\zeta({\phi_0}))\rbrace$ plus a tangent segment to the level set: all the points on these curves should be admissible, should not cross $Z$ and the last point of these curves must be saturated (meaning that $\mathcal A(\mathcal R) = 0$).
\end{definition}

It is possible that none of the points on the fire level set \newglossaryentry{xilevel}{name=\ensuremath{\xi},description={parametrization of the level set $u^{-1}(\zeta({\phi_0}))$}} $\gls{xilevel} = u^{-1}(u(\zeta({\phi_0})))$ is saturated, which means that an admissible strategy is to construct a wall on the whole level set $u^{-1}(u(\zeta({\phi_0}))$: in this case there is no starting point of the curve $\mathcal{R}(\bar\zeta)$. We thus assume that there is a point $\xi(a)$, $a \geq 0$ is the arc length parametrization, such that
\begin{equation*}
u(\xi(a)) = u(\zeta({\phi_0})) = \frac{L(\zeta([0,{\phi_0}])) + L(\xi([0,\bar a]))}{\sigma} = \frac{L(\zeta([0,{\phi_0}])) + a}{\sigma},
\end{equation*}
which is the saturation condition. Clearly such a point is unique, see Figure \ref{fig:best:first:1}, and will be the starting point of the curve $\mathcal R$.

\begin{figure}
\centering
\resizebox{.75\textwidth}{!}{\input{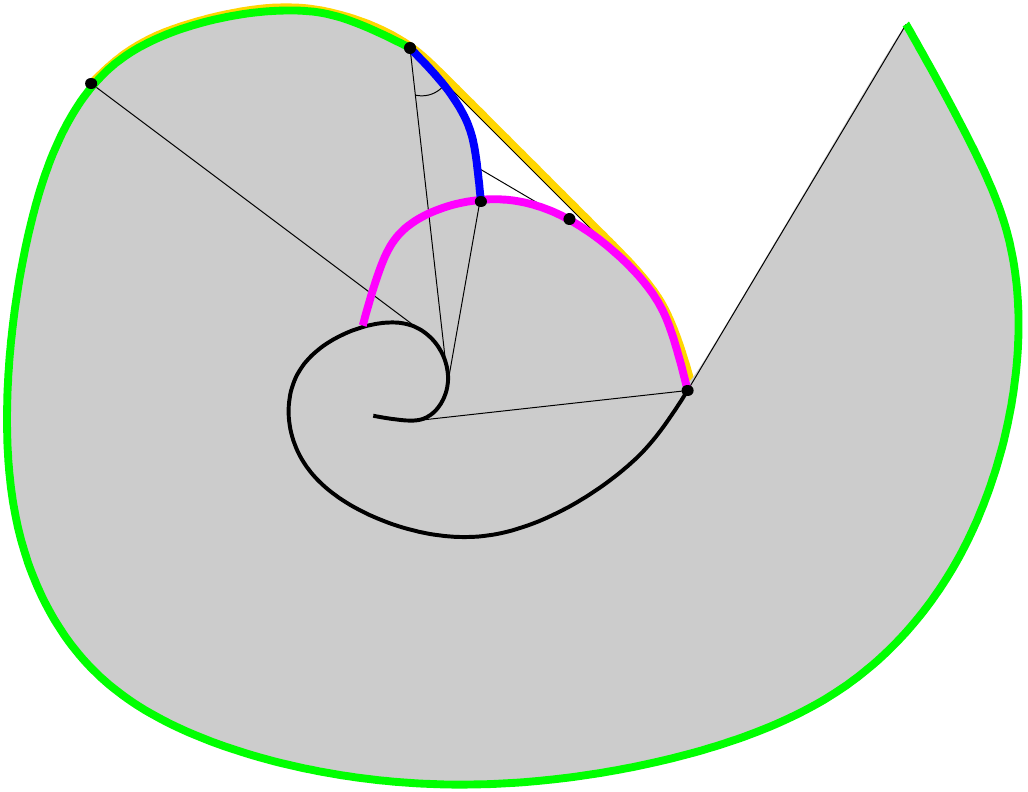_t}}
\caption{The magenta line is the level set of the time function $u$ passing through $\zeta({\phi_0})$, the green line is the set $\mathcal{R}$ where all the points are saturated. It divides the region into the admissible and the non admissible part (gray). The yellow curve is the optimal solution for the angle $\bar \phi > \phi_2$: it is made by an arc of level set, a segment and an arc of the saturated spiral.}
\label{fig:best:first:1}
\end{figure}

To construct this curve $\mathcal R$, we will write the tangent in each point. Let $x$ be a point in $\mathcal R$, let $[\zeta(s^-(x)),x]$ be the last part (a segment) of the optimal ray (with \newglossaryentry{s(x)}{name=\ensuremath{s^-(x)},description={initial point of the last segment in an optimal ray}} $x \mapsto \gls{s(x)}$ being the map to the initial point of the last segment in the optimal ray), and $[\xi(a(x)),x]$ be the segment part of the admissible curve (again $a(x)$ is the point on $\xi$ whose tangent passes through $x$), with respective length $r,b$: more precisely, using the same notation for the subdifferential for the convex curve $\xi$,
$$
x = \zeta(s) + r \zeta'(s) = \xi(a) + b \xi'(a), \quad \zeta'(s) \in \partial^- \zeta(s), \ \xi'(a) \in \partial^- \xi(a).
$$
The first equalities express that $x$ belongs to the optimal ray and the half-line tangent to $\xi$, and the second equality is the tangency properties of the rays.

We will also denote with \newglossaryentry{uxi}{name=\ensuremath{u_\xi},description={time function for the level set $u^{-1}(\zeta({\phi_0}))$}} \gls{uxi} the solution to the Eikonal equation with barrier $\xi$ and value $u(\zeta(\phi_0)) = 0$, which is explicitly given by
\begin{equation*}
u_\xi(\xi(a) + b \xi'(a)) = a + b.
\end{equation*}
We recall also that the function $u$ is computed similarly as
\begin{equation*}
u(\zeta(s) + r \zeta'(s)) = s + r.
\end{equation*}
Define also the vectors \newglossaryentry{tbf(x)}{name=\ensuremath{\mathbf t(x)},description={gradient of the time function $u$}} \newglossaryentry{nbf(x)}{name=\ensuremath{\mathbf n(x)},description={vector $\mathbf n(x)$ rotated by $\frac{\pi}{2}$}} \newglossaryentry{tbfxi(x)}{name=\ensuremath{\mathbf t_\xi(x)},description={gradient of the function $u_\xi$}} \newglossaryentry{nbfxi(x)}{name=\ensuremath{\mathbf n_\xi(x)},description={vector $\mathbf n_\xi(x)$ rotated by $\frac{\pi}{2}$}}
\begin{equation*}
\gls{tbf(x)} = \nabla u(x) = \frac{x-\zeta(s)}{|x - \zeta(s)|}, \ \gls{nbf(x)} = \mathbf t^\perp, \quad \gls{tbfxi(x)} = \dot \xi(a) = \frac{x-\xi(a)}{|x - \xi(a)|}, \ \gls{nbfxi(x)} = \mathbf t^\perp_\xi.
\end{equation*}

We first consider the curve \newglossaryentry{Rtt}{name=\ensuremath{\mathtt R},description={curve where $u - \frac{u_\xi}{\sigma} = u(\zeta({\phi_0}))$}}
\begin{equation*}
\gls{Rtt} = \bigg\{ x : u(x) - \frac{u_\xi(x)}{\sigma} = \frac{L(\zeta(0,{\phi_0}))}{\sigma} \bigg\}. 
\end{equation*}
This curve coincides with $\mathcal R$ if all the points in the segment $\xi(a) + (0,b) \xi'(a)$ are all admissible. By definition the points in $\mathtt R$ are saturated, in the sense that the admissibility functional $\mathcal A$ is $0$.

\begin{proposition}
\label{Prop:der_calR}
The tangent curve $\mathtt R$ in $x \in \mathtt R$ is given by \newglossaryentry{rbftangRtt}{name=\ensuremath{\mathbf r(x)},description={tangent to the curve $\mathcal R$}}
\begin{equation*}
\gls{rbftangRtt} = \frac{\mathbf n(x) - \frac{\mathbf n_\xi(x)}{\sigma}}{|\mathbf n(x) - \frac{\mathbf n_\xi(x)}{\sigma}|},
\end{equation*}
oriented according to $u \circ \mathtt R,u_\xi \circ \mathtt R$ increasing.

With this orientation, the angle $\angle(\mathbf t,\mathbf r)$ is decreasing in the admissible region, and the last admissible point \newglossaryentry{xsatur}{name=\ensuremath{x_\mathrm{sl}},description={point on the curve $\mathtt R$ tangent to the saturated spiral}} \gls{xsatur} of $\mathtt R$ corresponds to the slope of the saturated spiral starting from that point (i.e. it forms an angle $\alpha$ with the optimal ray of $\zeta$). After this point the curve $\mathtt R$ is not admissible, in the sense that the segment $\xi(a) + (0,b) \xi'(a)$ has points outside the admissibility region (i.e. $\mathcal A < 0$ on some point of the segment $\xi(a) + b \xi'(a)$).

Finally, the curve is convex in the direction of $\mathbf r^\perp$ in the admissible part.
\end{proposition}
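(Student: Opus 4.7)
The plan is to proceed in three steps---tangent formula, monotonicity with endpoint characterization, and convexity---all exploiting that $\mathtt R$ is the level set $\{F = L(\zeta([0,\phi_0]))/\sigma\}$ of $F := u - u_\xi/\sigma$, which is $C^{1,1}$ by Lemma~\ref{Lem:convexity} and Corollary~\ref{lem:C:1:1}. Since $\nabla u = \mathbf t$ and $\nabla u_\xi = \mathbf t_\xi$, we have $\nabla F = \mathbf t - \mathbf t_\xi/\sigma$; wherever this is nonzero, the implicit function theorem gives that $\mathtt R$ is a $C^{1,1}$ curve with tangent perpendicular to $\nabla F$, i.e., proportional to $\mathbf n - \mathbf n_\xi/\sigma$. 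Fixing the sign so that $u \circ \mathtt R$ (equivalently $u_\xi \circ \mathtt R$, as they differ by a constant on $\mathtt R$) is increasing along $\mathbf r$ yields the formula.

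Next, I would introduce $\theta := \angle(\mathbf t, \mathbf t_\xi)$, writing $\mathbf t_\xi = \cos\theta\,\mathbf t + \sin\theta\,\mathbf n$ and $\mathbf n_\xi = -\sin\theta\,\mathbf t + \cos\theta\,\mathbf n$; substituting gives
\[
\mathbf n - \frac{\mathbf n_\xi}{\sigma} = \frac{\sin\theta}{\sigma}\mathbf t + \Big(1 - \frac{\cos\theta}{\sigma}\Big)\mathbf n, \qquad \tan\beta = \frac{\sigma - \cos\theta}{\sin\theta},
\]
where $\beta := \angle(\mathbf t, \mathbf r)$. At the starting point $\xi(\bar a)$ one has $b = 0$, so $\mathbf t_\xi$ is tangent to $\xi$ and hence orthogonal to $\mathbf t = \nabla u$, giving $\theta = \pi/2$ and $\beta = \arctan\sigma > \alpha$. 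Since $d(\tan\beta)/d\theta = (1 - \sigma\cos\theta)/\sin^2\theta$ is strictly positive on $(\alpha,\pi/2]$, monotonicity of $\beta$ is equivalent to monotonicity of $\theta$; I would establish the latter by differentiating the identity $x = \xi(a) + b\,\mathbf t_\xi(a)$ along arc length on $\mathtt R$ and using the rank-one Hessian formulas of Lemma~\ref{Lem:convexity} to control the sign of $\dot\theta$.

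For the endpoint, a point $x \in \mathtt R$ is admissible iff the whole segment $[\xi(a(x)), x]$ lies in $\{F \geq L(\zeta([0,\phi_0]))/\sigma\}$. At $\xi(a(x))$ one has $F(\xi(a)) = u(\zeta(\phi_0)) - a/\sigma > L(\zeta([0,\phi_0]))/\sigma$ for $a < \bar a$, while $F(x) = L(\zeta([0,\phi_0]))/\sigma$; parametrizing the segment by $y(t) = \xi(a) + t\,\mathbf t_\xi$ and computing
\[
\frac{d}{dt}F(y(t))\Big|_{t=b} = \big(\mathbf t(x) - \mathbf t_\xi(x)/\sigma\big) \cdot \mathbf t_\xi(x) = \cos\theta - \frac{1}{\sigma},
\]
one sees that the condition for $F$ to approach its minimum at $t = b$ from above is $\cos\theta \leq 1/\sigma$, i.e., $\theta \geq \alpha$. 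The last admissible point thus corresponds to $\theta = \alpha$, at which $\tan\beta = (\sigma^2 - 1)/\sqrt{\sigma^2 - 1} = \sqrt{\sigma^2-1} = \tan\alpha$, so $\beta = \alpha$, matching the slope of the saturated spiral (Corollary~\ref{Cor:sat_burning}); past this point the derivative above becomes positive, forcing $F < L(\zeta([0,\phi_0]))/\sigma$ just before $x$ and hence non-admissibility. Convexity of $t \mapsto F(y(t))$, deduced from the Hessian formulas, ensures the non-positive endpoint derivative is also sufficient for admissibility of the full segment.

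Finally, for convexity of $\mathtt R$ in direction $\mathbf r^\perp$, I would compute the signed curvature $\kappa = -\nabla^2 F(\mathbf r, \mathbf r)/|\nabla F|$ using $\nabla^2 u = r^{-1}\mathbf n \otimes \mathbf n$ and $\nabla^2 u_\xi = b^{-1}\mathbf n_\xi \otimes \mathbf n_\xi$ from Lemma~\ref{Lem:convexity}, so that
\[
\nabla^2 F(\mathbf r, \mathbf r) = \frac{(\mathbf n \cdot \mathbf r)^2}{r} - \frac{(\mathbf n_\xi \cdot \mathbf r)^2}{\sigma b},
\]
with $\mathbf n \cdot \mathbf r = (1 - \cos\theta/\sigma)/|\mathbf n - \mathbf n_\xi/\sigma|$ and $\mathbf n_\xi \cdot \mathbf r = (\cos\theta - 1/\sigma)/|\mathbf n - \mathbf n_\xi/\sigma|$; in the admissible regime $\theta \in [\alpha,\pi/2]$ the sign of $\kappa$ gives the claimed convexity. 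The main obstacle is precisely the sign control required in the monotonicity step ($\dot\theta < 0$ along $\mathtt R$) and in the curvature computation; both are delicate because they combine two rank-one Hessians pointing in different directions ($\mathbf n$ and $\mathbf n_\xi$), and both crucially use the restriction $\theta \geq \alpha$ characterizing the admissible part of $\mathtt R$.
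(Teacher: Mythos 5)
Your setup is correct and matches the paper's: the tangent formula follows from differentiating the saturation identity $u - u_\xi/\sigma = \mathrm{const}$ with $\nabla u = \mathbf t$, $\nabla u_\xi = \mathbf t_\xi$, and your reduction $\tan\beta = \frac{\sigma - \cos\theta}{\sin\theta}$ with $\frac{d\tan\beta}{d\theta} = \frac{1-\sigma\cos\theta}{\sin^2\theta} > 0$ for $\theta > \alpha$ is a clean way to translate the claim into monotonicity of $\theta = \angle(\mathbf t,\mathbf t_\xi)$. Your endpoint characterization is actually nicer than the paper's: computing $\frac{d}{dt}F(y(t))\big|_{t=b} = \cos\theta - \frac{1}{\sigma}$ along the segment, noting that $t\mapsto F(y(t))$ is convex (since $\mathbf t_\xi^T\nabla^2 u_\xi\,\mathbf t_\xi = 0$ and $\mathbf t_\xi^T\nabla^2 u\,\mathbf t_\xi \ge 0$), and concluding that the segment is admissible iff the endpoint derivative is $\le 0$, i.e.\ $\theta \ge \alpha$, directly identifies $x_\mathrm{sl}$ with $\beta = \alpha$; the paper instead argues through the sign of $\nabla\mathcal A\cdot\mathbf t_\xi$ and the rotation of $\mathbf t_\xi$.

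However, there is a genuine gap: the two sign verifications that constitute the analytic core of the proposition are both deferred, not proved. You write that you ``would establish'' $\dot\theta \le 0$ by differentiating $x = \xi(a) + b\,\mathbf t_\xi(a)$ and ``controlling the sign'', and that for convexity ``the sign of $\kappa$ gives the claimed convexity'' --- and you yourself flag both as the main obstacle. Neither is routine: the quantity governing both is
\begin{equation*}
\big((\nabla u\cdot\nabla u_\xi)\circ\mathtt R\big)' \ \propto\ \frac{1 - \cos(\alpha)\,\mathbf n\cdot\mathbf n_\xi}{r} - \frac{\mathbf n\cdot\mathbf n_\xi - \cos(\alpha)}{b},
\end{equation*}
which combines the two rank-one Hessians with opposite signs and competing denominators $r$ and $b$, and its positivity is not a pointwise algebraic fact (e.g.\ at $\theta=\pi/2$ the second term blows up as $b\searrow 0$). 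The paper closes this by a continuation argument: the expression is positive for $b\ll 1$ near the starting point (where $\mathbf n\cdot\mathbf n_\xi\searrow 0$), positivity propagates because while it holds $\theta$ decreases, and at the critical angle $\theta=\alpha$ the derivative equals $(1-\cos^2(\alpha))/r > 0$ so the sign cannot flip before $x_\mathrm{sl}$. The same inequality $\frac{1-\cos(\alpha)\,\mathbf n\cdot\mathbf n_\xi}{r} \ge \frac{\mathbf n\cdot\mathbf n_\xi - \cos(\alpha)}{b}$ is then exactly what makes the curvature estimate $\mathbf r^T\nabla^2\mathcal A\,\mathbf r > 0$ close. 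Without identifying this quantity and this bootstrap, your Steps 2 and 4 do not go through; moreover your non-admissibility claim past $x_\mathrm{sl}$ (``the derivative becomes positive'') also presupposes that $\theta$ keeps decreasing, so it too rests on the missing monotonicity.
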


\begin{figure}
\centering
\input{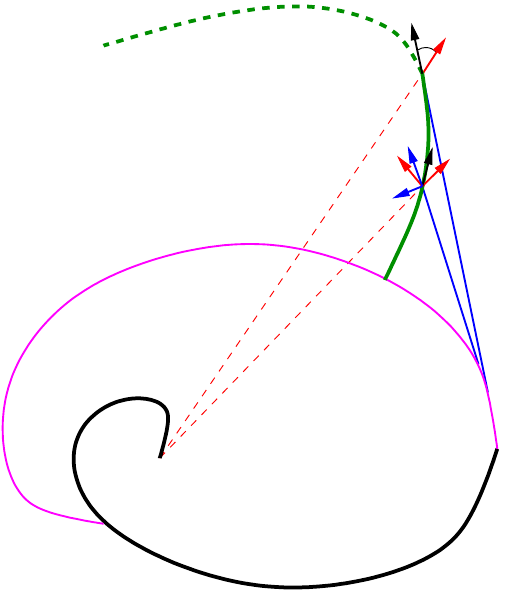_t}
\caption{Construction of the curve $\mathtt R$: the curve is made of saturated points when the spiral is prolonged by an arc of the level set and the blue segment. The last admissibility point is when $\mathtt R$ is tangent to the saturated spiral, and after that point it is not admissible.}
\label{fig:non:adm}
\end{figure}

\begin{proof}
Using the explicit formula for the two solutions of the Eikonal equation with barrier $\zeta,\xi$,
\begin{equation*}
u(\zeta(s) + r \zeta'(s)) = s + r, \quad u_\xi(\xi(a) + b \xi'(a)) = a + b,
\end{equation*}
the admissibility functional takes the form
\begin{equation*}
\mathcal A(x) = s + r - \frac{a+b + L(\zeta(0,{\phi_0}))}{\sigma} = \text{const} + u(x) - \cos(\alpha) u_\xi(x) = 0.
\end{equation*}

Differentiating and using Lemma \ref{Lem:convexity} we obtain
\begin{equation*}
\nabla \mathcal A = \mathbf t - \cos( \alpha) \mathbf t_\xi,
\end{equation*}
and then, being the points of $\mathtt R$ saturated,
\begin{equation*}
\big( \nabla u - \cos( \alpha) \nabla u_\xi \big) \cdot \mathbf r = \big( \mathbf t - \cos( \alpha) \mathbf t_\xi \big) \cdot \mathbf r = 0,
\end{equation*}
with $\textbf r = \gls{rbftangRtt}$ the vector of the statement (the tangent to the curve $\mathtt R$), and this gives precisely the first formula in the statement up to the orientation. 

We compute the derivative of $u \circ \mathtt R$:
\begin{equation*}
(u \circ \mathtt R)' = - \cos(\alpha) \frac{(\mathbf t \cdot \mathbf n_\xi)}{|\mathbf n - \frac{\mathbf n_\xi}{\sigma}|}
\end{equation*}
which is positive because of the orientation of the vectors (see Figure \ref{fig:non:adm}). The case when $\mathbf t \perp \mathbf n_\xi$ means that the lines are parallel, and then $\mathtt R \to \infty$, which is impossible for $\sigma > 1$. Similarly,
\begin{equation*}
(u_\xi \circ \mathtt R)' = \frac{(\mathbf t_\xi \cdot \mathbf n)}{|\mathbf n - \mathbf n_\xi/\sigma|} = - \frac{(\mathbf t \cdot \mathbf n_\xi)}{|\mathbf n - \mathbf n_\xi/\sigma|} > 0,
\end{equation*}
as the saturation condition requires. This concludes the first part of the proposition.

In the starting point on the level set $u^{-1}(\zeta({\phi_0}))$ we obtain $\mathbf n_\xi = - \mathbf t$, so that the vector of the statement becomes
\begin{equation*}
\mathbf r = \frac{\mathbf n + \cos(\alpha) \mathbf t}{|\mathbf n + \cos(\alpha) \mathbf t|}.
\end{equation*}
In particular the initial angle $\angle (\mathbf{r},\mathbf{t})$ is greater than $\alpha$. In general, in the fire frame $(\mathbf t,\mathbf n)$ the position of $\mathbf n - \cos(\alpha) \mathbf n_\xi$ is contained in the circle $(0,1) + \{|x| = \cos(\alpha)\}$, and the minimal slope is when $\mathbf r = e^{\i \alpha}$. Writing the derivative of $(\nabla u \cdot \nabla u_\xi) \circ \mathtt R$ explicitly one gets by Lemma \ref{Lem:convexity}
\begin{equation*}
\begin{split}
((\mathbf \nabla u \cdot \nabla u_\xi) \circ \mathtt {R})' &= \mathbf r^T \frac{\mathbf n \otimes \mathbf n}{r} \mathbf t_\xi + \mathbf r^T \frac{\mathbf n_\xi \otimes \mathbf n_\xi}{b} \mathbf t \\
&= \frac{\mathbf t_\xi \cdot \mathbf n}{|\mathbf n - \cos(\alpha) \mathbf n_\xi|} \bigg( \frac{1 - \cos(\alpha) \mathbf n \cdot \mathbf n_\xi}{r} - \frac{\mathbf n \cdot \mathbf n_\xi - \cos(\alpha)}{b} \bigg) \\
&= \frac{\mathbf t_\xi \cdot \mathbf n}{|\mathbf n - \cos(\alpha) \mathbf n_\xi|} \bigg[ \bigg( \frac{1}{r} + \frac{\cos(\alpha)}{b} \bigg) - ((\mathbf \nabla u \cdot \nabla u_\xi) \circ \mathtt {R}) \bigg( \frac{\cos(\alpha)}{r} + \frac{1}{b} \bigg) \bigg].
\end{split}
\end{equation*}
The quantity
\begin{equation*}
\frac{1 - \cos(\alpha) \mathbf n \cdot \mathbf n_\xi}{r} - \frac{\mathbf n \cdot \mathbf n_\xi - \cos(\alpha)}{b}
\end{equation*}
is positive for $b \ll 1$, because $\n \cdot \n_\xi \searrow 0$ as we approach the starting point, and whenever this quantity is positive the vectors $\mathbf t,\mathbf t_\xi$ are approaching, which means that $\textbf{r} \cdot \textbf{t}$ decreases. Note that at the minimal angle
\begin{equation*}
((\nabla u \cdot \nabla u_\xi) \circ \mathtt R)' = \frac{1 - \cos^2(\alpha)}{r} > 0,
\end{equation*}
so this behavior is valid up to the last admissible point $x_\mathrm{sl}$. This proves the first part of the second statement of the proposition. In the point $x_\mathrm{sl}$ we have that $\mathbf{r}\parallel\mathbf{t}_{\xi}$. 

After the critical angle, the vector $\mathbf t_\xi$ moves counterclockwise having derivative $\propto \n_\xi (\n_\xi \cdot \n - \cos(\alpha))$, which means that we are going to take values of the parameter $a$ (the starting point of the segment from the barrier $u^{-1}(u(\zeta({\phi_0})))$) already used. 
However the gradient of the admissibility functional satisfies
\begin{equation*}
\nabla \mathcal A = \mathbf t - \cos( \alpha) \mathbf t_\xi,
\end{equation*}
and thus for directions with angle $> \alpha$
\begin{equation*}
\nabla \mathcal A \cdot \mathbf t_\xi = \mathbf t \cdot \mathbf t_\xi - \cos(\alpha) < 0,
\end{equation*}
because $\nabla u \cdot \nabla u_\xi$ is still increasing. Hence in this part of the curve the functional is not admissible. This concludes the second part of the proposition.

We are left to prove that $\mathtt R$ is convex. The second derivative of $\mathcal A$ along $\mathbf v$ gives
\begin{equation*}
\begin{split}
\mathbf r^T \nabla^2 \mathcal A \mathbf r &= \frac{(\mathbf n \cdot \mathbf r)^2}{r} - \cos(\alpha) \frac{(\mathbf n_\xi \cdot \mathbf r)^2}{b} \\
&= \frac{1}{|\mathbf n - \cos(\alpha) \mathbf n_\xi|^2} \bigg( \frac{(1 - \cos(\alpha) \mathbf n \cdot \mathbf n_\xi)^2}{r} - \frac{(\mathbf n \cdot \mathbf n_\xi - \cos(\alpha))^2}{b} \bigg) \\
&\geq \frac{1}{|\mathbf n - \cos(\alpha) \mathbf n_\xi|^2} \frac{\mathbf n \cdot \mathbf n_\xi - \cos(\alpha)}{b} \big( 1 - \cos(\alpha) \mathbf n \cdot \mathbf n_\xi - \mathbf n \cdot \mathbf n_\xi + \cos(\alpha) \big) > 0,
\end{split}
\end{equation*}
where in the last line we have used the estimate on $((\nabla u \cdot \nabla u_\xi) \circ \mathtt R)' > 0$ in the admissibility region. Hence the curve is convex in the direction of $\mathbf r^\perp$.
\end{proof}

Using the above proposition, we construct the optimal closing curve as follows, see Fig. \ref{Fi:three_cases}.
\begin{description}
\item[Case 1] the level set $u^{-1}(u(\zeta({\phi_0}))$ of the time function is always admissible, ending on a point of the spiral $Z$. \\
In this case for all angle ${\bar \phi} \geq {\phi_0}$ the optimal closing curve will be the level set, and the optimality curve $\mathcal R$ is the level set itself.

\item[Case 2] there are non admissible points on the level set: thus we can construct the curve $\mathtt R$, and this curve remains admissible until it touches again $Z$. \\
In this case, define the optimality curve as
\begin{equation*}
\mathcal R = \big\{ x: u(x) = u(\zeta({\phi_0})), \mathcal A(x) > 0 \big\} \cup \mathtt R.
\end{equation*}
For each angle ${\bar \phi} \geq {\phi_0}$ there is only one intersection of $\zeta(s^-({\bar \phi})) + r e^{\i {\bar \phi}}$ with the above optimality curve $\mathcal R$ (being $\mathtt R$ transversal to $\mathbf t$ by the above proposition). If the intersection occurs along the level set, then the optimal closing curve is the arc of the level set; if it occurs along the curve $\mathtt R$ in the point $\xi(a) + b \xi'(a)$, then the optimal closing curve is the arc $[\zeta(\phi_0),\xi(a)]$ of level set plus the segment $[\xi(a),\xi(a) + b \xi'(a)]$.

\item[Case 3] in this case the admissible part of the curve $\mathtt R$ ends in the final admissibile point $x_\mathrm{sl}$, where its tangent makes an angle $\alpha$ with the fire direction $\mathbf t$. \\
Define the optimality curve by
\begin{equation*}
\mathcal R = \big\{ x: u(x) = u(\zeta({\phi_0})), \mathcal A(x) > 0 \big\} \cup \big\{ x \in \mathtt R, \mathcal A(x) \geq 0 \big\} \cup \big\{ \text{saturated spiral starting from $x_\mathrm{sl}$} \big\}. 
\end{equation*}
For each angle ${\bar \phi} \geq {\phi_0}$, if the intersection of $\zeta(s^-({\bar \phi})) + r e^{\i {\bar \phi}}$ with $\mathcal R$ occurs on the arc of the saturated spiral starting from $x_\mathrm{sl}$, then the solution is the optimal solution as in Case 2 above up to $x_\mathrm{sl}$ (i.e. an arc of the level set plus a segment ending in $x_\mathrm{sl}$), and then the arc of saturated spiral ending in the angle ${\bar \phi}$. The other cases (i.e. when the intersection occurs before $\bar x$) are treated as in the previous point.
\end{description}

\begin{figure}
\resizebox{\textwidth}{!}{\input{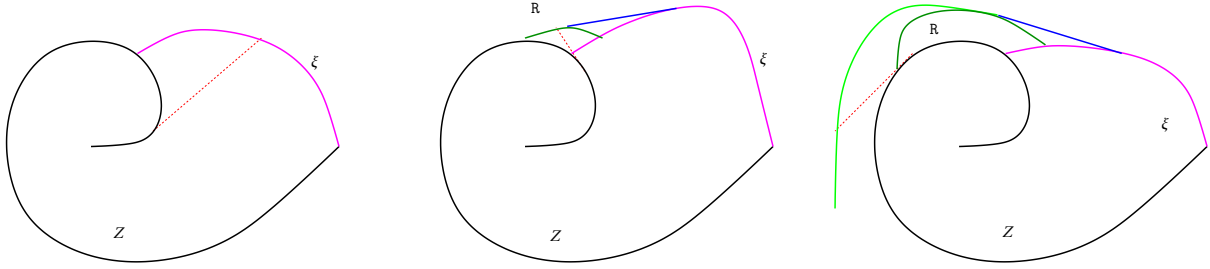}}
\caption{The three cases of Theorem \ref{Cor:curve_cal_R_sat_spiral}: in the first picture all level set $\xi$ is admissible, and then for every angle $\geq {\phi_0}$ the optimal solution is an arc of the level set; in the second case all curve $\mathtt R$ is admissible, and then the optimal solutions for points on $\mathtt R$ is an arc of $\xi$ and a segment; in the third case for large angles the minimal point lies on a saturated spiral, and the optimal solution is an arc of $\xi$, a segment and an arc of saturated spiral.}
\label{Fi:three_cases}
\end{figure}

\begin{theorem}
\label{Cor:curve_cal_R_sat_spiral}
The curves constructed in the three cases above are the unique solution to the minimum problem considered in Theorem \ref{Theo:exists_min} for ${\bar \phi} \in [{\phi_0},{\phi_0} + 2\pi + \beta^-({\phi_0}))$.
\end{theorem}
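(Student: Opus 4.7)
The key observation is that for $\bar \phi \in [\phi_0, \phi_0 + 2\pi + \beta^-(\phi_0))$ the base point $\zeta(s^-(\bar \phi))$ of the last segment of the optimal ray is entirely determined by the fixed initial arc $\zeta([0,\phi_0])$. Hence minimizing $r(\bar \phi)$ amounts to locating, on the half-line $\zeta(s^-(\bar \phi)) + \R^+ e^{\i \bar \phi}$, the point closest to $\zeta(s^-(\bar \phi))$ that can be realized as the endpoint $\zeta(s^+(\bar \phi))$ of some admissible convex continuation. Three constraints must hold along every such continuation: monotonicity of $u \circ \zeta$ (no intermediate point can have $u$-value below $u(\zeta(\phi_0))$), admissibility $\mathcal A \geq 0$, and convexity.

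First I would establish the lower bound: every admissible convex continuation ending at angle $\bar \phi$ has its endpoint on or beyond the optimality curve $\mathcal R$ along the direction $e^{\i \bar \phi}$. A candidate endpoint strictly on the forbidden side of $\mathcal R$ is ruled out by Proposition \ref{Prop:der_calR}: points on the fire side of $\xi = u^{-1}(u(\zeta(\phi_0)))$ violate monotonicity of $u \circ \zeta$, while points in the region bounded by the non-admissible part of $\mathtt R$ (or, in Case 3, by the saturated spiral prolonging $x_{\mathrm{sl}}$) violate $\mathcal A \geq 0$. The convexity of $\mathtt R$ in the direction $\mathbf r^\perp$ together with its tangency to the saturated spiral at $x_{\mathrm{sl}}$ with the critical angle $\alpha$, both from Proposition \ref{Prop:der_calR}, identify $\mathcal R$ as precisely the boundary of the reachable set, and each ray $\zeta(s^-(\bar \phi)) + \R^+ e^{\i \bar \phi}$ meets $\mathcal R$ transversally in a unique point.

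Next I would verify achievability: for every $\hat x \in \mathcal R$, the three-case construction yields an admissible convex spiral ending at $\hat x$. The functional $\mathcal A$ is non-increasing along the level-set arc, reaches $0$ exactly at the start of $\mathtt R$ by the definition of the latter, stays at $0$ along $\mathtt R$ by construction, and remains $0$ along the saturated-spiral arc in Case 3. Convexity is ensured at both junctions by tangency: the segment is tangent to $\xi$ at its initial point (by the definition of $\mathcal R$) and meets the saturated spiral at $x_{\mathrm{sl}}$ forming precisely the angle $\alpha$ that matches the tangent of the saturated spiral. Monotonicity of $u \circ \zeta$ is immediate on each of the three pieces.

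For uniqueness, suppose another admissible continuation $\tilde \zeta$ reaches the same optimal endpoint $\hat x \in \mathcal R$. Minimality forces $\mathcal A(\hat x) = 0$ and the final tangent of $\tilde \zeta$ to coincide with the tangent of $\mathcal R$ at $\hat x$, as otherwise a shorter final segment would still be admissible. Running the RDE \eqref{eq:spiral:angle} backward from $\hat x$ with the saturation condition $\beta \equiv \alpha$ (Case 3), or with the tangent-to-$\xi$ condition (Cases 1--2), combined with convexity, forces $\tilde \zeta$ to coincide with the explicit construction. The main obstacle I expect is ruling out exotic admissible continuations (with corner points, Cantor-type singular $\beta$, or internal saturated subarcs) that might formally satisfy the pointwise constraints while deviating from $\mathcal R$; handling this requires combining the BV regularity of $\beta$ from Theorem \ref{Cor:angle_preprest} with the strict convexity of $\mathcal R$ in the direction $\mathbf r^\perp$ to show that no such deviation can reduce $r(\bar \phi)$ without violating one of the three constraints.
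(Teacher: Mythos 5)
Your geometric picture and your achievability step match the paper's construction, but the lower-bound step has a genuine gap: you rule out endpoints beyond $\mathcal R$ by asserting that such points ``violate $\mathcal A \geq 0$''. The admissibility functional is not a function of a point of the plane alone — it depends on the curve used to reach that point, through the burned length $\H^1(Z\cap\overline{R^Z(u(x))})$. Proposition \ref{Prop:der_calR} only establishes that the \emph{specific} arc-plus-segment curves become non-admissible past $x_\mathrm{sl}$; it does not by itself exclude that some other admissible convex continuation reaches a point beyond $\mathcal R$ (or reaches a point of $\mathtt R$ and then continues admissibly past it). What closes the gap — and is the key sentence of the paper's proof — is that the Case 1,2,3 construction is the \emph{shortest} admissible convex path from $\zeta(\phi_0)$ to each point of $\mathcal R$ and is exactly saturated there; hence any competitor through such a point is at least as long, so has $\mathcal A \leq 0$ there and $\mathcal A<0$ beyond. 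You flagged the ``exotic continuations'' issue yourself at the end; this length-minimality observation is precisely what resolves it, and your sketch never supplies it.

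The paper organizes the argument as a crossing argument rather than your reachable-set argument: if a competitor $\check\zeta$ had $\check r(\bar\phi) < \hat r(\bar\phi)$ it would have to cross the candidate $\hat\zeta$ at some last angle. The crossing cannot occur in the saturated part of $\hat\zeta$ (there $\beta\le\alpha$ is forced and convexity gives the length comparison on the remaining arc), it cannot occur across the admissible part of $\mathtt R$ (by the shortest-path observation), so it must occur across the segment $(\xi(a),x_\mathrm{sl})$; but then, in the region bounded by the level set, $\mathcal R$ and that segment, the competitor can only exit through the same segment again, contradicting its convexity. Uniqueness falls out of the same argument; your backward-RDE uniqueness step is not needed and in any case would not apply on the non-saturated pieces, where $\beta$ is not constantly $\alpha$.
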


\begin{proof}
The proof is based on a contradiction: if an admissible spiral crosses the optimal solution given in the above Cases 1,2,3, then it is not admissible or not convex (Figure \ref{Fi:three_cases}).

Fix an angle ${\bar \phi}$, and let $\hat \zeta(\phi) = \zeta(s^-(\phi)) + \hat r(\phi) e^{\i \phi}$, $\phi \in [{\phi_0},{\bar \phi}]$ be the optimal solution candidate constructed as in Cases 1,2,3 above. Let $\check \zeta(\phi) = \zeta(s^-(\phi)) + \check r(\phi) e^{\i \phi}$ be another admissible spiral solution, and $\check \phi$ be the last angle for which $\hat r(\phi) \leq \check r(\phi)$ for $\phi \in [{\phi_0},\check \phi]$.

The convexity assumption of $\hat \zeta$ yields immediately that
\begin{equation*}
L(\hat \zeta([{\bar \phi},\check \phi])) \leq L(\check \zeta([{\bar \phi},\check \phi])),
\end{equation*}
and then this crossing cannot occur in the saturated part of $\hat \zeta$: indeed in this part $\beta(\phi) \leq \alpha$ is forced. By the monotonicity assumption on $u \circ \zeta$, the only part where $\check \zeta$ can cross $\hat \zeta$ is along the segment part of $\hat \zeta$.

The second observation is that $\check \zeta$ cannot cross $\mathtt R$ in the admissible region: indeed, it is fairly easy to see that the shortest path from $\zeta({\phi_0})$ to a point in $\mathtt R$ outside $\{u \leq u(\zeta({\phi_0}))\}$ is the solution constructed in Case 2 above. Hence after the crossing the spiral $\check \zeta$ would not be admissible by Proposition \ref{Prop:der_calR}.

Hence, the only situation to be studied is the following: the spiral $\check \zeta$ crosses the segment $(\xi(a),x_\mathrm{sl} = \xi(a) + b \xi'(a))$ in some point $\check x$. Considering the region delimited by $\{u = u(\zeta(\phi_0))\} \cup \mathcal R \cup [\xi(a),x_\mathrm{sl} = \xi(a) + b \xi'(a)]$, by convexity of $\hat \zeta$ and the above observation the curve $\check \zeta$ can only exit along the segment $(\xi(a),x_\mathrm{sl})$. This contradicts the convexity of $\check \zeta$. 
\end{proof}

\begin{remark}
\label{Rem:optima_for_others}
The argument of the above proof is that the shortest admissible convex curve for $\phi \in [{\phi_0},{\phi_0} + 2\pi + \beta^-({\phi_0}))$ is the one constructed in Cases 1,2,3. Since it is possible to prove that, in the range of angles considered here, the optimal closing barrier is convex, then the above statement is valid for generic simple curves such that $u$ is monotone along the curve.
\end{remark}

\begin{remark}
\label{Rem:other_angles}
It is important to notice that the time function $u$ used to compute the barrier is fixed, i.e. it does not depend on the barrier we are constructing after $\phi_0$: indeed the choice of $\phi \in [{\phi_0},{\phi_0} + 2\pi + \beta^-({\phi_0}))$ gives that we are only using the optimal rays leaving the spiral $\zeta([0,{\phi_0}])$ to construct the next arc of the spiral, and thus the rays are assigned. When we go above this angle, i.e. after one single round, then the choice of $\zeta$ at the first round impacts the behavior of $u$ in the next. We will indeed see that it is better to make $\zeta$ longer at the previous round in order to have more time at the next. This is easily explained by observing that if we increase the length of $\zeta$ by $L$ we gain a time $L$, i.e. we can construct a barrier with length $\sigma L > L$ at the next round.
\end{remark}

\section{A case study: the fastest saturated spiral}
\label{S:case:study}

In this section we consider as initial data for the saturated spiral the solution considered in Theorem \ref{Cor:curve_cal_R_sat_spiral}, namely an arc of the level set of $u$, a segment and the saturated spiral, where the initial fire set is the ball $\newglossaryentry{att}{name=\ensuremath{\mathtt a},description={initial radius for the case study of Section \ref{S:case:study}}} B_{\gls{att}}(0)$, $\gls{att} \in (0,1)$. The case $\mathtt a =1$ is the previous case of Section \ref{Sss:equation_satur}.

The aim of this analysis is twofold: on the one hand it is not clear if with this "optimized" initial data the spiral will close after some time, on the other hand the solution constructed here will be the fundamental block for the analysis of the general case. Since the spiral we construct is the spiral which becomes saturated before any other spiral, we call it the \emph{fastest saturated spiral}. We are not asking the question of its optimality here.

The construction speed we consider in this situation is the critical speed $\bar\sigma = \frac{1}{\cos(\bar \alpha)} = 2.6144..$ (Corollary \ref{Cor:eigen_angle}). The results proved here will be valid also for any other $\sigma \leq \bar\sigma$: more precisely, in the critical case the spirals diverges exponentially like $\phi e^{\lambda \phi}$, being $\lambda = \ln (\frac{2\pi + \bar 
\alpha}{\sin(\bar \alpha)}) = 0.27995...$ the only real eigenvalue of the RDE, and above the critical case it diverges like $e^{\lambda_1 \phi}$, being $\lambda_1$ the greater real eigenvalue. 

According to Cases 1,2,3 above Theorem \ref{Cor:curve_cal_R_sat_spiral} with $\hat \phi = 0$, the firefighter constructs a spiral $\mathtt Z_\mathtt a$ ($\mathtt a$ being the radius of the initial burning region $B_\mathtt a(0)$) starting from a point $P_0 = (1,0)$ with the following structure: it is the union of
\begin{enumerate}
\item a circle (the arc of the level set of $u$, in some cases consisting of a single point),
\item a segment of endpoints $P_1(\mathtt a),P_2(\mathtt a)$ tangent to the circle in $P_1(\mathtt a)$ such that the point $P_2(\mathtt a)$ is saturated,
\begin{equation*}
\mathcal{A}(P_2(\mathtt a)) = u(P_2(\mathtt a)) - \frac{1}{\bar\sigma} L(P_2(\mathtt a)) = 0,
\end{equation*}
where $L(P_2(\mathtt a))$ is the length of the spiral $\tilde Z_{\mathtt a}$ up to the point $P_2(\mathtt a)$,
\item a saturated spiral starting in $P_2(\mathtt a)$.
\end{enumerate}
The relevant quantities determining the exact structure of the spiral barrier are the following:
\begin{itemize}
\item the length of the arc of circle \newglossaryentry{Deltaphia}{name=\ensuremath{\Delta \phi_{\mathtt a}},description={opening angle of the initial starting in $(1,0)$}}
$$
\widearc{P_0 P_1(\mathtt a)} = \gls{Deltaphia},
$$
where $\Delta \phi_{\mathtt a}$ is the angle opening (remember that the level set $u$ passing by $(1,0)$ is $\partial B_1(0)$);
\item the length of the segment in the arc case (see below)
$$
|P_2(\mathtt a) - P_1(\mathtt a)| = \cot(\bar\alpha), \quad |P_2(\mathtt a)| = \frac{1}{\sin(\bar \alpha)|},
$$
where we have used that by convexity and admissibility the segment is tangent to the saturated spiral in $P_2(\mathtt a)$, i.e. $\angle(P_2(\mathtt a),P_2(\mathtt a) - P_1(\mathtt a)) = \bar \alpha$;
\item the saturation condition at $P_2(\mathtt a)$, which can be written now as
\begin{align*}
&u(P_2(\mathtt a)) - \frac{1}{\bar \sigma} L(P_2(\mathtt a)) = \left( \frac{1}{\sin(\bar \alpha)} - \mathtt a \right) - \frac{1}{\bar \sigma} \left( \Delta \phi_{\mathtt a} + \frac{\cos(\bar \alpha)}{\sin(\bar \alpha)} \right) = 0.
\end{align*}
\end{itemize}

Using $\bar \sigma = \frac{1}{\cos(\bar \alpha)}$, the last equation becomes
\begin{equation}
\label{eq:Delta:phi:a}
\Delta \phi_{\mathtt a} = \bar \sigma \left( \frac{1}{\sin(\bar \alpha)} - \mathtt a \right) - \frac{\cos(\bar \alpha)}{\sin(\bar \alpha)} = \tan(\bar \alpha) - \frac{\mathtt a}{\cos(\bar \alpha)}.
\end{equation}
Depending on the value of the initial radius $\mathtt a$ of the fire initial position, we have the following situations (see Fig. \ref{fig:two:cases}).

\begin{description}
\item[Arc case, $\mathtt a < \sin(\bar \alpha)$, i.e. $\Delta\phi_{\mathtt a} > 0$] then the angle at the origin corresponding to the segment $[P_1,P_2]$ is $\theta_{\mathtt a} = \frac{\pi}{2} - \bar\alpha$.
\item[Segment case, $\mathtt a \geq \sin(\bar \alpha)$, i.e. $\Delta \phi_{\mathtt a} \leq 0$] then $P_1(\mathtt a) = P_0 = (1,0)$, and the tangency and saturation conditions for $P_2(\mathtt a)$ give
\begin{equation*}
(|P_2(\mathtt a)| -  \mathtt a) - \frac{1}{\bar \sigma} |P_1(\mathtt a) - P_2(\mathtt a)| = \frac{\sin(\theta_{\mathtt a} + \bar \alpha)}{\sin(\bar \alpha)} -  \mathtt a - \frac{1}{\bar \sigma} \frac{\sin(\theta_{\mathtt a})}{\sin(\bar \alpha)} = 0.
\end{equation*}
Using again $\frac{1}{\bar \sigma} = \cos(\bar \alpha)$ we obtain
\begin{equation*}
\cos(\theta_{\mathtt a}) = \mathtt a, \quad \theta_{\mathtt a} \in \bigg[ 0 , \frac{\pi}{2} - \bar \alpha \bigg],
\end{equation*}
because $\mathtt a \in [\sin(\bar \alpha),1]$.
\end{description}

\begin{figure}
\centering
\includegraphics[scale=0.6]{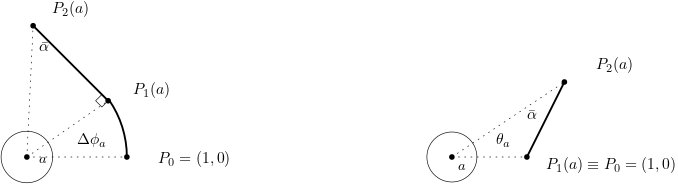}
\caption{The two cases with $ \mathtt a \in (0,\sin(\bar \alpha))$ and $ \mathtt a \in [\sin(\bar \alpha),1]$.}
\label{fig:two:cases}.
\end{figure}

The main result of this section is the following

\begin{theorem}
\label{thm:case:a}
For any value of $\mathtt a \in (0,1]$, the spiral \newglossaryentry{Stta}{name=\ensuremath{\tilde Z_{\mathtt a}},description={fastest saturated spiral, with the initial burning region $B_\mathtt{a}(0)$}} \gls{Stta} does not confine the fire: more precisely, the spiral diverges exponentially as
$$
\tilde r_\mathtt a(\phi) \sim 2 \phi e^{\bar c \phi} \begin{cases}
\frac{e^{-\bar c(\Delta \phi_{\mathtt a} + \frac{\pi}{2} - \bar \alpha)}}{\sin(\bar \alpha)} - e^{-\bar c(2\pi)} - \frac{e^{-\bar c(2\pi + \frac{\pi}{2})} - e^{- \bar c (2\pi + \frac{\pi}{2} + \Delta \phi_\mathtt a)}}{\bar c} - \cot(\bar \alpha) e^{-\bar c(2\pi + \frac{\pi}{2} + \Delta \phi_\mathtt a)} & 0 \leq \mathtt a < \sin(\bar \alpha), \\
\frac{\sin(\theta_\mathtt a + \bar \alpha) e^{-\bar c \theta_\mathtt a}}{\sin(\bar \alpha)} - e^{-\bar c(2\pi)} - \frac{\sin(\theta_\mathtt a) e^{-\bar c(2\pi + \theta_\mathtt a + \bar \alpha)}}{\sin(\bar \alpha)} & \sin(\bar \alpha) \leq \mathtt a \leq 1,
\end{cases} 
$$
being $\bar c = \ln(\frac{2\pi+\bar \alpha}{\sin(\bar \alpha)})$ given by Corollary \ref{Cor:eigen_angle}.
\end{theorem}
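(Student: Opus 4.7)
My plan is to represent the spiral $\tilde Z_\mathtt{a}$ explicitly in angle-ray coordinates via the Green kernels $G$ and $M$ of Section \ref{Sss:green_kernels}, rescale to the normalized RDE, and then invoke Lemma \ref{lem:key} (critical case) after a suitable numerical check. Concretely, in the segment case $\mathtt{a} \in [\sin(\bar\alpha),1]$, the spiral consists of a segment in the direction $e^{\i \theta_\mathtt{a}}$ starting at $(1,0)$ (i.e.\ an initial ``ray'' of the RDE) followed by a saturated spiral solving \eqref{eq:ODE:saturated}. Setting $\bar\phi_0(\mathtt{a}) = \theta_\mathtt{a} - \bar\alpha$, the initial ray for the RDE is Lipschitz and one can verify that
\[
\tilde r_\mathtt{a}(\phi) \;=\; \frac{\sin(\theta_\mathtt{a})}{\sin(\bar\alpha)}\,G(\phi - \bar\phi_0(\mathtt{a})) \;-\; G(\phi - 2\pi) \;-\; \frac{\sin(\theta_\mathtt{a}-\bar\alpha)}{\sin(\bar\alpha)}\,G(\phi - \bar\phi_0(\mathtt{a}) - 2\pi - \bar\alpha),
\]
by checking it satisfies the RDE \eqref{eq:ODE:saturated} with the correct initial data on $[\bar\phi_0(\mathtt{a}), \bar\phi_0(\mathtt{a})+2\pi+\bar\alpha]$. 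In the arc case $\mathtt{a}\in(0,\sin(\bar\alpha))$, the initial arc of level set contributes an additional diffuse source handled by $M$, yielding an analogous but longer expression involving both $G$ and $M$.

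Next, I would rescale by $\rho(\tau) = \tilde r_\mathtt{a}((2\pi+\bar\alpha)\tau)\,e^{-\bar c(2\pi+\bar\alpha)\tau}$ so that $\rho$ solves the normalized critical RDE $\dot\rho(\tau) = \rho(\tau) - \rho(\tau-1)$ of \eqref{Equa:rescal_RDE} with $a=1$. By the second part of Lemma \ref{lem:key}, if one shows
\[
\rho(1) \;>\; \rho(\tau) \;>\; 0 \qquad \forall\, \tau \in [0,1),
\]
then $\rho(\tau)$ diverges linearly and hence $\tilde r_\mathtt{a}(\phi) \sim C(\mathtt{a})\,\phi\, e^{\bar c \phi}$ with $C(\mathtt{a})>0$; the fire is not confined.

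To check this key positivity condition I would first reduce to the worst value of $\mathtt{a}$. Differentiating the explicit formula with respect to $\mathtt{a}$ (equivalently, with respect to $\theta_\mathtt{a}$ or $\Delta\phi_\mathtt{a}$) gives an expression of the same type but with modified coefficients; a sign analysis on the initial interval $[0,1]$ shows that $\tfrac{\partial \rho}{\partial \mathtt{a}} \geq 0$, so the minimal (worst) case is $\mathtt{a} = \sin(\bar\alpha)$ (corresponding to $\theta_\mathtt{a}=\tfrac{\pi}{2}-\bar\alpha$, $\Delta\phi_\mathtt{a}=0$), which is the common boundary of the segment and arc cases and reduces to the single function
\[
\rho_0(\tau) \;=\; \tfrac{1}{\sin(\bar\alpha)}\, g(\tau) \;-\; e^{-\bar c(2\pi+\bar\alpha-\frac{\pi}{2})}\, g\!\left(\tau - \tfrac{\pi/2}{2\pi+\bar\alpha}\right) \;-\; \cot(\bar\alpha)\, e^{-\bar c(2\pi+\bar\alpha)}\, g(\tau-1).
\]
Since $g$ is piecewise analytic with break-points only at integers, $\rho_0$ is analytic on each of the three pieces of $[0,1]$ induced by the shifts $0,\tfrac{\pi/2}{2\pi+\bar\alpha},1$. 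On each piece one plots $\rho_0$ numerically with Mathematica (cf.\ Fig.\ \ref{Fig:base_sol_intro}) and verifies both strict positivity on $[0,1)$ and the strict inequality $\rho_0(1) > \sup_{[0,1)} \rho_0$. The arc case is covered by the monotonicity argument together with an analogous computation on the extra interval $[0,\Delta\phi_\mathtt{a}/(2\pi+\bar\alpha)]$.

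Finally, to extract the explicit asymptotic constant stated in the theorem I would use the asymptotic expansion $G(\phi-\phi_0) = (\phi-\phi_0)e^{\bar c(\phi-\phi_0)}(1 + \mathcal{O}((\phi-\phi_0)^{-1}))$ from Corollary \ref{Cor:crit_G_M} applied to each of the three (resp.\ four) terms in the closed form for $\tilde r_\mathtt{a}$, together with the corresponding expansion of $M$ in the arc case, and collect the coefficients of $\phi e^{\bar c \phi}$. The main obstacle I anticipate is the numerical verification step for the positivity/monotonicity on $[0,1]$: although analytic piecewise, the function $\rho_0$ is not smooth at the internal breakpoint $\tfrac{\pi/2}{2\pi+\bar\alpha}$, so an unconditional positivity bound must either be obtained from explicit derivative estimates on each subinterval (using only polynomial and exponential terms in $g$) or from a certified numerical enclosure; this is the only non-elementary step and mirrors the role played later in Section \ref{S:family} for the more delicate perturbative arguments.
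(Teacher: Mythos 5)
Your overall strategy is the one the paper uses: explicit representation of $\tilde r_{\mathtt a}$ through the kernels $G$ (and $M$ in the arc case), rescaling to the critical RDE, reduction to a worst value of $\mathtt a$ by differentiating in the parameter, a numerical positivity/monotonicity check, and Lemma \ref{lem:key} plus Corollary \ref{Cor:crit_G_M} for the divergence and the asymptotic constant. The closed form you write for the segment case is exactly the paper's, and the function $\rho_0$ you isolate is \eqref{Equa:base_sol_intro}.

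There is, however, a genuine gap in the worst-case reduction. You claim $\partial\rho/\partial\mathtt a\geq 0$ on all of $(0,1]$ and then conclude that the extremal case is $\mathtt a=\sin(\bar\alpha)$; but if the solution is increasing in $\mathtt a$, the minimum over $(0,1]$ is attained as $\mathtt a\to 0$, i.e.\ at $\Delta\phi_{\mathtt a}=\tan(\bar\alpha)$, not at the segment/arc transition point. Verifying positivity of $\rho_0$ (the $\mathtt a=\sin(\bar\alpha)$ case) therefore only covers $\mathtt a\in[\sin(\bar\alpha),1]$; the range $\mathtt a\in(0,\sin(\bar\alpha))$ is not ``covered by the monotonicity argument,'' since the monotonicity points the other way. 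This is precisely why the paper's proof, after showing in the arc regime that $\tilde r_{\mathtt a}$ is decreasing in $\Delta\phi_{\mathtt a}$ (which itself requires a second application of Lemma \ref{lem:key} to the $\mathtt a$-derivative, not just a sign check on the initial interval), carries out the final positivity and monotonicity verification on the full four-term solution \eqref{Equa:expl_sol_sat_or} at $\mathtt a=0$, including the diffuse-source term $M(\phi,\bar\phi_1,\bar\phi_2)$ produced by the initial quarter-turn arc. Your proof needs that additional verification at $\mathtt a=0$ (or an argument replacing it); as written, the arc case is not established.
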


The proof of this theorem relies on a careful application of Lemma \ref{lem:key}.

\begin{remark}
\label{Rem:notation}
We will call the spiral $\tilde Z_\mathtt a$ as the \emph{fastest saturated spiral}: indeed it is the spiral which becomes saturated as fast as possible (i.e. at the smallest angle) and remains saturated from that angle onward. In the next section we consider the same problem when the initial configuration is an arc of a fixed barrier $\bar \zeta \rest_{\phi \in [0,\phi_0]}$. In both cases we will use the math accent \emph{tilde} will denote the fastest saturated spiral.
\end{remark}

\begin{figure}
\centering
\includegraphics[scale=0.6]{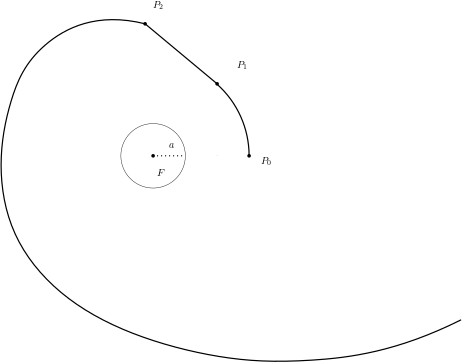}
\caption{Here $f_0$ is made by an arc, a segment and a saturated spiral.}
\label{fig:base:case:Arc}
\end{figure}

\begin{proof}[Proof of Theorem \ref{thm:case:a}]
We analyze separately the two cases $\Delta\phi_{\mathtt a} > 0$ and $\Delta\phi_{\mathtt a}\leq 0$.

\medskip

\noindent{\bf Case $\Delta\phi_{\mathtt a}>0$.} This case is described in Figure \ref{fig:base:case:Arc}. We remark that from \eqref{eq:Delta:phi:a} we find that $\Delta \phi_{\mathtt a} \in (0,\tan(\bar \alpha)]$, the largest possible arc-length being
\begin{equation*}
\Delta \phi_0 = \tan(\bar \alpha)
\end{equation*}
corresponding to $\mathtt a = 0$. We will denote by $\tilde r_{\mathtt a}$ the radius of the spiral $\tilde Z_{\mathtt a}$ in the fire coordinates of Theorem \ref{thm:param_1}, when we need to underline the dependence w.r.t. $\mathtt a$, otherwise we will use just $\tilde r$ to lighten the notation.

\medskip

\noindent{\it Step 1.} We compute the first rounds:
\begin{itemize}
\item for $\phi \in [0,\Delta \phi_{\mathtt a})$ we have
\begin{equation*}
\tilde r_{\mathtt a}(\phi) = 1;
\end{equation*}
\item for $\phi \in [\Delta \phi_{\mathtt a}, \Delta \phi_{\mathtt a} + \frac{\pi}{2} -\bar \alpha)$ we have that
\begin{equation*}
\tilde r_{\mathtt a}(\phi) = \frac{1}{\cos(\phi - \Delta\phi_{\mathtt a})},
\end{equation*}
in particular we call
\begin{equation*}
\kappa_1(\mathtt a) = \tilde r_{\mathtt a} \left( \Delta \phi_{\mathtt a} + \frac{\pi}{2} -\bar \alpha \right) = \frac{1}{\sin(\bar\alpha)}
\end{equation*}
the final distance for $0$;
\item for $\phi \in [\Delta \phi_{\mathtt a} + \frac{\pi}{2} - \bar \alpha, 2\pi)$ it holds
\begin{equation*}
\frac{d}{d\phi} \tilde r_{\mathtt a}(\phi) = \cot(\bar\alpha) \tilde r_{\mathtt a}(\phi),
\end{equation*}
since the spiral is saturated, giving
\begin{equation*}
\tilde r_{\mathtt a}(\phi) = \kappa_1(\mathtt a) e^{\cot(\bar\alpha)(\phi - \frac{\pi}{2} + \bar\alpha - \Delta \phi_{\mathtt a})};
\end{equation*}
\item the radius $\tilde r_{\mathtt a}$ has a jump at $\phi = 2\pi$,
\begin{equation*}
\tilde r_{\mathtt a}(2\pi)-\tilde r_{\mathtt a}(2\pi-)=-1,
\end{equation*}
in particular we denote by
\begin{equation*}
\kappa_2(\mathtt a) = \tilde r_{\mathtt a}(2\pi) = \kappa_1(\mathtt a) e^{\cot(\bar \alpha) (2\pi - \frac{\pi}{2} + \bar \alpha-\Delta \phi_{\mathtt a})}-1;
\end{equation*}
\item for $\phi \in [2\pi,2\pi+\frac{\pi}{2})$ it holds
\begin{equation*}
\frac{d}{d\phi} \tilde r_{\mathtt a}(\phi) = \cot(\bar\alpha) \tilde r_{\mathtt a}(\phi),
\end{equation*}
therefore
\begin{equation*}
\tilde r_{\mathtt a}(\phi) = \kappa_2(\mathtt a) e^{\cot(\bar\alpha)(\phi - 2\pi)} = \left[ \kappa_1(\mathtt a) e^{\cot(\bar \alpha) (2\pi - \frac{\pi}{2} + \bar \alpha - \Delta \phi_{\mathtt a})} - 1 \right] e^{\cot(\bar\alpha)(\phi - 2\pi)};
\end{equation*}
\item define
$$
\kappa_3(\mathtt a) = \tilde r_{\mathtt a} \left( 2\pi + \frac{\pi}{2} \right) = \kappa_2(\mathtt a) e^{\frac{\pi}{2}\cot(\bar\alpha)};
$$
\item for $\phi \in \left[ 2\pi + \frac{\pi}{2}, 2\pi + \frac{\pi}{2} + \Delta\phi_{\mathtt a} \right]$ the radius $\tilde r_{\mathtt a}(\phi)$ satisfies
\begin{equation*}
\frac{d}{d\phi} \tilde r_{\mathtt a}(\phi) = \cot(\bar\alpha) \tilde r_{\mathtt a}(\phi) - 1,
\end{equation*}
whose solution is
\begin{equation*}
\tilde r_{\mathtt a}(\phi) = k_3(\mathtt a) e^{\cot(\bar \alpha)(\phi - 2\pi - \frac{\pi}{2})} + \tan(\bar \alpha) \big( 1 - e^{\cot(\bar \alpha)(\phi - 2\pi - \frac{\pi}{2})} \big);
\end{equation*}
\item the radius $\tilde r_{\mathtt a}$ has a jump in $\phi = 2\pi + \frac{\pi}{2} + \Delta\phi_{\mathtt a}$: 
\begin{equation*}
\tilde r_{\mathtt a} \left( 2\pi + \frac{\pi}{2} + \Delta \phi_{\mathtt a} \right) -\tilde r_{\mathtt a} \left( 2\pi + \frac{\pi}{2} + \Delta \phi_{\mathtt a} - \right) = - \cot(\bar \alpha),
\end{equation*}
and also here we will introduce the quantity
\begin{equation*}
\kappa_4(\mathtt a) = \tilde r_{\mathtt a} \left(2\pi+\frac{\pi}{2}+\Delta\phi_{\mathtt a} \right) = e^{\cot(\bar \alpha) \Delta \phi_{\mathtt a}} \kappa_3(\mathtt a) + \tan(\bar \alpha) \left( 1 - e^{\cot(\bar \alpha)\Delta \phi_{\mathtt a}} \right) - \cot(\bar \alpha);
\end{equation*}
\item finally, for $\phi \geq 2\pi + \frac{\pi}{2} + \Delta \phi$ the function solves
\begin{equation*}
\frac{d}{d\phi} \tilde r_{\mathtt a}(\phi) = \cot(\bar \alpha) \tilde r_{\mathtt a}(\phi) -\frac{\tilde r_{\mathtt a}(\phi - 2\pi - \bar \alpha)}{\sin(\bar \alpha)}.
\end{equation*}
\end{itemize}

Using the kernel $G(\phi),M(\phi,\phi_1,\phi_2)$ of Remark \ref{Rem:oringal_phi_kernel}, the solution can be written for $\phi \geq \Delta \phi_\mathtt a + \frac{\pi}{2} - \bar \alpha$ as
\begin{equation}
\label{Equa:expl_sol_sat_or}
\tilde r_{\mathtt a}(\phi) = \frac{1}{\sin(\bar \alpha)} G(\phi - \bar \phi_0(\mathtt a)) - G(\phi - 2\pi) - M(\phi,\bar \phi_1(\mathtt a),\bar \phi_2(\mathtt a)) - \cot(\bar \alpha) G(\phi - \phi_0(\mathtt a) - 2\pi - \bar \alpha),
\end{equation}
where the angles are given by
\begin{equation*}
\bar \phi_0(\mathtt a) = \Delta \phi_\mathtt a + \frac{\pi}{2} - \bar \alpha, \quad \bar \phi_1(\mathtt a) = 2\pi + \frac{\pi}{2}, \quad \bar \phi_2(\mathtt a) = \bar \phi_0(\mathtt a) + 2\pi + \bar \alpha = \Delta \phi_a + 2\pi + \frac{\pi}{2}.
\end{equation*}
We can translate the solution by $\varphi = \phi - \bar \phi_0(\mathtt a)$ obtaining
\begin{equation}
\label{Equa:expl_sol_sat}
\tilde r_{\mathtt a}(\varphi) = \frac{1}{\sin(\bar \alpha)} G(\varphi) - G \bigg( \varphi + \frac{\pi}{2} - \bar \varphi_1(\mathtt a) \bigg) - M(\varphi,\bar \varphi_1(\mathtt a),2\pi + \bar \alpha) - \cot(\bar \alpha) G(\varphi - 2\pi - \bar \alpha),
\end{equation}
with
\begin{equation*}
\bar \varphi_1(\mathtt a) = 2\pi + \frac{\pi}{2} - \bar \varphi_0 = 2\pi + \bar \alpha - \Delta \phi_\mathtt a.
\end{equation*}

\medskip

\noindent{\it Step 2.}
The next step is to optimize the solution w.r.t. $\mathtt a$: this is done by taking the derivative w.r.t. $\mathtt a$ of the solution and using the linearity of the Delay ODE. We use the relations \eqref{Equa:ker_der_G}, \eqref{Equa:der_M_phi1} and
\begin{equation*}
\frac{d}{d\mathtt a} \bar \varphi_1 = \frac{1}{\cos(\bar \alpha)} \quad \text{because of (\ref{eq:Delta:phi:a})},
\end{equation*}
we obtain that
\begin{align*}
\partial_{\mathtt a} \tilde r_{\mathtt a}(\varphi) &= \frac{1}{\cos(\bar \alpha)} \bigg[ - \Diracd_{- \frac{\pi}{2} + \bar \varphi_1} - \cot(\bar \alpha) G \bigg( \varphi + \frac{\pi}{2} - \bar \varphi_1 \bigg) + \frac{G \big( \varphi + \frac{\pi}{2} - \bar \varphi_1 - 2\pi - \bar \alpha) \big)}{\sin(\bar \alpha)} + G(\varphi - \bar \varphi_1) \bigg].
\end{align*}
We have thus to study the function
\begin{equation*}
\check r(\psi) = \cot(\bar \alpha) G(\psi) - \frac{G(\psi - 2\pi - \bar \alpha)}{\sin(\bar \alpha)} - G \bigg( \psi - \frac{\pi}{2} \bigg), \quad \psi = \varphi + \frac{\pi}{2} - \bar \varphi_1,
\end{equation*}
which is the solution to the RDE for the saturated spiral
\begin{equation*}
\frac{d}{d\psi} \check r(\psi) = \cot(\bar \alpha) \check r(\psi) - \frac{\check r(\psi- 2\pi - \bar \alpha)}{\sin(\bar \alpha)} + \cot(\bar \alpha) \Diracd_0 - \Diracd_{\frac{\pi}{2}} - \frac{\Diracd_{2\pi + \bar \alpha}}{\sin(\bar \alpha)}. 
\end{equation*}
In terms the function $\check \rho(\tau) = \check r((2\pi+\bar \alpha) \tau) e^{- \bar c (2\pi + \bar \alpha) \tau}$ the RDE becomes
\begin{equation}
\label{Equa:ckrho_delph_casest}
\check \rho(\tau) = \cot(\bar \alpha) g(\tau) - \frac{e^{-\bar c (2\pi + \bar \alpha)} g(\tau - 1)}{\sin(\bar \alpha)} - e^{-\bar c \frac{\pi}{2}} g \bigg( \tau - \frac{\frac{\pi}{2}}{2\pi + \bar \alpha} \bigg),
\end{equation}
which is the solution to the RDE for the saturated spiral
\begin{equation*}
\frac{d}{d\psi} \check \rho(\tau) = \check \rho(\tau) - \check \rho(\tau-1) + \cot(\bar \alpha) \Diracd_0 - e^{-\bar c \frac{\pi}{2}} \Diracd_{\frac{\pi}{2(2\pi+\bar \alpha)}} - \frac{e^{-\bar c(2\pi+\bar \alpha)} \Diracd_{1}}{\sin(\bar \alpha)}. 
\end{equation*}

\begin{figure}
\includegraphics[scale=.6]{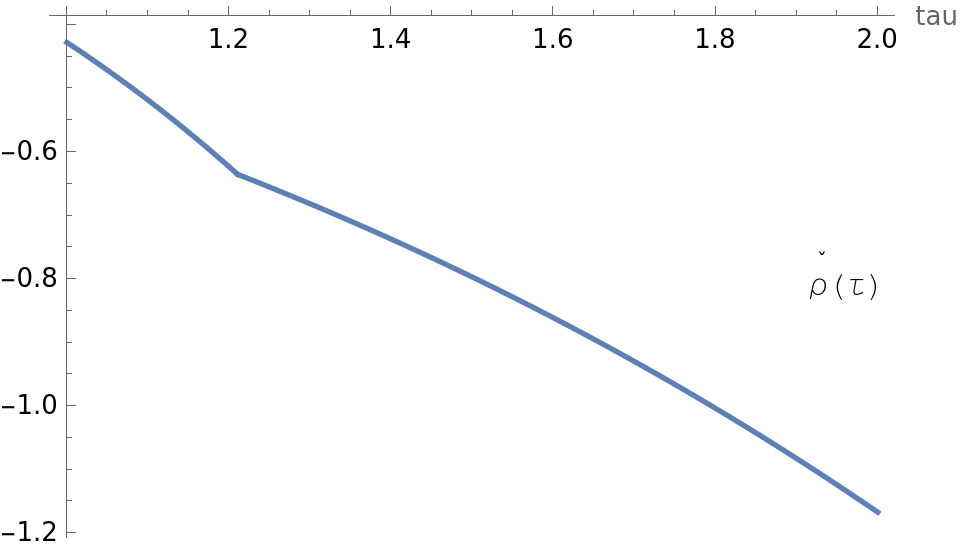}\hfill\includegraphics[scale=.6]{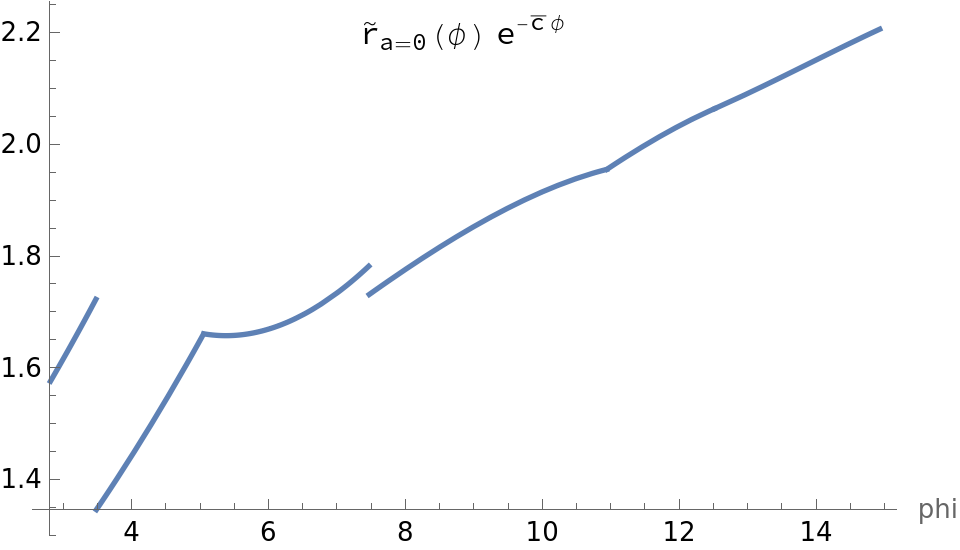}
\caption{Plot of the function \eqref{Equa:ckrho_delph_casest} for $\tau \in [1,2]$ on the left and of the first two rounds of the function $\tilde r_{\mathtt a = 0}(\phi)$ (Equation \eqref{Equa:expl_sol_sat_or}) multiplied by $e^{-\bar c \phi}$.}
\label{Fig:ckrho_delph_casst}
\end{figure}

A numerical plot of the solution for $\tau \in [1,2]$ is in Fig. \ref{Fig:ckrho_delph_casst}: we notice that it is strictly negative together with its derivative, and from Lemma \ref{lem:key} we deduce that $\check \rho < 0$ for all $\tau \geq 1$, corresponding to the angle
\begin{equation*}
\phi = 4\pi + \bar \alpha, \quad \text{because of the map} \ \tau \mapsto \phi = (2\pi + \bar \alpha) \tau + 2\pi.
\end{equation*}
For the first two rounds, a numerical plot of $\tilde r_{\mathtt a}(\phi)$ as function for $(\phi,\Delta \phi) \in [\Delta \phi + \frac{\pi}{2} - \bar \alpha] \times [0,\tan(\bar \alpha)]$ shows that the solution is strictly positive (and actually decreasing w.r.t. $\Delta \phi_\mathtt a$, i.e. increasing w.r.t. $\mathtt a$), see Fig. \ref{Fig:ckrho_delph_casst}.

\medskip


\noindent{\it Step 3.} From now on we will assume $\mathtt a = 0$, therefore $\Delta \phi_0 = \tan(\bar \alpha)$: a numerical plot of the function $\tilde r_{\mathtt a = 0}(\phi) e^{-\bar c \phi}$ is in Fig. \ref{Fig:ckrho_delph_casst}. We observe that the function is strictly positive and increasing in an interval larger than $2\pi+\bar \alpha$, and then by Lemma \ref{lem:key} we deduce that the solution is positive for all $\phi$.

\medskip

\noindent{\it Case $\Delta\phi_{\mathtt a}\leq 0$.} This case is described in Figure \ref{fig:base:case:185}. We will prove that the worst case is for $\mathtt a = \sin(\bar \alpha)$, which corresponds to $\Delta \phi_{\mathtt a} = 0$ in the previous analysis: hence again the worst case is for $\mathtt a = 0$, as shown above, and the spiral does not close.

The approach is completely similar, the only variation being the missing initial arc.

{\it Step 1.} We compute $\tilde r_{\mathtt a}(\phi)$ explicitly for the first rounds. We call $\bar \theta_{\mathtt a} = \theta_{\mathtt a} + \bar\alpha \in [0,\frac{\pi}{2}]$ as the slope of the segment $[P_1(\mathtt a),P_2(\mathtt a)]$:

\begin{figure}
\centering
\includegraphics[scale=0.75]{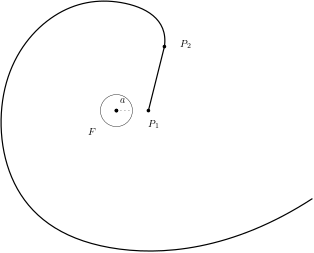}
\caption{Segment case for the fastest saturated spiral.}
\label{fig:base:case:185}
\end{figure}

\begin{itemize}
\item for $\phi \in[0,\bar \theta_{\mathtt a} - \bar \alpha]$ we have that
\begin{equation*}
\tilde r_{\mathtt a}(\phi) = \frac{\sin(\bar \theta_{\mathtt a})}{\sin(\bar \theta_{\mathtt a} - \phi)},
\end{equation*}
and we call
\begin{equation*}
\kappa'_1(\mathtt a) = \tilde r_{\mathtt a}(\bar \theta_{\mathtt a} - \bar \alpha) = \frac{\sin(\bar \theta_{\mathtt a})}{\sin(\bar \alpha)};
\end{equation*}
\item for $\phi \in [\bar \theta_{\mathtt a} - \bar \alpha, 2\pi)$ it holds
\begin{equation*}
\frac{d}{d\phi} \tilde r_{\mathtt a}(\phi) = \cot(\bar \alpha) \tilde r_{\mathtt a}(\phi),
\end{equation*}
which gives
\begin{equation*}
\tilde r_{\mathtt a}(\phi) = \kappa'_1(\mathtt a) e^{\cot(\bar \alpha) (\phi - \bar \theta_{\mathtt a} + \bar \alpha)};
\end{equation*}
\item the function $\tilde r_{\mathtt a}(\phi)$ has a jump at $\phi = 2\pi$, 
\begin{equation*}
\tilde r_{\mathtt a}(2\pi) - \tilde r_{\mathtt a}(2\pi-) = - 1,
\end{equation*}
and we set
$$
\kappa'_2(\mathtt a) = \tilde r_{\mathtt a}(2\pi) = \frac{\sin(\bar \theta_{\mathtt a})}{\sin(\bar \alpha)} e^{\cot(\bar \alpha) (2\pi - \bar \theta_{\mathtt a} + \bar \alpha)} - 1;
$$
\item for $\phi \in [2\pi,2\pi + \bar \theta_{\mathtt a})$ it holds
\begin{equation*}
\frac{d}{d\phi} \tilde r_{\mathtt a}(\phi) = \cot(\bar\alpha) \tilde r_{\mathtt a}(\phi),
\end{equation*}
so that
\begin{equation*}
\tilde r_{\mathtt a}(\phi) = \kappa'_2(\mathtt a) e^{\cot(\bar\alpha) (\phi - 2\pi)};
\end{equation*}
\item in $\phi=2\pi+\bar\theta_{\mathtt a}$ the function $\tilde r_{\mathtt a}$ has a jump, computed as
\begin{equation*}
\tilde r_{\mathtt a}(2\pi + \bar \theta_{\mathtt a}) - \tilde r_{\mathtt a}(2\pi + \bar \theta_{\mathtt a}-) = - \frac{\sin(\bar\theta_{\mathtt a} - \bar \alpha)}{\sin(\bar \alpha)},
\end{equation*}
and we call
$$
\kappa_3'(\mathtt a) = \tilde r_{\mathtt a}(2\pi + \bar \theta_{\mathtt a}) = \kappa'_2(\mathtt a) e^{\cot(\bar \alpha) \bar \theta_{\mathtt a}} - \frac{\sin(\bar \theta_{\mathtt a} - \bar \alpha)}{\sin(\bar \alpha)};
$$
\item finally, for $\phi \geq 2\pi+\bar\theta_{\mathtt a}$ the function solves
\begin{equation*}
\frac{d}{d\phi} \tilde r_{\mathtt a}(\phi) = \cot(\bar \alpha) \tilde r_{\mathtt a}(\phi) - \frac{\tilde r_{\mathtt a}(\phi - 2\pi - \bar \alpha)}{\sin(\bar\alpha)}.
\end{equation*}
\end{itemize}

As before, by using the kernel $G(\phi)$ of Remark \ref{Rem:oringal_phi_kernel}, the solution can be written for $\phi \geq \bar \theta_\mathtt a - \bar \alpha$ as
\begin{equation*}
\tilde r_{\mathtt a}(\phi) = \frac{\sin(\bar \theta_\mathtt a)}{\sin(\bar \alpha)} G(\phi - \bar \phi_0(\mathtt a)) - G(\phi - 2\pi) - \frac{\sin(\bar \theta_\mathtt a - \bar \alpha)}{\sin(\bar \alpha)} G(\phi - \bar \phi_0(\mathtt a) - 2\pi - \bar \alpha),
\end{equation*}
where the angle $\bar \phi_0(\mathtt a)$ given by
\begin{equation*}
\bar \phi_0(\mathtt a) = \bar \theta_\mathtt a - \bar \alpha.
\end{equation*}

\medskip

\noindent{\it Step 2.} \label{Page:step_2_case_study_segment} Taking the derivative w.r.t. $\bar \theta_\mathtt a$ and using \eqref{Equa:ker_der_G} we get
\begin{align*}
\check r_\mathtt a(\phi) &= \frac{d}{d\bar \theta_\mathtt a} \tilde r_{\mathtt a}(\phi) \\
&= \bigg( \frac{\cos(\theta_\mathtt a)}{\sin(\bar \alpha)} - \frac{\sin(\theta_\mathtt a)}{\sin(\bar \alpha)} \cot(\bar \alpha) \bigg) G(\phi - \phi_0) \\
& \quad + \bigg( \frac{\sin(\theta_\mathtt a)}{\sin(\bar \alpha)^2} - \frac{\cos(\theta_\mathtt a - \bar \alpha)}{\sin(\bar \alpha)} + \frac{\sin(\theta_\mathtt a - \bar \alpha)}{\sin(\bar \alpha)} \cot(\bar \alpha) \bigg) G(\phi - \phi_0 - (2\pi + \bar \alpha)) \\
& \quad  - \frac{\sin(\theta_\mathtt a - \bar \alpha)}{\sin(\bar \alpha)^2} G(\phi - \phi_0 - 2(2\pi + \bar \alpha)) \\
&= - \frac{\sin(\theta_\mathtt a - \bar \alpha)}{\sin(\bar \alpha)^2} \Big( G(\phi - \phi_0) - 2 \cos(\bar \alpha) G(\phi - \phi_0 - (2\pi + \bar \alpha)) + G(\phi - \phi_0 - 2(2\pi + \bar \alpha)) \Big).
\end{align*}
The latter function, when rescaled by $\rho(\tau) = \check r_\mathtt a(\phi_0 + (2\pi + \bar \alpha) \tau) e^{-\bar c(2\pi + \bar \alpha) \tau}$ is proportional to
\begin{equation}
\label{Equa:case_study_thetader2}
g(\tau) - 2 \cos(\bar \alpha) e^{- \bar c (2\pi +\bar \alpha)} g(\tau - 1) + e^{- 2 \bar c (2 \pi + \bar \alpha)} g(\tau - 2),
\end{equation}
The numerical plot (Fig. \ref{Fig:ckrho_delph_case_tg}) shows that it is positive and strictly increasing, hence by Lemma \ref{lem:key} we conclude that $\frac{d\tilde r_\mathtt a}{d\bar \theta_\mathtt a} < 0$, so the worst case is $\bar \theta_\mathtt a = \frac{\pi}{2}$, which corresponds to the arc case when $\Delta \phi_\mathtt a = 0$.

\begin{figure}
\includegraphics[scale=.5]{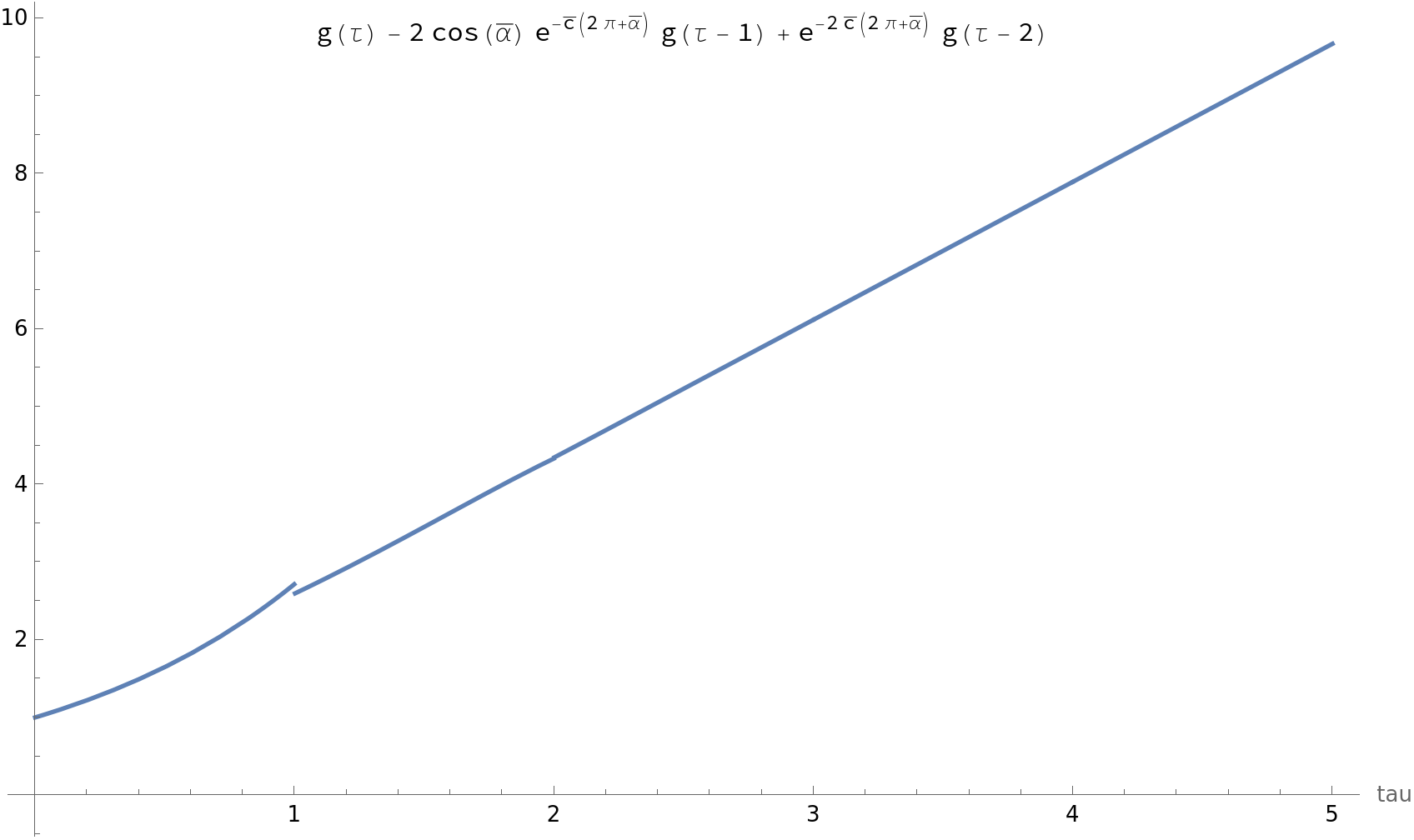}
\caption{Plot of the function \eqref{Equa:case_study_thetader2}.}
\label{Fig:ckrho_delph_case_tg}
\end{figure}

\medskip

\noindent{\it Asymptotic behavior.} For $\mathtt a \in [0,\sin(\bar \alpha)]$ the solution is given by \eqref{Equa:expl_sol_sat_or}:
\begin{equation*}
\tilde r_{\mathtt a}(\phi) = \frac{1}{\sin(\bar \alpha)} G(\phi - \bar \phi_0(\mathtt a)) - G(\phi - 2\pi) - M(\phi,\bar \phi_1(\mathtt a),\bar \phi_2(\mathtt a)) - \cot(\bar \alpha) G(\phi - \phi_0(\mathtt a) - 2\pi - \bar \alpha).
\end{equation*}
Using the asymptotic expansions of Corollary \ref{Cor:crit_G_M}, we obtain that the coefficient in front of the principal term $\phi e^{\bar c \phi}$ is
\begin{align*}
\frac{2 e^{-\bar c \bar \phi_0}}{\sin(\bar \alpha)} - 2 e^{-\bar c (2\pi)} - \int_{\bar \phi_1}^{\bar \phi_2} 2 e^{- \bar c \phi_0} d\phi_0 - 2 \cot(\bar \alpha) e^{-\bar c \bar \phi_2},
\end{align*}
which is the first case of the asymptotic expansion. The other case is completely similar.
\end{proof}

\subsection{An estimate on the length of the fastest saturated spiral}
\label{Ss:length_spiral_est}

We conclude this section with an estimate on the length $L$ of the fastest saturated spiral when $\mathtt a = 0$. Writing the solution \eqref{Equa:expl_sol_sat} with $\Delta \phi_\mathtt a = \tan(\bar \alpha)$ in the variable
\begin{equation*}
\rho(\tau) = \tilde r_{\mathtt a = 0}(\phi_0 + (2\pi + \bar \alpha) \tau) e^{- \bar c (2\pi + \bar \alpha) \tau}, \quad \phi_0(0) = \tan(\bar \alpha) + \frac{\pi}{2} - \bar \alpha,
\end{equation*}
we obtain
\begin{equation}
\label{Equa:rhobase_01}
\rho(\tau) = \frac{1}{\sin(\bar \alpha)} g(\tau) - e^{-\bar c(2\pi + \bar \alpha - \frac{\pi}{2} - \tan(\bar \alpha))} g(\tau - \tau_1) - m(\tau,\tau_2,1) - \cot(\bar \alpha) e^{-\bar c(2\pi + \bar \alpha)} g(\tau - 1),
\end{equation}
where
\begin{equation*}
\tau_1 = 1 - \frac{\frac{\pi}{2} + \tan(\bar \alpha)}{2\pi + \bar \alpha}, \quad \tau_2 = 1 - \frac{\tan(\bar \alpha)}{2\pi + \bar \alpha}.
\end{equation*}
Using the definitions of \gls{gfrak} and \gls{Gfrak} and that
$$
\tilde L_{\mathtt a = 0} = \tan(\bar \alpha) + \cot(\bar \alpha) + \int_{\phi_0}^\phi \frac{\tilde r_{\mathtt a=0}(\phi')}{\sin(\bar \alpha)} d\phi', \quad \phi \geq \phi_0,
$$
we thus have that the length of the spiral is
\begin{equation}
\label{Equa:Lbase_01}
\tilde L_{\mathtt a = 0}(\tau) = \begin{cases}
0 & \tau < - \frac{\frac{\pi}{2} - \bar \alpha + \tan(\bar \alpha)}{2\pi + \bar \alpha}, \\
(2\pi + \bar \alpha) \tau + \frac{\pi}{2} - \bar \alpha + \tan(\bar \alpha) & - \frac{\frac{\pi}{2} - \bar \alpha + \tan(\bar \alpha)}{2\pi + \bar \alpha} \leq \tau < - \frac{\frac{\pi}{2} - \bar \alpha}{2\pi + \bar \alpha}, \\
\tan(\bar \alpha) + \cot(\bar \alpha - (2\pi + \bar \alpha) \tau) & - \frac{\frac{\pi}{2} - \bar \alpha}{2\pi + \bar \alpha} \leq \tau < 0, \\
\begin{aligned}
&\tan(\bar \alpha) + \cot(\bar \alpha) + \frac{\mathfrak g(\tau)}{\sin(\bar \alpha)} - \mathfrak g(\tau - \tau_1) \\
& \quad - (\mathfrak G(\tau - \tau_2) - \mathfrak G(\tau - 1)) - \cot(\bar \alpha) \mathfrak g(\tau - 1)
\end{aligned} & \tau \geq 0.
\end{cases}
\end{equation}
The first terms take into account the initial arc and the initial segment.

\begin{proposition}
\label{Prop:length_compar}
It holds for $\phi \geq \tan(\bar \alpha) + \frac{\pi}{2} - \bar \alpha$
\begin{equation*}
\tilde r_{\mathtt a = 0}(\phi) - 2.08 \tilde L_{\mathtt a = 0}(\phi - 2\pi - \bar \alpha) \geq 0.67 e^{\bar c(\phi - \tan(\bar \alpha) - \frac{\pi}{2} + \bar \alpha)}.
\end{equation*}
\end{proposition}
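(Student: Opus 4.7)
The plan is to reduce the desired inequality to a positivity statement about a rescaled function and to conclude by an application of Lemma \ref{lem:key}, exactly as in the proof of Theorem \ref{thm:case:a}.

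First, I would move to the rescaled variable $\tau$ via $\phi = \phi_0 + (2\pi+\bar\alpha)\tau$ with $\phi_0 = \tan(\bar\alpha) + \frac{\pi}{2} - \bar\alpha$, so that the right-hand side becomes $0.67\, e^{\bar c(2\pi+\bar\alpha)\tau}$. Multiplying through by $e^{-\bar c(2\pi+\bar\alpha)\tau}$, letting $\rho(\tau)$ be as in \eqref{Equa:rhobase_01} and setting
$$\Lambda(\tau) := \tilde L_{\mathtt a=0}\bigl(\phi_0 + (2\pi+\bar\alpha)\tau\bigr)\, e^{-\bar c(2\pi+\bar\alpha)\tau},$$
the claim is equivalent to
$$F(\tau) := \rho(\tau) - 2.08\, e^{-\bar c(2\pi+\bar\alpha)}\, \Lambda(\tau-1) \geq 0.67 \qquad \text{for all } \tau \geq 0.$$

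Next, using the RDE \eqref{Equa:eq_for_frakG} for $\mathfrak G$ together with the analogous relation for $\mathfrak g$ coming from \eqref{Equa:eq_for_calG}, a direct differentiation of the explicit expression \eqref{Equa:Lbase_01} shows that $\Lambda$ (and hence $\Lambda(\cdot-1)$) solves a linear RDE with the same characteristic equation as $\rho$, plus a source concentrated in a bounded initial interval. Subtracting from the equation for $\rho$ derived in Step~1 of the proof of Theorem \ref{thm:case:a}, one obtains that $F$ satisfies a relation of the form
$$\dot F(\tau) = F(\tau) - F(\tau-1) + \mu(\tau),$$
with a known source $\mu$ whose support lies in a bounded initial interval.

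Once the equation for $F$ is in hand, I would apply the critical case of Lemma \ref{lem:key} together with Remark \ref{rmk:exponentially:explod}: it suffices to verify that $F(\tau) \geq 0.67$ on some interval $[\tau^\ast, \tau^\ast+1]$ lying past the support of $\mu$; the monotonicity-type criterion for $a=1$ then propagates the lower bound to all $\tau \geq \tau^\ast$. The initial part $\tau \in [0,\tau^\ast]$ is checked directly, using the closed-form expansions of Sections \ref{Sss:green_kernels} and \ref{Sss:length_saturated}: as in the proof of Theorem \ref{thm:case:a}, the resulting expressions involve only polynomials, exponentials and trigonometric combinations, and can be evaluated with Mathematica on the first few rounds.

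The main obstacle is quantitative rather than conceptual. The asymptotic expansions of Corollary \ref{Cor:crit_G_M} imply that the ratio $\tilde r_{\mathtt a=0}(\phi)/\tilde L_{\mathtt a=0}(\phi-2\pi-\bar\alpha)$ converges to a specific constant only slightly above $2.08$, so the slack in the inequality is tight. The numerical verification must therefore be performed with sufficient precision on enough initial rounds before the regularizing effect of the RDE smooths out the oscillations of $F$; the choice of $0.67$ needs to be calibrated against the minima of $F$ on those first rounds. This tightness — already flagged in Remark \ref{Rem:asymtp_consta_for_comp} — is what makes the quantitative calibration the delicate step in the argument.
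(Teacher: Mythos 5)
Your overall strategy is the paper's: rescale to $\tau$, derive a retarded relation for the combined functional, verify positivity and monotonicity numerically on the first few rounds, and propagate via Lemma \ref{lem:key} and Remark \ref{rmk:exponentially:explod}. But there is one concrete error in the structural claim on which the whole reduction rests. The source in the RDE for the length is \emph{not} concentrated in a bounded initial interval. From \eqref{Equa:eq_for_calG} and \eqref{Equa:eq_for_frakG}, each building block of $\tilde L_{\mathtt a = 0}$ contributes Heaviside-type and $\max\{0,\cdot\}$-type sources that persist for all large $\phi$; after the transient terms (the initial arc and segment, the jumps at $2\pi$ and at $\frac{5\pi}{2}+\tan(\bar\alpha)$) have all switched on, these combine into a \emph{constant} source $S_L = \frac{1}{\sin(\bar\alpha)} - 1 - \tan(\bar\alpha) - \cot(\bar\alpha) = -2.7473$ valid for every $\phi \geq \frac{5\pi}{2} + \tan(\bar\alpha)$, see \eqref{Equa:SL_const}. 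Consequently $F$ does not eventually satisfy the homogeneous equation $\dot F = F - F(\cdot-1)$; it satisfies $\dot F = F - F(\cdot -1) + \mu$ with $\mu$ a persistent term proportional to $-2.08\,S_L$.

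The step you are missing is precisely the sign check that saves the argument: because $S_L < 0$, the persistent source $-2.08\,S_L$ is \emph{positive}, so it can be discarded to obtain the one-sided inequality $\dot F \geq F(\tau) - F(\tau-1)$ for $\tau$ past the transient, which is exactly the hypothesis of Remark \ref{rmk:exponentially:explod}. This sign computation is the only non-numerical content of the proof, and if $S_L$ had the opposite sign the reduction to Lemma \ref{lem:key} would fail outright. A second, smaller imprecision: to invoke Lemma \ref{lem:key} you need not only $F \geq 0.67$ on a unit interval $[\tau^\ast,\tau^\ast+1]$ but also the right-endpoint dominance (in practice, that $F$ is increasing there); the paper checks positivity with minimum $>0.67$ on $[3,4]$ and monotonicity on $[4,5]$. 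Your remarks on the tightness of the constant $2.08$ relative to the asymptotic ratio $e^{\bar c}\bar c \sin(\bar\alpha) = 2.08884$ are correct and match Remark \ref{Rem:asymtp_consta_for_comp}.
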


\begin{figure}
\begin{subfigure}{.475\textwidth}
\resizebox{\textwidth}{!}{\includegraphics{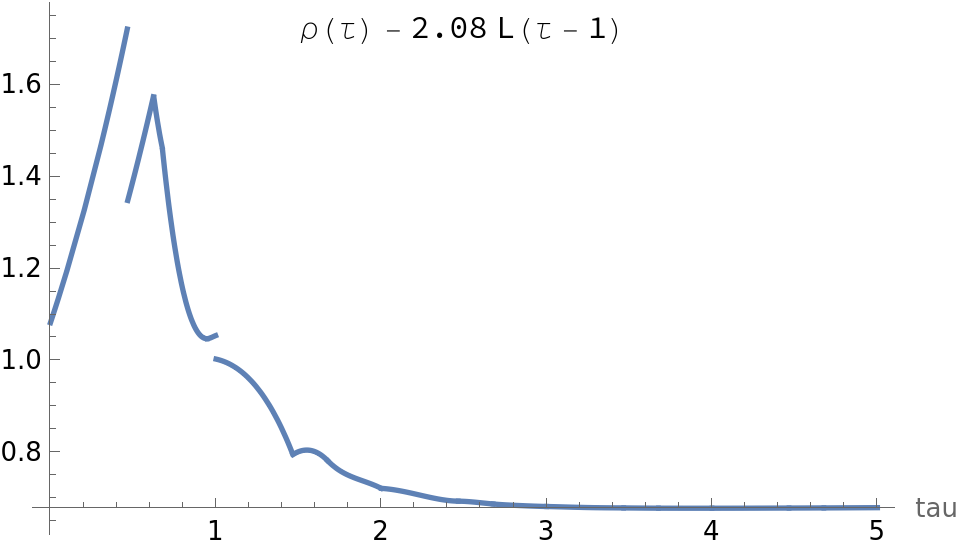}}
\end{subfigure} \hfill
\begin{subfigure}{.475\textwidth}
\resizebox{\textwidth}{!}{\includegraphics{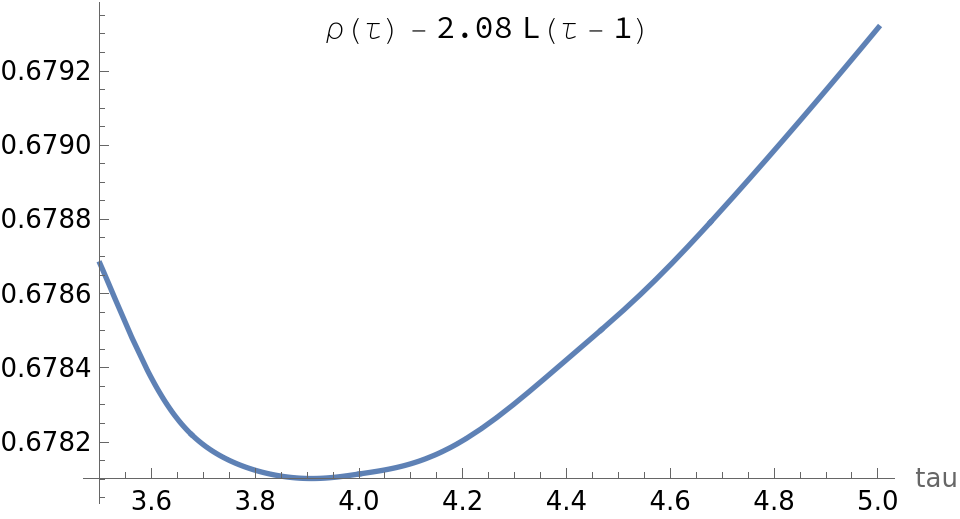}}
\end{subfigure}
\caption{Numerical computations of the function \eqref{Equa:rhobase_Lbase_comp}, with a blow-up about the minimum.}
\label{Fig:rhobase_Lbase_comp}
\end{figure}

\begin{proof}
We numerically compute the function
\begin{equation}
\label{Equa:rhobase_Lbase_comp}
\tau \mapsto \rho(\tau) - 2.08 e^{-\bar c (2\pi + \bar \alpha) \tau} \tilde L_{\mathtt a = 0}(\tau - 1),
\end{equation}
which is the rescaling of the function in the statement: Fig. \ref{Fig:rhobase_Lbase_comp} shows that the function has a minimum $>0.67$ for $\tau \in [3,4]$ and it is increasing for $\tau \in [4,5]$.

In order to apply Lemma \ref{lem:key}, we write the equation for $r - 2.08 L$. By using the equations for $\mathcal G, \mathfrak G$, Equations \eqref{Equa:eq_for_calG} and \eqref{Equa:eq_for_frakG}, we have that the $\phi$-derivative of $L(\phi)$ for $\phi \geq \phi_0$ is
\begin{align*}
\frac{d}{d\phi} \tilde L_{\mathtt a = 0}(\phi) &= \cot(\bar \alpha) \tilde L_{\mathtt a = 0}(\phi) - \frac{\tilde L_{\mathtt a = 0}(\phi - 2\pi - \bar \alpha)}{\sin(\bar \alpha)} \\
& \quad + \big( \tan(\bar \alpha) + \cot(\bar \alpha) \big) \delta_{\frac{\pi}{2} - \bar \alpha + \tan(\bar \alpha)} + \frac{H(\phi - \tan(\bar \alpha) - \frac{\pi}{2} + \bar \alpha)}{\sin(\bar \alpha)} - H(\phi - 2\pi) \\
& \quad - \frac{\max\{0,\phi - \frac{5\pi}{2}\} - \max\{0,\phi - \frac{5\pi}{2} - \tan(\bar \alpha)\}}{\sin(\bar \alpha)} - \cot(\bar \alpha) H \bigg( \phi - \frac{5\pi}{2} - \tan(\bar \alpha) \bigg).
\end{align*}
Recall that \gls{Heavyside} is the Heaviside function. Hence for $\phi \geq \frac{5\pi}{2} + \tan(\bar \alpha)$ we have \newglossaryentry{SL}{name=\ensuremath{S_L},description={source term for the length $L$ of a spiral}}
\begin{equation*}
\frac{d}{d\phi} \tilde L_{\mathtt a = 0}(\phi) = \cot(\bar \alpha) \tilde L_{\mathtt a = 0}(\phi) - \frac{\tilde L_{\mathtt a = 0}(\phi - 2\pi - \bar \alpha)}{\sin(\bar \alpha)} + \gls{SL},
\end{equation*}
where
\begin{equation}
\label{Equa:SL_const}
\gls{SL} = \frac{1}{\sin(\bar \alpha)} - 1 - \tan(\bar \alpha) - \cot(\bar \alpha) = - 2.7473 < 0.
\end{equation}
Hence for $\phi \geq \tan(\bar \alpha) + \frac{\pi}{2} - \bar \alpha + 2(2\pi + \bar \alpha)$ we have
\begin{align*}
\frac{d}{d\phi} (\tilde r_{\mathtt a = 0}(\phi) - 2.08 \tilde L_{\mathtt a = 0}(\phi - 2\pi - \bar \alpha)) &= \cot(\bar \alpha) (\tilde r_{\mathtt a = 0}(\phi) - 2.08 \tilde L_{\mathtt a = 0}(\phi - 2\pi - \bar \alpha)) \\
& \quad - \frac{\tilde r_{\mathtt a = 0}(\phi - 2\pi - \bar \alpha) - 2.08 \tilde L_{\mathtt a = 0}(\phi - 2(2\pi + \bar \alpha))}{\sin(\bar \alpha)} - S_L \\
&\geq \cot(\bar \alpha) (\tilde r_{\mathtt a = 0}(\phi) - 2.08 \tilde L_{\mathtt a = 0}(\phi - 2\pi - \bar \alpha)) \\
& \quad - \frac{\tilde r_{\mathtt a = 0}(\phi - 2\pi - \bar \alpha) - 2.08 \tilde L_{\mathtt a = 0}(\phi - 2(2\pi + \bar \alpha))}{\sin(\bar \alpha)}.
\end{align*}
Hence in terms of $\tau$ we obtain
\begin{equation*}
\begin{split}
\frac{d}{d\tau} (\rho(\tau) - 2.08 e^{-\bar c(2\pi + \bar \alpha) \tau} \tilde L_{\mathtt a = 0}(\tau-1)) &\geq (\rho(\tau) - 2.08 e^{-\bar c(2\pi + \bar \alpha) \tau} \tilde L_{\mathtt a = 0}(\tau-1)) \\
& \quad - (\rho(\tau-1) - 2.08 e^{-\bar c(2\pi + \bar \alpha) (\tau-1)} \tilde L_{\mathtt a = 0}(\tau-2)),
\end{split}
\end{equation*}
so that Lemma \ref{lem:key} and Remark \ref{rmk:exponentially:explod} can be applied.
\end{proof}

\begin{remark}
\label{Rem:asymtp_consta_for_comp}
The asymptotic constant for the length should be
\begin{equation}
\label{Equa:exact_Lbaserho}
\frac{\tilde r_{\mathtt a = 0}(\phi)}{\tilde L_{\mathtt a = 0}(\phi-2\pi-\bar \alpha)} \sim e^{\bar c} \bar c \sin(\bar \alpha) = 2.08884,
\end{equation}
obtained by using $\tilde r_{\mathtt a = 0}(\phi) \sim \phi e^{\bar c \phi}$. However for this exact value we cannot use Lemma \ref{lem:key}, obviously.
\end{remark}


\section{A differentiable path of spirals}
\label{S:family}

Let $\bar \zeta(s)$, $s \in [0,L]$ with $L \gg 1$, be a given admissible spiral. In this section we construct a family of admissible spirals \newglossaryentry{zetatildefam}{name=\ensuremath{\tilde \zeta(s;s_0)},description={family of admissible spirals, parametrized by $s_0$}} \gls{zetatildefam}, $s \in [0,L]$, with the following properties:
\begin{enumerate}
\item for $s \in [0,s_0]$ it holds $\tilde \zeta(s;s_0) = \bar \zeta(s)$;
\item in the first round starting from $\bar \zeta(s_0)$, the spiral $\tilde \zeta(s;s_0)$ is the spiral given by Theorem \ref{Cor:curve_cal_R_sat_spiral};
\item after the first round, the spiral $\tilde \zeta(s;s_0)$ is the saturated spiral.
\end{enumerate}
We call each spiral \newglossaryentry{Ztildes0}{name=\ensuremath{\tilde Z(s_0)},description={fastest saturated spiral starting from $s_0$}} $\gls{Ztildes0} = \tilde \zeta([0,L];s_0)$ the \emph{fastest saturated spiral starting from $s_0$}. The map $s_0 \mapsto \tilde Z(s_0)$ defines a path in the space of admissible spirals, connecting the fastest saturated spiral of the previous section (corresponding to $\tilde \zeta(s;0) = \tilde \zeta_{\mathtt a = 0}(s)$) with the given spiral $\bar Z = \bar \zeta([0,s])$.

If we denote by \newglossaryentry{phi0initangle}{name=\ensuremath{\phi_0 = (s^+)^{-1}(s_0)},description={starting angle for the admissible family of spiral $\tilde \zeta(s,;s_0)$}} \gls{phi0initangle} the starting angle of the above construction, then the first round is determined by
\begin{equation*}
\phi_0 \leq \phi \leq \phi_0 + \beta^-(\phi_0),
\end{equation*}
In the following we will write both $\tilde \zeta(\cdot;s_0),\tilde r(\cdot;s_0)$ and $\tilde \zeta(\cdot;\phi_0),\tilde r(\cdot;\phi_0)$ to denote this admissible family of spirals, and the argument $\cdot$ can be either the arc length $s$ or the rotation angle $\phi$.

It is immediate to deduce the following lemma.

\begin{lemma}
\label{Lem:tilde_zeta_prop}
The family of spirals $\tilde \zeta(s;s_0)$ satisfies the following properties:
\begin{enumerate}
\item $\tilde \zeta(\cdot;L) = \bar \zeta(\cdot)$;
\item $\tilde \zeta(s;0)$ is the fastest saturated spiral constructed in Section \ref{S:case:study};
\item if $s$ belongs to the first round starting from $\bar \zeta(s_0)$, then the solution is given by Theorem \ref{Cor:curve_cal_R_sat_spiral};
\item after the first round, denoting with \newglossaryentry{rtildefam}{name=\ensuremath{\tilde r(\phi;s_0)},description={family of admissible spirals in the rotation angle coordinate $\phi$, parametrized by $s_0$}} \gls{rtildefam} the angle-ray representation, we have
\begin{equation*}
\frac{d}{d\phi} \tilde r(\phi;s_0) = \cot(\bar \alpha) \tilde r(\phi;s_0) - \frac{\tilde r(\phi - 2\pi + \bar \alpha;s_0)}{\sin(\bar \alpha)}.
\end{equation*}
\end{enumerate}
\end{lemma}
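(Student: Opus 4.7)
The plan is to check the four properties in order, essentially by reading them off from the three-regime construction of $\tilde\zeta(\cdot;s_0)$ recalled just above the lemma (prior arc $\bar\zeta\rest_{[0,s_0]}$, then the first-round completion of Theorem \ref{Cor:curve_cal_R_sat_spiral}, then saturated continuation). Since each regime is already defined, no actual computation is required; the only work is to verify that the two boundary cases $s_0 = L$ and $s_0 = 0$ collapse to the claimed objects.

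For (1), I would simply observe that when $s_0 = L$, the clause $\tilde\zeta(s;s_0) = \bar\zeta(s)$ on $[0,s_0]$ already exhausts the interval $[0,L]$, so the other two clauses are vacuous and $\tilde\zeta(\cdot;L) \equiv \bar\zeta$. For (2), when $s_0 = 0$ there is no prior arc, so the construction begins with a direct application of Theorem \ref{Cor:curve_cal_R_sat_spiral} to the initial configuration ($R_0 = \{(0,0)\}$ with $\bar\zeta(0) = (1,0)$, hence no previously built barrier) and then continues by the saturated spiral from the first saturated point; this is exactly the arc $+$ segment $+$ saturated-spiral object studied in Section \ref{S:case:study}, i.e.\ the fastest saturated spiral.

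Property (3) is literally the second clause of the construction, so nothing is to be done. For property (4), the third clause says that after the first round $\tilde\zeta(\cdot;s_0)$ is saturated, so by Corollary \ref{Cor:sat_burning} together with the definition of the saturated spiral the control angle satisfies $\beta(\phi) \equiv \bar\alpha$. Substituting $\beta \equiv \bar\alpha$ into the distributional RDE \eqref{eq:spiral:angle} of Lemma \ref{lem:ODE:spiral}, and using Point \eqref{Point_2:lem:ODE:spiral} of the same lemma at the previous round to rewrite $\frac{ds^-}{d\phi}$ as $\frac{\tilde r(\phi - 2\pi - \bar\alpha;s_0)}{\sin(\bar\alpha)}$, produces exactly the linear constant-delay equation in the statement.

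The only potential obstacle is one of well-definedness rather than of proof: one must check that the three regimes glue into a single admissible spiral --- in particular, that local convexity, monotonicity of $u \circ \tilde\zeta$, and the admissibility inequality all survive across both the junction at $\bar\zeta(s_0)$ and the junction at the end of the first round. All three requirements are however already built into the invoked constructions: Theorem \ref{Cor:curve_cal_R_sat_spiral} produces a convex admissible first-round continuation ending at a saturated point tangent to the saturated spiral, and the discussion preceding Definition \ref{Def:satur_spiral} in Section \ref{Sss:equation_satur} shows that prolonging an admissible spiral by a saturated branch starting from a saturated point with $\beta^-(Q) \leq \bar\alpha$ preserves admissibility. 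Hence no new estimate is needed.
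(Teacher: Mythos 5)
Your proof is correct and follows exactly the route the paper intends: the paper gives no proof at all, declaring the lemma "immediate to deduce" from the three-regime construction, and your verification of the two boundary cases, the substitution $\beta\equiv\bar\alpha$ into \eqref{eq:spiral:angle}, and the gluing/admissibility check are precisely the details being left implicit. (Incidentally, your derivation of Point (4) yields the delay $\phi - 2\pi - \bar\alpha$, which is the correct form; the $+\bar\alpha$ in the printed statement is a typo.)
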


Since the construction of Theorem \ref{Cor:curve_cal_R_sat_spiral} is explicit, the last point above yields that the family is completely determined, up to computing the solution in the first round after $s_0$ and solving the saturated RDE.

The main result of this section is to prove that this family is $C^1$-w.r.t. the parameter $s_0$: more precisely, we will show that the limit of the incremental ratio \newglossaryentry{deltartildefam}{name=\ensuremath{\delta \tilde r(\phi;s_0)},description={derivative of the family $\tilde r(s;s_0)$ w.r.t. $s_0$}}
\begin{equation*}
\gls{deltartildefam} := \lim_{\delta s_0 \searrow 0} \frac{\tilde r(\phi;s_0 + \delta s_0) - \tilde r(\phi;s_0)}{\delta s_0}
\end{equation*}
exists outside a finite set of angles $\phi$, and it satisfies a precise RDE, which allows the explicit computation of the $\delta \tilde r$. The dependence on the base spiral $\bar \zeta$ appears only in two real parameters describing the initial data of $\delta \tilde r$, allowing to write its explicit form with computations similar to Section \ref{S:case:study}: more precisely, the form of the RDE satisfied by the perturbation is very similar to the RDE satisfied by the solution $r(\phi)$ or its derivative $\frac{d}{d\mathtt a} r(\phi)$ considered in Section \ref{S:case:study}, and will be studied with similar techniques. In particular, depending whether the non-saturated part of the fastest closing spiral at the first round is a segment or is an arc+segment, the derivative takes different form, and we will study them separately in two different subsections, Subsections \ref{S:segment} and \ref{S:arc}: these derivatives coincide at the transition point between segment and arc.

\begin{remark}
\label{Rem:segmet_sub_arc0}
We remark that the segment case can be recovered from the arc case when the initial level set is just one point, see Remark \ref{Rem:same_as_segment}: however we believe that it is simpler and very instructive to perform the analysis in the segment case, so that we present it in the next subsection.

We anticipate also that out that $\delta \tilde r(\phi;s_0)$ is not positive in an initial interval of the second round, and thus it cannot be optimal for some angles. Hence to prove Theorem \ref{Theo:main_extended} we will need an additional perturbation of the last round: we address this fact in Section \ref{S:optimal_sol_candidate}.
\end{remark}

Since in the last round we have already the explicit form of the optimal closing solution (Theorem \ref{Cor:curve_cal_R_sat_spiral}), we can assume that $\zeta$ makes at least one round after $\phi_0$, so that there will always be a saturated part.

\subsection{Analysis of \texorpdfstring{$\tilde \zeta(s;s_0)$}{tildezeta(s;s0)} - Segment Case}
\label{S:segment}

The assumption on $\tilde \zeta(\phi;s_0)$ is that for $\phi \in \phi_0 + [0,2\pi + \beta^-(\phi_0))$ it is made of
\begin{itemize}
\item a segment $[P_1,P_2]$ where $P_1 = \bar \zeta(s_0) = \bar \zeta(s^+_0(\phi_0))$, followed by
\item a saturated spiral with $\beta(\phi) = \bar \alpha$.
\end{itemize}
The spiral $\tilde r(\phi,s_0)$ will then proceed for all angles $\phi \geq \phi_0 + 2\pi + \beta^-(\phi_0)$ as a saturated spiral, i.e. $\beta(\phi) = \bar \alpha$ constant. Following the notation of Step 1 in Case $\Delta \phi_\mathtt a = 0$ of the proof of Theorem \ref{thm:case:a}, we will denote by $\bar \theta$ the angle \newglossaryentry{thetatildeseg}{name=\ensuremath{\bar \theta},description={angle of the initial segment $[P_1,P_2]$ for the family $\tilde \zeta(\cdot;s_0)$}}
\begin{equation*}
\gls{thetatildeseg} = \angle(e^{i\phi_0},[P_1,P_2]) \in \bigg[ 0,\frac{\pi}{2} \bigg].
\end{equation*}
The case $\bar \theta = \frac{\pi}{2}$ corresponds to the transition between the segment-case and the arc-case.

Due to uniqueness of the construction, the same structure (i.e. an initial segment followed by a saturated spiral) holds for the spiral $\tilde r(\phi;s_0 + \delta s_0)$, starting from the point $\bar \zeta(s_0 + \delta s_0)$ if $\delta s_0 \ll 1$ and $\bar \theta < \frac{\pi}{2}$, i.e. there will be an initial segment
$$
[P_1',P_2'] = [P_1 + \delta P_1, P_2 + \delta P_2],
$$
with the angles
$$
s^+(\phi_0') = s_0^+(\phi_0 + \delta \phi_0) = s_0 + \delta s_0, \quad \bar \theta' = \bar \theta + \delta \bar \theta = \angle(e^{i\phi_0},[P_1',P_2']),
$$
followed by a saturated spiral. The analysis of this section refers to Figure \ref{segm:per}. 

\begin{figure}
\centering
\includegraphics[scale=0.5]{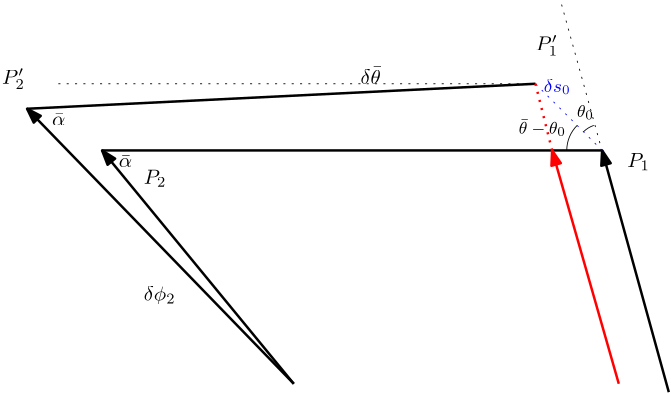}
\caption{Analysis of perturbations for the segment case.} 
\label{segm:per}
\end{figure}

Since the spirals are convex and there are no smaller admissible convex spirals for the first round ($\phi \in [\phi_0,\phi_0 + 2\pi + \beta^-(\phi_0))$), by the convexity of the level sets of $u$ one deduces that \newglossaryentry{theta0pertsegm}{name=\ensuremath{\theta_0 = \beta^+(s_0)},description={minimal initial angle for the segment in the segment case, Section \ref{S:segment}}}
\begin{equation*}
0 < \gls{theta0pertsegm} \leq \bar \theta, \quad \bar \alpha \leq \bar \theta,
\end{equation*}
where the second is due to the fact that the point $\bar \zeta(s_0)$ is not saturated if $P_1 \not= P_2$. We will denote by $\theta$ the angle \newglossaryentry{thetapertsegm}{name=\ensuremath{\theta},description={variation of the direction of the spiral at $s_0$}}
\begin{equation*}
\gls{thetapertsegm} = \bar \theta - \theta_0.
\end{equation*}

We compute now the variations up to second order terms w.r.t. $\delta s_0$:
\begin{equation*}
\delta P_1 = \delta s_0 e^{\i (\phi_0 + \theta_0)}, \quad \delta \phi_0 = \frac{\delta s_0 \sin(\theta_0)}{r(\phi_0)},
\end{equation*}
\begin{align*}
\delta P_2 &= \delta P_1 + \delta |P_2 - P_1| e^{\i (\phi_0 + \bar \theta)} + |P_2 - P_1| \delta \bar \theta e^{\i (\phi_0 + \bar \theta + \frac{\pi}{2})} \\
\notag &= \delta s_0 e^{\i (\phi_0 + \theta_0)} + \delta |P_2 - P_1| e^{\i (\phi_0 + \bar \theta)} + |P_2 - P_1| \delta \bar \theta e^{\i (\phi_0 + \bar \theta + \frac{\pi}{2})}.
\end{align*}
Since the point $P_2'$ is saturated, we obtain
\begin{equation*}
0 = u(P_2') - \cos(\bar \alpha) (\delta s_0 + |P_2' - P_1'|+L(P_0)),
\end{equation*}
where $L(P_0)$ is the length of the spiral up to the angle $\phi_0$. In particular, observing that $\nabla u(P_2) = e^{\i (\phi_0 + \bar \theta - \bar \alpha)}$,
\begin{align*}
 0 &= e^{\i (\phi_0 + \bar \theta - \bar \alpha)} \cdot \delta P_2 - \cos(\bar \alpha) (\delta s_0 + \delta |P_2 - P_1|) \\
&= \delta s_0 \cos(\bar \theta - \theta_0 - \bar \alpha) + \delta |P_2 - P_1| \cos(\bar \alpha) - |P_2 - P_1| \delta \theta \sin(\bar \alpha) - \cos(\bar \alpha) (\delta s_0 + \delta |P_2 - P_1|) \\
&= \delta s_0 \cos(\bar \alpha - \theta) + \delta |P_2 - P_1| \cos(\bar \alpha) - |P_2 - P_1| \delta \theta \sin(\bar \alpha) - \cos(\bar \alpha) (\delta s_0 + \delta |P_2 - P_1|),
\end{align*}
where we recall that \gls{vcdotwscalarprod} denotes the scalar product of $\mathbf v,\mathbf w \in \R^2$. We thus deduce that
\begin{equation}
\label{Equa:delta_theta_segm_1}
\frac{|P_2 - P_1| \delta \theta}{\delta s_0} = \frac{\cos(\bar \alpha - \theta) - \cos(\bar \alpha)}{\sin(\bar \alpha)}.
\end{equation}
Imposing that the angle at $P_2'$ with the optimal ray is equal to the critical angle $\bar \alpha$, we deduce that $P_2'$ corresponds to the angle $\phi_2 + \delta \theta$, where $\phi_2 = \phi_0 + \bar \theta - \bar \alpha$ is the angle corresponding to $P_2$ in the angle-ray representation for $\tilde r(\phi;s_0)$. Hence, imposing this relation we obtain
\begin{align*}
r(\phi_2) \delta \theta &= e^{\i (\phi_0 + \bar \theta - \bar \alpha + \frac{\pi}{2})} \cdot \delta P_2 \\
&= \delta s_0 \sin(\bar \alpha - \theta) + \delta |P_2 - P_1| \sin(\bar \alpha) + |P_2 - P_1| \delta \theta \cos(\bar \alpha),
\end{align*}
so that using \eqref{Equa:delta_theta_segm_1} we conclude that
\begin{equation}
\label{Equa:delta_P_2P_1_segm}
\begin{split}
\frac{\delta |P_2 - P_1|}{\delta s_0} - \frac{r(\phi_2)}{\sin(\bar \alpha)} \frac{\delta \theta}{\delta s_0} &= \cot(\bar \alpha) \frac{\cos(\bar \alpha) - \cos(\bar \alpha - \theta)}{\sin(\bar \alpha)} + \frac{\sin(\theta - \bar \alpha)}{\sin(\bar \alpha)} \\
&= \frac{\cos(\bar \alpha)^2 - \cos(\theta)}{\sin(\bar \alpha)^2} \\
&= - 1 + \frac{1 - \cos(\theta)}{\sin(\bar \alpha)^2}.
\end{split}
\end{equation}

To compute the perturbation $\delta \tilde r(\phi;s_0)$ for $\phi \geq \phi_2$, we observe that $\beta(\phi) = \bar \alpha$ in this region, so that, denoting with $R(\phi)$ the radius of curvature of the base spiral $r(\phi)$, one obtains

\begin{lemma}
\label{Lem:equation_segment_tilde_r}
The RDE satisfied by $\tilde r$ for $\phi \geq \phi_0 + \bar \theta - \bar \alpha$ is
\begin{equation}
\label{Equa:segm_equ_tilde_r}
\frac{d}{d\phi} \tilde r(\phi;s_0) = \cot(\bar \alpha) \tilde r(\phi;s_0) - \begin{cases}
R(\phi) & \phi_0 + \bar \theta - \bar \alpha \leq \phi < \phi_0 + 2\pi + \beta^-(\phi_0), \\
0 & \phi_0 + 2\pi + \beta^-(\phi_0) \leq \phi < \phi_0 + 2\pi + \bar \theta, \\
|P_2 - P_1| \Diracd_{\phi_0 + 2\pi + \bar \theta} + \frac{\tilde r(\phi - 2\pi - \bar \alpha)}{\sin(\bar \alpha)} & \phi \geq \phi_0 + 2\pi + \bar\theta.
\end{cases}
\end{equation}
\end{lemma}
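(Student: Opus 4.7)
The plan is to apply Lemma \ref{lem:ODE:spiral} Point \eqref{Point_3:lem:ODE:spiral} directly, which states that in the sense of distributions
\begin{equation*}
D\tilde r = \cot(\beta(\phi))\tilde r\, \mathscr L^1 - Ds^-.
\end{equation*}
The key reduction is that for all $\phi \geq \phi_0 + \bar\theta - \bar\alpha$ the endpoint $\tilde\zeta(s^+(\phi))$ lies on the saturated part of $\tilde\zeta$ (starting from $P_2$), so $\beta(\phi) \equiv \bar\alpha$ is constant. The coefficient on $\tilde r$ is therefore $\cot(\bar\alpha)$ throughout, and the only work is to compute the distributional derivative $Ds^-$ in each of the three regions.

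First, I would identify the base point $\tilde\zeta(s^-(\phi))$ in each region: for $\phi \in [\phi_0 + \bar\theta - \bar\alpha, \phi_0 + 2\pi + \beta^-(\phi_0))$, the base point lies on the arc of $\bar\zeta$ that precedes $s_0$ (i.e.\ on the portion inherited from the base spiral), hence by the relation $\frac{ds^-}{d\phi} = R(\phi)$ (the curvature of $\bar\zeta$ at the base, see Remark \ref{rmk:constant:angle}), we get the first case of the RDE. At $\phi = \phi_0 + 2\pi + \beta^-(\phi_0)$, the base point reaches the corner $P_1 = \tilde\zeta(s_0)$, where the tangent direction of $\tilde\zeta$ jumps from $e^{\i(\phi_0 + \beta^-(\phi_0))}$ (left limit, from $\bar\zeta$) to $e^{\i(\phi_0 + \bar\theta)}$ (right limit, tangent to the segment). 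By the description of $\partial^-\zeta$ at a corner (see Remark \ref{Rem:another_def}) the rays based at $P_1$ have directions $e^{\i\phi}$ with $\phi \in \phi_0 + 2\pi + [\beta^-(\phi_0),\bar\theta]$, during which $s^-(\phi) \equiv s_0$, giving $\frac{ds^-}{d\phi} = 0$ in the second region.

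Next comes the transition onto the segment. Since the tangent direction is constant equal to $e^{\i(\phi_0 + \bar\theta)}$ along the whole segment $[P_1,P_2]$, only the single ray direction $\phi = \phi_0 + 2\pi + \bar\theta$ is compatible with base points on the segment. By right-continuity and monotonicity of $s^-$, the function must jump at this angle from $s_0$ to $s_0 + |P_2 - P_1|$, contributing the Dirac mass $|P_2 - P_1|\Diracd_{\phi_0 + 2\pi + \bar\theta}$ to $Ds^-$. Finally, for $\phi \geq \phi_0 + 2\pi + \bar\theta$, the base point has entered the saturated portion of $\tilde\zeta$ beyond $P_2$; using $s^-(\phi) = s^+(\phi - 2\pi - \bar\alpha)$ (the subsequent-angle relation \eqref{eq:subs:angle} with $\beta \equiv \bar\alpha$) and the length formula \eqref{eq:var:length:1}, one obtains $\frac{ds^-}{d\phi} = \frac{\tilde r(\phi - 2\pi - \bar\alpha)}{\sin(\bar\alpha)}$, which is the delay term in the third case.

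Assembling these three contributions to $Ds^-$ and substituting into the distributional identity above yields the piecewise RDE of the statement. The main (minor) subtlety to check is the behavior at the two matching angles $\phi_0 + 2\pi + \beta^-(\phi_0)$ and $\phi_0 + 2\pi + \bar\theta$: in particular, one must verify that $s^-$ is genuinely constant on the intermediate interval (no hidden mass of $Ds^-$) and that the jump contribution at $\phi_0 + 2\pi + \bar\theta$ matches exactly $|P_2 - P_1|$, both of which follow from the geometric description of the corner at $P_1$ and the constancy of the tangent along the segment. No regularity obstacle is expected, since the one-sided Lipschitz condition on $\beta$ from \eqref{Equa:convex_beta} and the BV structure of $s^-$ guaranteed by Theorem \ref{Cor:angle_preprest} make all the distributional manipulations above legitimate.
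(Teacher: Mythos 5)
Your proposal is correct and follows the same route as the paper's (very terse) proof: fix $\beta\equiv\bar\alpha$ since the endpoint lies on the saturated part beyond $P_2$, and then read off $Ds^-$ region by region — curvature of the inherited arc, constancy of $s^-$ while the ray direction sweeps the subdifferential of the corner at $P_1$, the jump $|P_2-P_1|$ produced by the segment, and the delay term from the saturated continuation. The extra detail you supply (the explicit use of $s^-(\phi)=s^+(\phi-2\pi-\bar\alpha)$ and the length formula for the last case) is exactly what the paper leaves implicit.
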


\begin{proof}
We have just to compute the radius of curvature, since $\beta = \bar \alpha$ fixed. Formula \eqref{Equa:segm_equ_tilde_r} follows easily, observing that if $\beta^-(s_0) < \bar \theta$ then there is corner in the spiral $\tilde r(\phi;s_0)$ whose subdifferential is
\begin{equation*}
\partial^- \tilde r(\phi_0;s_0) = \big\{ e^{i\phi}, \phi \in \phi_0 + [\beta^-(\phi_0),\bar \theta] \big\}.
\end{equation*}
The jump at $\phi_0 + 2\pi + \bar \theta$ is due to the segment $[P_1,P_2]$, corresponding in a jump in $s^-(\phi)$ at that angle.
\end{proof}

In the next proposition we compute the RDE satisfied by the derivative $\delta \tilde r(\phi;s_0)$ w.r.t. $s_0$, for angles $\phi > \phi_0 + \bar \theta - \bar \alpha$.

\begin{proposition}
\label{Prop:equa_delta_tilde_r_segm}
For $\phi > \phi_2 = \phi_0 + \bar \theta - \bar \alpha$, the derivative 
$$
\delta \tilde r(\phi;s_0) = \lim_{\delta s_0 \searrow 0} \frac{\tilde r(\phi;s_0 + \delta s_0) - \tilde r(\phi;s_0)}{\delta s_0}
$$
satisfies the RDE on $\R$
\begin{equation*}
\frac{d}{d\phi} \delta \tilde r(\phi;s_0) = \cot(\bar \alpha) \delta \tilde r(\phi;s_0) - \frac{\delta \tilde r(\phi - 2\pi - \bar \alpha)}{\sin(\bar \alpha)} + S(\phi;s_0),
\end{equation*}
with source
\begin{align*}
S(\phi;s_0) &= \cot(\bar \alpha) \frac{1 - \cos(\theta)}{\sin(\bar \alpha)} \Diracd_{\phi_0 + \bar \theta - \bar \alpha} - \Diracd_{\phi_0 + 2\pi + \theta_0} \\
& \quad + \frac{\cos(\theta) - \cos(\bar \alpha)^2}{\sin(\bar \alpha)^2} \Diracd_{\phi_0 + 2\pi + \bar \theta} + \frac{\cos(\bar \alpha - \theta) - \cos(\bar \alpha)}{\sin(\bar \alpha)} \Diracd'_{\phi_0 + 2\pi + \bar \theta},
\end{align*}
where $\Diracd'$ is the distributional derivative of the Dirac's delta $\Diracd$.
\end{proposition}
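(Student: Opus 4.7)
The approach is to differentiate the RDE of Lemma \ref{Lem:equation_segment_tilde_r} with respect to $s_0$ at fixed $\phi$. Since the operator $\frac{d}{d\phi} - \cot(\bar \alpha)(\cdot)$ is linear with constant coefficients, $\delta \tilde r$ satisfies
\[
\frac{d}{d\phi}\delta \tilde r(\phi;s_0) - \cot(\bar \alpha)\,\delta \tilde r(\phi;s_0) \;=\; - \frac{d}{ds_0}\tilde S(\phi;s_0),
\]
with $\tilde S$ the source in \eqref{Equa:segm_equ_tilde_r}. The delayed term $\tilde r(\phi - 2\pi - \bar \alpha)/\sin(\bar \alpha)$ inside $\tilde S$ differentiates to $\delta \tilde r(\phi - 2\pi - \bar \alpha)/\sin(\bar \alpha)$; brought to the left-hand side it furnishes the retarded term of the target RDE. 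The rest of the proof consists in identifying the singular measure $S(\phi;s_0)$ produced by $-d\tilde S/ds_0$ on the three distinguished angles $\phi_2 = \phi_0 + \bar \theta - \bar \alpha$, $\phi_0 + 2\pi + \theta_0$ and $\phi_0 + 2\pi + \bar \theta$.

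To compute $-d\tilde S/ds_0$ I would track each ingredient of $\tilde S$ separately. The smooth contribution $R(\phi)\,\ind_{[\phi_2,\,\phi_0+2\pi+\beta^-(\phi_0))}(\phi)$ has an $s_0$-independent integrand (the curvature of the fixed spiral $\bar \zeta$ at $\bar \zeta(s^-(\phi))$); differentiating the indicator produces two boundary Dirac masses with coefficients $\pm R(\text{endpoint})\cdot(d/ds_0)(\text{endpoint})$. The concentrated source $|P_2 - P_1|\,\Diracd_{\phi_0+2\pi+\bar \theta}$ contributes an amplitude term $\frac{d|P_2-P_1|}{ds_0}\Diracd_{\phi_0+2\pi+\bar \theta}$ and a location term proportional to $\Diracd'_{\phi_0+2\pi+\bar \theta}$. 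Finally, the indicator $\ind_{[\phi_0+2\pi+\bar \theta,\infty)}$ multiplying the retarded term produces an additional Dirac at $\phi_0+2\pi+\bar \theta$ with coefficient $(\tilde r(\phi_2)/\sin(\bar \alpha))\cdot(d/ds_0)(\phi_0+\bar \theta)$, which combines with the amplitude contribution above. Substituting the already-computed kinematic identities $\delta \phi_0/\delta s_0 = \sin(\theta_0)/r(\phi_0)$, formula \eqref{Equa:delta_theta_segm_1} for $|P_2-P_1|\,\delta \bar \theta/\delta s_0$, and formula \eqref{Equa:delta_P_2P_1_segm} for $\delta |P_2 - P_1|/\delta s_0$, together with the geometric identity $R(\phi_0+2\pi+\theta_0) = r(\phi_0)/\sin(\theta_0)$ (which follows from Lemma \ref{lem:ODE:spiral}(\ref{Point_2:lem:ODE:spiral}) applied one round earlier), a trigonometric reduction then yields the four coefficients appearing in the statement.

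The main obstacle will be the delicate handling of the one-sided limits of $\beta$ at the corner $\bar \zeta(s_0)$. Any positive perturbation $\delta s_0 > 0$ automatically moves past a possible jump of the subdifferential of $\bar \zeta$ at $s_0$, so the right endpoint of the first indicator, originally at $\phi_0 + 2\pi + \beta^-(\phi_0)$, has to be interpreted as $\phi_0 + 2\pi + \beta^+(\phi_0) = \phi_0 + 2\pi + \theta_0$ when evaluating the right-derivative in $s_0$. This substitution is precisely what places the second Dirac of $S(\phi;s_0)$ at $\phi_0 + 2\pi + \theta_0$ rather than at $\phi_0 + 2\pi + \beta^-(\phi_0)$, and it is the only point where the BV structure of $\beta$ genuinely enters. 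A secondary subtlety will be the careful collection of all the Diracs and $\Diracd'$'s supported at the common location $\phi_0 + 2\pi + \bar \theta$, arising from three different mechanisms (amplitude variation of the jump, translation of the jump, and the boundary of the indicator carrying the retarded term); only after their algebraic simplification do the compact forms $(\cos\theta - \cos^2(\bar \alpha))/\sin^2(\bar \alpha)$ and $(\cos(\bar \alpha - \theta) - \cos(\bar \alpha))/\sin(\bar \alpha)$ emerge.
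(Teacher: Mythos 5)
Your overall strategy --- differentiate the RDE of Lemma \ref{Lem:equation_segment_tilde_r} in $s_0$ and read off the singular part of the source --- is the same as the paper's (which phrases it as a first-order expansion of the solution in each angular regime rather than a formal differentiation of $\tilde S$), and your accounting at the two angles $\phi_0 + 2\pi + \theta_0$ and $\phi_0 + 2\pi + \bar\theta$ is correct: the extra curvature mass swept by the arc $\bar\zeta([s_0,s_0+\delta s_0])$ concentrates into $-\Diracd_{\phi_0+2\pi+\theta_0}$ (and your remark on $\beta^-$ versus $\beta^+=\theta_0$ is exactly the right point), while the amplitude/translation split of $|P_2-P_1|\Diracd_{\phi_0+2\pi+\bar\theta}$ together with the switch-on of the retarded term reproduces, via \eqref{Equa:delta_theta_segm_1} and \eqref{Equa:delta_P_2P_1_segm}, the coefficients $\frac{\cos(\theta)-\cos(\bar\alpha)^2}{\sin(\bar\alpha)^2}$ and $\frac{\cos(\bar\alpha-\theta)-\cos(\bar\alpha)}{\sin(\bar\alpha)}$.

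There is, however, a genuine gap at the first distinguished angle $\phi_2=\phi_0+\bar\theta-\bar\alpha$. The Dirac $\cot(\bar\alpha)\frac{1-\cos(\theta)}{\sin(\bar\alpha)}\Diracd_{\phi_2}$ is \emph{not} a boundary term of the curvature indicator: the source $R(\phi)=Ds^-$ does not switch on at $\phi_2$ (the base points of the optimal rays move continuously along the fixed spiral across $\phi_2$; what changes there is only the coefficient $\cot(\beta(\phi))$, which becomes $\cot(\bar\alpha)$). Since the RDE of the statement is posed on $\R$ with $\delta\tilde r\equiv 0$ for $\phi<\phi_2$ (this is how Corollary \ref{Cor:evolv_r_after_phi_2} reads the solution through the kernel $G$), the Dirac at $\phi_2$ must encode the initial jump $\delta\tilde r(\phi_2^+)$, i.e.\ the first-order variation of the ray length caused by the displacement $\delta P_2$ of the first saturated point. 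Concretely, $\delta\tilde r(\phi_2^+)=\frac{e^{\i(\phi_0+\bar\theta-\frac{\pi}{2})}\cdot\delta P_2}{\sin(\bar\alpha)}=\frac{\delta s_0\sin(\theta)-|P_2-P_1|\,\delta\theta}{\sin(\bar\alpha)}=\delta s_0\cot(\bar\alpha)\frac{1-\cos(\theta)}{\sin(\bar\alpha)}$, using \eqref{Equa:delta_theta_segm_1}. Your proposed coefficient $R(\phi_2)\cdot\frac{d\phi_2}{ds_0}=R(\phi_2)\frac{\cos(\bar\alpha-\theta)-\cos(\bar\alpha)}{|P_2-P_1|\sin(\bar\alpha)}$ depends on the curvature $R(\phi_2)$ of the underlying fixed spiral and on $|P_2-P_1|$, neither of which appears in the correct answer, and there is no identity forcing this product to equal $\cot(\bar\alpha)\frac{1-\cos(\theta)}{\sin(\bar\alpha)}$; moreover this mechanism ignores both the change of the coefficient $\cot(\beta)$ across $\phi_2$ and the displacement of the segment $[P_1,P_2]$ itself. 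You need to replace that step by the direct computation of $\delta P_2$ projected on the normal to the spiral at $P_2$ and rescaled by $\frac{1}{\sin(\bar\alpha)}$.
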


\begin{proof}
%
We compute first the variation of the initial data of \eqref{Equa:segm_equ_tilde_r}: observing that the spiral $\tilde r(\phi;s_0)$ is saturated at $\phi_2$ and hence its tangent vector is $e^{\i (\phi_0 + \bar \theta)}$, one has up to second order terms for $\phi$ close to $\phi_2$ 
\begin{align*}
\tilde r(\phi;s_0 + \delta s_0) - \tilde r(\phi;s_0) &= \frac{e^{\i (\phi_0 + \bar \theta - \frac{\pi}{2})} \cdot \delta P_2}{\sin(\bar \alpha)} \\
&= \delta s_0 \frac{\sin(\theta)}{\sin(\bar \alpha)} - \frac{|P_2 - P_1| \delta \theta}{\sin(\bar \alpha)} \\
&= \delta s_0 \cot(\bar \alpha) \frac{1 - \cos(\theta)}{\sin(\bar \alpha)},
\end{align*}
which gives the first jump at $\phi_0 + \bar \theta - \bar \alpha$.

Being $R(\phi)$ the same up to $\phi_0 + 2\pi + \theta_0$, the RDE for $\delta \tilde r$ is actually the ODE
\begin{equation*}
\frac{d}{d\phi} \delta \tilde r(\phi) = \cot(\bar \alpha) \delta \tilde r(\phi)
\end{equation*}
for $\phi_0 + \bar \theta - \bar\alpha \leq \phi < \phi_0 + 2\pi + \theta_0$.

Using $R = Ds^-$, the variation at $\phi_0 + 2\pi + \theta_0$ is
\begin{align*}
\tilde r(\phi;s_0 + \delta s_0) - \tilde r(\phi;s_0) &= \cot(\bar \alpha) \big( \tilde r(\phi;s_0 + \delta s_0) - \tilde r(\phi;s_0) \big) - \big( s^-(\phi) - s^-(\phi_0 + 2\pi + \theta_0) \big) \\
&= \cot(\bar \alpha) \big( \tilde r(\phi;s_0 + \delta s_0) - \tilde r(\phi;s_0) \big) - \delta s_0.
\end{align*}
Hence $\delta \tilde r(\phi;s_0)$ has a jump of size $-1$ at the angle $\phi_0 + 2\pi + \theta_0$. 
From \eqref{Equa:segm_equ_tilde_r} it follows that
\begin{equation*}
\frac{d}{d\phi} \delta \tilde r(\phi) = \cot(\bar \alpha) \delta \tilde r(\phi)
\end{equation*}
for $\phi_0 + 2\pi + \theta_0 < \phi < \phi_0 + 2\pi + \bar \theta$.

At the angle $\phi_0 + 2\pi + \bar \theta$, computing the first order approximations for $0 \leq \phi - \phi_0 + 2\pi + \bar \theta \ll 1$ one gets up to second order terms
\begin{align*}
\tilde r(\phi;s_0 + \delta s_0) - \tilde r(\phi;s_0) &= \bigg[ \tilde r(\phi_0 + 2\pi + \bar\theta; s_0 + \delta s_0) + \cot(\bar \alpha) \tilde r(\phi_0 + 2\pi + \bar\theta;s_0 + \delta s_0) (\phi - \phi_0 - 2\pi - \bar \theta) \\
& \quad \quad - |P_2' - P_1'| \big( 1 + \cot(\bar \alpha) (\phi - \phi_0 - 2\pi - \bar \theta - \delta \bar \theta) \big) \ind_{\phi \geq \phi_0 + 2\pi + \bar \theta + \delta \bar \theta} \\
& \quad \quad - \frac{\tilde r(\phi_0 + \bar \theta - \bar \alpha;s_0 + \delta s_0)}{\sin(\bar \alpha)} (\phi - \phi_0 - 2\pi - \bar \theta - \delta \bar \theta) \bigg] \\
& \quad - \bigg[ \tilde r(\phi_0 + 2\pi + \theta-;s_0) + \cot(\bar \alpha) \tilde r(\phi_0 + 2\pi +\theta;s_0) (\phi - \phi_0 - 2\pi - \bar \theta) \\
& \quad \quad - |P_2 - P_1| \big( 1 + \cot(\bar \alpha) (\phi - \phi_0 - 2\pi - \bar \theta) \big) \ind_{\phi \geq \phi_0 + 2\pi + \bar \theta}  \\
& \quad \quad - \frac{\tilde r(\phi_0 + \bar \theta - \bar \alpha;s_0)}{\sin(\bar \alpha)} (\phi - \phi_0 - 2\pi - \bar \theta) \bigg] \\
&= \big( \tilde r(\phi_0 + 2\pi + \bar\theta;s_0 + \delta s_0) - \tilde r(\phi_0 + 2\pi + \bar\theta;s_0) \big) \big( 1 + \cot(\bar \alpha) (\phi - \phi_0 - 2\pi - \bar \theta) \big) \\
& \quad - \frac{\tilde r(\phi_0 + \bar \theta - \bar \alpha;s_0 + \delta s_0) - \tilde r(\phi_0 + \bar \theta - \bar \alpha;s_0)}{\sin(\bar \alpha)} (\phi - \phi_0 - 2\pi - \bar \theta) \\
& \quad + \bigg( - \delta |P_2 - P_1| + \frac{\tilde r(\phi_0 + \bar \theta - \bar \alpha;s_0)}{\sin(\bar \alpha)} \bigg) \ind_{\phi \geq \phi_0 + 2\pi + \bar \theta} + |P_2 - P_1| \ind_{[\phi_0 + 2\pi + \bar \theta,\phi_0 + 2\pi + \bar \theta + \delta \theta)}.
\end{align*}
For the above formula, we have used that the jumps $|P_2 - P_1|$ are in different angles and that the curvature term $- \frac{\tilde r(\phi_0+\bar \theta -\bar \alpha;s_0)}{\sin(\bar \alpha)}$ is applied starting from different angles. \\
Passing to the limit $\delta s_0 \searrow 0$ we obtain
\begin{align*}
\delta \tilde r(\phi;s_0) &= \delta \tilde r(\phi_0 + 2\pi + \bar\theta;s_0) \big( 1 + \cot(\bar \alpha) (\phi - \phi_0 - 2\pi - \bar \theta) \big) - \frac{\delta \tilde r(\phi_0 + \bar \theta - \bar \alpha;s_0)}{\sin(\bar \alpha)} (\phi - \phi_0 - 2\pi - \bar \theta) \\
& \quad + \bigg( - \frac{\delta |P_2 - P_1|}{\delta s_0} + \frac{\tilde r(\phi_0 + \bar \theta - \bar \alpha;s_0)}{\sin(\bar \alpha)} \frac{\delta \theta}{\delta s_0} \bigg) \ind_{\phi \geq \phi_0 + 2\pi + \bar \theta} + |P_2 - P_1| \frac{\delta \theta}{\delta s_0} \Diracd_{\phi_0 + 2\pi + \bar \theta}.
\end{align*}
Hence its derivative w.r.t. $\phi$ satisfies
\begin{align*}
\frac{d}{d\phi} \delta \tilde r(\phi;s_0) &= \cot(\bar \alpha) \delta \tilde r(\phi;s_0) - \frac{\delta \tilde r(\phi - 2\pi - \bar \alpha;s_0)}{\sin(\bar \alpha)} \\
& \quad + \bigg( - \frac{\delta |P_2 - P_1|}{\delta s_0} + \frac{\tilde r(\phi_0 + \bar \theta - \bar \alpha;s_0)}{\sin(\bar \alpha)} \frac{\delta \theta}{\delta s_0}  \bigg) \Diracd_{\phi_0 + 2\pi + \bar\theta} + |P_2 - P_1| \frac{\delta \theta}{\delta s_0} \Diracd'_{\phi_0 + 2\pi + \bar \theta},
\end{align*}
where $\Diracd'$ is the derivative of the Dirac's delta:
\begin{equation*}
\langle \Diracd'_x, f \rangle = - \frac{df(x)}{dx}.
\end{equation*}
Using \eqref{Equa:delta_theta_segm_1} and \eqref{Equa:delta_P_2P_1_segm} we obtain
\begin{align*}
- \frac{\delta |P_2 - P_1|}{\delta s_0} + \frac{\bar r(\phi_0 + \bar \theta - \bar \alpha;s_0)}{\sin(\bar \alpha)} \frac{\delta \theta}{\delta s_0} &= \cot(\bar \alpha) \frac{\cos(\bar \alpha - \theta) - \cos(\bar \alpha)}{\sin(\bar \alpha)} - \frac{\sin(\theta - \bar \alpha)}{\sin(\bar \alpha)} \\
&= \frac{\cos(\theta) - \cos(\bar \alpha)^2}{\sin(\bar \alpha)^2},
\end{align*}
\begin{equation*}
|P_2 - P_1| \frac{\delta \theta}{\delta s_0} = \frac{\cos(\bar \alpha - \theta) - \cos(\bar \alpha)}{\sin(\bar \alpha)}.
\end{equation*}
It is now easy to verify that the derivative $\delta \tilde r(\phi;s_0) \rest_{\phi_0 + \bar \theta - \bar \alpha}$ satisfies the RDE in the statement, since in the remaining part of $\R$ it solves the linear RDE of the saturated spiral.
\end{proof}

Since
\begin{equation*}
\sin(\theta) - \frac{\cos(\bar \alpha - \theta) - \cos(\bar \alpha)}{\sin(\bar \alpha)} = \cot(\bar \alpha) (1 - \cos(\theta)) \geq 0,
\end{equation*}
it follows that the perturbation is positive for $\phi \in [\phi_0,\phi_0+\bar \theta - \bar \alpha]$, and it is given explicitly by
\begin{equation*}
\delta \tilde r(\phi;s_0) = \frac{1}{\sin(\phi_0 + \bar \theta - \phi)} \bigg( \sin(\theta) - \frac{|P(\phi) - P_1|}{|P_2 - P_1|} \frac{\cos(\bar \alpha - \theta) - \cos(\bar \alpha)}{\sin(\bar \alpha)} \bigg),
\end{equation*}
where $P(\phi)$ is the position of the point on $[P_1,P_2]$ corresponding to the angle $\phi \in [\theta_0,\theta_0 + \bar \theta - \bar \alpha]$.

\begin{corollary}
\label{Cor:evolv_r_after_phi_2}
The solution $\delta \tilde r(\phi)$ for $\phi \geq \phi_2 = \phi_0 + \bar \theta - \bar \alpha$ is explicitly given by
\begin{equation*}
\begin{split}
\delta \tilde r(\phi) &= \cot(\bar \alpha) \frac{1 - \cos(\theta)}{\sin(\bar \alpha)} G(\phi - \phi_0 - \bar \theta + \bar \alpha) - G(\phi - \phi_0 - 2\pi - \theta_0) \\
&\quad + \bigg( 1 - \frac{1 - \cos(\theta)}{\sin(\bar \alpha)^2} + \cot(\bar \alpha) \frac{\cos(\bar \alpha - \theta) - \cos(\bar \alpha)}{\sin(\bar \alpha)} \bigg) G(\phi - 2\pi - \phi_0 - \bar \theta) \\
& \quad + \frac{\cos(\bar \alpha - \theta) - \cos(\bar \alpha)}{\sin(\bar \alpha)} \Diracd_{\phi - 2\pi - \phi_0 - \bar \theta} - \frac{\cos(\bar \alpha - \theta) - \cos(\bar \alpha)}{\sin(\bar \alpha)^2} G(\phi - 4\pi - \bar \theta - \bar \alpha),
\end{split}
\end{equation*}
where $G(\phi)$ is the Green kernel for the RDE.
\end{corollary}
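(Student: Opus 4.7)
The RDE in Proposition \ref{Prop:equa_delta_tilde_r_segm} is linear in $\delta \tilde r$, with right-hand side a distributional source $S(\phi;s_0)$ consisting of three Dirac masses plus one derivative of a Dirac mass, and with the implicit initial condition $\delta \tilde r \equiv 0$ for $\phi < \phi_0 + \bar \theta - \bar \alpha$ (which is why the first atom of $S$ is the jump computed at $\phi_2$ in the proof of Proposition \ref{Prop:equa_delta_tilde_r_segm}). The strategy is therefore pure Duhamel: compute $\delta \tilde r = G \ast S$, where $G$ is the Green kernel of Remark \ref{Rem:oringal_phi_kernel}, and match terms against the claimed expression.

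The three Dirac contributions are immediate. For each summand $c_k \Diracd_{\phi_k}$ of $S$, the convolution produces the term $c_k G(\phi - \phi_k)$, giving the three evaluations of $G$ at the arguments $\phi - \phi_0 - \bar \theta + \bar \alpha$, $\phi - \phi_0 - 2\pi - \theta_0$, and $\phi - \phi_0 - 2\pi - \bar \theta$ with the coefficients read off directly from the statement of Proposition \ref{Prop:equa_delta_tilde_r_segm}.

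The delicate term is the distributional derivative $\frac{\cos(\bar \alpha - \theta) - \cos(\bar \alpha)}{\sin(\bar \alpha)} \Diracd'_{\phi_0 + 2\pi + \bar \theta}$. Here I will use the identity \eqref{Equa:ker_der_G} of Remark \ref{Rem:diff_relations}, which reads
\begin{equation*}
\frac{d}{d\phi} G(\phi - \phi_j) = \cot(\bar \alpha) G(\phi - \phi_j) - \frac{G(\phi - \phi_j - 2\pi - \bar \alpha)}{\sin(\bar \alpha)} + \Diracd_{\phi_j}.
\end{equation*}
Since convolution with $\Diracd'_{\phi_j}$ yields $\frac{d}{d\phi} G(\phi - \phi_j)$ in the distributional sense, the $\Diracd'$ source contributes (with $\phi_j = \phi_0 + 2\pi + \bar \theta$)
\begin{equation*}
\frac{\cos(\bar \alpha - \theta) - \cos(\bar \alpha)}{\sin(\bar \alpha)} \left[ \cot(\bar \alpha) G(\phi - \phi_0 - 2\pi - \bar \theta) - \frac{G(\phi - \phi_0 - 4\pi - \bar \theta - \bar \alpha)}{\sin(\bar \alpha)} + \Diracd_{\phi_0 + 2\pi + \bar \theta} \right].
\end{equation*}
A direct consistency check shows that this is a solution of the right RDE: differentiate once more and use the RDE for $G$ twice to recover the $\Diracd'$ at $\phi_0 + 2\pi + \bar \theta$.

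It remains to assemble the pieces. The Dirac part at $\phi_0 + 2\pi + \bar \theta$ and the two residual $G$-contributions from the $\Diracd'$-term combine with the three direct delta contributions. The coefficient of $G(\phi - \phi_0 - 2\pi - \bar \theta)$ becomes
\begin{equation*}
\frac{\cos(\theta) - \cos(\bar \alpha)^2}{\sin(\bar \alpha)^2} + \cot(\bar \alpha) \frac{\cos(\bar \alpha - \theta) - \cos(\bar \alpha)}{\sin(\bar \alpha)},
\end{equation*}
which, using $\cos(\bar \alpha)^2 = 1 - \sin(\bar \alpha)^2$, rewrites as $1 - \frac{1 - \cos(\theta)}{\sin(\bar \alpha)^2} + \cot(\bar \alpha) \frac{\cos(\bar \alpha - \theta) - \cos(\bar \alpha)}{\sin(\bar \alpha)}$, exactly as in the statement. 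The term at $G(\phi - 4\pi - \bar \theta - \bar \alpha)$ and the residual Dirac at $\phi_0 + 2\pi + \bar \theta$ then account for the remaining two summands, completing the formula. The only place where I expect to have to be careful is the sign conventions for $\Diracd'$ and the bookkeeping of the shift in the delayed argument of $G$; otherwise the proof is a straightforward (but tedious) algebraic collection of the Duhamel superposition.
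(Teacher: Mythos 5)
Your proposal is correct and is exactly the paper's argument: the paper's proof of Corollary \ref{Cor:evolv_r_after_phi_2} is a one-line appeal to the Green kernel of Remark \ref{Rem:oringal_phi_kernel}, i.e.\ precisely the Duhamel superposition $G\ast S$ you carry out, including the treatment of the $\Diracd'$ source via the defining RDE for $G$ (equivalently \eqref{Equa:ker_der_G}) and the trigonometric simplification $\frac{\cos(\theta)-\cos(\bar\alpha)^2}{\sin(\bar\alpha)^2}+\cot(\bar\alpha)\frac{\cos(\bar\alpha-\theta)-\cos(\bar\alpha)}{\sin(\bar\alpha)} = 1-\frac{1-\cos(\theta)}{\sin(\bar\alpha)^2}+\cot(\bar\alpha)\frac{\cos(\bar\alpha-\theta)-\cos(\bar\alpha)}{\sin(\bar\alpha)}$. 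Your bookkeeping matches the stated coefficients term by term, so nothing further is needed.
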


\begin{proof}
The formula follows from Remark \ref{Rem:oringal_phi_kernel}.
\end{proof}

To proceed further, we rewrite the RDE translating $\phi_0 + \bar \theta - \bar \alpha$ to $0$ and then use the new variable
\begin{equation}
\label{Equa:rho_segm_r}
\delta \rho(\tau) = \delta \tilde r(\phi_0 + \bar \theta - \bar \alpha + (2\pi + \bar \alpha) \tau;s_0) e^{- \bar c (2\pi + \bar \alpha) \tau}.
\end{equation}

\begin{corollary}
\label{Cor:equation}
The values of the perturbation $\delta r(s;s_0)$ depend uniquely on the angle $\theta = \bar \theta - \theta_0 \in [0,\frac{\pi}{2}]$, and the function $\delta \rho(\tau)$ of Equation \eqref{Equa:rho_segm_r} is given explicitly by the formula \newglossaryentry{tau1}{name=\ensuremath{\tau_1},description={time of the first discontinuity for the perturbation in the segment and arc case}}
\begin{equation}
\label{Equa:equa_rho_segm_666}
\delta \rho(\tau) = S_0 g(\tau) + S_1 g(\tau - \tau_1) + S_2 g(\tau-1) + D \Diracd_1 + S_3 g(\tau - 2), \quad \gls{tau1} = 1 - \frac{\theta}{2\pi + \bar \alpha},
\end{equation}
with
\begin{subequations}
\label{Equa:source_segmnt_pert}
\begin{equation*}
S_0(\theta) = \cot(\bar \alpha) \frac{1 - \cos(\theta)}{\sin(\bar \alpha)},
\end{equation*}
\begin{equation*}
S_1(\theta) = - e^{- \bar c (2\pi + \bar \alpha - \theta)} = - \frac{\sin(\bar \alpha)}{2\pi + \bar \alpha} e^{\bar c \theta},
\end{equation*}
\begin{equation*}
\begin{split}
S_2(\theta) &= \bigg( 1 - \frac{1 - \cos(\theta)}{\sin(\bar \alpha)^2} + \cot(\bar \alpha) \frac{\cos(\bar \alpha - \theta) - \cos(\bar \alpha)}{\sin(\bar \alpha)} \bigg) e^{-\bar c (2\pi + \bar \alpha)} \\
&= \frac{\sin(\bar \alpha)}{2\pi + \bar \alpha} - \frac{1 - \cos(\theta)}{(2\pi + \bar \alpha) \sin(\bar \alpha)} + \frac{\cot(\bar \alpha)}{2\pi + \bar \alpha} \big( \cos(\bar \alpha - \theta) - \cos(\bar \alpha) \big),
\end{split}
\end{equation*}
\begin{equation*}
D(\theta) = \frac{\cos(\bar \alpha - \theta) - \cos(\bar \alpha)}{\sin(\bar \alpha)} e^{-\bar c (2\pi + \bar \alpha)} = \frac{\cos(\bar \alpha -\theta) - \cos(\bar \alpha)}{2\pi + \bar \alpha},
\end{equation*}
\begin{equation*}
S_3(\theta) = - \frac{\cos(\bar \alpha -\theta) - \cos(\bar \alpha)}{\sin(\bar \alpha)^2} e^{-\bar c (4\pi + 2 \bar \alpha)} = - \frac{\cos(\bar \alpha - \theta) - \cos(\bar \alpha)}{(2\pi + \bar \alpha)^2}.
\end{equation*}
\end{subequations}
\end{corollary}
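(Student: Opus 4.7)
The plan is to derive the stated formula directly from the explicit representation of $\delta\tilde r(\phi)$ already obtained in Corollary \ref{Cor:evolv_r_after_phi_2} by performing the affine change of variable
\begin{equation*}
\phi = \phi_0 + \bar\theta - \bar\alpha + (2\pi+\bar\alpha)\tau,\qquad \delta\rho(\tau) = \delta\tilde r(\phi;s_0) e^{-\bar c(2\pi+\bar\alpha)\tau},
\end{equation*}
and using the relation between the two Green kernels given in Remark \ref{Rem:oringal_phi_kernel}, namely $G(\phi) = g(\phi/(2\pi+\bar\alpha)) e^{\bar c\phi}$. Since both $G$ and $\Diracd_0$ are translation invariant, the arguments of the kernels appearing in Corollary \ref{Cor:evolv_r_after_phi_2} all shift by the same amount $\phi_0 + \bar\theta - \bar\alpha$; this is what produces the independence on $\phi_0$ and on $\theta_0,\bar\theta$ separately, since after the shift each kernel argument involves only the difference $\theta = \bar\theta - \theta_0$ (the value $\phi_0 + 2\pi + \theta_0$ becomes $2\pi + \bar\alpha - \theta$, giving rise to $\tau_1 = 1 - \theta/(2\pi+\bar\alpha)$).

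First I would rewrite the four $G$-terms of Corollary \ref{Cor:evolv_r_after_phi_2} as $g$-terms multiplied by appropriate exponentials of the form $e^{\bar c(\phi - \phi_*)}$; when then multiplied by $e^{-\bar c(2\pi+\bar\alpha)\tau}$, each factor collapses to $e^{-\bar c d}$ where $d$ is the shift of the corresponding kernel argument w.r.t.\ $\phi_0+\bar\theta-\bar\alpha$. This immediately produces the four numerical coefficients: the coefficient $S_0(\theta) = \cot(\bar\alpha)(1-\cos\theta)/\sin(\bar\alpha)$ with kernel argument $\tau$ (shift $0$); $S_1(\theta) = -e^{-\bar c(2\pi+\bar\alpha-\theta)}$ with argument $\tau - \tau_1$; $S_2(\theta)$ with argument $\tau - 1$; and $S_3(\theta)$ with argument $\tau - 2$, each obtained by substituting $\bar c = \ln((2\pi+\bar\alpha)/\sin(\bar\alpha))/(2\pi+\bar\alpha)$ to rewrite $e^{-\bar c(2\pi+\bar\alpha)}$ as $\sin(\bar\alpha)/(2\pi+\bar\alpha)$.

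The Dirac term $\Diracd_{\phi_0+2\pi+\bar\theta}$ in Proposition \ref{Prop:equa_delta_tilde_r_segm} produces, via the change of variable $\tau \mapsto \phi$ (whose Jacobian is $1/(2\pi+\bar\alpha)$) together with the multiplicative exponential factor, the term $D(\theta) \Diracd_1$; the computation gives $D(\theta) = (\cos(\bar\alpha-\theta)-\cos(\bar\alpha))/(2\pi+\bar\alpha)$, matching the stated form. The independence statement then follows because, once the translation is absorbed, all coefficients $S_0,S_1,S_2,D,S_3$ depend only on $\theta$ and on the universal constants $\bar\alpha,\bar c$.

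The only genuinely delicate step is keeping track of signs and factors of $2\pi+\bar\alpha$ when converting between $\phi$-Dirac masses and $\tau$-Dirac masses, and simplifying the expression of $S_2(\theta)$ using $e^{-\bar c(2\pi+\bar\alpha)} = \sin(\bar\alpha)/(2\pi+\bar\alpha)$; this is purely algebraic bookkeeping and does not require any new idea beyond the explicit formulas already established in Remark \ref{Rem:oringal_phi_kernel} and Corollary \ref{Cor:evolv_r_after_phi_2}.
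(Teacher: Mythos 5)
Your proposal is correct and follows exactly the route the paper intends (the paper states this corollary without a written proof, as an immediate consequence of substituting the change of variable \eqref{Equa:rho_segm_r} into the explicit representation of Corollary \ref{Cor:evolv_r_after_phi_2} via $G(\phi)=g(\phi/(2\pi+\bar\alpha))e^{\bar c\phi}$). The only point to watch is the Dirac term: the identity $e^{-\bar c(2\pi+\bar\alpha)}=\sin(\bar\alpha)/(2\pi+\bar\alpha)$ means the exponential rescaling alone already yields $D(\theta)=(\cos(\bar\alpha-\theta)-\cos(\bar\alpha))/(2\pi+\bar\alpha)$, so you should not additionally insert the Jacobian factor $1/(2\pi+\bar\alpha)$ as your narration suggests, otherwise you would get an extra power of $2\pi+\bar\alpha$.
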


We remark that we have used Equation \eqref{Equa:c_gene_defi} to replace $e^{- \bar c(2\pi + \bar \alpha)}$ with $\frac{\sin(\bar \alpha)}{2\pi + \bar \alpha}$.

The study of the sign of $\delta \rho$ given by \eqref{Equa:equa_rho_segm_666} is done in the next proposition, which is the main statement of this section concerning the segment case. For completeness, we let $\theta$ to vary in $[0,\pi]$.

\begin{proposition}
\label{Prop:regions_pos_neg_segm}
The solution $\delta \rho(\tau) = \delta \rho(\tau,\theta)$ is strictly positive outside the region \newglossaryentry{Nsegm}{name=\ensuremath{N_\mathrm{segm}},description={negative region for the perturbation of the segment case}}
\begin{equation*}
\gls{Nsegm} := \bigg\{ 1 - \frac{\theta}{2\pi + \bar \alpha} \leq \tau \leq 1, 0 \leq \theta \leq \hat \theta \bigg\} \cup \big\{ \theta = 0 \} \cup \{ \tau = 1, \theta \in [2\bar \alpha,\pi] \big\} \subset \R \times [0,\pi],
\end{equation*}
where the angle \newglossaryentry{thetahat}{name=\ensuremath{\hat \theta},description={angle corresponding to the boundary of the negativity region of the perturbation in the segment case, Equation (\ref{Equa:hat_theta_det_666})}} \gls{thetahat} is determined by the unique solution to
\begin{equation}
\label{Equa:hat_theta_det_666}
\cot(\bar \alpha) \frac{1 - \cos(\hat \theta)}{\sin(\bar \alpha)} -  e^{- \cot(\bar \alpha) (2\pi + \bar \alpha - \hat \theta)} = 0, \qquad \hat \theta \in [0,\pi].
\end{equation}
Moreover, as $\tau \to \infty$, the function $\delta \rho(\tau)$ diverges like
\begin{equation*}
\lim_{\tau \to \infty} \frac{\delta \rho(\tau) e^{-\bar c \tau}}{\tau} = 2 (S_0 + S_1 + S_2 + S_3), \quad 2 (S_0 + S_1 + S_2 + S_3) \in \theta^2 (0.08,028).
\end{equation*}
\end{proposition}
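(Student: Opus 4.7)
The plan is to exploit the explicit representation \eqref{Equa:equa_rho_segm_666}, splitting the analysis into three regimes: the first round $\tau \in [0,1]$, where the kernel reduces to $g(\tau) = e^\tau$ (since $a = 1$ in the critical case); a transient region $\tau \in [1, \tau_*]$ with $\tau_* \approx 4$ requiring piecewise numerical inspection as in the proof of Theorem \ref{thm:case:a}; and the asymptotic regime $\tau \to \infty$ handled via Lemma \ref{Lem:aympt_g} and Lemma \ref{lem:key}.

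For the first round the formula collapses to
\[
\delta \rho(\tau) = \begin{cases} S_0 e^\tau, & \tau \in [0,\tau_1), \\ S_0 e^\tau + S_1 e^{\tau - \tau_1}, & \tau \in [\tau_1, 1). \end{cases}
\]
On the first subinterval $\delta \rho > 0$ whenever $S_0 > 0$, i.e.\ $\theta > 0$; on the second, $S_0 > 0$ and $S_1 < 0$ permit a sign change. The endpoint condition $\delta \rho(1^-) = 0$, after clearing the common factor $e$ and using $e^{-\bar c(2\pi + \bar \alpha)} = \sin(\bar \alpha)/(2\pi + \bar \alpha)$, reduces precisely to the transcendental equation \eqref{Equa:hat_theta_det_666} for $\hat \theta$; monotonicity of $\delta \rho$ in $\tau$ on $[\tau_1, 1)$ then shows that the sign-change set in the first round is exactly $N_\mathrm{segm} \cap \{\tau < 1\}$. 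The Dirac contribution $D \Diracd_1$ has sign equal to that of $\cos(\bar \alpha - \theta) - \cos(\bar \alpha)$, i.e.\ positive for $\theta \in (0, 2\bar \alpha)$ and negative for $\theta \in (2\bar \alpha, \pi]$, accounting for the segment $\{\tau = 1, \theta \in [2\bar \alpha, \pi]\}$ in $N_\mathrm{segm}$.

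For the intermediate rounds I proceed as in Theorem \ref{thm:case:a}: on each interval $[k, k+1] \setminus \{k + \tau_1\}$, $k = 1,2,3,4$, $\delta \rho$ is a finite sum of explicit polynomial-times-exponential contributions coming from the expansion of $g$, and thus can be evaluated symbolically in Mathematica as a function of $(\tau, \theta) \in [1, \tau_*] \times [0, \pi]$. One checks positivity on this two-parameter region (excepting the axis $\{\theta = 0\}$) and finally verifies that on $[4, 5]$ the rescaled function is non-decreasing in $\tau$, which by Lemma \ref{lem:key} (applied exactly as in the proof of Proposition \ref{prop:saturated:not:closed_1}) upgrades positivity to all $\tau \geq \tau_*$.

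The asymptotic claim follows from the critical-case expansion $g(\tau) = 2\tau + \tfrac{2}{3} + \mathcal{O}(e^{-\Re(\lambda_1)\tau})$ of Lemma \ref{Lem:aympt_g}. Substituting into \eqref{Equa:equa_rho_segm_666} and collecting the coefficients of $\tau$ gives
\[
\lim_{\tau \to \infty} \frac{\delta \rho(\tau)}{\tau} = 2 (S_0 + S_1 + S_2 + S_3),
\]
since each $g(\tau - \tau_i)$ contributes $2\tau$ to leading order while the Dirac and bounded pieces are absorbed in the remainder. A Taylor expansion of the $S_i$ in $\theta$ shows that the constant and linear terms cancel (consistent with $\delta \rho \equiv 0$ when $\theta = 0$), leaving a leading quadratic behavior; the explicit bounds $0.08\,\theta^2 < 2 (S_0 + S_1 + S_2 + S_3) < 0.28\,\theta^2$ on $\theta \in (0, \pi]$ are then a direct numerical check on the resulting trigonometric expression. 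The main obstacle is the intermediate-round step: because $\delta \rho$ has jumps at $\tau = 1$ and corners at each $\tau = k$ and $\tau = k + \tau_1$, the positivity verification on $[1, \tau_*]$ must be partitioned into several smooth pieces, and one must ensure that the sign test genuinely covers the whole two-parameter rectangle $(\tau, \theta)$ rather than only a finite grid; a secondary difficulty is algebraically extracting the $\theta^2$ behavior of $\sum S_i$ from the cancellations between $S_0, S_1, S_2, S_3$ in order to justify the explicit quadratic bounds.
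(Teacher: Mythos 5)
Your proposal is correct and follows essentially the same route as the paper's proof: explicit integration of the first round to derive \eqref{Equa:hat_theta_det_666} (noting that on $[\tau_1,1)$ the sign of $\delta\rho = e^\tau(S_0+S_1e^{-\tau_1})$ is in fact $\tau$-independent, so no monotonicity argument is needed there), the sign of $D$ for the slice $\{\tau=1,\theta\in[2\bar\alpha,\pi]\}$, piecewise numerical positivity of $\delta\rho/\theta^2$ on the intermediate rounds, Lemma \ref{lem:key} after verifying monotonicity on one further round, and the asymptotics from Lemma \ref{Lem:aympt_g}. The only differences are cosmetic (the paper stops the explicit check at $\tau=3$ and verifies the increase on $[3,4]$).
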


\begin{figure}
\includegraphics[scale=.6]{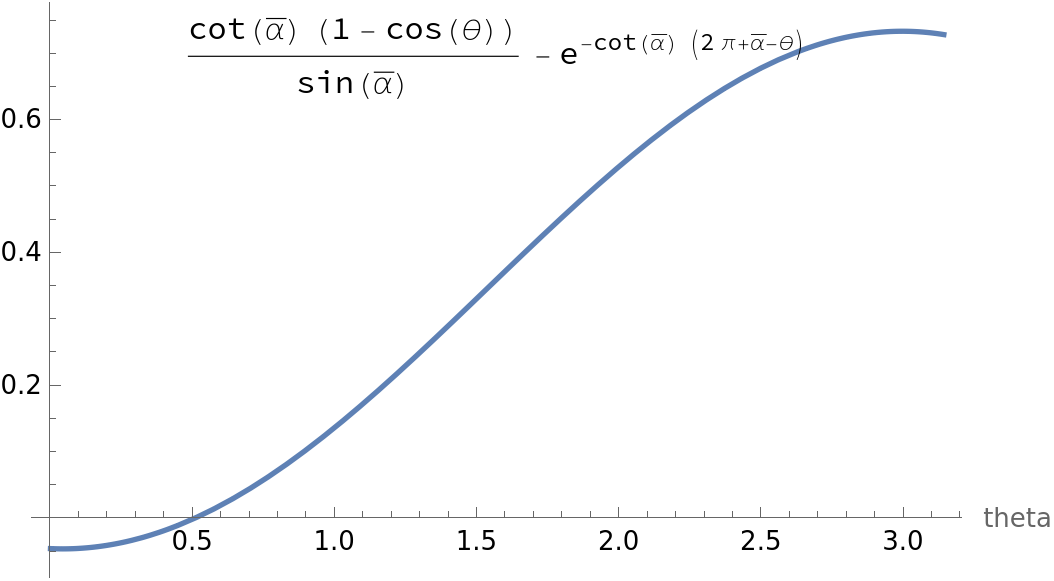}\hfill\includegraphics[scale=.6]{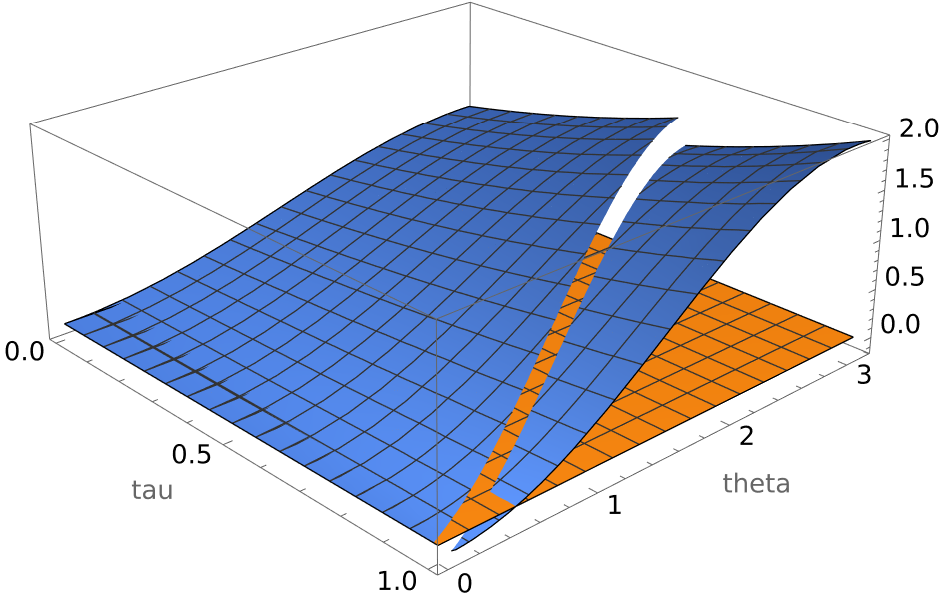}
\caption{Plot of the function (\ref{Equa:hat_theta_det_666}) on the left side, and of the solution (\ref{Equa:rho_segm_first}) on the right side.}
\label{Fig:hattheta_comput}
\end{figure}

A numerical computation gives the value (see Fig. \ref{Fig:hattheta_comput})
\begin{equation}
\label{Equa:hat_theta_value}
\hat \theta = 0.506134.
\end{equation}

\begin{proof}
We start by observing that for $\theta = 0$ the coefficients \eqref{Equa:source_segmnt_pert} are all $0$, so that the solution \eqref{Equa:equa_rho_segm_666} is $0$. Hence $\{\theta = 0\} \subset N_\mathrm{segm}$.

\medskip

{\it Step 1: computation of the first round.}

\noindent For $\tau \in [0,1)$ the solution is explicitly computed as
\begin{equation}
\label{Equa:rho_segm_first}
\delta \rho(\tau) = S_0 e^\tau + S_1 e^{\tau - \tau_1} \ind_{\tau \geq \tau_1} = e^{\tau} (S_0 + S_1 e^{-\tau_1} \ind_{\tau \geq \tau_1}).
\end{equation}
The solution $\delta \rho$ is thus negative only when $\tau_1 \leq \tau < 1$  and iff
\begin{equation*}
S_0 + S_1 e^{-\tau_1} = \cot(\bar \alpha) \frac{1- \cos(\theta)}{\sin(\bar \alpha)} - \frac{\sin(\bar \alpha)}{2\pi + \bar \alpha} e^{(\bar c + \frac{1}{2\pi + \bar \alpha})  \theta - 1} \leq 0,
\end{equation*}
which is \eqref{Equa:hat_theta_det_666}. This gives the angle $\hat \theta$: indeed
the function
\begin{equation*}
\theta \mapsto f(\theta) = (1 - \cos(\theta)) e^{- (\bar c + \frac{1}{2\pi + \bar \alpha}) \theta} = (1 - \cos(\theta)) e^{- \cot(\bar \alpha) \theta}
\end{equation*}
has first derivative
\begin{align*}
f'(\theta) &= \big( \sin(\theta) - \cos(\bar \alpha) (1 - \cos(\theta)) \big) e^{-\cot(\bar \alpha) \theta} = \frac{\cos(\bar \alpha - \theta) - \cos(\bar \alpha)}{\sin(\bar \alpha)} e^{- \cot(\bar \alpha) \theta} > 0 \quad \text{if} \ \theta > 0,
\end{align*}
because $\bar \alpha > \frac{\pi}{4}$, and
$$
f(0) = -1, \quad f \bigg( \frac{\pi}{2} \bigg) = e^{-\cot(\bar \alpha) \frac{\pi}{2}} > 0.
$$
Hence there is only one zero for $f$, and numerically one obtains the value $\hat \theta = 0.506134$, see Figure \ref{Fig:hattheta_comput}.

\medskip

\begin{figure}
\includegraphics[scale=.6]{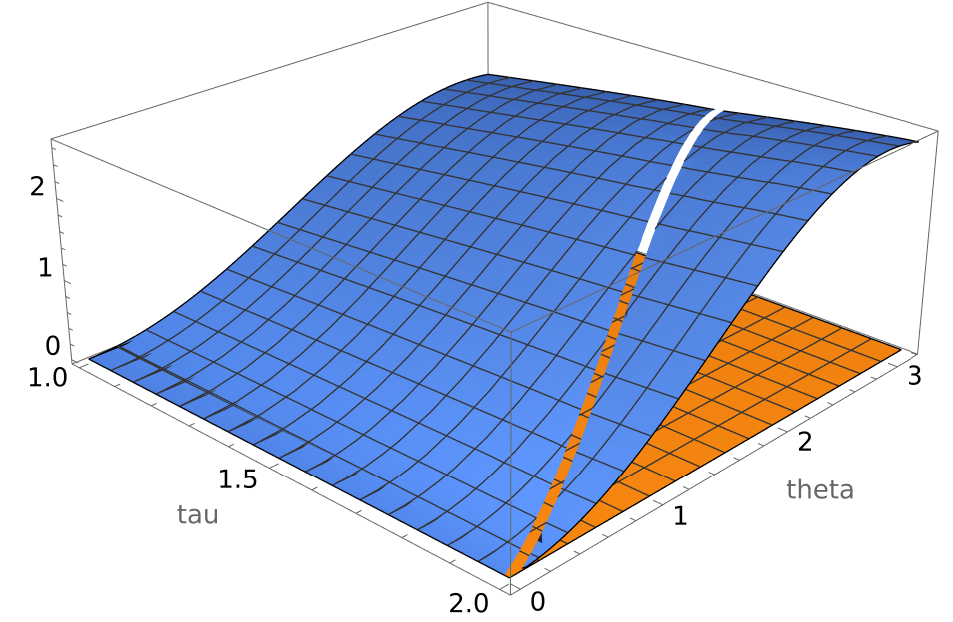}\hfill\includegraphics[scale=.6]{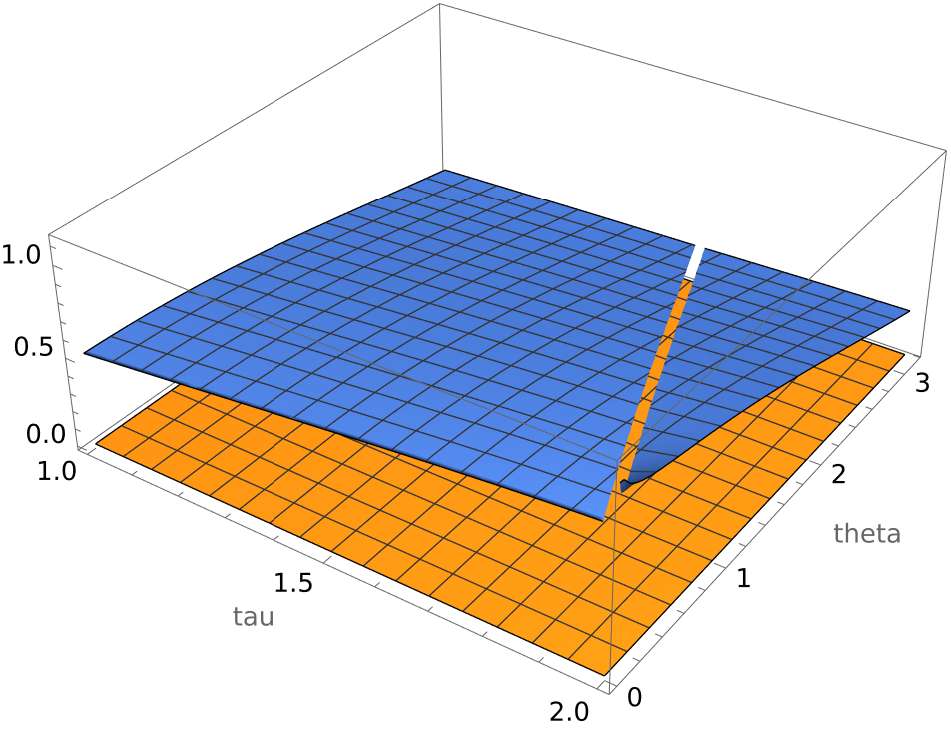}
\caption{Plot of the solution $\delta \rho(\tau;\theta)$ and of $\frac{\delta \rho(\tau;\theta)}{\theta^2}$ in the segment case for $\tau \in [1,2)$. Notice that the discontinuity for $\tau = \tau_1$ becomes here a discontinuity in the derivative in $1 + \tau_1$, and this is the reason why $\delta \rho \sim \theta$ as $\theta \in [1+\tau_1,2)$. In the next step we will have $\delta \rho \sim \theta^2$ in $[2 + \tau_1,3)$.}
\label{Fig:segm_round_2}
\end{figure}

{\it Step 2: the function $\frac{\delta \rho(\tau,\theta)}{\theta^2}$ is strictly positive for $\tau \in [1,1+\tau_1)$ and increasing w.r.t. $\tau$.}

\noindent For $\tau = [1,1+\tau_1)$ it holds
\begin{align*}
\delta \rho(\tau) &= D \Diracd_1 + S_0 \big( e^\tau - e^{\tau - 1} (\tau - 1) \big) + S_1 e^{\tau - \tau_1} + S_2 e^{\tau - 1}.
\end{align*}
Elementary computations show as before that since $\bar \alpha > \frac{\pi}{4}$
$$
D = \frac{\cos(\bar \alpha - \theta) - \cos(\bar \alpha)}{2\pi + \bar \alpha} \leq 0 \quad \Longleftrightarrow \quad \theta \in \{0\} \cup [2\bar \alpha,\pi],
$$
and in the interval $\theta \in [0,\pi]$ the only $0$ is for $\theta = 0$ which corresponds to $D = 0$.

When $1 < \tau < 1 + \tau_1$ the positivity of $\delta \rho$ depends on
\begin{align*}
f(\theta) &= S_0 (e - \tau + 1) + S_1 e^{1 - \tau_1} + S_2,
\end{align*}
whose worst case is for $\tau = 1 + \tau_1$:
\begin{equation*}
f(\theta) = S_0 (e - \tau_1) + S_1 e^{1-\tau_1} + S_2.
\end{equation*}
Near $\bar \theta = \theta_0$ we have the expansion
\begin{equation}
\label{Equa:first_round_exp_segm}
\begin{split}
f(\theta) &= \bigg( \frac{\cot(\bar \alpha)}{\sin(\bar \alpha)} (e-1) - \frac{1}{(2\pi + \bar \alpha) \sin(\bar \alpha)} - 2 \frac{\cot(\bar \alpha) \cos(\bar \alpha)}{2\pi + \bar \alpha} \bigg) \frac{(\bar \theta - \theta_0)^2}{2} + \mathcal O(\theta^3) \\
&\sim 0.582367 \frac{\theta^2}{2} + \mathcal O(\theta^3),
\end{split}
\end{equation}
so that we consider the function
\begin{equation*}
f'(\theta) = \frac{\delta \rho(\tau,\theta)}{\theta^2}, \quad 0 \leq \theta \leq \pi, 1 \leq \tau \leq 1 + \tau_1. 
\end{equation*}
A numerical plot shows that this function is increasing w.r.t. $\tau$ for all fixed $\theta$, and its minimal value is
\begin{equation*}
f'(1,\pi) = 0.17959.
\end{equation*}

\medskip

{\it Step 3: the function $\frac{\delta \rho(\tau,\theta)}{\theta^2}$ is strictly positive and increasing for $\tau \in [1+\tau_1(\theta),2)$, and diverging near $\theta = 0$.}

\noindent For $\tau \in [1+\tau_1,2)$ the solution is
\begin{align*}
\delta \rho(\tau) &= S_0 \big( e^\tau - e^{\tau - 1} (\tau - 1) \big) + S_1 \big( e^{\tau - \tau_1} - e^{\tau - 1 - \tau_1} (\tau - 1 - \tau_1) \big) + S_2 e^{\tau - 1}.
\end{align*}
Again numerically the function
\begin{equation*}
f(\tau,\theta) = \frac{\delta \rho(\tau,\theta)}{\theta^2}
\end{equation*}
has a strictly positive derivative w.r.t. $\tau$, and it is strictly positive, with minimal value at 
$$
f(1+\tau_1,\pi) = 0.226635.
$$

We compute the expansion at $\theta = 0$ finding
\begin{align*}
\delta \rho(\tau,\theta) &= \frac{\sin(\bar \alpha)}{2\pi + \bar \alpha} \bigg( \tau - 2 + \frac{\theta}{2\pi + \bar \alpha} \bigg),
\end{align*}
so that $f$ is diverging as $\frac{\sin(\bar \alpha) (1-\xi)}{(2\pi + \bar \alpha)^2 \theta}$ as $\theta \to 0$ and $\tau = 2 - \xi \frac{\theta}{2\pi + \bar \alpha}$, $\xi \in [0,1]$.

\medskip

\begin{figure}
\includegraphics[scale=.6]{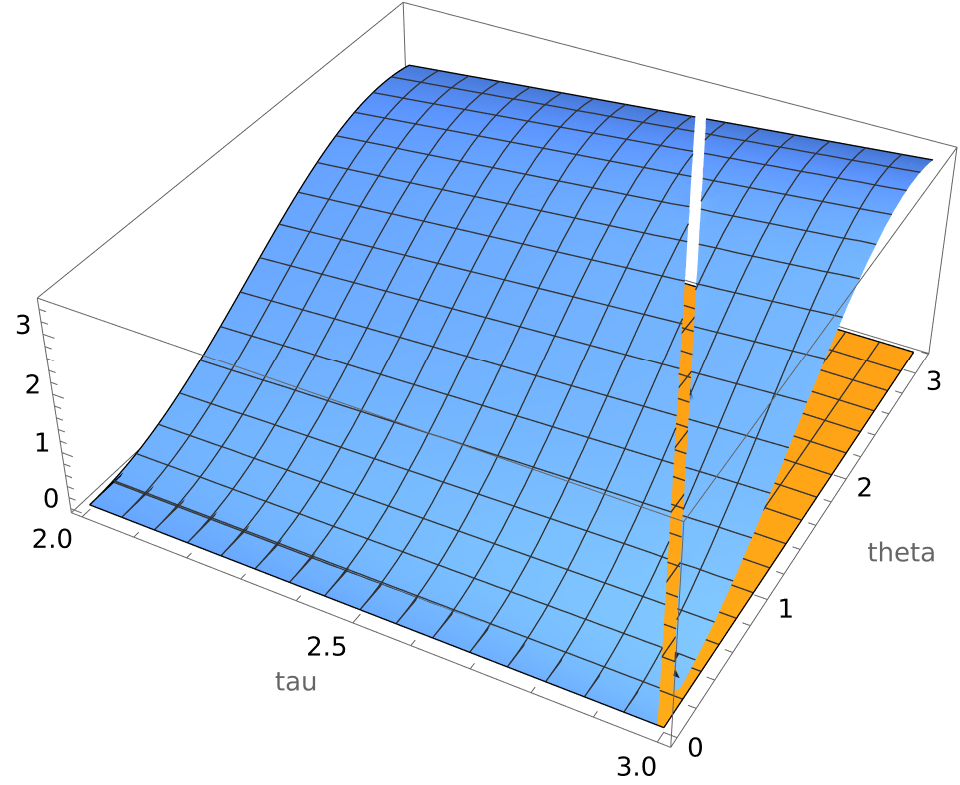}\hfill\includegraphics[scale=.6]{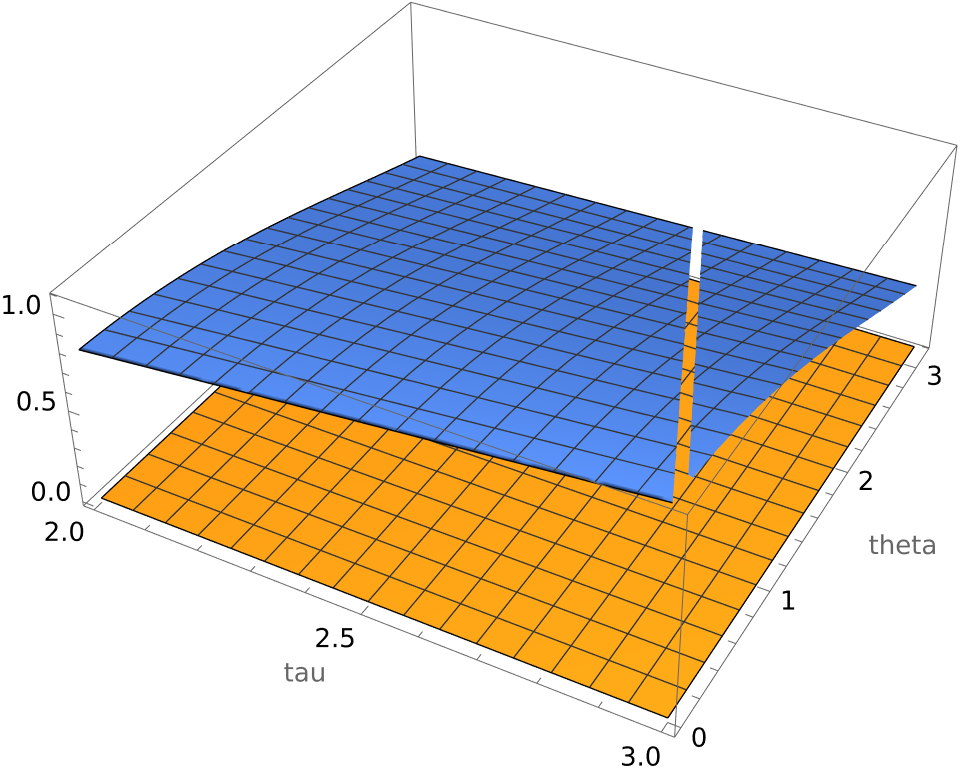}
\caption{Plot of the function $\delta \rho(\tau;\theta)$ (Equation (\ref{Equa:rhosegm_third})) on the left side and of $\frac{\delta \rho(\tau;\theta)}{\theta^2}$ on the right side for $\tau \in [2,3)$.}
\label{Fig:thordround_rhosegm}
\end{figure}

{\it Step 4: for $\tau \in [2,3]$ the function $\delta \rho(\tau,\theta)/\theta^2$ is bounded and strictly positive.}

\noindent The solution in this interval is written as
\begin{equation}
\label{Equa:rhosegm_third}
\begin{split}
\delta \rho(\tau,\theta) &= S_0 \bigg( e^\tau - e^{\tau - 1} (\tau-1) + e^{\tau-2} \frac{(\tau-2)^2}{2} \bigg) \\
& \quad + S_1 \bigg( e^{\tau - \tau_1} - e^{\tau - 1 - \tau_1} (\tau - 1 - \tau_1) + e^{\tau - 2 - \tau_1} \frac{(\tau - 2 - \tau_1)^2}{2} \ind_{\tau \geq 2 + \tau_1} \bigg) \\
&\quad + S_2 \big( e^{\tau-1} - e^{\tau-2} (\tau-2) \big) + S_3 e^{\tau-2}.
\end{split}
\end{equation}
Its expansion at $\theta = 0$ is quadratic in $\theta$, 
\begin{align*}
\delta \rho(\tau,\theta) &= \Bigg[ \frac{\cot(\bar \alpha)}{\sin(\bar \alpha)} \bigg( e^\tau - e^{\tau - 1} (\tau-1) + e^{\tau-2} \frac{(\tau-2)^2}{2} \bigg) \\
& \quad \quad - \frac{2 \cos(\bar \alpha)^2 + 1}{(2\pi + \bar \alpha) \sin(\bar \alpha)} \big( e^{\tau - 1} - e^{\tau - 2} (\tau - 2) \big) \\
& \quad \quad + \frac{3 \cos(\bar \alpha)}{(2\pi + \bar \alpha)^2} e^{\tau - 2} + \frac{\sin(\bar \alpha)}{2\pi + \bar \alpha} e^{\tau -3} \frac{(\frac{\tau - 3}{\theta} - \frac{1}{2\pi + \bar \alpha})^2}{2} \ind_{0 \geq \tau - 3 \geq - \frac{\theta}{2\pi + \bar \alpha}} \bigg) \Bigg] \frac{\theta^2}{2} + o(\theta^2).
\end{align*}
We can thus study numerically the function
\begin{equation*}
f(\tau,\theta) = \frac{\delta \rho(\tau,\theta)}{\theta^2},
\end{equation*}
which shows that the minimal value is
\begin{equation*}
f(2,\pi) = 0.262163.
\end{equation*}

\medskip

\begin{figure}
\includegraphics[scale=.6]{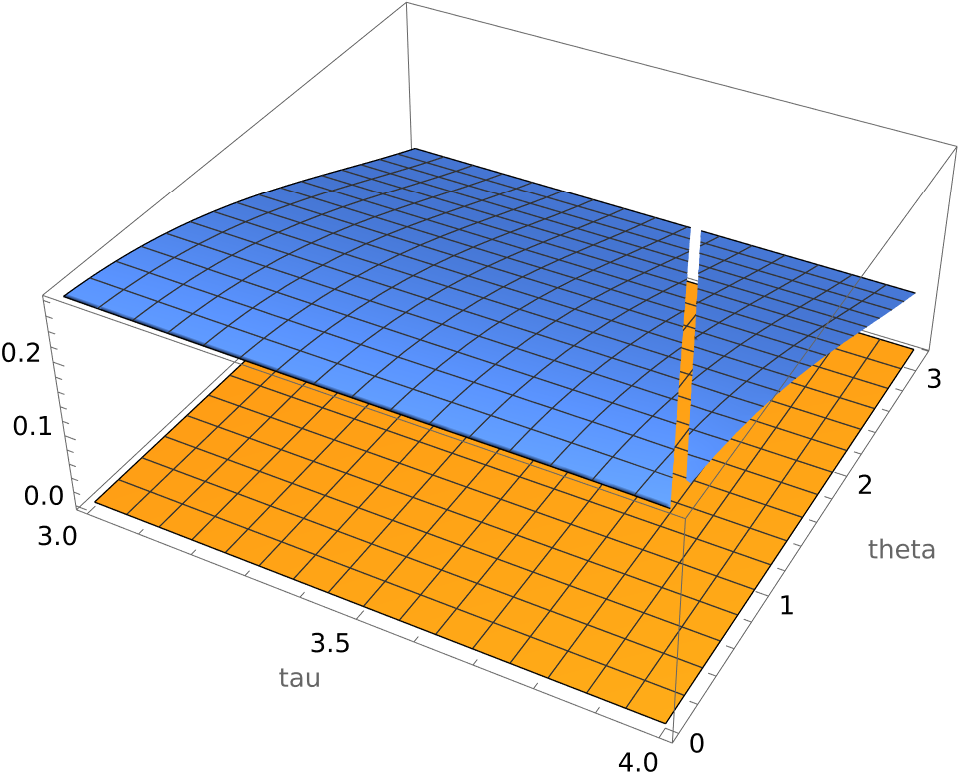}\hfill\includegraphics[scale=.6]{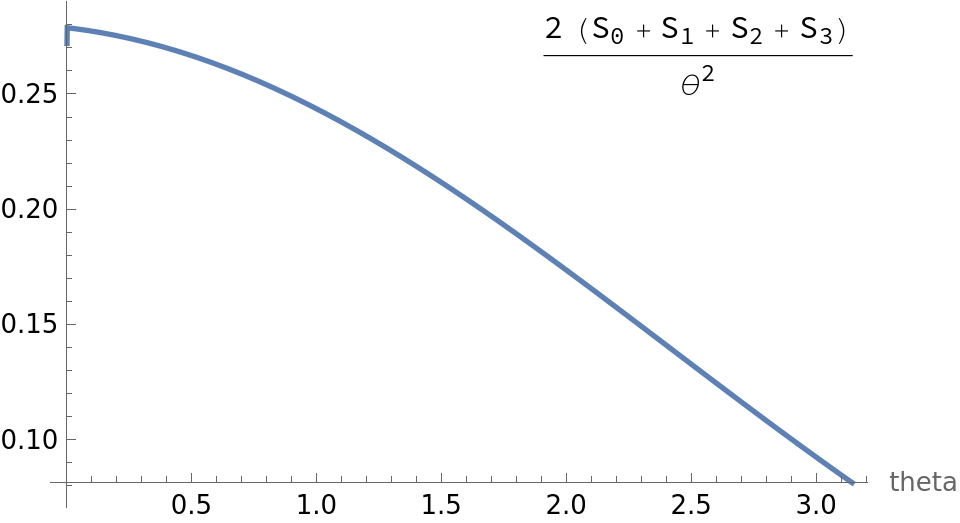}
\caption{Plot of the function $\frac{\delta \rho(\tau;\theta)}{\theta^2}$ for $\tau \geq 3$ (Equation (\ref{Equa:f_increas_segm})) on the left side and of the r.h.s. of Equation (\ref{Equa:asympt_segm}).}
\label{Fig:final_growth_rhosegm}
\end{figure}

{\it Step 5: the function $\frac{\delta \rho(\tau,\theta)}{\theta^2}$ is strictly increasing for $\tau \geq 3$.}

\noindent It is enough to study the interval $\tau \in [3,4]$ for the function
\begin{equation}
\label{Equa:f_increas_segm}
f(\tau,\theta) = \frac{\delta \rho(\tau,\theta)}{\theta^2}
\end{equation}
and apply Lemma \ref{lem:key}. The previous point, plus the fact that $g$ satisfies
\begin{equation*}
\dot f(\tau) = f(\tau) - f(\tau -1),
\end{equation*}
implies that it is bounded, so we do not need to study the expansion about $\theta = 0$.

The solution $\delta \rho$ is written as
\begin{align*}
\delta \rho(\tau,\theta) &= S_0 \bigg( e^\tau - e^{\tau - 1} (\tau-1) + e^{\tau-2} \frac{(\tau-2)^2}{2} - e^{\tau-3} \frac{(\tau-3)^3}{3!} \bigg) \\
& \quad + S_1 \bigg( e^{\tau - \tau_1} - e^{\tau - 1 - \tau_1} (\tau - 1 - \tau_1) + e^{\tau - 2 - \tau_1} \frac{(\tau - 2 - \tau_1)^2}{2} - e^{\tau - 3 - \tau_1} \frac{(\tau - 3 - \tau_1)^3}{3!} \ind_{\tau \geq 3 + \tau_1} \bigg) \\
&\quad + S_2 \bigg( e^{\tau-1} - e^{\tau-2} (\tau-2) + e^{\tau-3} \frac{(\tau-3)^2}{2} \bigg) + S_3 \big( e^{\tau-2} - e^{\tau-3} (\tau-3) \big).
\end{align*}
Numerical computations show that
\begin{equation*}
\frac{\partial_\tau \delta \rho(\tau,\theta)}{\theta^2} \geq \frac{\partial_\tau \delta \rho(3,\pi)}{\pi^2} = 0.142304,
\end{equation*}
and since the solution is continuous in the closed interval and positive at $\tau = 3$ from the previous point, we conclude that $\frac{\delta \rho(\tau,\theta)}{\theta^2}$ is strictly increasing in $\tau$, see Fig. \ref{Fig:final_growth_rhosegm}.

\medskip

{\it Step 6: the asymptotic behavior.} By using Lemma \ref{Lem:aympt_g}, the solution behaves as
\begin{equation}
\label{Equa:asympt_segm}
\lim_{\tau \to \infty} \frac{\delta \rho(\tau,\theta) e^{-\bar c(2\pi + \bar \alpha) \tau}}{\tau \theta^2} = 2 \frac{S_0(\theta) + S_1(\theta) + S_2(\theta) + S_3(\theta)}{\theta^2},
\end{equation}
and numerical plots gives that the function is decreasing w.r.t. $\theta$ values with (Fig. \ref{Fig:final_growth_rhosegm})
\begin{equation*}
0.0816077 \leq 2 \frac{S_0 + S_1 + S_2 + S_3}{\theta^2} \leq 0.278936. \qedhere
\end{equation*}
\end{proof}

%
%

\subsubsection{Estimate on the length for the fastest saturated spiral $\delta r(\phi;s_0)$, segment case}
\label{Sss:additional_estim_opt_segm}

In this section we study the functional
\begin{equation}
\label{Equa:Lpert_segm}
\delta \tilde r(\phi;s_0) - 2.08 \delta \tilde L(\phi - 2\pi + \bar \alpha;s_0) \quad \text{when} \quad \phi \geq 2\pi + \bar \alpha,
\end{equation}
where we denote with $\delta \tilde L$ the variation of length: more precisely, for angles larger than $\phi_0 + \bar \theta - \bar \alpha$ the spirals are saturated so that
\begin{equation*}
\delta \tilde L(\phi;s_0) = \frac{1 - \cos(\theta)}{\sin(\bar \alpha)^2} + \int_{\phi_0 + \bar \theta - \bar \alpha}^\phi \frac{\delta \tilde r(\phi';s_0)}{\sin(\bar \alpha)} d\phi'.
\end{equation*}
The first term is the due to the computation
\begin{equation*}
1 + \delta |P_2 - P_1| - \frac{r_2}{\sin(\bar \alpha)} \delta \bar \theta = \frac{1 - \cos(\theta)}{\sin(\bar \alpha)^2}
\end{equation*}
by \eqref{Equa:delta_P_2P_1_segm}.

Using the explicit expression of $\delta \tilde r$ of Corollary \ref{Cor:evolv_r_after_phi_2}, the length $\delta \tilde L$ can be written by
\begin{equation}
\label{Equa:evolv_L_after_phi_2}
\begin{split}
\delta \tilde L(\phi;s_0) &= \frac{1 - \cos(\theta)}{\sin(\bar \alpha)^2} + \cot(\bar \alpha) \frac{1 - \cos(\theta)}{\sin(\bar \alpha)} \mathcal G(\phi - \phi_0 - \bar \theta + \bar \alpha) - \mathcal G(\phi - \phi_0 - 2\pi - \theta_0) \\
&\quad + \bigg( 1 - \frac{1 - \cos(\theta)}{\sin(\bar \alpha)^2} + \cot(\bar \alpha) \frac{\cos(\bar \alpha - \theta) - \cos(\bar \alpha)}{\sin(\bar \alpha)} \bigg) \mathcal G(\phi - 2\pi - \phi_0 - \bar \theta) \\
& \quad + \frac{\cos(\bar \alpha - \theta) - \cos(\bar \alpha)}{\sin(\bar \alpha)} H(\phi - 2\pi - \phi_0 - \bar \theta) - \frac{\cos(\bar \alpha - \theta) - \cos(\bar \alpha)}{\sin(\bar \alpha)^2} \mathcal G(\phi - 4\pi - \phi_0 - \bar \theta - \bar \alpha),
\end{split}
\end{equation}
where we have used the definition of \gls{Gcal}. The equation it satisfies is
\begin{align*}
\frac{d}{dt} \delta \tilde L(\phi) &= \cot(\bar \alpha) \delta \tilde L(\phi) - \frac{\delta \tilde L(\phi - 2\pi - \bar \alpha)}{\sin(\bar \alpha)} + \cot(\bar \alpha) \frac{1 - \cos(\theta)}{\sin(\bar \alpha)} \frac{H(\phi - \phi_0 - \bar \theta + \bar \alpha)}{\sin(\bar \alpha)} - \frac{H(\phi - \phi_0 - 2\pi - \theta_0)}{\sin(\bar \alpha)} \\
&\quad + \bigg( 1 - \frac{1 - \cos(\theta)}{\sin(\bar \alpha)^2} + \cot(\bar \alpha) \frac{\cos(\bar \alpha - \theta) - \cos(\bar \alpha)}{\sin(\bar \alpha)} \bigg) \frac{H(\phi - 2\pi - \phi_0 - \bar \theta)}{\sin(\bar \alpha)} \\
& \quad + \frac{\cos(\bar \alpha - \theta) - \cos(\bar \alpha)}{\sin(\bar \alpha)} \frac{\Diracd(\phi - 2\pi - \phi_0 - \bar \theta)}{\sin(\bar \alpha)} - \frac{\cos(\bar \alpha - \theta) - \cos(\bar \alpha)}{\sin(\bar \alpha)^2} \frac{H(\phi - 4\pi - \phi_0 - \bar \theta - \bar \alpha)}{\sin(\bar \alpha)},
\end{align*}
which becomes for $\phi \geq \phi_0 + \bar \theta + 4\pi + \bar \alpha$
\begin{equation*}
\frac{d}{dt} \delta \tilde L(\phi) = \cot(\bar \alpha) \delta \tilde L(\phi) - \frac{\delta \tilde L(\phi - 2\pi - \bar \alpha)}{\sin(\bar \alpha)} + S_L,
\end{equation*}
\begin{align*}
S_L &= \frac{1}{\sin(\bar \alpha)} \bigg[ \cot(\bar \alpha) \frac{1 - \cos(\theta)}{\sin(\bar \alpha)} - 1 + \bigg( 1 - \frac{1 - \cos(\theta)}{\sin(\bar \alpha)^2} + \cot(\bar \alpha) \frac{\cos(\bar \alpha - \theta) - \cos(\bar \alpha)}{\sin(\bar \alpha)} \bigg) - \frac{\cos(\bar \alpha - \theta) - \cos(\bar \alpha)}{\sin(\bar \alpha)^2} \bigg] \\
&= - \frac{1 - \cos(\bar \alpha)}{\sin(\bar \alpha)^3} \big( (1 - \cos(\bar \alpha)) (1 - \cos(\theta) + \sin(\bar \alpha) \sin(\theta) \big) < 0.
\end{align*}
In particular we have for the same range of angles
\begin{align*}
\frac{d}{d\phi} \big( \delta \tilde r(\phi) - 2.08 \delta \tilde L(\phi) \big) &= \cot(\bar \alpha) \big( \delta \tilde r(\phi) - 2.08 \delta \tilde L(\phi) \big) - \frac{\delta \tilde r(\phi - 2\pi - \bar \alpha ) - 2.08 \delta \tilde L(\phi - 2\pi - \bar \alpha)}{\sin(\bar \alpha)} - S_L \\
&\geq \cot(\bar \alpha) \big( \delta \tilde r(\phi) - 2.08 \delta \tilde L(\phi) \big) - \frac{\delta \tilde r(\phi - 2\pi - \bar \alpha ) - 2.08 \delta \tilde L(\phi - 2\pi - \bar \alpha)}{\sin(\bar \alpha)}.
\end{align*}

\begin{figure}
\begin{subfigure}{.33\textwidth}
\resizebox{\textwidth}{!}{\includegraphics{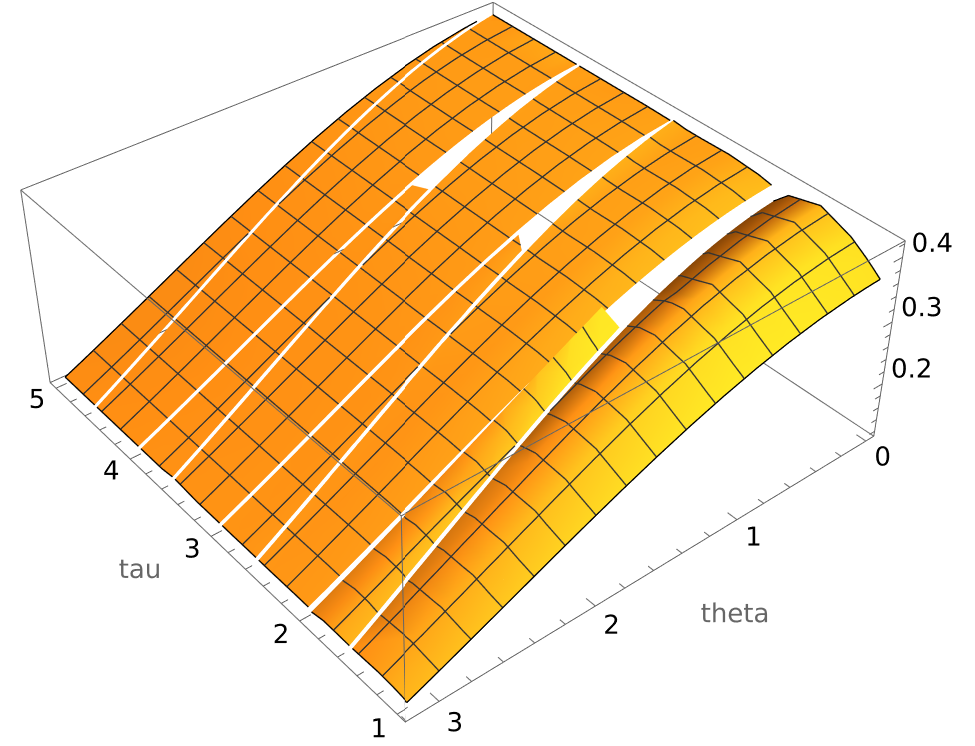}}
\end{subfigure} \hfill
\begin{subfigure}{.33\textwidth}
\resizebox{\textwidth}{!}{\includegraphics{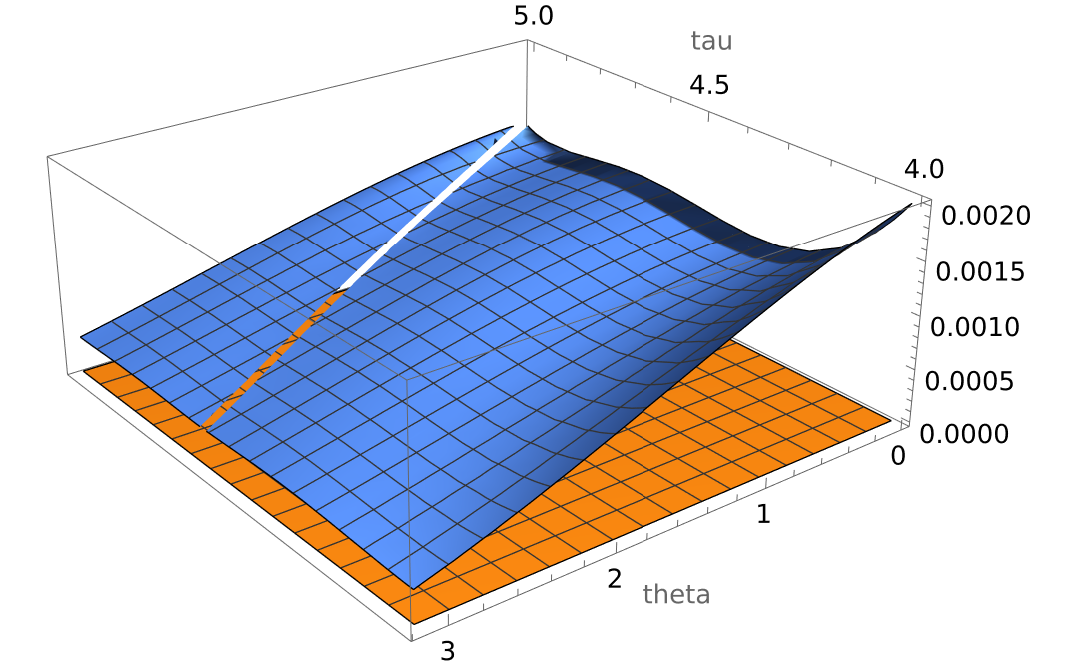}}
\end{subfigure} \hfill
\begin{subfigure}{.33\textwidth}
\resizebox{\textwidth}{!}{\includegraphics{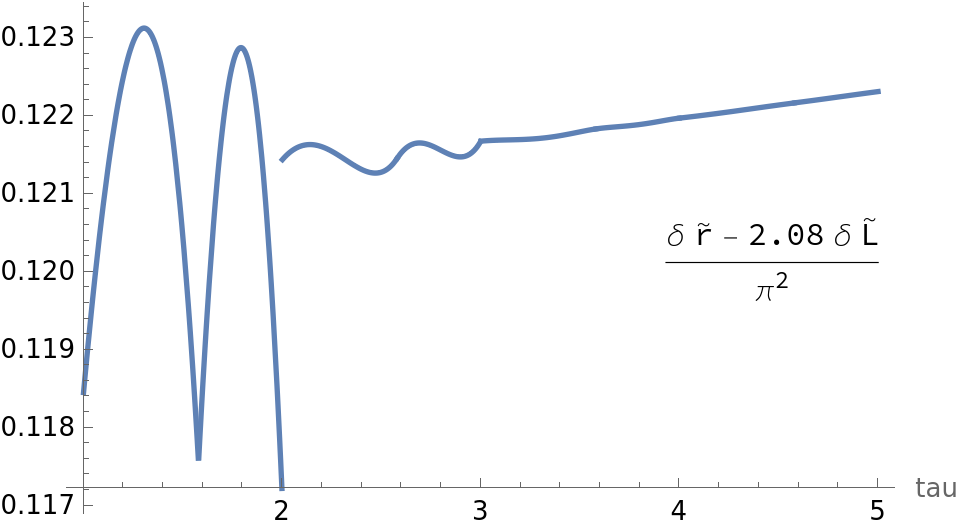}}
\end{subfigure}
\caption{Numerical plot of the function \eqref{Equa:Lpert_segm} rescaled by $e^{-\bar c \phi}$, divided by $\theta^2$ and using the variable $\tau$: from left to right, the first 4 rounds, the derivative for the round $\tau \in [4,5]$ and the minimal value.}
\label{Fig:Lpert_segm}
\end{figure}

In Fig. \ref{Fig:Lpert_segm} the numerical plot of the function \eqref{Equa:Lpert_segm} shows that it is positive, increasing after $\tau = 4$ and thus by Lemma \ref{lem:key} and Remark \ref{rmk:exponentially:explod} it holds

\begin{proposition}
\label{Prop:lemgth_segm_pert}
It holds for the perturbation in the segment case
\begin{equation*}
\delta \rho(\tau) - 2.08 e^{- \bar c(2\pi + \bar \alpha) \tau} \delta \tilde L(\tau) \geq 0.117 \theta^2, \quad \tau \geq 1. 
\end{equation*}
\end{proposition}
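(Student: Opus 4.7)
The plan is to apply Lemma \ref{lem:key} in the critical case $a=1$ together with Remark \ref{rmk:exponentially:explod} to the rescaled quantity
\begin{equation*}
\Psi(\tau,\theta) := \delta \rho(\tau,\theta) - 2.08\, e^{-\bar c(2\pi+\bar\alpha)\tau}\, \delta \tilde L\bigl(\phi_0+\bar\theta-\bar\alpha+(2\pi+\bar\alpha)\tau;s_0\bigr).
\end{equation*}
The differential inequality needed to invoke the lemma has essentially already been derived in the paragraph preceding the statement: combining the RDE for $\delta\tilde r$ from Proposition~\ref{Prop:equa_delta_tilde_r_segm} with the RDE for $\delta \tilde L$ obtained by integrating \eqref{Equa:evolv_L_after_phi_2}, one sees that for $\phi\geq \phi_0+\bar\theta+4\pi+\bar\alpha$ the function $\delta\tilde r-2.08\,\delta\tilde L$ solves the saturated RDE with constant source $-2.08\, S_L$, and because $S_L<0$ this source is nonnegative. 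Passing to the $\tau$-variable and rescaling by $e^{-\bar c(2\pi+\bar\alpha)\tau}$ yields
\begin{equation*}
\dot\Psi(\tau,\theta)\ \geq\ \Psi(\tau,\theta)-\Psi(\tau-1,\theta),\qquad \tau\geq 2.
\end{equation*}

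The second step is to check the hypothesis of the critical case of Lemma~\ref{lem:key} on a full round where all the Dirac masses and discontinuities appearing in $\delta\rho$ (coming from the sources of Proposition~\ref{Prop:equa_delta_tilde_r_segm} at $\phi_0+\bar\theta-\bar\alpha$, $\phi_0+2\pi+\theta_0$, and $\phi_0+2\pi+\bar\theta$) have already been integrated twice by the kernel $g$ and are therefore smoothed. Concretely I would verify numerically on the explicit formulas of Corollary~\ref{Cor:evolv_r_after_phi_2} and \eqref{Equa:evolv_L_after_phi_2}, divided by $\theta^2$ (the natural scale, as explained in Step~2 of the proof of Proposition~\ref{Prop:regions_pos_neg_segm}), the three facts displayed in Figure~\ref{Fig:Lpert_segm}: (a) $\Psi(\tau,\theta)/\theta^2>0$ for $(\tau,\theta)\in[1,4]\times[0,\pi]$; (b) $\partial_\tau\Psi(\tau,\theta)/\theta^2>0$ on $[4,5]\times[0,\pi]$; and (c) the minimum of $\Psi/\theta^2$ over the critical round $\tau\in[4,5]$ is at least $0.117$.

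Once these three checks are in place, Lemma~\ref{lem:key} applied in the form of Remark~\ref{rmk:exponentially:explod} to the inequality $\dot\Psi\geq\Psi(\tau)-\Psi(\tau-1)$, starting from the interval $[4,5]$, yields $\Psi(\tau,\theta)\geq \Psi(4,\theta)\geq 0.117\,\theta^2$ for every $\tau\geq 4$. Combining with the pointwise positivity on $[1,4]$ gives the bound on the whole range $\tau\geq 1$ claimed in the statement.

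The main obstacle is purely computational rather than conceptual: the two-parameter function $\Psi(\tau,\theta)/\theta^2$ is written in closed form as a finite combination of the kernels $G$ and $\mathcal G$ evaluated at translated arguments (with the source coefficients \eqref{Equa:source_segmnt_pert}), and its expansion at $\theta=0$ is quadratic in $\theta$ with a bounded coefficient, so the ratio extends continuously to $\theta=0$; the numerical verification of items (a)--(c) on the compact rectangle $[1,5]\times[0,\pi]$ is then a finite Mathematica computation of the same nature as the one carried out for Proposition~\ref{Prop:length_compar}, and the constant $2.08$ is chosen exactly so that the asymptotic ratio \eqref{Equa:exact_Lbaserho} leaves a strictly positive margin, making (b) robust.
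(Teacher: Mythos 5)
Your proposal follows essentially the same route as the paper: the differential inequality for $\delta\tilde r - 2.08\,\delta\tilde L$ with source $-2.08\,S_L \geq 0$ is exactly what is derived in the text preceding the statement, and the conclusion is obtained by the same numerical verification of Fig.~\ref{Fig:Lpert_segm} followed by Lemma~\ref{lem:key} and Remark~\ref{rmk:exponentially:explod}.

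One bookkeeping slip in your final assembly: the constant $0.117$ is \emph{not} the minimum of $\Psi/\theta^2$ over the round $\tau\in[4,5]$ — the global minimum of the rescaled functional is attained near $\tau=2$ (the numerical value is $0.117205$ at $\tau\approx 2$, $\theta=\pi$). Your item (a) only records positivity on $[1,4]$, and "pointwise positivity on $[1,4]$" does not yield the quantitative bound $\geq 0.117\,\theta^2$ there. The checks should be reorganized as: the quantitative lower bound $\Psi/\theta^2 \geq 0.117$ on the whole initial window $[1,5]$ (this is where the binding value lives), together with $\partial_\tau\Psi \geq 0$ on $[4,5]$, after which Lemma~\ref{lem:key} propagates monotonicity, hence the same lower bound, to all $\tau\geq 5$. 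With that correction the argument is the paper's proof.
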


\subsection{Analysis of Perturbations - Arc Case}
\label{S:arc}

In this section we repeat the computations in the case that the fastest saturated solution $\tilde \xi$ for $\phi \in \phi_0 + [0,2\pi + \beta^-(\phi_0))$ has the following structure:
\begin{itemize}
\item it is an arc of the level set $\lbrace u=u(\bar \zeta(s_0)) \rbrace$, starting in $P_0 = \bar \zeta(\phi_0)$ and ending in \newglossaryentry{Deltaphi}{name=\ensuremath{\Delta \phi},description={opening angle for the arc on the level set}} $P_1 = \tilde \zeta(\phi_1) = \tilde \zeta(\phi_0 + \gls{Deltaphi})$;
\item a segment $[P_1,P_2]$ tangent to the level set $\lbrace u=u(\bar \zeta(s_0))\rbrace$ in $P_1$ and such that $P_2 = \tilde \zeta(\phi_2) = \bar \zeta(\phi_1 + \frac{\pi}{2} -\bar \alpha)$ is saturated;
\item a saturated spiral with $\beta(\phi) = \bar \alpha$ for $\phi \geq \phi_2$.
\end{itemize}
Due to the presence of the initial arc, we call it the \emph{arc case}. The analysis of this section refers to the geometric situation of Figure \ref{fig:enpert:arc}.

We observe that for angles $\phi\geq \phi_1$ the spiral has the same structure of the segment case (Subsection \ref{S:segment}), with $\bar\theta=\frac{\pi}{2}$, though we will show that the analysis of the derivative is different due to the presence of the initial arc: the formulas will coincide when $\bar \theta = \frac{\pi}{2}$ and $\Delta \phi = 0$, yielding as corollary that the derivative $\delta r$ is continuous across $\bar \theta = \frac{\pi}{2}$ (Remark \ref{Rem:same_as_segment}).

\begin{figure}
\centering
\includegraphics[scale=0.5]{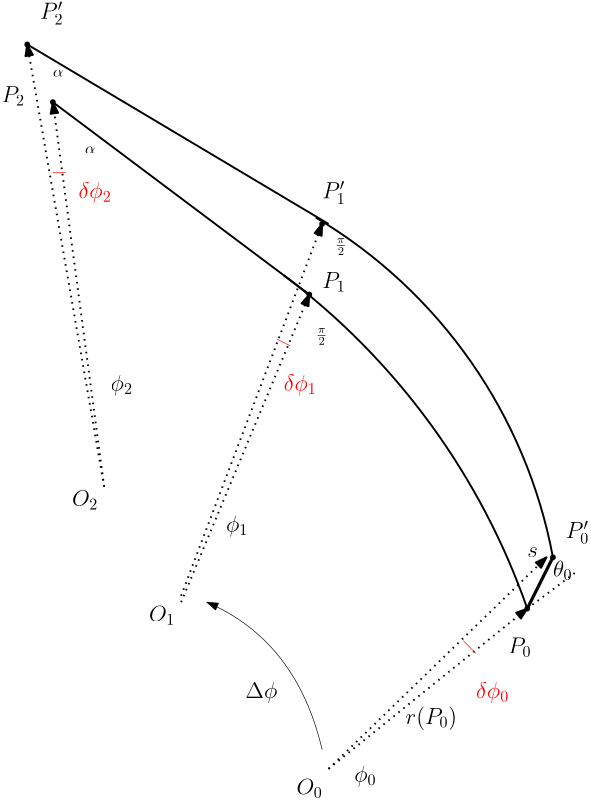}
\caption{Analysis of perturbations of the arc case, Subsection \ref{S:arc}.}
\label{fig:enpert:arc}
\end{figure}

Let us denote as before
\begin{equation*}
\gls{thetapertsegm} = \frac{\pi}{2} - \theta_0
\end{equation*}
We compute now the variations up to second order terms w.r.t. $\delta s_0$:
\begin{equation*}
\delta P_0 = \delta s_0 e^{\i (\phi_0 + \frac{\pi}{2} - \theta)}, \quad \delta \phi_0 = \delta s_0 \frac{\cos(\theta)}{r(\phi_0)},
\end{equation*}
\begin{equation}\label{eq:P:1:arc}
\delta P_1 = \tilde r(\phi_1) \delta \phi_1 e^{\i (\phi_0 + \Delta\phi + \frac{\pi}{2})} + \delta s_0 \sin(\theta) e^{\i (\phi_0 + \Delta \phi)},
\end{equation}
\begin{equation}\label{eq:P:2:arc}
\delta P_2 = \delta P_1 + \delta |P_2 - P_1| e^{\i (\phi_0 + \Delta\phi + \frac{\pi}{2})} - |P_2 - P_1|e^{i\phi_1} \delta \phi_1.
\end{equation}

By the saturation condition on $P_2$ and $P_2'$ we find that
\begin{equation*}
0 = u(P_2') - \cos(\bar \alpha) (\delta s_0 +L(P'_0P'_1) +|P_2' - P_1'|+L(P_0)),
\end{equation*}
where $L(P_0)$ is the length of the spiral up to the angle $\phi_0$ and $L(P'_0P'_1)$ is the length of the arc $P'_0P_1'$, in particular
\begin{align*}
0 = e^{\i (\phi_0 + \Delta\phi+\frac{\pi}{2} - \bar \alpha)} \cdot \delta P_2 - \cos(\bar \alpha) (\delta s_0 + \delta L(P_0P_1)+ \delta |P_2 - P_1|).
\end{align*}
We now notice that
\begin{equation*}
\delta L(P_0P_1) = \delta s_0 \sin(\theta) \Delta \phi + \tilde r(\phi_1) \delta \phi_1 - \delta s_0 \cos(\theta),
\end{equation*}
so that the saturation condition becomes 
\begin{align*}
0 &= \tilde r(\phi_1) \cos(\bar \alpha) \delta \phi_1 + \delta s_0 \sin(\theta) \sin(\bar \alpha) + \delta |P_2 - P_1| \cos(\bar \alpha) - |P_2 - P_1| \delta \phi_1 \sin(\bar \alpha) \\
& \quad - \cos(\bar \alpha) \big( \delta s_0 + \delta s_0 \sin(\theta) \Delta \phi + \tilde r(\phi_1) \delta \phi_1 - \delta s_0 \cos(\theta) + \delta |P_2 - P_1| \big) \\
&= \delta s_0 \big( \cos(\bar \alpha - \theta) - \cos(\bar \alpha) - \cos(\bar \alpha) \sin(\theta) \Delta\phi \big) - |P_2 - P_1| \delta \phi_1 \sin(\bar \alpha),
\end{align*}
in particular
\begin{equation}
\label{Equa:DeltaP_arc}
|P_2 - P_1| \frac{\delta \phi_1}{\delta s_0} = \sin(\theta) - \cot(\bar \alpha) \big( 1 - \cos(\theta) + \Delta \phi \sin(\theta) \big).
\end{equation}
Moreover, from the relations
\begin{equation*}
r(\phi_2) \delta \phi_2 = \delta P_2 \cdot e^{\i (\phi_0 + \Delta \phi + \pi - \bar \alpha)} \quad \text{and} \quad \phi_2 = \phi_1 + \frac{\pi}{2} - \bar \alpha,
\end{equation*}
we obtain $\delta \phi_2 = \delta \phi_1$ and
\begin{equation}
\label{Equa:Deltar_arc}
\begin{split}
\frac{r(\phi_2) \delta \phi_1}{\sin(\bar \alpha)} - \delta |P_2 - P_1| - r(\phi_1) \delta \phi_1 &= \cot(\bar \alpha) |P_2 - P_1| \delta \phi_1 - \cot(\bar \alpha) \sin(\theta) \delta s_0 \\
&= - \cot(\bar \alpha)^2 \big( 1 - \cos(\theta) + \Delta \phi \sin(\theta) \big).
\end{split}
\end{equation}
The variation of the angle $\Delta \phi$ along the arc of the level set is then
\begin{equation}
\label{Equa:delta_Delta_phi}
\begin{split}
\delta \Delta \phi &= \delta\phi_1-\delta\phi_0 \\
&= \frac{\delta s_0}{|P_2-P_1|} \left( \sin(\theta) (1 - \cot(\bar \alpha) \Delta \phi) - \cot(\bar \alpha) (1 - \cos(\theta)) \right) - \frac{\delta s_0}{r(\phi_0)} \cos(\theta).
\end{split}
\end{equation}
The variation $\delta P(\phi)$ along the segment $[P_1,P_2]$ can be computed as 
\begin{align*}
\delta P(\phi) &= \frac{\delta P_1 \cdot e^{\i \phi_1} - |P(\phi) - P_1| \delta \phi_1}{\cos(\phi - \phi_1)} \\
&= \frac{1}{\cos(\phi - \phi_1)} \bigg( \sin(\theta) \bigg(1 - \frac{|P(\phi) - P_1|}{|P_2 - P_1|} \bigg) + \frac{|P(\phi) - P_1|}{|P_2 - P_1|} \cot(\bar \alpha) \big( 1 - \cos(\theta) + \Delta \phi \sin(\theta) \big) \bigg),
\end{align*}
with $\phi \in (\phi_1,\phi_2)$.

\begin{remark}
\label{rmk:deltaphi:max}
We remark that $\Delta\phi\leq\tan\bar\alpha,$ where the equality corresponds to the situation of the case study of Section \ref{S:case:study} with $\mathtt a = 0$. Indeed for such a choice
\begin{equation*}
|P_2 - P_1| \frac{\delta \Delta \phi}{\delta s_0} = - \cot(\bar \alpha) (1 - \cos(\theta)) - \frac{|P_2 - P_1|}{\tilde r(\phi_0)} \cos(\theta) < 0
\end{equation*}
unless $\theta = 0$ and $|P_2 - P_1| = 0$. Notice that for or $\tilde r_{\mathtt a}$ we do not have the initial segment $[(0,0),(1,0)]$, and this is the reason why the limit value $\Delta \phi = \tan(\bar \alpha)$ is allowed.
\end{remark}

To compute $\delta \tilde r(\phi;s_0)$ for $\phi \geq \phi_2$, we observe that the angle $\beta(\phi)$ is equal to $\bar \alpha$ in this region, so that one obtains

\begin{lemma}
\label{Lem:equation_arc_tilde_r}
The RDE satisfied by $\tilde r$ for $\phi \geq \phi_0 +\Delta\phi+\frac{\pi}{2} - \bar \alpha$ is
\begin{equation}
\label{Equa:arc_equ_tilde_r}
\dot{\tilde r}(\phi;s_0) = \cot(\bar \alpha) \tilde r(\phi;s_0) - \begin{cases}
R(\phi) & \phi_0 +\Delta\phi+\frac{\pi}{2} - \bar \alpha \leq \phi < \phi_0 + 2\pi + \beta^-(\phi_0), \\
0 & \phi_0 + 2\pi + \beta^-(\phi_0) \leq \phi < \phi_0 + 2\pi + \frac{\pi}{2}, \\
\tilde r(\phi-2\pi-\frac{\pi}{2}) & \phi_0 + 2\pi + \frac{\pi}{2}\leq \phi\leq \phi_0 + 2\pi + \frac{\pi}{2}+\Delta\phi, \\
|P_2 - P_1| \Diracd_{\phi_0 + 2\pi + \frac{\pi}{2}+\Delta\phi} + \frac{\tilde r(\phi - 2\pi - \bar \alpha)}{\sin(\bar \alpha)} & \phi \geq \phi_0 + 2\pi + \frac{\pi}{2}+\Delta\phi.
\end{cases}
\end{equation}
where $R(\phi)$ is the radius of curvature of $r(\phi)$ (the original spiral).
\end{lemma}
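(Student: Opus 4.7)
The plan is to apply Lemma~\ref{lem:ODE:spiral} directly. Since by construction $\beta(\phi)=\bar\alpha$ for all $\phi\geq\phi_0+\Delta\phi+\frac{\pi}{2}-\bar\alpha$, the RDE \eqref{eq:spiral:angle} collapses to
\begin{equation*}
\frac{d}{d\phi}\tilde r(\phi;s_0)=\cot(\bar\alpha)\,\tilde r(\phi;s_0)-\frac{ds^-}{d\phi}(\phi),
\end{equation*}
so the whole task reduces to identifying, on each of the four subintervals in the statement, where the base point $\tilde\zeta(s^-(\phi))$ lies and computing the measure $ds^-$ there. The four subintervals correspond precisely to the four disjoint pieces of $\tilde\zeta([0,s_0+\bar\theta/\sin(\bar\alpha)])$ that a ray of angle $\phi$ can touch at its base: the original arc $\bar\zeta([0,s_0])$, the corner at $P_0$, the arc of the level set $\{u=u(P_0)\}$, and the segment $[P_1,P_2]$ followed by the saturated continuation. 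The identification itself is forced by the subsequent-angle relation \eqref{eq:subs:angle}, $\phi=\phi^-+2\pi+\beta(\phi^-)$, applied at the base angle.

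Case (1), $\phi\in[\phi_0+\Delta\phi+\frac{\pi}{2}-\bar\alpha,\phi_0+2\pi+\beta^-(\phi_0))$: the base point lies on the original spiral $\bar\zeta([0,s_0])$, because $\tilde\zeta$ coincides with $\bar\zeta$ there; hence $ds^-/d\phi=R(\phi)$ is the radius of curvature inherited from $\bar\zeta$. Case (2), $\phi\in[\phi_0+2\pi+\beta^-(\phi_0),\phi_0+2\pi+\frac{\pi}{2})$: at $P_0=\bar\zeta(s_0)$ the curve $\tilde\zeta$ has a corner whose subdifferential spans unit vectors with angles in $[\phi_0+\beta^-(\phi_0),\phi_0+\frac{\pi}{2}]$, and every ray with angle in the corresponding translated interval $+2\pi$ has its base at this single point, so $s^-(\phi)\equiv s_0$ and $ds^-/d\phi=0$.

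Case (3) is the genuinely new case and the only mildly delicate one: for $\phi\in[\phi_0+2\pi+\frac{\pi}{2},\phi_0+2\pi+\frac{\pi}{2}+\Delta\phi]$ the base lies on the arc of the level set $\{u=u(P_0)\}$. At any such base point the tangent to $\tilde\zeta$ is by construction orthogonal to its own optimal ray, i.e.\ $\beta=\frac{\pi}{2}$ there; plugging this into \eqref{eq:subs:angle} yields $\phi^-=\phi-2\pi-\frac{\pi}{2}$. By Corollary~\ref{lem:C:1:1} (applied to the original strategy $\bar\zeta$), the radius of curvature of the level set at a point $x$ equals the distance from $x$ to its own base on $\bar\zeta$; by definition this distance is exactly $\tilde r(\phi-2\pi-\frac{\pi}{2})$. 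Via Point~\eqref{Point_2:lem:ODE:spiral} of Lemma~\ref{lem:ODE:spiral} with $\beta(\phi^-)=\frac{\pi}{2}$ we obtain $ds^-/d\phi=\tilde r(\phi-2\pi-\frac{\pi}{2})$, which is the claimed term.

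Case (4), $\phi\geq\phi_0+2\pi+\frac{\pi}{2}+\Delta\phi$: at the transition from the arc to the segment $[P_1,P_2]$, the function $s^-$ jumps by the full length $|P_2-P_1|$ (because the whole segment is swept by a single ray direction $e^{i(\phi_0+\Delta\phi+\frac{\pi}{2})}$), producing the Dirac mass $|P_2-P_1|\Diracd_{\phi_0+2\pi+\frac{\pi}{2}+\Delta\phi}$; past this angle the base lies on the saturated continuation with $\beta\equiv\bar\alpha$, so again by \eqref{eq:subs:angle} and \eqref{eq:var:length:1} the curvature contribution becomes $\tilde r(\phi-2\pi-\bar\alpha)/\sin(\bar\alpha)$. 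Assembling the four cases gives \eqref{Equa:arc_equ_tilde_r}. The only real subtlety is the boundary value $\beta=\frac{\pi}{2}$ in Case (3); one has to check that our convexity and counterclockwise-rotation conventions (Remark~\ref{Rem:equiv_convex} and assumption \eqref{eq:assumption:spiral}) still apply at this limiting value, which is immediate from the fact that the arc of the level set is itself convex (Corollary~\ref{lem:C:1:1}) and traversed counterclockwise by construction of Theorem~\ref{Cor:curve_cal_R_sat_spiral}.
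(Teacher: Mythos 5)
Your proposal is correct and follows essentially the same route as the paper: apply the general RDE \eqref{eq:spiral:angle} with $\beta\equiv\bar\alpha$ and identify $Ds^-$ on each of the four angle ranges (base on $\bar\zeta$, corner at $P_0$, level-set arc with $\beta^-=\frac{\pi}{2}$ giving the delayed term $\tilde r(\phi-2\pi-\frac{\pi}{2})$, and the jump $|P_2-P_1|$ from the segment followed by the saturated delay). Your treatment is simply a more detailed write-up of the paper's terse argument, and the justifications you supply (the subdifferential at $P_0$, Corollary~\ref{lem:C:1:1} for the level-set curvature) are the intended ones.
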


\begin{proof}
We have just to compute the radius of curvature, since $\beta(\phi) = \bar \alpha$ fixed for $\phi \geq \phi_2$. Formula \eqref{Equa:arc_equ_tilde_r} follows easily, observing that if $\beta^-(s_0) < \frac{\pi}{2}$ then there is corner in the spiral $\tilde r(\phi;s_0)$ whose subdifferential is
\begin{equation*}
\partial^- \tilde r(\phi;s_0) = \bigg\{ e^{i\phi}, \phi \in \phi_0 + 2\pi + \bigg[ \beta^-(\phi_0),\frac{\pi}{2} \bigg] \bigg\}.
\end{equation*}
For the angles $\phi_0 + 2\pi + \frac{\pi}{2}\leq \phi\leq \phi_0 + 2\pi + \frac{\pi}{2}+\Delta\phi$ the radius of curvature is precisely the radius of the spiral, computed at the previous angle (Formula \eqref{eq:subs:angle}). The jump at $\phi_0 + 2\pi + \frac{\pi}{2}+\Delta\phi$ is due to the segment $[P_1,P_2]$, corresponding in a jump in $s^-(\phi)$ at that angle.
\end{proof}

Computing the derivative w.r.t. $s_0$ for $\phi > \phi_0 + \Delta\phi+\frac{\pi}{2}- \bar \alpha$ we can prove the following

\begin{proposition}
\label{Prop:equa_delta_tilde_r_arc}
For $\phi >  \phi_0 + \Delta\phi+\frac{\pi}{2}- \bar \alpha$, the derivative
$$
\delta \tilde r(\phi;s_0) = \lim_{\delta s_0 \searrow 0} \frac{\tilde r(\phi;s_0 + \delta s_0) - \tilde r(\phi;s_0)}{\delta s_0}
$$
satisfies the RDE on $\R$
\begin{equation*}
\frac{d}{d\phi} \delta \tilde r(\phi;s_0) = \cot(\bar \alpha) \delta \tilde r(\phi;s_0) - \frac{\delta \tilde r(\phi - 2\pi - \bar \alpha)}{\sin(\bar \alpha)} + S(\phi;s_0),
\end{equation*}
with source
\begin{align*}
S(\phi;s_0) &= \frac{\cot(\bar \alpha)}{\sin(\bar \alpha)} \big( 1 - \cos(\theta) + \Delta \phi \sin(\theta) \big) \Diracd_{\phi_0 + \Delta \phi + \frac{\pi}{2} - \bar \alpha} - \Diracd_{\phi_0 + 2\pi + \frac{\pi}{2} - \theta} + \cos(\theta) \Diracd_{\phi_0 + 2\pi + \frac{\pi}{2}} \\
& \quad - \sin(\theta) \ind_{\phi_0 + 2\pi + \frac{\pi}{2} + [0,\Delta \phi]} - \cot(\bar \alpha)^2 \big( 1 - \cos(\theta) + \Delta \phi \sin(\theta) \big) \Diracd_{\phi_0 + 2\pi + \Delta \phi + \frac{\pi}{2}} \\
& \quad + \big( \sin(\theta) - \cot(\bar \alpha) (1 - \cos(\theta) + \Delta \phi \sin(\theta)) \big) \Diracd'_{\phi_0 + 2\pi + \frac{\pi}{2} + \Delta \phi}.
\end{align*}
\end{proposition}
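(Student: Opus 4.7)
The plan is to repeat the argument used for Proposition \ref{Prop:equa_delta_tilde_r_segm}, but with two additional layers of complication: (i) the arc case has an extra regime in Lemma \ref{Lem:equation_arc_tilde_r} — the ``arc'' interval $[\phi_0 + 2\pi + \frac{\pi}{2}, \phi_0 + 2\pi + \frac{\pi}{2} + \Delta\phi]$ on which the RDE has a nontrivial source $\tilde r(\phi - 2\pi - \frac{\pi}{2})$ coming from the curvature of the level-set arc traversed at the previous round; and (ii) when we perturb $s_0$, not only the value of this source but also its interval of support shifts. Since the underlying RDE is linear with constant coefficients for $\phi > \phi_2$, it suffices to assemble the distributional contributions produced at each of the (shifted) endpoints of the four regimes of \eqref{Equa:arc_equ_tilde_r}, using the infinitesimal geometric relations \eqref{eq:P:1:arc}--\eqref{Equa:delta_Delta_phi} together with the explicit linearization of $\delta P(\phi)$ along $[P_1,P_2]$ that follows them.

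Concretely, I would proceed regime by regime. First, at the starting point $\phi_2 = \phi_0 + \Delta\phi + \frac{\pi}{2} - \bar\alpha$: the saturated condition at $P_2$ means the tangent to $\tilde\zeta$ at $P_2$ is $e^{\i(\phi_2 + \bar\alpha)}$; projecting $\delta P_2$ from \eqref{eq:P:2:arc} on the direction orthogonal to this tangent and dividing by $\sin(\bar\alpha)$, then simplifying with \eqref{Equa:DeltaP_arc}, gives exactly the coefficient $\cot(\bar\alpha)(1-\cos\theta+\Delta\phi\sin\theta)/\sin(\bar\alpha)$ of $\Diracd_{\phi_2}$. On the first regime $[\phi_2, \phi_0 + 2\pi + \beta^-(\phi_0))$ the coefficient $R(\phi)$ is fixed by $\bar\zeta$ and independent of $s_0$, so $\delta\tilde r$ satisfies the homogeneous equation. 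At the transition angle $\phi_0 + 2\pi + \theta_0 = \phi_0 + 2\pi + \frac{\pi}{2} - \theta$, exactly as in the segment case, the jump in $s^-$ produced by moving the base point by $\delta s_0$ yields the term $-\Diracd_{\phi_0+2\pi+\frac{\pi}{2}-\theta}$.

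The genuinely new analysis happens on the arc regime $[\phi_0+2\pi+\frac{\pi}{2}, \phi_0+2\pi+\frac{\pi}{2}+\Delta\phi]$. Here one writes, for $0<\phi-\phi_0-2\pi-\frac{\pi}{2}\ll 1$, the difference $\tilde r(\phi;s_0+\delta s_0)-\tilde r(\phi;s_0)$ by Duhamel up to second order: the boundary of the interval shifts by $\delta\phi_0=\delta s_0\cos\theta/r(\phi_0)$ on the left and by $\delta\phi_1$ on the right, and the integrand $\tilde r(\phi-2\pi-\frac{\pi}{2};s_0)$ along the arc is the radius computed from the formula for $\delta P(\phi)$ on $[P_0,P_1]$ given after \eqref{Equa:delta_Delta_phi}. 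After dividing by $\delta s_0$, the shift of the left endpoint of the interval, coupled with the fact that on the arc $\tilde r(\phi_0)\cdot\delta\phi_0\cdot$ (orthogonal component) produces a clean $\cos\theta$, gives the $\cos\theta\,\Diracd_{\phi_0+2\pi+\pi/2}$ contribution; the variation of the integrand itself yields the $-\sin\theta\,\ind_{\phi_0+2\pi+\pi/2+[0,\Delta\phi]}$ term (a diffuse source, exactly as in \eqref{Equa:diffuse_source}). Finally, at the right endpoint $\phi_0+2\pi+\frac{\pi}{2}+\Delta\phi$ the argument of the segment case applies verbatim to the jump produced by the segment $[P_1,P_2]$: its amplitude $|P_2-P_1|$ varies with $s_0$, producing the $\Diracd$-coefficient $-\cot^2(\bar\alpha)(1-\cos\theta+\Delta\phi\sin\theta)$ once \eqref{Equa:DeltaP_arc}--\eqref{Equa:Deltar_arc} are plugged in, and the position of the jump itself shifts by $\delta\phi_1$, producing the $\Diracd'$-coefficient $\sin\theta-\cot(\bar\alpha)(1-\cos\theta+\Delta\phi\sin\theta)$.

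The last step is simply to verify that on each open subinterval between these singular angles, the derivative $\delta\tilde r$ inherits from Lemma \ref{Lem:equation_arc_tilde_r} the same linear retarded equation as $\tilde r$ itself (which is constant-coefficient since $\beta\equiv\bar\alpha$ on the whole second round), so that the computed jumps and diffuse source assemble into the single RDE on $\R$ stated in the proposition. The main obstacle is the bookkeeping in the arc regime, where one must carefully track that the $\cos\theta$ arising at $\phi_0+2\pi+\frac{\pi}{2}$ from the boundary shift is not cancelled by part of the integrand variation; the fact that the two contributions separate cleanly into a Dirac at the left endpoint and an indicator over the interval reflects the geometric meaning that the arc's opening angle $\Delta\phi$ and its starting point $\phi_0$ are independent degrees of freedom parametrizing the perturbation, whose variations are already computed in \eqref{Equa:delta_Delta_phi}.
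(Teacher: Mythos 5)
Your proposal is correct and follows essentially the same route as the paper's proof: a regime-by-regime linearization of the RDE of Lemma \ref{Lem:equation_arc_tilde_r}, reading off the Dirac contribution at $\phi_2$ from the normal projection of $\delta \check P_2$, the $-\Diracd$ at $\phi_0+2\pi+\theta_0$ from the jump in $s^-$, the $\cos(\theta)\,\Diracd$ and the diffuse $-\sin(\theta)$ source from, respectively, the shift of the arc's left endpoint and the variation of the arc source, and the $\Diracd,\Diracd'$ terms at $\phi_0+2\pi+\frac{\pi}{2}+\Delta\phi$ from the variation in amplitude and position of the segment jump, with the coefficients simplified via \eqref{Equa:DeltaP_arc}--\eqref{Equa:Deltar_arc}. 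No gap to report.
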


We recall the notation
$$
\phi_1 = \phi_0 + \Delta \phi, \quad \phi_2 = \phi_1 + \frac{\pi}{2} -\bar \alpha.
$$
which we will use in the following.

\begin{proof}
We compute first the variation of the initial data of \eqref{Equa:arc_equ_tilde_r}: observing that the spiral $\tilde r(\phi;s_0)$ is saturated at $\phi_2$ and hence its tangent vector is $e^{\i (\phi_0 +\Delta\phi+\frac{\pi}{2})}$, one has up to second order terms for $\phi$ close to $\phi_2$
\begin{align*}
\delta \tilde r(\phi_2) &= \frac{\delta P_2 \cdot  e^{i\phi_1}}{\sin(\bar \alpha)} \\
&= \frac{\delta s_0 \sin(\theta) - |P_2 - P_1| \delta \phi_1}{\sin(\bar \alpha)} \\
&= \delta s_0 \frac{\cot(\bar \alpha)}{\sin(\bar \alpha)}(1 - \cos(\theta) + \Delta \phi \sin(\theta)),
\end{align*}
which gives the first jump at $\phi_2$.

Being $R(\phi)$ the same up to $\phi_0 + 2\pi + \theta_0$, the RDE for $\delta \tilde r$ is actually the ODE
\begin{equation*}
\frac{d}{d\phi} \delta \tilde r(\phi) = \cot(\bar \alpha) \delta \tilde r(\phi)
\end{equation*}
for $\phi_0 + \Delta\phi + \frac{\pi}{2} - \bar\alpha \leq \phi < \phi_0 + 2\pi + \theta_0$.

Using $R = Ds^-$ and taking $\delta \phi \ll 1$, the variation at $\phi_0 + 2\pi + \theta_0$ is up to second order terms w.r.t. $\delta \phi$
\begin{align*}
&\tilde r(\phi_0+2\pi+\theta_0 + \delta \phi; s_0 + \delta s_0) - \tilde r(\phi_0+2\pi+\theta_0 + \delta \phi;s_0) \\
\quad &= (1 + \cot(\bar \alpha) \delta \phi) \big( \tilde r(\phi_0+2\pi+\theta_0;s_0 + \delta s_0) - \tilde r(\phi_0+2\pi+\theta_0;s_0) \big) - \big( s^-(\phi_0 + 2\pi + \theta_0 + \delta \phi) - s^-(\phi_0 + 2\pi + \theta_0) \big) \\
\quad &= (1 + \cot(\bar \alpha) \delta \phi) \big( \tilde r(\phi_0 + 2\pi + \theta_0;s_0 + \delta s_0) - \tilde r(\phi_0 + 2\pi + \theta_0;s_0) \big) - \delta s_0.
\end{align*}
Dividing by $\delta s_0$, letting $\delta s_0 \searrow 0$ first and then $\phi \searrow 0$, we conclude that $\delta \tilde r(\phi;s_0)$ has a jump of size $-1$ at the angle $\phi_0 + 2\pi + \theta_0$.

From \eqref{Equa:arc_equ_tilde_r} it follows that
\begin{equation*}
\frac{d}{d\phi} \delta \tilde r(\phi) = \cot(\bar \alpha) \delta \tilde r(\phi)
\end{equation*}
for $\phi \in \phi_0 + 2\pi + [\theta_0,\frac{\pi}{2}]$.

At about the angle $\phi = \phi_0 + 2\pi + \frac{\pi}{2}$, we can repeat the analysis for a small angle as we did at the angle $\phi_0 + 2\pi + \theta_0$ above,  obtaining that
there is an upward jump of value $\cos(\theta)$, giving the source $\cos(\theta) \Diracd_{\phi_0 + 2\pi + \frac{\pi}{2}}$.

Again, from \eqref{Equa:arc_equ_tilde_r} it follows that
\begin{equation*}
\frac{d}{d\phi} \delta \tilde r(\phi) = \cot(\bar \alpha) \delta \tilde r(\phi) - \sin(\theta),
\end{equation*}
for $\phi_0 + 2\pi + \frac{\pi}{2} < \phi < \phi_0 + 2\pi + \frac{\pi}{2}+\Delta\phi$, because $\delta \tilde r(\phi_0 + \frac{\pi}{2} + [0,\Delta \phi)) = \sin(\theta)$.

At the angle $\phi_0 + 2\pi + \frac{\pi}{2} + \Delta \phi$ the solution has a discontinuity due to the variation of $\delta \phi_1$ and $|P_2 - P_1|$: computing the first order approximations for $0 \leq \delta \phi \ll 1$ one gets
\begin{align*}
&\tilde r \bigg( \phi_0 + 2\pi + \frac{\pi}{2} + \Delta \phi + \delta \phi, s_0 + \delta s_0 \bigg) - \tilde r \bigg( \phi_0 + 2\pi + \frac{\pi}{2} + \Delta\phi +\delta \phi,s_0 \bigg) \\
& \quad = \left[ \tilde r \bigg( \phi_0 + 2\pi + \frac{\pi}{2} + \Delta \phi - \delta \phi, s_0 + \delta s_0 \bigg) - \tilde r \bigg( \phi_0 + 2\pi + \frac{\pi}{2} + \Delta \phi - \delta \phi,s_0 \bigg) \right] (1 + 2 \cot(\bar \alpha) \delta \phi) \\
& \quad \quad - |P'_2 - P'_1| H(\phi - \phi_2 + \delta \phi_1) - (\delta \phi + \delta \phi_1) \tilde r(\phi_1,s_0 + \delta s_0) - (\delta \phi - \delta \phi_1) \frac{\tilde r(\phi_2,s_0 + \delta s_0)}{\sin(\bar \alpha)} \\
& \quad \quad + |P_2 - P_1| H(\phi - \phi_2) + \delta \phi \tilde r(\phi_1,s_0) + \delta \phi \frac{\tilde r(\phi_2,s_0)}{\sin(\bar \alpha)},
\end{align*}
with \gls{Heavyside} being the Heaviside function. Dividing by $\delta s_0$, using that $\delta \tilde(\phi_1) = \sin(\theta)$ and passing to the limit $\delta s_0 \searrow 0$ we obtain
\begin{align*}
&\delta \tilde r \bigg( \phi_0 + 2\pi + \frac{\pi}{2} + \Delta \phi + \delta \phi;s_0 \bigg) \\
& \quad = \delta \tilde r \bigg( \phi_0 + 2\pi + \frac{\pi}{2} + \Delta \phi - \delta \phi;s_0 \bigg) \big( 1 + 2 \cot(\bar \alpha) \delta \phi \big) - \frac{\delta \tilde r(\phi_2;s_0)}{\sin(\bar \alpha)} \delta \phi - \sin(\theta) \delta \phi \\
& \quad \quad + |P_2 - P_1| \frac{\delta \phi_1}{\delta s_0} \Diracd_{\phi_0+2\pi + \frac{\pi}{2} + \Delta \phi} + \bigg[ - \frac{\delta |P_2 - P_1|}{\delta s_0} - \frac{\delta \phi_1}{\delta  s_0} \tilde r(\phi_1) + \frac{\delta \phi_1}{\delta {s_0}} \frac{\tilde r(\phi_2)}{\sin(\bar \alpha)} \bigg].
\end{align*}
Hence its derivative w.r.t. $\phi$ satisfies
\begin{align*}
\frac{d}{d\phi} \delta \tilde r(\phi;s_0) &= \cot(\bar \alpha) \delta \tilde r(\phi;s_0) - \frac{\delta \tilde r(\phi - 2\pi - \bar \alpha;s_0)}{\sin(\bar \alpha)} \\
& \quad + \bigg( - \frac{\delta |P_2 - P_1|}{\delta s_0} + \frac{\tilde r(\phi_2;s_0)}{\sin(\bar \alpha)} \frac{\delta \phi_1}{\delta s_0} -\tilde r(\phi_1)\frac{\delta\phi_1}{\delta s_0} \bigg) \Diracd_{\phi_0 + 2\pi + \frac{\pi}{2}+\Delta \phi} + |P_2 - P_1| \frac{\delta \phi_1}{\delta s_0} \Diracd'_{\phi_0 + 2\pi +\frac{\pi}{2}+\Delta\phi},
\end{align*}
where \newglossaryentry{Diracprime}{name=\ensuremath{\Diracd'},description={derivative of the Dirac delta $\Diracd$}} \gls{Diracprime} is the derivative of the Dirac's delta:
\begin{equation*}
\langle \Diracd'_x, f \rangle = - \frac{df(x)}{dx}.
\end{equation*}
Using \eqref{Equa:DeltaP_arc} and \eqref{Equa:Deltar_arc} we obtain source in the statement.

For $\phi \geq \phi_2 + 2\pi + \bar \alpha$ the equation is linear, and then the conclusion follows.
\end{proof}

We remark that, since
\begin{equation*}
\frac{\cot(\bar \alpha)}{\sin(\bar \alpha)} \left( 1 - \cos(\theta) + \Delta \phi \sin(\theta) \right) \geq 0,
\end{equation*}
being $\Delta\phi\geq 0$, it follows that the perturbation is positive for $\phi \in [0,\phi_0 + \frac{\pi}{2} + \Delta \phi -\bar\alpha]$, as the optimality of Theorem \ref{Cor:curve_cal_R_sat_spiral} implies.

\begin{remark}
\label{Rem:same_as_segment}
Observe that when $\Delta \phi = 0$ the source becomes
\begin{align*}
S(\phi;s_0) &= \frac{\cos(\bar \alpha)}{\sin(\bar \alpha)} (1 - \cos(\theta)) \Diracd_{\phi_0 + \frac{\pi}{2} - \bar \alpha} - \Diracd_{\phi_0 + 2\pi + \theta_0} \\
& \quad + \big( \cos(\theta) - \cot(\bar \alpha)^2 (1 - \cos(\theta)) \big) \Diracd_{\phi_0 + 2\pi + \frac{\pi}{2}} + \big( \sin(\theta) - \cos(\bar \alpha) (1 - \cos(\theta)) \big) \Diracd'_{\phi_0 + 2\pi + \frac{\pi}{2}},
\end{align*}
which is the same as the source of the segment case. Hence the derivative $\delta r$ is continuous when passing from the segment case to the arc case, i.e. when $\bar \theta = \frac{\pi}{2}$.
\end{remark}

Using the kernel \gls{Gkernel} of Remark \ref{Rem:oringal_phi_kernel} we obtain the following explicit representation of the solution $\delta \tilde r$.

\begin{corollary}
\label{Cor:evolv_r_after_phi_2:arc}
The solution $\delta \tilde r(\phi)$ for $\phi \geq \phi_2 = \phi_0 + \frac{\pi}{2} - \bar \alpha + \Delta \phi$ is explicitly given by
\begin{equation*}
\begin{split}
\delta \tilde r(\phi;s_0) &= \frac{\cot(\bar \alpha)}{\sin(\bar \alpha)} \left( 1 - \cos(\theta) + \Delta \phi \sin(\theta) \right) G \bigg( \phi - \phi_0 - \Delta \phi - \frac{\pi}{2} + \bar \alpha \bigg) - G(\phi - \phi_0 - 2\pi - \theta_0) \\
& \quad + \cos(\theta) G \bigg( \phi - \phi_0 - 2\pi - \frac{\pi}{2} \bigg) - \sin(\theta) M \bigg( \phi; \phi_0 + 2\pi + \frac{\pi}{2}; \phi_0 + 2\pi + \frac{\pi}{2} + \Delta \phi \bigg) \\
& \quad + \bigg( - \cot(\bar \alpha)^2 \big( 1 - \cos(\theta) + \Delta \phi \sin (\theta) \big) \\
& \quad \qquad \qquad + \cot(\bar \alpha) \big( \sin(\theta) - \cot(\bar \alpha) (1 - \cos(\theta) + \Delta \phi \sin(\theta)) \big) \bigg) G \bigg( \phi -\phi_0 - 2\pi - \Delta \phi - \frac{\pi}{2} \bigg) \\
& \quad + \big( \sin(\theta) - \cot(\bar \alpha) (1 - \cos(\theta) + \Delta \phi \sin(\theta)) \big) \Diracd_{\phi_0 + 2\pi + \frac{\pi}{2} + \Delta \phi} \\
& \quad - \frac{1}{\sin(\bar \alpha)} \big( \sin(\theta) - \cot(\bar \alpha) (1 - \cos(\theta) + \Delta \phi \sin(\theta)) \big) G \bigg( \phi - \phi_0 - 2\pi - \Delta \phi - \frac{\pi}{2} - 2\pi  - \bar \alpha \bigg).
\end{split}
\end{equation*}
\end{corollary}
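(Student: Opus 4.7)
The plan is to obtain the formula purely by invoking linearity of the RDE and the explicit Green kernels of Section \ref{Sss:green_kernels}. By Proposition \ref{Prop:equa_delta_tilde_r_arc}, the function $\delta \tilde r(\phi;s_0)$ solves on $\R$ the linear retarded equation
\begin{equation*}
\frac{d}{d\phi} \delta \tilde r(\phi;s_0) = \cot(\bar\alpha)\, \delta \tilde r(\phi;s_0) - \frac{\delta \tilde r(\phi - 2\pi - \bar\alpha;s_0)}{\sin(\bar\alpha)} + S(\phi;s_0),
\end{equation*}
together with the initial condition $\delta \tilde r(\phi;s_0) = 0$ for $\phi$ smaller than $\phi_0 + \Delta\phi + \frac{\pi}{2} - \bar\alpha$ (the position of the first Dirac in $S$). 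Because the equation is linear, I would decompose the solution as a sum of the responses to each of the six distributional summands in $S(\phi;s_0)$, which are of three types: Dirac masses $A\, \Diracd_{\phi^*}$, an indicator function $-\sin(\theta)\, \ind_{[\phi_0+2\pi+\pi/2,\, \phi_0+2\pi+\pi/2+\Delta\phi]}$, and one derivative of a Dirac mass $A\, \Diracd'_{\phi^{**}}$.

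The first two types are immediate: by definition of the kernel $G$ (Remark \ref{Rem:oringal_phi_kernel}) the response to $A\,\Diracd_{\phi^*}$ is $A\, G(\phi - \phi^*)$, and by definition of $M$ the response to the indicator source is $-\sin(\theta)\, M(\phi;\phi_0+2\pi+\tfrac{\pi}{2};\phi_0+2\pi+\tfrac{\pi}{2}+\Delta\phi)$. Substituting in the four Dirac masses of $S$ at the angles $\phi_0+\Delta\phi+\tfrac{\pi}{2}-\bar\alpha$, $\phi_0+2\pi+\tfrac{\pi}{2}-\theta$, $\phi_0+2\pi+\tfrac{\pi}{2}$, and $\phi_0+2\pi+\tfrac{\pi}{2}+\Delta\phi$ with their respective coefficients produces the first three lines and part of the fourth of the displayed formula, as well as the term $-\cot(\bar\alpha)^2(1-\cos(\theta)+\Delta\phi\sin(\theta))\, G(\phi - \phi_0 - 2\pi - \Delta\phi - \tfrac{\pi}{2})$.

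The only nontrivial step is the treatment of the source $A\,\Diracd'_{\phi^{**}}$ with $\phi^{**} = \phi_0 + 2\pi + \tfrac{\pi}{2} + \Delta\phi$ and $A = \sin(\theta) - \cot(\bar\alpha)(1 - \cos(\theta) + \Delta\phi \sin(\theta))$. Here I would use the identity of Remark \ref{Rem:diff_relations},
\begin{equation*}
\partial_{\phi_0} G(\phi - \phi_0) = -\Diracd_{\phi_0} - \cot(\bar\alpha)\, G(\phi - \phi_0) + \frac{G(\phi - \phi_0 - 2\pi - \bar\alpha)}{\sin(\bar\alpha)},
\end{equation*}
together with the distributional relation $\frac{d}{d\phi_0}\Diracd_{\phi_0} = -\Diracd'_{\phi_0}$ (so that the response to $\Diracd'_{\phi^{**}}$ is $-\partial_{\phi^{**}} G(\phi - \phi^{**})$). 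This yields a response
\begin{equation*}
A\, \Diracd_{\phi^{**}} + A\cot(\bar\alpha)\, G(\phi - \phi^{**}) - \frac{A}{\sin(\bar\alpha)}\, G(\phi - \phi^{**} - 2\pi - \bar\alpha),
\end{equation*}
which accounts for the explicit Dirac term on the fourth line of the statement, for the $A\cot(\bar\alpha)G(\phi - \phi_0 - 2\pi - \Delta\phi - \tfrac{\pi}{2})$ contribution (which must be added to the $-\cot(\bar\alpha)^2(1-\cos(\theta)+\Delta\phi\sin(\theta))\, G$ obtained above in order to give the compound coefficient written in the statement), and for the final $-\frac{A}{\sin(\bar\alpha)}G(\phi - \phi_0 - 2\pi - \Delta\phi - \tfrac{\pi}{2} - 2\pi - \bar\alpha)$ term.

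Summing all these responses and grouping the two contributions landing on $G(\phi - \phi_0 - 2\pi - \Delta\phi - \tfrac{\pi}{2})$ gives precisely the formula. No obstacle here beyond careful bookkeeping of the six source terms and a single application of the differential identity for $\partial_{\phi_0}G$; the main point I would emphasize is the correct sign for the $\Diracd'$ response, which is the only subtle algebraic step.
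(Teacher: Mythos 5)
Your proposal is correct and follows exactly the route the paper intends (the paper leaves this corollary essentially without proof, deferring to Remark \ref{Rem:oringal_phi_kernel}): decompose by linearity over the six summands of the source from Proposition \ref{Prop:equa_delta_tilde_r_arc}, use $G$ for the Dirac masses and $M$ for the indicator, and resolve the $\Diracd'$ term via the identity \eqref{Equa:ker_der_G}. Your sign bookkeeping for the $\Diracd'$ response and the resulting grouping of the two contributions on $G(\phi-\phi_0-2\pi-\Delta\phi-\frac{\pi}{2})$ are both right.
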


We next make the change of variables
\begin{equation*}
\delta \tilde r(\phi_2 + (2\pi + \bar \alpha) \tau;s_0) = \delta \rho(\tau) e^{\bar c (2\pi + \bar \alpha) \tau}, \quad \phi_2 = \phi_0 + \Delta \phi + \frac{\pi}{2} - \bar \alpha,
\end{equation*}
obtaining the following corollary.

\begin{corollary}
\label{Cor:arc_rho_eq}
The values of the perturbation depend only on the angles $\theta = \frac{\pi}{2} - \theta_0$ and $\Delta \phi$, and the function $\delta \rho(\tau)$ is given by
\begin{equation*}
\delta \rho(\tau) = S_0 g(\tau) + S_1 g(\tau - \tau_1) + S_2 g(\tau - \tau_2) + S_3 m(\tau,\tau_2,1) + S_4 g(\tau-1) + D \Diracd_1 + S_5 g(\tau-2),
\end{equation*}
where \newglossaryentry{tau2}{name=\ensuremath{\tau_2},description={time of the second discontinuity in the perturbation in the arc case}}
\begin{equation*}
\gls{tau1} = 1 - \frac{\Delta \phi + \theta}{2\pi + \bar \alpha}, \quad \gls{tau2} = 1 - \frac{\Delta \phi}{2\pi + \bar \alpha},
\end{equation*}
and
\begin{equation*}
S_0(\theta,\Delta \phi) = \frac{\cot(\bar \alpha)}{\sin(\bar \alpha)} \big( 1 - \cos(\theta) + \Delta \phi \sin(\theta) \big),
\end{equation*}
\begin{equation*}
S_1(\theta,\Delta \phi) = - e^{- \bar c(2\pi + \bar \alpha) \tau_1} = - \frac{\sin(\bar \alpha)}{2\pi + \bar \alpha} e^{\bar c(\Delta \phi + \theta)},
\end{equation*}
\begin{equation*}
S_2(\theta,\Delta \phi) = \cos(\theta) e^{-\bar c(2\pi + \bar \alpha) \tau_2} = \frac{\sin(\bar \alpha) \cos(\theta)}{2\pi + \bar \alpha} e^{\bar c \Delta \phi},
\end{equation*}
\begin{equation*}
S_3(\theta) = - \sin(\theta), \qquad m(\tau,\tau_2,1) = \int_{\tau_2}^1 g(\tau - \tau') e^{- \bar c(2\pi + \bar \alpha) \tau'} (2\pi + \bar \alpha) d\tau',
\end{equation*}
\begin{equation*}
S_4(\theta,\Delta \phi) = \frac{\cos(\bar \alpha)}{2\pi + \bar \alpha} \Big( - 2 \cot(\bar \alpha) (1 - \cos(\theta) + \Delta \phi \sin(\theta)) + \sin(\theta) \Big)
\end{equation*}
\begin{equation*}
D(\theta,\Delta \phi) = \frac{\sin(\bar \alpha)}{2\pi + \bar \alpha} \big( \sin(\theta) - \cot(\bar \alpha) (1 - \cos(\theta) + \Delta \phi \sin(\theta)) \big), 
\end{equation*}
\begin{equation*}
S_5(\theta,\Delta \phi) = - \frac{\sin(\bar \alpha)}{(2\pi + \bar \alpha)^2} \Big( \sin(\theta) - \cot(\bar \alpha) (1 - \cos(\theta) + \Delta \phi \sin(\theta)) \Big).
\end{equation*}
\end{corollary}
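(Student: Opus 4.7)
My plan is to derive Corollary \ref{Cor:arc_rho_eq} as a direct computational consequence of Corollary \ref{Cor:evolv_r_after_phi_2:arc} by performing the change of variable
\begin{equation*}
\phi = \phi_2 + (2\pi + \bar\alpha)\tau, \qquad \phi_2 = \phi_0 + \Delta\phi + \tfrac{\pi}{2} - \bar\alpha,
\end{equation*}
and multiplying the resulting expression by $e^{-\bar c(2\pi+\bar\alpha)\tau}$. The key dictionary between the $(\phi,G,M)$-world and the $(\tau,g,m)$-world is provided by Remark \ref{Rem:oringal_phi_kernel}, which gives $G(\phi) = g(\phi/(2\pi+\bar\alpha))\,e^{\bar c\phi}$ and, by integrating, $M(\phi,\phi_1,\phi_2) = e^{\bar c \phi}\,m\bigl(\phi/(2\pi+\bar\alpha),\phi_1/(2\pi+\bar\alpha),\phi_2/(2\pi+\bar\alpha)\bigr)$. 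Together with the identity $e^{-\bar c(2\pi+\bar\alpha)} = \sin(\bar\alpha)/(2\pi+\bar\alpha)$ coming from \eqref{Equa:c_gene_defi}, this lets us convert every shifted kernel into a rescaled one modulo explicit exponential factors that combine with the $\sin(\bar\alpha)$ prefactors.

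First I would compute the shifts appearing in the arguments of each $G$-term of Cor.\ \ref{Cor:evolv_r_after_phi_2:arc}. Using $\theta_0 = \tfrac{\pi}{2} - \theta$ I get
\begin{align*}
\phi - \phi_0 - \Delta\phi - \tfrac{\pi}{2} + \bar\alpha &= (2\pi+\bar\alpha)\tau, \\
\phi - \phi_0 - 2\pi - \theta_0 &= (2\pi+\bar\alpha)(\tau - \tau_1), \\
\phi - \phi_0 - 2\pi - \tfrac{\pi}{2} &= (2\pi+\bar\alpha)(\tau - \tau_2), \\
\phi - \phi_0 - 2\pi - \Delta\phi - \tfrac{\pi}{2} &= (2\pi+\bar\alpha)(\tau - 1), \\
\phi - \phi_0 - 2\pi - \Delta\phi - \tfrac{\pi}{2} - 2\pi - \bar\alpha &= (2\pi+\bar\alpha)(\tau - 2),
\end{align*}
which matches the shifts $0,\tau_1,\tau_2,1,2$ appearing in the target formula. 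For the $M$-term, the change of variable converts the interval $[\phi_0+2\pi+\tfrac{\pi}{2},\phi_0+2\pi+\tfrac{\pi}{2}+\Delta\phi]$ into $[\tau_2,1]$; writing $M = \int G(\phi-\phi')d\phi'$ and changing variable inside the integral gives directly $M(\phi;\cdot,\cdot)\,e^{-\bar c(2\pi+\bar\alpha)\tau} = m(\tau,\tau_2,1)$, accounting for the source contribution $S_3 g$ replaced by $S_3\,m(\tau,\tau_2,1)$ with $S_3 = -\sin(\theta)$.

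Next, each $G$-term $c_i\,G\bigl((2\pi+\bar\alpha)(\tau-\tau_i)\bigr)$ becomes $c_i\,g(\tau-\tau_i)\,e^{\bar c(2\pi+\bar\alpha)\tau}\,e^{-\bar c(2\pi+\bar\alpha)\tau_i}$, so after multiplication by $e^{-\bar c(2\pi+\bar\alpha)\tau}$ the coefficient $S_i$ in front of $g(\tau-\tau_i)$ equals $c_i\,e^{-\bar c(2\pi+\bar\alpha)\tau_i}$. For $\tau_1,\tau_2,1,2$ this amounts to multiplying the coefficients of Cor.\ \ref{Cor:evolv_r_after_phi_2:arc} by $e^{\bar c(\Delta\phi+\theta)}$, $e^{\bar c\Delta\phi}$, $e^{-\bar c(2\pi+\bar\alpha)} = \sin(\bar\alpha)/(2\pi+\bar\alpha)$, and $e^{-2\bar c(2\pi+\bar\alpha)}$ respectively; collecting algebraically reproduces $S_0,\ldots,S_5$ as stated (for $S_4$ one combines the $-\cot^2(\bar\alpha)(\cdots)$ and $+\cot(\bar\alpha)(\sin\theta - \cot(\bar\alpha)(\cdots))$ contributions, giving exactly $-2\cot(\bar\alpha)(1-\cos\theta+\Delta\phi\sin\theta) + \sin\theta$ times $\cos(\bar\alpha)/(2\pi+\bar\alpha)$). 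The Dirac term transforms consistently with the BV-jump interpretation of the preceding display, producing the factor $\sin(\bar\alpha)/(2\pi+\bar\alpha)$ in $D$.

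The main — and essentially only — obstacle is the purely algebraic bookkeeping of trigonometric identities and exponentials; no new analytic input is required beyond the two kernel-conversion formulas of Remark \ref{Rem:oringal_phi_kernel} and the defining identity \eqref{Equa:c_gene_defi} for $\bar c$. Finally, because the formula of Cor.\ \ref{Cor:evolv_r_after_phi_2:arc} already encodes the fact that the perturbation depends on $\bar\zeta$ only through $\theta$ and $\Delta\phi$ (the initial arc length and the variation of the tangent direction), the statement that $\delta\rho$ depends only on $(\theta,\Delta\phi)$ comes for free.
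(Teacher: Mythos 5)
Your proposal is correct and is exactly the route the paper takes: the corollary is obtained from Corollary \ref{Cor:evolv_r_after_phi_2:arc} by the substitution $\phi = \phi_2 + (2\pi+\bar\alpha)\tau$ together with the rescaling by $e^{-\bar c(2\pi+\bar\alpha)\tau}$, using $G(\phi)=g(\phi/(2\pi+\bar\alpha))e^{\bar c\phi}$, the analogous identity for $M$ and $m$, and $e^{-\bar c(2\pi+\bar\alpha)}=\sin(\bar\alpha)/(2\pi+\bar\alpha)$. All the shift and coefficient computations you indicate (including the grouping that yields $S_4$ and the normalization of the Dirac term, which follows the same convention as the segment case in Corollary \ref{Cor:equation}) check out.
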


The next result is essential for the analysis in the following sections: as in the previous case we let $\theta$ to vary in $[0,\pi]$, even if for spiral strategies $\theta \leq \frac{\pi}{2}$.

\begin{proposition}
\label{Prop:neg_region}
The function $\delta \rho = \delta \rho(\tau;\theta,\Delta \phi)$ is strictly positive outside the region \newglossaryentry{h1Deltaphi}{name=\ensuremath{h_1(\Delta \phi)},description={boundary of the first negativity region in the arc case}} \newglossaryentry{h2Deltaphi}{name=\ensuremath{h_2(\Delta \phi)},description={boundary of the first negativity region in the arc case}} \newglossaryentry{h3Deltaphi}{name=\ensuremath{h_3(\Delta \phi)},description={boundary of the first negativity region in the arc case}} \newglossaryentry{tau3thetaDeltaphi}{name=\ensuremath{\tau_3(\theta,\Delta \phi)},description={initial time of the third negativity region in the arc case}} \newglossaryentry{Narc}{name=\ensuremath{N_\mathrm{arc}},description={negative region for the perturbation of the arc case}} \newglossaryentry{Nresarc}{name=\ensuremath{N_\mathrm{res}},description={residual negative region for the arc case}}
\begin{align*}
\gls{Narc} &= \{\theta = 0\} \cup \big\{ \tau_1(\theta,\Delta \phi) \leq \tau < \tau_2(\Delta \phi), 0 \leq \theta \leq \gls{h1Deltaphi} \big\} \\
&\quad \cup \big\{\tau_2(\Delta \phi) \leq \tau < 1, 0 \leq \theta \leq \gls{h2Deltaphi} \big\} \\
&\quad \cup \big\{ \gls{tau3thetaDeltaphi} \leq \tau < 1, h_2(\Delta \phi) \leq \theta < \gls{h3Deltaphi} \big\} \cup \gls{Nresarc}, 
\end{align*}
where the functions $\gls{h1Deltaphi},\gls{h2Deltaphi},\gls{h3Deltaphi},\gls{tau3thetaDeltaphi}$ are determined by solving the equations
\begin{itemize}
\item for $\tau_1 \leq \tau \leq \tau_2$, the function $h_1(\Delta \phi)$ is determined implicitly by
\begin{equation}
\label{Equa:theta_Deltaphi_rel}
\frac{\cot(\bar \alpha)}{\sin(\bar \alpha)} (1 - \cos(\theta) + \sin(\theta) \Delta \phi) e^{\cot(\bar \alpha) (2\pi + \bar \alpha - \Delta \phi - \theta)} - 1 = 0,
\end{equation}
\item for $\tau_2 \leq \tau \leq 1$, the function $h_2(\Delta \phi)$ is determined by
\begin{equation*}
\frac{\cot(\bar \alpha)}{\sin(\bar \alpha)} (1 - \cos(\theta) + \sin(\theta) \Delta \phi) e^{\cot(\bar \alpha) (2\pi + \bar \alpha - \Delta \phi)} - e^{\cot(\bar \alpha) \theta} + \cos(\theta) = 0,
\end{equation*}
\item for $\tau_3 \leq \tau \leq 1$, the function $h_3(\Delta \phi)$ is determined by
\begin{equation*}
\frac{\cot(\bar \alpha)}{\sin(\bar \alpha)} (1 - \cos(\theta) + \sin(\theta) \Delta \phi) e^{\cot(\bar \alpha) (2\pi + \bar \alpha - \Delta \phi)} - e^{\cot(\bar \alpha) \theta} + \cos(\theta) - \sin(\theta) \frac{e^{\cot(\bar \alpha) \Delta \phi} - 1}{\cot(\bar \alpha)} = 0,
\end{equation*}
\item the function $\tau_3(\theta,\Delta \phi) = 1 - \frac{\theta_3}{2\pi + \bar \alpha}$ is given by solving in the region $\{h_2(\Delta \phi) \leq \theta \leq h_3(\Delta \phi)\}$
\begin{equation*}
\frac{\cot(\bar \alpha)}{\sin(\bar \alpha)} (1 - \cos(\theta) + \sin(\theta) \Delta \phi) e^{\cot(\bar \alpha) (2\pi + \bar \alpha - \theta_3)} - e^{\cot(\bar \alpha) (\Delta \phi + \theta - \theta_3)} + \cos(\theta) e^{\cot(\bar \alpha) (\Delta \phi - \theta_3} - \sin(\theta) M(\Delta \phi - \theta_3;0,\Delta \phi) = 0.
\end{equation*}
\end{itemize}
Finally, the region \gls{Nresarc} is contained in the set
\begin{equation*}
N_\mathrm{res} \subset \big\{ \tau \geq 3, \theta \in (3.12,\pi], \Delta \phi \in (2.37,\tan(\bar \alpha)] \big\} \cup \bigg\{ \tau = 1, \Delta \phi \geq \tan(\bar \alpha) - \frac{1 - \cos(\theta)}{\sin(\theta)} \bigg\}.
\end{equation*}

The asymptotic behavior is given by
\begin{equation}
\label{Equa:asympt_arc_sta}
\lim_{\tau \to \infty} \frac{\delta \rho(\tau) e^{-\bar c \tau}}{\tau \theta(\theta + \Delta \phi)} = \frac{2}{\theta ( \theta+ \Delta \phi)} \bigg( S_0 + S_1 + S_2 + S_3 \frac{e^{-\bar c (2\pi + \bar \alpha) \tau_2} - e^{-\bar c(2\pi + \bar \alpha)}}{\bar c} + S_4 + S_5 \bigg).
\end{equation}
\end{proposition}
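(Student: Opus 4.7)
The plan is to follow the same strategy as in Proposition \ref{Prop:regions_pos_neg_segm}, using the explicit representation of $\delta\rho$ provided by Corollary \ref{Cor:arc_rho_eq} together with the applicability criterion of Lemma \ref{lem:key}. The extra complication compared to the segment case is that now the source contains four Dirac-like terms plus a spread-out piece $S_3\,m(\tau,\tau_2,1)$ coming from the curvature exchange along the initial arc. This produces three possible negativity windows inside the first round, hence the more elaborate description of $N_{\mathrm{arc}}$.

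First I would analyze $\delta\rho$ on the first round $\tau\in[0,1)$, which is where all the negativity is concentrated. Recalling that $g(\tau)=e^{\tau}\ind_{[0,\infty)}(\tau)$ on $[0,1)$ and that $m(\tau,\tau_2,1)$ is obtained by integrating $g$ against the rescaled source, the solution reduces to an explicit combination of exponentials. In the subinterval $[0,\tau_1)$ only $S_0$ contributes, so $\delta\rho = S_0 g(\tau)>0$ for $\theta>0$. In $[\tau_1,\tau_2)$ also $S_1$ enters and the sign of $S_0 + S_1 e^{-\tau_1}$ controls positivity; solving $S_0 + S_1 e^{-\tau_1}=0$ (and rewriting in the original variables $\theta,\Delta\phi$) produces precisely equation \eqref{Equa:theta_Deltaphi_rel} and the curve $h_1(\Delta\phi)$. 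On $[\tau_2,1)$ one adds the contribution of $S_2$ plus the primitive-type term from $S_3\,m$, and the corresponding zero-level sets of the resulting combination deliver $h_2(\Delta\phi)$ and $h_3(\Delta\phi)$, together with the inner time $\tau_3(\theta,\Delta\phi)$. In each case monotonicity in $\theta$ of the left-hand side of the defining equation (which follows by direct differentiation, using $\bar\alpha>\pi/4$ exactly as in the segment case) guarantees uniqueness of the implicit root, so the three curves are well defined on the relevant intervals.

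Once the full geometry of $N_{\mathrm{arc}}$ on $\tau\in[0,1)$ is described, I would treat the jump at $\tau=1$ coming from the Dirac term $D\,\Diracd_1$: if $D(\theta,\Delta\phi)\ge 0$ the jump only increases $\delta\rho$ (and $D<0$ is exactly the condition producing the residual set $N_{\mathrm{res}}$, whose location is characterized by $\sin(\theta)-\cot(\bar\alpha)(1-\cos(\theta)+\Delta\phi\sin(\theta))<0$, i.e.\ $\Delta\phi\ge\tan(\bar\alpha)-(1-\cos\theta)/\sin\theta$). Then for the next two rounds $\tau\in[1,3]$ I would carry out, exactly as in Steps~2--4 of Proposition \ref{Prop:regions_pos_neg_segm}, a numerical verification on $\delta\rho(\tau,\theta,\Delta\phi)/\bigl[\theta(\theta+\Delta\phi)\bigr]$ of strict positivity and of the sign of $\partial_\tau$. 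The normalization $\theta(\theta+\Delta\phi)$ is dictated by the fact that $\delta\rho$ vanishes to second order along $\{\theta=0\}$ and degenerates as $\theta+\Delta\phi\to 0$, mirroring the $\theta^2$-normalization of Proposition \ref{Prop:regions_pos_neg_segm}. On the fourth round $\tau\in[3,4]$ a plot of $\partial_\tau \delta\rho/[\theta(\theta+\Delta\phi)]$ over the compact parameter box $[0,\pi]\times[0,\tan(\bar\alpha)]$ exhibits uniform positivity, so Lemma \ref{lem:key} (in the critical-$a$ version of Remark \ref{rmk:exponentially:explod}) applies to the equation for $\delta\rho$ and yields positivity together with linear growth for all $\tau\ge 4$.

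The asymptotic formula \eqref{Equa:asympt_arc_sta} is then a direct consequence of the asymptotics of Lemma \ref{Lem:aympt_g} applied to each kernel appearing in Corollary \ref{Cor:arc_rho_eq}: each $g(\tau-\tau_\ast)$ contributes $2e^{-\bar c(2\pi+\bar\alpha)\tau_\ast}\tau$ (up to an exponentially small error), while $m(\tau,\tau_2,1)$ contributes the integrated version, which explains the factor $(e^{-\bar c(2\pi+\bar\alpha)\tau_2}-e^{-\bar c(2\pi+\bar\alpha)})/\bar c$ in front of $S_3$. Finally I would numerically verify that the right-hand side of \eqref{Equa:asympt_arc_sta}, divided by $\theta(\theta+\Delta\phi)$, is bounded away from zero on the parameter box, in analogy with Step~6 of the segment case. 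The main obstacle of the proof is purely bookkeeping: one has to keep track of the non-smoothness of $\delta\rho$ across $\tau=\tau_1,\tau_2,1$ and ensure that the numerical checks on rounds $\tau\in[1,4]$ cover the whole parameter box $(\theta,\Delta\phi)\in[0,\pi]\times[0,\tan(\bar\alpha)]$; conceptually the argument is a direct generalization of the segment case, and in fact it specializes to it when $\Delta\phi=0$, consistently with Remark \ref{Rem:same_as_segment}.
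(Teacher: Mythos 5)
Your proposal follows essentially the same route as the paper: explicit piecewise-exponential formulas for $\delta\rho$ on the first round, implicit-function analysis of the zero sets $f_1=0,f_2=0,f_3=0,f_4=0$ to produce $h_1,h_2,h_3,\tau_3$ (your identification of $h_1$ via $S_0+S_1e^{-\tau_1}=0$ is exactly Lemma \ref{Lem:study_g_1_arc}), numerical positivity checks of $\delta\rho/[\theta(\theta+\Delta\phi)]$ on the subintervals of $[1,3)$ split at the shifts of $\tau_1,\tau_2$, then Lemma \ref{lem:key} after round three, and the asymptotics from Lemmata \ref{Lem:aympt_g} and \ref{Lem:m_aymptotic}.

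One concrete inaccuracy: you claim that $\partial_\tau\delta\rho/[\theta(\theta+\Delta\phi)]$ is \emph{uniformly} positive on the whole box $(\theta,\Delta\phi)\in[0,\pi]\times[0,\tan(\bar\alpha)]$ for $\tau\in[3,4]$, and you attribute $N_\mathrm{res}$ entirely to the sign of the Dirac coefficient $D$ at $\tau=1$. That is not what happens: the increment $\delta\rho(\tau)-\delta\rho(\tau-1)$ fails to be nonnegative in the corner $(\theta,\Delta\phi)\in(3.12,\pi)\times(2.37,\tan(\bar\alpha))$ (consistently with the fact that the asymptotic sum of sources in \eqref{Equa:asympt_arc_sta} is negative there), so Lemma \ref{lem:key} can only be applied on the complement of that corner, and this is precisely why $N_\mathrm{res}$ contains the additional component $\{\tau\geq 3,\theta\in(3.12,\pi],\Delta\phi\in(2.37,\tan(\bar\alpha)]\}$ and not only the $\{\tau=1\}$ slice coming from $D\leq 0$. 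Your reading of the $\tau=1$ piece of $N_\mathrm{res}$ as the condition $D<0$, i.e. $\Delta\phi\geq\tan(\bar\alpha)-\frac{1-\cos\theta}{\sin\theta}$, is correct; you just need to restrict the final monotonicity check to the parameter region excluding that corner (which is harmless for the application, since spiral strategies have $\theta\leq\frac{\pi}{2}$).
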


The region of negativity for the arc case is depicted in Fig. \ref{Fig:nega_arc_case} in the first 3 rounds: the last region is due to the fact that the sum of the sources in \eqref{Equa:asympt_arc_sta} is negative in some region, see Fig. \ref{Fig:asympt_arc} below. We will not use this region in our perturbations.

\begin{figure}
\resizebox{.75\textwidth}{!}{\input{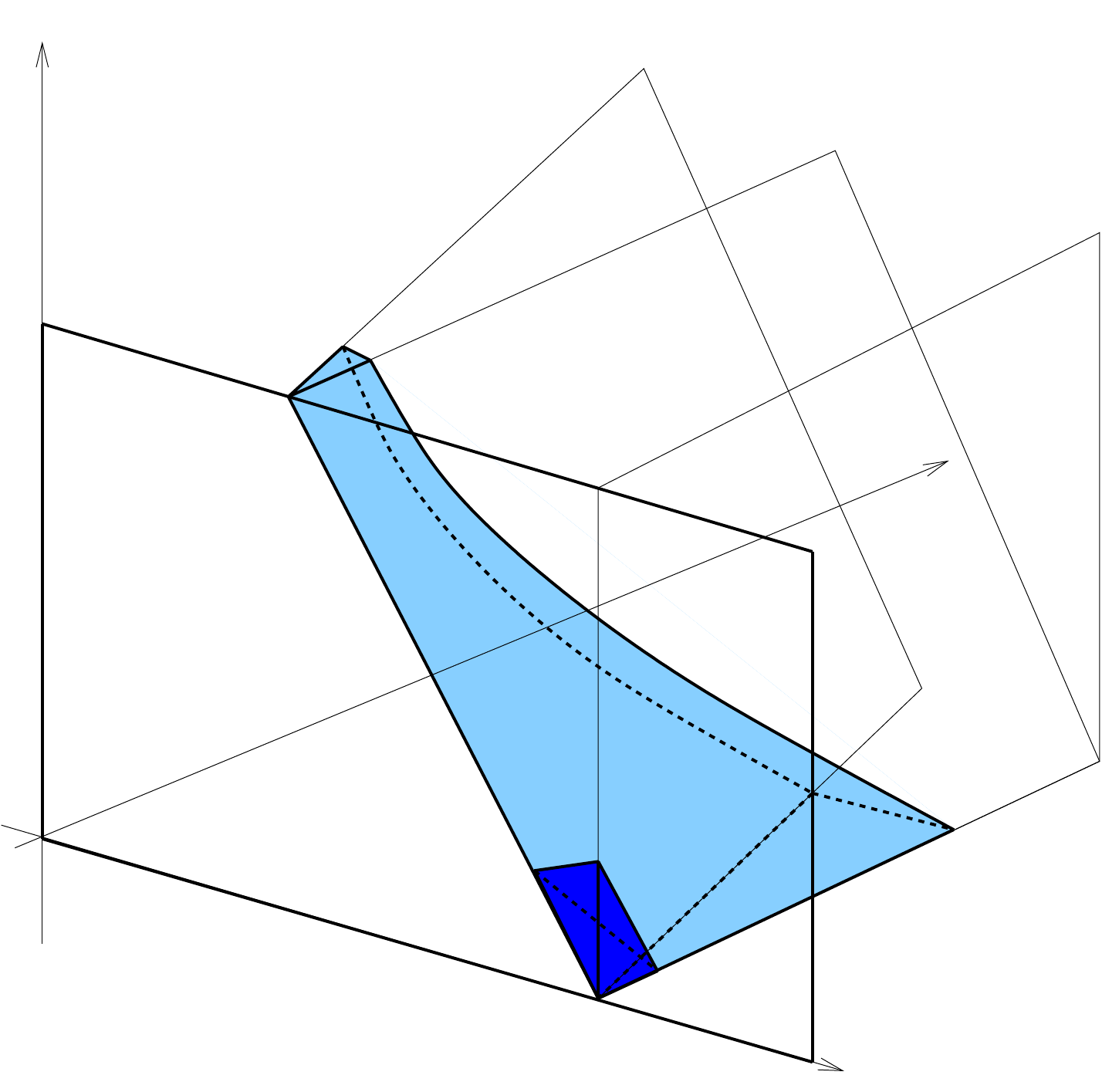_t}}
\caption{The region where $\delta \tilde r \leq 0$ in the arc case for $\tau \in [0,1]$, neglecting $N_\mathrm{res}$ for clarity: in addition to the plane $\{\theta = 0\}$, it contains the cyan region $\{0 \leq \theta \leq h_1(\Delta \phi), \tau_1 \leq \tau \leq \tau_2\}$ and the trapezoidal blue region for $\{\tau_2 \leq \tau \leq 1\}$.}
\label{Fig:nega_arc_case}
\end{figure}

\begin{proof}
The proof will be given in several steps.

\medskip

{\it Case 0: $\theta = 0$.}

\noindent It is easy to see that in this case $\delta \rho(\tau) = 0$, so that $\{\theta = 0\} \subset N_{\mathrm{arc}}$.

\medskip

{\it Case 1: $\tau \in [0,\tau_1)$.}

\noindent Being $S_0 > 0$ for $\theta > 0$, it follows that $\delta \rho(\tau) > 0$ in this region.

\medskip

{\it Case 2: $\tau \in [\tau_1,\tau_2)$.}

\noindent The solution $\delta r(\phi;s_0)$ is written explicitly as
\begin{align*}
\delta r(\phi;s_0) &= \frac{\cot(\bar \alpha)}{\sin(\bar \alpha)} (1 - \cos(\theta) + \sin(\theta) \Delta \phi) e^{\cot(\bar \alpha) (\phi - \phi_2)} - e^{\cot(\bar \alpha) (\phi - \phi_2 - 2\pi - \bar \alpha + \Delta \phi + \theta} \\
&= e^{\cot(\bar \alpha)(\phi - \phi_2 - 2\pi - \bar \alpha + \Delta \phi + \theta} \bigg( \frac{\cot(\bar \alpha)}{\sin(\bar \alpha)} (1 - \cos(\theta) + \sin(\theta) \Delta \phi) e^{\cot(\bar \alpha) (2\pi + \bar \alpha - \Delta \phi - \theta)} - 1 \bigg). 
\end{align*}
The study of the above equation is done in the following lemma.

\begin{lemma}
\label{Lem:study_g_1_arc}
The equation
\begin{equation*}
\frac{\cot(\bar \alpha)}{\sin(\bar \alpha)} (1 - \cos(\theta) + \sin(\theta) \Delta \phi) e^{\cot(\bar \alpha) (2\pi + \bar \alpha - \Delta \phi - \theta)} - 1 = 0
\end{equation*}
determines implicitly a convex function \gls{h1Deltaphi} such that
\begin{equation*}
h_1(\Delta \phi) < \hat \theta \quad \text{unless} \ \Delta \phi = 0,
\end{equation*}
and moreover
\begin{equation*}
\Delta \phi \mapsto \hat \theta(\Delta \phi) := \Delta \phi + h_1(\Delta \phi)
\end{equation*}
is strictly increasing unless $\Delta \phi = 0$ and has derivative $<1$.
\end{lemma}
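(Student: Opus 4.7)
The plan is to analyze the defining equation
\begin{equation*}
F(\theta,\Delta\phi) := \frac{\cot(\bar\alpha)}{\sin(\bar\alpha)} \psi(\theta,\Delta\phi) \, e^{\cot(\bar\alpha)(2\pi+\bar\alpha-\Delta\phi-\theta)} - 1 = 0, \qquad \psi(\theta,\Delta\phi) := 1-\cos\theta+\sin\theta\,\Delta\phi,
\end{equation*}
via the implicit function theorem. First I would note that at $\Delta\phi=0$ the equation reduces verbatim to the defining relation \eqref{Equa:hat_theta_det_666} for $\hat\theta$, so $h_1(0)=\hat\theta$. Taking logarithms, $F=0$ becomes the smooth equation $G(\theta,\Delta\phi):=\ln\psi-B(\Delta\phi+\theta)+C=0$, with $B=\cot(\bar\alpha)$ and $C$ a constant, so all regularity claims follow once I show the partial derivative $G_\theta$ is nonzero along the curve.

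A direct computation at any point with $F=0$ gives
\begin{equation*}
G_\theta = \frac{\sin\theta+\cos\theta\,\Delta\phi}{\psi}-B, \qquad G_{\Delta\phi} = \frac{\sin\theta}{\psi}-B,
\end{equation*}
so setting $N:=\sin\theta-B\psi$ and $D:=N+\cos\theta\,\Delta\phi=\sin\theta+\cos\theta\,\Delta\phi-B\psi$, the implicit derivative is $h_1'(\Delta\phi)=-N/D$, whence
\begin{equation*}
(\hat\theta)'(\Delta\phi)=1+h_1'(\Delta\phi)=\frac{\cos\theta\,\Delta\phi}{D}.
\end{equation*}
The two bounds $-1<h_1'<0$ and $0<(\hat\theta)'<1$ are therefore equivalent to the two strict positivity statements $N>0$ and $D>0$ along the curve $\{F=0\}$. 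At the starting point $(\hat\theta,0)$ one has $D=N=\sin\hat\theta-B(1-\cos\hat\theta)$; substituting the definition of $\hat\theta$ shows $B(1-\cos\hat\theta)=\sin(\bar\alpha)e^{-B(2\pi+\bar\alpha-\hat\theta)}$, which is exponentially small in $2\pi$, while $\sin\hat\theta$ is of order $\hat\theta\approx 0.5$, so $N=D>0$ there. Propagation of the inequalities along the curve comes from monotonicity in $\Delta\phi$: for fixed $\theta$,
\begin{equation*}
\partial_{\Delta\phi} D = \cos\theta - B\sin\theta = \frac{\cos(\bar\alpha-\theta)}{\sin(\bar\alpha)}\cdot\cos\bar\alpha \cdot \frac{1}{\cos\bar\alpha},
\end{equation*}
which is strictly positive for $\theta<\bar\alpha$, and along $\{F=0\}$ one has $\theta\le\hat\theta<\bar\alpha$. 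A similar computation (or the chain $N=D-\cos\theta\Delta\phi$ combined with monotonicity of $\theta\mapsto h_1$) controls $N$; combined with the fact that $h_1$ cannot reach the axis $\theta=0$ (where $\psi=0$ and $F=-1\ne 0$), this gives $N,D>0$ on the whole curve, and therefore $h_1'\in(-1,0)$ as claimed. The bound $h_1(\Delta\phi)<\hat\theta$ for $\Delta\phi>0$ then follows from $h_1'<0$.

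For convexity, I would differentiate $G_\theta h_1'+G_{\Delta\phi}=0$ once more, obtaining
\begin{equation*}
h_1''(\Delta\phi) = -\frac{G_{\theta\theta}(h_1')^2+2G_{\theta\Delta\phi}h_1'+G_{\Delta\phi\Delta\phi}}{G_\theta},
\end{equation*}
and the task reduces to showing the numerator is negative (since $G_\theta=D/\psi>0$). Using $\psi_{\theta\theta}=\cos\theta-\sin\theta\,\Delta\phi$, $\psi_{\theta\Delta\phi}=\cos\theta$, $\psi_{\Delta\phi}=\sin\theta$ and $\psi_{\Delta\phi\Delta\phi}=0$, a short algebraic manipulation rewrites this numerator as a rational expression in $(\sin\theta,\cos\theta,\Delta\phi,B)$ whose sign I would determine by substituting $h_1'=-N/D$ and exploiting the relation $D=N+\cos\theta\,\Delta\phi$; the leading term, after clearing $\psi^2 D^2$, becomes a polynomial in $\cos\theta\,\Delta\phi\geq 0$ with nonnegative coefficients on the curve, yielding $h_1''\geq 0$. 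The expected obstacle is precisely this last algebraic step: the second-order sign computation is messy and may need to be supplemented, in the spirit of the rest of the paper, by a numerical plot of $h_1''(\Delta\phi)$ on the compact interval $\Delta\phi\in[0,\tan(\bar\alpha)]$ after the bulk of the formula has been simplified by the relation $B\psi=\sin\theta-N$.
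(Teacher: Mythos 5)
Your reduction to the implicit function theorem and the identity $h_1'=-N/D$ with $N=\sin\theta-\cot(\bar\alpha)\,\psi$, $D=N+\cos(\theta)\Delta\phi$ agrees with the paper's computation (the paper's numerator and denominator in \eqref{Equa:dh1_dDeltaphi} are $\sin(\bar\alpha)N$ and $\sin(\bar\alpha)D$). The genuine gap is your claim that $N>0$ along the whole curve, i.e.\ that $h_1'<0$ and $h_1$ is strictly decreasing. This is false: at the right endpoint $\Delta\phi=\tan(\bar\alpha)$ one has $\cot(\bar\alpha)\sin(\theta)\Delta\phi=\sin(\theta)$, so
\begin{equation*}
N=\sin(\theta)-\cot(\bar\alpha)\big(1-\cos(\theta)\big)-\cot(\bar\alpha)\sin(\theta)\tan(\bar\alpha)=-\cot(\bar\alpha)\big(1-\cos(\theta)\big)<0
\end{equation*}
for $\theta=h_1(\tan(\bar\alpha))\approx 0.1175>0$. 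Indeed the paper finds that $h_1$ is convex with an \emph{interior} minimum near $\Delta\phi\approx 2.357<\tan(\bar\alpha)\approx 2.416$, so $h_1'>0$ on the last part of the interval; no propagation argument can rescue $N>0$, and your "similar computation controls $N$" step would fail. Consequently your derivation of $h_1(\Delta\phi)<\hat\theta$ from $h_1'<0$ also collapses; this inequality has to be obtained differently (the paper gets it from the numerically verified monotonicity of $f_1$ in $\theta$ on $[0,\pi/4]$ together with $f_1>0$ on $[\pi/4,\pi]\times[0,\tan(\bar\alpha)]$, which also supplies the existence and uniqueness of the root that you take for granted).

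What does survive, and is in fact the paper's route, is the part of your argument that only uses $D>0$: since $h_1'=-1+\cos(\theta)\Delta\phi/D$ and $D>0$, one gets $h_1'\geq-1$ with equality exactly when $\Delta\phi=0$, hence $\hat\theta'=\cos(\theta)\Delta\phi/D>0$ for $\Delta\phi>0$. Note, however, that the paper's own numerical upper bound for $h_1'$ is the small positive number $0.0251844$, not $0$, so the bound "$\hat\theta'<1$" is exactly the delicate point and cannot be waved through; and the convexity of $h_1$, which you acknowledge reduces to a messy second-order sign computation, is in the paper settled by a numerical plot of the explicit expression \eqref{Equa:d2h1_dDeltaphi}. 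You should restructure the proof around $D>0$ only, and treat the sign of $N$ (equivalently the location of the minimum of $h_1$) as genuinely changing on $[0,\tan(\bar\alpha)]$.
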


\begin{figure}
\begin{subfigure}{.475\linewidth}
\includegraphics[width=\linewidth]{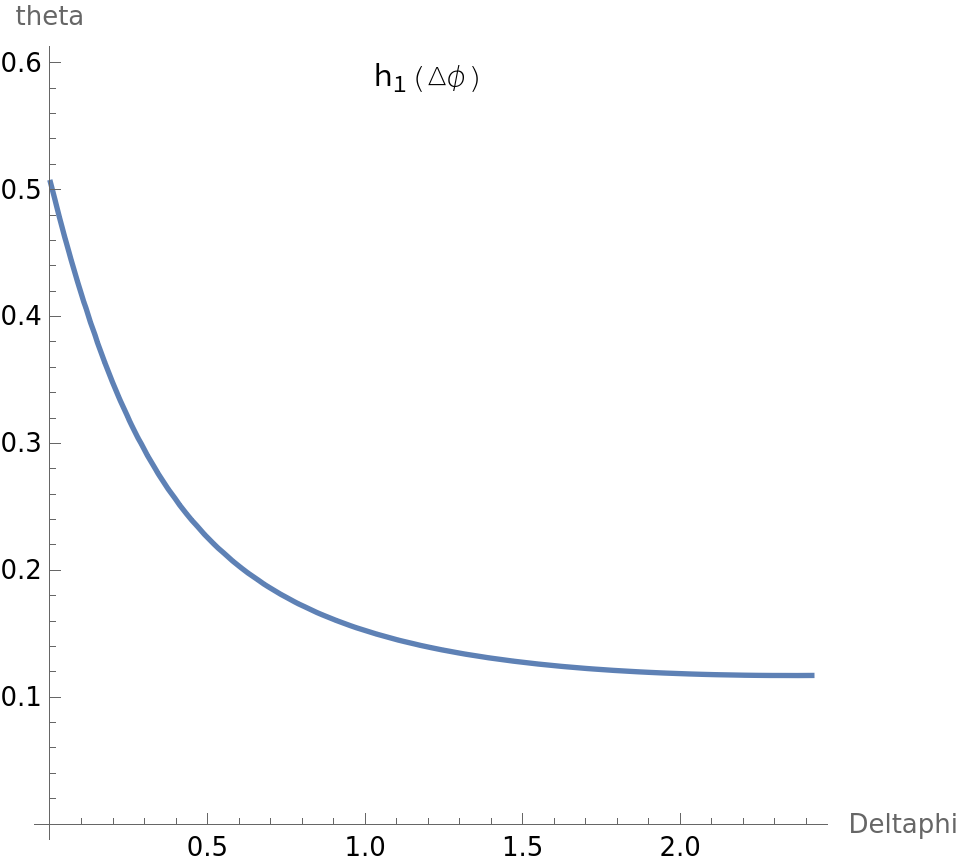}
\caption{Plot of the function $h_1(\Delta \phi)$ of Lemma \ref{Lem:study_g_1_arc}.}
\label{Fig:h1_plot}
\end{subfigure}
\hfill
\begin{subfigure}{.475\linewidth}
\includegraphics[width=\linewidth]{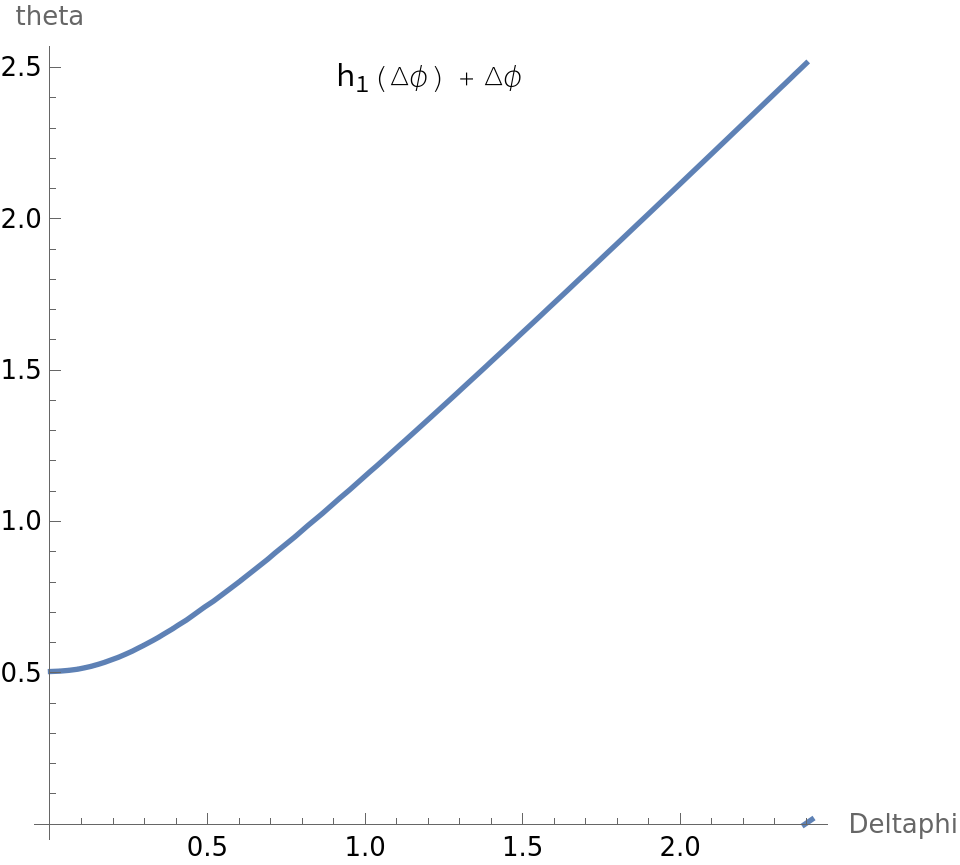}
\caption{Plot of the function $h_1(\Delta \phi) + \Delta \phi$ of Lemma \ref{Lem:study_g_1_arc}.}
\label{Fig:h1Deltaphi_plot}
\end{subfigure}
\caption{Numerical plots of the function $h_1(\Delta \phi)$ of Lemma \ref{Lem:study_g_1_arc}.}
\label{Fig:h1_total}
\end{figure}

A numerical plot of $h_1(\Delta \phi)$ is in Fig. \ref{Fig:h1_plot}, while in Fig. \ref{Fig:h1Deltaphi_plot} it is plotted the function $h_1(\Delta \phi) + \Delta \phi$.

\begin{proof}
Let
\begin{equation}
\label{Equa:f_1_arc_first_neg}
f_1(\theta,\Delta \phi) = \frac{\cot(\bar \alpha)}{\sin(\bar \alpha)} (1 - \cos(\theta) + \sin(\theta) \Delta \phi) e^{\cot(\bar \alpha) (2\pi + \bar \alpha - \Delta \phi - \theta)} - 1.
\end{equation}
Taking the derivative w.r.t. $\theta$ we obtain
\begin{equation}
\label{Equa:partialf_1_arc_first_neg}
\begin{split}
\partial_\theta f_1 &= \frac{\cot(\bar \alpha)}{\sin(\bar \alpha)} \big( \sin(\theta) + \cos(\theta) \Delta \phi - \cot(\bar \alpha) (1 - \cos(\theta) + \sin(\theta) \Delta \phi) \big) e^{\cot(\bar \alpha) (2\pi + \bar \alpha - \Delta \phi - \theta)} \\
&= \frac{\cot(\bar \alpha)}{\sin(\bar \alpha)^2} \big( \cos(\bar \alpha - \theta) - \cos(\bar \alpha) + \sin(\bar \alpha - \theta) \Delta \phi \big) e^{\cot(\bar \alpha) (2\pi + \bar \alpha - \Delta \phi - \theta)}.
\end{split}
\end{equation}
Since numerically one observe that (see Fig. \ref{Fig:partialf_1_arc_first_neg})
\begin{equation*}
f_1 \bigg( \bigg[ \frac{\pi}{4}, \pi \bigg] \times [0,\tan(\bar \alpha)] \bigg) \geq f_1(\pi,\tan(\bar \alpha)) = 0.971097,
\end{equation*}
\begin{equation*}
\partial_\theta f_1 \bigg( \bigg[0,\frac{\pi}{4} \bigg],\Delta \phi \bigg) \geq \partial_\theta f_1 \bigg( \frac{\pi}{4},0 \bigg) \geq 1.29579 (\theta + \Delta \phi),
\end{equation*}
then $\partial_\theta f_1 > 0$ in the region $(0,\frac{\pi}{4}) \times [0,\tan(\bar \alpha)]$ (observe that $f1(0,\Delta \phi) = -1$). We deduce that the equation $f_1(\theta,\Delta \phi) = 0$ defines an implicit function $h_1(\Delta \phi)$, such that
\begin{equation*}
h_1(\Delta \phi) < \hat \theta \quad \text{unless} \ \Delta \phi = 0.
\end{equation*}

\begin{figure}
\begin{subfigure}{.475\linewidth}
\includegraphics[width=\linewidth]{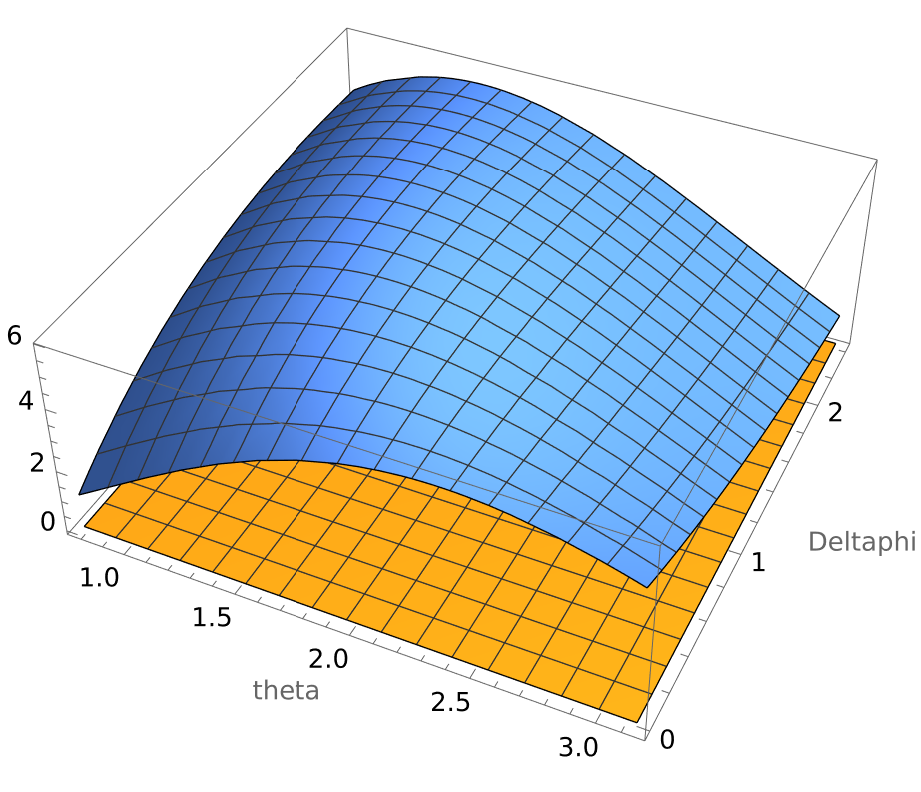}
\caption{Plot of the function $f_1(\theta,\Delta \phi)$, Equation (\ref{Equa:f_1_arc_first_neg}), in the region $[\frac{\pi}{4},\pi] \times [0,\tan(\bar \alpha)]$.}
\label{Fig:f_1_arc_first_neg}
\end{subfigure}\hfill
\begin{subfigure}{.475\linewidth}
\includegraphics[width=\linewidth]{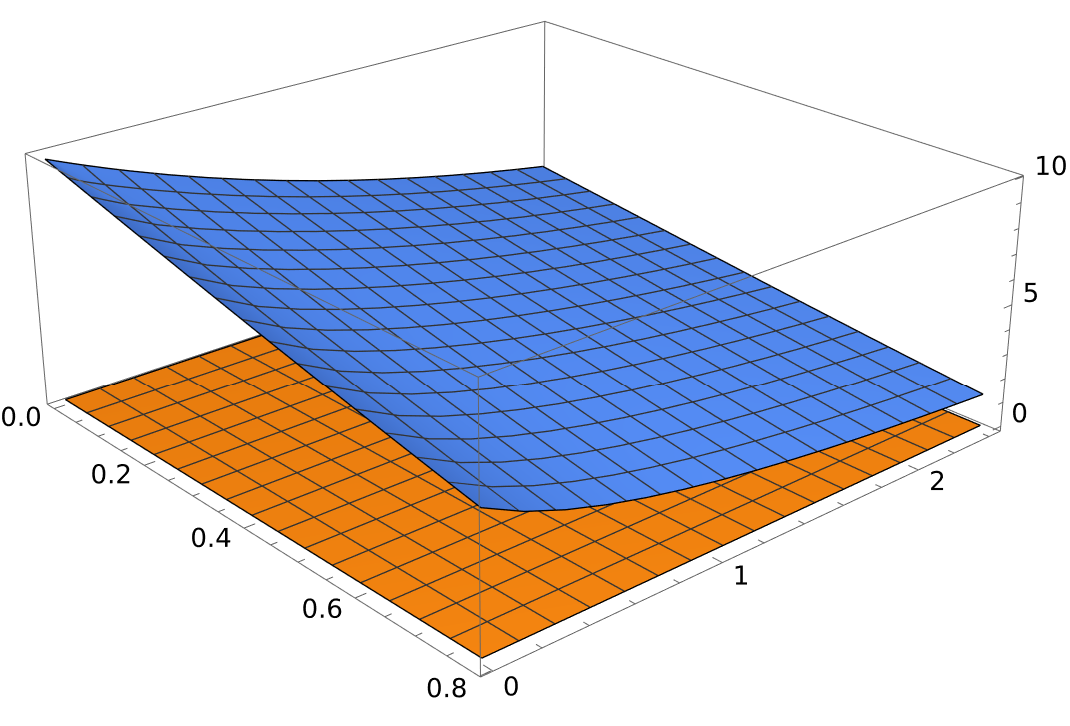}
\caption{Plot of the function $\partial_\theta f_1(\theta,\Delta \phi)$, Equation (\ref{Equa:partialf_1_arc_first_neg}), in the region $[0,\frac{\pi}{4}] \times [0,\tan(\bar \alpha)]$.}
\label{Fig:partialf_1_arc_first_neg}
\end{subfigure}
\medskip
\begin{subfigure}{.475\linewidth}
\includegraphics[width=\linewidth]{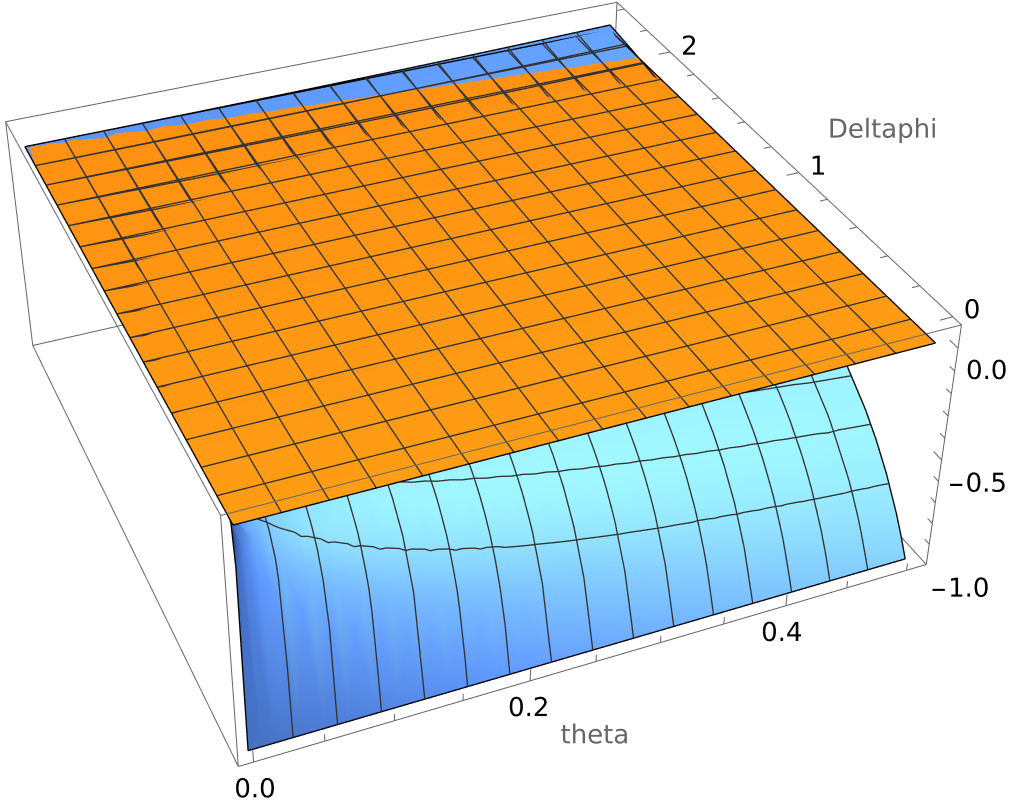}
\caption{Plot of the function (\ref{Equa:dh1_dDeltaphi}).}
\label{Fig:dh1_dDeltaphi}
\end{subfigure}\hfill
\begin{subfigure}{.475\linewidth}
\includegraphics[width=\linewidth]{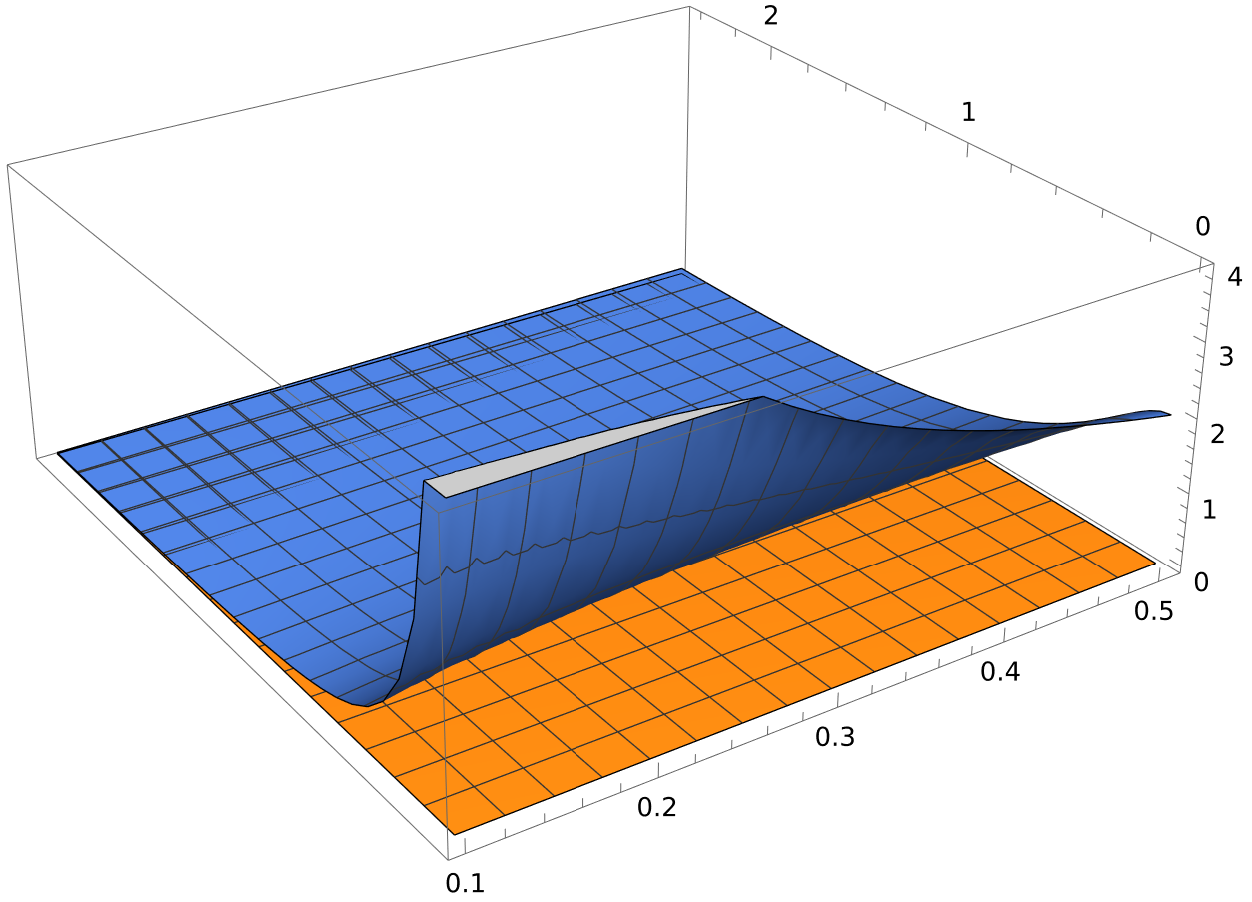}
\caption{Plot of the function (\ref{Equa:d2h1_dDeltaphi}).}
\label{Fig:d2h1_dDeltaphi}
\end{subfigure}
\caption{Numerical plots for the proof of Lemma \ref{Lem:study_g_1_arc}.}
\label{Fig:ddh1}
\end{figure}

By the Implicit Function Theorem we obtain
\begin{equation}
\label{Equa:dh1_dDeltaphi}
\begin{split}
\frac{dh_1}{d\Delta \phi} &= - \frac{\cos(\bar \alpha - \theta) - \cos(\bar \alpha) - \cos(\bar \alpha) \sin(\theta) \Delta \phi}{\cos(\bar \alpha - \theta) - \cos(\bar \alpha) + \sin(\bar \alpha - \theta) \Delta \phi} \\
&= - 1 + \frac{\sin(\bar \alpha) \cos(\theta) \Delta \phi}{\cos(\bar \alpha - \theta) - \cos(\bar \alpha) + \sin(\bar \alpha - \theta) \Delta \phi}.
\end{split}
\end{equation}
so that in the region of interest one can check numerically that (see Fig. \ref{Fig:dh1_dDeltaphi})
\begin{equation*}
-1 \leq \frac{dh_1(\Delta \phi)}{d\Delta \phi} < - 1 + \frac{\sin(\bar \alpha) \cos(\hat \theta) \tan(\bar \alpha)}{\cos(\bar \alpha - \hat \theta) - \cos(\bar \alpha) + \sin(\bar \alpha - \hat \theta) \tan(\bar \alpha)} = 0.0251844.
\end{equation*}
Differentiating w.r.t $\Delta \phi$ one obtains
\begin{equation}
\label{Equa:d2h1_dDeltaphi}
\begin{split}
\frac{d^2h_1}{d\Delta \phi^2} &= \partial_{\Delta \phi} \frac{dh_1}{d\Delta \phi} + \partial_{\theta} \frac{dh_1}{d\Delta \phi} \frac{dh_1}{d\Delta \phi} \\
&= \frac{\sin(\bar \alpha) \cos(\theta) (\cos(\bar \alpha - \theta) - \cos(\bar \alpha))}{(\cos(\bar \alpha - \theta) - \cos(\bar \alpha) + \sin(\bar \alpha - \theta) \Delta \phi)^2} \\
& \quad + \frac{\sin(\bar \alpha) \Delta \phi (- \sin(\bar \alpha) - \cos(\bar \alpha) \sin(\theta) + \cos(\bar \alpha) \Delta \phi)}{(\cos(\bar \alpha - \theta) - \cos(\bar \alpha) + \sin(\bar \alpha - \theta) \Delta \phi)^2} \bigg( - 1 + \frac{\sin(\bar \alpha) \cos(\theta) \Delta \phi}{\cos(\bar \alpha - \theta) - \cos(\bar \alpha) + \sin(\bar \alpha - \theta) \Delta \phi} \bigg),
\end{split}
\end{equation}
and numerically one sees that (see Fig. \ref{Fig:d2h1_dDeltaphi})
\begin{equation*}
\frac{d^2 h_1}{d\Delta \phi^2} > 0.0168525,
\end{equation*}
so that the function is convex, with a minimum near $\Delta \phi = 2.35681$.

Finally, the only case for which $\frac{dh_1}{d\Delta \phi} = -1$ is for $\Delta \phi = 0$, $g_3(\Delta \phi) = \hat \theta$. This means that the function
\begin{equation*}
\delta \phi \mapsto \hat \theta(\Delta \phi) = \Delta \phi + h_1(\Delta \phi)
\end{equation*}
is strictly increasing, convex and has derivative $0$ for $\Delta \phi = 0$, see Fig. \ref{Fig:h1Deltaphi_plot}.
\end{proof}

\medskip

{\it Case 3: $\tau \in [\tau_2,1)$.}

\noindent The solution to be studied is
\begin{align*}
\delta \tilde r(\phi;s_0) &= \frac{\cot(\bar \alpha)}{\sin(\bar \alpha)} (1 - \cos(\theta) + \sin(\theta) \Delta \phi) e^{\cot(\bar \alpha) (\phi - \phi_2)} - e^{\cot(\bar \alpha) (\phi - \phi_2 - 2\pi - \bar \alpha + \Delta \phi + \theta)} \\
&\quad + \cos(\theta) e^{\cot(\bar \alpha) (\phi - \phi_2 - 2\pi - \bar \alpha + \Delta \phi)} - \sin(\theta) \int_{\phi_2 + 2\pi + \bar \alpha + \Delta \phi}^\phi e^{\cot(\bar \alpha) (\phi - \phi')} d\phi' \\
&= e^{\cot(\bar \alpha)(\phi - \phi_2 - 2\pi - \bar \alpha + \theta)} \bigg[ \frac{\cot(\bar \alpha)}{\sin(\bar \alpha)} (1 - \cos(\theta) + \sin(\theta) \Delta \phi) e^{\cot(\bar \alpha) (2\pi + \bar \alpha - \theta)} - e^{\cot(\bar \alpha) \theta} + \cos(\theta) \bigg] \\
& \quad - \sin(\theta) \tan(\bar \alpha) \big( e^{\cot(\bar \alpha) (\phi - \phi_2 - 2\pi - \bar \alpha + \theta)} - 1 \big).
\end{align*}
At the initial angle $\phi = \phi_2 + 2\pi + \bar \alpha - \Delta \phi$ we have
\begin{equation}
\label{Equa:f2_arc}
\begin{split}
f_2(\theta,\Delta \phi) &= \frac{\cot(\bar \alpha)}{\sin(\bar \alpha)} (1 - \cos(\theta) + \sin(\theta) \Delta \phi) e^{\cot(\bar \alpha) (2\pi + \bar \alpha - \Delta \phi)} - e^{\cot(\bar \alpha) \theta} + \cos(\theta) \\
&= f_1(\theta,\Delta \phi) e^{\cot(\bar \alpha) \theta} + \cos(\theta).
\end{split}
\end{equation}
Numerical plots (see Fig. \ref{Fig:f_2_arc}) show that $f_2 < 0$ is a triangular shaped region $N_2$ with corners
\begin{equation*}
(0,0), \quad (0.0956622,0), \quad (0,0.0428438).
\end{equation*}

\begin{figure}
\begin{subfigure}{.475\linewidth}
\includegraphics[width=\linewidth]{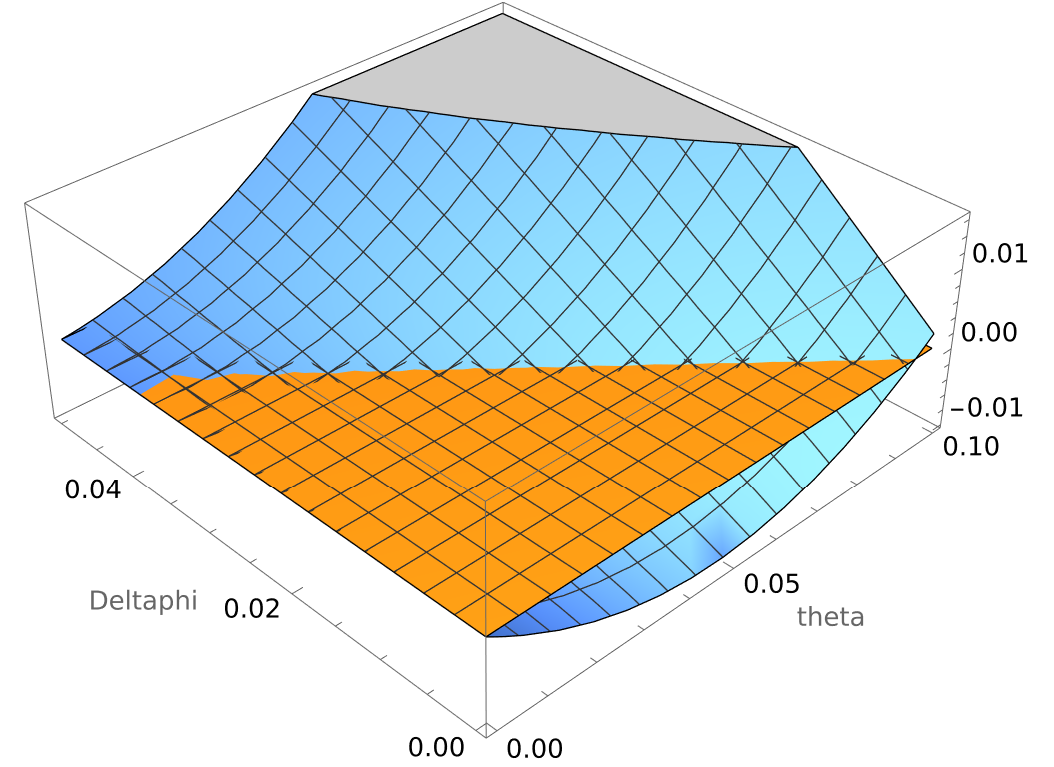}
\caption{Plot of the function $f_2(\theta,\Delta \phi)$ (\ref{Equa:f2_arc}).}
\label{Fig:f_2_arc}
\end{subfigure}\hfill
\begin{subfigure}{.475\linewidth}
\includegraphics[width=\linewidth]{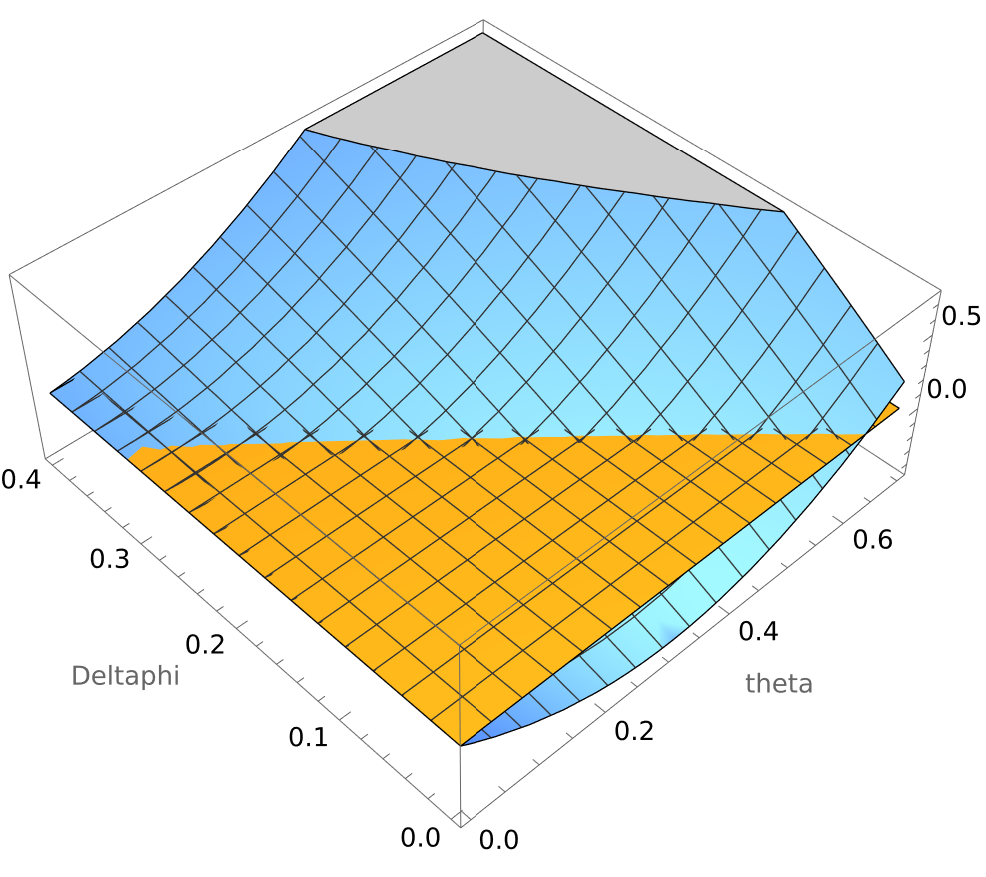}
\caption{Plot of the function $f_3(\theta,\Delta \phi)$ (\ref{Equa:f3_arc}).}
\label{Fig:f_3_arc}
\end{subfigure}
\medskip
\begin{subfigure}{.475\linewidth}
\includegraphics[width=\linewidth]{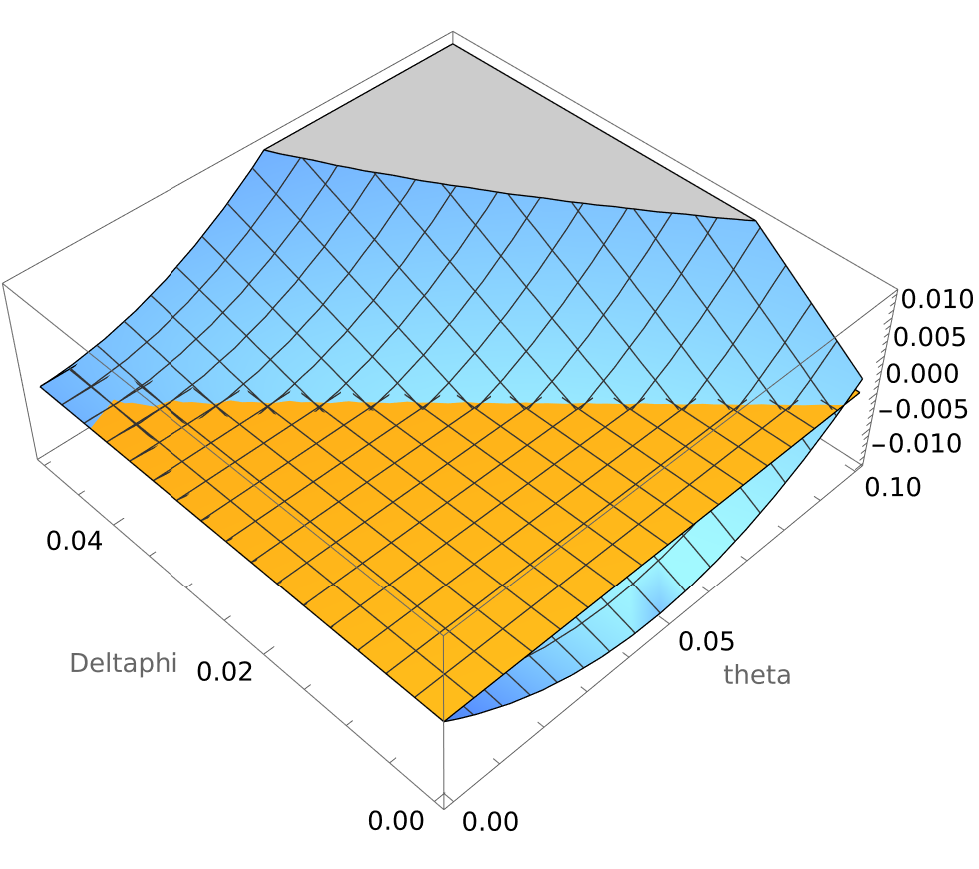}
\caption{Plot of the function (\ref{Equa:f4_arc}).}
\label{Fig:f_4_arc}
\end{subfigure}\hfill
\begin{subfigure}{.475\linewidth}
\includegraphics[width=\linewidth]{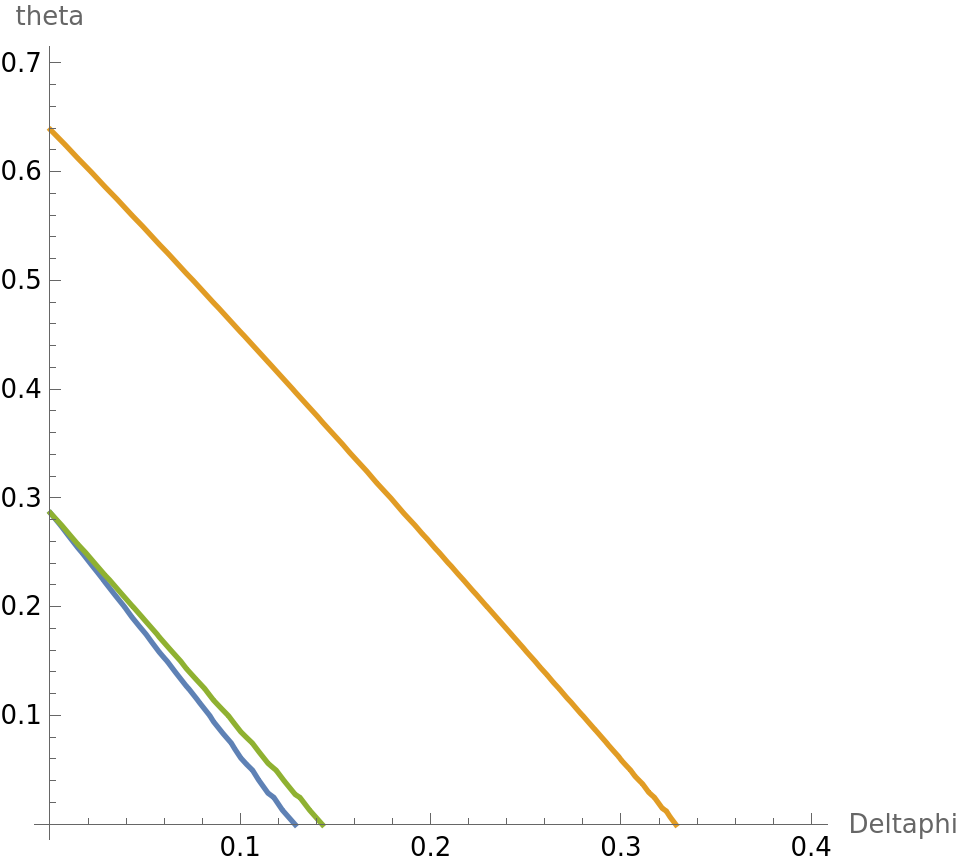}
\caption{Plot of $f_2(\cdot/3) = 0$ (blue), $f_3 = 0$ (orange), $f_4(\cdot/3) = 0$ (green). We have rescaled the first and the third for clarity.}
\label{Fig:f234}
\end{subfigure}
\caption{Numerical plots for the analysis of the negative region with $\tau \in [\tau_2,1]$.}
\label{Fig:f234_tot}
\end{figure}

Next, we take the derivative of $\delta \tilde r(\phi;s_0)$ w.r.t. $\phi$, obtaining
\begin{equation}
\label{Equa:f3_arc}
\begin{split}
\partial_\phi \delta \tilde r(\phi;s_0) &= e^{\cot(\bar \alpha)(\phi - \phi_2 - 2\pi - \bar \alpha + \Delta \phi)} \cot(\bar \alpha) \bigg[ \frac{\cot(\bar \alpha)}{\sin(\bar \alpha)} (1 - \cos(\theta) + \sin(\theta) \Delta \phi) e^{\cot(\bar \alpha) (2\pi + \bar \alpha - \Delta \phi)} \\
& \quad \qquad \qquad \qquad \qquad \qquad \qquad \qquad \qquad - e^{\cot(\bar \alpha) \theta} + \cos(\theta) - \sin(\theta) \tan(\bar \alpha) \bigg] \\
&= e^{\cot(\bar \alpha)(\phi - \phi_2 - 2\pi - \bar \alpha + \Delta \phi)} \cot(\bar \alpha) f_3(\theta,\Delta \phi).
\end{split}
\end{equation}
Numerically (see Fig. \ref{Fig:f_3_arc}), the negativity region is a triangular shaped region $N_3$ with corners
\begin{equation*}
    (0,0), \quad (0.639078,0), \quad (0,0.329784).
\end{equation*}
Being this region larger than the negativity region for $f_2$, it follows that
\begin{equation*}
    \big\{ \tau \in [\tau_2,1], (\theta,\Delta \phi) \in N_2 \big\} \subset N_{\mathrm{arc}},
\end{equation*}
and
\begin{equation*}
    \big\{ \tau \in [\tau_2,1], (\theta,\Delta \phi) \notin N_3 \big\} \cap N_\mathrm{arc} = \emptyset. 
\end{equation*}
In particular the derivative of $\delta \tilde r(\phi;s_0)$ is strictly negative in the region of interest, and thus this determines a function $\tau_2(\theta,\Delta \phi)$ whose graph is the boundary of the region of negativity.

The maximal region is computed by considering
\begin{equation}
\label{Equa:f4_arc}
\begin{split}
\delta \tilde r(\phi_2 + 2\pi + \bar \alpha -;s_0) &= \frac{\cot(\bar \alpha)}{\sin(\bar \alpha)} (1 - \cos(\theta) + \sin(\theta) \Delta \phi) e^{\cot(\bar \alpha) (2\pi + \bar \alpha)} - e^{\cot(\bar \alpha) (\theta + \Delta \phi)} \\
& \quad + \cos(\theta) e^{\cot(\bar \alpha) \Delta \phi} - \sin(\theta) \tan(\bar \alpha) \big( e^{\cot(\bar \alpha) \Delta \phi} - 1 \big) \\
&= f_4(\theta,\Delta \phi).
\end{split}
\end{equation}
Numerically (see Fig. \ref{Fig:f_4_arc}) the negativity region is triangular shaped with corners
\begin{equation*}
    (0,0), \quad (0,0.0478458), \quad (0.0956622,0).
\end{equation*}

The contours of the three regions of negativity are in Fig. \ref{Fig:f234}.

\medskip

{\it Case 4: $\tau \in [1,1+\tau_1)$} 

\noindent The solution $\delta \rho$ is
\begin{align*}
\delta \rho(\tau) &= S_0 (e^{\tau} - e^{\tau - 1} (\tau - 1)) + S_1 e^{\tau - \tau_1} + S_2 e^{\tau - \tau_2} + S_3 e^{- \bar c (2\pi + \bar \alpha)} \frac{e^{\cot(\bar \alpha) \Delta \phi} - 1}{\cot(\bar \alpha)} e^{\tau - 1} + S_4 e^{\tau - 1} \\
&= e^{\tau - 1} \bigg[ S_0 (e - (\tau - 1)) + S_1 e^{1 - \tau_1} + S_2 e^{1 - \tau_2} + S_3 e^{-\bar c(2\pi + \bar \alpha)} \frac{e^{\cot(\bar \alpha) \Delta \phi} - 1}{\cot(\bar \alpha)} + S_4 \bigg].
\end{align*}
The worst case is for $\tau = 1 + \tau_1$, so that we have to study the function
\begin{align*}
f(\theta,\Delta \phi) &= S_0 (e - \tau_1) + S_1 e^{1 - \tau_1} + S_2 e^{1 - \tau_2} + S_3 \frac{\sin(\bar \alpha)}{2\pi + \bar \alpha} \frac{e^{\cot(\bar \alpha) \Delta \phi} - 1}{\cot(\bar \alpha)} + S_4.
\end{align*}
One observes numerically that the quantity inside the bracket is strictly positive ($> 0.44$), and also numerically one obtains that
\begin{equation*}
\frac{f(\theta,\Delta \phi)}{\theta(\theta + \Delta \phi)} \geq \frac{f(\pi,\tan(\bar \alpha))}{\pi(\pi + \tan(\bar \alpha)} = 0.031,
\end{equation*}
and that (see Fig. \ref{Fig:second_round_arc_1})
\begin{equation*}
\frac{\delta \rho([1,1+\tau_1),\theta,\Delta \phi)}{\theta(\theta + \Delta \phi)} \geq \frac{\delta \rho(1+\tau_1,\pi,\tan(\bar \alpha))}{\pi(\pi+\tan(\bar \alpha)} > 0.04.
\end{equation*}

\begin{figure}
\begin{subfigure}{.475\linewidth}
\includegraphics[width=\linewidth]{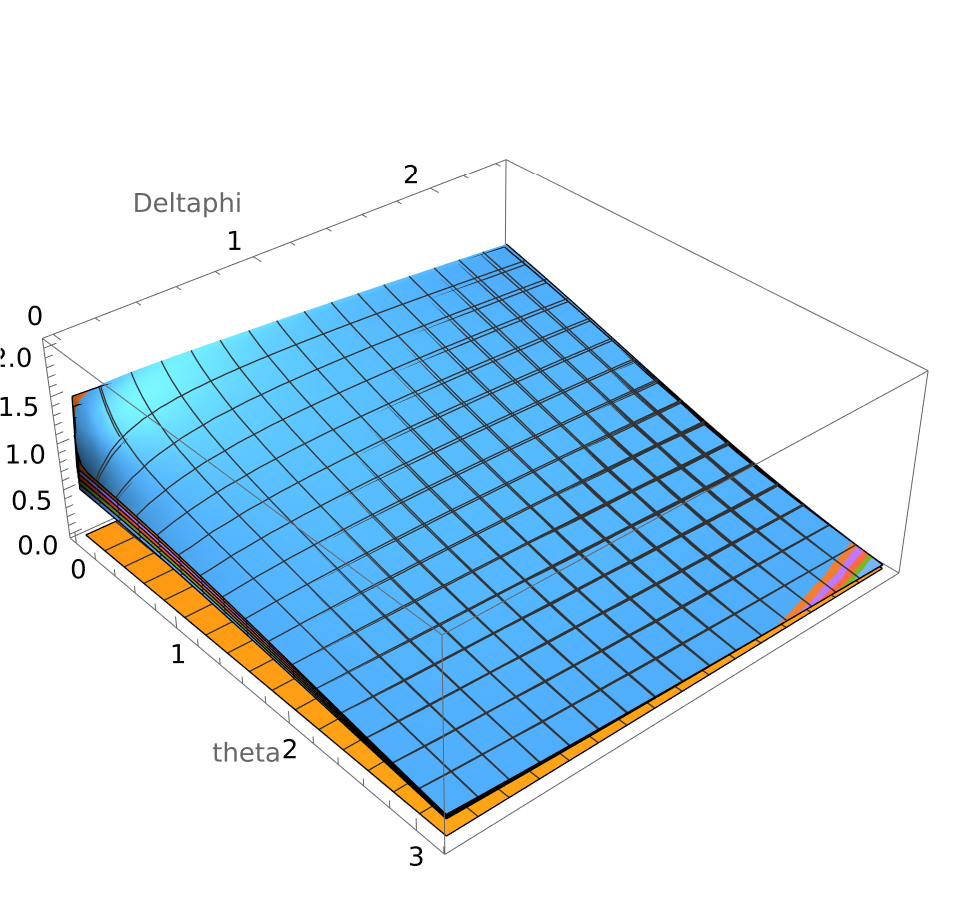}
\caption{Plot of the function $\frac{\delta \rho(1 + s \tau_1,\theta,\Delta \phi)}{\theta (\theta + \Delta \phi)}$ for $s$ assuming the values $\{0,.2,.4,.6,.8,1\}$ (monotonically bottom to top graph).}
\label{Fig:rho_first_1_arc}
\end{subfigure}\hfill
\begin{subfigure}{.475\linewidth}
\includegraphics[width=\linewidth]{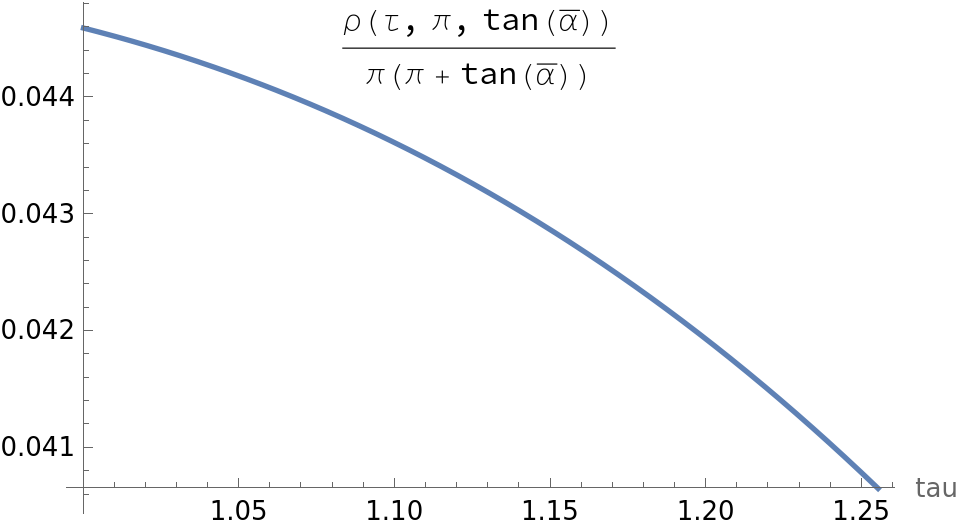}
\caption{Plot of the worst case $\frac{\delta \rho(\tau,\pi,\tan(\bar \alpha))}{\pi(\pi + \tan(\bar \alpha))}$, $\tau \in [1,1+\tau_1)$.}
\label{Fig:rho_first_1_arc_worst}
\end{subfigure}
\caption{Study of $\delta \rho$ for $\tau \in [1,1+\tau_1)$.}
\label{Fig:second_round_arc_1}
\end{figure}

\medskip

{\it Case 5: $\tau \in 1 + [\tau_1,\tau_2)$.}

\noindent The solution is given by
\begin{align*}
\delta \rho(\tau) &= S_0 (e^{\tau} - e^{\tau - 1} (\tau - 1)) + S_1 \big( e^{\tau - \tau_1} - e^{\tau - \tau_1 - 1} (\tau - \tau_1 -1) \big) + S_2 e^{\tau - \tau_2} + S_3 e^{- \bar c(2\pi + \bar \alpha)} \frac{e^{\cot(\bar \alpha) \Delta \phi} - 1}{\cot(\bar \alpha)} e^{\tau - 1} + S_4 e^{\tau - 1} \\
&= e^{\tau - 1} \bigg[ S_0 (e - (\tau - 1)) + S_1 (e^{1 - \tau_1} - e^{-\tau_1} (\tau - \tau_1 - 1)) + S_2 e^{1 - \tau_2} + S_3 e^{-\bar c(2\pi + \bar \alpha)} \frac{e^{\cot(\bar \alpha) \Delta \phi} - 1}{\cot(\bar \alpha)} + S_4 \bigg].
\end{align*}
A numerical evaluation shows that (see Fig. \ref{Fig:second_round_arc_2})
\begin{equation*}
\frac{\delta \rho([1+\tau_1,1+\tau_2),\theta,\Delta \phi)}{\theta(\theta + \Delta \phi)} \geq \frac{\delta \rho(\tau_1,\pi,\tan(\bar \alpha))}{\pi(\pi + \tan(\bar \alpha))} > 0.04.
\end{equation*}

\begin{figure}
\begin{subfigure}{.475\linewidth}
\includegraphics[width=\linewidth]{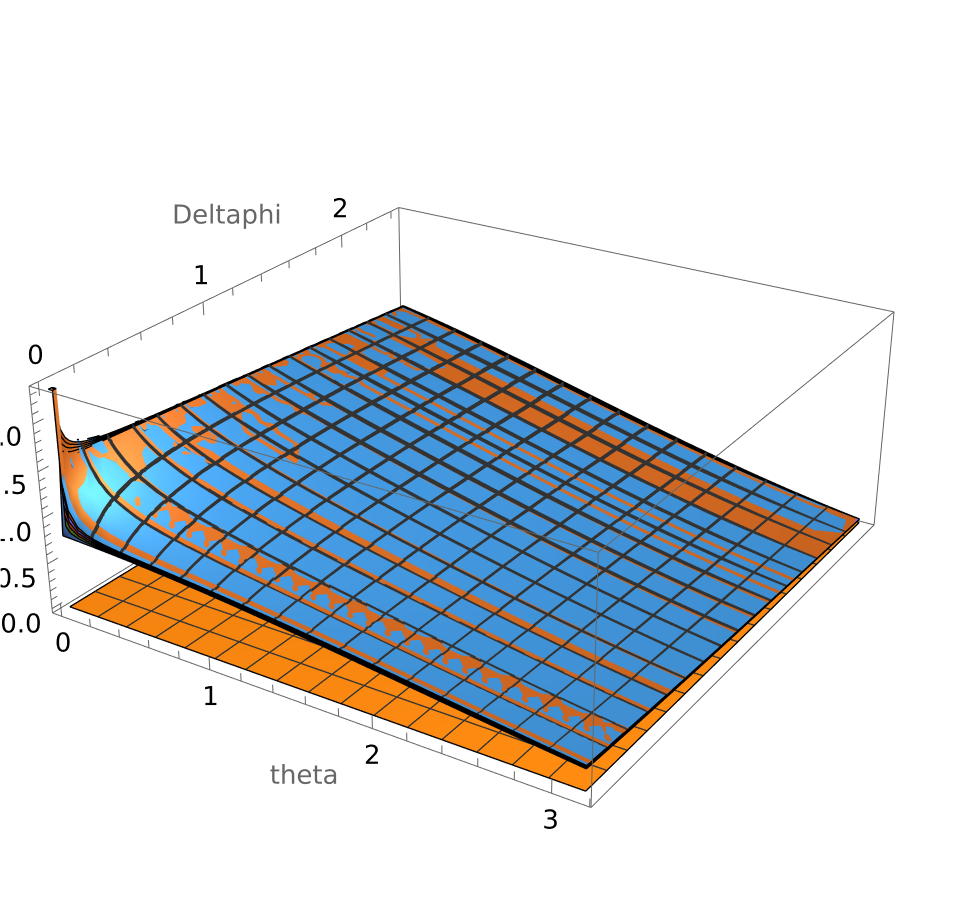}
\caption{Plot of the function $\frac{\delta \rho(1 + (1-s) \tau_1 + s \tau_2,\theta,\Delta \phi)}{\theta (\theta + \Delta \phi)}$ for $s$ assuming the values $\{0,.2,.4,.6,.8,1\}$ (monotonically bottom to top graph).}
\label{Fig:rho_first_2_arc}
\end{subfigure}\hfill
\begin{subfigure}{.475\linewidth}
\includegraphics[width=\linewidth]{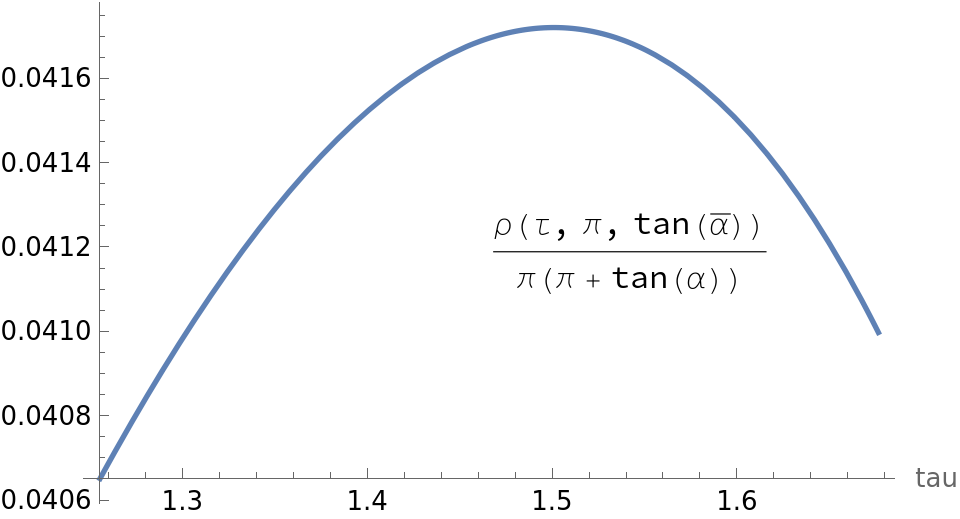}
\caption{Plot of the worst case $\frac{\delta \rho(\tau,\frac{\pi}{2},\tan(\bar \alpha))}{\frac{\pi}{2}(\frac{\pi}{2} +\tan(\bar \alpha))}$, $\tau \in [1+\tau_1,1+\tau_2)$.}
\label{Fig:rho_first_2_arc_worst}
\end{subfigure}
\caption{Study of $\delta \rho$ for $\tau \in [1+\tau_1,1+\tau_2)$.}
\label{Fig:second_round_arc_2}
\end{figure}

\medskip

{\it Case 6: $\tau \in [1+\tau_2,2]$.}

\noindent The solution is given by 
\begin{align*}
\delta \rho(\tau) &= S_0 (e^{\tau} - e^{\tau - 1} (\tau - 1)) + S_1 \big( e^{\tau - \tau_1} - e^{\tau - \tau_1 - 1} (\tau - \tau_1 -1) \big) + S_2 \big( e^{\tau - \tau_2} - e^{\tau - \tau_2 - 1} (\tau - \tau_2 - 1) \big) \\
& \quad + S_3 e^{- \bar c(2\pi + \bar \alpha)} \bigg( \frac{e^{\cot(\bar \alpha) \Delta \phi} - 1}{\cot(\bar \alpha)} e^{\tau - 1} \\
& \quad \qquad \qquad \qquad \qquad \qquad - \frac{e^{\cot(\bar \alpha) \Delta \phi} \cot(\bar \alpha) (2\pi + \bar \alpha) (\tau - 2 + \frac{\Delta \phi}{2\pi + \bar \alpha}) - e^{\cot(\bar \alpha) \Delta \phi} + e^{- \cot(\bar \alpha) (2\pi + \bar \alpha) (\tau - 2)}}{(2\pi + \bar \alpha) \cot(\bar \alpha)^2} e^{\tau - 2} \bigg) \\
& \quad + S_4 e^{\tau - 1}.
\end{align*}
Numerically one observes that (see Fig. \ref{Fig:second_round_arc_3})
\begin{equation*}
\frac{\delta \rho([1+\tau_2,2),\theta,\Delta \phi)}{\theta(\theta + \Delta \phi)} \geq \frac{\delta \rho(1+\tau_2,\pi,\tan(\bar \alpha))}{\pi(\pi + \tan(\bar \alpha))} > 0.04.
\end{equation*}

\begin{figure}
\begin{subfigure}{.475\linewidth}
\includegraphics[width=\linewidth]{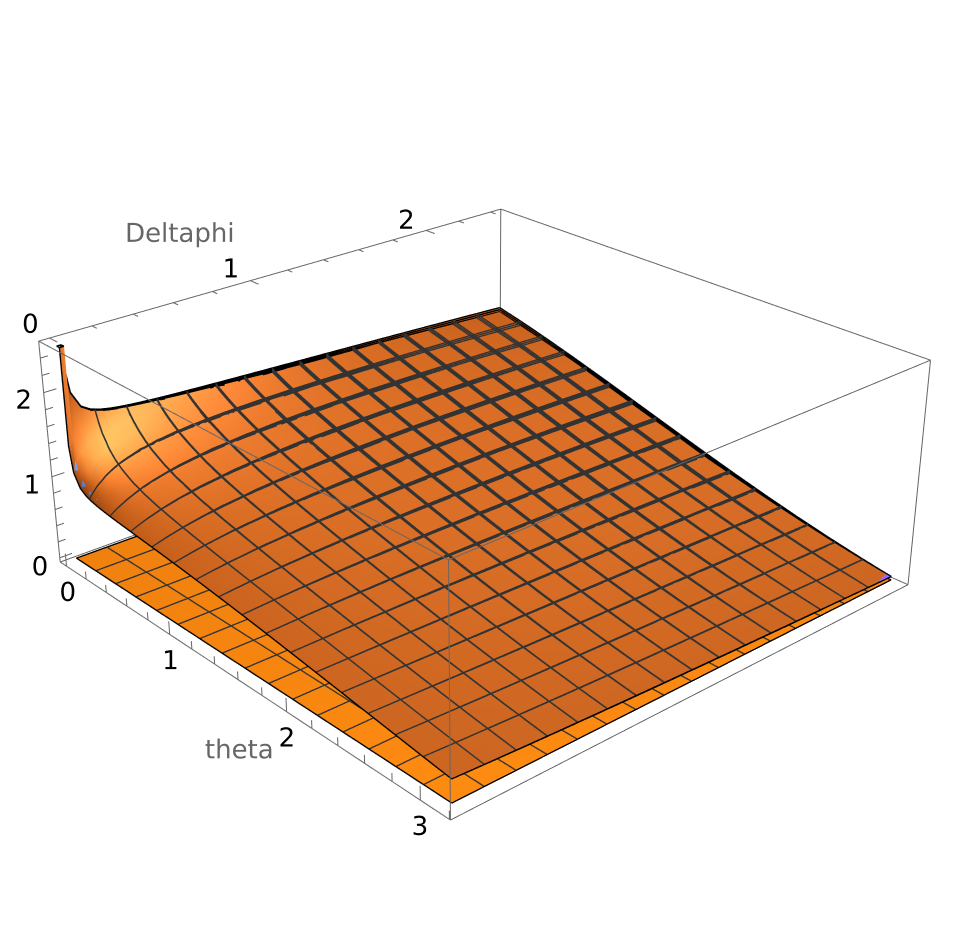}
\caption{Plot of the function $\frac{\delta \rho(1 + (1-s) \tau_2 + s,\theta,\Delta \phi)}{\theta (\theta + \Delta \phi)}$ for $s$ assuming the values $\{0,.2,.4,.6,.8,1\}$ (monotonically bottom to top graph).}
\label{Fig:rho_first_3_arc}
\end{subfigure}\hfill
\begin{subfigure}{.475\linewidth}
\includegraphics[width=\linewidth]{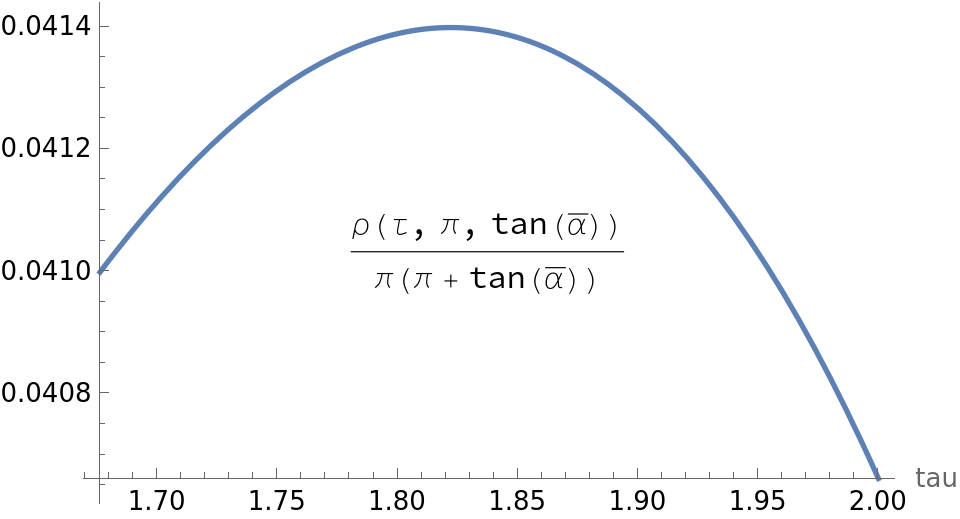}
\caption{Plot of the worst case $\frac{\delta \rho(\tau,\pi,\tan(\bar \alpha))}{\pi(\pi + \tan(\bar \alpha))}$, $\tau \in [1+\tau_2,2)$.}
\label{Fig:rho_first_3_arc_worst}
\end{subfigure}
\caption{Study of $\delta \rho$ for $\tau \in [1+\tau_2,2)$.}
\label{Fig:second_round_arc_3}
\end{figure}

\medskip

{\it Case 7: $\tau \in [2,2+\tau_1]$.}

\noindent The explicit form of the solution is
\begin{align*}
\delta \rho(\tau) &= S_0 \bigg( e^{\tau} - e^{\tau - 1} (\tau - 1) + e^{\tau-2}\frac{(\tau - 2)^2}{2} \bigg) + S_1 \big( e^{\tau - \tau_1} - e^{\tau - \tau_1 - 1} (\tau - \tau_1 -1) \big) + S_2 \big( e^{\tau - \tau_2} - e^{\tau - \tau_2 - 1} (\tau - \tau_2 - 1) \big) \\
& \quad + S_3 \bigg[ \frac{e^{- \bar c \tau}}{\cot(\bar \alpha)} \big( e^{\cot(\bar \alpha)(2\pi + \bar \alpha) (\tau - \tau_2)} - e^{\cot(\bar \alpha) (2\pi + \bar \alpha) (\tau-1)} \big) \\
& \quad \qquad \qquad + \frac{e^{\bar c(2\pi + \bar \alpha)(\tau-1)}}{\cot(\bar \alpha)^2 (2\pi + \bar \alpha)} \bigg( - e^{-\cot(\bar \alpha) (\tau - \tau_2 - 1)} \cot(\bar \alpha) (2\pi + \bar \alpha) (\tau - \tau_2 - 1) \\
& \quad \qquad \qquad \qquad \qquad \qquad + e^{-\cot(\bar \alpha) (\tau - 2)} \cot(\bar \alpha) (2\pi + \bar \alpha) (\tau - 2) + e^{\cot(\bar \alpha) (2\pi + \bar \alpha) (\tau - \tau_2 - 1} - e^{\cot(\bar \alpha) (2\pi + \bar \alpha) (\tau - 2)} \bigg) \bigg] \\
& \quad + S_4 \big( e^{\tau - 1} - e^{\tau-2} (\tau -2) \big) + S_5 e^{\tau-2}.
\end{align*}
Numerical evaluations show that (see Fig. \ref{Fig:third_round_arc_1})
\begin{equation*}
\frac{\delta \rho([2,2+\tau_1),\theta,\Delta \phi)}{\theta(\theta+\Delta \phi)} \geq \frac{\delta \rho(2,\pi,\tan(\bar \alpha))}{\pi(\pi + \tan(\bar \alpha))} > 0.04,
\end{equation*}
so that the function is positive.

\begin{figure}
\begin{subfigure}{.475\linewidth}
\includegraphics[width=\linewidth]{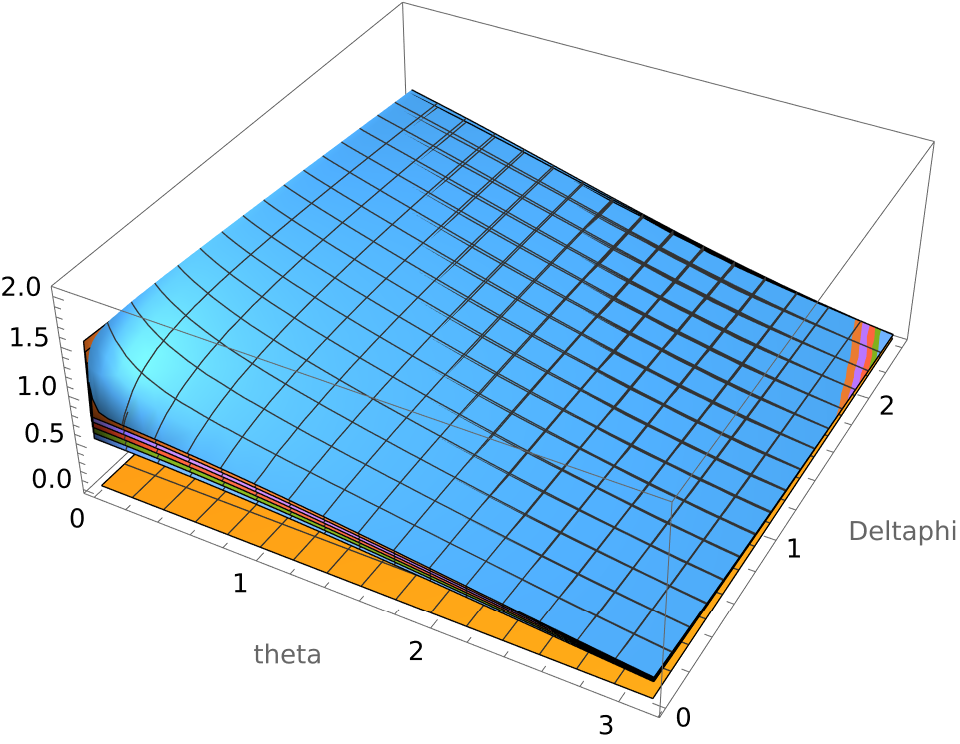}
\caption{Plot of the function $\frac{\delta \rho(2 + s \tau_1,\theta,\Delta \phi)}{\theta (\theta + \Delta \phi)}$ for $s$ assuming the values $\{0,.2,.4,.6,.8,1\}$ (monotonically bottom to top graph).}
\label{Fig:rho_third_1_arc}
\end{subfigure}\hfill
\begin{subfigure}{.475\linewidth}
\includegraphics[width=\linewidth]{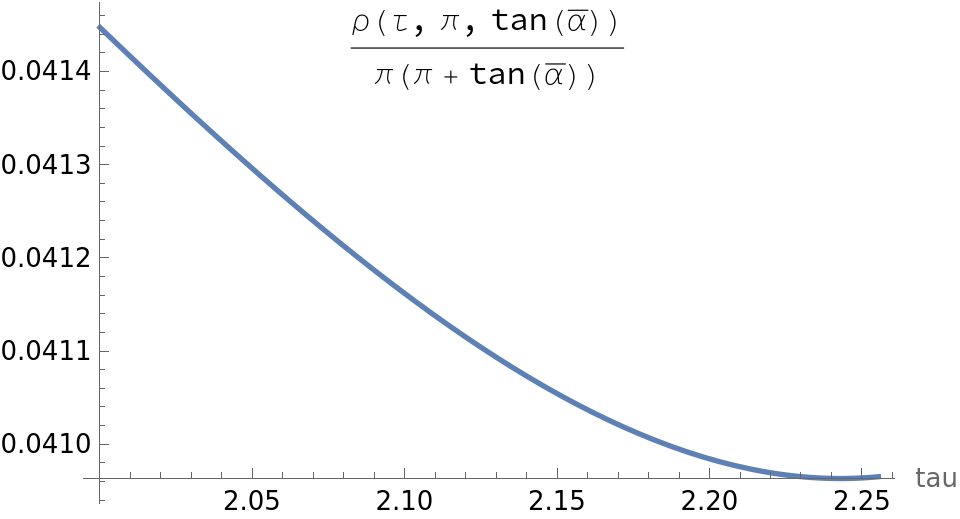}
\caption{Plot of the worst case $\frac{\delta \rho(\tau,\pi,0)}{\pi(\pi + \tan(\bar \alpha))}$, $\tau \in [2,2+\tau_1)$.}
\label{Fig:rho_third_1_arc_worst}
\end{subfigure}
\caption{Study of $\delta \rho$ for $\tau \in [2,2+\tau_1)$.}
\label{Fig:third_round_arc_1}
\end{figure}

\medskip

{\it Case 8: $\tau \in [2+\tau_1,2+\tau_2]$.}

\noindent The explicit form of the solution is
\begin{align*}
\delta \rho(\tau) &= S_0 \bigg( e^{\tau} - e^{\tau - 1} (\tau - 1) + e^{\tau-2}\frac{(\tau - 2)^2}{2} \bigg) + S_1 \bigg( e^{\tau - \tau_1} - e^{\tau - \tau_1 - 1} (\tau - \tau_1 -1) + e^{\tau - \tau_1 - 2} \frac{(\tau - 2 - \tau_1)^2}{2} \bigg) \\
& \quad + S_2 \big( e^{\tau - \tau_2} - e^{\tau - \tau_2 - 1} (\tau - \tau_2 - 1) \big) \\
& \quad + S_3 \bigg[ \frac{e^{- \bar c \tau}}{\cot(\bar \alpha)} \big( e^{\cot(\bar \alpha)(2\pi + \bar \alpha) (\tau - \tau_2)} - e^{\cot(\bar \alpha) (2\pi + \bar \alpha) (\tau-1)} \big) \\
& \quad \qquad \qquad + \frac{e^{\bar c(2\pi + \bar \alpha)(\tau-1)}}{\cot(\bar \alpha)^2 (2\pi + \bar \alpha)} \bigg( - e^{-\cot(\bar \alpha) (\tau - \tau_2 - 1)} \cot(\bar \alpha) (2\pi + \bar \alpha) (\tau - \tau_2 - 1) \\
& \quad \qquad \qquad \qquad \qquad \qquad + e^{-\cot(\bar \alpha) (\tau - 2)} \cot(\bar \alpha) (2\pi + \bar \alpha) (\tau - 2) + e^{\cot(\bar \alpha) (2\pi + \bar \alpha) (\tau - \tau_2 - 1} - e^{\cot(\bar \alpha) (2\pi + \bar \alpha) (\tau - 2)} \bigg) \bigg] \\
& \quad + S_4 \big( e^{\tau - 1} - e^{\tau-2} (\tau -2) \big) + S_5 e^{\tau-2}.
\end{align*}
Numerical evaluations show that (see Fig. \ref{Fig:third_round_arc_2})
\begin{equation*}
\frac{\delta \rho([2+\tau_1,2+\tau_2),\theta,\Delta \phi)}{\theta(\theta+\Delta \phi)} \geq \frac{\delta \rho(2+\tau_1,\pi,\tan(\bar \alpha))}{\pi(\pi + \tan(\bar \alpha))} > 0.04,
\end{equation*}
so that the function is positive.

\begin{figure}
\begin{subfigure}{.475\linewidth}
\includegraphics[width=\linewidth]{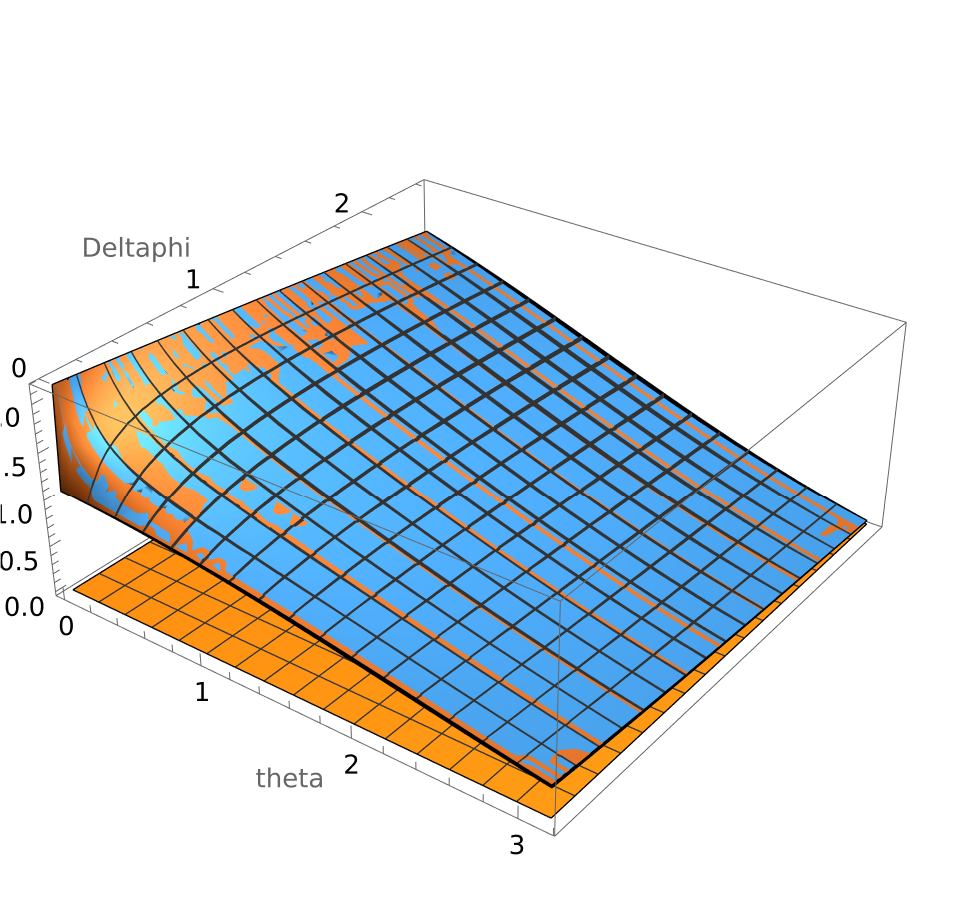}
\caption{Plot of the function $\frac{\delta \rho(2 + (1-s) \tau_1 + s \tau_2,\theta,\Delta \phi)}{\theta (\theta + \Delta \phi)}$ for $s$ assuming the values $\{0,.2,.4,.6,.8,1\}$ (monotonically bottom to top graph).}
\label{Fig:rho_third_2_arc}
\end{subfigure}\hfill
\begin{subfigure}{.475\linewidth}
\includegraphics[width=\linewidth]{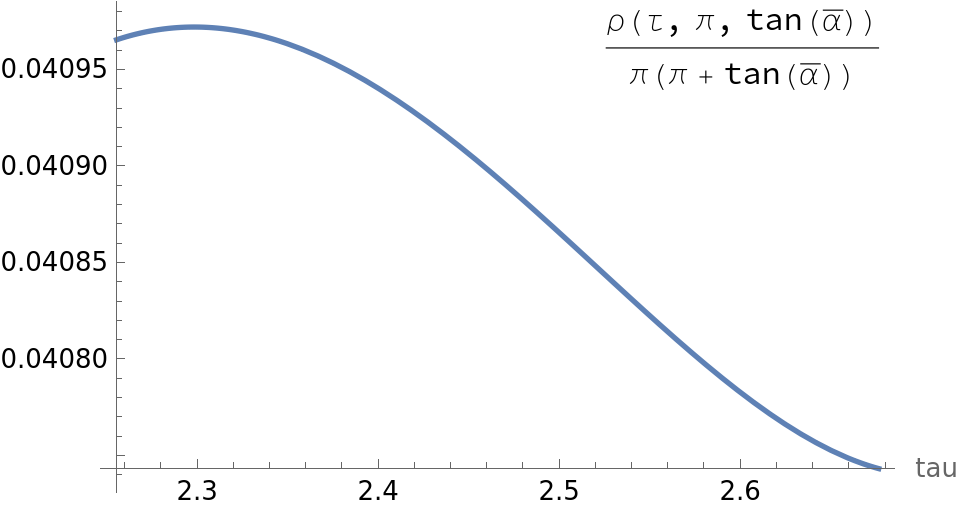}
\caption{Plot of the worst case $\frac{\delta \rho(\tau,\frac{\pi}{2},\tan(\bar \alpha))}{\frac{\pi}{2}(\frac{\pi}{2} + \tan(\bar \alpha))}$, $\tau \in [2+\tau_1,2+\tau_2)$.}
\label{Fig:rho_third_2_arc_worst}
\end{subfigure}
\caption{Study of $\delta \rho$ for $\tau \in [2+\tau_1,2+\tau_2)$.}
\label{Fig:third_round_arc_2}
\end{figure}

\medskip

{\it Case 9: $\tau \in [2+\tau_2,3)$.}

\noindent The explicit form of the solution is
\begin{align*}
\delta \rho(\tau) &= S_0 \bigg( e^{\tau} - e^{\tau - 1} (\tau - 1) + e^{\tau-2}\frac{(\tau - 2)^2}{2} \bigg) + S_1 \bigg( e^{\tau - \tau_1} - e^{\tau - \tau_1 - 1} (\tau - \tau_1 -1) + e^{\tau - \tau_1 - 2} \frac{(\tau - 2 - \tau_1)^2}{2} \bigg) \\
& \quad + S_2 \bigg( e^{\tau - \tau_2} - e^{\tau - \tau_2 - 1} (\tau - \tau_2 - 1) + e^{\tau - \tau_2 -2} \frac{(\tau - \tau_2 -2)^2}{2} \bigg) \\
& \quad + S_3 \bigg[ \frac{e^{- \bar c \tau}}{\cot(\bar \alpha)} \big( e^{\cot(\bar \alpha)(2\pi + \bar \alpha) (\tau - \tau_2)} - e^{\cot(\bar \alpha) (2\pi + \bar \alpha) (\tau-1)} \big) \\
& \quad \qquad + \frac{e^{\bar c(2\pi + \bar \alpha)(\tau-1)}}{\cot(\bar \alpha)^2 (2\pi + \bar \alpha)} \bigg( - e^{-\cot(\bar \alpha) (\tau - \tau_2 - 1)} \cot(\bar \alpha) (2\pi + \bar \alpha) (\tau - \tau_2 - 1) \\
& \quad \qquad \qquad \qquad \qquad \qquad \qquad + e^{-\cot(\bar \alpha) (\tau - 2)} \cot(\bar \alpha) (2\pi + \bar \alpha) (\tau - 2) + e^{\cot(\bar \alpha) (2\pi + \bar \alpha) (\tau - \tau_2 - 1} - e^{\cot(\bar \alpha) (2\pi + \bar \alpha) (\tau - 2)} \bigg) \\
& \quad \qquad + \frac{e^{\bar c(2\pi + \bar \alpha)(\tau-2)}}{\cot(\bar \alpha)^3 (2\pi + \bar \alpha)^2} \bigg( e^{-\cot(\bar \alpha) (\tau - \tau_2 - 2)} \frac{(\cot(\bar \alpha) (2\pi + \bar \alpha) (\tau - \tau_2 - 2))^2}{2} \\
& \quad \qquad \qquad \qquad \qquad \qquad \qquad - e^{-\cot(\bar \alpha) (\tau - \tau_2 - 2)} \cot(\bar \alpha) (2\pi + \bar \alpha) (\tau - \tau_2 - 2) + e^{\cot(\bar \alpha) (2\pi + \bar \alpha) (\tau - \tau_2 - 2)} - 1 \bigg) \bigg] \\
& \quad + S_4 \big( e^{\tau - 1} - e^{\tau-2} (\tau -2) \big) + S_5 e^{\tau-2}.
\end{align*}
Numerical evaluations show that (see Fig. \ref{Fig:third_round_arc_3})
\begin{equation*}
\frac{\delta \rho([2+\tau_2,3),\theta,\Delta \phi)}{\theta(\theta+\Delta \phi)} \geq \frac{\delta \rho(3,\pi,\tan(\bar \alpha))}{\pi(\pi+ \tan(\bar \alpha))} > 0.04,
\end{equation*}
so that the function is positive.

\begin{figure}
\begin{subfigure}{.475\linewidth}
\includegraphics[width=\linewidth]{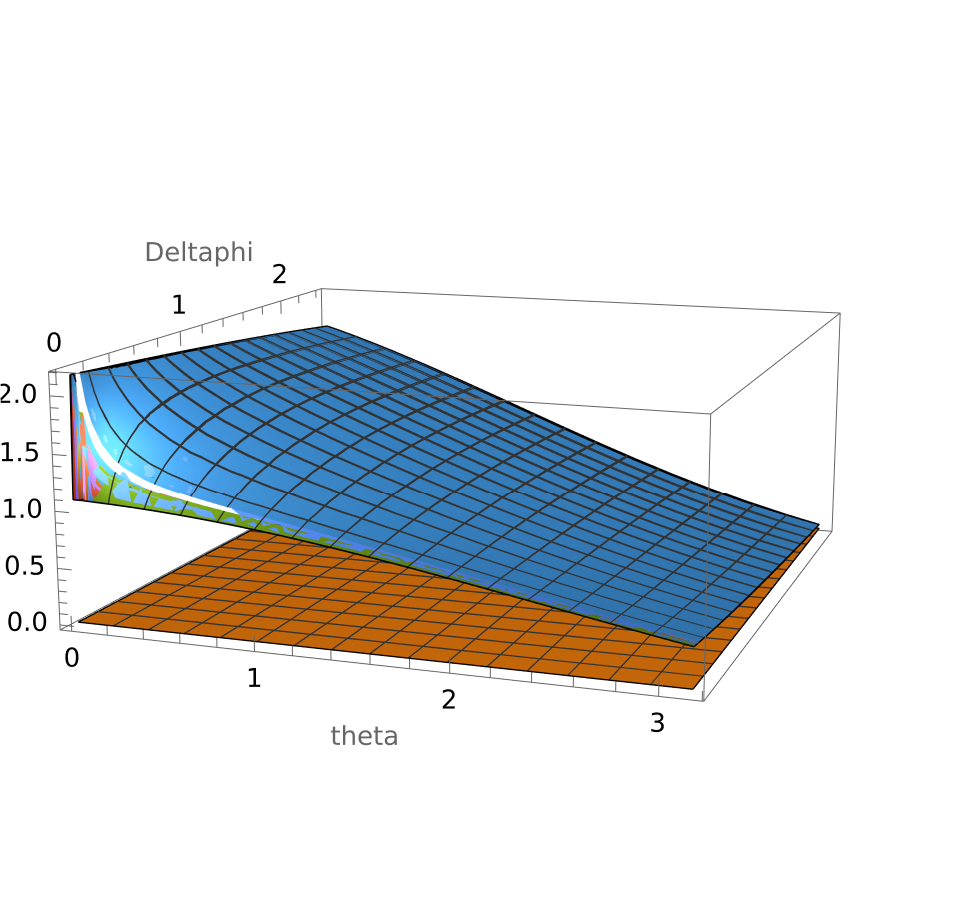}
\caption{Plot of the function $\frac{\delta \rho(2 + (1-s) \tau_2 + s,\theta,\Delta \phi)}{\theta (\theta + \Delta \phi)}$ for $s$ assuming the values $\{0,.2,.4,.6,.8,1\}$ (monotonically bottom to top graph).}
\label{Fig:rho_third_3_arc}
\end{subfigure}\hfill
\begin{subfigure}{.475\linewidth}
\includegraphics[width=\linewidth]{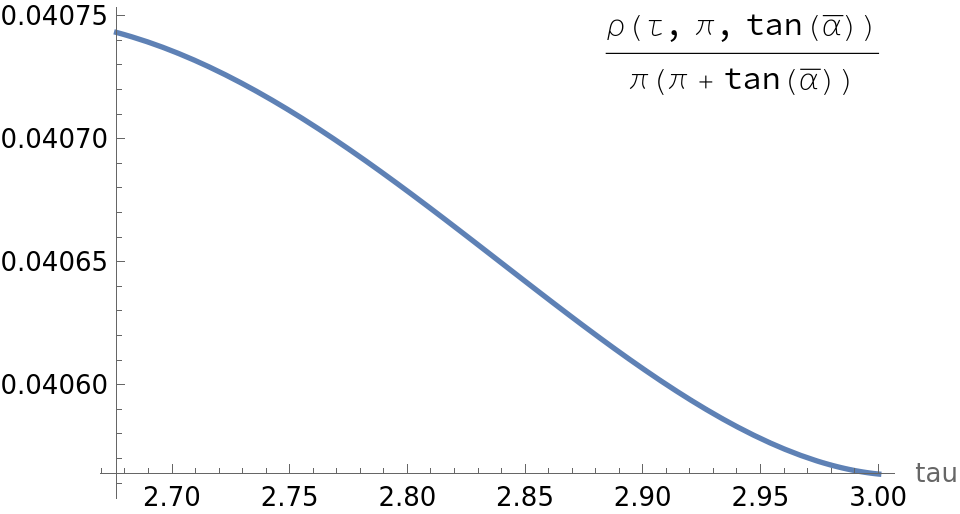}
\caption{Plot of the worst case $\frac{\delta \rho(\tau,\frac{\pi}{2},\tan(\bar \alpha))}{\frac{\pi}{2}(\frac{\pi}{2} + \tan(\bar \alpha))}$, $\tau \in [2+\tau_2,3)$.}
\label{Fig:rho_third_3_arc_worst}
\end{subfigure}
\caption{Study of $\delta \rho$ for $\tau \in [2+\tau_2,3)$.}
\label{Fig:third_round_arc_3}
\end{figure}

\medskip

{\it Case 10: $\tau \geq 3$.}

It is enough to find the region where that
\begin{equation*}
\dot \delta \rho(\tau) = \delta \rho(\tau) - \delta \rho(\tau-1) \geq 0
\end{equation*}
and apply Lemma \ref{lem:key}. Numerically it can be shown that (see Fig. \ref{Fig:fourth_round_arc})
\begin{equation*}
\frac{\delta \rho(\tau,\theta,\Delta \phi) - \delta \rho(\tau,\theta,\Delta \phi)}{\theta(\theta + \Delta \phi)} \geq 10^{-4}
\end{equation*}
for
\begin{equation*}
(\theta,\Delta \phi) \in (0,\pi] \times [0,\tan(\bar \alpha)] \setminus (3.12,\pi) \times (2.37,\tan(\bar \alpha)).
\end{equation*}

\begin{figure}
\begin{subfigure}{.32\linewidth}
\includegraphics[width=\linewidth]{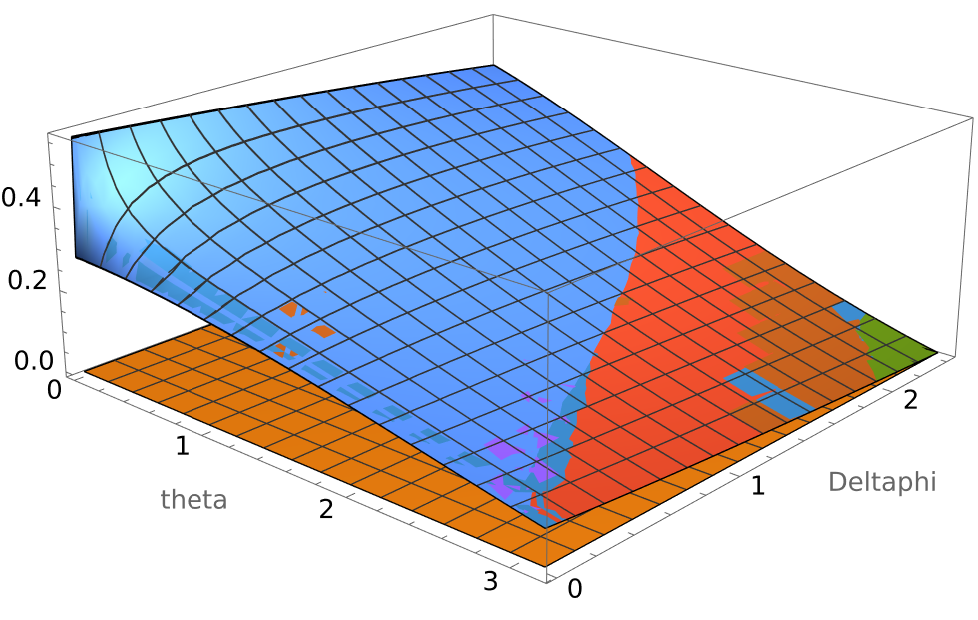}
\caption{Numerical plot of the function $\frac{\delta \rho(\tau,\theta,\Delta \phi)-\delta \rho(\tau-1,\theta,\Delta \phi)}{\theta (\theta + \Delta \phi)}$, $\tau$ assuming the values $\{3,3.2,3.4,3.6,3.8,4\}$.}
\label{Fig:rho_fourth_1_arc}
\end{subfigure}\hfill
\begin{subfigure}{.32\linewidth}
\includegraphics[width=\linewidth]{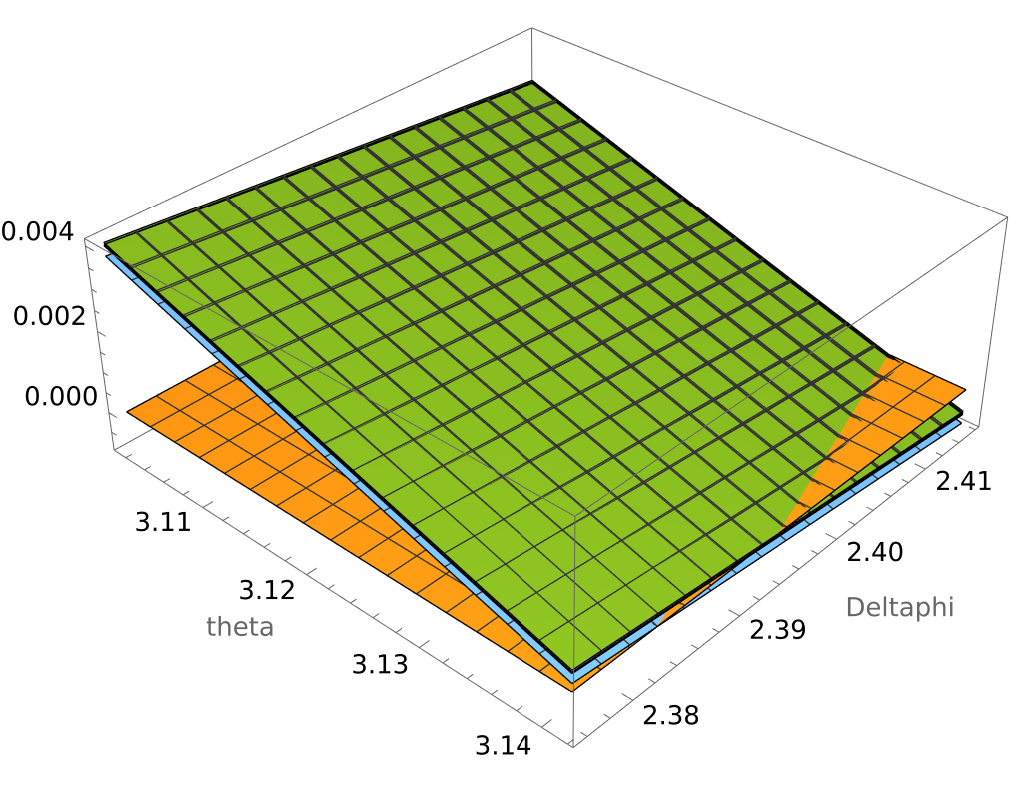}
\caption{Close-in plot of the previous graph near the negativity region $(\theta,\Delta \phi) \in (\pi,\tan(\bar \alpha)) + (-.4,0) \times (-4,0)$.}
\label{Fig:rho_fourth_2_arc}
\end{subfigure} \hfill
\begin{subfigure}{.32\linewidth}
\includegraphics[width=\linewidth]{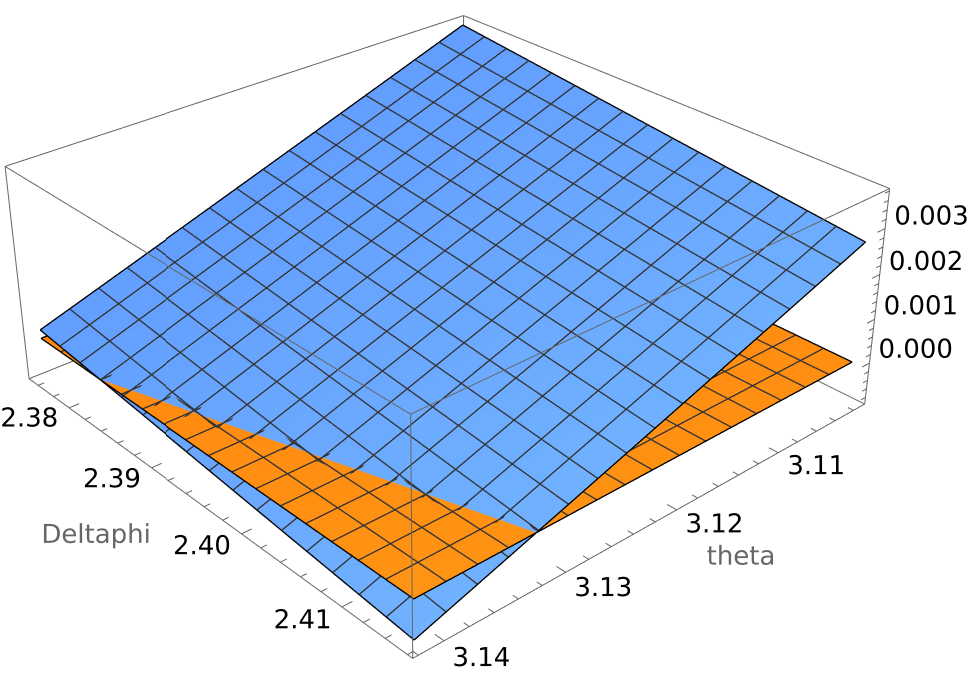}
\caption{Numerical plot of the worst case $\frac{\delta \rho(3,\theta,\Delta \phi) - \delta \rho(2,\theta,\Delta \phi)}{\theta(\theta + \Delta \phi)}$ near the negativity region.}
\label{Fig:rho_fourth_3_arc}
\end{subfigure}
\caption{Study of $\delta \rho$ for $\tau \in [3,4)$.}
\label{Fig:fourth_round_arc}
\end{figure}
The application of Lemma \ref{lem:key} concludes the proof.

\medskip

{\it Asymptotic behavior.}

\begin{figure}
\begin{subfigure}{.475\linewidth}
\includegraphics[width=\linewidth]{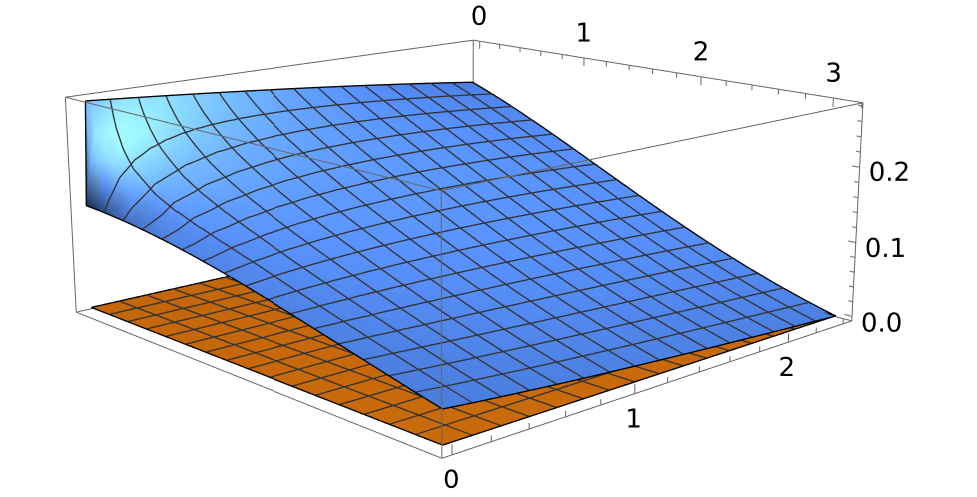}
\end{subfigure}\hfill
\begin{subfigure}{.475\linewidth}
\includegraphics[width=\linewidth]{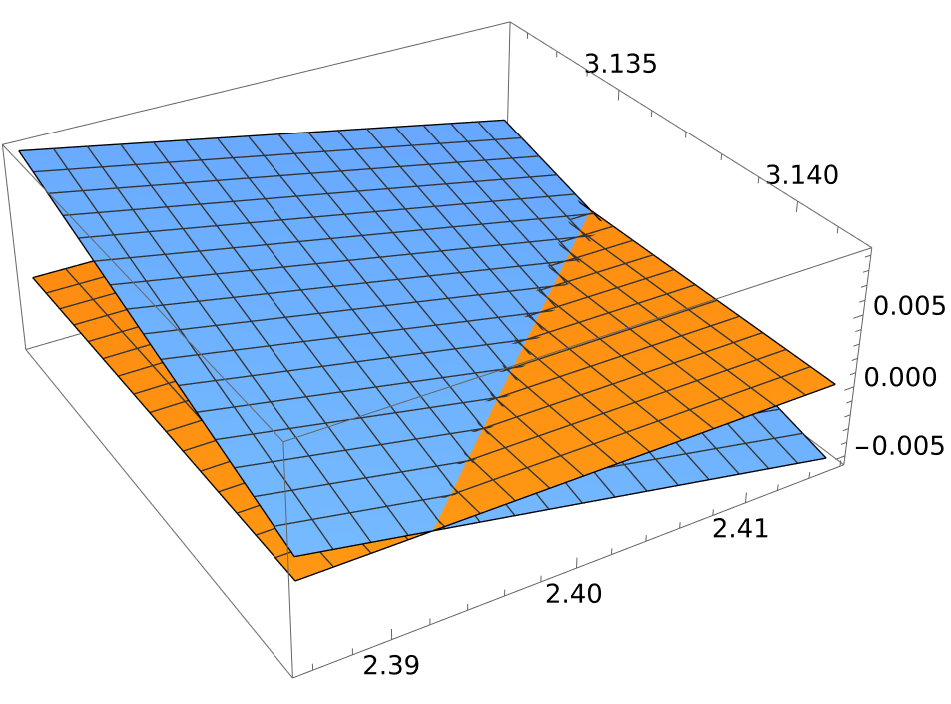}
\end{subfigure}
\caption{Plot of the function \ref{Equa:asympt_arc}, and a blowup about the negativity region.}
\label{Fig:asympt_arc}
\end{figure}

Using Lemmata \ref{Lem:aympt_g} and \ref{Lem:m_aymptotic} we obtain
\begin{equation}
\label{Equa:asympt_arc}
\lim_{\tau \to \infty} \frac{\delta \rho(\tau) e^{-\bar c \tau}}{\tau \theta(\theta + \Delta \phi)} = \frac{2}{\theta ( \theta+ \Delta \phi)} \bigg( S_0 + S_1 + S_2 + S_3 \frac{e^{-\bar c (2\pi + \bar \alpha) \tau_2} - e^{-\bar c(2\pi + \bar \alpha)}}{\bar c} + S_4 + S_5 \bigg).
\end{equation}
A plot fo this function is in Fig. \ref{Fig:asympt_arc}.
\end{proof}

%

\subsubsection{Estimate on the length for the fastest saturated spiral $\delta r(\phi;s_0)$, arc case}
\label{Sss:additional_estim_opt_arc}

We will repeat some of the estimates for the functional
\begin{equation*}
\delta \tilde r(\phi;s_0) - 2.08 \delta \tilde L(\phi;s_0), \quad \phi \geq \phi_0 + \Delta \phi + \frac{\pi}{2} + 2\pi.
\end{equation*}
Using computations similar to Section \ref{Sss:additional_estim_opt_segm} we obtain
\begin{equation}
\label{Equa:lenght_arc}
\begin{split}
\delta \tilde L(\phi;s_0) &= \frac{1 - \cos(\theta) + \sin(\theta) \Delta \phi}{\sin(\bar \alpha)^2} \\
& \quad + \frac{\cot(\bar \alpha)}{\sin(\bar \alpha)} \left( 1 - \cos(\theta) + \Delta \phi \sin(\theta) \right) \mathcal G \bigg( \phi - \phi_0 - \Delta \phi - \frac{\pi}{2} + \bar \alpha \bigg) - \mathcal G(\phi - \phi_0 - 2\pi - \theta_0) \\
& \quad + \cos(\theta) \mathcal G \bigg( \phi - \phi_0 - 2\pi - \frac{\pi}{2} \bigg) - \sin(\theta) \bigg( \mathfrak G \bigg( \phi - \phi_0 - 2\pi - \frac{\pi}{2} \bigg) - \mathfrak G \bigg( \phi - \phi_0 - 2\pi - \frac{\pi}{2} - \Delta \phi \bigg) \bigg)  \\
& \quad + \bigg( - \cot(\bar \alpha)^2 \big( 1 - \cos(\theta) + \Delta \phi \sin (\theta) \big) \\
& \quad \qquad \qquad \qquad + \cot(\bar \alpha) \Big( \sin(\theta) - \cot(\bar \alpha) \big( 1 - \cos(\theta) + \Delta \phi \sin(\theta) \big) \Big) \bigg) \mathcal G \bigg( \phi -\phi_0 - 2\pi - \Delta \phi - \frac{\pi}{2} \bigg) \\
& \quad + \Big( \sin(\theta) - \cot(\bar \alpha) \big( 1 - \cos(\theta) + \Delta \phi \sin(\theta) \big) \Big) H \bigg( \phi - \phi_0 + 2\pi + \frac{\pi}{2} + \Delta \phi \bigg) \\
& \quad - \frac{1}{\sin(\bar \alpha)} \Big( \sin(\theta) - \cot(\bar \alpha) \big( 1 - \cos(\theta) + \Delta \phi \sin(\theta) \big) \Big) \mathcal G \bigg( \phi - \phi_0 - 2\pi - \Delta \phi - \frac{\pi}{2} - 2\pi  - \bar \alpha \bigg).
\end{split}
\end{equation}
where we have used the definition of \gls{Gfrak} and for the first term we compute by \eqref{Equa:Deltar_arc}
\begin{equation*}
1 - \cos(\theta) + \sin(\theta) \Delta \phi + r(\phi_1) \delta \phi_1 + \delta |P_2 - P_1| - \frac{r(\phi_2) \delta \phi_2}{\sin(\bar \alpha)} = \frac{1 - \cos(\theta) + \sin(\theta) \Delta \phi}{\sin(\bar \alpha)^2}.
\end{equation*}

For $\phi \geq \phi_0 + 2\pi + \Delta \phi + \frac{\pi}{2} + 2\pi + \bar \alpha$, the RDE satisfied by $\delta \tilde L$ is
\begin{align*}
\frac{d}{d\phi} \delta \tilde L(\phi) &= \cot(\bar \alpha) \delta \tilde L(\phi) - \frac{\delta \tilde L(\phi - 2\pi - \bar \alpha)}{\sin(\bar \alpha)} + S_L,
\end{align*}
with
\begin{align*}
S_L &= \frac{\cot(\bar \alpha)}{\sin(\bar \alpha)^2} \left( 1 - \cos(\theta) + \Delta \phi \sin(\theta) \right) - \frac{1}{\sin(\bar \alpha)} + \frac{\cos(\theta)}{\sin(\bar \alpha)} - \frac{\sin(\theta) \Delta \phi}{\sin(\bar \alpha)} \\
& \quad + \frac{1}{\sin(\bar \alpha)} \bigg( - \cot(\bar \alpha)^2 \big( 1 - \cos(\theta) + \Delta \phi \sin (\theta) \big) + \cot(\bar \alpha) \Big( \sin(\theta) - \cot(\bar \alpha) \big( 1 - \cos(\theta) + \Delta \phi \sin(\theta) \big) \Big) \bigg) \\
& \quad - \frac{1}{\sin(\bar \alpha)^2} \Big( \sin(\theta) - \cot(\bar \alpha) \big( 1 - \cos(\theta) + \Delta \phi \sin(\theta) \big) \Big) \\
&= - \big( 1 - \cos(\theta) + \sin(\theta) \Delta \phi \big) \frac{(1 - \cos(\bar \alpha))^2}{\sin(\bar \alpha)^3} + \sin(\theta) \frac{\cos(\bar \alpha) - 1}{\sin(\bar \alpha)^2} < 0.
\end{align*}
Hence the RDE for $\delta \tilde r - 2.08 \delta \tilde L$ is
\begin{equation*}
\frac{d}{d\phi} (\delta \tilde r(\phi) - 2.08 \delta \tilde L(\phi)) > \cot(\bar \alpha) (\delta \tilde r(\phi) - 2.08 \delta \tilde L(\phi)) - \frac{\delta \tilde r(\phi - 2\pi - \bar \alpha) - 2.08 \delta \tilde L(\phi - 2\pi - \bar \alpha)}{\sin(\bar \alpha)}
\end{equation*}
for $\phi \geq \phi_0 + 2\pi + \Delta \phi + \frac{\pi}{2} + 2\pi + \bar \alpha$. We can thus apply Lemma \ref{lem:key} and Remark \ref{rmk:exponentially:explod} if we can show numerically that the function is positive and strictly increasing for $\tau \in [4,5]$.

\begin{figure}
\begin{subfigure}{.33\textwidth}
\resizebox{\textwidth}{!}{\includegraphics{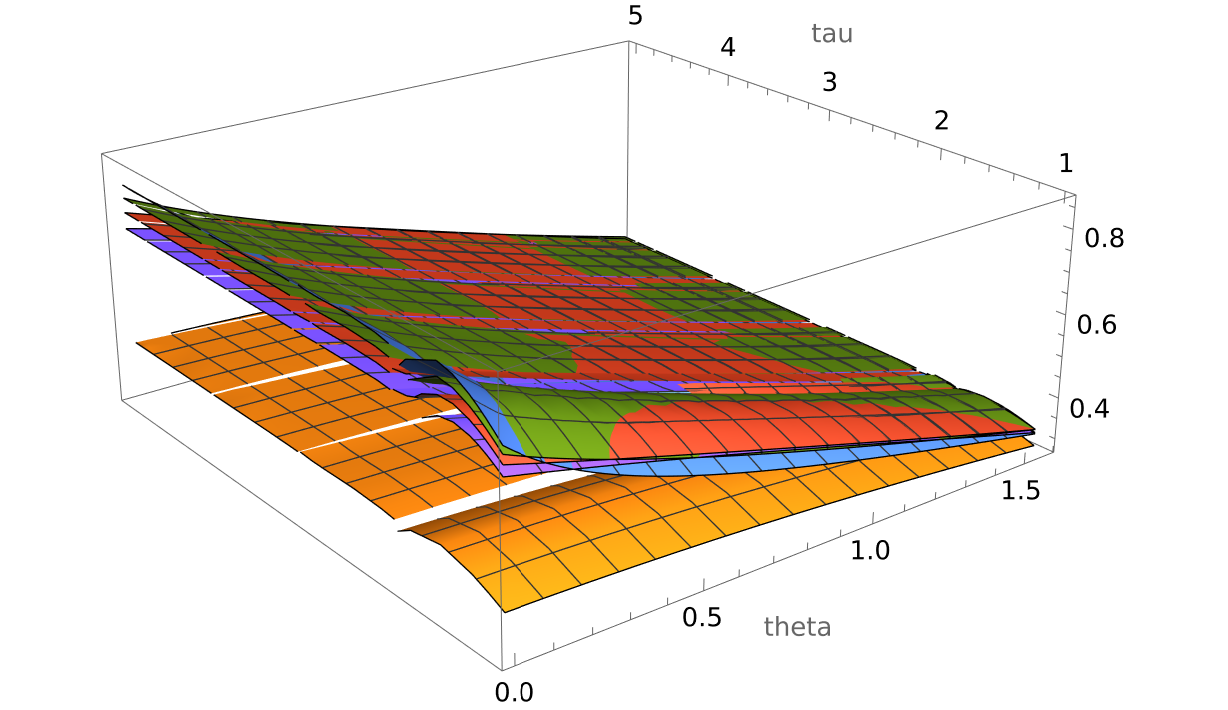}}
\end{subfigure} \hfill
\begin{subfigure}{.33\textwidth}
\resizebox{\textwidth}{!}{\includegraphics{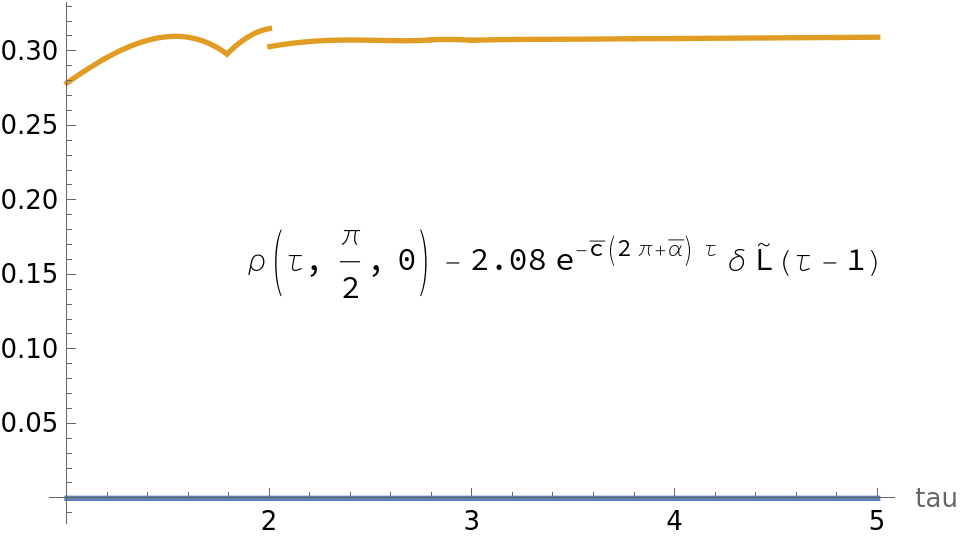}}
\end{subfigure} \hfill
\begin{subfigure}{.33\textwidth}
\resizebox{\textwidth}{!}{\includegraphics{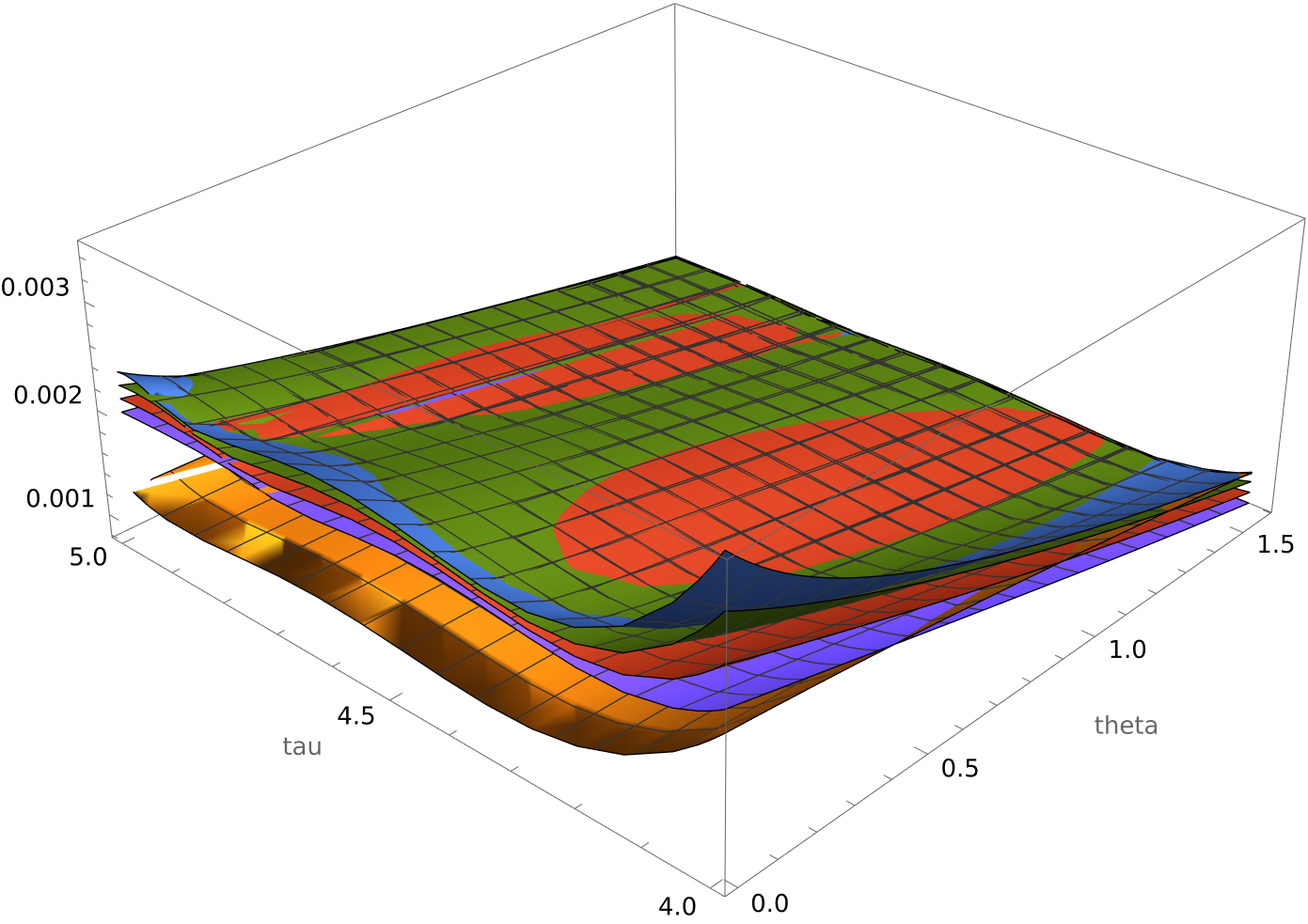}}
\end{subfigure}
\caption{Numerical plots of the function $\frac{\delta \rho(\tau) - e^{-\bar c (2\pi + \bar \alpha) \tau} \delta \tilde L(\tau)}{\theta(\theta + \Delta \phi)}$: the fist 5 rounds, the minimal values for $\theta = \frac{\pi}{2}, \Delta \phi = 0$, and its derivative for $\tau \in [4,5]$.}
\label{Fig:Lpert_arc}
\end{figure}

A numerical plot of the function $\delta \rho(\tau) - 2.08 e^{-\bar c (2\pi + \bar \alpha) \tau} \delta L(\tau)$, $\phi = \phi_0 + \Delta \phi + \frac{\pi}{2} - \bar \alpha$ and $(\theta,\Delta \phi) \in [0,\frac{\pi}{2}] \times [0,\tan(\bar \alpha)]$, is in Fig. \ref{Fig:Lpert_arc}: it is shown that the function is positive, increasing for $\tau \in [4,5]$ and its minimal value: by Lemma \ref{lem:key} we conclude that

\begin{proposition}
\label{Prop:lemgth_arc_pert}
It holds for the perturbation in the arc case
\begin{equation*}
\delta \rho(\tau) - 2.08 e^{-\bar c (2\pi + \bar \alpha) \tau} \delta \tilde L(\tau) \geq 0.27 e^{\bar c (2\pi + \bar \alpha) \tau} \theta(\theta + \Delta \phi), \quad \tau \geq 1.
\end{equation*}
\end{proposition}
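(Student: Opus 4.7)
The plan is to follow the same strategy used in Section \ref{Sss:additional_estim_opt_segm} (the segment case of Proposition \ref{Prop:lemgth_segm_pert}), but now applied to the more involved arc-case formula for $\delta \tilde L$ given by \eqref{Equa:lenght_arc}. First, using the RDEs satisfied by the primitives $\mathcal G$ and $\mathfrak G$ (Equations \eqref{Equa:eq_for_calG} and \eqref{Equa:eq_for_frakG}) together with \eqref{Equa:lenght_arc}, I would differentiate term-by-term to show that for $\phi$ in the range where all seven Heaviside terms are active (namely $\phi \geq \phi_0 + 2\pi + \Delta\phi + \tfrac{\pi}{2} + 2\pi + \bar\alpha$), $\delta \tilde L$ satisfies
\begin{equation*}
\frac{d}{d\phi}\delta \tilde L(\phi) = \cot(\bar\alpha)\,\delta \tilde L(\phi) - \frac{\delta \tilde L(\phi - 2\pi - \bar\alpha)}{\sin(\bar\alpha)} + S_L,
\end{equation*}
with constant source $S_L$. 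A direct computation (already written out above the statement) simplifies $S_L$ to
\begin{equation*}
S_L = -\bigl(1-\cos(\theta)+\sin(\theta)\Delta\phi\bigr)\frac{(1-\cos(\bar\alpha))^2}{\sin(\bar\alpha)^3} + \sin(\theta)\frac{\cos(\bar\alpha)-1}{\sin(\bar\alpha)^2},
\end{equation*}
which is strictly negative for $\theta \in (0,\tfrac{\pi}{2}]$.

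Second, subtracting $2.08$ times this RDE from the linear RDE satisfied by $\delta \tilde r$ (Proposition \ref{Prop:equa_delta_tilde_r_arc}) in the region where the source of $\delta \tilde r$ has vanished (i.e.\ after the last Dirac at $\phi_0 + 2\pi + \tfrac{\pi}{2} + \Delta\phi$), I obtain the differential inequality
\begin{equation*}
\frac{d}{d\phi}\bigl(\delta \tilde r(\phi) - 2.08\,\delta \tilde L(\phi)\bigr) \geq \cot(\bar\alpha)\bigl(\delta \tilde r(\phi)-2.08\,\delta \tilde L(\phi)\bigr) - \frac{\delta \tilde r(\phi - 2\pi - \bar\alpha) - 2.08\,\delta \tilde L(\phi - 2\pi - \bar\alpha)}{\sin(\bar\alpha)},
\end{equation*}
where the sign of $-2.08 S_L > 0$ has been used to pass from equality to inequality. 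This puts the quantity $\delta \tilde r - 2.08\,\delta \tilde L$ into exactly the form required by the second case of Lemma \ref{lem:key} (the critical case $a=1$), in its extended form given in Remark \ref{rmk:exponentially:explod}.

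Third, to trigger the lemma I need to verify the hypothesis on an initial interval of length one in the rescaled variable $\tau$. Concretely, passing to $\rho(\tau) = \delta \tilde r(\phi_2 + (2\pi+\bar\alpha)\tau)e^{-\bar c(2\pi+\bar\alpha)\tau}$ and analogously rescaling $\delta \tilde L$, I would verify numerically on Mathematica that the rescaled function $(\theta,\Delta\phi)\mapsto\frac{\delta\rho(\tau)-2.08 e^{-\bar c(2\pi+\bar\alpha)\tau}\delta\tilde L(\tau)}{\theta(\theta+\Delta\phi)}$ is (i) bounded below by $0.27$ on the strip $\tau\in[1,5]\times[0,\tfrac{\pi}{2}]\times[0,\tan(\bar\alpha)]$, and (ii) strictly increasing in $\tau$ on $\tau\in[4,5]$ for every admissible $(\theta,\Delta\phi)$. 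These two facts, which are exactly what Fig. \ref{Fig:Lpert_arc} displays, put the rescaled quantity in the regime of Lemma \ref{lem:key} from $\tau = 4$ onwards, and then extend the lower bound to all $\tau\geq 1$ by combining the direct numerical check on $[1,4]$ with the monotone-divergence conclusion of the lemma for $\tau \geq 4$.

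The main obstacle is the two-parameter numerical verification: unlike the segment case, where $\theta$ was the single parameter and the worst case could be located easily, here the function $\rho(\tau)-2.08 e^{-\bar c(2\pi+\bar\alpha)\tau}\delta\tilde L(\tau)$ must be controlled uniformly over a rectangle $(\theta,\Delta\phi)\in[0,\tfrac{\pi}{2}]\times[0,\tan(\bar\alpha)]$ and over five rounds in $\tau$. The factorization by $\theta(\theta+\Delta\phi)$ (which is forced because both factors send the function to zero, as can be read off from the source terms $S_0,\dots,S_5$ in Corollary \ref{Cor:arc_rho_eq}) is what makes the quantitative bound $0.27$ realistic; identifying and justifying this factorization, and then exhibiting a single worst-case point (presumably $(\theta,\Delta\phi)=(\tfrac{\pi}{2},0)$, by analogy with the tightest corner observed in Proposition \ref{Prop:neg_region}), is the non-trivial step that the numerical plots in Fig. \ref{Fig:Lpert_arc} are designed to resolve.
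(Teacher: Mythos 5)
Your proposal is correct and follows essentially the same route as the paper: derive the constant negative source $S_L$ for the RDE satisfied by $\delta\tilde L$ in the arc case, convert $\delta\tilde r - 2.08\,\delta\tilde L$ into a differential inequality of the form required by Lemma \ref{lem:key} and Remark \ref{rmk:exponentially:explod}, and close the argument with the numerical verification of positivity and monotonicity on $\tau\in[4,5]$ over the rectangle $(\theta,\Delta\phi)\in[0,\tfrac{\pi}{2}]\times[0,\tan(\bar\alpha)]$, with the worst case at $(\tfrac{\pi}{2},0)$ exactly as in Fig. \ref{Fig:Lpert_arc}.
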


\subsection{An bound for the length of a spiral}
\label{Ss:bound_lenth}

We can put Propositions \ref{Prop:lemgth_segm_pert} and \ref{Prop:lemgth_arc_pert} together, obtaining the following

\begin{corollary}
\label{Cor:bound_length_gen}
Consider the spiral $\tilde r(\phi;\phi_0)$: if
$$
\phi \geq \phi_2 = \begin{cases}
\phi_0 + \bar \theta + 2\pi & \text{segment case}, \\
\phi_0 + \Delta \phi + \frac{\pi}{2} + 2\pi & \text{arc case},
\end{cases}
$$
then
\begin{equation*}
\tilde r(\phi) - 2.08 \tilde L(\phi - 2\pi - \bar \alpha) \geq \tilde r_{\mathtt a = 0}(\phi) - 2.08 \tilde L_{\mathtt a = 0}(\phi - 2\pi - \bar \alpha) > 0.67 e^{\bar c(\phi - \tan(\bar \alpha) - \frac{\pi}{2} + \bar \alpha)},
\end{equation*}
\end{corollary}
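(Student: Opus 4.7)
\medskip

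\noindent\textbf{Proof plan.} The statement contains two inequalities. The rightmost one, $\tilde r_{\mathtt a = 0}(\phi) - 2.08\, \tilde L_{\mathtt a = 0}(\phi - 2\pi - \bar\alpha) > 0.67\, e^{\bar c(\phi - \tan(\bar\alpha) - \pi/2 + \bar\alpha)}$, is exactly Proposition \ref{Prop:length_compar} applied to the base fastest saturated spiral of Section \ref{S:case:study} (identifying $r = \tilde r_{\mathtt a = 0}$, $L = \tilde L_{\mathtt a = 0}$). All the work therefore concentrates on the middle inequality, which compares the functional $\tilde r - 2.08\, \tilde L(\,\cdot - 2\pi - \bar\alpha)$ evaluated on the fastest saturated spiral starting from the generic angle $\phi_0$ with the value of the same functional on the base spiral at $\phi_0 = 0$.

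For the middle inequality my plan is a homotopy-plus-perturbation argument along the one-parameter family $s'_0 \mapsto \tilde\zeta(\cdot;s'_0)$ of Lemma \ref{Lem:tilde_zeta_prop}, which by construction interpolates between $\tilde\zeta(\cdot;0) = \tilde\zeta_{\mathtt a = 0}$ and $\tilde\zeta(\cdot;s_0)$. Writing the telescoping identity
\[
\bigl[\tilde r(\phi;\phi_0) - 2.08\, \tilde L(\phi - 2\pi - \bar\alpha;\phi_0)\bigr] - \bigl[\tilde r_{\mathtt a = 0}(\phi) - 2.08\, \tilde L_{\mathtt a = 0}(\phi - 2\pi - \bar\alpha)\bigr] = \int_0^{s_0} \bigl[\delta\tilde r(\phi;s'_0) - 2.08\, \delta\tilde L(\phi - 2\pi - \bar\alpha;s'_0)\bigr]\, ds'_0,
\]
it suffices to show that the integrand is nonnegative for all $s'_0 \in [0,s_0]$ and $\phi$ satisfying the hypothesis of the corollary. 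At fixed $s'_0$ the integrand falls in either the segment case or the arc case of Sections \ref{S:segment}--\ref{S:arc}, and Propositions \ref{Prop:lemgth_segm_pert} and \ref{Prop:lemgth_arc_pert} give precisely the nonnegativity one needs (up to the irrelevant multiplicative factor $e^{-\bar c(2\pi + \bar\alpha)\tau}$ absorbed in the change of variable), provided the rescaled time $\tau$ relative to the intermediate saturation angle $\phi_2(s'_0)$ exceeds $1$.

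The main technical obstacle is to verify this last inequality \emph{uniformly} in $s'_0$: one must check that, for $\phi$ satisfying the hypothesis at $s'_0 = s_0$, the inequality $\tau(\phi,s'_0) \geq 1$ still holds at each intermediate $s'_0 \leq s_0$. I would deduce this from the monotonicity $s'_0 \mapsto \phi_2(s'_0)$ implicit in the explicit construction of Theorem \ref{Cor:curve_cal_R_sat_spiral}: as $s'_0$ decreases the tip of the initial arc-plus-segment configuration retreats, so $\phi_0(s'_0) + \bar\theta(s'_0)$ (segment) or $\phi_0(s'_0) + \Delta\phi(s'_0) + \pi/2$ (arc) cannot increase with $s'_0$ decreasing. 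A complementary issue is the regularity of the family across the transition $\bar\theta = \pi/2$ between the two cases, which is handled by Remark \ref{Rem:same_as_segment} and its analogue for $\delta\tilde L$ obtained directly from \eqref{Equa:evolv_L_after_phi_2} and \eqref{Equa:lenght_arc} at $\Delta\phi = 0$, $\bar\theta = \pi/2$. Gluing the two cases along the homotopy and combining with Proposition \ref{Prop:length_compar} then yields the full chain of inequalities.
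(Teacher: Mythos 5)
Your proposal is correct and follows essentially the same route as the paper: the paper's proof is precisely the telescoping identity $\tilde r(\phi) - 2.08\,\tilde L(\phi - 2\pi - \bar\alpha) = \tilde r_{\mathtt a = 0}(\phi) - 2.08\,\tilde L_{\mathtt a = 0}(\phi - 2\pi - \bar\alpha) + \int_0^{\phi_0} \big[\delta\tilde r - 2.08\,\delta\tilde L\big]\,ds_0$ combined with the nonnegativity of the integrand from Propositions \ref{Prop:lemgth_segm_pert} and \ref{Prop:lemgth_arc_pert}, and the final lower bound from Proposition \ref{Prop:length_compar}. Your additional care about the uniformity of $\tau \geq 1$ along the homotopy and the regularity across the segment/arc transition is a reasonable refinement of a point the paper leaves implicit, but it does not constitute a different argument.
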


\begin{proof}
We can write
\begin{align*}
\tilde r(\phi) - 2.08 \tilde L(\phi - 2\pi - \bar \alpha) &= \tilde r_{\mathtt a = 0}(\phi) - 2.08 \tilde L_{\mathtt a = 0}(\phi - 2\pi - \bar \alpha) + \int_0^{\phi_0} \delta \tilde r(\phi;s_0) - 2.08 \delta \tilde L(\phi - 2\pi - \bar \alpha;s_0) ds_0 \\
&\geq \tilde r_{\mathtt a = 0}(\phi) - 2.08 \tilde L_{\mathtt a = 0}(\phi - 2\pi - \bar \alpha). \qedhere
\end{align*}
\end{proof}

\section{The optimal solution}
\label{S:optimal_solut}

In this section we construct the optimal candidate spiral for the minimization problem \eqref{Equa:minimum_rphi_intro}, \eqref{eq:min:probl_pert}. The previous section showed that the fastest saturated spiral $\tilde \zeta(\phi;s_0)$ is optimal outside a small interval of rotation angle in the first round after $\phi_0$ (for $\theta \in [0,\frac{\pi}{2}]$): we thus construct a new family of spirals $\check r$ whose structure takes into account that for some angles $\phi$ it is better not to follow the fastest saturated spiral $\tilde r$.

\subsection{Definition and construction of the optimal solution candidate}
\label{Ss:optimal_sol_def}

To describe the possible situations which can occur, depending on the relative position of $\phi_0, \bar \phi$ and the quantities \gls{thetatildeseg}, \gls{Deltaphi} (recall the definitions of the two quantities from Section \ref{S:family}), we define the quantity \newglossaryentry{omegaopt}{name=\ensuremath{\omega},description={rotation angle for the unsaturated part of the fastest saturated spiral $\tilde r$}}
\begin{equation}
\label{Eq:def:omega:bar:theta}
\gls{omegaopt} = \begin{cases}
\bar \theta & \text{segment case}, \\
\frac{\pi}{2} + \Delta \phi & \text{arc case},
\end{cases} \qquad \text{so that} \ \bar \theta = \min \bigg\{ \omega, \frac{\pi}{2} \bigg\},
\end{equation}
and consider the following cases (see Figure \ref{fig:tent:solution} as a reference):
\begin{description}
\item[Point $(1)$ $\bar \phi -\phi_0 \in [0,2\pi +\beta^-(\phi_0))$] in this case we define
$$
\hat r(\phi;s_0) = \tilde r(\phi;s_0), \quad \phi \in [\phi_0,\bar \phi],
$$
i.e. we follow the fastest saturated spiral, which we already know it is the optimal solution for the first round by Theorem \ref{Cor:curve_cal_R_sat_spiral}.
\item[Point $(2)$ $\bar \phi - \phi_0 - 2\pi \in [\beta^-(\phi_0),\bar \theta)$] the solution $\hat r(\phi;s_0)$ will be a segment $[\check P_0,\check P^-]$ in the direction $\bar \phi - 2\pi$. We stop at a length $\ell_0 = |\check P^- - \check P_0|$ such that the fastest saturated spiral starting at the end point $\check P^-$ forms an initial angle whose value is on the boundary of the negativity region (it may happen that $\ell_0 = 0$). From this point onward, the solution is the fastest saturated spiral, which can start either with a segment or with a level set arc.


\item[Point $(2')$  $\bar \phi - \phi_0 - 2\pi \in [\frac{\pi}{2},\omega)$] here we are in the arc case, and we follow the fastest saturated solution $\tilde r(\phi;s_0)$ in the interval
\begin{equation*}
\phi \in \bigg[ \phi_0,\bar \phi - 2\pi - \frac{\pi}{2} \bigg),
\end{equation*}
then we repeat the construction of the previous point: follow the direction $\bar \phi - 2\pi$ until we remain in the negativity region. Also in this case, after reaching the boundary of the negativity region, we close the last round starting either with a segment or with a level set arc.

\item[Point $(3)$    $\bar \phi - \phi_0 - 2\pi - \omega \geq 0$] we follow the fastest saturated solution $\tilde \zeta$ up to the angle $\bar \phi - 2\pi - \bar \alpha$, then we repeat Point (2) and Point $(2')$ above starting from a saturated point $\bar \phi - 2\pi - \bar \alpha$. Note that when $\bar \phi = \phi_0 + \omega + 2\pi$, then
\begin{equation*}
P_2 = \tilde \zeta \bigg( \phi_0 + \Delta \phi + \frac{\pi}{2} - \bar \alpha \bigg), \quad P_1 = \tilde \zeta(\phi_0 + \Delta \phi),
\end{equation*}
and then $\bar \phi - 2\pi \in \partial^- \tilde \zeta(\phi_1) \cap \partial^- \tilde \zeta(\phi_2)$, so that the two constructions above are the same. Here, we have indicated by $\phi_1=\phi_0+\Delta\phi$ and by $\phi_2=\phi_0+\Delta\phi+\frac{\pi}{2}-\bar\alpha$ as in Section \ref{S:arc}.
\end{description}
\begin{figure}
	\centering
	\includegraphics[scale=0.6]{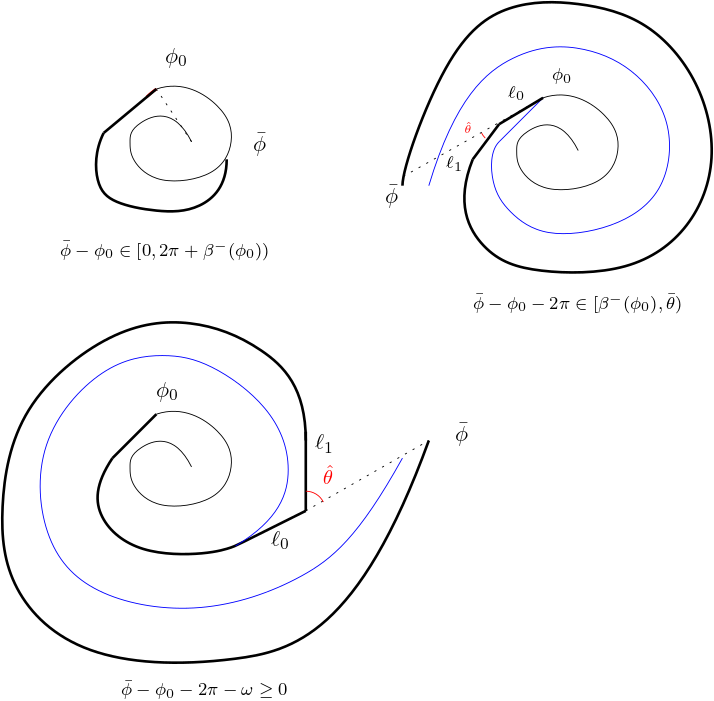}
	\caption{The three possible situations occurring in the construction of an element of the family, in the segment case ($\omega<\frac{\pi}{2}$). The blue curve represents the strategy $\tilde r(\phi;s_0)$ constructed in the previous section. In red the angle $\hat\theta$, representing the opening of the tent}
	\label{fig:tent:solution}
\end{figure}

We can summarize the construction as follows, Fig. \ref{Fig:checkr_notation}: we follow the fastest saturated solution $\tilde r$ up to the angle $\check \phi_0$ such that
\begin{equation}
\label{Equa:barphi_checkphi_rel}
\bar \phi \in \check \phi_0 + 2\pi + [\beta^-(\check \phi_0),\beta^+(\check \phi_0)],
\end{equation}
then a segment of length $\ell_0 = |\check P^- - \check P_0|$ determined by requiring that it is the last point such that the derivative of the fastest saturated spiral $\tilde r'$ starting from that point satisfies $\delta \tilde r' \leq 0$, and then the fastest saturated solution for the last round.

\begin{figure}
\resizebox{.75\textwidth}{!}{\input{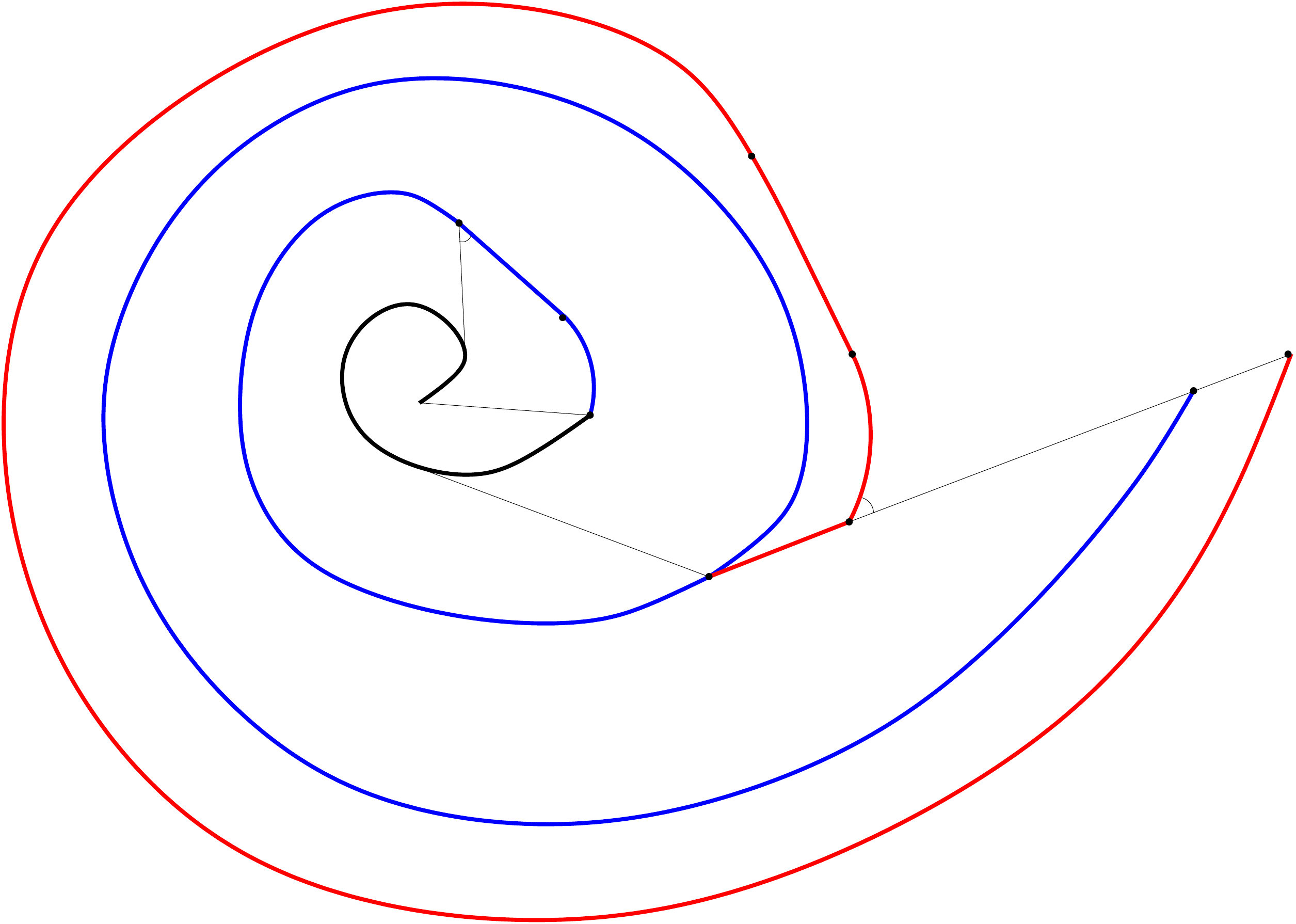_t}}
\caption{The construction of the optimal closing solution: the starting spiral is $\zeta$ (black) up to the angle $\phi_0$. Then we follow the fastest saturated spiral $\tilde \zeta$ (blue) up to the angle $\check \phi_0$, determined by \eqref{Equa:barphi_checkphi_rel}. At this point we follow the segment with direction $\bar \phi$ until the angle between $\bar \phi$ and the new fastest saturated spiral $\check \zeta$ (red) is on the boundary of the negativity region. Note that while the blue spiral is inside the red one, the ray $\check r(\bar \phi;s_0)$ is smaller than $\tilde r(\bar \phi;s_0)$.}
\label{Fig:checkr_notation}
\end{figure}

Apriori, the perturbation of $\tilde r$ in the last case (i.e. on the saturated part of $\tilde r$) can be either a segment of an arc, i.e. we are exiting the negativity region in the arc or segment case. This may only happen in the first round, because afterwards the last round corresponds to the segment case (Proposition \ref{Prop:tent_admissible}): due to the particular form of the solution in this case (two consecutive segments followed by a saturated spiral), we will call this construction a \emph{tent}. In particular, Fig. \ref{Fig:checkr_notation} is correct when $\check P_1 = \check P^-$, but its purpose was just for notation.

We next fix the notation for the optimal candidate solution, see Fig. \ref{Fig:checkr_notation}.
\begin{itemize}
\item \gls{phibar} is the final angle, where we want to minimize $r(\bar \phi)$;
\item \gls{phi0} is the starting angle, with $\phi_0 \leq \bar\phi$, from which the arc/segment of the fastest saturated spiral \gls{rtildefam} starts;
\item the point \newglossaryentry{P0check}{name=\ensuremath{\check P_0},description={initial point of the segment of length $\ell_0$ in the direction $\bar \phi$}} \gls{P0check} is the point of the spiral $\tilde r(\phi;s_0)$ where a segment of length $\ell_0$ starts with direction $\bar \phi$; the corresponding angle is \newglossaryentry{phi0check}{name=\ensuremath{\check \phi_0},description={angle of the point $\check P_0$}} \gls{phi0check};
\item the segment of length \newglossaryentry{ell0}{name=\ensuremath{\ell_0},description={length of the segment $[\check P_0,P^-]$ of direction $\bar \phi$}} \gls{ell0} has endpoints \newglossaryentry{P-check}{name=\ensuremath{\check P^-},description={final point of the segment of length $\ell_0$ and direction $\bar \phi$}} $\check P_0,\gls{P-check}$, and its direction belongs to the subdifferential to the fastest saturated spiral $\tilde r(\phi;s_0),\phi)$ in the point $\check P_0$;
\item in the point $\check P^{-}$ we compute the fastest saturated spiral
: two cases again can occur (arc and segment, see however Proposition \ref{Prop:tent_admissible});
\item if the segment case occurs in the point $\check P^{-}$ we will denote it by \newglossaryentry{P1check}{name=\ensuremath{\check P_1},description={initial point of the last segment of the fastest saturated spiral}} $\gls{P1check} = \check P^{-}$;
\item if the arc case occurs in the point $\check P^{-}$ we will denote by \gls{P1check} the second endpoint of the arc of the level set of $u$, with opening \newglossaryentry{Deltaphicheck}{name=\ensuremath{\Delta \check \phi},description={angle of the level set for the last round of the optimal solution $\check r(\bar \phi;s_0)$}} \gls{Deltaphicheck};
\item from the point $\check P_1$ the spiral is a segment of length \newglossaryentry{ell1}{name=\ensuremath{\ell_1},description={length of the last segment $[\check P_1,\check P_2]$ of the fastest saturated spiral}} \gls{ell1} and endpoints \newglossaryentry{P2check}{name=\ensuremath{\check P_2},description={final point of the last segment $[\check P_1,\check P_2]$ in the fastest saturated spiral}} $\check P_1,\gls{P2check}$;
\item the point $\check P_2$ is saturated;
\item the exterior angle between the segments $[\check P_0,\check P_1]$ and $[\check P_1,\check P_2]$ is the critical angle \gls{thetahat} in the segment case, while in the arc case the angle between $[\check P_0,\check P^-]$ and the level set arc is $\gls{h1Deltaphi} \leq \hat \theta$.
\end{itemize}

The first result of this section is that the above construction defines a unique spiral \newglossaryentry{rcheckphis0}{name=\ensuremath{\check r(\phi;s_0)},description={optimal spiral at the angle $\bar \phi$}} \gls{rcheckphis0}.

\begin{proposition}
\label{Prop:unique_optimal_guess}
Given $s_0 = s(\phi_0) \geq 0$ and an angle $\bar \phi \geq \phi_0$, there exists a unique spiral $\check r(\phi;s_0)$ with the properties above.
\end{proposition}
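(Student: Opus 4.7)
The plan is to treat separately the four cases (1), (2), ($2'$), (3) listed above Figure \ref{Fig:checkr_notation}, showing in each case that the defining conditions on $\check \phi_0$ and on the segment length $\ell_0$ admit a unique solution, and that the resulting curve is a genuine spiral. In case (1) there is nothing to do: $\check r(\phi;s_0) = \tilde r(\phi;s_0)$ is the optimal solution already produced by Theorem \ref{Cor:curve_cal_R_sat_spiral}. For cases (2), ($2'$), (3) one first observes that the relation \eqref{Equa:barphi_checkphi_rel} determines $\check \phi_0$ uniquely: in case (3) this is because on the saturated part of $\tilde r$ the subdifferential $\partial^- \tilde \zeta$ is single-valued and $\phi \mapsto \mathbf t(\tilde \zeta(\phi))$ is strictly increasing, while in cases (2), ($2'$) the only $\check \phi_0$ satisfying \eqref{Equa:barphi_checkphi_rel} is a point where the subdifferential contains $e^{\i(\bar \phi - 2\pi)}$ (the corner at $\phi_0$ in case (2), an angle on the level set arc in case ($2'$)). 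Given $\check \phi_0$ and hence the starting point $\check P_0$, the candidate curve is determined by a single scalar parameter: the length $\ell_0 \geq 0$ of the segment $[\check P_0, \check P^-]$ in the direction $\bar \phi - 2\pi$.

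I would then translate the defining condition on $\ell_0$ into an implicit equation. Denote by $\check \theta(\ell_0) \in [0,\frac{\pi}{2}]$ the angle that the continuation of the segment makes with the tangent to the fastest saturated spiral issued from $\check P^-(\ell_0)$, and by $\Delta \check \phi(\ell_0)$ the opening of the possible initial arc in $\check P^-$ (which is $0$ in the segment subcase). The requirement is that $(\check \theta(\ell_0), \Delta \check \phi(\ell_0))$ lies exactly on the boundary of the negativity region $N_{\mathrm{segm}}$ (Proposition \ref{Prop:regions_pos_neg_segm}) or $N_{\mathrm{arc}}$ (Proposition \ref{Prop:neg_region}) from which we are exiting, i.e.\ $\check \theta = \hat \theta$ in the segment subcase and $\check \theta = h_1(\Delta \check \phi)$ in the arc subcase. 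Existence for $\ell_0 = 0$ forces $\check \theta \in [0, \hat \theta]$ and at $\ell_0$ sufficiently large the geometry pushes $\check \theta$ above $\hat \theta$; a continuity argument on $\ell_0$ then gives at least one solution.

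The core obstacle, and the reason the statement is not elementary, is the \emph{uniqueness}, which amounts to showing that the curve $\ell_0 \mapsto (\check \theta(\ell_0), \Delta \check \phi(\ell_0))$ crosses the boundary of the negativity region transversally and only once, never dipping back in. In the pure segment-segment case (where the fastest saturated spiral starting at $\check P^-$ also starts with a segment and $\Delta \check \phi \equiv 0$), this reduces to the monotonicity of $\ell_0 \mapsto \check \theta(\ell_0)$, which follows from the strict positivity of $\delta \tilde r$ outside $N_{\mathrm{segm}}$ in Proposition \ref{Prop:regions_pos_neg_segm} together with the formulas \eqref{Equa:delta_theta_segm_1}--\eqref{Equa:delta_P_2P_1_segm} relating variations of $\ell_0$ to variations of $\check \theta$. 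The delicate case is the segment-arc case, where both $\check \theta$ and $\Delta \check \phi$ vary with $\ell_0$ and the negativity region $N_{\mathrm{arc}}$ has the nonconvex trapezoidal shape described in Proposition \ref{Prop:neg_region}: here I would parametrize by $\Delta \check \phi$ instead of $\ell_0$, use the convexity of $h_1(\Delta \phi)$ and the estimate $0 \leq h_1(\Delta \phi) + \Delta \phi$ increasing with slope $<1$ (Lemma \ref{Lem:study_g_1_arc}) to compare the geometric rate at which $(\check \theta, \Delta \check \phi)$ moves against the slope of the boundary $\partial N_{\mathrm{arc}}$, and conclude that once the candidate curve touches $\partial N_{\mathrm{arc}}$ it exits outward and does not re-enter. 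Finally one checks that the resulting concatenation of an arc of $\tilde \zeta$, the segment $[\check P_0, \check P^-]$, and the fastest saturated spiral from $\check P^-$ satisfies the convexity and monotonicity conditions of Definition \ref{def:adm:spirals}, which is automatic once $\check \theta$ equals the critical value and all angles are in the admissible ranges.
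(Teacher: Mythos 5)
Your plan is essentially the paper's own proof: reduce everything to the single scalar parameter $\ell_0$, track the curve $\ell_0 \mapsto (\theta(\ell_0),\omega(\ell_0))$ of angles of the fastest saturated spiral issued from $\check P^-(\ell_0)$, and show via the variation formulas \eqref{Equa:delta_theta_segm_1}, \eqref{Equa:delta_Delta_phi} that this curve is a graph over $\theta$ crossing the boundary of $N_{\mathrm{segm}}\cup N_{\mathrm{arc}}$ transversally and exactly once, always from the negative to the positive side. The only point where you diverge in detail is the arc-case transversality, which the paper settles not by the convexity of $h_1$ or the slope bound on $h_1(\Delta\phi)+\Delta\phi$ but by the explicit formula \eqref{Equa:dh1_dDeltaphi} for $(dh_1/d\Delta\check\phi)^{-1}$, observing that tangency of the two curves would force $\check r(\check\phi^-)/|\check P_2-\check P_1|\le 0$, which is impossible.
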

%
%
%
%

\begin{proof}
The spiral $\check r(\phi;s_0)$ is defined once we know at what length $\ell_0$ of the segment starting in $\check P_0$ and pointing in the direction $\bar \phi$ has to stop, and the spiral becomes the fastest saturated afterward. This point is when the derivative $\delta \tilde r(\phi;\ell_0)$ of the fastest saturated spiral $\tilde r(\phi;\ell_0)$ starting from $\check P^-$, $|\check P^- - \check P_0| = \ell_0$, is $0$. Recall the definition of $\omega,\bar\theta$ in \eqref{Eq:def:omega:bar:theta}.

\begin{figure}
\resizebox{.75\textwidth}{!}{\input{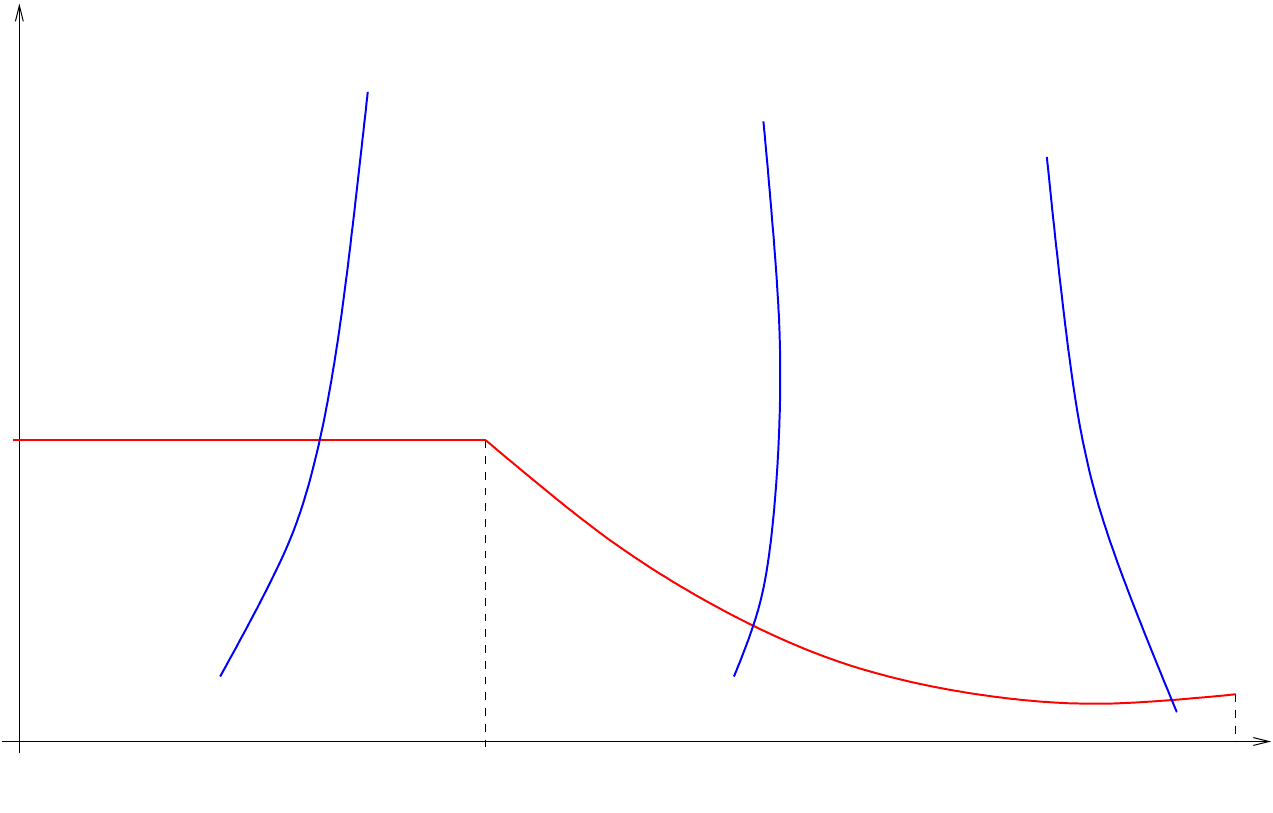_t}}
\caption{The curves $\nu(\ell_0)$ of Proposition \ref{Prop:unique_optimal_guess} in blue, and the boundary of the negativity region in red.}
\label{Fig:nu_funct}
\end{figure}

If we define $\omega(\ell_0), \theta(\ell_0)$ as the values of the angles $\omega$, $\theta$ formed by the fastest saturated spiral along the segment $\ell_0 e^{\i \bar \phi}$ (recall that \gls{thetapertsegm} is the angle $\gls{thetatildeseg} -\theta_0$, where $\theta_0 = \beta^-(\check P_0)$ is the direction of the perturbation), we have defined a curve \newglossaryentry{nuell0}{name=\ensuremath{\nu(\ell_0)},description={curve representinvg the variation of the angles for the fastest saturated spiral w.r.t. the length $\ell_0$ of the segment $[\check P_0.\check P^-]$}}
\begin{equation*}
\R^+ \ni \ell_0 \mapsto \gls{nuell0} = (\theta(\ell_0),\omega(\ell_0)) \in \R^2.
\end{equation*}
The statement is proved if we show that this curve crosses the boundary of the negativity region $N = N_\mathrm{segm} \cup N_\mathrm{arc}$ (Propositions \ref{Prop:neg_region} and \ref{Prop:regions_pos_neg_segm}) only in one point: if this happens, the crossing point is a minimum for the quantity $\tilde r(\bar \phi;\ell_0)$, see Fig. \ref{Fig:nu_funct}.

In the case the initial value is already outside the negativity region (i.e. $\theta > h_1(\Delta \phi)$), then we deduce that $\ell_0 = 0$. This could happen for example in the case $\theta>\hat\theta$, as for instance in the second picture of Figure \ref{fig:tent:solution}, where the angle $\theta$ is sufficiently big. In this case $\ell_0=0$ and the optimal solution coincides with the fastest saturated spiral.

We observe that after the point $P_0$, the segment $[\check P_0,\check P^-]$ starts tangent to the spiral $\tilde \zeta$, i.e. $\theta = 0$: thus we always start in the negativity region of $\delta \tilde r(\bar \phi)$ (i.e. $\ell_0 > 0$) apart from the case where $\check P_0 = P_0$. Summarizing, we encounter two different situations, $P_0=\check P_0$ and $P_0\not= \check P_0$. In the first case we compute $\theta$, and if $\theta>\hat\theta$ then $\ell_0=0$ and the optimal solution is the fastest saturated spiral (if it is not, we find $\ell_0$ such that the angle formed with the solution is exactly $\hat\theta$). In the second situation, since we start tangent to the fastest saturated spiral in the point $\check P_0$, then $\theta=0$ and we are in the negativity region. Again, $\ell_0$ is such that the fastest saturated spiral computed at the endpoint $\check P_1$ forms exactly an angle $\hat\theta$ in the segment case and  $h_1(\Delta \phi)$ in the arc case.


We can compute the derivative of the curve $\nu(\ell_0)$ by using the formulas of the previous section.

\begin{description}
\item[Segment case] we have from \eqref{Equa:delta_theta_segm_1}
\begin{equation*}
\frac{d\bar \theta}{d\ell_0} = \frac{1}{|\check P_2 - \check P_1|} \big( \sin(\theta) - \cot(\bar \alpha) ( 1 - \cos(\theta)) \big) - \frac{\sin(\check \theta_1)}{\check r(\check \phi_1)},
\end{equation*}
\begin{equation*}
\frac{d\theta}{d\ell_0} = \frac{1}{|\check P_2 - \check P_1|} \big( \sin(\theta) - \cot(\bar \alpha) \big( 1 - \cos(\theta) \big)\big),
\end{equation*}
where we have used Equation \eqref{Equa:delta_theta_segm_1} and where $\check r(\check \phi_1)$ is the length of the segment part of the optimal ray ending in the initial point $\check P_1$ and $\check \theta_1 = \beta^-(\check \phi_1) =\beta^+(\check\phi_1)-\theta$. See Figure \ref{fig:capitolo:7} as a reference. We point out that the derivative is computed in the point $\check P_1$.
\begin{figure}
	\centering
	\includegraphics[scale=0.7]{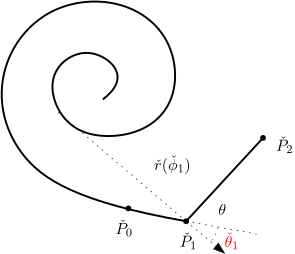}
	\caption{The situation of Proposition \ref{Prop:unique_optimal_guess} in the segment case.}
	\label{fig:capitolo:7}
\end{figure}
 Hence, since $\frac{d\theta}{d\ell_0} > 0$ unless $\theta = 0$, the curve $\nu(\ell_0)$ which in the segment case is $(\theta(\ell_0),\bar\theta(\ell_0))$ can be represented as the graph of a differentiable function $\bar \theta(\theta)$, parameterized with $\theta$ increasing. Since in the segment case the boundary of the negativity region is $\theta = \hat \theta$, this curve exits the negativity region only in one point, corresponding to a unique value $\ell_0$.
 
 In the case $\check P_0 = \check P_1 = \check P_2$ one can check by a local expansion about a saturated point that
 \begin{equation*}
 \frac{d\theta}{d\ell_0} = \frac{\sin(\bar \alpha)}{\tilde r(\check \phi_0)}, \quad \frac{d\bar \theta}{d\ell_0} = 0.
 \end{equation*}
 Hence the curve $\nu(0)$ starts with direction $(0,1)$.

\item[Arc case] here we use \eqref{Equa:delta_Delta_phi} and the fact that the angle of the level set with the ray is $\frac{\pi}{2}$ to write
\begin{equation*}
\frac{d\omega}{d\ell_0} = \frac{d\Delta \check \phi}{d\ell_0} = \frac{1}{|\check P_2 - \check P_1|} \big( \sin(\theta) - \cot(\bar \alpha) (1 - \cos(\theta)) + \cot(\bar\alpha) \sin(\theta) \Delta \check \phi \big) - \frac{\cos(\theta)}{\check r(\check \phi^-)},
\end{equation*}
\begin{equation*}
\frac{d\theta}{d\ell_0} = \frac{\cos(\theta)}{\check r(\check \phi^-)},
\end{equation*}
with $\check r(\check \phi^-)$ the segment part of the optimal ray ending in $\check P^-$. Hence the curve can be represented again as the graph of a function $\Delta \check \phi(\theta)$, oriented along the increasing $\theta$ axis, and such that
\begin{equation*}
\frac{d\Delta \check \phi}{d\theta} = \frac{\check r(\check \phi^-)}{|\check P_2 - \check P_1|} \frac{\sin(\theta) - \cot(\bar \alpha) (1 - \cos(\theta) + \sin(\theta) \Delta \phi)}{\cos(\theta)} - 1.
\end{equation*}
The computation of Lemma \ref{Lem:study_g_1_arc} gives that the derivative of the boundary curve $h_1(\Delta \check \phi)$ is (see Equation \eqref{Equa:dh1_dDeltaphi})
\begin{equation*}
\bigg( \frac{dh_1}{d\Delta \check \phi} \bigg)^{-1} = - 1 - \frac{\cos(\theta) \Delta \check \phi}{\sin(\theta) - \cot(\bar \alpha) (1 - \cos(\theta) + \sin(\theta) \Delta \check \phi)},
\end{equation*}
and then it would follow that the curves are not transversal only if
\begin{equation*}
\frac{\check r(\check \phi^-)}{|\check P_2 - \check P_1|} = - \frac{\cos(\theta)^2 \Delta \check \phi}{(\sin(\theta) - \cot(\bar \alpha) (1 - \cos(\theta) + \sin(\theta) \Delta \check \phi))^2} \leq 0,
\end{equation*}
which is impossible.
\end{description}
Thus the curves are transversal, and the crossing occurs from the negative region to the positive region: indeed this is clear in the segment case, while in the arc case
$$
\text{if} \quad \sin(\theta) - \cot(\bar \alpha)(1 - \cos(\theta) + \sin(\theta) \Delta \check \phi) < 0 \quad \text{then} \quad \frac{d\Delta \check \phi}{d\theta} < - 1 < \bigg( \frac{dh_1}{d\Delta \check \phi} \bigg)^{-1},
$$
$$
\text{if} \quad \sin(\theta) - \cot(\bar \alpha)(1 - \cos(\theta) + \sin(\theta) \Delta \check \phi) > 0 \quad \text{then} \quad \frac{d\Delta \check \phi}{d\theta} > -1 > \bigg( \frac{dh_1}{d\Delta \check \phi} \bigg)^{-1},
$$
meaning that if you cross the negative region, then the crossing occurs always with the same direction.
The above inequalities yield the statement.
\end{proof}

\begin{corollary}
\label{Cor:unique}
Assume that along the line $\check P_0 + \R^+ e^{i \bar \phi}$ we find a point $\check P^-$ such that the fastest saturated spiral starting from $\check P^-$ form the critical angle $\theta = \hat \theta$ in the segment case (i.e. $\check P^- = \check P_1$) or $\theta = h_1(\Delta \phi)$ in the arc case. Then this is the starting point of the unique spiral of Proposition \ref{Prop:unique_optimal_guess}.
\end{corollary}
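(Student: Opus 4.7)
The plan is to reduce the statement to the transversality and monotonicity analysis already carried out in the proof of Proposition \ref{Prop:unique_optimal_guess}. First I would parametrize the candidate starting points by the length $\ell_0$ of the segment $[\check P_0, \check P^-]$, so that along the half-line $\check P_0 + \R^+ e^{\i \bar \phi}$ the fastest saturated spiral starting from $\check P^-(\ell_0)$ determines a well-defined curve $\nu(\ell_0) = (\theta(\ell_0),\omega(\ell_0))$ in the parameter space $(\theta,\omega)$, exactly as in the proof of Proposition \ref{Prop:unique_optimal_guess}. The critical angle condition in the statement (either $\theta = \hat\theta$ in the segment case or $\theta = h_1(\Delta \phi)$ in the arc case) is precisely the equation defining the boundary $\partial N = \partial(N_{\mathrm{segm}} \cup N_{\mathrm{arc}})$ of the negativity region studied in Propositions \ref{Prop:regions_pos_neg_segm} and \ref{Prop:neg_region}. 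Hence the hypothesis that $\check P^-$ realizes the critical angle is equivalent to saying that $\nu(\ell_0) \in \partial N$.

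Next, I would invoke the transversality computation performed in the proof of Proposition \ref{Prop:unique_optimal_guess}: there it is shown that the curve $\ell_0 \mapsto \nu(\ell_0)$ is the graph of a smooth function (of $\theta$ in the segment case, of $\theta$ in the arc case after the reparametrization obtained from $\frac{d\theta}{d\ell_0} > 0$), and that it crosses $\partial N$ in exactly one point, always from the interior of $N$ toward the complement. In particular the map $\ell_0 \mapsto \nu(\ell_0)$ intersects $\partial N$ at a unique value $\ell_0^*$, and the spiral constructed in Proposition \ref{Prop:unique_optimal_guess} is by definition the one corresponding to $\ell_0^*$.

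Putting this together, any $\check P^-$ satisfying the hypothesis of the corollary corresponds to a value of $\ell_0$ for which $\nu(\ell_0) \in \partial N$; by the uniqueness of the crossing this value must coincide with $\ell_0^*$, and thus $\check P^-$ is the same as the starting point identified in Proposition \ref{Prop:unique_optimal_guess}. The one issue requiring a small extra argument is the degenerate initial configuration $\check P_0 = P_0$ with $\theta$ already on $\partial N$ at $\ell_0 = 0$: here the uniqueness is trivial because $\ell_0^* = 0$ by the same monotonicity argument (the curve $\nu$ starts outside $N$). The main (mild) obstacle is therefore purely bookkeeping: checking that the definitions of $\theta,\omega,\Delta\phi$ as functions of $\ell_0$ along the half-line exactly reproduce the parametrization used in Proposition \ref{Prop:unique_optimal_guess}, so that one is entitled to quote the transversality computation verbatim; everything else is a direct consequence of the unique crossing established there.
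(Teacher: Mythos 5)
Your proposal is correct and follows essentially the same route as the paper: the paper's own proof simply observes that the map $\ell_0 \mapsto (\theta,\omega)$ is a smooth curve strictly increasing in $\theta$ that crosses the critical line ($\theta = \hat\theta$ in the segment case, $\theta = h_1(\Delta\phi)$ in the arc case) exactly once, which is precisely the transversality and monotone-crossing argument you quote from Proposition \ref{Prop:unique_optimal_guess}. Your extra remark on the degenerate case $\ell_0 = 0$ is consistent with the discussion already contained in that proposition's proof.
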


\begin{proof}
Indeed, the map $\ell_0 \to (\theta,\omega)$ is a smooth curve strictly increasing in $\theta$ and crossing the line of critical angle $\bar \theta$ (segment case) or $h_1(\Delta \phi)$ (arc case) exactly in one point.
\end{proof}

In particular, the segment case is admissible if and only if the tip of the tent $\check P_1$ is such that the angle of the optimal ray ending in $\check P_1$ and the segment $\ell_1$ is $\leq \frac{\pi}{2}$: this is the criterion we will check in Section \ref{Sss:having_tent_check}.


\begin{remark}
\label{Rem:trans_segm_arc}
Observe that along the line $\Delta \check \phi = 0$, i.e. $\bar \theta = \frac{\pi}{2}$ and $\check \theta_1 = \frac{\pi}{2} - \theta$, we have
\begin{equation*}
\frac{d\omega}{d\theta} =\frac{\check r(\check\phi^-)}{|\check P_2-\check P_1|}\frac{\sin\theta-\cot\bar\alpha(1-\cos\theta)}{\cos\theta}-1.
\end{equation*}
We want to check whether from the arc case (meaning that the fastest saturated spiral starts with an arc in the point $\check P^-$) there is a transition from the segment solution.
By observing that the spirals are convex and the angle at the top of the segment $\ell_0$ with the optimal ray is $\bar \alpha$, by elementary geometrical reasons we must have 
\begin{equation*}
|\check P_2-\check P_1|\tan\bar\alpha=\ell_1\tan\bar\alpha\leq \check r(\check \phi_-),
\end{equation*}
in particular the previous inequality becomes 
\begin{equation*}
\frac{d\omega}{d\theta} > \tan(\bar\alpha)\frac{\sin(\theta)-\cot(\bar\alpha)(1-\cos(\theta))}{\cos(\theta)}-1 = \frac{\tan(\bar \alpha) \sin(\theta) - 1}{\cos(\theta)}.
\end{equation*}
This quantity is positive whenever $\theta\geq\sin^{-1}(\cot(\bar\alpha)) = 0.426813$. In particular, in this range there is no transition from arc solutions to segment solutions. 
\end{remark}

The previous proposition proves that construction of the spiral $\check r(\phi;s_0)$ at the beginning of the section is well defined. To prove that it is the minimizer for the minimization problem \eqref{eq:min:probl_pert}, we need to study the positivity of $\check r(\bar \phi;s_0)$ w.r.t. to perturbations. More precisely, we want to compute the derivative \newglossaryentry{deltarcheck}{name=\ensuremath{\delta \check r(\phi;s_0)},description={derivative of the candidate optimal slosing spiral w.r.t. the starting position $s_0$}} 
\begin{equation*}
\gls{deltarcheck} = \frac{d}{ds_0} \check r(\phi;s_0).
\end{equation*}
Similarly to the computation of $\delta \tilde r(\phi;s_0)$, depending on the structure of the solution $\check r$ several cases are possible: the initial fastest saturated part of $\check r(\phi;s_0)$ can be a segment or a level-set arc, and the segment starting in $\check P_0$ with length $\ell_0$ may be followed by a segment (tent case) or an arc.

The most important case is when the last round starts with segment (tent case), which we analyze in the next section.

\subsection{Formulas for the tent case}
\label{Ss:comput_tent_new}

\begin{figure}
\resizebox{.5\textwidth}{!}{\input{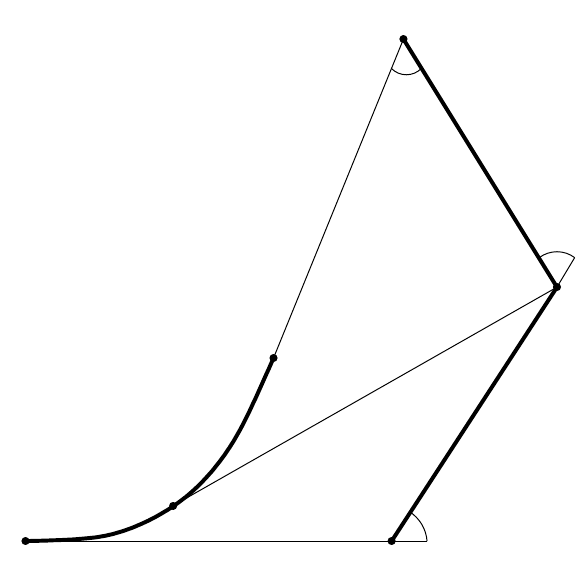_t}}
\caption{The geometry used to obtain the formulas for the tent case, Section \ref{Ss:comput_tent_new}.}
\label{Fig:tent_comput_geo}
\end{figure}

In order to study the correction to the fastest saturated spiral $\tilde r$ after the angle $\check \phi_0$, we need to study some geometric relations among the length $\ell_0$ of this segment and the underlying spiral $\tilde r$. This section is thus devoted to obtain these formulas, depending on the following quantities, which in order to distinguish from the previous ones are denoted with a hat, see Fig. \ref{Fig:tent_comput_geo}:
\begin{itemize}
\item the entrance ray \newglossaryentry{rhat0}{name=\ensuremath{\hat r_0},description={entry ray for the formulas for tent or arc perturbation}} \gls{rhat0} at angle $0$ initial position \newglossaryentry{Qhat0tent}{name=\ensuremath{\hat Q_0},description={base point for $\hat r_0$ in the tent or arc perturbation}} \gls{Qhat0tent}, the end point \newglossaryentry{Phat0}{name=\ensuremath{\hat P_0},description={initial point of the tent or arc perturbation}} \gls{Phat0} of $\hat r_0$ is saturated;
\item the exit ray \newglossaryentry{rhat2}{name=\ensuremath{\hat r_2},description={exit ray for the tent or arc computations}} \gls{rhat2} at angle $\hat \theta$ with initial position \newglossaryentry{Qhat2tent}{name=\ensuremath{\hat Q_2},description={base point for $\hat r_2$ in the tent or arc perturbation}} \gls{Qhat2tent}, the end point of $\hat r_2$ is saturated;
\item the length of segments \newglossaryentry{ellhat0}{name=\ensuremath{\hat \ell_0},description={length of the first segment in the tent or arc computations}} \newglossaryentry{ellhat1}{name=\ensuremath{\hat \ell_1},description={length of the second segment in the tent or arc computations}} \gls{ellhat0}, \gls{ellhat1};
\item the length \newglossaryentry{Lhattent}{name=\ensuremath{\hat L},description={length of the spiral between $Q_0,\hat Q_2$ in the tent or arc computations}} \gls{Lhattent} of the spiral at the base between the points $\hat Q_0, \hat Q_2$ (angle opening $[0,\hat \theta]$).
\end{itemize}
We will also give conditions to verify that it is admissible, which is equivalent to check that the angle formed by segment \newglossaryentry{Qhat1tent}{name=\ensuremath{\hat Q_1},description={point of the curve $\zeta$ where the optimal ray ending in the tip $\hat P_1$ of the ten or arc is detaching}} \newglossaryentry{Phat1tent}{name=\ensuremath{\hat P_1},description={tip of the tent or arc}} $[\gls{Qhat1tent},\gls{Phat1tent}]$ and \newglossaryentry{Phat2tent}{name=\ensuremath{\hat P_2},description={saturated final point of the tent or arc}} the second segment $[\gls{Phat1tent},\gls{Phat2tent}]$ is $\angle \hat Q_1\hat P_1\hat P_2\geq\frac{\pi}{2}$.

Since $\hat P_0$ is saturated, the saturation condition at the end point of the segment $\ell_1$ reads as
\begin{equation}
\label{Equa:saturation_tent}
\hat r_2 + \hat L - \hat r_0 - \cos(\bar \alpha) (\hat \ell_0 + \hat \ell_1) = 0,
\end{equation}
and the vector relation among the various points is
\begin{equation}
\label{Equa:vector_Q_1_Q_2}
\hat Q_0 + \hat r_0 (1,0) + \hat \ell_0 e^{\i \bar \alpha} + \hat \ell_1 e^{\i (\bar \alpha + \hat \theta)} = \hat Q_2 + \hat r_2 e^{\i \hat \theta}.
\end{equation}

Projecting the second equation in the direction $e^{\i \hat \theta}$ we obtain
\begin{equation*}
(\hat Q_2 - \hat Q_0) \cdot e^{\i \hat \theta} + \hat r_2 - \hat \ell_1 \cos(\bar \alpha) - \hat \ell_0 \cos(\bar \alpha - \hat \theta) - \hat r_0 \cos(\hat \theta) = 0,
\end{equation*}
and then using the saturation condition
\begin{equation*}
(\hat Q_2 - \hat Q_0) \cdot e^{\i \hat \theta} - \hat \ell_0 \cos(\bar \alpha - \hat \theta) - \hat r_0 \cos(\hat \theta) = \hat L - \hat r_0 - \cos(\bar \alpha) \hat \ell_0,
\end{equation*}
\begin{equation*}
\begin{split}
\hat \ell_0 &= \frac{(\hat Q_2 - \hat Q_0) \cdot e^{\i \hat \theta} - \hat L + \hat r_0 (1 - \cos(\hat \theta))}{\cos(\bar \alpha - \hat \theta) - \cos(\bar \alpha)}. 
\end{split}
\end{equation*}
Projecting again \eqref{Equa:vector_Q_1_Q_2} on the direction $e^{\i (\alpha + \hat \theta - \frac{\pi}{2})}$
\begin{equation*}
(\hat Q_2 - \hat Q_0) \cdot e^{\i (\bar \alpha + \hat \theta - \frac{\pi}{2})} + \hat r_2 \sin(\bar \alpha) - \hat \ell_0 \sin(\hat \theta) - \hat r_0 \sin(\bar \alpha + \hat \theta) = 0,
\end{equation*}
which gives
\begin{equation*}
\hat r_2 = \frac{\hat r_0 \sin(\bar \alpha + \hat \theta) + \hat \ell_0 \sin(\hat \theta) - (\hat Q_2 - \hat Q_0) \cdot e^{\i (\bar \alpha + \hat \theta - \frac{\pi}{2})}}{\sin(\bar \alpha)}.
\end{equation*}
We can now compute $\hat \ell_1$ from \eqref{Equa:saturation_tent} as
\begin{equation*}
\hat \ell_1 = \frac{\hat r_2 + \hat L  -\hat r_0 - \cos(\bar \alpha) \hat \ell_0}{\cos(\bar \alpha)}.
\end{equation*}
Finally, the relation between $\hat L$ and $\hat Q_2 - \hat Q_0$ is given by integrating the curve \newglossaryentry{zetahattent}{name=\ensuremath{\hat \zeta},description={convex curve for computing the formulas for the tent or arc case}} \gls{zetahattent}: 
\begin{equation*}
\frac{d\hat \zeta}{d\theta} = e^{\i \theta} \hat R(\theta), \quad \hat \zeta(\theta) - \hat \zeta(\theta_0) = \int_{\theta_0}^{\theta} e^{\i \omega} \hat R(\omega) d\omega,
\end{equation*}
where \newglossaryentry{Rhatcurvsegm}{name=\ensuremath{\hat R},description={curvature of the convex curve $\hat \zeta$ for the computations of the tent or arc formulas}} \gls{Rhatcurvsegm} is the radius of curvature of $\hat \zeta$. The above formula holds also when the curvature is $\infty$, by considering $ds = \hat R d\theta$ as a measure.

We summarize the results obtained in the following lemma.

\begin{lemma}
\label{Lem:relation_admissibility_tent}
Consider the geometric configuration as in Fig. \ref{Fig:tent_comput_geo}, with $\hat P_0,\hat P_2$ saturated. Then the following geometric relations hold:
\begin{equation*}
\hat Q_2 - \hat Q_0 = \int_0^{\hat \theta} e^{\i \omega} \hat R(\omega) d\omega, \quad \hat R(\theta) \ \text{curvature of } \ \hat \zeta,
\end{equation*}
\begin{equation*}
\hat L = \int_0^{\hat \theta} \hat R(\theta) d\theta,
\end{equation*}
\begin{equation}
\label{Equa:ell_0_tent_general}
\hat \ell_0 = \frac{(\hat Q_2 - \hat Q_0) \cdot e^{\i \hat \theta} - \hat L + \hat r_0 (1 - \cos(\hat \theta))}{\cos(\bar \alpha - \hat \theta) - \cos(\bar \alpha)},
\end{equation}
\begin{equation*}
\hat r_2 = \frac{\hat r_0 \sin(\bar \alpha + \hat \theta) + \hat \ell_0 \sin(\hat \theta) - (\hat Q_2 - \hat Q_0) \cdot e^{\i (\bar \alpha + \hat \theta - \frac{\pi}{2})}}{\sin(\bar \alpha)},
\end{equation*}
\begin{equation*}
\hat \ell_1 = \frac{\hat r_2 + \hat L -\hat r_0 - \cos(\bar \alpha) \hat \ell_0}{\cos(\bar \alpha)}.
\end{equation*}
\end{lemma}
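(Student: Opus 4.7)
The plan is to collect the identities already derived in the preceding paragraphs into a single statement, verifying that the system is well-posed (no hidden redundancy) and that the substitutions can be carried out in the order presented. Since everything is linear in the unknowns $\hat\ell_0,\hat\ell_1,\hat r_2$ once $\hat Q_2-\hat Q_0$ and $\hat L$ are known, this amounts to an elementary computation with two projections and one scalar equation.

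First I would establish the two kinematic identities for the underlying convex curve $\hat\zeta$. Using the angle-ray parametrization of Section \ref{Sss:alternative_angles} and Lemma \ref{lem:ODE:spiral}, the arc $\hat\zeta\rest_{[0,\hat\theta]}$ can be parametrized by its tangent angle $\omega\in[0,\hat\theta]$, with $d\hat\zeta/d\omega=e^{\i\omega}\hat R(\omega)$ understood in the sense of measures (to cover the case where $\hat R$ has atoms at corners of $\hat\zeta$ or vanishes along flat pieces). Integrating from $\hat Q_0$ to $\hat Q_2$ yields the first formula, and taking moduli yields the length formula $\hat L=\int_0^{\hat\theta}\hat R(\theta)d\theta$. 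Both are valid for BV-convex arcs as in Definition \ref{def:convexity:spiral}.

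Next I would collect the two algebraic constraints imposed by the tent geometry: the saturation condition \eqref{Equa:saturation_tent}, which comes from $\hat P_0$ saturated together with $\mathcal{A}(\hat P_2)=0$ and Proposition \ref{Lem:burnign_rate}, and the closing vector relation \eqref{Equa:vector_Q_1_Q_2}, which simply expresses that the chain $\hat Q_0\to \hat P_0\to \hat P_1\to \hat P_2$ along the rays/segments of slopes $0,\bar\alpha,\bar\alpha+\hat\theta$ must terminate at $\hat Q_2+\hat r_2 e^{\i\hat\theta}$. Projecting \eqref{Equa:vector_Q_1_Q_2} on the direction $e^{\i\hat\theta}$ kills the $\hat r_2$-contribution in a clean way and, after inserting \eqref{Equa:saturation_tent} to eliminate $\hat\ell_1$, leaves a linear scalar equation for $\hat\ell_0$ alone; solving it gives \eqref{Equa:ell_0_tent_general}, provided $\cos(\bar\alpha-\hat\theta)-\cos(\bar\alpha)\neq 0$ (which holds for $\hat\theta\in(0,\pi)$ and is part of the admissibility range).

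Finally I would project \eqref{Equa:vector_Q_1_Q_2} on $e^{\i(\bar\alpha+\hat\theta-\tfrac{\pi}{2})}$, which is orthogonal to $[\hat P_1,\hat P_2]$; this isolates $\hat r_2$ in terms of $\hat\ell_0$ and the already-known geometric data, giving the stated expression for $\hat r_2$. Substituting this $\hat r_2$ back into the saturation identity \eqref{Equa:saturation_tent} yields $\hat\ell_1$. The main bookkeeping issue (and essentially the only obstacle) is to make sure the two projection directions are independent so the system is uniquely solvable; this is automatic because the angle between them is $\bar\alpha\neq 0$. No convexity assumption is used beyond the existence of the curve parametrization by the tangent angle, so the conclusion applies uniformly to all the subcases (arc, corner, segment) encountered in Section \ref{Ss:optimal_sol_def}, which is what will be needed in Corollary \ref{Cor:tent_corol_for_pert} when linearizing these relations in the perturbation analysis.
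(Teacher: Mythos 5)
Your proposal is correct and follows essentially the same route as the paper: the two kinematic identities from the tangent-angle parametrization of $\hat\zeta$, the closing vector relation \eqref{Equa:vector_Q_1_Q_2} projected first on $e^{\i\hat\theta}$ (combined with the saturation condition \eqref{Equa:saturation_tent} to eliminate the combination $\hat r_2-\cos(\bar\alpha)\hat\ell_1$ and solve for $\hat\ell_0$) and then on $e^{\i(\bar\alpha+\hat\theta-\frac{\pi}{2})}$ to isolate $\hat r_2$, with $\hat\ell_1$ recovered last from saturation. The only slip is the phrase that the $e^{\i\hat\theta}$ projection ``kills the $\hat r_2$-contribution'': it actually produces the coefficient $+1$ on $\hat r_2$ and $-\cos(\bar\alpha)$ on $\hat\ell_1$, which is exactly the combination fixed by the saturation condition — the rest of your argument uses this correctly.
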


Since all formulas are linear, we obtain the following corollary.

\begin{corollary}
\label{Cor:tent_corol_for_pert}
The same formulas of Lemma \ref{Lem:relation_admissibility_tent} hold for the perturbation $\delta \check r(\phi;s_0)$.
\end{corollary}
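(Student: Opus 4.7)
The plan is to exploit the fact that every formula in Lemma \ref{Lem:relation_admissibility_tent} is an \emph{affine} relation among the geometric quantities $\hat r_0,\hat L, \hat Q_2-\hat Q_0, \hat\ell_0, \hat r_2,\hat\ell_1$, whose coefficients depend only on the two angles $\hat\theta$ and $\bar\alpha$. Both of these angles are $s_0$-independent constants: $\bar\alpha$ is the critical angle of Corollary \ref{Cor:eigen_angle}, while, in the tent configuration produced by the construction preceding Proposition \ref{Prop:unique_optimal_guess}, the tip angle $\hat\theta$ is pinned at the universal value determined by \eqref{Equa:hat_theta_det_666} (segment-segment case) since the tip $\check P_1$ lies on the boundary of the negativity region of Proposition \ref{Prop:regions_pos_neg_segm}. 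Hence differentiating the relations of Lemma \ref{Lem:relation_admissibility_tent} with respect to $s_0$ cannot touch the coefficients, only the arguments.

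First, I would revisit the two structural identities from which Lemma \ref{Lem:relation_admissibility_tent} is deduced: the vector closure \eqref{Equa:vector_Q_1_Q_2} and the saturation condition \eqref{Equa:saturation_tent}. Both are linear in $\hat r_0,\hat\ell_0,\hat\ell_1,\hat r_2,\hat L,\hat Q_2-\hat Q_0$ with coefficients in $\{1,\cos(\bar\alpha),\sin(\bar\alpha),e^{\i\hat\theta},e^{\i(\bar\alpha+\hat\theta-\frac{\pi}{2})},\dots\}$. Differentiating each equation in $s_0$ therefore yields the identical system, with every quantity replaced by its $\delta$-counterpart. Solving this perturbed linear system by projecting on $e^{\i\hat\theta}$ and on $e^{\i(\bar\alpha+\hat\theta-\frac{\pi}{2})}$, as in the proof of the original lemma, reproduces verbatim the three formulas for $\hat\ell_0,\hat r_2,\hat\ell_1$ with hats replaced by $\delta$-hats.

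Second, the integral identities $\hat Q_2-\hat Q_0=\int_0^{\hat\theta}e^{\i\omega}\hat R(\omega)\,d\omega$ and $\hat L=\int_0^{\hat\theta}\hat R(\omega)\,d\omega$ must be differentiated under the integral sign. Since $\hat\theta$ is fixed and $\hat R$ depends on $s_0$ only through the underlying base spiral $\bar\zeta$ (which enters $\hat\zeta$ linearly in the construction of $\tilde r(\cdot;s_0)$), differentiation gives
\begin{equation*}
\delta(\hat Q_2-\hat Q_0)=\int_0^{\hat\theta}e^{\i\omega}\,\delta\hat R(\omega)\,d\omega,\qquad \delta\hat L=\int_0^{\hat\theta}\delta\hat R(\omega)\,d\omega,
\end{equation*}
where $\delta\hat R$ is interpreted as a measure (the $s_0$-derivative of the curvature distribution, encoded by $Ds^-$ of Lemma \ref{lem:ODE:spiral}); this is the same distributional framework used in Section \ref{S:family} to define $\delta\tilde r$.

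Combining the two steps, one obtains that $\delta\hat\ell_0,\delta\hat r_2,\delta\hat\ell_1$ are given by exactly the expressions of Lemma \ref{Lem:relation_admissibility_tent} with the unperturbed quantities replaced by their $s_0$-derivatives. The only point requiring care is that the angle $\hat\theta$ indeed stays constant along the family: I would record this explicitly by noting that in the tent case the optimality criterion of Corollary \ref{Cor:unique} fixes $\hat\theta$ as the solution to \eqref{Equa:hat_theta_det_666}, independent of $s_0$, so $\delta\hat\theta=0$. The potentially delicate alternative, namely a transition between tent (segment-segment) and arc-segment configurations in which $\hat\theta=h_1(\Delta\phi)$ varies, is deferred to the arc-arc analysis of Section \ref{S:arc_1}; here it is excluded by Proposition \ref{Prop:tent_admissible}, which is where I expect the main subtlety to lie if one tried to upgrade the statement beyond the saturated portion of $\tilde r(\cdot;s_0)$.
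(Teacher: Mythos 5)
Your proposal is correct and follows essentially the same route as the paper, which simply observes that all the formulas of Lemma \ref{Lem:relation_admissibility_tent} are linear in the geometric quantities and therefore pass verbatim to the perturbation. Your elaboration — that the coefficients depend only on the $s_0$-independent angles $\hat\theta$ and $\bar\alpha$, so differentiation in $s_0$ replaces each quantity by its $\delta$-counterpart without touching the coefficients — is exactly the content the paper leaves implicit.
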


\newglossaryentry{tauhat}{name=\ensuremath{\hat \tau},description={angle shift \gls{thetahat} in the $\tau$ coordinates}}
For future use, define the shift \gls{tauhat} in the $\tau$-coordinate corresponding to the angle shift $\hat \theta$:
\begin{equation}
\label{Equa:hattau_def}
\hat \tau = \frac{\hat \theta}{2\pi + \bar \alpha}.
\end{equation}

\subsubsection{Estimates for the admissibility of the tent}
\label{Sss:having_tent_check}

 The next series of lemmas give sufficient conditions which ensure that the construction above is admissible, i.e. the angle between the segments $[\hat Q_1,\hat P_1]$ and $[\hat P_1,\hat P_2]$ is $\angle \hat Q_1\hat P_1\hat P_2\geq\frac{\pi}{2}$. If \newglossaryentry{phihat}{name=\ensuremath{\hat \phi},description={angular coordinate correponding to $\hat Q_1 = \hat \zeta(\hat \phi)$ for the tent or arc case}} $\hat Q_1 = \hat \zeta(\gls{phihat})$, then we must require 
\begin{equation*}
\bar \alpha + \hat \theta - \hat \phi < \frac{\pi}{2}, \quad \hat \phi \geq 0.113642.
\end{equation*}
\begin{lemma}
\label{Lem:when_tent}
Assume that we are in the configuration of Fig. \ref{Fig:tent_comput_geo} {\color{red}with $\hat P_0$ and $\hat P_2$ saturated}: if \newglossaryentry{Kadmis}{name=\ensuremath{K_\mathrm{tent}},description={constant for verifying whether the tent is admissible}}
\begin{equation*}
\frac{\hat L}{\hat r_0} \leq \gls{Kadmis} = \frac{\sin(\bar \alpha) (\sin(\bar \alpha + \hat \theta) - \sin(\bar \alpha))}{\cos(\hat \theta) (1 - \sin(\bar \alpha))} < 0.966795,
\end{equation*}
then the tent is admissible.
\end{lemma}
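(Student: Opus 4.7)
The admissibility condition $\angle \hat Q_1 \hat P_1 \hat P_2 \geq \pi/2$ translates into the inequality $\hat \phi \geq \bar\alpha + \hat\theta - \pi/2$ on the angle of the optimal ray ending at the tip $\hat P_1$, since the outgoing segment $[\hat P_1, \hat P_2]$ has direction $e^{i(\bar\alpha + \hat\theta)}$ and the incoming optimal ray has direction $e^{i\hat\phi}$. The plan is to extract from Lemma \ref{Lem:relation_admissibility_tent} an explicit expression for $\hat P_1 = \hat Q_0 + \hat r_0(1,0) + \hat\ell_0 e^{i\bar\alpha}$, and then establish the angular bound by a convexity/chord argument together with the maximization of $\hat L / \hat r_0$ over admissible spirals.

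First I would reformulate admissibility geometrically. The condition $\hat \phi \geq \bar\alpha + \hat\theta - \pi/2$ is equivalent to requiring that $\hat P_1$ lies in the closed half-plane bounded by the line through $\hat Q_2$ with normal $e^{i(\bar\alpha + \hat\theta)}$ (on the side containing $\hat P_2$): otherwise a supporting line to $\hat \zeta$ at some $\hat Q_1$ with $\hat \phi < \bar\alpha + \hat\theta - \pi/2$ would separate $\hat P_1$ from the tent's outgoing ray, contradicting the convexity needed for the spiral. Writing this half-plane condition as $(\hat P_1 - \hat Q_2) \cdot e^{i(\bar\alpha + \hat\theta)} \geq 0$, and using the identity $\hat Q_2 - \hat Q_0 = \int_0^{\hat\theta} e^{i\omega}\hat R(\omega)\,d\omega$ together with the chord–length bound $|\hat Q_2 - \hat Q_0| \leq \hat L$, reduces the question to a scalar inequality in $\hat r_0$, $\hat L$, $\hat\ell_0$, $\bar\alpha$, $\hat\theta$.

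Next I would eliminate $\hat \ell_0$ via \eqref{Equa:ell_0_tent_general}, observing that the worst case for admissibility is the extremal convex arc that maximizes $(\hat Q_2 - \hat Q_0)\cdot e^{i\hat\theta}$ (shifting the chord away from $\hat P_0$), which corresponds to the degenerate curve $\hat R(\omega) = \hat L\,\delta_0(\omega)$, so $(\hat Q_2 - \hat Q_0) \cdot e^{i\hat\theta} = \hat L \cos(\hat\theta)$. Substituting this extremal value, the admissibility inequality becomes linear in $\hat L$ and $\hat r_0$; solving for the critical ratio, after using the identity $\sin(\bar\alpha + \hat\theta) - \sin(\bar\alpha) = 2\cos(\bar\alpha + \hat\theta/2)\sin(\hat\theta/2)$ and simplifying the projection of $e^{i\bar\alpha}$ onto $e^{i(\bar\alpha + \hat\theta)}$, yields exactly the threshold $K_\mathrm{tent}$ in the statement. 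The uniform upper bound $K_\mathrm{tent} < 0.966795$ is then obtained by maximizing the explicit function of $\hat\theta$ over the admissible range of angles, a one-variable calculus exercise.

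The main obstacle I anticipate is justifying that the degenerate configuration $\hat R = \hat L\,\delta_0$ is indeed the admissibility-worst case among all convex arcs with the given opening angle $\hat\theta$ and length $\hat L$, while the saturation of both endpoints $\hat P_0, \hat P_2$ is preserved. This requires a monotonicity (or rearrangement) argument: moving curvature mass towards $\omega = 0$ increases $(\hat Q_2-\hat Q_0)\cdot e^{i\hat\theta}$ while decreasing the opposite-direction component, both of which worsen the angular bound through \eqref{Equa:ell_0_tent_general}. Once that monotonicity is verified, the rest is trigonometric bookkeeping; the numerical bound on $K_\mathrm{tent}$ is immediate.
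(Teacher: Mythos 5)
Your reduction of admissibility to the angular condition $\hat \phi \geq \bar \alpha + \hat \theta - \frac{\pi}{2}$ at the tip is correct and matches the paper's starting point, but the heart of your argument — the identification of the extremal configuration — does not work. You claim the worst case is the degenerate arc $\hat R(\omega) = \hat L\, \Diracd_0(\omega)$ because it "maximizes $(\hat Q_2 - \hat Q_0)\cdot e^{\i \hat \theta}$"; this is internally inconsistent, since the projection of a curvature increment at angle $\omega$ onto $e^{\i \hat \theta}$ is $\cos(\hat \theta - \omega)$, which is maximized at $\omega = \hat \theta$, not at $\omega = 0$. More importantly, neither concentration point is the right one: the paper's critical configuration has the curvature concentrated at the specific angle $\hat \phi = \bar \alpha + \hat \theta - \frac{\pi}{2}$, with $\hat Q_1 = \hat Q_2$ lying on the critical supporting line through $\hat P_1$, and it is only by evaluating $\frac{\hat L}{\hat r_0} = \frac{\sin(\bar \alpha)(\cos(\hat \phi) - \cos(\hat \theta - \hat \phi))}{\sin(\bar \alpha - \hat \phi)(1 - \cos(\hat \theta - \hat \phi))}$ at that angle that one recovers $K_\mathrm{tent}$. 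Substituting your configuration into \eqref{Equa:ell_0_tent_general} would produce a different (and wrong) threshold. Also, $\hat \theta$ is a fixed constant determined by \eqref{Equa:hat_theta_det_666}, so $K_\mathrm{tent}$ is a single number; there is no maximization over $\hat \theta$.

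The second gap is structural: the admissibility condition is a statement about the tangency point $\hat Q_1$ of the optimal ray ending at $\hat P_1$, which depends on the full shape of the convex arc $\hat \zeta$, not merely on the chord $\hat Q_2 - \hat Q_0$ and the length $\hat L$. Your proposed reduction to "a scalar inequality in $\hat r_0, \hat L, \hat \ell_0$" via the chord–length bound discards exactly the information needed to locate $\hat Q_1$, and your half-plane reformulation $(\hat P_1 - \hat Q_2)\cdot e^{\i(\bar \alpha + \hat \theta)} \geq 0$ is not equivalent to $\hat \phi \geq \bar \alpha + \hat \theta - \frac{\pi}{2}$. To pass from the critical configuration to a general one the paper does not use a rearrangement of curvature mass at all: it performs a variation that moves $\hat Q_0$ backward along a convex curve while keeping $\hat Q_1$ and $\hat P_1$ fixed, computes $\frac{\delta \hat L}{\delta \hat r_0} = \frac{\sin(\bar \alpha)(\cos(\theta) - \sin(\bar \alpha))}{(1 - \sin(\bar \alpha))\sin(\bar \alpha - \theta)}$, checks that this ratio is decreasing in $\theta$ and equals $K_\mathrm{tent}$ at the critical angle, and deduces the differential inequality $\delta(\frac{\hat L}{\hat r_0} - K_\mathrm{tent}) > \frac{\delta \hat r_0}{\hat r_0}(K_\mathrm{tent} - \frac{\hat L}{\hat r_0})$, which forces $\frac{\hat L}{\hat r_0} > K_\mathrm{tent}$ in every non-admissible configuration. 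Some argument of this variational type, coupled through the saturation constraints on $\hat P_0$ and $\hat P_2$, is needed; a one-shot rearrangement of the arc does not suffice.
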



\begin{proof}
We have to study the minimum of the function $\frac{\hat L}{\hat r_0}$ in order to have that $\hat P_1$ belongs to the critical line $\hat Q_1 + \R^+ e^{\i (\bar \alpha + \hat \theta - \frac{\pi}{2})}$: we will show that in order to have this, we must have $\frac{\hat L}{\hat r_0} \geq K_\mathrm{tent}$.

If $\hat Q_1 + \R^+ e^{\i (\bar \alpha + \hat \theta - \frac{\pi}{2})}$, we must satisfy the vector relation
\begin{equation*}
\hat Q_0 + \hat r_0 e^{\i 0} + \hat \ell_0 e^{i \bar \alpha} = \hat P_1,
\end{equation*}
with by \eqref{Equa:ell_0_tent_general}
\begin{equation*}
\hat \ell_0 \big( \cos(\bar \alpha - \hat \theta) - \cos(\bar \alpha) \big) = (\hat Q_2 - \hat Q_0) \cdot e^{\i \hat \theta} - \hat L + \hat r_0 (1 - \cos(\hat \theta)).
\end{equation*}

We can fix the point $\hat Q_1$, and observe that the geometric configuration with minimal length is when $\hat Q_2 \in \hat Q_1 + \R^+ e^{\i (\bar \alpha + \hat \theta - \frac{\pi}{2})}$: indeed in the formula for $\hat \ell_0$ we can write
\begin{equation*}
(\hat Q_2 - \hat Q_0) \cdot e^{\i \hat \theta} - \hat L = \int_0^{\hat \theta} (\cos(\hat \theta - \theta) - 1) d\hat L(\theta) = \bigg[ \int_0^{\bar \alpha + \hat \theta - \frac{\pi}{2}} + \int_{\bar \alpha + \hat \theta - \frac{\pi}{2}}^{\hat \theta} \bigg] (\cos(\hat \theta - \theta) - 1) d\hat L(\theta).
\end{equation*}
In the second integral one can reduce the quantity
\begin{equation*}
\int_{\bar \alpha + \hat \theta - \frac{\pi}{2}}^{\hat \theta} d\hat L(\theta)
\end{equation*}
if we take the maximal value of $1 - \cos(\hat \theta - \theta)$, which is when $\theta = \bar \alpha + \hat \theta - \frac{\pi}{2}$. This means that we can assume $\hat Q_2 \in \hat Q_0 + \R^+ e^{\i (\bar \alpha + \hat \theta - \frac{\pi}{2})}$.

The saturation gives then
\begin{equation*}
\hat L + |\hat P_2 - \hat Q_2| - \hat r_0 = \hat L + \frac{|\hat P_1 - \hat Q_2|}{\sin(\bar \alpha)} - \hat r_0 = \cos(\bar \alpha) (\hat \ell_0 + \hat \ell_1) = \cos(\bar \alpha) \big( \hat \ell_0 + \cot(\bar \alpha) |\hat P_1 - \hat Q_2| \big),
\end{equation*}
which is
\begin{equation*}
\hat L + \sin(\bar \alpha) |\hat P_1 - \hat Q_2| - \hat r_0 = \cos(\bar \alpha) \hat \ell_0.
\end{equation*}

A configuration for which the quantity $K_\mathrm{tent}$ of the statement is reached is in Fig. \ref{Fig:worst_tent_case}. For this configuration, elementary geometry gives 
\begin{equation*}
\hat r_0 \frac{\sin(\hat \phi)}{\sin(\bar \alpha - \hat \phi)} = \hat \ell_0 = \frac{\hat r_0 (1 - \cos(\hat \theta)) - \hat L (1 - \cos(\hat \theta - \hat \phi))}{\cos(\bar \alpha - \hat \theta) - \cos(\bar \alpha)},
\end{equation*}
where we have used Equation \eqref{Equa:ell_0_tent_general}, i.e.
\begin{equation*}
\frac{\hat L}{\hat r_0} = \frac{\sin(\bar \alpha - \hat \phi)(1 - \cos(\hat \theta)) - \sin(\hat \phi) (\cos(\bar \alpha - \hat \theta) - \cos(\bar \alpha))}{\sin(\bar \alpha - \hat \phi)(1 - \cos(\hat \theta - \hat \phi))} = \frac{\sin(\bar \alpha) (\cos(\hat \phi) - \cos(\hat \theta - \hat \phi))}{\sin(\bar \alpha - \hat \phi) (1 - \cos(\hat \theta - \hat \phi))}.
\end{equation*}
The function
\begin{equation}
\label{Equa:Loverr0_tent}
\hat \phi \mapsto \frac{\sin(\bar \alpha) (\cos(\hat \phi) - \cos(\hat \theta - \hat \phi))}{\sin(\bar \alpha - \hat \phi) (1 - \cos(\hat \theta - \hat \phi))}
\end{equation}
is such that if $\hat L_0/\hat r_0<0.966795$, then the angle corresponding to $\hat P_1$ is larger than $\bar \alpha + \hat \theta - \frac{\pi}{2}$, see Fig. \ref{Fig:Loverr0_tent}.

\begin{figure}
\begin{subfigure}{.32\textwidth}
\resizebox{\linewidth}{!}{\input{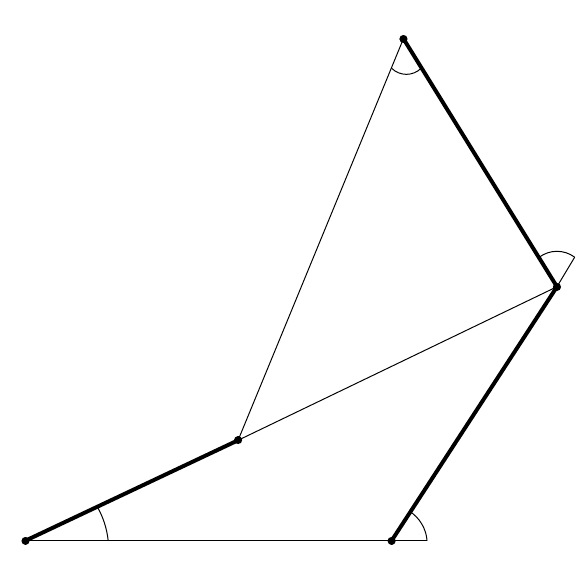_t}}
\caption{The worst scenario of Lemma \ref{Lem:when_tent}.}
\label{Fig:worst_tent_case}
\end{subfigure}
\hfill
\begin{subfigure}{.32\linewidth}
\includegraphics[width=\linewidth]{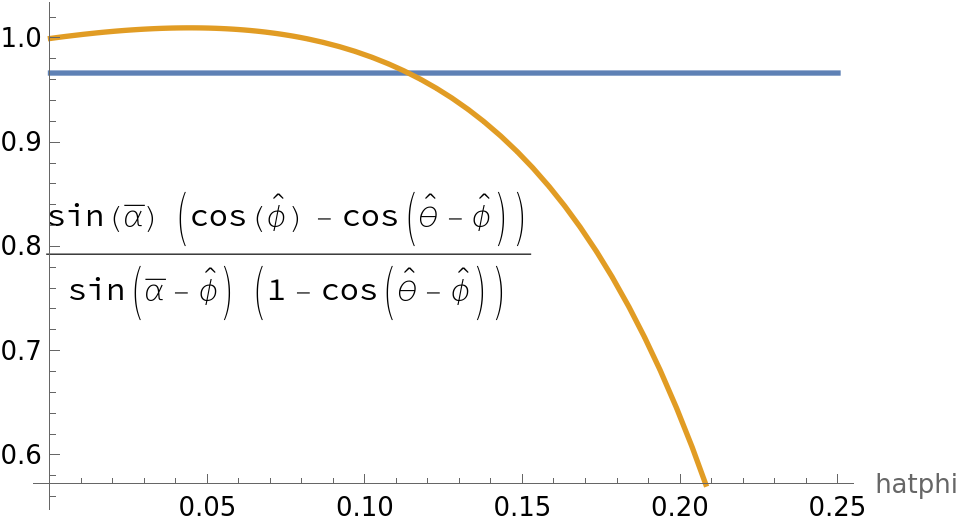}
\caption{Plot of the function (\ref{Equa:Loverr0_tent}) in blue, in orange the value of $\frac{\hat L}{\hat r_0}$ for the critical angle $\hat \phi = \bar \alpha + \hat \theta - \frac{\pi}{2}$.}
\label{Fig:Loverr0_tent}
\end{subfigure}
\hfill
\begin{subfigure}{.32\linewidth}
\resizebox{\linewidth}{!}{\input{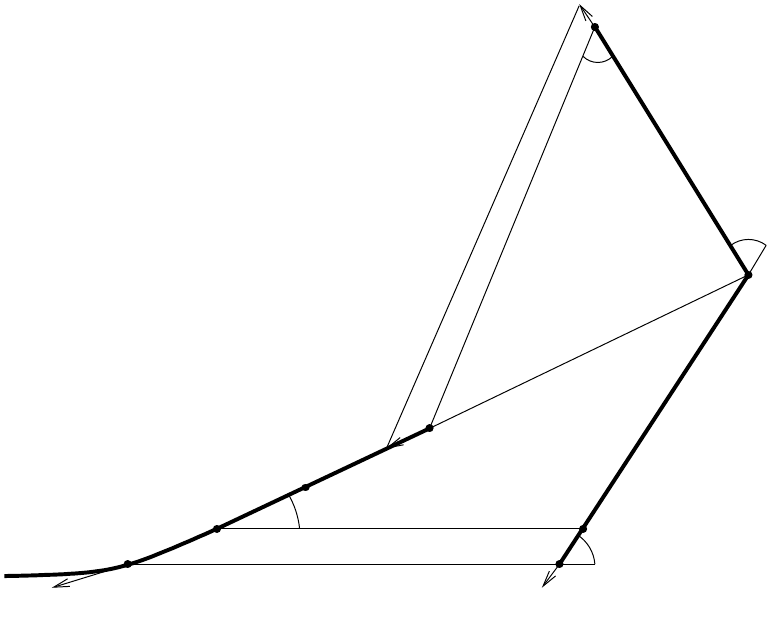_t}}
\caption{The geometry of the perturbation used to estimate the tent in the general case.}
\label{Fig:worst_case_tent_L}
\end{subfigure}
\caption{Analysis of Lemma \ref{Lem:when_tent}.}
\label{Fig:tent_2_case}
\end{figure}

For a generic configuration, we consider a variation of the geometry as follows (see Fig. \ref{Fig:worst_case_tent_L}): keeping $\hat Q_1$, $\hat P_1$ fixed, we move backward the point $\hat Q_0$ along a convex curve and vary $\hat r_0,\hat \ell_0$ accordingly in order to preserve their directions. In particular, if $\theta$ is the direction of the tangent of the convex curve in $\hat Q_0$, we obtain from the vector relations
\begin{equation*}
\delta \hat \ell_0 = \delta \hat Q_0 \frac{\sin(\theta)}{\sin(\bar \alpha)}, \quad \delta \hat r_0 = \delta \hat Q_0 \cos(\theta) - \delta \hat \ell_0 \cos(\bar \alpha) = \delta \hat Q_0 (\cos(\theta) - \sin(\theta) \cot(\bar \alpha)).
\end{equation*}
Plugging this relation into the formula for $\hat \ell_0$ we obtain
\begin{equation*}
\delta \hat \ell_0 (\cos(\bar \alpha - \hat \theta) - \cos(\bar \alpha)) = \delta \hat Q_0 (- 1 + \cos(\hat \theta - \theta)) + \delta \hat Q_2 ( 1 - \sin(\bar \alpha)) + \delta \hat r_0 (1 - \cos(\hat \theta)),
\end{equation*}
because $\hat Q_2$ is assumed to move along the curve $\hat Q_1 + \R^+ e^{\i (\bar \alpha + \hat \theta - \frac{\pi}{2})}$ towards $\hat Q_1$. Substituting we obtain
\begin{equation*}
\begin{split}
\delta \hat Q_2 (1 - \sin(\bar \alpha)) &= \delta \hat \ell_0 (\cos(\bar \alpha - \hat \theta) - \cos(\bar \alpha)) + \delta \hat Q_0 (1 - \cos(\hat \theta - \theta)) - \delta \hat r_0 (1 - \cos(\hat \theta)) \\
&= \delta \hat Q_0 \bigg[ \frac{\sin(\theta)}{\sin(\bar \alpha)} (\cos(\bar \alpha - \hat \theta) - \cos(\bar \alpha)) + (1 - \cos(\hat \theta - \theta)) - (\cos(\theta) - \cot(\bar \alpha) \sin(\theta)) (1 - \cos(\hat \theta)) \bigg] \\
&= \delta \hat Q_0 (1 - \cos(\theta)).
\end{split}
\end{equation*}
Hence we have
\begin{equation*}
\frac{\delta \hat L}{\delta \hat r_0} = \frac{\delta \hat Q_0 - \delta \hat Q_2}{\delta \hat r_0} = \frac{\sin(\bar \alpha) (\cos(\theta) - \sin(\bar \alpha))}{(1 - \sin(\bar \alpha)) \sin(\bar \alpha - \theta)}.
\end{equation*}
Note that this formula coincides with $K_\mathrm{tent}$ for $\theta = \bar \alpha + \hat \theta - \frac{\pi}{2}$. By taking of the r.h.s. we have that
\begin{equation*}
\frac{d}{d\theta} \frac{\sin(\bar \alpha) (\cos(\theta) - \sin(\bar \alpha))}{(1 - \sin(\bar \alpha)) \sin(\bar \alpha - \theta)} = \frac{\sin(\bar \alpha)}{1 - \sin(\bar \alpha)} \frac{\cos(\bar \alpha) - \cos(\bar \alpha - \theta) \sin(\bar \alpha)}{\sin(\bar \alpha - \theta)^2}
\end{equation*}
whose maximum in the interval $\theta \in [0,\bar \alpha + \hat \theta - \frac{\pi}{2}]$ is for $\theta = \bar \alpha + \hat \theta - \frac{\pi}{2}$ and equal to
\begin{equation*}
\frac{\sin(\bar \alpha)}{1 - \sin(\bar \alpha)} \frac{\cos(\bar \alpha) - \sin(\hat \theta) \sin(\bar \alpha)}{\cos(\hat \theta)^2} = - 1.03949 < 0.
\end{equation*}
Hence
\begin{equation*}
\delta \bigg( \frac{\hat L}{\hat r_0} - K_\mathrm{tent} \bigg) = \frac{\delta \hat r_0}{\hat r_0} \bigg( \frac{\delta \hat L}{\delta \hat r_0} - \frac{\hat L}{\hat r_0} \bigg) > \frac{\delta \hat r_0}{\hat r_0} \bigg( K_\mathrm{tent} - \frac{\hat L}{\hat r_0} \bigg) \quad \text{for} \ \theta < \bar \alpha + \hat \theta - \frac{\pi}{2},
\end{equation*}
and then we conclude that
\begin{equation*}
\frac{\hat L}{\hat r_0} - K_\mathrm{tent} > 0
\end{equation*}
unless we are in the critical configuration where $\frac{\hat L}{\hat r_0} - K_\mathrm{tent} = 0$.

We conclude that is $\frac{\hat L}{\hat r_0} < K_\mathrm{tent}$ then the angle $\hat \zeta(\hat \phi) = \hat Q_1$ is larger than the critical angle $\bar \alpha + \hat \theta - \frac{\pi}{2}$.
\end{proof}

We have proved that, under certain conditions on the length $\hat L$ and on $\hat r_0$, the tent is admissible, meaning that $\angle\hat Q_1\hat P_1\hat P_2\geq\frac{\pi}{2}$. In particular, if $\frac{\hat L}{\hat r_0}\geq 0.966795..$, then the fasted saturated spiral computed at $\hat P_1$ starts with an arc.

We next prove that in such a case (i.e. when we start with an arc), after one round the functional $r(\phi) - 2.08 L(\phi - 2\pi - \bar \alpha)$ will be still negative. This is a consequence of the fact that $\hat r_0 - \hat L \sim 0.04$, so that we need $\hat L$ to be almost of the size of $\hat r_0$, and we need at least one round to make $r(\phi)$ more than twice the length $L(\phi - 2\pi - \bar \alpha)$.

Consider indeed the spiral $\tilde r(\phi;\phi_0)$, and let $\hat \phi_0$ be the initial angle for the tent and $\hat \phi_2 = \hat \phi_0 + \hat \theta$ the final angle, and let \newglossaryentry{phihat3}{name=\ensuremath{\hat \phi_3},description={maximal angle for using the length bound as admissibility criterion for the tent}} $\gls{phihat3} \geq \hat \phi_0$ an angle where we are going to estimate the functional $r(\cdot) - 2.08 L(\cdot - 2\pi - \bar \alpha)$. By considering the fastest growing spiral
\begin{equation*}
\dot r = \cot(\bar \alpha) r,
\end{equation*}
and observing that the best situation is the one of the previous lemma (because the length $\hat L$ is the minimal and is applied at the latest possible angle), we obtain that if the tent is not admissible and $\hat \phi_3 \geq \hat \phi_2$
\begin{equation*}
\begin{split}
\tilde r(\hat \phi_3) &\leq \tilde r(\hat \phi_0) e^{\cot(\bar \alpha)(\hat \phi_3 - \hat \phi_0)} - (\tilde L(\hat \phi_0 + \hat \theta) - \tilde L(\hat \phi_0)) e^{\cot(\bar \alpha)(\hat \phi_3 - \hat \phi_0 + \frac{\pi}{2} - \bar \alpha - \hat \theta)} \\
&\leq (\tilde L(\hat \phi_0 + \hat \theta) - \tilde L(\hat \phi_0)) \bigg( \frac{e^{\cot(\bar \alpha) (\bar \alpha + \hat \theta - \frac{\pi}{2})}}{K_\mathrm{tent}} - 1 \bigg) e^{\cot(\bar \alpha) (\hat \phi_3 - \hat \phi_0 + \frac{\pi}{2} - \bar \alpha - \hat \theta)} \\
&\leq \tilde L(\hat \phi_3 - 2\pi - \bar \alpha) \bigg( \frac{e^{\cot(\bar \alpha) (\bar \alpha + \hat \theta - \frac{\pi}{2})}}{K_\mathrm{tent}} - 1 \bigg) e^{\cot(\bar \alpha) (\hat \phi_3 - \hat \phi_0 + \frac{\pi}{2} - \bar \alpha - \hat \theta)}.
\end{split}
\end{equation*}
If now
\begin{equation*}
\hat \phi_3 > \phi_0 + \omega + 2\pi,
\end{equation*}
we must have that Corollary \ref{Cor:bound_length_gen} is valid, i.e.
\begin{equation*}
\bigg( \frac{e^{\cot(\bar \alpha) (\bar \alpha + \hat \theta - \frac{\pi}{2})}}{K_\mathrm{tent}} - 1 \bigg) e^{\cot(\bar \alpha) (\hat \phi_3 - \hat \phi_0 + \frac{\pi}{2} - \bar \alpha - \hat \theta))} \geq 2.08.
\end{equation*}
Numerically one obtains
\begin{equation*}
\hat \phi_3 - \hat \phi_0 \geq \bar \alpha + \hat \theta - \frac{\pi}{2} + \tan(\bar \alpha) \ln \bigg( \frac{2.08}{\frac{e^{\cot(\bar \alpha) (\bar \alpha + \hat \theta - \frac{\pi}{2})}}{K_\mathrm{tent}} - 1} \bigg) = 7.86127,
\end{equation*}
which in the $\tau$ coordinates corresponds to
\begin{equation}
\label{Equa:hattau3_cond}
\hat \tau_3 - \hat \tau_0 \geq 1.05358.
\end{equation}
We thus have the following

\begin{proposition}
\label{Prop:tent_admissible}
The tent is admissible for if the starting angle $\hat \phi_0$ is larger that $\phi_0 + \bar \theta - \bar \alpha$ (segment case) of $\phi_0 + \frac{\pi}{2} + \Delta \phi - \bar \alpha$.
\end{proposition}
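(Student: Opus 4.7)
The plan is to reduce the admissibility of the tent to a ratio estimate via Lemma \ref{Lem:when_tent} and then verify that this estimate follows from the length--radius bound proved in Corollary \ref{Cor:bound_length_gen}. In the setting of the tent, the initial point $\hat P_0$ is on the saturated part of $\tilde r(\,\cdot\,; s_0)$ by the hypothesis $\hat \phi_0 > \phi_0 + \bar \theta - \bar \alpha$ (segment case) or $\hat \phi_0 > \phi_0 + \frac{\pi}{2} + \Delta \phi - \bar \alpha$ (arc case), and $\hat P_2$ is saturated by the very construction of the tent. Hence the quantities $\hat L$, $\hat r_0$, $\hat r_2$, $\hat \ell_0$, $\hat \ell_1$ and $\hat \theta$ introduced in Section \ref{Ss:comput_tent_new} satisfy the relations of Lemma \ref{Lem:relation_admissibility_tent}.

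By Lemma \ref{Lem:when_tent}, to prove that the tent is admissible (i.e.\ $\angle\hat Q_1 \hat P_1 \hat P_2 \geq \frac{\pi}{2}$) it is enough to show that
\begin{equation*}
\frac{\hat L}{\hat r_0} \;\leq\; K_{\mathrm{tent}} \;<\; 0.966795.
\end{equation*}
I would argue this by contradiction. Suppose the tent is not admissible, i.e.\ $\hat L/\hat r_0 > K_{\mathrm{tent}}$. Then, using the worst-case growth estimate $\dot r \leq \cot(\bar \alpha) r$ for the unperturbed saturated spiral, at every angle $\hat \phi_3 \geq \hat \phi_2 = \hat \phi_0 + \hat \theta$ we obtain
\begin{equation*}
\tilde r(\hat \phi_3) \;\leq\; \tilde L(\hat \phi_3 - 2\pi - \bar \alpha)\,\Bigl(\tfrac{e^{\cot(\bar \alpha)(\bar \alpha + \hat \theta - \frac{\pi}{2})}}{K_{\mathrm{tent}}} - 1\Bigr)\, e^{\cot(\bar \alpha)(\hat \phi_3 - \hat \phi_0 + \frac{\pi}{2} - \bar \alpha - \hat \theta)},
\end{equation*}
as carried out in the paragraph preceding the statement.

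For $\hat \phi_3$ larger than $\phi_0 + \omega + 2\pi$, with $\omega$ as in \eqref{Eq:def:omega:bar:theta}, Corollary \ref{Cor:bound_length_gen} forces $\tilde r(\hat \phi_3) \geq 2.08\,\tilde L(\hat \phi_3 - 2\pi - \bar \alpha)$. Combining the two bounds yields the necessary condition
\begin{equation*}
\Bigl(\tfrac{e^{\cot(\bar \alpha)(\bar \alpha + \hat \theta - \frac{\pi}{2})}}{K_{\mathrm{tent}}} - 1\Bigr) e^{\cot(\bar \alpha)(\hat \phi_3 - \hat \phi_0 + \frac{\pi}{2} - \bar \alpha - \hat \theta)} \;\geq\; 2.08,
\end{equation*}
which, as shown in \eqref{Equa:hattau3_cond}, translates into $\hat \phi_3 - \hat \phi_0 \geq 7.86127$, i.e.\ more than one full round. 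Under the hypothesis of the proposition, $\hat \phi_0$ lies in the saturated part right at the beginning of the second round, so choosing $\hat \phi_3$ in the range where both Corollary \ref{Cor:bound_length_gen} and the tent formulas apply gives a value of $\hat \phi_3 - \hat \phi_0$ below this threshold, contradicting non-admissibility.

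The main obstacle — already essentially addressed in the pre-statement discussion — is to show that the geometric worst case (in which the inequality $\hat L/\hat r_0 \leq K_{\mathrm{tent}}$ is tightest) is indeed the critical configuration of Lemma \ref{Lem:when_tent}, and that the exponent bookkeeping in passing from $\hat r_0$ at angle $\hat \phi_0$ to $\tilde r(\hat \phi_3)$ by the worst-case ODE $\dot r = \cot(\bar \alpha) r$ is compatible with the applicability window of Corollary \ref{Cor:bound_length_gen}; the remainder is just the numerical check $7.86127 < 2\pi + \bar \alpha$ (equivalently $\hat \tau_3 - \hat \tau_0 \geq 1.05358$ being incompatible with $\hat \phi_0$ in the first saturated round). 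The arc case is handled identically, replacing $\bar \theta$ with $\frac{\pi}{2} + \Delta \phi$ and invoking the arc version of Corollary \ref{Cor:bound_length_gen}.
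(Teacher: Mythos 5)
Your proof follows the paper's argument exactly: reduce admissibility to the ratio bound $\hat L/\hat r_0 \le K_{\mathrm{tent}}$ of Lemma \ref{Lem:when_tent}, derive from non-admissibility the necessary condition $\hat\phi_3 - \hat\phi_0 \ge 7.86127$ by comparison with Corollary \ref{Cor:bound_length_gen}, and contradict it by choosing $\hat\phi_3$ at the start of the applicability window of that corollary. One small correction: the relevant numerical fact is $7.86127 > 2\pi + \bar\alpha$ (equivalently $1.05358 > 1$), not ``$<$'' as you wrote --- it is precisely because the required separation exceeds one full round that, for every $\hat\tau_0 > 0$, one can take $\hat\tau_3 = 1$ with $\hat\tau_3 - \hat\tau_0 < 1.05358$; your parenthetical restatement in the $\tau$-variable is the correct form of the argument.
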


\begin{proof}
The initial angle corresponds to $\tau = 0$ in the $\tau$-coordinates, and Corollary \ref{Cor:bound_length_gen} must be true at $\tau = 1$: 

Hence if the tent is not admissible for $\tau > 0$, there is a $\tau > 1$ such that Corollary \ref{Cor:bound_length_gen} is not true by \eqref{Equa:hattau3_cond}, which is a contradiction. 
%
\end{proof}

\begin{remark}
\label{Rem:inital_tent}
%
It is easy to see that the tent is not always admissible: if we assume that the spiral $\zeta$ has a very long segment, $\zeta$ asymptotically becomes by scaling
\begin{equation*}
\Omega_0 = B_{\cos(\bar \alpha)}(0), \quad \zeta = [0,1],
\end{equation*}
for which the fastest saturated is an arc. Indeed, in this situation the saturation condition for the tent reads as
\begin{align*}
0 &= \hat \ell_1 \cos(\bar \alpha) + \hat \ell_0 \cos(\bar \alpha - \hat \theta) + \cos(\hat \theta) - \cos(\bar \alpha) - \cos(\bar \alpha) (1 + \hat \ell_0 + \hat \ell_1) \\
&= \hat \ell_0 (\cos(\bar \alpha - \hat \theta) - \cos(\bar \alpha)) - 2 \cos(\bar \alpha) + \cos(\hat \theta).
\end{align*}
Being
\begin{equation*}
2 \cos(\bar \alpha) - \cos(\hat \theta) = - 0.10964 < 0,
\end{equation*}
the tent is not admissible.
\end{remark}


\subsubsection{An estimate on the final value $\hat r(\bar \phi)$}
\label{Sss:estima_final_value}

To conclude this section, we give a simplified formula for estimating the final value of $\hat r$ and the angle $\bar \phi = \hat \phi_0 + 2\pi + \bar \alpha$.

\begin{lemma}
\label{Lem:final_value_satu}
Assume that base of the tent is a convex curve, $\hat P_0$ is saturated and that $\tilde r$ is the saturated spiral starting from $\hat \phi_0$ tangent to the tent in the point $\hat P_0$. Then it holds
\begin{equation*}
\hat r(\bar \phi) \geq \tilde r(\bar \phi) - \hat \ell_0 \geq \tilde r(\bar \phi) - \frac{1 - \cos(\hat \theta)}{\cos(\bar \alpha - \hat \theta) - \cos(\bar \alpha)} \hat r_0.
\end{equation*}
\end{lemma}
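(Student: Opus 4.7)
The lemma contains two inequalities and I would treat them separately: the second is a direct computation from Lemma \ref{Lem:relation_admissibility_tent}, the first requires combining the saturation conditions with an arc-length estimate.

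For the second inequality, I would substitute the formula
$$\hat\ell_0 = \frac{(\hat Q_2 - \hat Q_0)\cdot e^{\i\hat\theta} - \hat L + \hat r_0(1 - \cos\hat\theta)}{\cos(\bar\alpha - \hat\theta) - \cos\bar\alpha}$$
given by Lemma \ref{Lem:relation_admissibility_tent}. Writing $\hat Q_2 - \hat Q_0 = \int_0^{\hat\theta} e^{\i\omega}\hat R(\omega)\,d\omega$ and $\hat L = \int_0^{\hat\theta}\hat R(\omega)\,d\omega$, the convexity of the base gives $\hat R \geq 0$, and since $\cos(\hat\theta - \omega) \leq 1$ on $[0,\hat\theta]$,
$$(\hat Q_2 - \hat Q_0)\cdot e^{\i\hat\theta} - \hat L = \int_0^{\hat\theta}\bigl(\cos(\hat\theta - \omega) - 1\bigr)\hat R(\omega)\,d\omega \leq 0.$$
Therefore $\hat\ell_0 \leq \frac{(1 - \cos\hat\theta)\,\hat r_0}{\cos(\bar\alpha - \hat\theta) - \cos\bar\alpha}$, which after subtracting from $\tilde r(\bar\phi)$ is exactly the second inequality.

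For the first inequality, I would exploit the saturation conditions. At the endpoints $\hat P_0$ and $\hat P_2$ of the tent, \eqref{Equa:saturation_tent} reads
$$\hat r_2 + \hat L - \hat r_0 = \cos\bar\alpha\,(\hat\ell_0 + \hat\ell_1),$$
while the saturated spiral $\tilde\zeta$, being saturated at every arc-length parameter, shares the same base $\hat\zeta$ over the relevant range so that its exit ray at angle $\hat\theta$ is based on the same point $\hat Q_2$, and
$$\tilde r(\bar\phi) + \hat L - \hat r_0 = \cos\bar\alpha\,\tilde L,$$
where $\tilde L$ denotes the arc length of $\tilde\zeta$ from $\hat P_0$ to its exit point on the ray at angle $\hat\theta$. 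Subtracting these two identities yields the clean identity
$$\tilde r(\bar\phi) - \hat r(\bar\phi) = \cos\bar\alpha\,\bigl(\tilde L - \hat\ell_0 - \hat\ell_1\bigr),$$
so the first inequality is equivalent to the arc-length comparison $\tilde L \leq \hat\ell_0/\cos\bar\alpha + \hat\ell_0 + \hat\ell_1$. This last estimate follows from the tangency of $\tilde\zeta$ to the segment $[\hat P_0,\hat P_1]$ at $\hat P_0$ combined with convexity: projecting the displacement $\tilde P_2 - \hat P_0$ onto the initial tangent $e^{\i\bar\alpha}$ and using that $\tilde\zeta$ lies between the two segments of the tent and must terminate on the exit ray at angle $\hat\theta$ yields the required bound on $\tilde L$.

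The main obstacle is the final arc-length comparison: once the saturation framework reduces the claim to $\tilde L \leq \hat\ell_0/\cos\bar\alpha + \hat\ell_0 + \hat\ell_1$, a careful quantitative geometric argument is needed. The cleanest way to handle it is to parametrize $\tilde\zeta$ by the rotation angle $\omega \in [0,\hat\theta]$ of its tangent and use $\tilde L = \int_0^{\hat\theta}\tilde R(\omega)\,d\omega$ together with the tangency at $\hat P_0$ and the constraint that the endpoint lies on the exit ray, which after projection onto $e^{\i\hat\theta}$ and $e^{\i(\bar\alpha + \hat\theta - \pi/2)}$ gives the needed inequality via the same convexity argument as in the second inequality.
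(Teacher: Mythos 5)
Your treatment of the second inequality is correct and is essentially the paper's argument: the paper bounds $(\hat Q_2 - \hat Q_0)\cdot e^{\i\hat\theta} \leq |\hat Q_2 - \hat Q_0| \leq \hat L$ using convexity, which is the same estimate as your integral computation with $\cos(\hat\theta-\omega)\leq 1$ and $\hat R \geq 0$; either version plugged into \eqref{Equa:ell_0_tent_general} gives $\hat\ell_0 \leq \frac{1-\cos(\hat\theta)}{\cos(\bar\alpha-\hat\theta)-\cos(\bar\alpha)}\hat r_0$.

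The first inequality, however, has a genuine gap, and it comes from conflating angles. Your second ``saturation identity'' equates $\tilde r(\bar\phi)$ with the exit ray of $\tilde\zeta$ on the ray at angle $\hat\theta$; but that exit ray is $\tilde r(\hat\phi_2)=\tilde r(\hat\phi_0+\hat\theta)$, whereas $\bar\phi = \hat\phi_0 + 2\pi+\bar\alpha$ is a full round later. Subtracting the two saturation conditions therefore gives a relation between $\hat r_2$ and $\tilde r(\hat\phi_2)$ at the angle $\hat\phi_2$, not your ``clean identity'' at $\bar\phi$; to pass from $\hat\phi_2$ to $\bar\phi$ you must still propagate the difference through the RDE, which (since the delayed terms coincide) yields $\hat r(\bar\phi) = (\hat r_2 - \tilde r(\hat\phi_2))\,e^{\cot(\bar\alpha)(2\pi+\bar\alpha-\hat\theta)} + \tilde r(\bar\phi) - \hat\ell_0$ --- this is Equation \eqref{eq:comparaison:Saturated}, and it is where both the exponential factor and the $-\hat\ell_0$ (the downward jump of the ray over the first tent segment, which points in the direction $\bar\phi$) come from. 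Once this formula is in place, the first inequality reduces to $\hat r_2 \geq \tilde r(\hat\phi_2)$, and the paper gets this for free from Theorem \ref{Cor:curve_cal_R_sat_spiral}: the fastest saturated spiral is the \emph{minimal} admissible continuation over the first round, so the tent's exit point cannot lie below it. Your proposal never invokes this optimality; instead it reduces to an arc-length comparison $\tilde L \leq \hat\ell_0/\cos(\bar\alpha)+\hat\ell_0+\hat\ell_1$ (itself distorted by the misplaced $\hat\ell_0$ term) and then only asserts, without carrying out, the projection argument that would prove it. As written, the first inequality is not established.
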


\begin{proof}
Since the base of the spiral is a convex curve, then
\begin{equation*}
|\hat Q_2 - \hat Q_0| \leq \hat L,
\end{equation*}
and then by Equation \eqref{Equa:ell_0_tent_general}
\begin{equation*}
\hat \ell_0 \leq \frac{1 - \cos(\hat \theta)}{\cos(\bar \alpha - \hat \theta) - \cos(\bar \alpha)} \hat r_0.
\end{equation*}
By Theorem \ref{Cor:curve_cal_R_sat_spiral} it holds also
\begin{equation*}
\hat r_2 \geq \tilde r(\hat \phi_2) = \tilde r(\hat \phi_0 + \hat \theta).
\end{equation*}
Since $\hat r$ and $r$ satisfy after the angle $\hat\phi_2$ the same RDE, their difference satisfies 
\begin{equation*}
	\frac{d}{d\theta}(\hat r(\phi)-r(\phi))=\cot\bar\alpha(\hat r(\phi)-r(\phi)).
	\end{equation*}
Therefore,
\begin{align*}
\hat r(\bar \phi) &= (\hat r_2 - \tilde r(\hat \phi_2)) e^{\cot(\bar \alpha)(2\pi + \bar \alpha - \hat \theta)} + \tilde r(\bar \phi) - \hat \ell_0 \\
&\geq \tilde r(\bar \phi) - \frac{1 - \cos(\hat \theta)}{\cos(\bar \alpha - \hat \theta) - \cos(\bar \alpha)} \hat r_0. \qedhere
\end{align*}
\end{proof}

The last computation of the above proof will be used several times in the next sections:
\begin{align}
\label{eq:comparaison:Saturated}
\hat r(\bar \phi) &= (\hat r_2 - \tilde r(\hat \phi_2)) e^{\cot(\bar \alpha)(2\pi + \bar \alpha - \hat \theta)} + \tilde r(\bar \phi) - \hat \ell_0.
\end{align}

\section{Optimal solution candidate}
\label{S:optimal_sol_candidate}

In this section we construct the optimal candidate \newglossaryentry{roptim}{name=\ensuremath{r_\mathrm{opt}(\phi)},description={the optimal solution for blocingk the fire at a given angle}} \gls{roptim} when the initial burning set is $\{0\}$ and the spiral is constructed outside $B_1(0)$. We will show the structure of the optimal solution candidate depending on the final angle $\bar \phi$ and its asymptotic behavior.

\subsection{Optimal solution candidate for \texorpdfstring{$\bar \phi \in [0,2\pi)$}{barphi in [0,2pi]}: see Fig. \ref{fig:base:case:Arc}}
\label{Ss:optimal_cand_1}

In this case the optimal solution is the fastest saturated spiral $r_\mathrm{opt} = \tilde r_{\mathtt a = 0}$, according to Theorem \ref{Cor:curve_cal_R_sat_spiral}, and by Theorem \ref{thm:case:a} the solution is positive.

\subsection{Optimal solution candidate for \texorpdfstring{$\bar \phi \in [2\pi,2\pi + \frac{\pi}{2})$}{bar phi in [2pi,2pi + pi/2]}: see Fig. \ref{Fig:opt_sol_1_1}}
\label{Ss:optimal_cand_2}

\begin{figure}
\begin{subfigure}{.475\textwidth}
\resizebox{\textwidth}{!}{\input{opt_sol_1.pdf_t}}
\caption{Geometric situation of Section \ref{Ss:optimal_cand_2}.}
\label{Fig:opt_sol_1_1}
\end{subfigure} \hfill
\begin{subfigure}{.475\textwidth}
\resizebox{\textwidth}{!}{\includegraphics{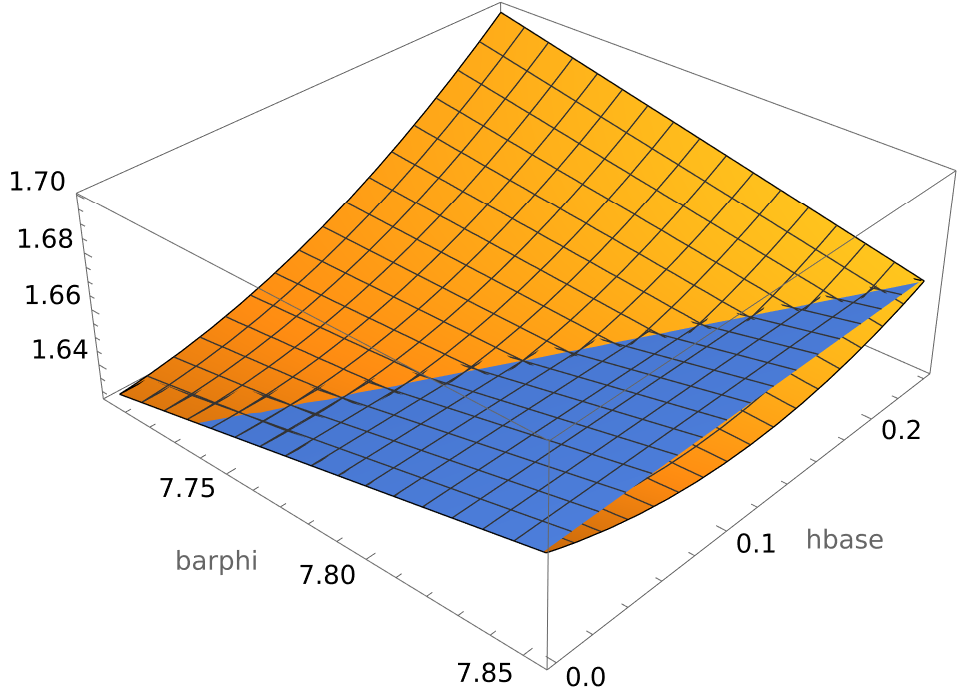}}
\caption{Numerical plot of the function $r_\mathrm{opt}$ (orange) and $S_\mathtt a$ with $\mathtt a = 0$ (blue).}
\label{Fig:opt_sol_1_2}
\end{subfigure}
\caption{Analysis of Section \ref{Ss:optimal_cand_2}.}
\label{Fig:opt_sol_1}
\end{figure}

In this case we move along the direction $\bar \phi$ until we are in the critical curve $\check \theta = h_1(\Delta \check \phi)$: the saturation condition at the point $\check P_2$ is
\begin{equation*}
\frac{\check \ell}{\sin(\bar \alpha)} - \cos(\bar \alpha) \big( h + \check \ell \Delta \check \phi + \check \ell \cot(\bar \alpha) \big) = 0, \quad \check \ell = \sqrt{1 + h^2 + 2 h \cos(\bar \phi)}, \quad \sin(\check \phi) = \frac{h}{\check \ell} \sin(\bar \phi),
\end{equation*}
which gives
\begin{equation}
\label{Equa:opt_2_checkDeltaphi}
\Delta \check \phi = \tan(\bar \alpha) - \frac{h}{\check \ell}.
\end{equation}
The final value is then
\begin{equation}
\label{Equa:ropt_2}
r_\mathrm{opt}(\bar \phi,h) = \frac{\check \ell}{\sin(\bar \alpha)} e^{\cot(\bar \alpha) (\bar \phi - (\check \phi + \Delta \check \phi + \frac{\pi}{2} - \bar \alpha))} - e^{\cot(\bar \alpha)(\bar \phi - 2\pi)} - h.
\end{equation}
A plot is in Fig. \ref{Fig:opt_sol_1_2}, where the function $e^{-\bar c \phi} r_\mathrm{opt}$ and the fastest saturated spiral $e^{-\bar c \phi} \tilde r_{\mathtt a = 0}$ of Section \ref{S:case:study} are plotted together: the figure presents only the part where $r_\mathrm{opt}$ is below $\tilde r_{\mathtt a = 0}$. Note that the region where it is convenient to use $r_\mathrm{opt}$ is for
$$
\bar \phi > 2\pi + \frac{\pi}{2} - h_1(\tan(\bar \alpha)) = 7.73645,
$$
as Proposition \ref{Prop:neg_region} requires. The optimal value is computed by imposing  $h = h_1(\Delta \check \phi)$ together with \eqref{Equa:opt_2_checkDeltaphi}.

In order to verify that the tent is not an admissible solution, one could check that $\check \Delta \phi > 0$ or try to compute the geometric quantities for a tent: indeed, if we try to replace the arc with the tent and we use the saturation condition and the optimality condition $\theta = \hat \theta$ of Proposition \ref{Prop:unique_optimal_guess}, we obtain that if $P_2$ is the first saturated point of the fastest closing spiral starting in $P_1 = (1 + h \cos(\bar \phi),h \sin(\bar \phi))$, then from geometric consideration and the saturation condition
\begin{equation*}
|P_2| = \cos(\bar \alpha - \hat \theta - \bar \phi + 2\pi) + h \cos(\bar \alpha - \hat \theta) + \ell_1 \cos(\bar \alpha), \quad |P_2| - \cos(\bar \alpha) (h + \ell_1) = 0,
\end{equation*}
where $\ell_1=|P_2-P_1|$, so that
\begin{equation*}
h = - \frac{\cos(\bar \alpha - \hat \theta - \bar \phi + 2\pi)}{\cos(\bar \alpha - \hat \theta) - \cos(\bar \alpha)} < 0
\end{equation*}
for $\phi \in [2\pi,2\pi + \frac{\pi}{2}]$, so it cannot be an admissible solution.

We summarize these results into the following lemma.

\begin{lemma}
\label{Lem:optimal_2}
If $\bar \phi \in 2\pi + [0,\frac{\pi}{2} - h_1(\tan(\bar \alpha))]$, the optimal solution is the saturated spiral. For $\bar \phi \in 2\pi + [\frac{\pi}{2} - h_1(\tan(\bar \alpha)),\frac{\pi}{2}]$ the optimal solution is a segment in the direction $\bar \phi$ followed by an arc. The solution remains positive.
\end{lemma}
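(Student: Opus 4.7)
For the first subrange $\bar\phi\in 2\pi+[0,\tfrac{\pi}{2}-h_1(\tan\bar\alpha)]$, the plan is to apply Proposition \ref{Prop:neg_region} to the base spiral $\tilde r_{\mathtt a=0}$, which begins with the maximal arc of opening $\Delta\phi=\tan\bar\alpha$ (see Remark \ref{rmk:deltaphi:max}). Under the rescaling $\tau=(\bar\phi-\tan\bar\alpha-\tfrac{\pi}{2}+\bar\alpha)/(2\pi+\bar\alpha)$, the subrange lies strictly to the left of the first connected component of $N_{\mathrm{arc}}$ on the slice $\{\Delta\phi=\tan\bar\alpha\}$: indeed the value $\tau_1(h_1(\tan\bar\alpha),\tan\bar\alpha)$ corresponds exactly to $\bar\phi=2\pi+\tfrac{\pi}{2}-h_1(\tan\bar\alpha)$ by definition of $h_1$. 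Hence every admissible perturbation of $\tilde r_{\mathtt a=0}$ in the sense of the Section \ref{Ss:optimal_sol_def} construction has non-negative first variation of the final radius, so that the optimal candidate coincides with $\tilde r_{\mathtt a=0}(\bar\phi)$, whose positivity is Theorem \ref{thm:case:a_intro}.

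For the second subrange $\bar\phi\in 2\pi+[\tfrac{\pi}{2}-h_1(\tan\bar\alpha),\tfrac{\pi}{2}]$, I would invoke Proposition \ref{Prop:unique_optimal_guess} together with Corollary \ref{Cor:unique} to obtain a unique optimal candidate $\check r(\bar\phi;0)$. Departing from $(1,0)$ in direction $\bar\phi$ by a length $h$, the endpoint $\check P^-=(1+h\cos\bar\phi,h\sin\bar\phi)$ together with the transversality condition $\check\theta=h_1(\Delta\check\phi)$ fixes the geometry uniquely; imposing the saturation condition at $\check P_2$ and the elementary relation $\sin\check\phi=(h/\check\ell)\sin\bar\phi$ then yields \eqref{Equa:opt_2_checkDeltaphi} and \eqref{Equa:ropt_2}. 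The tent (two-segment) alternative can be ruled out, as sketched in the excerpt, by simultaneously enforcing saturation at the tip and the critical-angle condition $\theta=\hat\theta$ of Proposition \ref{Prop:unique_optimal_guess}; this forces
\[
h=-\frac{\cos(\bar\alpha-\hat\theta-\bar\phi+2\pi)}{\cos(\bar\alpha-\hat\theta)-\cos\bar\alpha}<0 \qquad \text{for } \bar\phi\in 2\pi+[0,\tfrac{\pi}{2}],
\]
which is incompatible with admissibility.

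Positivity of $r_\mathrm{opt}$ on the second subrange follows by inspection of \eqref{Equa:ropt_2}. At the matching endpoint $\bar\phi=2\pi+\tfrac{\pi}{2}-h_1(\tan\bar\alpha)$ the implicit system determining $(h,\Delta\check\phi)$ from \eqref{Equa:opt_2_checkDeltaphi} gives $h=0$, $\Delta\check\phi=\tan\bar\alpha$, and the formula collapses to $\tilde r_{\mathtt a=0}(\bar\phi)>0$. Since the subrange has length $h_1(\tan\bar\alpha)<1$, the exponential factor $e^{\cot\bar\alpha(\bar\phi-\check\phi-\Delta\check\phi-\frac{\pi}{2}+\bar\alpha)}$ in the leading term of \eqref{Equa:ropt_2} remains uniformly bounded from below and dominates the correction $e^{\cot\bar\alpha(\bar\phi-2\pi)}+h$, giving a uniform positive lower bound. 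The continuity of $\bar\phi\mapsto r_\mathrm{opt}(\bar\phi)$ needed to connect these two observations follows from the implicit function theorem applied to the system defining $(h,\Delta\check\phi,\check\ell,\check\phi)$, the transversality of which was already verified inside the proof of Proposition \ref{Prop:unique_optimal_guess}.

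The step I expect to be the main obstacle is the verification that the construction of Section \ref{Ss:optimal_sol_def} selects the arc case (as stated) and not the segment case for every $\bar\phi$ in the second subrange; this amounts to ensuring $\Delta\check\phi>0$ throughout, which by \eqref{Equa:opt_2_checkDeltaphi} translates into the inequality $h/\check\ell<\tan\bar\alpha$. The bound on $h$ produced by the transversality condition combined with $\check\ell\geq|{\sin\bar\phi}|\cdot h/|\sin\check\phi|$ reduces this to a trigonometric inequality on a compact $\bar\phi$-interval, which can be verified by direct computation paralleling the reasoning given in Remark \ref{Rem:trans_segm_arc}.
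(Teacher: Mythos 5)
Your proposal is correct and follows essentially the same route as the paper: the first subrange is handled by observing that $\bar\phi$ lies before the negativity region $N_{\mathrm{arc}}$ for the slice $\Delta\phi=\tan(\bar\alpha)$ (so $\ell_0=0$ and the candidate is the fastest saturated spiral, positive by Theorem \ref{thm:case:a}), while the second subrange uses the segment-plus-arc construction with \eqref{Equa:opt_2_checkDeltaphi}--\eqref{Equa:ropt_2} and rules out the tent via the sign of $h$. The only difference is that the paper verifies positivity of \eqref{Equa:ropt_2} by a numerical plot (Fig. \ref{Fig:opt_sol_1_2}) rather than by the dominance estimate you sketch; your estimate is the right idea but, to meet the paper's standard, should be backed by the same explicit evaluation of the trigonometric--exponential expression on the compact interval.
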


\subsection{Optimal solution candidate for \texorpdfstring{$\bar \phi \in [2\pi + \frac{\pi}{2},2\pi+\frac{\pi}{2}+\tan(\bar \alpha))$}{bar phi in [2pi + pi/2,2pi + pi/2+ tan(bar alpha)}: see Fig. \ref{Fig:opt_sol_2_1}, \ref{Fig:opt_sol_2_2}}
\label{Ss:optimal_cand_3}

\begin{figure}
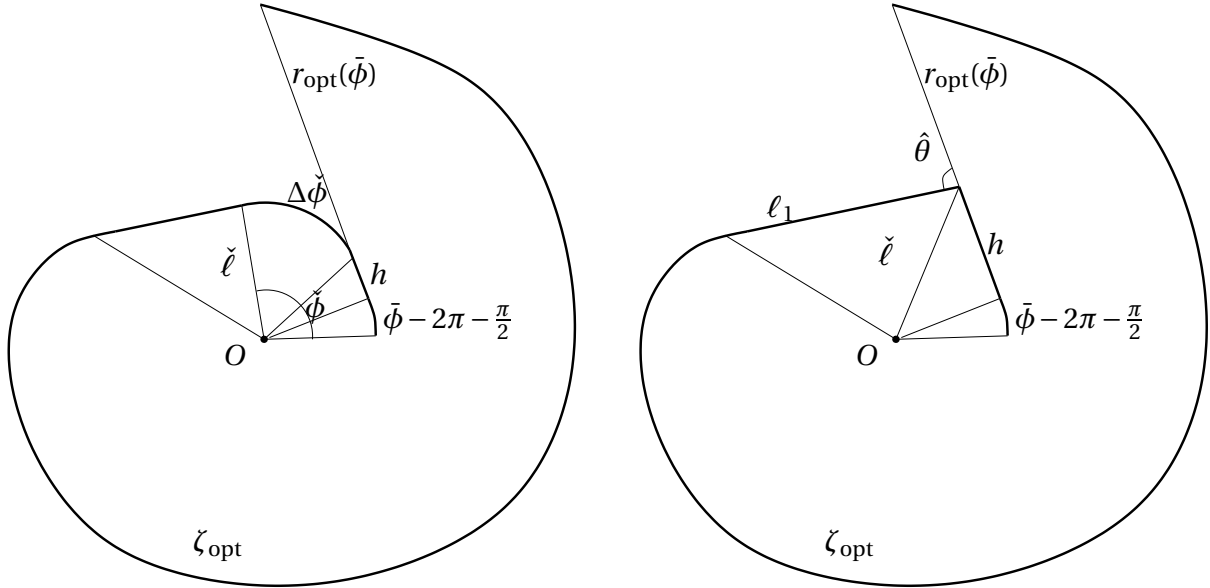

\begin{subfigure}{.475\textwidth}
\resizebox{\textwidth}{!}{\input{opt_sol_2.pdf_t}}
\caption{Geometric situation of Section \ref{Ss:optimal_cand_3} in the arc case.}
\label{Fig:opt_sol_2_1}
\end{subfigure} \hfill
\begin{subfigure}{.475\textwidth}
\resizebox{\textwidth}{!}{\input{opt_sol_2_tent.pdf_t}}
\caption{Geometric situation of Section \ref{Ss:optimal_cand_3} in the tent case.}
\label{Fig:opt_sol_2_2}
\end{subfigure}
\caption{Analysis of Section \ref{Ss:optimal_cand_3}.}
\label{Fig:opt_sol_2}
\end{figure}

In this case the solution $r_\mathrm{opt}$ switches from the arc to the tent as the angle $\bar \phi$ increases: indeed from Proposition \ref{Prop:tent_admissible} we know that in the saturated region the tent is the admissible solution. For the arc case (Figure \ref{Fig:opt_sol_2_1}), the saturation condition at the point $\check P_2$ is
\begin{equation*}
\frac{\check \ell}{\sin(\bar \alpha)} - \cos(\bar \alpha) \bigg( \bar \phi - 2\pi - \frac{\pi}{2} + h + \check \ell \Delta \check \phi + \check \ell \cot(\bar \alpha) \bigg) = 0, \quad \check \ell = \sqrt{1 + h^2}, \quad \sin(\check \phi) = \frac{h}{\check \ell},
\end{equation*}
i.e.
\begin{equation}
\label{Equa:opt_sol_2_arc_1}
\Delta \check \phi = \tan(\bar \alpha) - \frac{\bar \phi - 2\pi - \frac{\pi}{2} + h}{\check \ell}.
\end{equation}
which is positive for (see Fig. \ref{Fig:opt_sol_2_arc_1})
\begin{equation*}
\bar \phi \leq 2\pi + \frac{\pi}{2} + \tan(\bar \alpha) \check \ell - h.
\end{equation*}

\begin{figure}
\begin{subfigure}{.32\textwidth}
\resizebox{\textwidth}{!}{\includegraphics{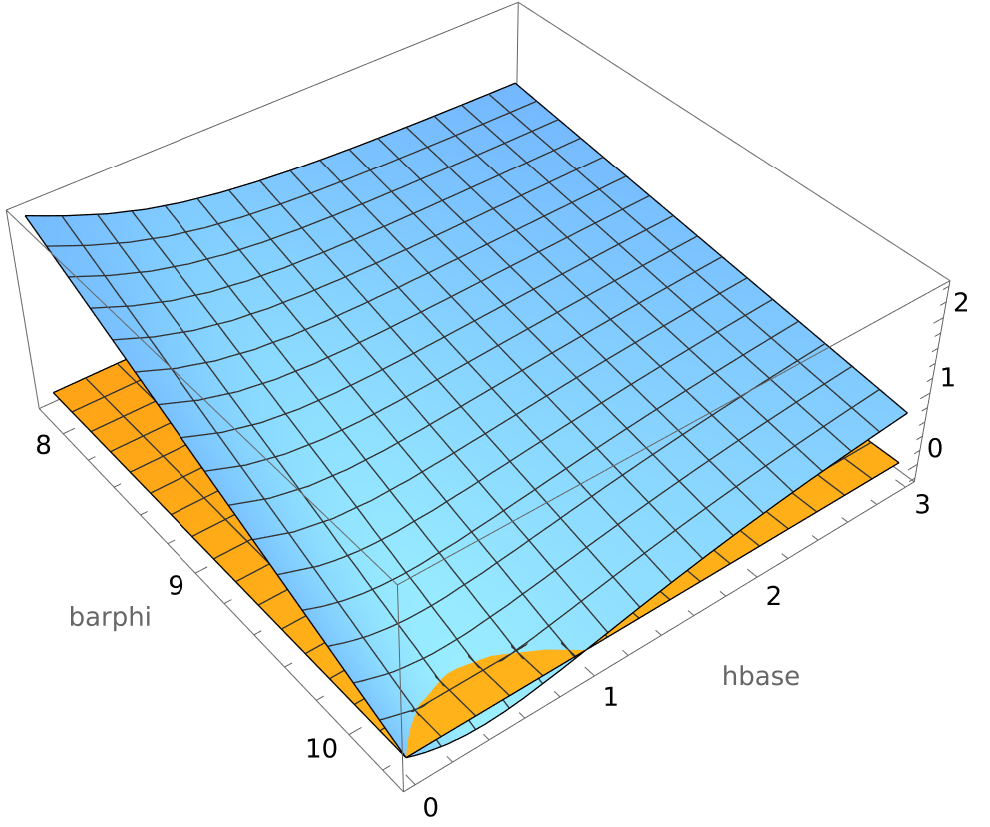}}
\caption{Plot of the function \eqref{Equa:opt_sol_2_arc_1}.}
\label{Fig:opt_sol_2_arc_1}
\end{subfigure} \hfill
\begin{subfigure}{.32\textwidth}
\resizebox{\textwidth}{!}{\includegraphics{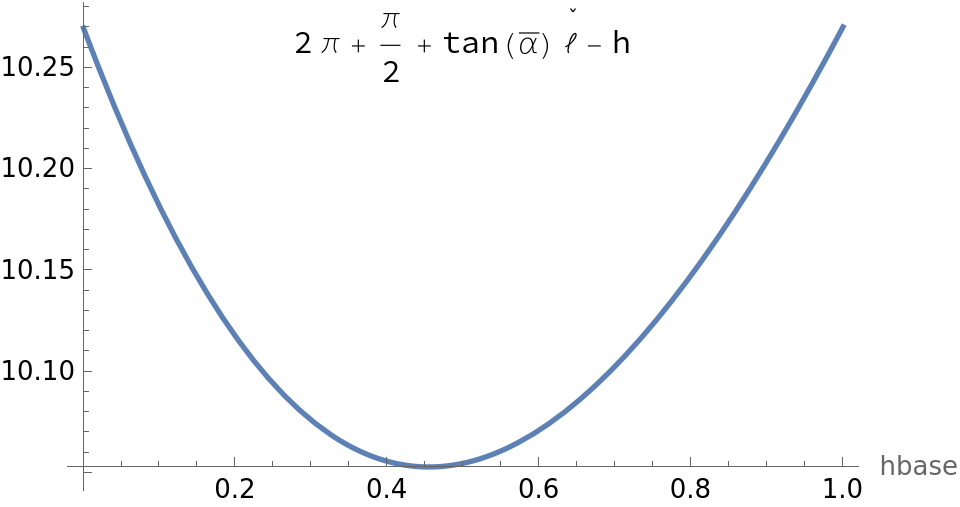}}
\caption{Plot of the boundary of the region \eqref{Equa:opt_sol_2_arc_2}.}
\label{Fig:opt_sol_2_arc_2}
\end{subfigure} \hfill
\begin{subfigure}{.32\textwidth}
\resizebox{\textwidth}{!}{\includegraphics{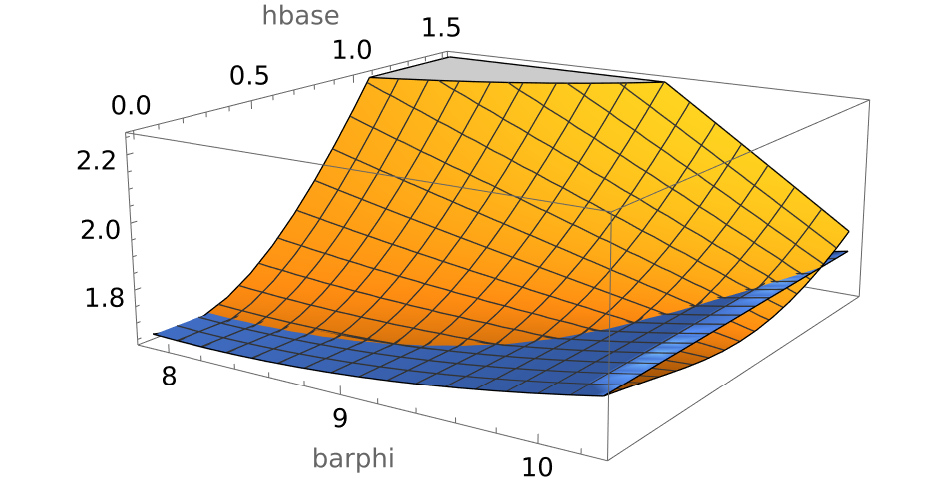}}
\caption{Plot of the function \eqref{Equa:opt_sol_3_finarc} (orange) and the base solution (blue), both rescaled by $e^{-\bar c(\bar \phi - (\tan(\bar \alpha) + \frac{\pi}{2} - \bar \alpha))}$.}
\label{Fig:opt_sol_2_arc_3}
\end{subfigure}
\begin{subfigure}{.32\textwidth}
\resizebox{\textwidth}{!}{\includegraphics{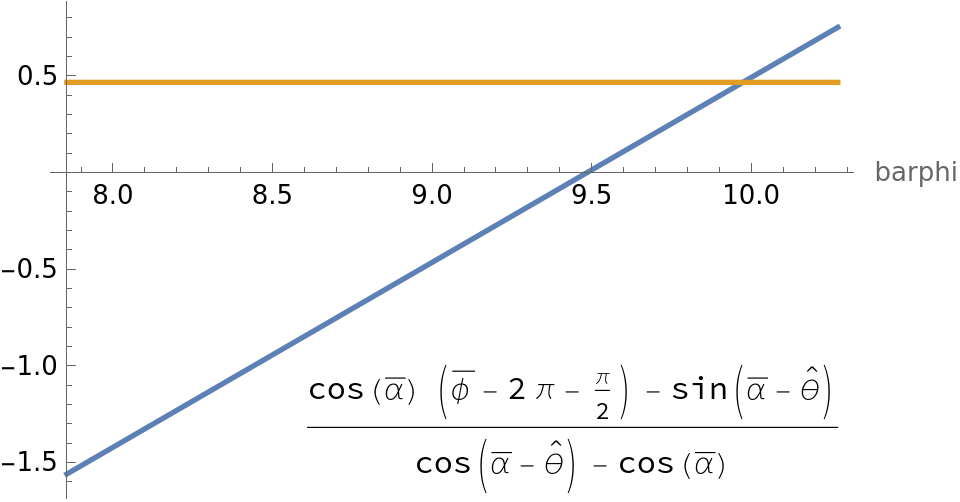}}
\caption{Plot of the function \eqref{Equa:opt_sol_3_h_tent} (blue): the orange line is $\tan(\hat \theta)$.}
\label{Fig:opt_sol_2_tent_1}
\end{subfigure} \hfill
\begin{subfigure}{.32\textwidth}
\resizebox{\textwidth}{!}{\includegraphics{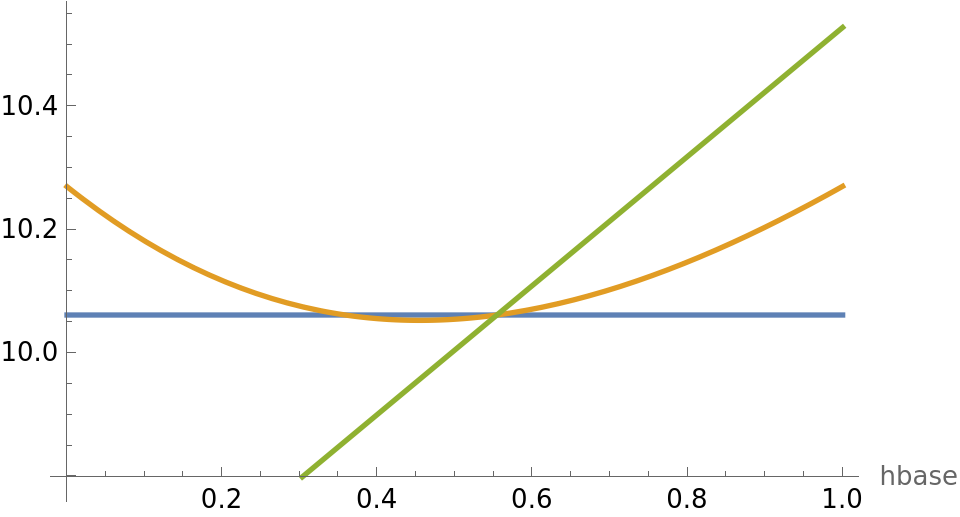}}
\caption{Plot of Equation \eqref{Equa:opt_sol_3_h_tent} (green), \eqref{Equa:opt_sol_2_arc_2} (orange) and the line for admissibility (Equation \eqref{Equa:opt_sol_3_admi_tent}, blue).}
\label{Fig:opt_sol_2_tent_2}
\end{subfigure} \hfill
\begin{subfigure}{.32\textwidth}
\resizebox{\textwidth}{!}{\includegraphics{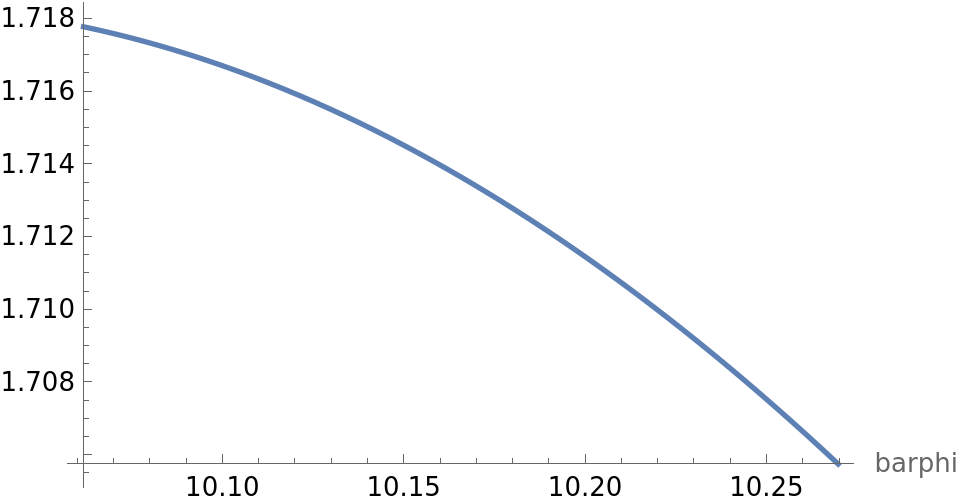}}
\caption{Plot of the function \eqref{Equa:opt_sol_3_tent_final} multiplied by $e^{-\bar c(\bar \phi - (\tan(\bar \alpha) + \frac{\pi}{2} - \bar \alpha))}$.}
\label{Fig:opt_sol_2_tent_3}
\end{subfigure}
\caption{Numerical analysis of Section \ref{Ss:optimal_cand_3}.}
\label{Fig:opt_sol_2_arc_1_tot}
\end{figure}

In the region (see Fig. \ref{Fig:opt_sol_2_arc_2})
\begin{equation}
\label{Equa:opt_sol_2_arc_2}
\bar \phi > 2\pi + \frac{\pi}{2} + \tan(\bar \alpha) \check \ell - h
\end{equation}
the arc is not admissible having $\Delta \check \phi < 0$, so that the solution will be the tent. The final value in the arc case is
\begin{equation}
\label{Equa:opt_sol_3_finarc}
\begin{split}
r_\mathrm{opt}(\bar \phi) &= \frac{\check \ell}{\sin(\bar \alpha)} e^{\cot(\bar \alpha) (\bar \phi - (\bar \phi - 2\pi - \frac{\pi}{2} + \Delta \check \phi + \check \phi + \frac{\pi}{2} - \bar \alpha))} - e^{\cot(\bar \alpha)(\bar \phi - 2\pi)} - \int_0^{\bar \phi - 2\pi - \frac{\pi}{2}} e^{\cot{\bar \alpha} \phi'} d\phi' - h \\
&= \frac{\check \ell}{\sin(\bar \alpha)} e^{\cot(\bar \alpha) (2\pi + \bar \alpha - (\check \phi + \Delta \check \phi))} - e^{\cot(\bar \alpha)(\bar \phi - 2\pi)} - \frac{e^{\cot{\bar \alpha} (\bar \phi - 2\pi - \frac{\pi}{2})}-1}{\cot(\bar \alpha)} - h.
\end{split}
\end{equation}
which is positive $\geq 1.6 e^{\bar c(\bar \phi - (\frac{\pi}{2} - \bar \alpha + \tan(\bar \alpha)))}$ also in the region where $\Delta \check \phi < 0$, see Fig. \ref{Fig:opt_sol_2_arc_3}.

We next compute the tent case with final angle $\hat \theta$, which we know to be the optimal angle at the end point of the segment $h$: the saturation gives here
\begin{align*}
0 &= \big( \ell_1 \cos(\bar \alpha) + h \cos(\bar \alpha - \hat \theta) + \sin(\bar \alpha - \hat \theta) \big) - \cos(\bar \alpha) \bigg( \bar \phi - 2\pi - \frac{\pi}{2} + h + \ell_1 \bigg) \\
&= h (\cos(\bar\alpha-\hat\theta) - \cos(\bar \alpha)) + \sin(\bar \alpha - \hat \theta) - \cos(\bar \alpha) \bigg( \bar \phi - 2\pi - \frac{\pi}{2} \bigg),
\end{align*}
\begin{equation}
\label{Equa:opt_sol_3_h_tent}
h = \frac{\cos(\bar \alpha) (\bar \phi - 2\pi - \frac{\pi}{2}) - \sin(\bar \alpha - \hat \theta)}{\cos(\bar \alpha - \hat \theta) - \cos(\bar \alpha)}.
\end{equation}
This quantity is positive only for (see Fig. \ref{Fig:opt_sol_2_tent_1})
\begin{align*}
\bar \phi &\geq 2\pi + \frac{\pi}{2} + \frac{\sin(\bar \alpha - \hat \theta)}{\cos(\bar \alpha)} = 2\pi + \frac{\pi}{2} + \tan(\bar \alpha) - \frac{\sin(\bar \alpha) - \sin(\bar \alpha - \hat \theta)}{\cos(\bar \alpha)} = 2\pi + \frac{\pi}{2} + \tan(\bar \alpha) - .787658 = 9.48195.
\end{align*}
The angle between the ray departing from the origin and arriving at the end of the segment $h$ and the segment with angle $\hat \theta$ is
\begin{equation*}
\pi - \hat \theta - \frac{\pi}{2} + \arctan(h) = \frac{\pi}{2} + \arctan(h) - \hat \theta,
\end{equation*}
so that the requirement to be $\geq \frac{\pi}{2}$ for admissibility gives
\begin{equation}
\label{Equa:opt_sol_3_admi_tent}
h \geq \tan(\hat \theta) = .554249, \quad \bar \phi \geq 2\pi + \frac{\pi}{2} + \frac{\sin(\bar \alpha) - \sin(\hat \theta) \cos(\bar\alpha)}{\cos(\bar \alpha) \cos(\hat \theta)} = 10.0161.
\end{equation}
The point $(h,\bar \phi) = (.554249,10.0161)$ belongs to the curve where $\Delta \check \phi = 0$, and from that point onward the value $h$ given by Equation \eqref{Equa:opt_sol_3_finarc} is inside the region where $\Delta \check \phi < 0$, see Fig. \ref{Fig:opt_sol_2_tent_2}.

The final value of the solution for the tent case in the admissibility region is
\begin{equation}
\label{Equa:opt_sol_3_tent_final}
r_\mathrm{opt}(\bar \phi) = \bigg( \frac{\cos(\arctan(h) - \hat \theta)}{\sin(\bar \alpha)} \sqrt{1+h^2} \bigg) e^{\cot(\bar \alpha)(2 \pi + \bar \alpha - \hat \theta)} - e^{\cot(\bar \alpha)(\bar \phi - 2\pi)} - \frac{e^{\cot(\bar \alpha)(\bar \phi - 2\pi - \frac{\pi}{2})} - 1}{\cot(\bar \alpha)} - h,
\end{equation}
with $h$ given by \eqref{Equa:opt_sol_3_h_tent}. A numerical plot is in Fig. \ref{Fig:opt_sol_2_tent_3}.

We summarize the results in the following lemma.

\begin{lemma}
\label{Lem:optimal_3}
If $\bar \phi \in 2\pi + \frac{\pi}{2} + [0,\tan(\bar \alpha) - \frac{\sin(\bar \alpha) - \sin(\bar \alpha - \hat \theta)}{\cos(\bar \alpha)}]$, the optimal solution is a segment in the direction $\bar \phi$ followed by an arc. If $\bar \phi \in 2\pi + \frac{\pi}{2} + [\tan(\bar \alpha) - \frac{\sin(\bar \alpha) - \sin(\bar \alpha - \hat \theta)}{\cos(\bar \alpha)},\tan(\bar \alpha)]$, the optimal solution is the tent. Both solutions remain positive.
\end{lemma}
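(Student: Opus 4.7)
The plan is to verify Lemma \ref{Lem:optimal_3} by combining the explicit constructions of the two candidate solutions (segment$+$arc vs.\ tent), matching them at the critical angle, and confirming positivity in each subregime using the formulas already derived in Section \ref{Ss:optimal_cand_3}. Both endpoints of the two intervals in the statement correspond precisely to structural transitions already computed: the left endpoint $\bar\phi=2\pi+\frac{\pi}{2}$ is where the last round begins with a level-set arc of opening $\Delta\check\phi = \tan(\bar\alpha)$ (as determined by \eqref{Equa:opt_sol_2_arc_1} with $h=0$), and the right endpoint $\bar\phi=2\pi+\frac{\pi}{2}+\tan(\bar\alpha)$ is when the opening $\Delta\check\phi$ collapses to $0$, i.e.\ when the arc+segment solution of Section \ref{Ss:optimal_cand_3} degenerates into two consecutive segments — the tent of Section \ref{Ss:comput_tent_new}.

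First I would treat the arc subregime $\bar\phi\in 2\pi+\frac{\pi}{2}+[0,\tan(\bar\alpha)-\frac{\sin(\bar\alpha)-\sin(\bar\alpha-\hat\theta)}{\cos(\bar\alpha)}]$. Uniqueness of the optimal candidate follows from Proposition \ref{Prop:unique_optimal_guess} applied in the arc case: the map $\ell_0\mapsto \nu(\ell_0)$ crosses the negativity boundary $\theta=h_1(\Delta\check\phi)$ exactly once, which pins down the unique pair $(h,\Delta\check\phi)$ via the saturation relation \eqref{Equa:opt_sol_2_arc_1}. In this subregime the numerical check of Fig.\ \ref{Fig:opt_sol_2_arc_3} shows $r_{\mathrm{opt}}\ge 1.6\, e^{\bar c(\bar\phi-(\tan(\bar\alpha)+\frac{\pi}{2}-\bar\alpha))}>0$, i.e.\ \eqref{Equa:opt_sol_3_finarc} is strictly positive throughout. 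The fact that the tent alternative is inadmissible here reduces to the admissibility inequality \eqref{Equa:opt_sol_3_admi_tent}: the corresponding $h$ computed from \eqref{Equa:opt_sol_3_h_tent} is smaller than $\tan(\hat\theta)$, so the tip $\hat P_1$ lies in the wrong halfplane and the segment $[\hat P_1,\hat P_2]$ would cross the cone of the optimal ray.

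Next I would treat the tent subregime $\bar\phi\in 2\pi+\frac{\pi}{2}+[\tan(\bar\alpha)-\frac{\sin(\bar\alpha)-\sin(\bar\alpha-\hat\theta)}{\cos(\bar\alpha)},\tan(\bar\alpha)]$. By Proposition \ref{Prop:tent_admissible} the tent is admissible since the base point is saturated (we are precisely on the saturated continuation of $\tilde r_{\mathtt a=0}$, which has been saturated since angle $\frac{\pi}{2}-\bar\alpha+\tan(\bar\alpha)$). The explicit formula \eqref{Equa:opt_sol_3_h_tent} for $h$ makes $h\ge\tan(\hat\theta)$ exactly in this subregime by \eqref{Equa:opt_sol_3_admi_tent}, so the tent satisfies the geometric admissibility constraint $\frac{\pi}{2}+\arctan(h)-\hat\theta\ge\frac{\pi}{2}$, and Corollary \ref{Cor:unique} gives uniqueness. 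Positivity of the final value is read off the numerical plot of \eqref{Equa:opt_sol_3_tent_final} in Fig.\ \ref{Fig:opt_sol_2_tent_3}. Continuity of $r_{\mathrm{opt}}$ across the transition angle follows by noting that at $\Delta\check\phi=0$ the two constructions produce the same curve (an arc of length $0$ is a point), so both formulas \eqref{Equa:opt_sol_3_finarc} and \eqref{Equa:opt_sol_3_tent_final} coincide there.

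The only delicate point I anticipate is the exclusion of the tent in the arc subregime and the exclusion of the arc in the tent subregime — in both cases, one needs to rule out that a different valid construction could produce a smaller value of $r(\bar\phi)$. This is already handled abstractly by Proposition \ref{Prop:unique_optimal_guess}, which shows that along the homotopy $\ell_0\mapsto\nu(\ell_0)$ there is exactly one crossing of the negativity boundary $N_{\mathrm{segm}}\cup N_{\mathrm{arc}}$; the sign of $h-\tan(\hat\theta)$ (equivalently, the sign of $\Delta\check\phi$) selects which branch of the boundary is hit, and hence which of the two geometric configurations realizes the optimum. Beyond that, the lemma is essentially a bookkeeping exercise that chains together formulas \eqref{Equa:opt_sol_3_finarc}, \eqref{Equa:opt_sol_3_h_tent}, \eqref{Equa:opt_sol_3_tent_final} with the admissibility threshold \eqref{Equa:opt_sol_3_admi_tent} and the numerically verified positivity.
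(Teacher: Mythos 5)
Your overall strategy — build both candidates explicitly from the saturation conditions, locate the structural transition, and read positivity off the numerical plots of \eqref{Equa:opt_sol_3_finarc} and \eqref{Equa:opt_sol_3_tent_final} — is exactly what Section \ref{Ss:optimal_cand_3} does, and the appeals to Proposition \ref{Prop:unique_optimal_guess} and Corollary \ref{Cor:unique} for uniqueness of the candidate are appropriate. However, there is a concrete error in how you match the two regimes. You assert that the formula \eqref{Equa:opt_sol_3_h_tent} ``makes $h\ge\tan(\hat\theta)$ exactly in this subregime by \eqref{Equa:opt_sol_3_admi_tent}'' and that the two constructions coincide at the lemma's transition angle. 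They do not. The lemma's transition angle is $2\pi+\frac{\pi}{2}+\tan(\bar\alpha)-\frac{\sin(\bar\alpha)-\sin(\bar\alpha-\hat\theta)}{\cos(\bar\alpha)}=2\pi+\frac{\pi}{2}+\frac{\sin(\bar\alpha-\hat\theta)}{\cos(\bar\alpha)}\approx 9.482$, which is the threshold at which $h$ given by \eqref{Equa:opt_sol_3_h_tent} becomes \emph{non-negative}. The admissibility threshold $h\ge\tan(\hat\theta)$ of \eqref{Equa:opt_sol_3_admi_tent}, which is simultaneously the angle at which the arc's opening $\Delta\check\phi$ collapses to $0$ (the point $(h,\bar\phi)=(\tan(\hat\theta),10.0161)$ lies on the curve $\Delta\check\phi=0$), occurs at $\bar\phi\approx 10.016$, strictly inside your second interval. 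Consequently, on $\bar\phi\in 2\pi+\frac{\pi}{2}+[\frac{\sin(\bar\alpha-\hat\theta)}{\cos(\bar\alpha)},\frac{\tan(\bar\alpha)}{\cos(\hat\theta)}-\tan(\hat\theta)]$ your justification for the tent (admissibility via $h\ge\tan(\hat\theta)$) fails, and your continuity argument is placed at the wrong angle: the arc and tent configurations coincide at $10.016$, not at $9.482$.

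Symmetrically, your exclusion of the tent in the first subregime by ``$h<\tan(\hat\theta)$'' understates what actually happens there: for $\bar\phi<9.482$ the saturation condition forces $h<0$, so the tent configuration does not exist at all; the inequality $0\le h<\tan(\hat\theta)$ only arises on the intermediate interval that your argument does not cover. To close the gap you must either treat the interval $[9.482,\,10.016]$ separately (deciding whether the candidate there is the segment-plus-arc of Fig.~\ref{Fig:spiral_opt_first_3}, which still has $\Delta\check\phi>0$ and whose final value \eqref{Equa:opt_sol_3_finarc} the paper checks to be positive even where $\Delta\check\phi<0$, or the tent), or identify precisely which of the two thresholds governs the structural change claimed in the statement. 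As written, the proof establishes the lemma only outside that intermediate interval.
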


\subsection{Optimal solution candidate for \texorpdfstring{$\bar \phi \geq 2\pi + \frac{\pi}{2} + \tan(\bar \alpha)$}{bar phi geq 2pi + pi/2 + tan(bar alpha)}: see Fig. \ref{Fig:opt_sol_3_tent}}
\label{Ss:optimal_cand_4}

\begin{figure}
\begin{subfigure}{.475\textwidth}
\resizebox{\textwidth}{!}{\input{opt_sol_3_tent.pdf_t}}
\caption{The optimal candidate $r_\mathrm{opt}(\phi)$ for $\bar \phi \geq 2\pi + \tan(\bar \alpha) + \frac{\pi}{2}$.}
\label{Fig:opt_sol_3_tent}
\end{subfigure} \hfill
\begin{subfigure}{.475\textwidth}
\resizebox{\textwidth}{!}{\includegraphics{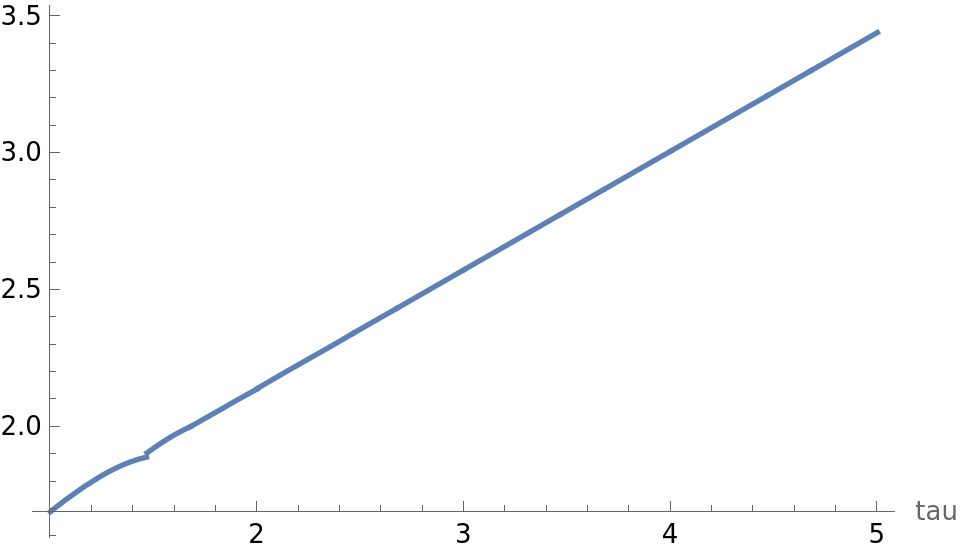}}
\caption{Numerical plot of the functional \eqref{Equa:opt_sol_3_pos_1}.}
\label{Fig:opt_sol_3_pos}
\end{subfigure}
\caption{Analysis of Section \ref{Ss:optimal_cand_4}.}
\label{Fig:opt_sol_3}
\end{figure}

We first observe that by Proposition \ref{Prop:tent_admissible} the tent is admissible.

Next we have to apply the formulas of Lemma \ref{Lem:relation_admissibility_tent} in the saturated part, in order to compute the final value $r_{\mathrm{opt}}(\bar \phi)$. Using Lemma \ref{Lem:final_value_satu}, we need to estimate the function
\begin{equation}
\label{Equa:opt_sol_3_pos_1}
\rho(\bar \tau) - \frac{1 - \cos(\hat \theta)}{\cos(\bar \alpha - \hat \theta) - \cos(\bar \alpha)} e^{-\bar c (2\pi + \bar \alpha) } \rho(\bar \tau-1), \quad \bar \tau \geq 1.
\end{equation}

A numerical plot is in Fig. \ref{Fig:opt_sol_3_pos}: we observe that it is positive and increasing for $\tau \in [4,5]$: then by Lemma \ref{lem:key} we conclude that

\begin{proposition}
\label{Prop:e_opt_large_phi}
For $\bar \tau \geq 1$, the optimal solution $r_\mathrm{opt}$ is made by a final tent with initial point in $\tau - 1$ and moreover $r_\mathrm{opt}(\bar \phi) > 0$.
\end{proposition}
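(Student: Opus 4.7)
The plan is to handle separately the structural claim that $r_\mathrm{opt}$ takes the tent form and the quantitative claim of positivity. For the structural part, I would combine Proposition \ref{Prop:unique_optimal_guess}, which gives the existence of a unique optimal candidate $\check r(\phi;0)$, with Proposition \ref{Prop:tent_admissible}. The latter guarantees admissibility of the two-segment tent configuration whenever the base of the construction lies in the saturated part of the base spiral $\tilde r_{\mathtt a = 0}$. By Theorem \ref{thm:case:a} the base spiral enters the saturated regime at the angle $\tan(\bar\alpha) + \frac{\pi}{2} - \bar\alpha$, and so the hypothesis $\bar\phi \geq 2\pi + \frac{\pi}{2} + \tan(\bar\alpha)$ (equivalently $\bar\tau \geq 1$) places the tip angle $\bar\phi - 2\pi - \bar\alpha$ inside the saturated part. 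Consequently the optimal candidate is the tent with tip at this angle, yielding the structural claim.

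For the positivity, I would apply Lemma \ref{Lem:final_value_satu} with entry ray $\hat r_0 = \tilde r(\bar\phi - 2\pi - \bar\alpha)$ to obtain
\begin{equation*}
r_\mathrm{opt}(\bar\phi) \;\geq\; \tilde r(\bar\phi) \;-\; K\,\tilde r(\bar\phi - 2\pi - \bar\alpha), \qquad K \;=\; \frac{1 - \cos(\hat\theta)}{\cos(\bar\alpha - \hat\theta) - \cos(\bar\alpha)}.
\end{equation*}
Passing to the rescaled variable $\rho(\tau) = \tilde r((2\pi+\bar\alpha)\tau)e^{-\bar c(2\pi+\bar\alpha)\tau}$, this lower bound is precisely the functional \eqref{Equa:opt_sol_3_pos_1}. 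In the range under consideration $\tilde r$ satisfies the linear saturated RDE, so both $\rho(\tau)$ and the shift $\rho(\tau-1)$ solve the critical rescaled equation $\dot\rho = \rho(\tau) - \rho(\tau-1)$; by linearity the combination
\begin{equation*}
F(\tau) \;=\; \rho(\tau) - K\,e^{-\bar c(2\pi + \bar\alpha)}\,\rho(\tau-1)
\end{equation*}
solves the same critical RDE.

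The remaining task is a finite-round numerical check, in the same spirit as Step 3 of the proof of Theorem \ref{thm:case:a} and as Proposition \ref{Prop:length_compar}: using the explicit representation of $\rho$ through the Green kernel $g$ of Lemma \ref{Lem:explic_kernel_RDE} and the formulas of Section \ref{S:case:study}, one assembles $F(\tau)$ on $[1,5]$ as a finite combination of shifts of $g$. Figure \ref{Fig:opt_sol_3_pos} shows that $F(\tau) > 0$ on $[1,5]$ and that $\dot F(\tau) > 0$ on the final round $[4,5]$. Once this is verified, the second part of Lemma \ref{lem:key} together with Remark \ref{rmk:exponentially:explod} upgrades the local positivity and monotonicity into positivity (and in fact linear growth) of $F$ for all $\tau \geq 1$, so that $r_\mathrm{opt}(\bar\phi) > 0$.

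The principal obstacle is the delicacy of the numerical verification: the constant $K$ encodes the worst-case correction coming from the length $\hat\ell_0$ of the tent segment and is not small, so it is not \emph{a priori} obvious that $F$ stays nonnegative during the transient rounds. If the bound failed on $[1,5]$, one would have to sharpen Lemma \ref{Lem:final_value_satu} by exploiting the convexity of the actual tent base rather than the crude comparison $|\hat Q_2 - \hat Q_0| \leq \hat L$. The numerical plot, however, exhibits comfortable slack, so no such refinement should be required and the scheme above completes the proof.
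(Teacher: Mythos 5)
Your proposal is correct and follows essentially the same route as the paper: admissibility of the tent via Proposition \ref{Prop:tent_admissible}, reduction to the functional \eqref{Equa:opt_sol_3_pos_1} via Lemma \ref{Lem:final_value_satu} with $\hat r_0 = \tilde r(\bar\phi - 2\pi - \bar\alpha)$, and then the finite-round numerical check combined with Lemma \ref{lem:key} and Remark \ref{rmk:exponentially:explod}. The only difference is that you spell out the structural step (via Proposition \ref{Prop:unique_optimal_guess} and the location of the saturated regime from Theorem \ref{thm:case:a}) and the linearity argument for $F$ more explicitly than the paper does, which is harmless.
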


\subsection{Asymptotic behavior of the optimal candidate}
\label{Ss:optimal_asympt}

We can compute the asymptotic value of the optimal solution, using the asymptotic value of the fastest saturated spiral $\tilde r_{\mathtt a = 0}(\phi)$ given by Theorem \ref{thm:case:a} for $\bar \phi \gg 1$: \newglossaryentry{Kasymptosat}{name=\ensuremath{K_\mathrm{asympt}},description={constant for the linear part of the asymptotic behavior of the fastest saturated spiral}}
\begin{align*}
\tilde r_{\mathtt a = 0}(\phi) &\sim 2 \phi e^{\bar c \phi} \bigg( \frac{e^{-\bar c(\tan(\bar \alpha) + \frac{\pi}{2} - \bar \alpha)}}{\sin(\bar \alpha)} - e^{-\bar c(2\pi)} - \frac{e^{-\bar c(2\pi + \frac{\pi}{2})} - e^{- \bar c (2\pi + \frac{\pi}{2} + \tan(\bar \alpha))}}{\bar c} - \cot(\bar \alpha) e^{-\bar c(2\pi + \frac{\pi}{2} + \tan(\bar \alpha))} \bigg) \\
&= \gls{Kasymptosat} \phi e^{\bar c \phi}.
\end{align*}
with $\gls{Kasymptosat} = 0.149681$.

One thus obtain the estimates for the tent computations of Lemma \ref{Lem:relation_admissibility_tent} 
\begin{align*}
\hat Q_2 - \hat Q_0 &= e^{\i (\bar \phi - 2\pi - \bar \alpha)} \int_0^{\hat \theta} e^{\i \omega} \frac{\tilde r_{\mathtt a = 0}(\omega + (\bar \phi - 2(2\pi + \bar \alpha)))}{\sin(\bar \alpha)} d\omega \\
&\sim K_\mathrm{asympt} e^{\i (\bar \phi - 2\pi - \bar \alpha)} \frac{\bar \phi e^{\bar c (\bar \phi - 2(2\pi + \bar \alpha))}}{\sin(\bar \alpha)} \int_0^{\hat \theta} e^{\i \omega} e^{\bar c \omega} d\omega \\
&= K_\mathrm{asympt} e^{\i (\bar \phi - 2\pi - \bar \alpha)} \frac{\bar \phi e^{\bar c (\bar \phi - 2(2\pi + \bar \alpha))}}{\sin(\bar \alpha)} \frac{e^{(\bar c + \i) \hat \theta} - 1}{\bar c + \i}, 
\end{align*}
\begin{align*}
(\hat Q_2 - \hat Q_0) \cdot e^{\i (\bar \phi - (2\pi + \bar \alpha) + \hat \theta)} &\sim K_\mathrm{asympt} \frac{\bar \phi e^{\bar c (\bar \phi - 2(2\pi + \bar \alpha))}}{\sin(\bar \alpha)} \frac{\bar c (e^{\bar c \hat \theta} - \cos(\hat \theta)) + \sin(\hat \theta)}{1 + \bar c^2} \\
&= 0.564562 K_\mathrm{asympt} \bar \phi e^{\bar c (\bar \phi - 2(2\pi + \bar \alpha))},
\end{align*}
\begin{align*}
(\hat Q_2 - \hat Q_0) \cdot e^{\i (\bar \phi - (2\pi + \bar \alpha) + \bar \alpha + \hat \theta - \frac{\pi}{2})} &\sim K_\mathrm{asympt} \frac{\bar \phi e^{\bar c (\bar \phi - 2(2\pi + \bar \alpha))}}{\sin(\bar \alpha)} \frac{\bar c (e^{\bar c \hat \theta} \sin(\bar \alpha) - \sin(\bar \alpha + \hat \theta)) + (e^{\bar c \hat \theta} \cos(\bar \alpha) - \cos(\bar \alpha + \hat \theta))}{1 + \bar c^2} \\
&= 0.576096 K_\mathrm{asympt} \bar \phi e^{\bar c (\bar \phi - 2(2\pi + \bar \alpha))},
\end{align*}
\begin{align*}
\hat L &= \int_0^{\hat \theta} \frac{\tilde r_{\mathtt a = 0}(\omega + (\bar \phi - 2(2\pi + \bar \alpha)))}{\sin(\bar \alpha)} d\omega \sim K_\mathrm{asympt} \frac{\bar \phi e^{\bar c (\bar \phi - 2(2\pi + \bar \alpha))}}{\sin(\bar \alpha)} \frac{e^{\bar c \hat \theta} - 1}{\bar c} \\
&= 0.588497 K_\mathrm{asympt} \bar \phi e^{\bar c (\bar \phi - 2(2\pi + \bar \alpha))},
\end{align*}
\begin{align*}
\hat \ell_0 &= \frac{(\hat Q_2 - \hat Q_0) \cdot e^{\i (\hat \theta + (\bar \phi - 2(2\pi + \bar \alpha)))} - \hat L + (1 - \cos(\hat \theta)) r(\bar \phi - 2(2\pi + \bar \alpha))}{\cos(\bar \alpha - \hat \theta) - \cos(\bar \alpha)} \\
&\sim K_\mathrm{asympt} \bar \phi e^{\bar c (\bar \phi - (2\pi + \bar \alpha))} \frac{1 - \cos(\hat \theta) + \frac{e^{-\bar c(2\pi + \bar \alpha)}}{\sin(\bar \alpha)} (\frac{e^{(\bar c + \i) \hat \theta} - 1}{\bar c + \i} \cdot e^{\i \theta} - \frac{e^{\bar c \hat \theta} - 1}{\bar c})}{\cos(\bar \alpha - \hat \theta) - \cos(\bar \alpha)} \\
&= 0.306042 K_\mathrm{asympt} \bar \phi e^{\bar c (\bar \phi - (2\pi + \bar \alpha))},
\end{align*}
\begin{align*}
\hat r_2 &= \frac{\hat r_0 \sin(\bar \alpha + \hat \theta) + \hat \ell_0 \sin(\hat \theta) - (\hat Q_2 - \hat Q_0) \cdot e^{\i (\bar \alpha + \hat \theta - \frac{\pi}{2})}}{\sin(\bar \alpha)} \\
&\sim 1.15869 K_\mathrm{asympt} \bar \phi e^{\bar c (\bar \phi - (2\pi + \bar \alpha) + \hat \theta)},
\end{align*}
so that the final value is computed via the comparison with the saturated spiral, see Equation \eqref{eq:comparaison:Saturated}, as
\begin{align*}
r_\mathrm{opt}(\bar \phi) &= (\hat r_2 - \tilde r_{\mathtt a = 0}(\bar \phi - (2\pi + \bar \alpha) + \hat \theta)) e^{\cot(\bar \alpha)(2\pi + \bar \alpha - \hat \theta)} + \tilde r_{\mathtt a = 0}(\bar \phi) - \hat \ell_0 \\
&\sim \big[ (1.15869 - e^{\bar c \hat \theta}) e^{\cot(\bar \alpha)(2 \pi + \bar \alpha - \hat \theta) - \bar c(2\pi + \bar \alpha)} + 1 - 0.306042 e^{- \bar c (2\pi + \bar \alpha)} \big] K_\mathrm{asympt} \bar \phi e^{\bar c \bar \phi} \\
&= 0.976359 K_\mathrm{asympt} \bar \phi e^{\bar c \bar \phi}.
\end{align*}

We thus have proved the following result.

\begin{proposition}
\label{Prop:postiv_aympt}
The optimal candidate solution at $\bar \phi$ can be estimated as 
\begin{equation*}
r_\mathrm{opt}(\bar \phi) = 0.976359 \tilde r_{\mathtt a = 0}(\bar \phi) + \mathcal O(1)
\end{equation*}
for $\bar \phi \to \infty$.
\end{proposition}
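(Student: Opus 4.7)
The strategy is to substitute the asymptotic expansion of the fastest saturated spiral $\tilde r_{\mathtt a = 0}$ from Theorem \ref{thm:case:a} into the tent formulas of Lemma \ref{Lem:relation_admissibility_tent}, and then apply the comparison identity \eqref{eq:comparaison:Saturated} that was derived at the end of Section \ref{Ss:comput_tent_new}. For $\bar\phi$ sufficiently large we are in the regime of Section \ref{Ss:optimal_cand_4}, where by Proposition \ref{Prop:e_opt_large_phi} the optimal candidate $r_\mathrm{opt}$ is obtained from $\tilde r_{\mathtt a = 0}$ by replacing the last round with a tent of opening angle $\hat\theta$ whose base lies on the saturated spiral starting at the angle $\bar\phi - 2\pi - \bar\alpha$.

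The first step is to write
\begin{equation*}
\tilde r_{\mathtt a = 0}(\phi) = K_\mathrm{asympt}\,\phi\,e^{\bar c \phi}\,\bigl(1 + \mathcal O(\phi^{-1})\bigr), \qquad K_\mathrm{asympt} = 0.149681,
\end{equation*}
as furnished by Theorem \ref{thm:case:a} and Corollary \ref{Cor:crit_G_M}. Using this, I compute the leading asymptotics of the four geometric quantities in Lemma \ref{Lem:relation_admissibility_tent}, namely $\hat Q_2 - \hat Q_0$, $\hat L$, $\hat \ell_0$, and $\hat r_2$. Each is an explicit integral (or pointwise evaluation) of $\tilde r_{\mathtt a = 0}$ over the last round, so the computation reduces to evaluating
\begin{equation*}
\int_0^{\hat\theta} e^{\bar c \omega}\,d\omega \qquad\text{and}\qquad \int_0^{\hat\theta} e^{(\bar c + \i)\omega}\,d\omega,
\end{equation*}
together with trigonometric projections in the directions $e^{\i\hat\theta}$ and $e^{\i(\bar\alpha + \hat\theta - \pi/2)}$. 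These are the computations already displayed immediately before the statement of the proposition.

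The second step is to feed these expressions into the comparison identity
\begin{equation*}
r_\mathrm{opt}(\bar\phi) = \bigl(\hat r_2 - \tilde r_{\mathtt a = 0}(\bar\phi - (2\pi + \bar\alpha) + \hat\theta)\bigr)\,e^{\cot(\bar\alpha)(2\pi + \bar\alpha - \hat\theta)} + \tilde r_{\mathtt a = 0}(\bar\phi) - \hat\ell_0,
\end{equation*}
collecting all terms as multiples of $K_\mathrm{asympt}\,\bar\phi\,e^{\bar c \bar\phi}$. The four rescaling factors $e^{-\bar c(2\pi + \bar\alpha)}$, $e^{\bar c \hat\theta}$, $e^{\cot(\bar\alpha)(2\pi+\bar\alpha-\hat\theta)}$ combine (using $\bar c = \cot(\bar\alpha) - (2\pi+\bar\alpha)^{-1}$ from Corollary \ref{Cor:eigen_angle}) into a single numerical coefficient. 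The numerical verification that this coefficient equals $0.976359$ is straightforward trigonometric bookkeeping, using the value $\hat\theta = 0.506134$ from \eqref{Equa:hat_theta_value} and the standard evaluation of $\bar\alpha$, $\bar c$.

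The main subtlety, and the only place where care is needed, is the control of the remainder. Since $\tilde r_{\mathtt a = 0}(\phi) = K_\mathrm{asympt}\,\phi\,e^{\bar c\phi} + \mathcal O(e^{\bar c\phi})$ by Lemma \ref{Lem:aympt_g}\eqref{Point_2:expli_exp_kern}, the next-to-leading term in every integral over the last round contributes at most $\mathcal O(e^{\bar c \bar\phi})$, which is absorbed into the remainder claimed in the statement. I expect no genuine difficulty here, as the tent formulas are all linear functionals of $\tilde r_{\mathtt a = 0}$ restricted to an interval of fixed length $\hat\theta$, so remainder control is inherited directly from the uniform expansion in Lemma \ref{Lem:aympt_g}. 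The heart of the argument is the numerical identity between the linear combination of trigonometric factors and $0.976359$, which is a direct check.
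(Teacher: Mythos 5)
Your proposal is correct and follows essentially the same route as the paper: Section \ref{Ss:optimal_asympt} likewise substitutes the asymptotic expansion $\tilde r_{\mathtt a = 0}(\phi) \sim K_\mathrm{asympt}\,\phi\,e^{\bar c \phi}$ into the tent formulas of Lemma \ref{Lem:relation_admissibility_tent} and then applies the comparison identity \eqref{eq:comparaison:Saturated} to extract the coefficient $0.976359$. The remainder control you describe via Lemma \ref{Lem:aympt_g} is exactly what justifies the $\mathcal O(1)$ term (more precisely $\mathcal O(e^{\bar c\bar\phi})$ relative to the leading $\bar\phi e^{\bar c\bar\phi}$ growth), so nothing is missing.
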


In particular, we gain 2.4\% using the tent w.r.t. the saturated spiral. However observe that asymptotically $\hat\ell_0 \sim 0.3 \tilde r_{\mathtt a = 0}$, so that the spirals $\tilde r_{\mathtt a = 0}$ and $r_\mathrm{opt}$ are quite different.

%
%
%
%
%
%
%
%
%
%
%
%
%
%

\section{Segment-segment case}
\label{S:segment_case}

In the following subsections, we analyze the case where the fastest saturated spiral starts with a segment, and the perturbation of the optimal solution is a tent: we call it \emph{segment-segment case} (or \emph{segment-tent case}). This is the unique possibility for $\bar \tau \geq 1$ by Proposition \ref{Prop:tent_admissible}.

Depending on the relative position of $\phi_0$ and $\bar \phi$, $8$ cases have to be considered: we will study them in the next subsections, and we will show that the derivative of the optimal candidate is always positive for all admissible perturbations. The fact that we need to consider several cases is due to the presence of the negativity region: indeed in this case we cannot rely on the estimates of Lemma \ref{Lem:final_value_satu}, and the only way is to use the explicit formulas of Lemma \ref{Lem:relation_admissibility_tent}. For more clarity, see also Figure \ref{fig:tent:solution}, where there are the different cases based on the choice of $\bar\phi$.
We recall that the angle $\bar \theta$ is the angle formed by the initial segment $[P_1,P_2]$ of the fastest saturated spiral, with $P_1 = \zeta(s_0)$.

\subsection{Segment-segment with \texorpdfstring{$\bar \phi \in \phi_0 + [\beta^-(\phi_0),2\pi + \bar \theta - \hat \theta]$}{bar phi in beta-,2pi + bar theta - hat theta}, see Fig. \ref{Fig:segm_arc_1}}
\label{Sss:segm-segm_low}

In this case $\hat r(\phi;s_0) = \tilde r(\phi;s_0)$, because the derivative $\delta \tilde r(\phi;s_0) \geq 0$: indeed $\bar \phi$ is outside the region of negativity $\gls{Nsegm}$. This fact gives by Proposition \ref{Prop:neg_region} the following

\begin{lemma}
\label{Lem:segm_segm_1_case}
If $\bar \phi \in \phi_0 + [\beta^-(\phi_0),2\pi + \bar \theta - \hat \theta]$ and $\tilde r = \check r$ is in the segment case, any non-null perturbation of the optimal solution is strictly positive at $\bar \phi$.
\end{lemma}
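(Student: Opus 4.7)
The plan is to reduce the statement to a direct application of Proposition \ref{Prop:regions_pos_neg_segm}, which already carries out the sign analysis of the rescaled derivative in the segment case. Under the standing hypothesis $\check r = \tilde r$ in this range of $\bar\phi$, the derivative $\delta\check r(\bar\phi;s_0)$ coincides with $\delta\tilde r(\bar\phi;s_0)$, so it suffices to establish strict positivity of the latter. By Corollary \ref{Cor:equation} and the rescaling \eqref{Equa:rho_segm_r}, i.e.\ $\delta\rho(\tau;\theta) = \delta\tilde r(\phi_0 + \bar\theta - \bar\alpha + (2\pi+\bar\alpha)\tau;s_0)\,e^{-\bar c(2\pi+\bar\alpha)\tau}$, this is equivalent to positivity of $\delta\rho(\bar\tau,\theta)$ at
\[
\bar\tau = \frac{\bar\phi - \phi_0 - \bar\theta + \bar\alpha}{2\pi+\bar\alpha}, \qquad \theta = \bar\theta - \beta^+(\phi_0).
\]

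The first step is to translate the upper bound on $\bar\phi$: the hypothesis $\bar\phi \leq \phi_0 + 2\pi + \bar\theta - \hat\theta$ is exactly the condition $\bar\tau \leq 1 - \hat\theta/(2\pi+\bar\alpha)$, while the lower bound $\bar\phi \geq \phi_0 + \beta^-(\phi_0)$ plays no role apart from guaranteeing that $\bar\phi$ is in the regime where the representation is valid. Recalling that in the segment case one has $\bar\alpha \leq \bar\theta < \pi/2$ and $\theta \leq \bar\theta < \pi/2 < 2\bar\alpha$, the component $\{\tau = 1,\ \theta \in [2\bar\alpha,\pi]\}$ of the negativity region $N_\mathrm{segm}$ is inaccessible to us. For $\theta \in (0,\hat\theta]$ the triangular part of $N_\mathrm{segm}$ lies entirely in the strip $\tau \geq 1 - \theta/(2\pi+\bar\alpha) \geq 1 - \hat\theta/(2\pi+\bar\alpha) \geq \bar\tau$, with simultaneous equality only at the vertex $(\bar\tau,\theta) = (1 - \hat\theta/(2\pi+\bar\alpha),\hat\theta)$; for $\theta > \hat\theta$ only the isolated point $\tau = 1$ belongs to $N_\mathrm{segm}$ and is avoided by $\bar\tau < 1$.

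Finally I would invoke the strict positivity statement of Proposition \ref{Prop:regions_pos_neg_segm} outside $N_\mathrm{segm}$ to conclude that $\delta\rho(\bar\tau,\theta) > 0$, and therefore $\delta\check r(\bar\phi;s_0) > 0$, for every $\theta > 0$, that is, for every perturbation direction not tangent to the initial segment $[P_1,P_2]$ of $\tilde\zeta$. The degenerate direction $\theta = 0$ is precisely the null perturbation excluded by the statement (the curve is moved along its own tangent and $\delta\tilde r \equiv 0$). I expect no real obstacle here: the lemma is essentially a bookkeeping consequence of the detailed sign analysis already packaged in Proposition \ref{Prop:regions_pos_neg_segm}, and it serves as the base case from which the remaining (and genuinely more involved) sub\-geometries of the segment-segment case will be tackled in the subsequent subsections.
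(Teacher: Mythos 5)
Your argument is the same as the paper's: for $\bar\phi$ in this range the optimal candidate coincides with the fastest saturated spiral, so the claim reduces to checking that the rescaled point $(\bar\tau,\theta)$ lies outside the negativity region $N_{\mathrm{segm}}$ of Proposition \ref{Prop:regions_pos_neg_segm}; your translation of the angle bounds into the $\tau$-variable is correct and in fact more explicit than the paper's one-line justification. The only (shared) loose ends are the degenerate corner $(\bar\tau,\theta)=(1-\hat\theta/(2\pi+\bar\alpha),\hat\theta)$, where the derivative vanishes rather than being strictly positive, and the sub-range $\bar\phi<\phi_0+\bar\theta-\bar\alpha$, where positivity comes instead from the explicit formula for $\delta\tilde r$ on the segment part; neither affects the substance of the proof.
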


\begin{proof}
Fix indeed $\bar \phi \in [\beta^{-}(\phi_0),2\pi+\bar\theta - \hat \theta]$. In $P_0=\zeta(s_0)$ compute the fastest saturated spiral: we know it is optimal whenever $\bar\theta-\theta_0=\theta>\hat\theta$, that is, it is outside the negativity region. Thus in the tent solution $\hat\ell_0$=0, and this corresponds precisely to the case of Figure \ref{Fig:segm_arc_1}.
\end{proof}

\begin{figure}
\resizebox{.75\textwidth}{!}{\input{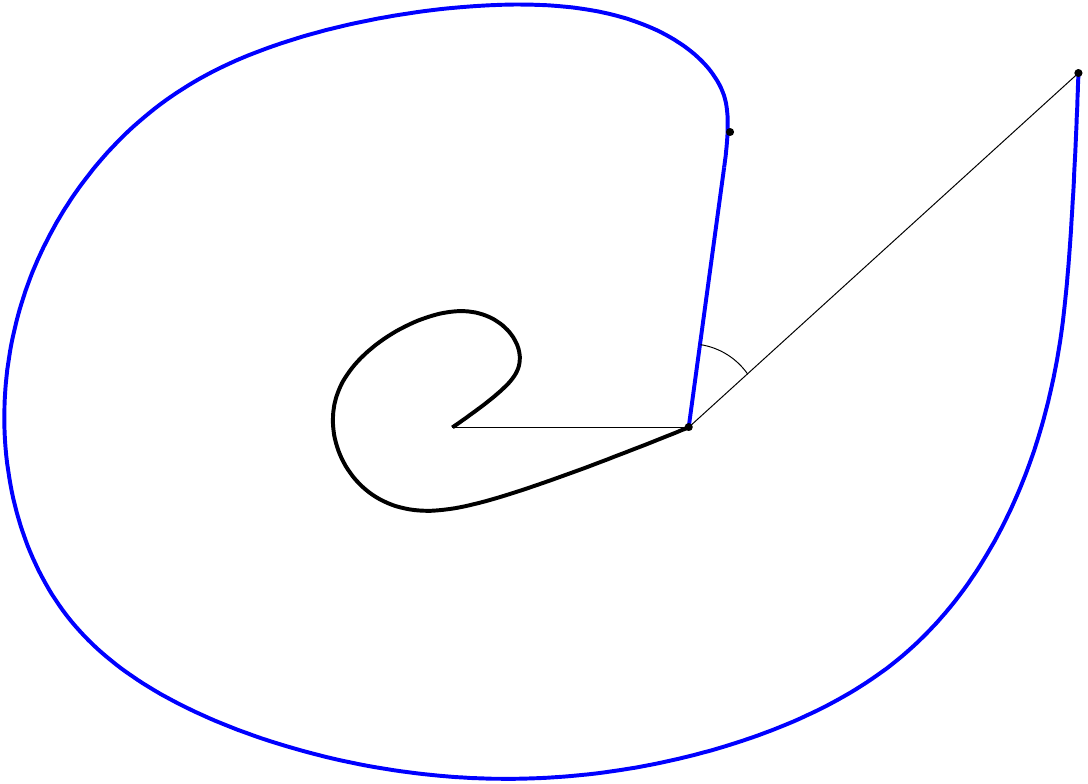_t}}
\caption{The geometric situation of Section \ref{Sss:segm-segm_low}: being $\bar \phi - 2\pi - \phi_0 - \bar \theta > \hat \theta$, $\hat r = \tilde r$ and the derivative is $\leq 0$.}
\label{Fig:segm_arc_1}
\end{figure}

%
%
%
%

\subsection{Segment-segment with \texorpdfstring{$\bar \phi \in \phi_0 + 2\pi + (\bar \theta-\hat \theta,\bar \theta)$}{bar phi in phi0 + 2pi + (bar theta - hat theta, bar theta)}, See Fig. \ref{Fig:segm_segm_2_case}.}
\label{Sss:segm_semg_neg}

This situation occurs when $\bar \phi$ is inside the negativity region $\gls{Nsegm}$: it is thus more convenient to take an initial segment in the direction $\bar \phi$, because the derivative $\delta \tilde r$ is negative according to Proposition \ref{Prop:regions_pos_neg_segm}.

The spiral $\check \zeta$, corresponding to $\check r(\phi;s_0)$, is thus made of a segment $[P_0 = P_1 = \check P_0, \check P^- = \check P_1]$ and a second segment $[\check P_1,\check P_2]$, forming an angle equal to the critical value $\hat \theta$. The assumption to be in the segment-segment case enters here because the second segment corresponds to the fastest saturated spiral starting in $\check P_1$: in the next Section \ref{Sss:segm_arc_2} we will address the segment-arc case, where after $\check P_1$ the fastest saturated spiral starts with a level-set arc.

In order to prove that any perturbation is positive, we compute the derivative w.r.t. an initial perturbation.

First of all, if $\phi_0 + \beta^+(\phi_0) + 2\pi > \bar \phi$, we are not in the optimal solution: indeed this would imply that
$$
\delta \check r(\bar \phi;s_0) -= \delta \tilde r(\bar \phi;s_0) > 0.
$$
Hence the situation to be considered is as in Fig. \ref{Fig:segm_segm_2_case}. 

\begin{figure}
\begin{subfigure}{.475\textwidth}
\resizebox{\textwidth}{!}{\input{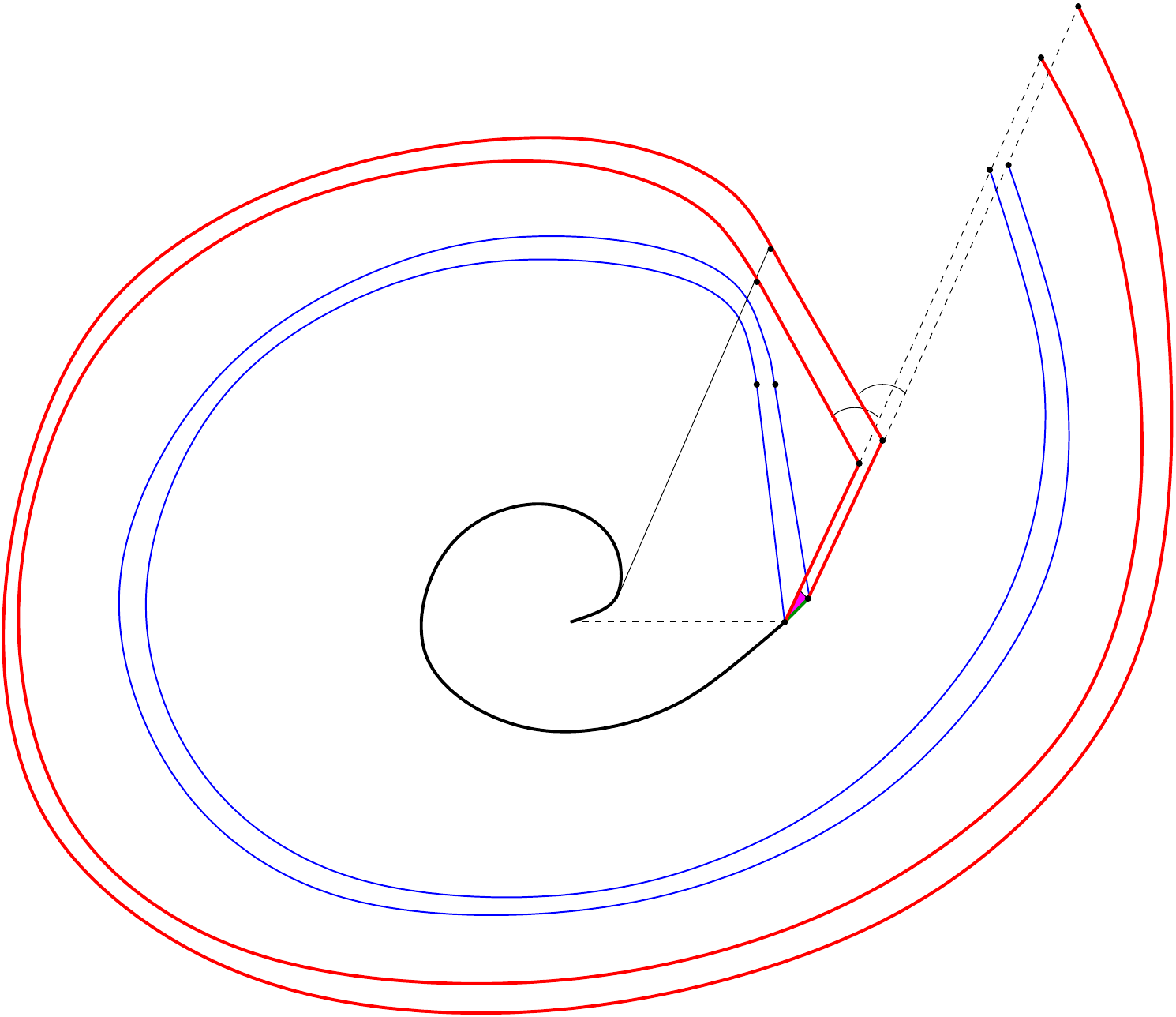_t}}
\caption{The perturbation of the optimal solution $\check r$ for the case of Section \ref{Sss:segm_semg_neg}.}
\label{Fig:segm_segm_2_case}
\end{subfigure} \hfill
\begin{subfigure}{.475\textwidth}
\resizebox{\textwidth}{!}{\includegraphics{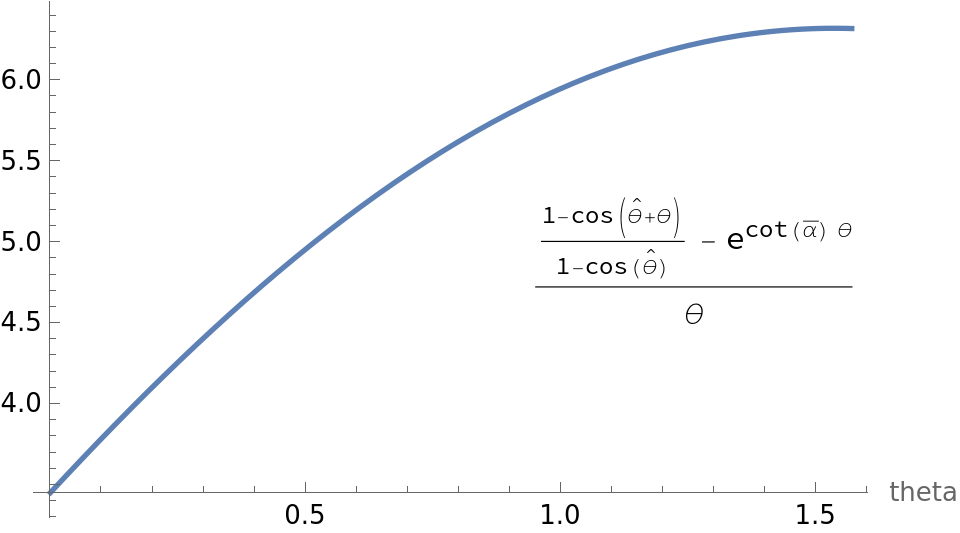}}
\caption{Plot of the function (\ref{Equa:segm_segm_2_final}), whose minimal value is $3.45283$.}
\label{Fig:segm_segm_case_2_final}
\end{subfigure} 
\caption{In the first figure, in blue the fastest saturated solution $\tilde r$ is represented, while $\check r$ is in red. While $\delta \tilde r(\bar \phi;s_0) < 0$ (variation of the distance of the final point from $P_0$), it holds $\delta \check r(\phi;s_0) \geq 0$ (variation of the distance from $\check P^-$), giving the optimality in this case. It is also represented the angle $\theta$ (magenta), formed by the direction of the perturbation and $\bar \phi$.}
\end{figure}

The variations are
\begin{equation*}
\delta \check P_0 = \delta s_0 e^{\i (\bar \phi - \theta)}, \quad 
\delta \check P_1 = \delta \check P_0 + \delta \ell_0 e^{\i \bar \phi} = \delta s_0 e^{\i (\bar \phi - \theta)} + \delta \ell_0 e^{\i \bar \phi},
\end{equation*}
\begin{equation*}
\delta \check P_2 = \delta \check P_1 + \delta \ell_1 e^{\i (\bar \phi + \hat \theta)} = \delta s_0 e^{\i (\bar \phi - \theta)} + \delta \ell_0 e^{\i \bar \phi} + \delta \ell_1 e^{\i (\bar \phi + \hat \theta)},
\end{equation*}
where we recall that
\begin{equation*}
\ell_0 = |\check P_1 - \check P_0|, \quad \ell_1 = |\check P_2 - \check P_1|.
\end{equation*}
The vector relation among the above quantities is
\begin{equation*}
\delta \check r_2(\bar \phi - 2\pi - \bar \alpha + \hat \theta) e^{\i (\bar \phi + \hat \theta - \bar \alpha)} = \delta \check P_2,
\end{equation*}
because the angle of the optimal ray with $\check \zeta$ at $\check P_2$ is constantly equal to $\bar\alpha$ by construction, see Fig. \ref{Fig:segm_segm_2_case}.
Projecting the above equation one obtains
\begin{align*}
0 &= e^{\i (\bar \phi + \frac{\pi}{2} + \hat \theta - \bar \alpha)} \cdot \delta \check P_2 \\
&= \delta s_0 \sin(\bar \alpha - \hat \theta - \theta) + \delta \ell_0 \sin(\bar \alpha - \hat \theta) + \delta \ell_1 \sin(\bar \alpha), 
\end{align*}
\begin{align*}
\delta \check r_2 = \delta \check P_2 \cdot e^{\i (\bar \phi + \hat \theta - \bar \alpha)} &= \delta s_0 \cos(\bar \alpha - \hat \theta - \theta) + \delta \ell_0 \cos(\bar \alpha - \hat \theta) + \delta \ell_1 \cos(\bar \alpha) \\
&= \delta s_0 \frac{\sin(\hat \theta + \theta)}{\sin(\bar \alpha)} + \delta \ell_0 \frac{\sin(\hat \theta)}{\sin(\bar \alpha)},
\end{align*}
where we have used the first projection to replace $\delta \ell_1$.

The saturation condition at the point $\check P_2 + \delta \check P_2$ gives
\begin{align*}
0 &= \delta \check r_2 - \cos(\bar \alpha) \big( \delta s_0 + \delta \ell_0 + \delta \ell_1 \big) \\
&= \delta s_0 \big( \cos(\bar \alpha - \hat \theta - \theta) - \cos(\bar \alpha) \big) + \delta \ell_0 ( \cos(\bar \alpha - \hat \theta) - \cos(\bar \alpha)),
\end{align*}
so that
\begin{equation*}
\delta \ell_0 = - \delta s_0 \frac{\cos(\bar \alpha - \hat \theta - \theta) - \cos(\bar \alpha)}{\cos(\bar \alpha - \hat \theta) - \cos(\bar \alpha)},
\end{equation*}
\begin{align*}
\delta \check r_2 &= \delta s_0 \frac{\sin(\hat \theta + \theta) (\cos(\bar \alpha - \hat \theta) - \cos(\bar \alpha)) - \sin(\hat \theta) (\cos(\bar \alpha - \hat \theta - \theta) - \cos(\bar \alpha))}{\sin(\bar \alpha) (\cos(\bar \alpha - \hat \theta) - \cos(\bar \alpha))} \\
&= \delta s_0 \frac{\cot(\bar \alpha)}{\cos(\bar \alpha - \hat \theta) - \cos(\bar \alpha)} \big( \sin(\theta) (1 - \cos(\hat \theta)) + (1 - \cos(\theta)) \sin(\hat \theta) \big).
\end{align*}

As in Corollary \ref{Cor:equation}, the ODE for the perturbation divided by $\delta s_0$ becomes
\begin{equation*}
\delta \check r(\bar \phi - \theta -) = e^{\cot(\bar \alpha) (2\pi + \bar \alpha - \hat \theta - \theta)} \delta \check r_2,
\end{equation*}
\begin{equation*}
\delta \check r(\bar \phi -) = e^{\cot(\bar \alpha) (2\pi + \bar \alpha - \hat \theta)} \delta \check r_2 - e^{\cot(\bar \alpha) \theta},
\end{equation*}
and then the final value is computed as
\begin{align*}
\delta \check r(\bar \phi) &= \delta \check r_2 e^{\cot(\bar \alpha) (2\pi + \bar \alpha - \hat \theta)} - e^{\cot(\bar \alpha) \theta} - \delta \ell_0 \\
&= \frac{\cot(\bar \alpha)}{\cos(\bar \alpha - \hat \theta) - \cos(\bar \alpha)} \big( \sin(\theta) (1 - \cos(\hat \theta)) + (1 - \cos(\theta)) \sin(\hat \theta) \big) e^{\cot(\bar \alpha) (2\pi + \bar \alpha - \hat \theta)} \\
& \quad \qquad \qquad \qquad - e^{\cot(\bar \alpha) \theta} + \frac{\cos(\bar \alpha - \hat \theta - \theta) - \cos(\bar \alpha)}{\cos(\bar \alpha - \hat \theta) - \cos(\bar \alpha)} \\
&= \frac{1 - \cos(\hat \theta + \theta)}{1 - \cos(\hat \theta)} - e^{\cot(\bar \alpha) \theta},
\end{align*}
where we have used the definition of \gls{thetahat}, namely (see \eqref{Equa:hat_theta_det_666})
$$
\frac{\cot(\bar \alpha)}{\sin(\bar \alpha)}(1 - \cos(\hat \theta)) e^{\cot(\bar \alpha)(2\pi + \bar \alpha - \hat \theta)} = 1.
$$
A numerical plot of
\begin{equation}
\label{Equa:segm_segm_2_final}
\theta \mapsto \frac{\delta \check r(\bar \phi)}{\theta} = \frac{1}{\theta} \bigg( \frac{1 - \cos(\hat \theta + \theta)}{1 - \cos(\hat \theta)} - e^{\cot(\bar \alpha) \theta} \bigg)
\end{equation}
is in Fig. \ref{Fig:segm_segm_case_2_final}: the function is strictly positive, with minimal value
\begin{equation*}
\lim_{\theta \to 0} \frac{1}{\theta} \bigg( \frac{1 - \cos(\hat \theta + \theta)}{1 - \cos(\hat \theta)} - e^{\cot(\bar \alpha) \theta} \bigg) = \frac{\sin(\hat \theta)}{1 - \cos(\hat \theta)} - \cot(\bar \alpha) = 3.45283.
\end{equation*}

We have proved the following result.

\begin{lemma}
\label{Lem:segm_segm_2_case}
If $\bar \phi \in \phi_0 + 2\pi + (\bar \theta-\hat \theta,\bar \theta)$, any non-null perturbation of the optimal solution is strictly positive at $\bar \phi$ for $\theta \in [0,\pi]$.
\end{lemma}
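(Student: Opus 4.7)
The approach is to compute the derivative $\delta\check r(\bar\phi;s_0)=\frac{d}{ds_0}\check r(\bar\phi;s_0)$ explicitly and show it is strictly positive whenever the perturbation is nontrivial. Because $\bar\phi$ lies inside the negativity region $N_{\mathrm{segm}}$ of Proposition \ref{Prop:regions_pos_neg_segm}, the optimal candidate $\check r$ differs from the fastest saturated spiral $\tilde r$ by inserting a tent: the new curve consists of a segment of length $\ell_0$ in the direction $\bar\phi$ followed by a tangent segment of length $\ell_1$ making the critical exterior angle $\hat\theta$. Thus $\delta\check r(\bar\phi;s_0)$ is controlled by a finite system of linear relations coming from tent geometry, rather than from the RDE machinery of Section \ref{S:family}.

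First I would write out the variations $\delta\check P_0,\delta\check P_1,\delta\check P_2$ of the three tent vertices in terms of the three infinitesimal parameters $\delta s_0,\delta\ell_0,\delta\ell_1$ by unit-vector bookkeeping. Next I would impose two closure conditions: (i) the saturation equation $\mathcal A(\check P_2+\delta\check P_2)=0$, yielding one linear relation among these parameters and $\delta\check r_2$; and (ii) the fact that the optimal ray arriving at $\check P_2$ makes the fixed angle $\bar\alpha$ with the tangent of $\check\zeta$ there (since $\check P_2$ sits on the saturated branch), obtained by projecting $\delta\check P_2=\delta\check r_2\, e^{\i(\bar\phi+\hat\theta-\bar\alpha)}$ onto the perpendicular direction. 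Solving this $2\times2$ linear system expresses $\delta\ell_0$ and $\delta\check r_2$ in terms of $\delta s_0$ alone, in which the angle $\theta=\bar\theta-\theta_0\in[0,\pi]$ between $\dot\zeta(\phi_0+)$ and the direction of $\bar\phi$ appears as a parameter.

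Once $\delta\check r_2$ is known, the value $\delta\check r(\bar\phi)$ follows by propagating the saturated RDE over the final arc of angle $2\pi+\bar\alpha-\hat\theta$, accounting for the curvature jump $-e^{\cot(\bar\alpha)\theta}$ produced at the angle $\theta$, and subtracting $\delta\ell_0$. Using the defining equation \eqref{Equa:hat_theta_det_666} of $\hat\theta$ to simplify the constant $e^{\cot(\bar\alpha)(2\pi+\bar\alpha-\hat\theta)}$, the expression collapses to the compact form
\begin{equation*}
\frac{\delta\check r(\bar\phi)}{\delta s_0}=\frac{1-\cos(\hat\theta+\theta)}{1-\cos(\hat\theta)}-e^{\cot(\bar\alpha)\theta}.
\end{equation*}

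The final and only delicate step is verifying that this function of $\theta$ is strictly positive on $(0,\pi]$. At $\theta=0$ it vanishes, which is consistent with the fact that $\theta=0$ corresponds to following the already-optimal tent and hence to a null perturbation. I would therefore study $\theta\mapsto\delta\check r(\bar\phi)/\theta$ and show it is bounded below by its limit at $0^+$, namely $\frac{\sin(\hat\theta)}{1-\cos(\hat\theta)}-\cot(\bar\alpha)\simeq 3.45$. This is where I expect the main obstacle: because the expression mixes a trigonometric numerator with an exponential, an entirely analytic bound is awkward, so I would close the argument numerically via Mathematica, in line with the rest of the paper, on the compact interval $\theta\in[0,\pi]$.
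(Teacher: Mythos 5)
Your proposal follows essentially the same route as the paper: the same tent-vertex variations, the same two closure conditions (saturation at $\check P_2$ and the fixed angle $\bar\alpha$ of the optimal ray), the same propagation and simplification via the defining equation of $\hat\theta$, arriving at the identical expression $\frac{1-\cos(\hat\theta+\theta)}{1-\cos(\hat\theta)}-e^{\cot(\bar\alpha)\theta}$ and the identical numerical lower bound $\frac{\sin(\hat\theta)}{1-\cos(\hat\theta)}-\cot(\bar\alpha)=3.45283$. This matches the paper's proof.
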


%
%
%

\begin{figure}
\begin{subfigure}{.453\textwidth}
\resizebox{\linewidth}{!}{\input{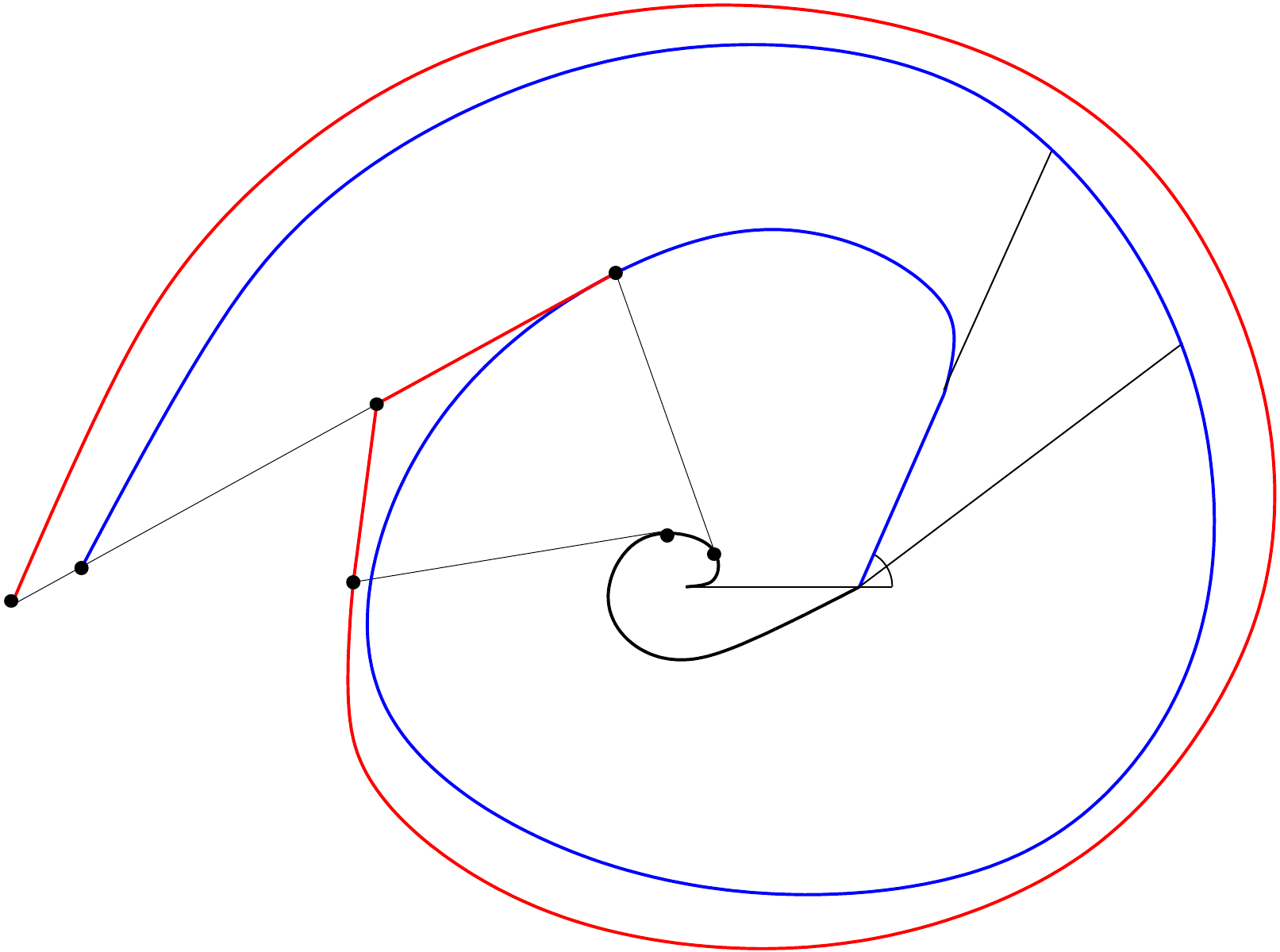_t}}
\caption{The geometric situation of Section \ref{Sss:segm_semg_after_neg}.}
\label{Fig:segm_segm_sat_1}
\end{subfigure}
\hfill
\begin{subfigure}{.453\linewidth}
\resizebox{\linewidth}{!}{\input{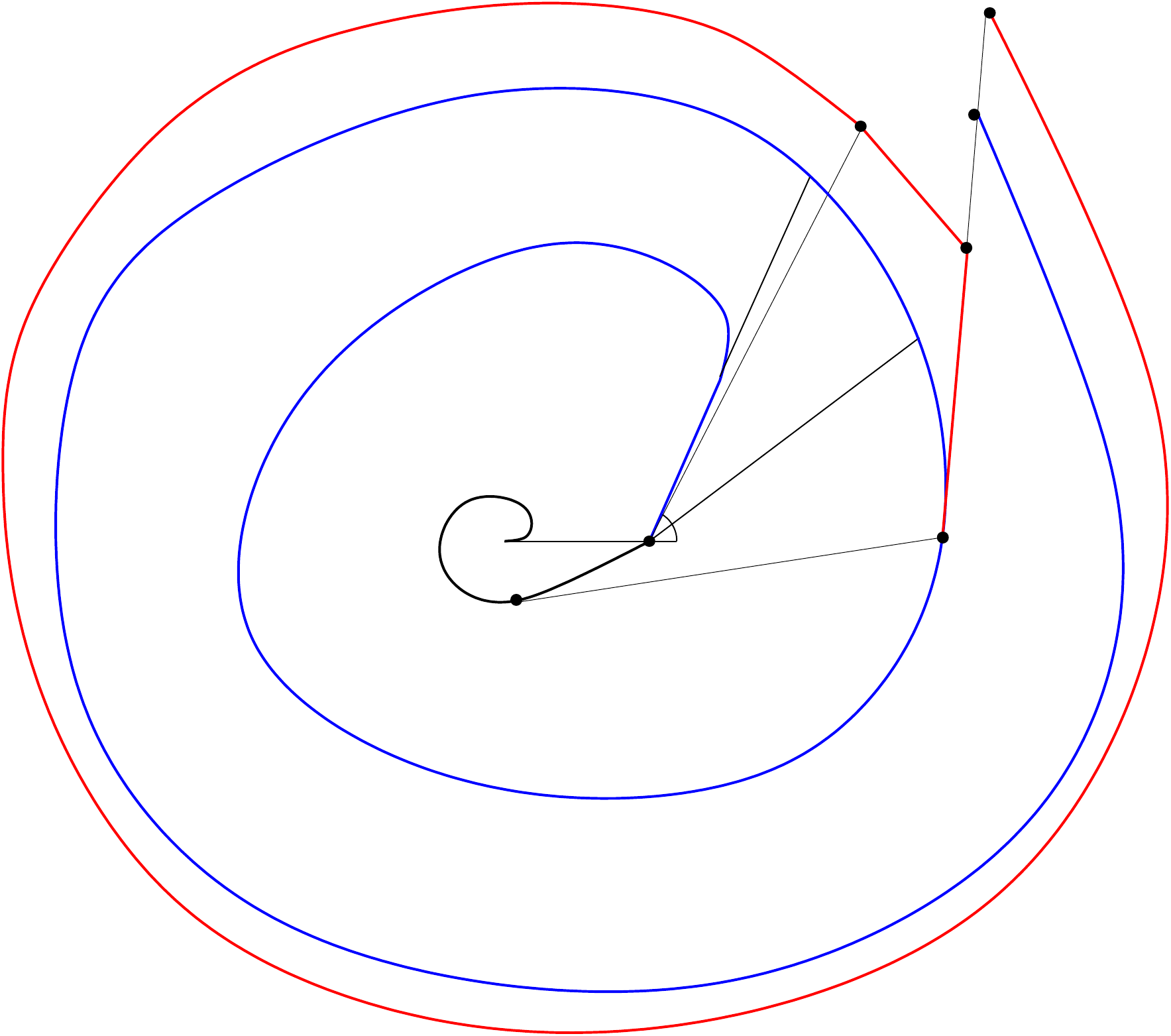_t}}
\caption{The geometric situation of Section \ref{Sss:segm_semg_after_neg_2}.}
\label{Fig:segm_segm_sat_2}
\end{subfigure}
\begin{subfigure}{.453\textwidth}
\resizebox{\linewidth}{!}{\input{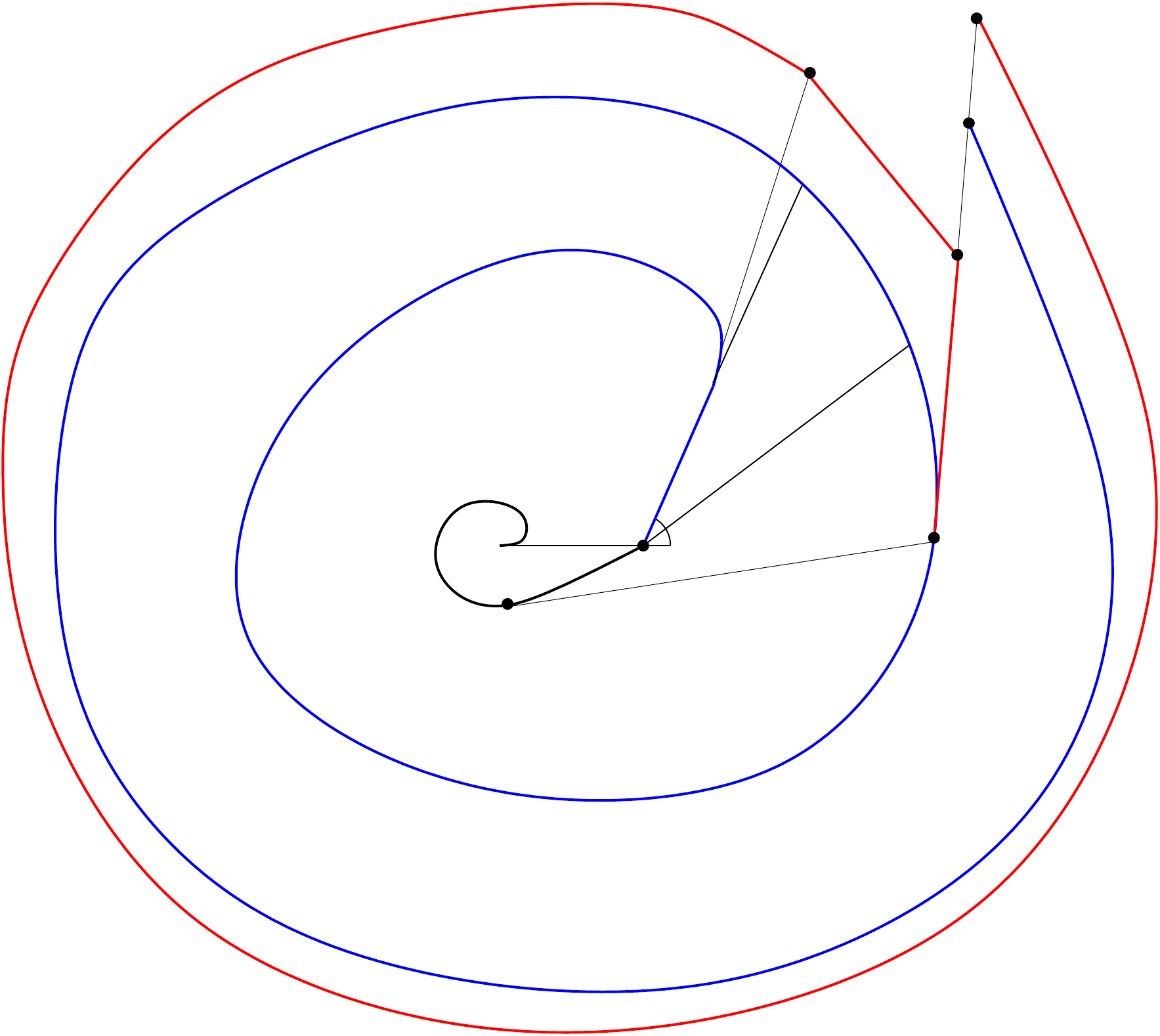_t}}
\caption{The geometric situation of Section \ref{Sss:segm_semg_after_neg_3}.}
\label{Fig:segm_segm_sat_3}
\end{subfigure}
\hfill
\begin{subfigure}{.453\linewidth}
\resizebox{\linewidth}{!}{\input{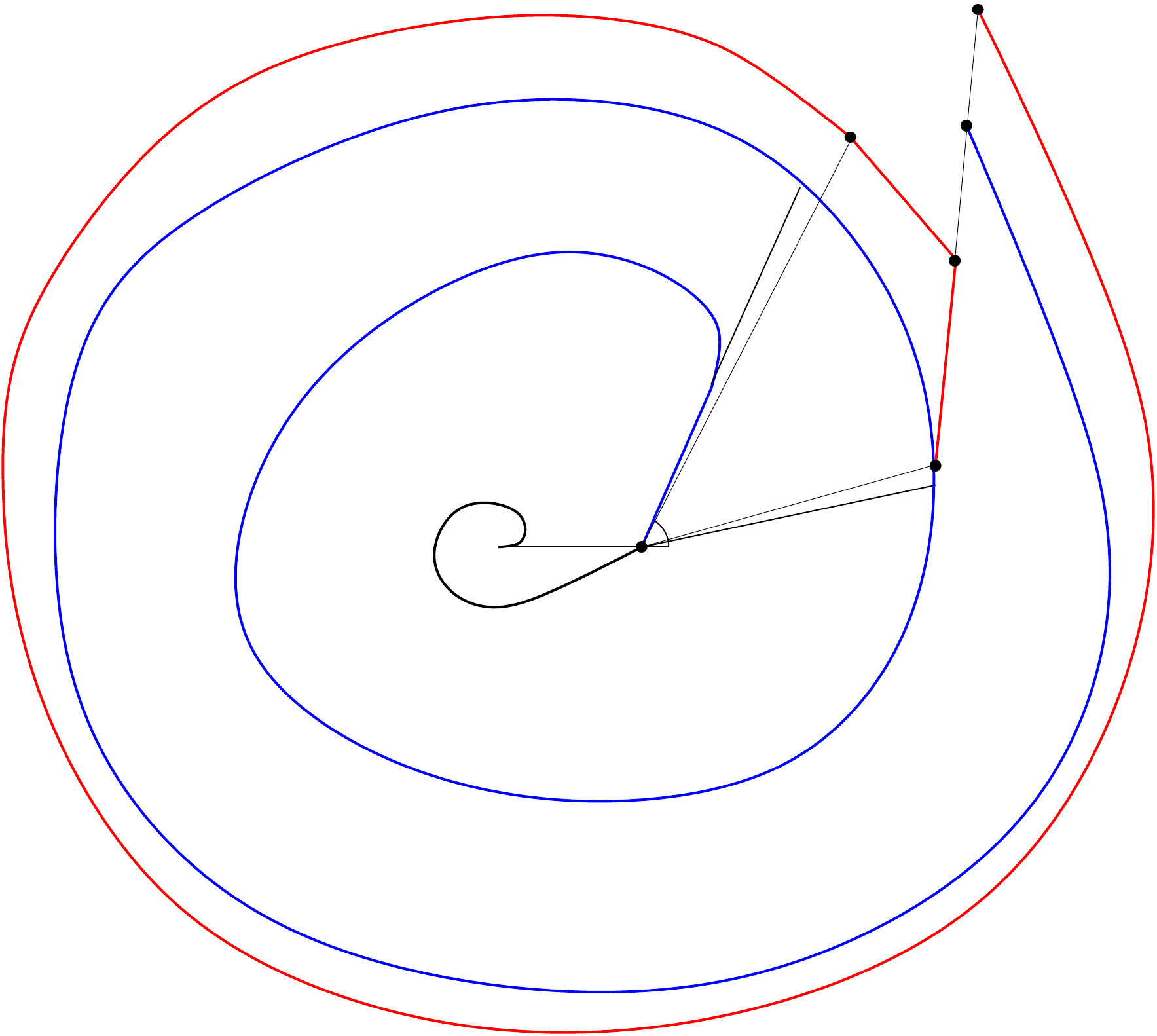_t}}
\caption{The geometric situation of Section \ref{Sss:segm_semg_after_neg_4}.}
\label{Fig:segm_segm_sat_4}
\end{subfigure}
\begin{subfigure}{.453\textwidth}
\resizebox{\linewidth}{!}{\input{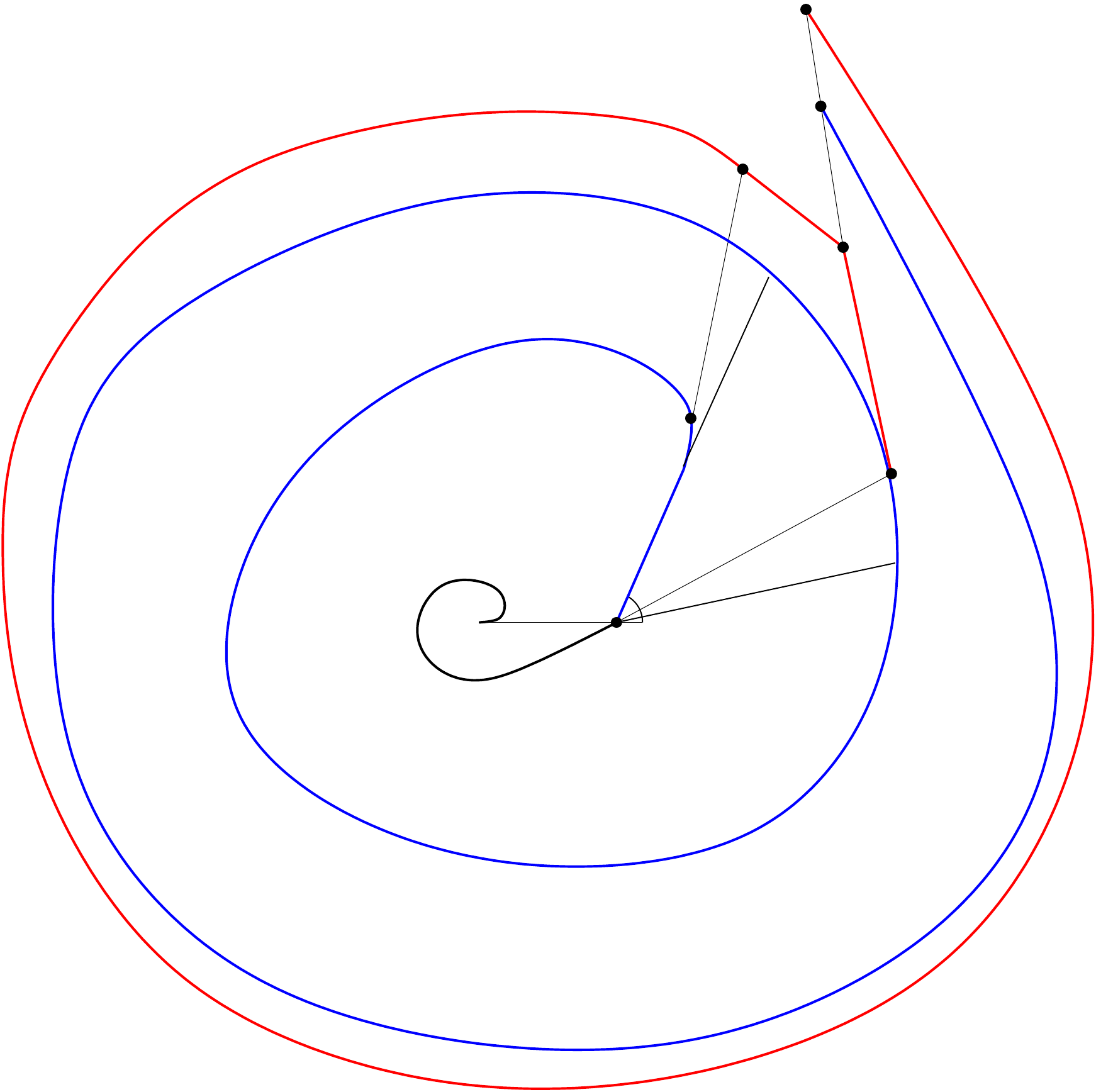_t}}
\caption{The geometric situation of Section \ref{Sss:segm_semg_after_neg_5}.}
\label{Fig:segm_segm_sat_5}
\end{subfigure}
\hfill
\begin{subfigure}{.453\linewidth}
\resizebox{\linewidth}{!}{\input{segm_segm_sat_6.pdf_t}}
\caption{The geometric situation of Section \ref{Sss:segm_semg_after_neg_6}.}
\label{Fig:segm_segm_sat_6}
\end{subfigure}
\caption{The various geometric situation for the segment-segment case}
\label{Fig:segm_segm_sat_tot}
\end{figure}

\bigskip

\begin{remark}
\label{Rem:divide_deltas_0}
From now on we will write the formulas for the perturbation of $\tilde \zeta, \check \zeta$ omitting the term $\delta s_0$.
\end{remark}

\bigskip

\subsection{Segment-segment with \texorpdfstring{$\bar \phi \in \phi_0 + 2\pi + [\bar \theta,2\pi + \bar \alpha + \bar \theta - \theta - \hat \theta)$}{bar phi in phi0 + 2pi + [bar theta,2pi + bar alpha + bar theta - theta- hat theta]}: see Fig. \ref{Fig:segm_segm_sat_1}}
\label{Sss:segm_semg_after_neg}

In this case the full tent is constructed, so that we can use the formulas of Lemma \ref{Lem:relation_admissibility_tent} to compute the perturbation $\check r(\phi;s_0)$. In this case, we have that the base of the tent is not perturbed, so that we are in the situation of Fig. \ref{Fig:tent_comput_geo} with
\begin{equation*}
\hat \zeta = \{(0,0)\} \quad \text{because} \quad \check \phi_0 = \bar \phi - 2\pi - \bar \alpha \geq \bar \phi_0 + \bar \theta - \bar \alpha, \ \check \phi_1 = \check \phi_0 + \hat \theta \leq \phi_0 + 2\pi - \bar \theta - \theta.
\end{equation*}
The computations are thus explicit: the quantities of Lemma \ref{Lem:relation_admissibility_tent} are computed in this case as
\begin{equation*}
\hat r_0 = \delta \tilde r(\check \phi_0;s_0), \quad \hat \ell_0 = \frac{\hat r_0 (1 - \cos(\hat \theta))}{\cos(\bar \alpha - \hat \theta) - \cos(\bar \alpha)},
\end{equation*}
\begin{equation*}
\hat r_2 = \frac{\hat r_0 \sin(\bar \alpha + \hat \theta) + \hat \ell_0 \sin(\hat \theta)}{\sin(\bar \alpha)}, \quad \hat \ell_1 = \frac{\hat r_2 - \hat r_1}{\cos(\bar \alpha)} - \hat \ell_0.
\end{equation*}
The numerical plot of
\begin{align*}
\frac{e^{-\bar c(2\pi + \bar \alpha)}}{\theta^2} \delta \check r(\bar \phi;s_0) &= \frac{e^{-\bar c(2\pi + \bar \alpha)}}{\theta^2}  \bigg [ (\hat r_2 - \delta \tilde r(\check \phi_2;s_0)) e^{\cot(\bar \alpha)(2\pi + \bar \alpha - \hat \theta)} + \delta \tilde r(\bar \phi;s_0) - \hat \ell_0 \bigg] 
\end{align*}
is in Fig. \ref{Fig:segm_segm_after_neg_1}: its minimal value is $> 0.2$.

We have proved the following result.

\begin{lemma}
\label{Lem:segm_segm_3_case}
If $\bar \phi \in \phi_0 + 2\pi + [\bar \theta,2\pi + \bar \alpha + \bar \theta - \hat \theta)$, any non-null perturbation of the optimal solution is strictly positive at $\bar \phi$ for $\theta \in [0,\pi]$.
\end{lemma}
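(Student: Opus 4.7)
The plan is to exploit the fact that in this range of $\bar\phi$, the base of the tent sits entirely within the saturated part of $\tilde r(\cdot;s_0)$, so that the formulas of Lemma \ref{Lem:relation_admissibility_tent} apply with a trivial base curve. Concretely, since $\check\phi_0 = \bar\phi - 2\pi - \bar\alpha \geq \phi_0 + \bar\theta - \bar\alpha$ and $\check\phi_2 = \check\phi_0 + \hat\theta < \phi_0 + 2\pi + \bar\theta$, the tent's endpoints $\check\phi_0$ and $\check\phi_2$ lie strictly between the last jump of $\delta\tilde r$ in the first round and the first jump in the second round, i.e.\ in a region where $\delta\tilde r$ satisfies the homogeneous saturated RDE. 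Thus in the notation of Lemma \ref{Lem:relation_admissibility_tent} we have $\hat\zeta \equiv \{0\}$, so that $\hat Q_2 - \hat Q_0 = 0$ and $\hat L = 0$.

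First, I would verify the range conditions and confirm that the tent of Proposition \ref{Prop:tent_admissible_intro} is the correct optimal candidate, so that $\check r(\phi;s_0)$ on $[\check\phi_0,\bar\phi]$ consists of two consecutive segments followed by a saturated arc with the standard matching conditions. Next, applying Corollary \ref{Cor:tent_corol_for_pert} with $\hat r_0 = \delta\tilde r(\check\phi_0;s_0)$ yields the explicit linear formulas
\begin{equation*}
\hat\ell_0 = \frac{(1-\cos\hat\theta)\,\hat r_0}{\cos(\bar\alpha-\hat\theta)-\cos\bar\alpha},\qquad
\hat r_2 = \frac{\hat r_0 \sin(\bar\alpha+\hat\theta) + \hat\ell_0 \sin\hat\theta}{\sin\bar\alpha}.
\end{equation*}
Then Formula \eqref{eq:comparaison:Saturated}, which says
\begin{equation*}
\delta\check r(\bar\phi;s_0) = \bigl(\hat r_2 - \delta\tilde r(\check\phi_2;s_0)\bigr) e^{\cot(\bar\alpha)(2\pi+\bar\alpha-\hat\theta)} + \delta\tilde r(\bar\phi;s_0) - \hat\ell_0,
\end{equation*}
reduces the problem to a completely explicit expression in the two parameters $(\theta,\bar\phi)$ once the form of $\delta\tilde r$ from Corollary \ref{Cor:evolv_r_after_phi_2} is substituted in.

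The final step is the numerical verification, in the spirit of Section \ref{S:family}: I would plot the normalized quantity $\frac{e^{-\bar c(2\pi+\bar\alpha)}}{\theta^2}\delta\check r(\bar\phi;s_0)$ on the product domain $\theta \in [0,\pi]$ and $\bar\phi - \phi_0 - 2\pi \in [\bar\theta,2\pi+\bar\alpha+\bar\theta-\hat\theta)$, and check that its minimum is uniformly bounded below (numerically, above $0.2$). The division by $\theta^2$ is natural because $\delta\tilde r \sim \theta^2$ on the second round as shown in Proposition \ref{Prop:regions_pos_neg_segm_intro}, and both $\hat r_0$ and $\hat\ell_0$ inherit this scaling.

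The main obstacle is not any one of these steps individually, but the piecewise structure of $\delta\tilde r$: the explicit formula for $\delta\tilde r$ changes its analytic expression depending on which sub-interval of the round $\check\phi_0$, $\check\phi_2$, and $\bar\phi$ belong to (since they straddle possibly different source-discontinuity locations from Proposition \ref{Prop:equa_delta_tilde_r_segm_intro}). By the range assumption $\bar\phi \geq \phi_0 + 2\pi + \bar\theta$ and $\bar\phi < \phi_0 + 4\pi + \bar\alpha + \bar\theta - \hat\theta$, both $\check\phi_0$ and $\check\phi_2$ land in the smooth saturated stretch of the second round of $\delta\tilde r$, while $\bar\phi$ itself lies in the third round where $\delta\tilde r$ has already been shown to be monotonic and smooth in Step 5 of Proposition \ref{Prop:regions_pos_neg_segm_intro}; this allows the plotted function to be given by a single explicit formula, making the numerical check tractable, as recorded in Figure~\ref{Fig:segm_segm_after_neg_1}.
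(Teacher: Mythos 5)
Your proposal is correct and follows essentially the same route as the paper: unperturbed tent base ($\delta\hat Q_2-\delta\hat Q_0=0$, $\delta\hat L=0$), the explicit tent formulas of Lemma \ref{Lem:relation_admissibility_tent} with $\hat r_0=\delta\tilde r(\check\phi_0;s_0)$, the comparison formula \eqref{eq:comparaison:Saturated}, and a numerical check that $e^{-\bar c(2\pi+\bar\alpha)}\delta\check r(\bar\phi;s_0)/\theta^2>0.2$. The only slip is peripheral: with $\bar\phi-\phi_0-2\pi\in[\bar\theta,2\pi+\bar\alpha+\bar\theta-\theta-\hat\theta)$ the angle $\bar\phi$ corresponds to $\bar\tau\in[1,2-\hat\tau)$, i.e.\ the second round of $\delta\tilde r$ rather than the third, but this does not affect the argument since the numerics cover exactly that range.
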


%
%

\begin{figure}
\begin{subfigure}{.475\textwidth}
\resizebox{\linewidth}{!}{\includegraphics{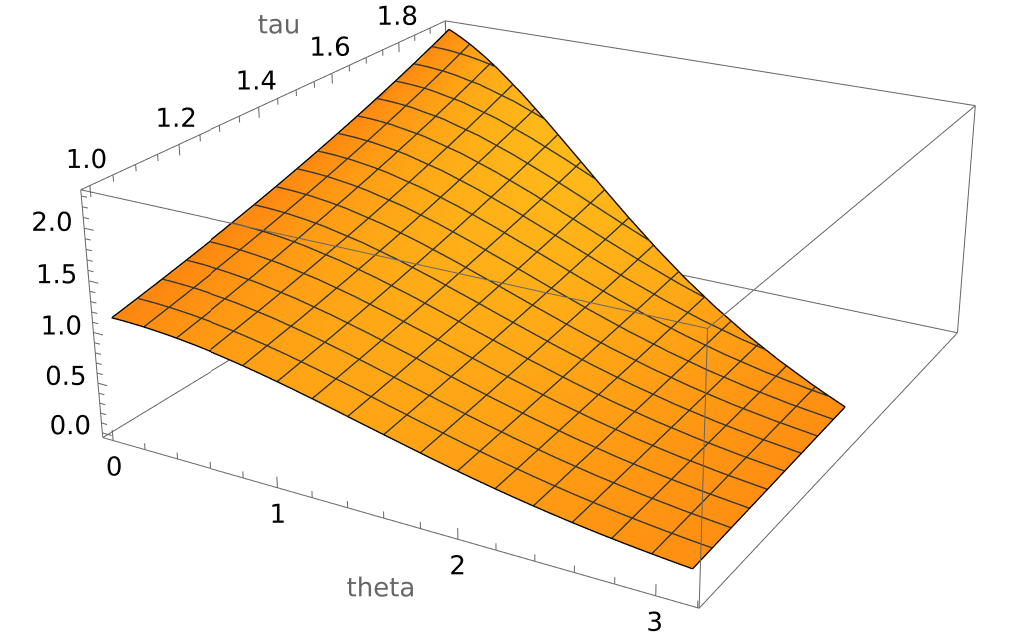}}
\caption{Plot of the function $\frac{e^{-\bar c \bar \phi} \delta \hat r(\bar \phi)}{\theta^2}$.}
\label{Fig:segm_segm_after_neg_1}
\end{subfigure}
\hfill
\begin{subfigure}{.475\linewidth}
\resizebox{\linewidth}{!}{\includegraphics{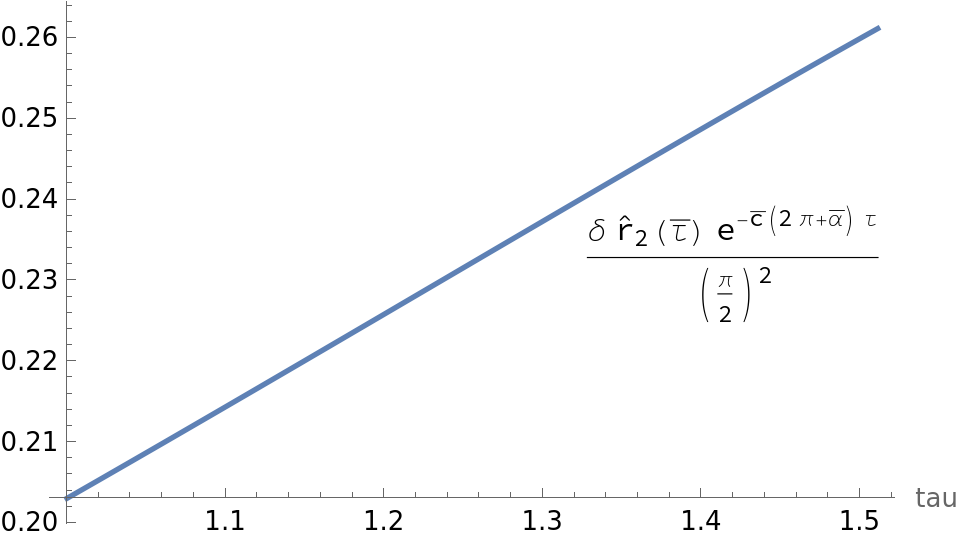}}
\caption{Minimal value of the function $\frac{e^{-\bar c \bar \phi} \delta \hat r(\bar \phi)}{\theta^2}$, i.e. when $\theta = \frac{\pi}{2}$.}
\label{Fig:segm_segm_after_neg_2}
\end{subfigure}
\caption{Numerical analysis of Section \ref{Sss:segm_semg_after_neg}.}
\label{Fig:segm_segm_after_neg}
\end{figure}


\subsection{Segment-segment with \texorpdfstring{$\check \phi_0 \leq \phi_0 + 2\pi + \bar \theta - \theta, \check \phi_2 \in \phi_0 + 2\pi + [\bar \theta - \theta,\bar \theta)$}{chec phi 0 leq phi0 + 2pi + bar theta - theta, check phi2 in phi0 + 2pi + [bar theta - theta,bar theta)}: see Fig. \ref{Fig:segm_segm_sat_2}}
\label{Sss:segm_semg_after_neg_2}

Since the angle $\bar\phi$ is fixed, the idea here is that we perturb the point $\hat Q_2$, while the perturbation of the point $\hat Q_0$ is $\delta\hat Q_0=0$. Because of the relative position of $\bar\phi$ with respect to the perturbed point $\hat Q_2+\delta\hat Q_2$, we see that the perturbation of the point $\hat Q_2$ is the unit vector $e^{i(\phi_0+2\pi+\bar\theta-\theta)}$, where $\theta=\bar\theta-\theta_0$ is the angle introduced in Section \ref{S:family}.
Hence the formulas of Lemma \ref{Lem:relation_admissibility_tent} become 
%
\begin{equation*}
\delta \hat Q_2 - \delta \hat Q_0 = e^{\i (\phi_0 + 2\pi + \bar \theta - \theta)} = e^{\i (\bar \phi - (2\pi + \bar \alpha))} e^{\i (\phi_0 + \bar \theta - \bar \alpha - \theta + 2 (2 \pi + \bar \alpha) - \bar \phi)} = e^{\i (\bar \phi - (2\pi + \bar \alpha))} e^{\i ((2 \pi + \bar \alpha)(2 - \bar \tau) - \theta)},
\end{equation*}
\begin{equation*}
\big( \delta \hat Q_2 - \delta \hat Q_0 \big) \cdot e^{\i (\bar \phi - (2\pi + \bar \alpha) + \hat \theta)} = \cos \big( (2\pi + \bar \alpha) (2 - \bar\tau) - \theta - \hat \theta \big),
\end{equation*}
\begin{equation*}
\big( \delta \hat Q_2 - \delta \hat Q_0 \big) \cdot e^{\i (\bar \phi - (2\pi + \bar \alpha) + \hat \theta)} e^{\i\left(\frac{\pi}{2}-\bar\alpha\right)}= \cos \bigg( (2\pi + \bar \alpha) (2 - \bar\tau) - \theta - \bar \alpha - \hat \theta + \frac{\pi}{2} \bigg),
\end{equation*}
\begin{equation*}
\delta \hat L = 1, \quad \delta \hat r_0 = \delta \tilde r(\check \phi_0) = \delta \rho(\check \tau_0) e^{\bar c (2\pi + \bar \alpha) \check \tau_0}.
\end{equation*}
In terms of $\check \tau_0$ the interval of interest is
\begin{equation*}
1 - \frac{\theta + \hat \theta}{2\pi + \bar \alpha} \leq \check \tau_0 < 1 - \frac{\max\{\theta,\hat \theta\}}{2\pi + \bar \alpha}.
\end{equation*}
The functions $\hat \ell_0,\hat r_2,\hat \ell_1$ are computed according to Lemma \ref{Lem:relation_admissibility_tent}.

In Fig. \ref{Fig:segm_segm_after_neg2_1} we numerically plot the function
\begin{equation}
\label{Equa:segm_tent_1}
\frac{1}{\theta^2} \delta \hat r(\bar \phi) e^{- \bar c (2\pi + \bar \alpha) \bar \tau} = \frac{e^{- \bar c (2\pi + \bar \alpha) \bar \tau}}{\theta^2} \big[ e^{\cot(\bar \alpha)(2\pi + \bar \alpha - \hat \theta)} (\delta \hat r(\check \phi_2) - \delta \tilde r(\check \phi_2)) + \delta \tilde r(\bar \phi) - \delta \hat \ell_0 \big],
\end{equation}
corresponding to the quantity computed in Equation \eqref{eq:comparaison:Saturated}, which follows with the comparison with the saturated spiral. We compute this quantity as a function of \newglossaryentry{taubar}{name=\ensuremath{\bar \tau},description={the value of $\tau$ corresponding to $\bar \phi$, $\bar \tau = \frac{\bar \phi - \phi_0 - \bar \theta + \bar \alpha}{2\pi + \bar \alpha}$}} $\gls{taubar} = (1-\varsigma) (1 - \frac{\theta+\hat \theta}{2\pi+\bar \alpha}) + \varsigma (2 - \frac{\max\lbrace{\theta,\hat \theta\rbrace}}{2\pi + \bar \alpha})$, $\varsigma \in [0,1]$. The function is strictly positive, with minimal value greater than $0.2$ in $\theta = \pi, \bar\tau = 2 - \frac{\pi + \hat \theta}{2\pi + \bar \alpha}$.
\begin{figure}
\begin{subfigure}{.475\textwidth}
\resizebox{\linewidth}{!}{\includegraphics{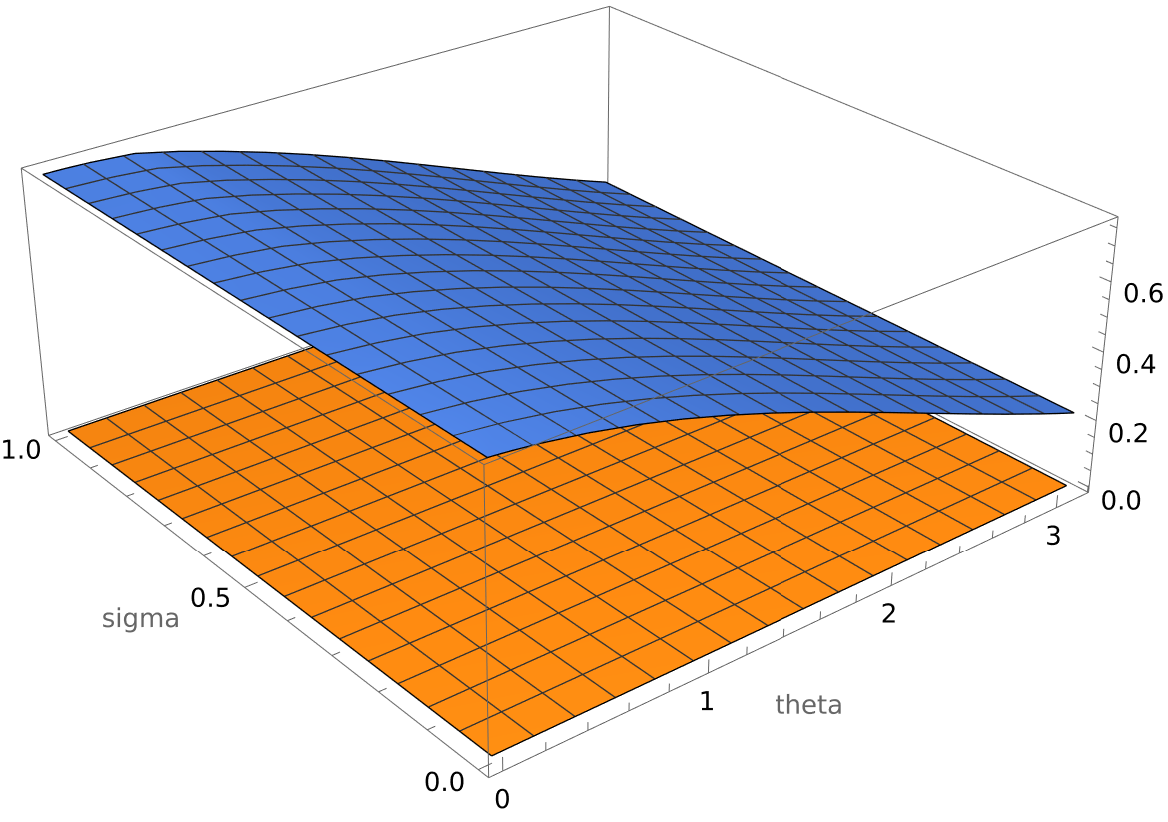}}
\caption{Plot of the function \eqref{Equa:segm_tent_1}.}
\label{Fig:segm_segm_after_neg2_1}
\end{subfigure}
\hfill
\begin{subfigure}{.475\linewidth}
\resizebox{\linewidth}{!}{\includegraphics{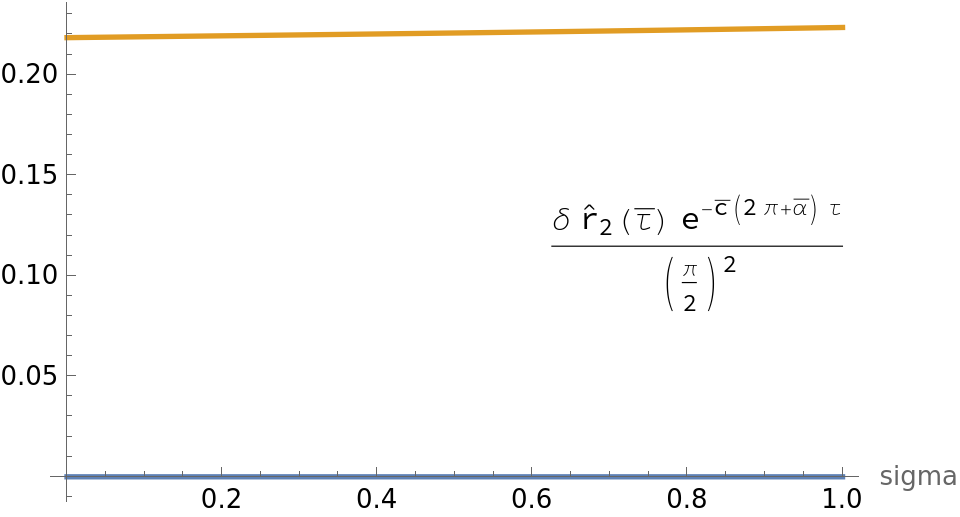}}
\caption{Minimal value for the function \eqref{Equa:segm_tent_1} with $\theta = \frac{\pi}{2}$.}
\label{Fig:segm_segm_after_neg2_2}
\end{subfigure}
\caption{Numerical analysis of Section \ref{Sss:segm_semg_after_neg_2}.}
\label{Fig:segm_tent_1}
\end{figure}

We thus have proved the following lemma.

\begin{lemma}
\label{Lem:segm_tent_1}
If $\check \phi_0 \leq \phi_0 + 2\pi + \bar \theta - \theta, \check \phi_2 \in \phi_0 + 2\pi + [\bar \theta - \theta,\bar \theta)$, then $\delta \hat r(\bar \phi) \geq 0$ for $\theta \in [0,\pi]$.
\end{lemma}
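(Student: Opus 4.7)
The plan is to mimic the structure of Section \ref{Sss:segm_semg_after_neg}, but now accounting for the fact that the base of the tent straddles the discontinuity angle $\phi_0+2\pi+\bar\theta$ of the fastest saturated spiral $\tilde\zeta$. I will apply the geometric relations of Lemma \ref{Lem:relation_admissibility_tent} (in the form given by Corollary \ref{Cor:tent_corol_for_pert} for perturbations) to the tent with vertices $\hat P_0=\tilde\zeta(\check\phi_0)$ and $\hat P_2=\tilde\zeta(\check\phi_2)$ saturated.

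The first step is to identify each input of the tent formulas. Because $\check\phi_0\leq \phi_0+2\pi+\bar\theta-\theta$, the base point $\hat Q_0$ lies on the saturated part of $\tilde\zeta$ \emph{before} the discontinuity, so its position is unaffected by an infinitesimal shift of $s_0$: $\delta\hat Q_0=0$. On the other hand $\hat Q_2$ sits past the discontinuity, so it is rigidly translated by the perturbation, contributing the unit vector $\delta\hat Q_2=e^{\i(\phi_0+2\pi+\bar\theta-\theta)}$ as already computed in the preamble. The length increment is $\delta\hat L=1$, since the entire jump $|P_2-P_1|$ lies between $\hat Q_0$ and $\hat Q_2$ and is translated by exactly $\delta s_0$. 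Finally, $\delta\hat r_0=\delta\tilde r(\check\phi_0;s_0)$, which is explicitly given in terms of $\delta\rho$ through Proposition \ref{Prop:regions_pos_neg_segm} (more precisely Corollary \ref{Cor:equation}), so it is known in closed form.

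Plugging these into Lemma \ref{Lem:relation_admissibility_tent} yields explicit expressions for $\delta\hat\ell_0$, $\delta\hat r_2$ and $\delta\hat\ell_1$. Writing $\bar\tau=\frac{\bar\phi-\phi_0-\bar\theta+\bar\alpha}{2\pi+\bar\alpha}$, the relevant range is $\bar\tau\in\bigl[2-\frac{\theta+\hat\theta}{2\pi+\bar\alpha},2-\frac{\max\{\theta,\hat\theta\}}{2\pi+\bar\alpha}\bigr)$, which parametrises precisely the configurations of Fig.~\ref{Fig:segm_segm_sat_2}. The final value $\delta\hat r(\bar\phi)$ is then obtained from the comparison with the saturated spiral in Equation \eqref{eq:comparaison:Saturated}, namely
\begin{equation*}
\delta\hat r(\bar\phi)=e^{\cot(\bar\alpha)(2\pi+\bar\alpha-\hat\theta)}\bigl(\delta\hat r_2-\delta\tilde r(\check\phi_2)\bigr)+\delta\tilde r(\bar\phi)-\delta\hat\ell_0.
\end{equation*}

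With this closed-form expression in hand, I would then perform a numerical evaluation (using the Mathematica code of Appendix \ref{S:num_code}) of the function \eqref{Equa:segm_tent_1} rescaled by $\frac{e^{-\bar c(2\pi+\bar\alpha)\bar\tau}}{\theta^2}$ on the rectangle $(\theta,\varsigma)\in[0,\pi]\times[0,1]$, with $\bar\tau$ interpolated as in the discussion. The plots in Fig.~\ref{Fig:segm_tent_1} already suggest that the function stays uniformly $\geq 0.2$, achieving its minimum at the corner $\theta=\pi$, $\bar\tau=2-\frac{\pi+\hat\theta}{2\pi+\bar\alpha}$. The main (and only real) obstacle will be a careful bookkeeping of the signs in the projections $(\delta\hat Q_2-\delta\hat Q_0)\cdot e^{\i(\bar\phi-(2\pi+\bar\alpha)+\hat\theta)}$ and its orthogonal counterpart appearing in $\hat\ell_0$ and $\hat r_2$, because the geometric angle $\phi_0+2\pi+\bar\theta-\theta$ must be rewritten as $\bar\phi-(2\pi+\bar\alpha)+(2\pi+\bar\alpha)(2-\bar\tau)-\theta$ to make the formulas trigonometric in the single variable $\bar\tau$; an erroneous rewrite would invalidate the numerical check. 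Once these substitutions are verified, the strict positivity of the plotted function together with continuity gives the lemma.
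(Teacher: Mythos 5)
Your proposal matches the paper's proof essentially step for step: the same identification $\delta\hat Q_0=0$, $\delta\hat Q_2=e^{\i(\phi_0+2\pi+\bar\theta-\theta)}$, $\delta\hat L=1$, $\delta\hat r_0=\delta\tilde r(\check\phi_0)$, the same substitution into Lemma \ref{Lem:relation_admissibility_tent} and the comparison formula \eqref{eq:comparaison:Saturated}, and the same numerical verification of \eqref{Equa:segm_tent_1} over the interpolated range of $\bar\tau$ with minimum $>0.2$ at $\theta=\pi$. The rewriting of the angle as $(2\pi+\bar\alpha)(2-\bar\tau)-\theta$ that you flag as the delicate point is exactly the substitution the paper performs, so no gap remains.
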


\begin{remark}
\label{Rem:more_regu}
Note that the asymptotic behavior $\delta \tilde r(\bar \phi) \simeq \theta$ as $\theta \to 0$ for these angles (see Fig. \ref{Fig:segm_round_2}) becomes now $\delta \hat r(\bar \phi) \simeq \theta^2$, showing that the tent has a regularizing effect on the dependence w.r.t. $\theta$, while $\delta \tilde r \sim -1$.
\end{remark}

%
%
%
%

\subsection{Segment-segment with \texorpdfstring{$\check \phi_0 \leq \phi_0 + 2\pi + \bar \theta - \theta, \check \phi_1 \geq \phi_0 + 2\pi + \bar \theta]$}{check phi0 leq phi0 + 2pi + bar theta - theta, check phi1 geq phi0 + 2pi + bar theta]}: see Fig. \ref{Fig:segm_segm_sat_3}}
\label{Sss:segm_semg_after_neg_3}

Because of the conditions on $\bar \phi$, we have in this case that the point $\hat Q_2$ and its perturbation lie in the saturated part, so that
\begin{equation*}
\delta \hat Q_2 - \delta \hat Q_0 = \delta \tilde r(\check \phi_2 - 2\pi - \bar \alpha) e^{\i (\phi_0 + \bar \theta - \bar \alpha + (2\pi + \bar \alpha) (\bar\tau - 2 + \hat \tau))} = \delta \rho(\bar \tau - 2 + \hat \tau) e^{\bar c (2\pi + \bar \alpha) (\bar \tau - 2 + \hat \tau)} e^{\i(\phi_0 + \bar \theta - \bar \alpha + (2\pi + \bar \alpha) (\bar\tau - 1))} e^{\i (\hat \theta - \bar \alpha)},
\end{equation*}
\begin{equation*}
\big( \delta \hat Q_2 - \delta \hat Q_0 \big) \cdot e^{\i (\bar \phi - (2\pi + \bar \alpha) + \hat \theta)} = \delta \rho(\bar \tau - 2 + \hat \tau) e^{\bar c (2\pi + \bar \alpha) (\bar \tau - 2 + \hat \tau)} \cos(\bar \alpha),
\end{equation*}
\begin{equation*}
		\big( \delta \hat Q_2 - \delta \hat Q_0 \big) \cdot e^{\i (\bar \phi - (2\pi + \bar \alpha) + \hat \theta)}e^{\i\left(-\frac{\pi}{2}+\bar\alpha\right)} = \delta \rho(\bar \tau - 2 + \hat \tau) e^{\bar c (2\pi + \bar \alpha) (\bar \tau - 2 + \hat \tau)} \cos \bigg( \frac{\pi}{2} - 2 \bar \alpha \bigg),
\end{equation*}
See the auxiliary picture \ref{fig:section:9:5} for reference. In particular we have also
\begin{figure}
	\centering
	\includegraphics[scale=0.4]{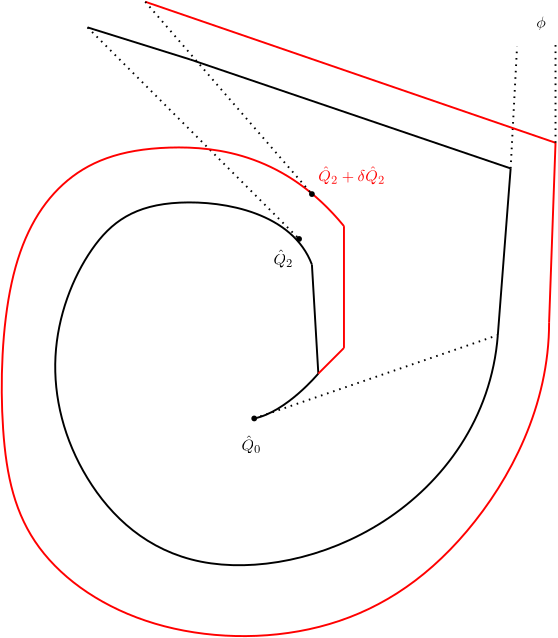}
	\caption{The spiral and its perturbed one in the case of Subsection \ref{Sss:segm_semg_after_neg_3}.}
		\label{fig:section:9:5}
\end{figure}
\begin{align*}
\delta \hat L 
&= 1 + \delta |P_2 - P_1| - \frac{r(\phi_2)}{\sin(\bar \alpha)} \delta \phi_2 + \int_0^{(2\pi + \bar \alpha)(\bar\tau - 2 + \hat \tau)} \frac{\delta \tilde r(\phi)}{\sin(\bar \alpha)} d\phi \\
\big[ \text{Equation} \ (\ref{Equa:delta_P_2P_1_segm}) \big] \quad &= \frac{1 - \cos(\theta)}{\sin(\bar \alpha)^2} + \int_0^{(2\pi + \bar \alpha)(\bar\tau - 2 + \hat \tau)} \frac{\delta \tilde r(\phi)}{\sin(\bar \alpha)} d\phi = \frac{1 - \cos(\theta)}{\sin(\bar \alpha)^2} e^{\cot(\bar \alpha)(2\pi + \bar \alpha)(\bar\tau-2+\hat \tau)},
\end{align*}
We also remark that the previous computation follows by the expression of $\delta\tilde r$ that can be recovered from Proposition \ref{Prop:equa_delta_tilde_r_segm}. Finally
\begin{equation*}
\delta \hat r_0 = \delta \rho(\bar \tau - 1) e^{\bar c (2\pi + \bar \alpha) (\bar \tau - 1)}.
\end{equation*}
In terms of $\check \tau_0$ the interval of interest is
\begin{equation*}
2 - \frac{\hat \theta}{2\pi + \bar \alpha} \leq \bar \tau < 2 - \frac{\theta}{2\pi + \bar \alpha}, \quad \theta < \hat \theta.
\end{equation*}
In Fig. \ref{Fig:segm_segm_after_neg3_1} we numerically plot the function
\begin{equation}
\label{Equa:segm_tent_2}
\frac{\delta \hat r(\bar \phi) e^{- \bar c (2\pi + \bar \alpha) \bar \tau}}{\theta^2} = \frac{e^{- \bar c (2\pi + \bar \alpha) \bar \tau}}{\theta^2} \big[ e^{\cot(\bar \alpha)(2\pi + \bar \alpha - \hat \theta)} (\delta \hat r(\check \phi_2) - \delta \tilde r(\check \phi_2)) + \delta \tilde r(\bar \phi) - \delta \hat \ell_0 \big],
\end{equation}
as a function of $\bar \tau = (1 - \varsigma)(2 - \hat \tau) + \varsigma(2 - \frac{\theta}{2\pi + \bar \alpha})$, $\varsigma \in (0,1)$. The function is strictly positive with minimum for $\theta = \hat \theta$ and value $> 0.14$. 

\begin{figure}
\begin{subfigure}{.475\textwidth}
\resizebox{\linewidth}{!}{\includegraphics{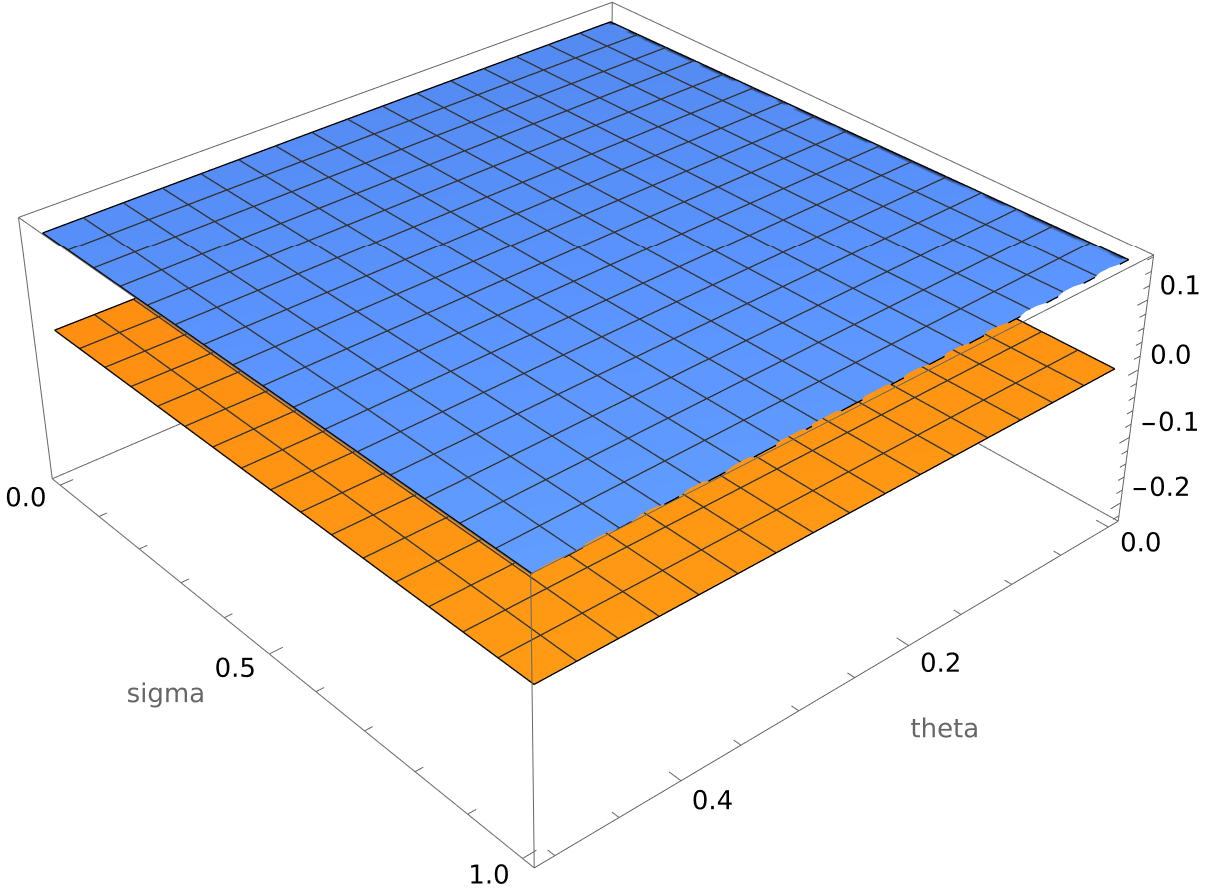}}
\caption{Plot of the function \eqref{Equa:segm_tent_2}.}
\label{Fig:segm_segm_after_neg3_1}
\end{subfigure}
\hfill
\begin{subfigure}{.475\linewidth}
\resizebox{\linewidth}{!}{\includegraphics{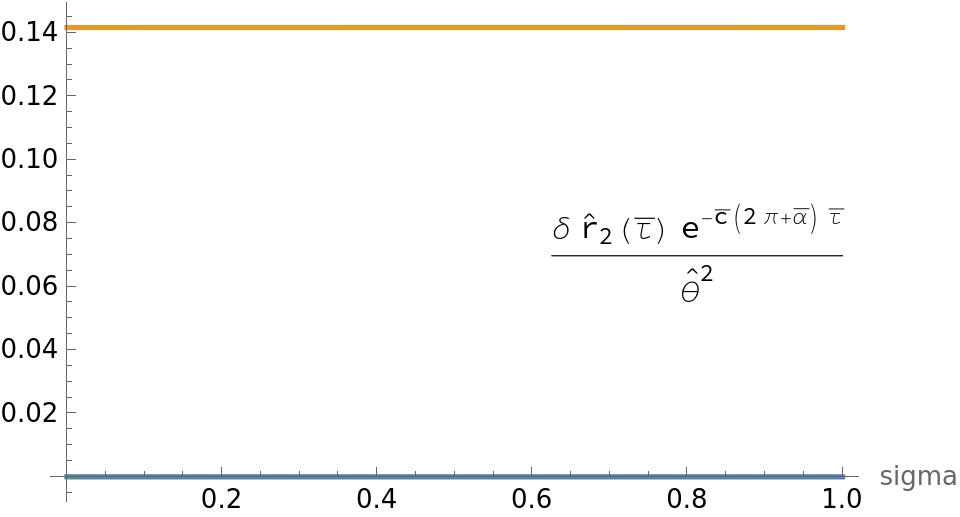}}
\caption{Plot of the function \eqref{Equa:segm_tent_2} when $\theta = \hat \theta$.}
\label{Fig:segm_segm_after_neg4_2}
\end{subfigure}
\caption{Numerical analysis of Section \ref{Sss:segm_semg_after_neg_3}.}
\label{Fig:segm_tent_2}
\end{figure}

We thus have proved the following lemma.

\begin{lemma}
\label{Lem:segm_tent_2}
If $\check \phi_0 \leq \phi_0 + 2\pi + \bar \theta - \theta, \check \phi_2 \geq \phi_0 + 2\pi + \bar \theta$, then $\delta \hat r(\bar \phi) \geq 0$.
\end{lemma}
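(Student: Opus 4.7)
The proof will follow the template already established for Lemmas \ref{Lem:segm_segm_3_case} and \ref{Lem:segm_tent_1}, but adapted to a tent configuration that straddles every singular source of $\delta \tilde r$. The first step is to identify precisely where the base $\hat \zeta$ of the tent sits relative to the four atomic contributions of the source in Proposition \ref{Prop:equa_delta_tilde_r_segm}: with $\check \phi_0 \leq \phi_0 + 2\pi + \bar \theta - \theta$ and $\check \phi_2 \geq \phi_0 + 2\pi + \bar \theta$, the base spans a pre-jump arc of the saturated spiral, swallows the $-\Diracd_{\phi_0 + 2\pi + \theta_0}$ source, crosses the original segment $[P_1,P_2]$ which carries the $\Diracd_{\phi_0 + 2\pi + \bar \theta}$ and $\Diracd'_{\phi_0 + 2\pi + \bar \theta}$ contributions, and then continues along a post-jump arc of the saturated spiral ending at the base point $\hat Q_2$. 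All four source terms thus contribute to $\delta \hat L$ and $\delta \hat Q_2 - \delta \hat Q_0$.

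Next I would apply Corollary \ref{Cor:tent_corol_for_pert} by computing each quantity explicitly. The endpoint displacements $\delta \hat Q_0, \delta \hat Q_2$ reduce in the saturated regions to the simple logarithmic-rotation formula $\delta \tilde r(\phi) e^{\i \phi}$ applied at $\check \phi_0$ and $\check \phi_2 - 2\pi - \bar \alpha$ respectively, yielding in particular $\delta \hat r_0 = \delta \rho(\bar \tau - 1) e^{\bar c(2\pi + \bar \alpha)(\bar \tau - 1)}$ and the projections onto $e^{\i(\bar \phi - (2\pi + \bar \alpha) + \hat \theta)}$ and its $\perp$-rotation that are recorded in the text. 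The key computation is $\delta \hat L$: integrating the distributional derivative of $\delta \tilde r$ across all four sources, the pre-segment and delta/delta-prime jumps telescope to the clean expression $\frac{1 - \cos(\theta)}{\sin(\bar \alpha)^2} e^{\cot(\bar \alpha)(2\pi + \bar \alpha)(\bar \tau - 2 + \hat \tau)}$, thanks to the identity $1 + \delta |P_2 - P_1| - \frac{r(\phi_2)}{\sin(\bar \alpha)} \delta \phi_2 = \frac{1 - \cos(\theta)}{\sin(\bar \alpha)^2}$ already derived in \eqref{Equa:delta_P_2P_1_segm}. Plugging these into Lemma \ref{Lem:relation_admissibility_tent} gives $\delta \hat \ell_0, \delta \hat r_2, \delta \hat \ell_1$.

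The final step uses the saturated comparison identity \eqref{eq:comparaison:Saturated} to obtain
\begin{equation*}
\delta \hat r(\bar \phi) = e^{\cot(\bar \alpha)(2\pi + \bar \alpha - \hat \theta)} \bigl( \delta \hat r_2 - \delta \tilde r(\check \phi_2) \bigr) + \delta \tilde r(\bar \phi) - \delta \hat \ell_0,
\end{equation*}
which I would rescale by $e^{-\bar c (2\pi + \bar \alpha) \bar \tau}/\theta^2$ and verify numerically to be strictly positive on the parameter rectangle $\theta \in (0,\hat \theta]$, $\bar \tau \in [2 - \hat \tau, 2 - \theta/(2\pi + \bar \alpha))$, parametrized by $\varsigma \in [0,1]$ as in the preceding numerical displays.

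The main obstacle, and the reason this case is the most delicate of the segment-segment sub-cases, is that the constraint $\theta < \hat \theta$ forces the tent into a regime where the usual $\mathcal O(\theta^2)$ smoothing effect of the tent is minimal: the worst-case lower bound on $e^{-\bar c(2\pi + \bar \alpha)\bar \tau} \delta \hat r(\bar \phi)/\theta^2$ degrades from $> 0.2$ (previous sections) to merely $> 0.14$, achieved at the boundary $\theta = \hat \theta$ where the tent angle coincides with the critical saturation angle. Resolving this obstacle requires a careful numerical sweep over the two-parameter rectangle with sufficient resolution near the corner $(\theta,\bar \tau) = (\hat \theta, 2 - \hat \tau)$, together with the consistency check that the formulas match \eqref{Equa:segm_tent_1} at $\bar \tau = 2 - \hat \tau$ (since that is the boundary between Section \ref{Sss:segm_semg_after_neg_2} and the present section). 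Once this positivity is established on the grid, the lemma follows.
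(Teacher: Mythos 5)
Your proposal follows essentially the same route as the paper: identifying that the tent base straddles the Dirac sources at $\phi_0 + 2\pi + \theta_0$ and $\phi_0 + 2\pi + \bar \theta$ while its endpoint lies in the saturated region, computing $\delta \hat Q_2 - \delta \hat Q_0$, $\delta \hat r_0 = \delta\rho(\bar\tau-1)e^{\bar c(2\pi+\bar\alpha)(\bar\tau-1)}$ and $\delta \hat L = \frac{1-\cos(\theta)}{\sin(\bar\alpha)^2}e^{\cot(\bar\alpha)(2\pi+\bar\alpha)(\bar\tau-2+\hat\tau)}$ via \eqref{Equa:delta_P_2P_1_segm}, feeding these into Lemma \ref{Lem:relation_admissibility_tent}, and numerically verifying positivity of \eqref{Equa:segm_tent_2} on the rectangle $\theta < \hat\theta$, $\bar\tau \in [2-\hat\tau, 2-\theta/(2\pi+\bar\alpha))$ with the same worst case ($\theta = \hat\theta$, value $>0.14$). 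This is the argument the paper gives.
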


%
%
%
%

\subsection{Segment-segment with \texorpdfstring{$\check \phi_0 \geq \phi_0 + 2\pi + \bar \theta - \theta, \check \phi_1 \leq \phi_0 + 2\pi + \bar \theta$}{check phi0 geq phi0 + 2pi + bar theta - theta, check phi1 leq phi0 + 2pi + bar theta}: see Fig. \ref{Fig:segm_segm_sat_4}}
\label{Sss:segm_semg_after_neg_4}

We compute in this case
\begin{equation*}
\delta \hat Q_2 - \delta \hat Q_0 = 0, \quad \delta \hat L = 0,
\end{equation*}
\begin{equation*}
\delta \hat r_0 = \delta \rho(\bar \tau - 1) e^{\bar c (2\pi + \bar \alpha) (\bar \tau - 1)}.
\end{equation*}
In terms of $\check \tau_0$ the interval of interest is
\begin{equation*}
2 - \frac{\theta}{2\pi + \bar \alpha} \leq \bar \tau < 2 - \hat \tau, \quad \theta > \hat \theta.
\end{equation*}
The situation is very similar to Section \ref{Sss:segm-segm_low}, and the function $\frac{\delta \check r(\bar \phi) e^{-\bar c(2\pi + \bar \alpha) \bar \tau}}{\theta^2}$ as a function of $\bar \tau = (1-\varsigma) (2 - \frac{\theta}{2\pi + \bar \alpha}) + \varsigma (2 - \hat \tau)$, $\varsigma \in (0,1)$, is plotted in Fig. \ref{Fig:segm_segm_after_neg4_1}, while in Fig. \ref{Fig:segm_segm_after_neg5_1} the minimal values are plotted for $\theta = \pi/2$.

\begin{figure}
\begin{subfigure}{.475\textwidth}
\resizebox{\linewidth}{!}{\includegraphics{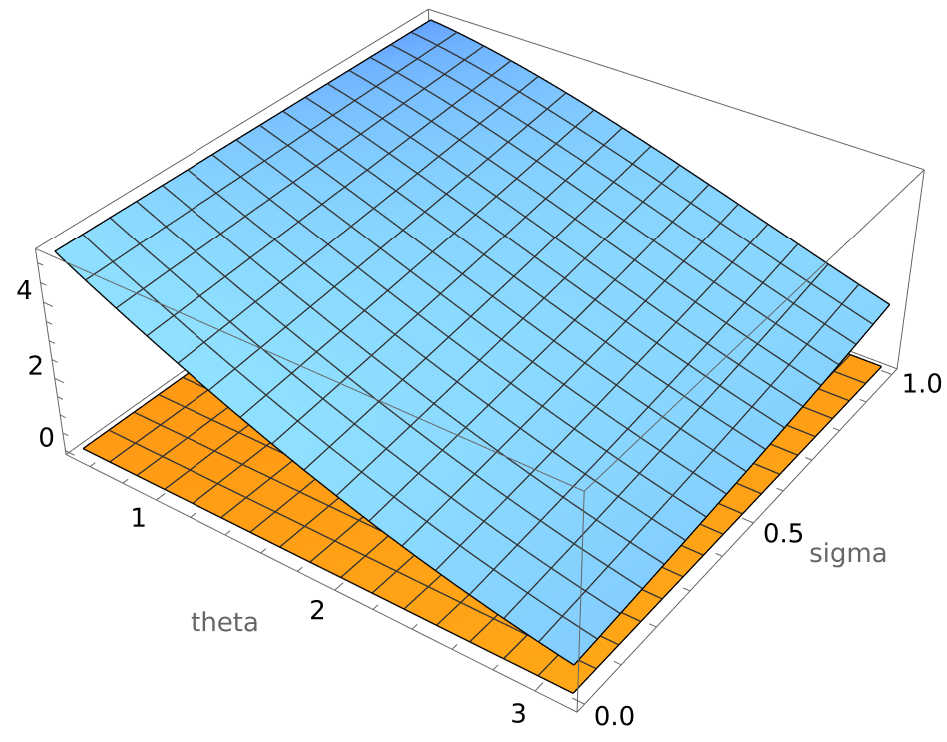}}
\caption{Plot of the function $\delta \check r(\bar \phi)/\theta^2$.}
\label{Fig:segm_segm_after_neg4_1}
\end{subfigure}
\hfill
\begin{subfigure}{.475\linewidth}
\resizebox{\linewidth}{!}{\includegraphics{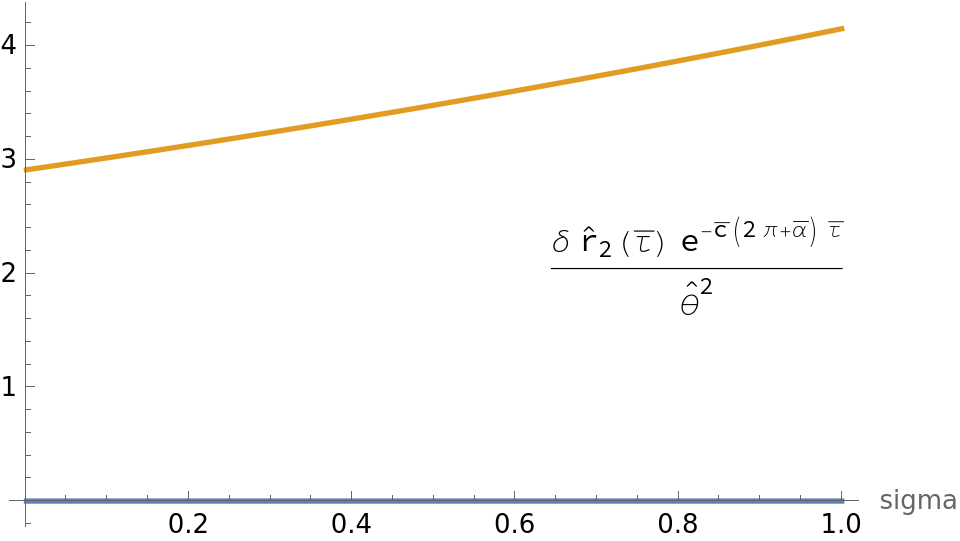}}
\caption{Plot of the function $\delta \check r(\bar \phi)/\pi^2$ (minimal values).}
\label{Fig:segm_segm_after_neg5_2}
\end{subfigure}
\caption{Numerical analysis of Section \ref{Sss:segm_semg_after_neg_4}.}
\label{Fig:segm_tent_4}
\end{figure}

%
%

\begin{lemma}
\label{Lem:segm_tent_3}
If $2 - \frac{\theta}{2\pi + \bar \alpha} \leq \bar\tau < 2 - \frac{\hat \theta}{2\pi + \bar \alpha}$, then $\delta \check r(\bar \phi) > 2.91 \theta^2$, $\theta \in (\hat \theta,\pi/2]$.
\end{lemma}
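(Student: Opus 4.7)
The plan is to follow the same scheme used in Sections \ref{Sss:segm_semg_after_neg_2} and \ref{Sss:segm_semg_after_neg_3}: set up the geometric quantities of Lemma \ref{Lem:relation_admissibility_tent}, express $\delta \check r(\bar \phi)$ via the comparison formula \eqref{eq:comparaison:Saturated}, and check positivity numerically via Mathematica. The simplifying feature of this subcase is that both endpoints of the tent base lie strictly past the discontinuity/jump region of $\delta \tilde r$: since $\check \phi_0 \geq \phi_0 + 2\pi + \bar\theta - \theta$ we are above the jump at $\phi_0 + 2\pi + \theta_0$, and since $\check \phi_1 \leq \phi_0 + 2\pi + \bar\theta$ we are below the jump/Dirac at $\phi_0 + 2\pi + \bar \theta$. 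Consequently the base curve of the tent sees \emph{no} source term from Proposition \ref{Prop:equa_delta_tilde_r_segm}, which means
\[
\delta \hat Q_2 - \delta \hat Q_0 = 0, \qquad \delta \hat L = 0,
\]
and the only datum driving the perturbation is the entry-ray variation $\delta \hat r_0 = \delta \rho(\bar\tau - 1)\, e^{\bar c (2\pi + \bar\alpha)(\bar\tau - 1)}$, where $\delta \rho$ is the rescaled base function of Proposition \ref{Prop:regions_pos_neg_segm}.

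First I would plug these into the formulas of Lemma \ref{Lem:relation_admissibility_tent} (via Corollary \ref{Cor:tent_corol_for_pert}) to obtain
\[
\delta \hat \ell_0 = \frac{1-\cos(\hat\theta)}{\cos(\bar\alpha - \hat\theta) - \cos(\bar\alpha)}\, \delta \hat r_0, \qquad
\delta \hat r_2 = \frac{\sin(\bar\alpha + \hat\theta)\, \delta\hat r_0 + \sin(\hat\theta)\, \delta\hat\ell_0}{\sin(\bar\alpha)},
\]
so that \eqref{eq:comparaison:Saturated} yields
\[
\delta \check r(\bar\phi) = \bigl(\delta \hat r_2 - \delta \tilde r(\check\phi_2)\bigr) e^{\cot(\bar\alpha)(2\pi + \bar\alpha - \hat\theta)} + \delta \tilde r(\bar\phi) - \delta\hat\ell_0.
\]
Structurally this is identical to the expression studied in Section \ref{Sss:segm_semg_after_neg_3}, but now the parameter window is $\bar\tau \in [2 - \theta/(2\pi + \bar\alpha),\, 2 - \hat\tau)$ with $\theta \in (\hat\theta, \pi/2]$, and the perturbation of the base vanishes identically.

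Next, as in the previous subsections, I would rescale by $e^{-\bar c (2\pi + \bar\alpha)\bar\tau}/\theta^2$ and use the parametrization $\bar\tau = (1-\varsigma)(2 - \theta/(2\pi + \bar\alpha)) + \varsigma(2 - \hat\tau)$ with $\varsigma \in [0,1]$ to turn the claim into the scalar inequality
\[
\frac{\delta \check r(\bar\phi)\, e^{-\bar c(2\pi + \bar\alpha)\bar\tau}}{\theta^2} > 2.91\, e^{-\bar c(2\pi + \bar\alpha)\bar\tau}
\]
uniformly on $(\varsigma, \theta) \in [0,1] \times (\hat\theta, \pi/2]$. Here I would rely on the explicit representation of $\delta \rho$ from Corollary \ref{Cor:equation} (whose support in $\tau \in [1, 2 - \hat\tau]$ involves only $g(\tau)$, $g(\tau - \tau_1)$, and the Dirac-free piece of $S_2 g(\tau - 1)$, already smooth on this window), and produce a Mathematica plot analogous to Fig.~\ref{Fig:segm_segm_after_neg4_1} to exhibit the numerical minimum.

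The main obstacle will be the uniform lower bound of $2.91\, \theta^2$: the reason this must be better than the $\sim 0.14\theta^2$ bound of Section \ref{Sss:segm_semg_after_neg_3} is that here we are outside the negativity region of $\delta \tilde r$ (we have $\theta > \hat\theta$), so both $\delta \tilde r(\bar\phi)$ and the ``correction'' contribution $(\delta\hat r_2 - \delta\tilde r(\check\phi_2))e^{\cot(\bar\alpha)(2\pi + \bar\alpha - \hat\theta)}$ should be positive and comparable to $\theta^2$. The sharpness of the constant $2.91$ (its role becomes visible in the next subsection, where it is used to absorb the negative contribution of an outgoing tent into this estimate) means one must be careful to compute the minimum numerically across the entire rectangle $[0,1] \times (\hat\theta, \pi/2]$ rather than just at a corner; as in the previous sections the worst case is expected on the boundary, and a final plot of the minimum as a function of $\theta$ along with a check at $\theta = \hat\theta$ and $\theta = \pi/2$ will confirm the bound with the needed margin.
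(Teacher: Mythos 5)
Your proposal matches the paper's argument in Section \ref{Sss:segm_semg_after_neg_4}: since the base of the tent lies strictly between the two jump angles, the paper likewise sets $\delta \hat Q_2 - \delta \hat Q_0 = 0$, $\delta \hat L = 0$, keeps only $\delta \hat r_0 = \delta\rho(\bar\tau-1)e^{\bar c(2\pi+\bar\alpha)(\bar\tau-1)}$, feeds these into the tent formulas of Lemma \ref{Lem:relation_admissibility_tent} and the comparison formula \eqref{eq:comparaison:Saturated}, and verifies the bound numerically over $\bar\tau = (1-\varsigma)(2-\theta/(2\pi+\bar\alpha))+\varsigma(2-\hat\tau)$, finding the minimum $\approx 2.91$ at $\theta=\pi/2$, $\varsigma\approx 0$. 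The only inessential deviation is your aside on why the constant $2.91$ matters later, which the paper does not use; otherwise this is the same proof.
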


%
%
%
%
%
%

\subsection{Segment-segment with \texorpdfstring{$\check \phi_0 \geq \phi_0 + 2\pi + \bar \theta - \theta, \check \phi_1 \geq \phi_0 + 2\pi + \bar \theta$}{check phi0 geq phi0 + 2pi + bar theta - theta, check phi1 geq phi0 + 2pi + bar theta}: see Fig. \ref{Fig:segm_segm_sat_5}}
\label{Sss:segm_semg_after_neg_5}

In this case we have
\begin{align*}
\Delta\hat Q=\delta \hat Q_2 - \delta \hat Q_0 &= \delta \tilde r(\check \phi_2 - 2\pi - \bar \alpha) e^{\i (\phi_0 + \bar \theta -\bar \alpha + (2\pi + \bar \alpha) (\bar\tau - 2 + \hat \tau))} - e^{\i (\phi_0 + \bar \theta - \theta - (\phi_0 + \bar \theta - \bar \alpha + (2\pi + \bar \alpha) (\bar\tau - 1)))} \\
&= \delta \rho(\bar \tau - 2 + \hat \tau) e^{\bar c (2\pi + \bar \alpha) (\bar \tau - 2 + \hat \tau)} e^{\i (\phi_0 + \bar \theta + (2\pi + \bar \alpha) (\bar\tau - 1 + \hat \tau))} e^{\i (\hat \theta - \bar \alpha)} \\
& \quad - e^{\i (\phi_0 + \bar \theta - \bar \alpha + (2\pi + \bar \alpha) (\bar\tau - 1))} e^{\i (- \theta - (2\pi + \bar \alpha) (\bar\tau - 1)))},
\end{align*}
We remark that the change of variables we have used is
\begin{equation*}
	\phi=\phi_0+\bar\theta-\bar\alpha+(2\pi+\bar\alpha)\tau, \quad \hat \tau = \frac{\hat \theta}{2\pi + \bar \alpha}.
	\end{equation*}
In particular we have denoted by $\bar\tau$ that value corresponding to $\bar\phi$.
\begin{equation*}
\Delta \hat Q \cdot e^{\i (\bar \phi - (2\pi + \bar \alpha) + \hat \theta))} = \delta \rho(\bar \tau - 2 + \hat \tau) e^{\bar c (2\pi + \bar \alpha) (\bar \tau - 2 + \hat \tau)} \cos(\bar \alpha) - \cos(- \theta - (2\pi + \bar \alpha) (\bar \tau - 2) - \hat \theta),
\end{equation*}
\begin{equation*}
\Delta \hat Q \cdot e^{\i (\bar \phi - (2\pi + \bar \alpha) + (\bar \alpha + \hat \theta - \frac{\pi}{2}))} = \delta \rho(\bar \tau - 2 + \hat \tau) e^{\bar c (2\pi + \bar \alpha) (\bar \tau - 2 + \hat \tau)} \cos \bigg( \frac{\pi}{2} - 2 \bar \alpha \bigg) - \cos \bigg(- \theta - (2\pi + \bar \alpha) (\bar\tau - 2) - \bigg( \bar \alpha + \hat \theta - \frac{\pi}{2} \bigg) \bigg),
\end{equation*}
\begin{align*}
\delta \hat L 
&= \delta |P_2 - P_1| - \frac{r(\phi_2)}{\sin(\bar \alpha)} \delta \phi_2 + \int_0^{(2\pi + \bar \alpha)(\bar\tau - 2 + \hat \tau)} \frac{\delta \tilde r(\phi)}{\sin(\bar \alpha)} d\phi \\
\big[ (\ref{Equa:delta_P_2P_1_segm}) \big] \quad &= - 1 + \frac{1 - \cos(\theta)}{\sin(\bar \alpha)^2} + \int_0^{(2\pi + \bar \alpha)(\bar\tau - 2 + \hat \tau)} \frac{\delta \tilde r(\phi)}{\sin(\bar \alpha)} d\phi \\
&= - 1 + \frac{1 - \cos(\theta)}{\sin^2(\bar \alpha)} e^{\cot(\bar \alpha)(2\pi + \bar \alpha)(\bar\tau - 2 + \hat \tau)},
\end{align*}
\begin{equation*}
\delta \hat r_0 = \delta \rho(\bar \tau - 1) e^{\bar c (2\pi + \bar \alpha) (\bar \tau - 1)}.
\end{equation*}
The interval of interest is
\begin{equation*}
2 - \frac{\min\{\theta,\hat \theta\}}{2\pi + \bar \alpha} \leq \bar \tau < 2, \quad \theta \in (0,\pi].
\end{equation*}
In Fig. \ref{Fig:segm_segm_after_neg5_1} we numerically plot the function $\frac{\delta \check r(\bar \tau) e^{-\bar c(2\pi + \bar \alpha) \bar \tau}}{\theta^2}$ as a function of $\bar \tau = (1- \varsigma)(1- \frac{\min\{\theta,\hat \theta\}}{2\pi +\bar\alpha}) + 2\varsigma$, $\varsigma \in (0,1)$: its minimal value is for $\varsigma = 0,\theta = \pi/2$ and $> 0.59$ (Fig. \ref{Fig:segm_segm_after_neg6_2}).

\begin{figure}
\begin{subfigure}{.475\textwidth}
\resizebox{\linewidth}{!}{\includegraphics{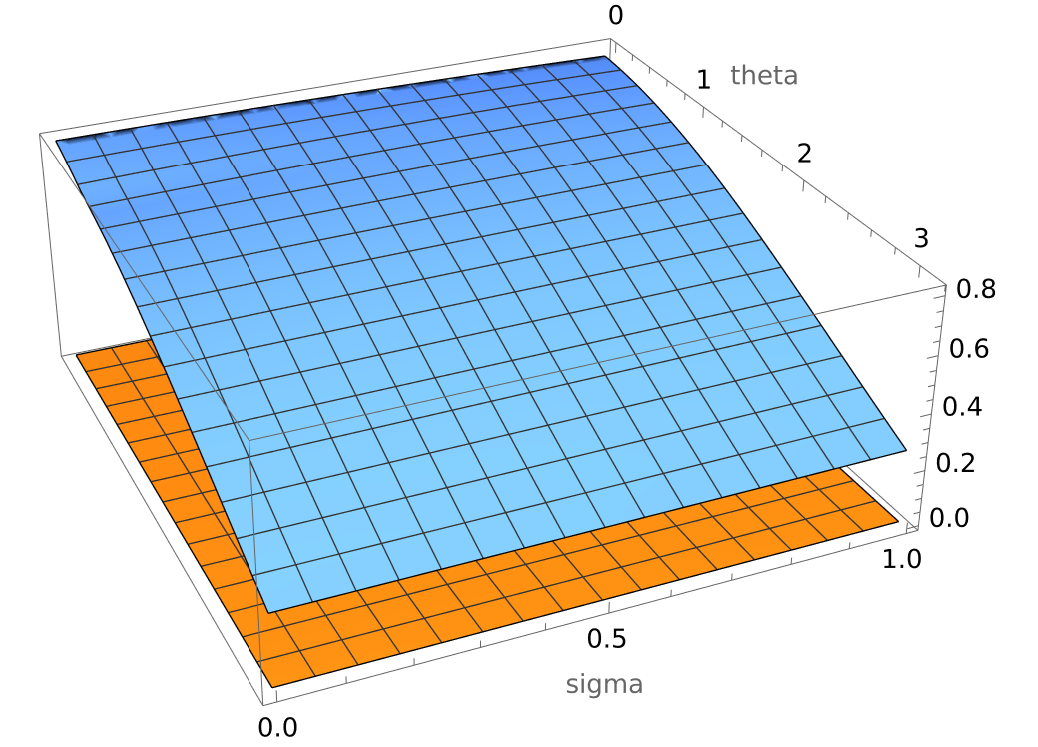}}
\caption{Plot of the function $\delta \check r(\bar \phi)/\theta^2$.}
\label{Fig:segm_segm_after_neg5_1}
\end{subfigure}
\hfill
\begin{subfigure}{.475\linewidth}
\resizebox{\linewidth}{!}{\includegraphics{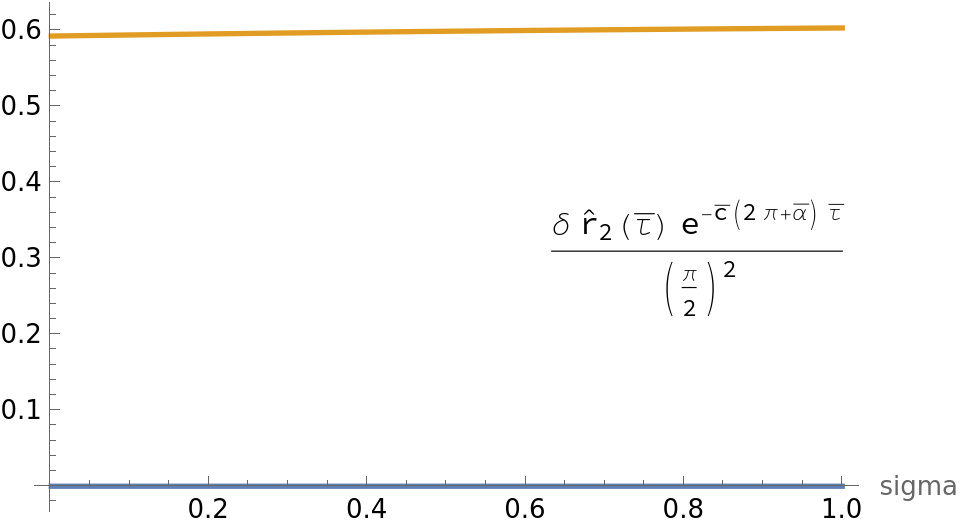}}
\caption{Plot of the function $\delta \check r(\bar \phi)/\theta^2$ for $\theta = \frac{\pi}{2}$.}
\label{Fig:segm_segm_after_neg6_2}
\end{subfigure}
\caption{Analysis of Section \ref{Sss:segm_semg_after_neg_5}.}
\label{Fig:segm_tent_3}
\end{figure}

We thus have proved the following lemma.

\begin{lemma}
\label{Lem:segm_tent_5}
If $\check \phi_0 \geq \phi_0 + 2\pi + \bar \theta - \theta, \check \phi_2 \geq \phi_0 + 2\pi + \bar \theta$, then $\delta \hat r(\bar \phi) \geq 0.59 \theta^2$.
\end{lemma}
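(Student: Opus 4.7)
The proof plan mirrors the strategy used in Sections \ref{Sss:segm_semg_after_neg_2}--\ref{Sss:segm_semg_after_neg_4}: since the tent is admissible in this regime by Proposition \ref{Prop:tent_admissible} (as we are well past the first round after $\phi_0$), the formulas of Lemma \ref{Lem:relation_admissibility_tent} together with their linearized counterparts from Corollary \ref{Cor:tent_corol_for_pert} apply to the perturbation $\delta \check r(\bar \phi;s_0)$. The configuration (see Fig.~\ref{Fig:segm_segm_sat_5}) is the one where both $\check \phi_0$ and $\check \phi_2$ lie in the saturated part of $\tilde \zeta$, but with $\check \phi_0$ appearing \emph{before} the angle $\phi_0 + 2\pi + \bar \theta$ where $\tilde r$ has the Dirac mass from the base segment $[P_1,P_2]$; hence the base perturbation at $\hat Q_0$ picks up the full contribution of the unit jump in direction $e^{i(\phi_0 + \bar \theta - \theta)}$, while $\hat Q_2$ is already past this jump and only sees the smooth saturated evolution.

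The first step is therefore to write down the four geometric inputs entering Corollary \ref{Cor:tent_corol_for_pert}: $\delta \hat Q_2 - \delta \hat Q_0$, its projections on $e^{i(\bar \phi - (2\pi + \bar \alpha) + \hat \theta)}$ and $e^{i(\bar \phi - (2\pi + \bar \alpha) + \bar \alpha + \hat \theta - \frac{\pi}{2})}$, and $\delta \hat L$. The difference $\delta \hat Q_2 - \delta \hat Q_0$ is the sum of the saturated-spiral displacement at $\check \phi_2 - 2\pi - \bar \alpha$ (yielding $\delta \rho(\bar \tau - 2 + \hat \tau) e^{\bar c (2\pi + \bar \alpha)(\bar \tau - 2 + \hat \tau)}$ in the direction $e^{i(\hat \theta - \bar \alpha)}$ relative to the frame of $\bar \phi$) minus the jump contribution $e^{i(-\theta - (2\pi + \bar \alpha)(\bar \tau - 1))}$ from the non-saturated bit between $\check \phi_0$ and $\phi_0 + 2\pi + \bar \theta$. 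The length perturbation $\delta \hat L$ is then obtained by the same integration used in Section \ref{Sss:segm_semg_after_neg_5}, namely $\delta \hat L = -1 + \tfrac{1 - \cos(\theta)}{\sin^2(\bar \alpha)} e^{\cot(\bar \alpha)(2\pi + \bar \alpha)(\bar \tau - 2 + \hat \tau)}$, where the $-1$ accounts for the unit jump between $\check \phi_0$ and $\phi_0 + 2\pi + \bar \theta$ and the rest is the integral of $\delta \tilde r/\sin(\bar \alpha)$ on the saturated tail. Finally $\delta \hat r_0 = \delta \rho(\bar \tau - 1) e^{\bar c(2\pi + \bar \alpha)(\bar \tau - 1)}$.

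With these inputs, the formulas of Lemma \ref{Lem:relation_admissibility_tent} produce explicit expressions for $\delta \hat \ell_0$ and $\delta \hat r_2$, and the comparison identity \eqref{eq:comparaison:Saturated} gives
\begin{equation*}
\delta \check r(\bar \phi) = e^{\cot(\bar \alpha)(2\pi + \bar \alpha - \hat \theta)} \bigl( \delta \hat r_2 - \delta \tilde r(\check \phi_2) \bigr) + \delta \tilde r(\bar \phi) - \delta \hat \ell_0.
\end{equation*}
The relevant parameter region is $\bar \tau \in [2 - \tfrac{\min\{\theta,\hat \theta\}}{2\pi + \bar \alpha},2)$, $\theta \in (0,\pi]$, and one studies the rescaled quantity $\frac{\delta \check r(\bar \phi)\,e^{-\bar c(2\pi + \bar \alpha)\bar \tau}}{\theta^2}$. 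As in the parallel cases, the division by $\theta^2$ is natural because the tent construction smooths away the $\theta$-jump present in $\delta \tilde r$ (cf.\ Remark \ref{Rem:more_regu}): direct expansion near $\theta=0$ shows that the leading-order term of $\delta \hat \ell_0$ cancels the leading-order term of $\delta \tilde r(\bar \phi)$ via the saturation condition, leaving a quadratic dependence.

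The proof is then completed by numerical evaluation of the explicit closed-form expression for $\frac{\delta \check r(\bar \phi) e^{-\bar c(2\pi + \bar \alpha) \bar \tau}}{\theta^2}$ on the rectangle in $(\bar \tau, \theta)$, parametrized by $\bar \tau = (1-\varsigma)\bigl(2 - \tfrac{\min\{\theta,\hat \theta\}}{2\pi + \bar \alpha}\bigr) + 2\varsigma$, $\varsigma \in [0,1]$. The expected minimum is attained at the endpoint of the saturated-tail interval and corresponds to the worst case $\theta = \pi/2$, yielding the lower bound $0.59$. The main obstacle is not an analytic one — the formulas are all linear combinations of trigonometric quantities and of $\delta \rho$ evaluated at known arguments — but rather the careful book-keeping of the three distinct contributions to $\delta \hat Q_2 - \delta \hat Q_0$ and $\delta \hat L$ (saturated evolution, unit jump of $\tilde r$ at $\phi_0 + 2\pi + \bar \theta$, and the tangential drift of $\hat Q_0$), and checking that the numerical lower bound $0.59$ is uniformly attained on the \emph{entire} parameter rectangle, including the boundary $\theta = \hat \theta$ where this subcase matches the subcase of Section \ref{Sss:segm_semg_after_neg_3} and the boundary $\bar \tau = 2$ where it matches Section \ref{Sss:segm_semg_after_neg_4}, so that continuity across cases is preserved.
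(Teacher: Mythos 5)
Your proposal follows essentially the same route as the paper: the same identification of $\delta \hat Q_2 - \delta \hat Q_0$ (saturated evolution at $\check\phi_2 - 2\pi - \bar\alpha$ minus the unit tangential displacement at $\hat Q_0$), the same $\delta \hat L = -1 + \frac{1-\cos(\theta)}{\sin^2(\bar\alpha)} e^{\cot(\bar\alpha)(2\pi+\bar\alpha)(\bar\tau-2+\hat\tau)}$ and $\delta \hat r_0$, fed into Lemma \ref{Lem:relation_admissibility_tent} via Corollary \ref{Cor:tent_corol_for_pert} and the comparison identity \eqref{eq:comparaison:Saturated}, with the bound $0.59\,\theta^2$ obtained by numerical evaluation of $\delta\check r(\bar\phi) e^{-\bar c(2\pi+\bar\alpha)\bar\tau}/\theta^2$ over the stated rectangle. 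The only cosmetic difference is that the paper's numerics locate the minimum at the left endpoint $\bar\tau = 2 - \frac{\min\{\theta,\hat\theta\}}{2\pi+\bar\alpha}$ with $\theta = \pi/2$, which your description of the worst case leaves slightly vague.
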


%
%
%
%

\subsection{Segment-segment with \texorpdfstring{$\tau - 2 \geq 0$}{tau - 2 geq 0}: see Fig. \ref{Fig:segm_segm_sat_6}}
\label{Sss:segm_semg_after_neg_6}

In this region the base of the tent is a convex curve, because $\delta \tilde r \geq 0$: we can thus apply Lemma \ref{Lem:final_value_satu}, we need just the positivity of
\begin{equation}
\label{Equa:rho_7_segm_segm}
\frac{e^{-\bar c \bar \phi}}{\theta^2} \bigg( \delta \tilde r(\bar \phi) - \frac{1 - \cos(\hat \theta)}{\cos(\bar \alpha - \hat \theta) - \cos(\bar \alpha)} \delta \tilde r(\hat \phi_0) \bigg) = \frac{1}{\theta^2} \bigg( \delta \rho(\bar \tau) - \frac{(1 - \cos(\hat \theta)) e^{- \bar c (2\pi + \bar \alpha) 2}}{\cos(\bar \alpha - \hat \theta) - \cos(\bar \alpha)} \delta \rho(\bar \tau - 2) \bigg), \quad \bar \tau > 2.
\end{equation}
A numerical plot is in Fig. \ref{Fig:segm_tent_7}, where one observe that is is positive and strictly increasing.

\begin{figure}
\begin{subfigure}{.475\textwidth}
\resizebox{\linewidth}{!}{\includegraphics{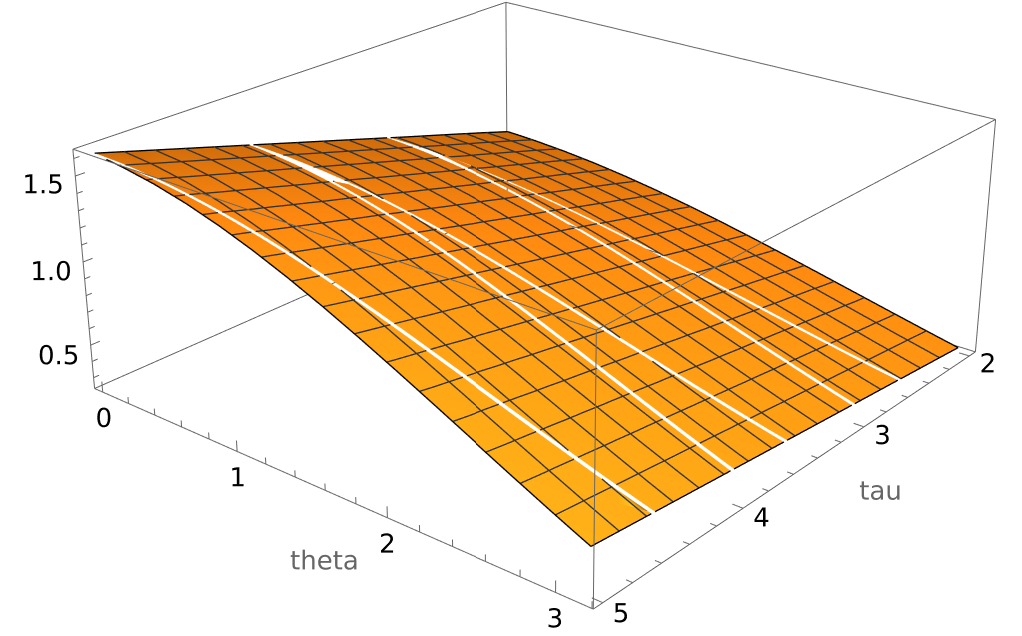}}
\caption{Plot of the function \eqref{Equa:rho_7_segm_segm}.}
\label{Fig:segm_segm_after_neg7_1}
\end{subfigure}
\hfill
\begin{subfigure}{.475\linewidth}
\resizebox{\linewidth}{!}{\includegraphics{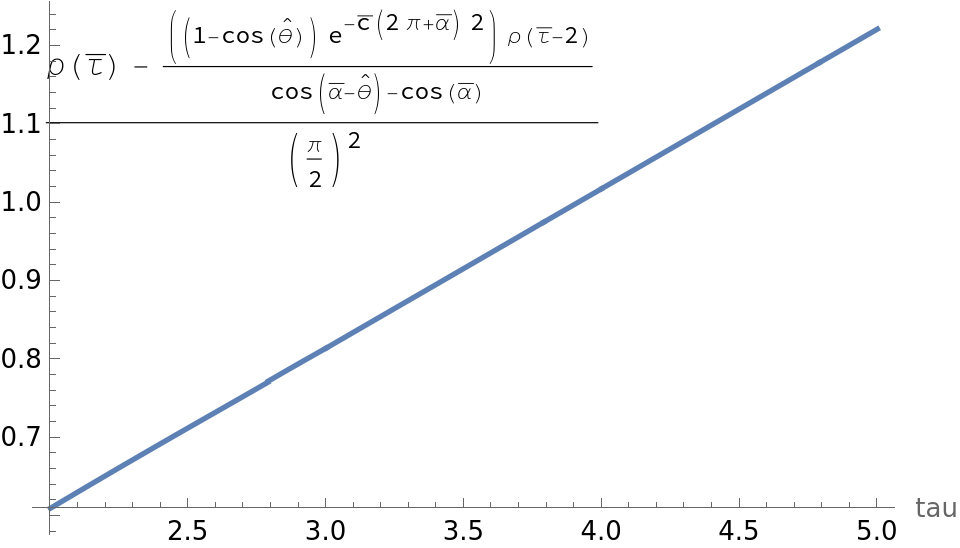}}
\caption{Plot of the function \eqref{Equa:rho_7_segm_segm} for $\theta = \frac{\pi}{2}$.}
\label{Fig:segm_segm_after_neg7_2}
\end{subfigure}
\caption{Analysis of Section \ref{Sss:segm_semg_after_neg_6}.}
\label{Fig:segm_tent_7}
\end{figure}

We thus conclude that

\begin{lemma}
\label{Lem:segm_tent_6}
If $\bar \tau \geq 2$, then $\delta \hat r(\bar \phi) \geq 0$.
\end{lemma}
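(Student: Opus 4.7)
The plan is to reduce this final case to a direct application of Lemma \ref{Lem:final_value_satu}, exploiting the fact that in the regime $\bar\tau \geq 2$ we are already past the negativity region of $\delta\tilde r$ identified in Proposition \ref{Prop:regions_pos_neg_segm}. First I would verify that under the hypothesis $\bar\tau\geq 2$, the base of the tent, namely the arc of $\tilde\zeta$ corresponding to rotation angles in the interval $[\hat\phi_0,\hat\phi_2]\subset\phi_0 + [2\bar\theta,2\pi+\bar\alpha+\bar\theta]$ shifted appropriately, has perturbation $\delta\tilde r \geq 0$. This is because Proposition \ref{Prop:regions_pos_neg_segm} locates the negativity region $N_{\mathrm{segm}}$ entirely within $\tau\in[\tau_1,1]$ (together with the degenerate case $\theta=0$), and the translated support of the base of the tent starts at $\tau = \bar\tau - 1 \geq 1$, which is outside $N_{\mathrm{segm}}$. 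Hence the perturbed base remains a convex curve, so that the hypothesis of Lemma \ref{Lem:final_value_satu} is satisfied.

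Next, using linearity of the geometric identities of Lemma \ref{Lem:relation_admissibility_tent} (Corollary \ref{Cor:tent_corol_for_pert}), I would differentiate the inequality of Lemma \ref{Lem:final_value_satu} in $s_0$. Because $\hat\theta$ is a fixed critical angle (independent of $s_0$), the constant $\frac{1-\cos(\hat\theta)}{\cos(\bar\alpha-\hat\theta)-\cos(\bar\alpha)}$ is inert under differentiation, and one obtains
\begin{equation*}
\delta\hat r(\bar\phi) \;\geq\; \delta\tilde r(\bar\phi) \;-\; \frac{1-\cos(\hat\theta)}{\cos(\bar\alpha-\hat\theta)-\cos(\bar\alpha)}\,\delta\tilde r(\hat\phi_0),
\end{equation*}
with $\hat\phi_0 = \bar\phi - 2\pi - \bar\alpha$, i.e., a shift of one round. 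Rescaling through $\delta\rho(\tau) = \delta\tilde r(\phi_0+\bar\theta-\bar\alpha+(2\pi+\bar\alpha)\tau)e^{-\bar c(2\pi+\bar\alpha)\tau}$ converts this into the quantity \eqref{Equa:rho_7_segm_segm}, and it suffices to show this rescaled function is nonnegative for $\bar\tau\geq 2$ and for all $\theta\in[0,\pi]$.

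For the verification itself I would follow the now-standard procedure of the previous subsections: numerically plot \eqref{Equa:rho_7_segm_segm}/$\theta^2$ on the parameter rectangle $(\theta,\bar\tau)\in(0,\pi]\times[2,3]$, check that the function is strictly positive and that its $\tau$-derivative is also strictly positive on $[2,3]\times(0,\pi]$ (this is Fig. \ref{Fig:segm_tent_7}). Since both $\delta\rho(\tau)$ and its one-step translate $\delta\rho(\tau-2)$ satisfy the same linear RDE $\dot\rho = \rho(\tau)-\rho(\tau-1)$ with critical coefficient $a=1$ past the first round, their linear combination also satisfies this RDE. I can then invoke Lemma \ref{lem:key} in its critical case: positivity and strict monotonicity on an interval of length $\geq 1$ propagate to linear divergence for all $\tau\geq 2$.

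The main obstacle I expect is a delicate one, and it is essentially bookkeeping rather than a new idea: making sure that the bound of Lemma \ref{Lem:final_value_satu} actually differentiates cleanly. The lemma's inequality uses $|\hat Q_2 - \hat Q_0|\leq \hat L$ for a \emph{convex} base, so I must check that the perturbed base of the tent remains convex for \emph{every} admissible perturbation (i.e. that $\delta\tilde r(\phi)\geq 0$ uniformly on the full arc $\phi\in[\hat\phi_0,\hat\phi_2]$, not only at its endpoints). Proposition \ref{Prop:regions_pos_neg_segm} already guarantees this positivity on $\tau\in[1,\infty)$, so the condition is indeed met, but one should note that the convexity inequality is saturated precisely when the base is a straight segment, which only happens along $\theta=0$ where $\delta\rho\equiv 0$; this matches the boundary case of the numerical plot. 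Once these compatibility checks are in place, the numerical verification of \eqref{Equa:rho_7_segm_segm} together with Lemma \ref{lem:key} closes the argument.
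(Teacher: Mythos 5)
Your proposal follows the paper's proof of this lemma essentially verbatim: the paper likewise observes that for $\bar\tau \geq 2$ the perturbed base of the tent remains convex because $\delta\tilde r \geq 0$ past the negativity region of Proposition \ref{Prop:regions_pos_neg_segm}, applies the linearized form of Lemma \ref{Lem:final_value_satu} via Corollary \ref{Cor:tent_corol_for_pert}, and closes by numerically checking positivity and monotonicity of \eqref{Equa:rho_7_segm_segm} so that Lemma \ref{lem:key} propagates the bound. The only discrepancy is bookkeeping rather than substance: the paper's displayed functional \eqref{Equa:rho_7_segm_segm} carries a two-round shift $\delta\rho(\bar\tau-2)$ (the base curve $\hat\zeta$ between $\hat Q_0$ and $\hat Q_2$ sits one round below the tent itself), whereas you write the one-round shift $\delta\tilde r(\bar\phi - 2\pi - \bar\alpha)$, and correspondingly the numerical verification must extend past $\tau=3$ (as in the paper's figure) rather than stopping at $[2,3]$ before Lemma \ref{lem:key} can be invoked.
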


\newpage

\section{Study of the perturbation to the optimal solution in the presence of an arc}
\label{S:arc_1}

In this section we address the case when the optimal candidate we are going to perturb contains a level set arc. More precisely, four situations can occur:
\begin{enumerate}
\item the optimal candidate spiral $\check r$ starts with a segment at $\phi_0$ in the direction $\bar \phi$, and exit the negativity region in the arc case, Section \ref{Sss:segm_arc_2} and Fig. \ref{Fig:segm_arc_2};
\item the optimal candidate spiral $\check r$ starts with an arc, then a segment at $\phi = \bar \phi - 2\pi - \frac{\pi}{2}$ in the direction $\bar \phi$, and exits the negativity region in the arc case, Section \ref{Sss:segm_arc_3} and Fig. \ref{Fig:segm_arc_3};
\item the optimal candidate spiral $\check r$ starts with an arc, then a segment at $\phi = \bar \phi - 2\pi - \frac{\pi}{2}$ in the direction $\bar \phi$, and exits the negativity region in the segment case, Section \ref{Sss:arc_segm_1} and Fi. \ref{Fig:arc_segm_1};
\item the optimal candidate spiral $\check r$ starts with an arc, then a segment and a saturated spiral, next at $\phi = \bar \phi - 2\pi - \frac{\pi}{2}$ we have a tent, Section \ref{Sss:arc_tent_satur}: the 10 different respective positions of the arc and the tent are presented in Fig. \ref{Fig:arc_segm_2}.
\end{enumerate}
By Proposition \ref{Prop:tent_admissible}, only the last case can occur for $\bar \tau \geq 1$. In particular, when we are in the saturated region for $\hat \phi_0$ (the angle at which the tent starts), the tent is admissible, so that the following situation is not happening: the optimal candidate starts with an arc, then a segment and a saturated spiral, next at $\phi = \bar \phi - 2\pi - \bar \alpha$ there is a segment in the direction $\bar \phi$ and the last round starts with an arc.

\subsection{Segment-arc or arc-arc cases}
\label{Ss:arc-arc}

These situations can happen only when $\bar \phi - 2\pi - \phi_0 \leq \omega$, i.e. $\bar \tau < 1$ by Proposition \ref{Prop:tent_admissible}.

%
%
%
%

\subsubsection{Segment-arc with $\bar \phi \in \phi_0 + 2\pi + [\bar \theta - \theta,\bar \theta)$: see Fig. \ref{Fig:segm_arc_2}}
\label{Sss:segm_arc_2}

\begin{figure}
\begin{subfigure}{.475\textwidth}
\resizebox{\textwidth}{!}{\input{segm_arc_2.pdf_t}}
\caption{The geometric situation of Section \ref{Sss:segm_arc_2}.}
\label{Fig:segm_arc_2}
\end{subfigure}
\hfill
\begin{subfigure}{.475\textwidth}
\resizebox{\textwidth}{!}{\input{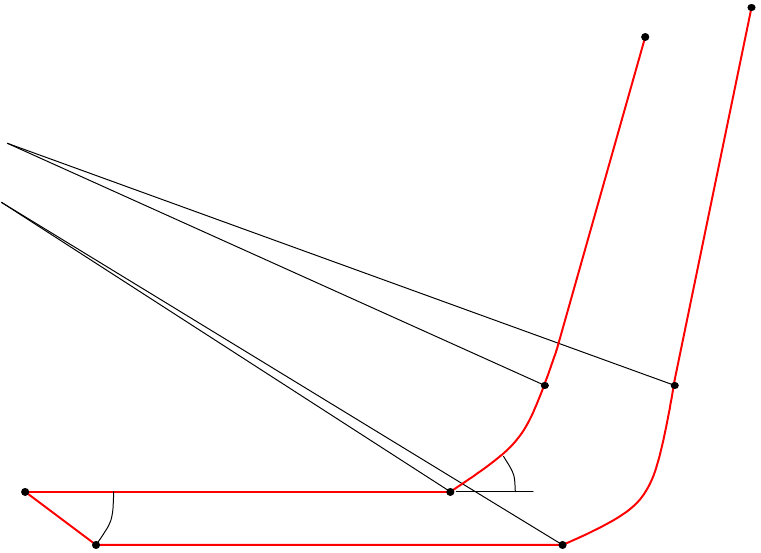_t}}
\caption{The geometry to compute the variations of the points in the arc case.}
\label{Fig:segm_arc_2_2}
\end{subfigure}
\caption{Analysis of Section \ref{Sss:segm_arc_2}.}
\label{Fig:segm_arc_2_tot}
\end{figure}

Setting for simplicity $\bar \phi = 0$ and with $\theta$ the angle of the perturbation of the point $P_0$, the  variations of the positions of the points is (see Fig. \ref{Fig:segm_arc_2_2}):
\begin{equation*}
\delta P_0 = e^{- \i \theta}, \quad \delta \check P^- = \delta P_0 + \delta \check \ell_0 e^{\i \bar\phi}=\delta P_0 + \delta \check \ell_0 e^{\i 0}, \quad \delta \check \phi^- = \frac{\delta \check P^- \cdot e^{\i \check \theta}}{\check r(\check \phi^-)},
\end{equation*}
\begin{equation*}
\delta \check P_1 = \big[ \delta \check P^- \cdot e^{\i (\check \theta - \frac{\pi}{2})} \big] e^{\i (\check \theta + \Delta \check \phi - \frac{\pi}{2})} + \check r(\check \phi_1) \delta \check \phi_1 e^{\i (\check \theta + \Delta \check \phi)}, \quad \delta \check \phi_1 = \frac{\delta \check P^- \cdot e^{\i \check \theta}}{\check r(\check \phi^-)} + \delta \Delta \check \phi,
\end{equation*}
where the first term is the variation of the level set and the second is the variation of the angle along the level set. Using Formula \eqref{eq:P:1:arc} we have used the fact that $\check\phi^-=-\frac{\pi}{2}+\check\theta-2\pi$. Moreover we have
\begin{equation*}
\delta \check P_2 = \delta \check P_1 + \delta \check \ell_1 e^{\i (\check \theta + \Delta \check \phi)} + \delta \check \phi_1 \check \ell_1 e^{i (\check \theta + \Delta \check \phi + \frac{\pi}{2})}.
\end{equation*}
For reference compare the previous formulas with \eqref{eq:P:1:arc},\eqref{eq:P:2:arc}.
The optimality condition requires by Lemma \ref{Lem:study_g_1_arc} that
\begin{equation*}
\check \theta = h_1(\Delta \check \phi), \quad \delta \check \theta = \delta \check \phi^- = \frac{dh_1(\Delta \check \phi)}{d\Delta \check\phi} \delta \Delta \check \phi,
\end{equation*}
and the saturation condition reads as
\begin{equation*}
\begin{split}
0 &= \delta \check P_2 \cdot e^{\i (\check \theta + \Delta \check \phi - \bar \alpha)} - \cos(\bar \alpha) \bigg( 1 + \delta \check \ell_0 - \check r(\check \phi^-) \delta \check \phi^- + \Delta \check \phi \big( \delta \check P^- \cdot e^{\i (\check \theta - \frac{\pi}{2})} \big) + \check r(\check \phi_1) \delta \check \phi_1 + \delta \check \ell_1 \bigg) \\
&= \big( \delta \check P_2 \cdot e^{\i (\check \theta + \Delta \check \phi)} \cos(\bar \alpha) + \delta \check P_2 \cdot e^{\i (\check \theta + \Delta \check \phi - \frac{\pi}{2})} \sin(\bar \alpha) \big) \\
& \quad \qquad \qquad \qquad - \cos(\bar \alpha) \bigg( 1 + \delta \check \ell_0 - \check r(\check \phi^-) \delta \check \phi^- + \Delta \check \phi \big( \delta \check P^- \cdot e^{\i (\check \theta - \frac{\pi}{2})} \big) + \check r(\check \phi_1) \delta \check \phi_1 + \delta \check \ell_1 \bigg) \\
&= \big( (\hat r_1(\hat \phi_1) \delta \phi_1 + \delta \hat \ell_1) \cos(\bar \alpha) + \delta \check P_2 \cdot e^{\i (\check \theta + \Delta \check \phi - \frac{\pi}{2})} \sin(\bar \alpha) \big) \\
&\quad \qquad \qquad \qquad - \cos(\bar \alpha) \bigg( 1 + \delta \check \ell_0 - \check r(\check \phi^-) \delta \check \phi^- + \Delta \check \phi \big( \delta \check P^- \cdot e^{\i (\check \theta - \frac{\pi}{2})} \big) + \check r(\check \phi_1) \delta \check \phi_1 + \delta \check \ell_1 \bigg) \\
&= \delta \check P_2 \cdot e^{\i (\check \theta + \Delta \check \phi - \frac{\pi}{2})} \sin(\bar \alpha) - \cos(\bar \alpha) \bigg( 1 + \delta \check \ell_0 - \check r(\check \phi^-) \delta \check \phi^- + \Delta \check \phi \big( \delta \check P^- \cdot e^{\i (\check \theta - \frac{\pi}{2})} \big) \bigg).
\end{split}
\end{equation*}
%
The final value is computed by
\begin{equation*}
\begin{split}
\delta \hat r(\bar \phi) &= \frac{\delta \check P_2 \cdot e^{\i (\check \theta + \Delta \check \phi - \frac{\pi}{2})}}{\sin(\bar \alpha)} e^{\cot(\bar \alpha) (2\pi + \bar \alpha - \check \theta - \Delta \check \phi)} - e^{\cot(\bar \alpha) \theta} - \delta \check \ell_0 \\
&= \frac{\cot(\bar \alpha)}{\sin(\bar \alpha)} \bigg( 1 + \delta \check \ell_0 - \big( \cos(\theta + \check \theta) + \delta \check \ell_0 \cos(\check \theta) \big) + \Delta \check \phi \big( \sin(\theta + \check \theta) + \delta \check \ell_0 \sin(\check \theta) \big) \bigg) e^{\cot(\bar \alpha) (2\pi + \bar \alpha - \check \theta - \Delta \check \phi)} \\&- e^{\cot(\bar \alpha) \theta} - \delta \check\ell_0.
\end{split}
\end{equation*}
Using \eqref{Equa:theta_Deltaphi_rel} we simplify into
\begin{equation*}
\begin{split}
\delta \hat r(\bar \phi) &= \frac{\cot(\bar \alpha)}{\sin(\bar \alpha)} \big( 1 - \cos(\theta + \check \theta) + \Delta \check \phi  \sin(\theta + \check \theta) \big) e^{\cot(\bar \alpha) (2\pi + \bar \alpha - \check \theta - \Delta \check \phi)} - e^{\cot(\bar \alpha) \theta} \\
&= e^{\cot(\bar \alpha) \theta} \bigg( \frac{\cot(\bar \alpha)}{\sin(\bar \alpha)} \big( 1 - \cos(\theta + \check \theta) + \Delta \check \phi \sin(\theta + \check \theta) \big) e^{\cot(\bar \alpha) (2\pi + \bar \alpha - (\check \theta + \theta) - \Delta \check \phi)} - 1 \bigg),
\end{split}
\end{equation*}
which is $\geq 0$ unless $\theta = 0$ by Lemma \ref{Lem:study_g_1_arc}.

We thus have the following result.

\begin{lemma}
\label{Lem:segm_arc_2}
If $\bar \phi \in \phi_0 + 2\pi + [\frac{\pi}{2} - \theta,\frac{\pi}{2})$, then $\delta \check r(\bar \phi) \geq 0$.
\end{lemma}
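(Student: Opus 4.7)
The plan is to compute $\delta \check r(\bar\phi)$ by tracking the first-order variations of the key geometric points $P_0, \check P^-, \check P_1, \check P_2$ induced by a perturbation of direction $\theta$ at $P_0$, then exploit the optimality condition at the boundary of the negativity region to reduce the sign question to an inequality already handled in Lemma \ref{Lem:study_g_1_arc}. First I would write $\delta P_0 = e^{-i\theta}$ and propagate through the configuration of Figure \ref{Fig:segm_arc_2_2}: $\delta \check P^-$ includes the motion of $P_0$ plus the variation $\delta \check\ell_0$ along the direction $\bar\phi$, the angle $\delta\check\phi^-$ follows from the length $\check r(\check\phi^-)$, $\delta \check P_1$ comes from sliding along the level set by $\delta \check\phi_1 = \delta\check\phi^- + \delta\Delta\check\phi$, and finally $\delta\check P_2$ picks up the additional contributions from $\delta\check\ell_1$ and the rotation of the last segment.

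Next I would impose the optimality condition at $\check P^-$, which by Proposition \ref{Prop:unique_optimal_guess} and Lemma \ref{Lem:study_g_1_arc} reads $\check\theta = h_1(\Delta\check\phi)$, giving $\delta\check\theta = \delta\check\phi^- = h_1'(\Delta\check\phi)\,\delta\Delta\check\phi$ and thereby coupling $\delta\Delta\check\phi$ to $\delta\check\ell_0$. Then the saturation condition at $\check P_2$ (projection of $\delta\check P_2$ on the optimal ray equals $\cos(\bar\alpha)$ times the total length variation) is projected onto $e^{i(\check\theta + \Delta\check\phi)}$ and $e^{i(\check\theta + \Delta\check\phi - \pi/2)}$; the $\cos(\bar\alpha)$ terms cancel the contributions from $\delta\check\ell_1$ and $\check r(\check\phi_1)\delta\check\phi_1$, leaving a single scalar equation for $\delta\check P_2 \cdot e^{i(\check\theta + \Delta\check\phi - \pi/2)}$ in terms of $\theta$ and $\delta\check\ell_0$. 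The final value $\delta \check r(\bar\phi)$ is then obtained as the image of this quantity under the unperturbed saturated evolution over the angle $2\pi + \bar\alpha - \check\theta - \Delta\check\phi$, minus the initial perturbation $e^{\cot(\bar\alpha)\theta}$ along $\zeta$, minus the displacement $\delta\check\ell_0$ at the base.

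The key simplification will be that after using the identity \eqref{Equa:theta_Deltaphi_rel} characterizing $h_1$, all $\delta\check\ell_0$ terms cancel and the expression reduces to
\begin{equation*}
\delta \check r(\bar\phi) = e^{\cot(\bar\alpha)\theta}\left(\frac{\cot(\bar\alpha)}{\sin(\bar\alpha)}\bigl(1 - \cos(\theta + \check\theta) + \Delta\check\phi\sin(\theta+\check\theta)\bigr) e^{\cot(\bar\alpha)(2\pi + \bar\alpha - (\check\theta+\theta) - \Delta\check\phi)} - 1\right).
\end{equation*}
At this point the argument is essentially finished: the bracket is precisely $f_1(\theta + \check\theta, \Delta\check\phi)$ in the notation introduced in the proof of Lemma \ref{Lem:study_g_1_arc}, where it was shown that $f_1$ is strictly increasing in its first argument on the relevant range with $f_1(h_1(\Delta\check\phi), \Delta\check\phi) = 0$; hence $\theta \geq 0$ combined with $\check\theta = h_1(\Delta\check\phi)$ yields $f_1 \geq 0$, with equality only when $\theta = 0$. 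The main obstacle I anticipate is not conceptual but algebraic: keeping the bookkeeping of the many projections consistent and verifying that the cancellations producing the clean closed form above occur exactly as claimed, in particular that the contribution from $\delta\check\phi^- = h_1'\,\delta\Delta\check\phi$ does not spoil the reduction. A careful check against the limiting segment case of Section \ref{Sss:segm_semg_neg} (which should correspond to $\Delta\check\phi \to 0$, $\check\theta \to \hat\theta$) will serve as a sanity check for the final formula.
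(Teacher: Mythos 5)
Your proposal matches the paper's proof essentially step for step: the same propagation of variations through $P_0,\check P^-,\check P_1,\check P_2$, the same use of the optimality condition $\check\theta=h_1(\Delta\check\phi)$ and the saturation condition at $\check P_2$, the same cancellation via \eqref{Equa:theta_Deltaphi_rel} leading to the closed form $e^{\cot(\bar\alpha)\theta}\,f_1(\theta+\check\theta,\Delta\check\phi)$, and the same appeal to the monotonicity of $f_1$ from Lemma \ref{Lem:study_g_1_arc}. The argument is correct as outlined.
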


\subsubsection{Arc-arc with $2\pi + \frac{\pi}{2} \leq \bar \phi - \phi_0 \leq 2\pi + \frac{\pi}{2} + \Delta \phi$: see Fig. \ref{Fig:segm_arc_3}}
\label{Sss:segm_arc_3}

\begin{figure}
\resizebox{.75\textwidth}{!}{\input{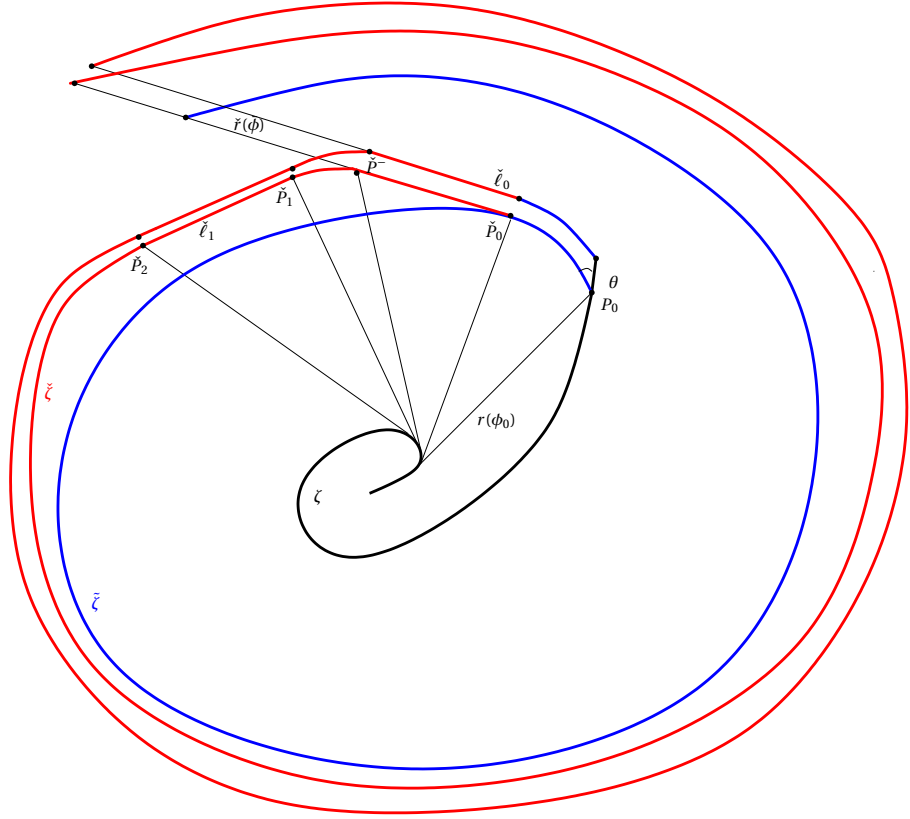}}
\caption{The geometric situation of Section \ref{Sss:segm_arc_3}.}
\label{Fig:segm_arc_3}
\end{figure}

The angle $\Delta \phi$ is the angle corresponding to the arc of the level set for the fastest saturated spiral $\tilde \zeta$. We first observe that the angle at the point $\check P_2$ is
\begin{equation}
\label{Equa:max_arc_arc}
\begin{split}
\check \phi_2 - \phi_0 &\leq \tan(\bar \alpha) + \check \theta + \Delta \check \phi + \frac{\pi}{2} - \bar \alpha \\
&\leq \tan(\bar \alpha) + \max \big\{ \Delta \check \phi + h_1(\Delta \check \phi) \big\} + \frac{\pi}{2} - \bar \alpha < 5.4 < 2\pi + \frac{\pi}{2} - \theta,
\end{split}
\end{equation}
so that the end point is always inside the first round. We thus have
\begin{equation*}
\delta P_0 = e^{\i (\phi_0 + \frac{\pi}{2} - \theta)}, \quad \delta \check P_0 = \sin(\theta) e^{\i (\bar \phi - 2\pi - \frac{\pi}{2})},
\end{equation*}
\begin{equation*}
\delta \check P^- = \delta \check \ell_0 e^{\i (\bar \phi - 2\pi)} + \sin(\theta) e^{\i (\bar \phi - 2\pi - \frac{\pi}{2})}, \quad \delta \check P_1 = \check r_1 \delta \check \omega e^{\i (\bar \phi - 2\pi + \check \omega)} +  \delta \check P^- \cdot e^{\i (\bar \phi - 2\pi + \check \theta - \frac{\pi}{2})} e^{\i (\bar \phi - 2\pi + \check \omega - \frac{\pi}{2})},
\end{equation*}
\begin{equation*}
\delta \check P_2 = \delta \check P_1 + \delta \check \ell_1 e^{\i (\bar \phi + \check \omega)} - \check \ell_1 \delta \check \omega e^{\i (\bar \phi + \check \omega - \frac{\pi}{2})}.
\end{equation*}
Using $\Delta \phi = \bar \phi - 2\pi - \phi_0 - \frac{\pi}{2}$ and denoting
$$
\check \omega = \check \theta + \Delta \check \phi = h_1(\Delta \check \phi) + \Delta \check \phi,
$$
the saturation condition is
\begin{align*}
0 &= \delta \check P_2 \cdot e^{\i(\bar \phi -2 \pi + \check \omega - \bar \alpha)} - \cos(\bar \alpha) \Big( (1 - \cos(\theta) + \sin(\theta) \Delta \phi) + \delta \check \ell_0 - \check r^- \delta \check \phi^- + \delta \check P^- \cdot e^{\i (\bar \phi - 2\pi + \check \theta - \frac{\pi}{2})} \Delta \check \phi + \check r_1 \delta \check \phi_1 + \delta \check \ell_1 \Big) \\
&= \delta \check P_1 \cdot e^{\i (\bar \phi - 2\pi + \check \omega - \bar \alpha)} - \check \ell_1 \delta \check \omega \sin(\bar \alpha) \\
& \quad - \cos(\bar \alpha) \Big( (1 - \cos(\theta) + \sin(\theta) \Delta \phi) + \delta \check \ell_0 - \delta \check P^- \cdot e^{i(\bar \phi - 2\pi + \check \theta)} + \delta \check P^- \cdot e^{\i (\bar \phi - 2\pi + \check \theta - \frac{\pi}{2})} \Delta \check \phi + \delta \check P_1 \cdot e^{i (\bar \phi - 2\pi + \check \omega)} \Big) \\
&= \delta \check P^- \cdot e^{\i (\bar \phi - 2\pi + \check \theta - \frac{\pi}{2})} \sin(\bar \alpha) - \check \ell_1 \delta \check \omega \sin(\bar \alpha) \\
&\quad - \cos(\bar \alpha) \Big( \big( 1 - \cos(\theta) + \sin(\theta) \Delta \phi \big) + \sin(\theta) \sin(\check \theta) + \sin(\theta) \cos(\check \theta) \Delta \check \phi + \delta \check \ell_0 \big( 1 - \cos(\check \theta) + \sin(\check \theta) \Delta \check \phi \big) \Big). \\
\end{align*}
The final value is
\begin{equation}
\label{Equa:final_arc_arc}
\begin{split}
\delta \check r(\bar \phi) &= \frac{\delta \check P_2 \cdot e^{\i (\bar \phi - 2\pi + \check \omega - \frac{\pi}{2})}}{\sin(\bar \alpha)} e^{\cot(\bar \alpha) (2\pi + \bar \alpha - \check \omega)} - e^{\cot(\bar \alpha)(\Delta \phi + \theta)} + \cos(\theta) e^{\cot(\bar \alpha) \Delta \phi} - \sin(\theta) \frac{e^{\cot(\bar \alpha) \Delta \phi} - 1}{\cot(\bar \alpha)} - \delta \check \ell_0 \\
&= \frac{\delta \check P^- \cdot e^{\i (\bar \phi - 2\pi + \check \theta - \frac{\pi}{2})} - \check \ell_1 \delta \check \omega}{\sin(\bar \alpha)} e^{\cot(\bar \alpha) (2\pi + \bar \alpha - \check \omega)} - e^{\cot(\bar \alpha)(\Delta \phi + \theta)} + \cos(\theta) e^{\cot(\bar \alpha) \Delta \phi} - \sin(\theta) \frac{e^{\cot(\bar \alpha) \Delta \phi} - 1}{\cot(\bar \alpha)} - \delta \check \ell_0 \\
&= \frac{\cot(\bar \alpha)}{\sin(\bar \alpha)} e^{\cot(\bar \alpha) (2\pi + \bar \alpha - \check \omega)} \Big( \big( 1 - \cos(\theta) + \sin(\theta) \Delta \phi \big) + \delta \check \ell_0 - \delta \check P^- \cdot e^{i (\bar \phi - 2\pi + \check \theta)} + \delta \check P^- \cdot e^{\i (\bar \phi - 2\pi + \check \theta - \frac{\pi}{2})} \Delta \check \phi \Big) \\
& \quad - e^{\cot(\bar \alpha)(\Delta \phi + \theta)} + \cos(\theta) e^{\cot(\bar \alpha) \Delta \phi} - \sin(\theta) \frac{e^{\cot(\bar \alpha) \Delta \phi} - 1}{\cot(\bar \alpha)} - \delta \check \ell_0 \\
&= \frac{\cot(\bar \alpha)}{\sin(\bar \alpha)} e^{\cot(\bar \alpha) (2\pi + \bar \alpha - \check \omega)} \Big( \big( 1 - \cos(\theta) + \sin(\theta) \Delta \phi \big) \\
& \quad \qquad \qquad \qquad \qquad \qquad \qquad + \sin(\theta) \sin(\check \theta) + \sin(\theta) \cos(\check \theta) \Delta \check \phi + \delta \check \ell_0 \big( 1 - \cos(\check \theta) + \sin(\check \theta) \Delta \check \phi \big) \Big) \\
& \quad - e^{\cot(\bar \alpha)(\Delta \phi + \theta)} + \cos(\theta) e^{\cot(\bar \alpha) \Delta \phi} - \sin(\theta) \frac{e^{\cot(\bar \alpha) \Delta \phi} - 1}{\cot(\bar \alpha)} - \delta \check \ell_0 \\
&= \frac{1 - \cos(\theta + \check \theta) + \sin(\theta + \check \theta) \Delta \check \phi + \sin(\theta) \Delta \phi}{1 - \cos(\check \theta) + \sin(\check \theta) \Delta \check \phi} - e^{\cot(\bar \alpha)(\Delta \phi + \theta)} + \cos(\theta) (e^{\cot(\bar \alpha) \Delta \phi} - 1) - \sin(\theta) \frac{e^{\cot(\bar \alpha) \Delta \phi} - 1}{\cot(\bar \alpha)} \\
&= \frac{1 - \cos(\theta + \check \theta) + \sin(\theta + \check \theta) \Delta \check \phi + \sin(\theta) \Delta \phi}{1 - \cos(\check \theta) + \sin(\check \theta) \Delta \check \phi} - e^{\cot(\bar \alpha)(\Delta \phi + \theta)} + \cos(\theta + \bar \alpha) \frac{e^{\cot(\bar \alpha) \Delta \phi} - 1}{\cos(\bar \alpha)},
\end{split}
\end{equation}
where we have used \eqref{Equa:theta_Deltaphi_rel}, and we rely on the observation that the final angle is before $2\pi + \frac{\pi}{2} - \theta$. The second derivative w.r.t. $\Delta \phi$ is
\begin{equation*}
\partial^2_{\Delta \phi} \delta \check r(\bar \phi) = - \frac{\cot(\bar \alpha)}{\sin(\bar \alpha)} e^{\cot(\bar \alpha)(\Delta \phi + \theta)} \big( \cos(\bar \alpha) - \cos(\bar \alpha + \theta) e^{-\cot(\bar \alpha) \theta} \big) \leq 0,
\end{equation*}
so that the minimal values are for $\Delta \phi = 0,\tan(\bar \alpha)$. The first case corresponds to the previous section, while for $\Delta \phi = \tan(\bar \alpha)$ we plot the function  $\delta \check r(\bar \phi)/\theta$ in Fig. \ref{Fig:arcarc_2_2} in a neighborhood of the curve determined by \eqref{Equa:theta_Deltaphi_rel}: the function is positive $>4.5 \theta$.

%
%

\begin{figure}
\begin{subfigure}{.475\textwidth}
\resizebox{\textwidth}{!}{\includegraphics{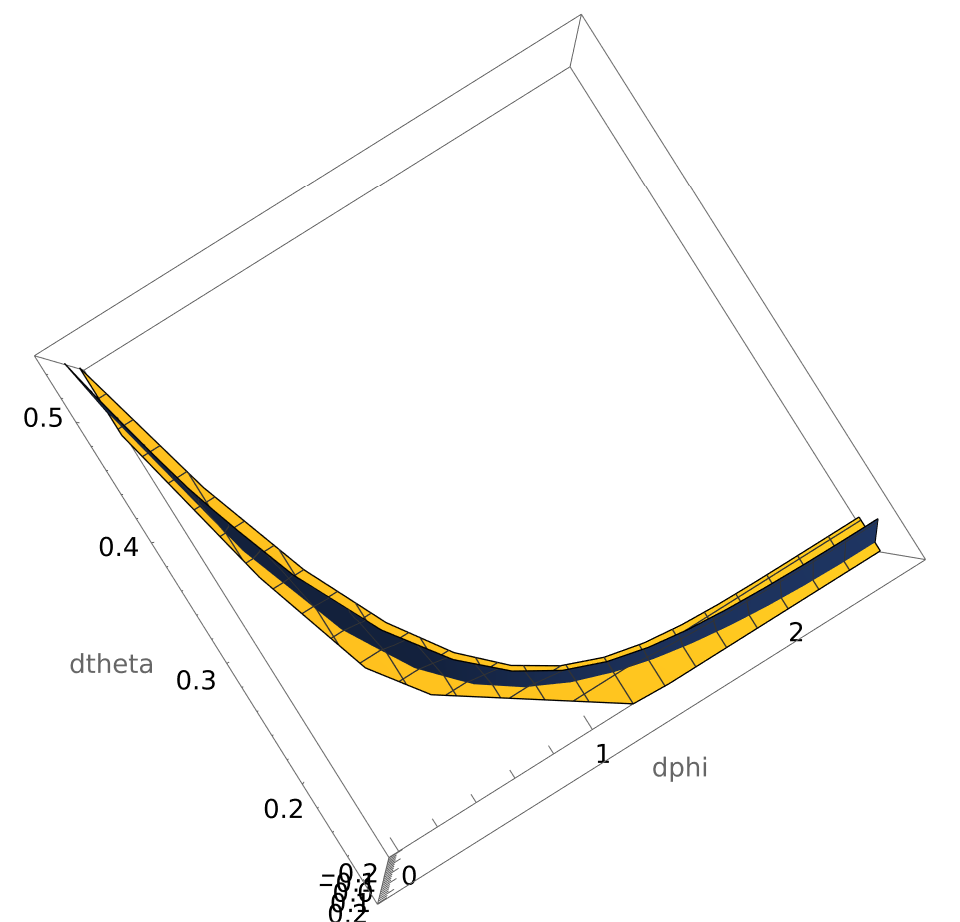}}
\caption{A small neighborhood of the region $\frac{\cot(\bar \alpha)}{\sin(\bar \alpha)} (1 - \cos(\check \theta) + \sin(\check \theta) \Delta \check \phi) e^{\cot(\bar \alpha) (2\pi + \bar \alpha - \check \theta - \Delta \check \phi)} = 1$: in orange the plane $\R^2 \times \{0\}$ and in blue the previous function.}
\label{Fig:arcarc_2_1}
\end{subfigure} \hfill
\begin{subfigure}{.475\textwidth}
\resizebox{\textwidth}{!}{\includegraphics{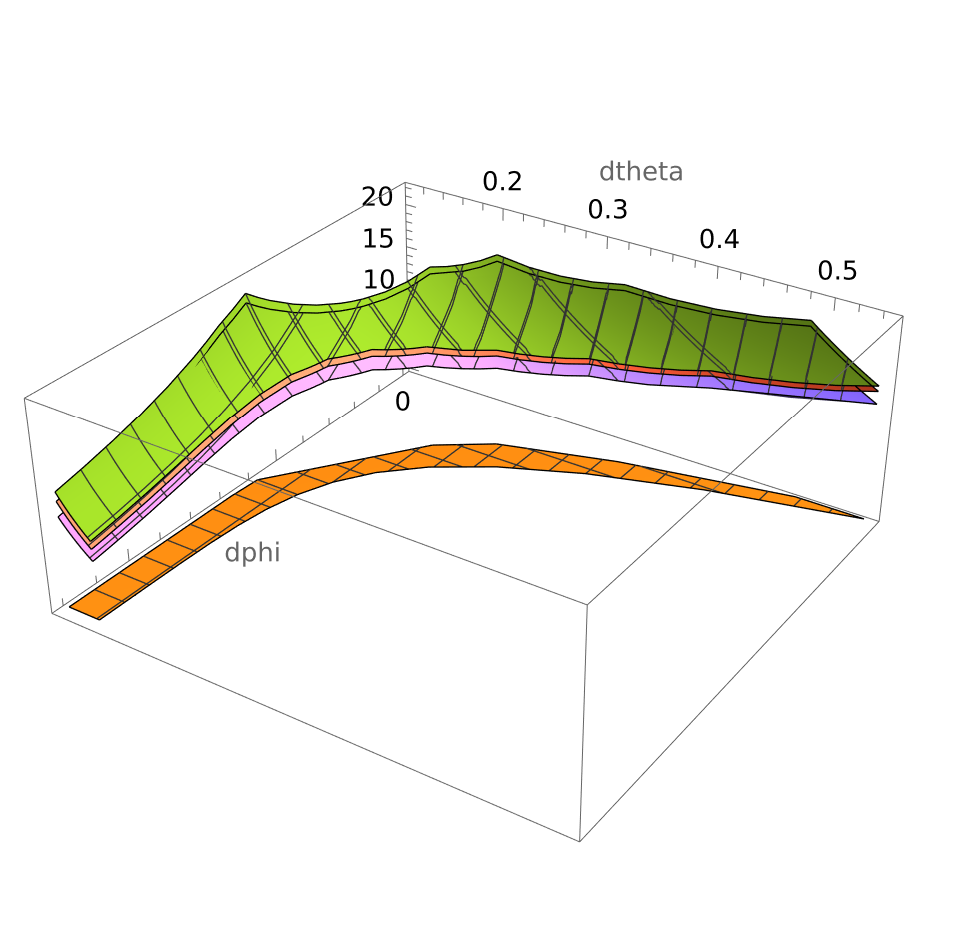}}
\caption{The function $\delta \check r(\bar \phi)/\theta$ for various values of $\theta \in (0,\pi/2]$ in the same neighborhood.}
\label{Fig:arcarc_2_2}
\end{subfigure}
\caption{Analysis of the final value of $\delta \check r(\bar \phi)$, Section  \ref{Sss:segm_arc_3}.}
\label{Fig:arcarc_2}
\end{figure}

\begin{lemma}
\label{Lem:segm_arc_3}
In this geometric setting it holds $\delta \check r(\bar \phi) \geq 4.5 \theta$.
\end{lemma}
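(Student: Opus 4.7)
My plan is to start from the explicit closed form of $\delta \check r(\bar \phi)$ already derived in \eqref{Equa:final_arc_arc}, which depends on four parameters $(\theta, \Delta\phi, \check\theta, \Delta\check\phi)$ constrained by the optimality relation $\check \theta = h_1(\Delta \check \phi)$ of Lemma \ref{Lem:study_g_1_arc}. Since the formula has already been simplified using \eqref{Equa:theta_Deltaphi_rel}, the problem reduces to showing positivity of the explicit expression
\begin{equation*}
\frac{1 - \cos(\theta + \check \theta) + \sin(\theta + \check \theta) \Delta \check \phi + \sin(\theta) \Delta \phi}{1 - \cos(\check \theta) + \sin(\check \theta) \Delta \check \phi} - e^{\cot(\bar \alpha)(\Delta \phi + \theta)} + \cos(\theta + \bar \alpha) \frac{e^{\cot(\bar \alpha) \Delta \phi} - 1}{\cos(\bar \alpha)},
\end{equation*}
divided by $\theta$, over the relevant parameter region for $(\theta, \Delta\phi) \in [0, \frac{\pi}{2}] \times [0, \tan(\bar\alpha)]$ with $\Delta \check \phi$ determined so as to lie on the critical curve of Proposition \ref{Prop:neg_region}.

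The key reduction step exploits concavity in $\Delta \phi$, already observed immediately after \eqref{Equa:final_arc_arc}: since $\partial^2_{\Delta\phi} \delta\check r(\bar\phi) \leq 0$, the minimum over $\Delta\phi \in [0,\tan(\bar\alpha)]$ must be attained at one of the two endpoints. At $\Delta\phi = 0$ the geometry degenerates into the segment-arc configuration of Section \ref{Sss:segm_arc_2}, so Lemma \ref{Lem:segm_arc_2} applies and gives a lower bound of order $\theta$ (the leading expansion in $\theta$ of the formula in that lemma yields a strictly positive linear rate). At $\Delta\phi = \tan(\bar\alpha)$ there is no such reduction available, so one falls back on numerical inspection along the critical curve $\check\theta = h_1(\Delta\check\phi)$, which is what Fig.~\ref{Fig:arcarc_2_2} is designed to display: a plot of $\delta\check r(\bar\phi)/\theta$ on a neighborhood of the $1$-dimensional curve \eqref{Equa:theta_Deltaphi_rel}, showing it stays strictly above $4.5$.

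The constant $4.5$ is therefore extracted as the minimum of the two lower bounds (analytic at $\Delta\phi=0$, numerical at $\Delta\phi=\tan(\bar\alpha)$), and \eqref{Equa:max_arc_arc} guarantees that throughout the region considered the final angle $\check\phi_2$ stays inside the first round, so no extra saturated contribution needs to be added to the explicit formula. Regarding obstacles: the analytic part is routine once concavity is noted, so the genuine difficulty is the uniform numerical bound near $\Delta\phi=\tan(\bar\alpha)$, particularly verifying that as $\theta \searrow 0$ the ratio $\delta\check r(\bar\phi)/\theta$ does not drop below $4.5$ along the implicit curve $\check\theta = h_1(\Delta\check\phi)$. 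The Taylor expansion at $\theta=0$ can be worked out in closed form using the derivative $\frac{dh_1}{d\Delta\check\phi}$ computed in \eqref{Equa:dh1_dDeltaphi} to check the limit, while the uniformity on the rest of the curve is certified by the numerical plot.
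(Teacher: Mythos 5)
Your proposal follows essentially the same route as the paper: the explicit formula \eqref{Equa:final_arc_arc}, the concavity in $\Delta\phi$ reducing the minimization to the endpoints, the identification of the $\Delta\phi=0$ endpoint with the configuration of Lemma \ref{Lem:segm_arc_2}, and the numerical verification of the bound $>4.5\theta$ along the critical curve $\check\theta = h_1(\Delta\check\phi)$ at $\Delta\phi=\tan(\bar\alpha)$ (Fig.~\ref{Fig:arcarc_2_2}). The only addition — working out the $\theta\searrow 0$ expansion analytically via \eqref{Equa:dh1_dDeltaphi} rather than reading it off the plot — is a minor refinement, not a different argument.
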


%
%

\subsection{Arc-segment case}
\label{Ss:arc_segment}

The situation is when the initial spiral $\tilde r$ starts with an arc, and later $\check r$ closes with a tent. This is the unique situation in the saturated region. There are several cases depending on the position of the segment w.r.t. the arc.

\subsubsection{The tent starts inside the arc: Fig. \ref{Fig:arc_segm_1}}
\label{Sss:arc_segm_1}

\begin{figure}
\begin{subfigure}{.475\textwidth}
\resizebox{\textwidth}{!}{\input{arc_segm_1.pdf_t}}
\caption{The geometric situation of Section \ref{Sss:arc_segm_1}.}
\label{Fig:arc_segm_1}
\end{subfigure} \hfill
\begin{subfigure}{.475\textwidth}
\resizebox{\textwidth}{!}{\includegraphics{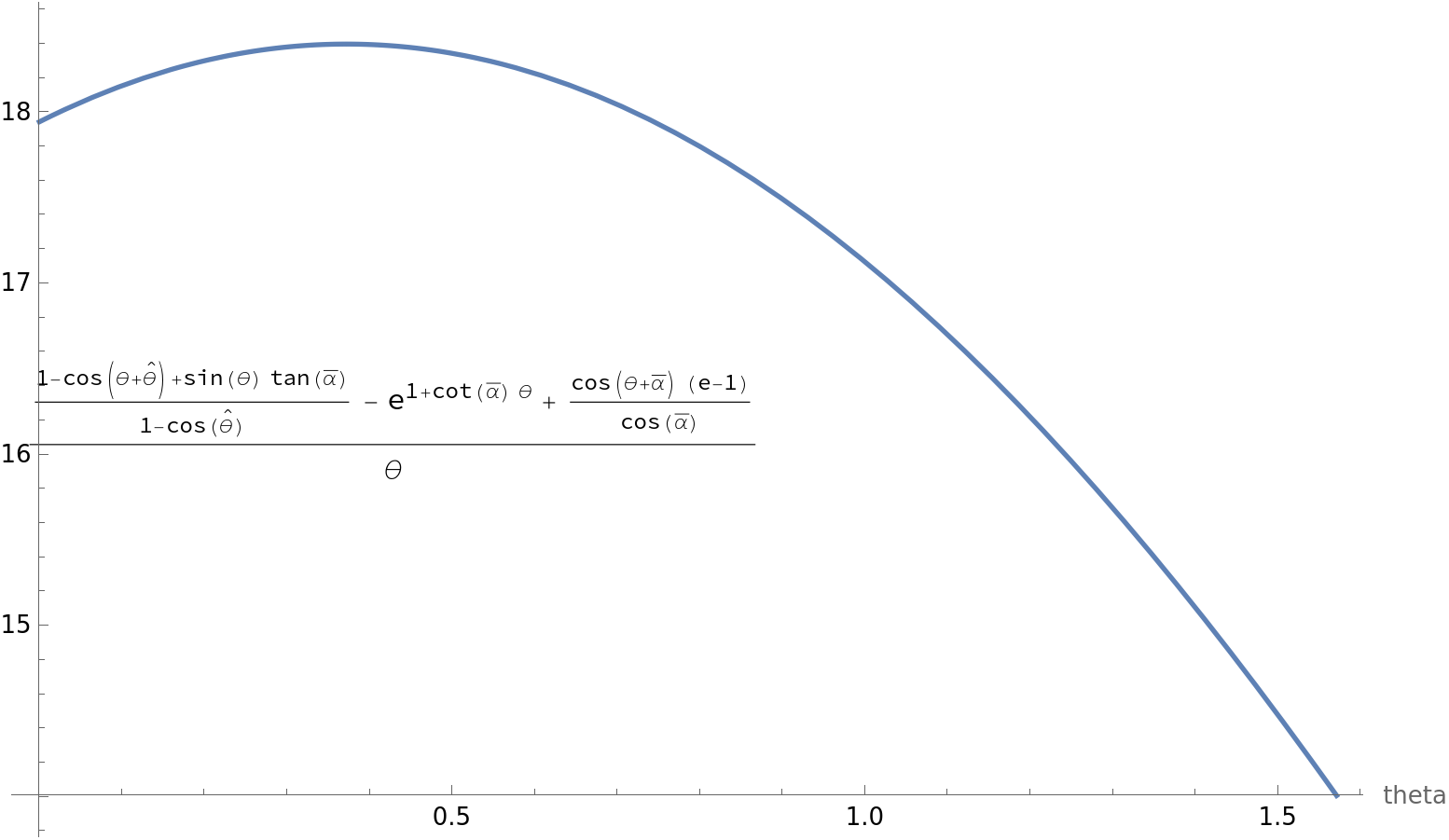}}
\caption{The plot of the function \eqref{Equa:arc_segm_1} divided by $\theta$.}
\label{Fig:arc_segm_1_1}
\end{subfigure}
\caption{Numerical analysis of Section \ref{Sss:arc_segm_1}.}
\label{Fig:arc_segm_1_1_1}
\end{figure}

In this case the analysis is the same as in the arc-arc case, by setting $\Delta \check \phi = 0$ and $\check \theta = \hat \theta$: as before we are left with the case $\Delta \phi = \tan(\bar \alpha)$, thus the final value is
\begin{align}
\label{Equa:arc_segm_1}
\delta \check r(\bar \phi) &= \frac{1 - \cos(\theta + \hat \theta) + \sin(\theta) \tan(\bar \alpha)}{1 - \cos(\hat \theta)} - e^{1 + \cot(\bar \alpha) \theta} + \frac{\cos(\theta + \bar \alpha)}{\cos(\bar \alpha)} (e-1).
\end{align}
The plot of this function is in Fig. \ref{Fig:arc_segm_1_1}.

\begin{lemma}
\label{Lem:arc_segm_1}
In the above situation the perturbation is positive $> 14 \theta$.
\end{lemma}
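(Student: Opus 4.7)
The plan is to reduce this configuration to a degenerate case of the arc-arc setting analyzed in Section \ref{Sss:segm_arc_3}, and then verify the quantitative bound numerically. Geometrically, the arc-segment case in which the tent's initial point $\check P_0$ still lies along the arc of the level set is exactly the arc-arc case with no terminal arc ($\Delta \check \phi = 0$) and with exit angle forced to the critical value $\check \theta = \hat \theta$ of Proposition \ref{Prop:regions_pos_neg_segm}. Concretely, the tip of the tent comes out of the fastest saturated spiral via the segment case, so Lemma \ref{Lem:study_g_1_arc} specializes to $h_1(0) = \hat \theta$ and the saturation/optimality conditions collapse accordingly.

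First I would substitute $\Delta \check \phi = 0$ and $\check \theta = \hat \theta$ into formula \eqref{Equa:final_arc_arc}. The denominator $1 - \cos(\check \theta) + \sin(\check \theta) \Delta \check \phi$ becomes $1 - \cos(\hat \theta)$, and using \eqref{Equa:theta_Deltaphi_rel} the exponential prefactor simplifies as in Section \ref{Sss:segm_arc_2}. This yields directly the closed form \eqref{Equa:arc_segm_1}, expressed purely in terms of $\theta$ and $\Delta \phi$ (with $\Delta \phi$ now being the angular opening of the initial arc on the fixed $\tilde \zeta$).

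Second, as in Section \ref{Sss:segm_arc_3}, I would compute $\partial_{\Delta \phi}^2 \delta \check r(\bar \phi)$ and observe that the same negativity
\begin{equation*}
\partial_{\Delta \phi}^2 \delta \check r(\bar \phi) = - \frac{\cot(\bar \alpha)}{\sin(\bar \alpha)} e^{\cot(\bar \alpha)(\Delta \phi + \theta)} \bigl( \cos(\bar \alpha) - \cos(\bar \alpha + \theta) e^{-\cot(\bar \alpha) \theta} \bigr) \leq 0
\end{equation*}
holds, so that the minimum over $\Delta \phi \in [0, \tan(\bar \alpha)]$ is attained at one of the two endpoints. The endpoint $\Delta \phi = 0$ is precisely the segment-arc case already settled in Lemma \ref{Lem:segm_arc_2}, so only the endpoint $\Delta \phi = \tan(\bar \alpha)$ remains, giving the factor $e = e^{\cot(\bar \alpha) \tan(\bar \alpha)}$ that produces \eqref{Equa:arc_segm_1} in its final form. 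One also has to check the admissibility constraint \eqref{Equa:max_arc_arc} so that the last point $\check P_2$ stays inside the first round, but this follows by exactly the same elementary bound on $\check \phi_2 - \phi_0$ used there.

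Third, having reduced the problem to a function of the single variable $\theta \in (0, \pi/2]$, I would evaluate $\delta \check r(\bar \phi)/\theta$ numerically (Fig. \ref{Fig:arc_segm_1_1}). The expansion at $\theta = 0$ gives
\begin{equation*}
\lim_{\theta \to 0^+} \frac{\delta \check r(\bar \phi)}{\theta} = \frac{\sin(\hat \theta) + \tan(\bar \alpha)}{1 - \cos(\hat \theta)} - \cot(\bar \alpha) e - \frac{\sin(\bar \alpha)}{\cos(\bar \alpha)} (e-1),
\end{equation*}
which is the value near the left edge of the plot; the worst case across $\theta \in (0,\pi/2]$ is then read off, yielding the uniform lower bound $14\theta$ claimed in the statement. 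No real analytical obstacle arises here: the difficulty, as in every case of this section, is purely bookkeeping—making sure the geometric reduction to the arc-arc formula is carried out with the correct specialization of $(\Delta \check \phi, \check \theta)$ and that the worst direction in the $(\theta,\Delta \phi)$-square is correctly identified as the edge $\Delta \phi = \tan(\bar \alpha)$.
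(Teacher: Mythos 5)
Your proposal matches the paper's argument essentially verbatim: the paper also treats this configuration as the arc-arc formula \eqref{Equa:final_arc_arc} specialized to $\Delta \check \phi = 0$, $\check \theta = \hat \theta$, uses the concavity in $\Delta \phi$ to reduce to the endpoint $\Delta \phi = \tan(\bar \alpha)$ (the other endpoint being a previously settled case), and then reads the bound $>14\theta$ off the numerical plot of \eqref{Equa:arc_segm_1} divided by $\theta$. The only quibble is that the $\Delta\phi=0$ endpoint here degenerates to the segment-segment case of Lemma \ref{Lem:segm_segm_2_case} rather than the segment-arc case of Lemma \ref{Lem:segm_arc_2}, but either way it is already covered, so this does not affect the argument.
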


\subsubsection{Arc-tent case with the final tent in the saturated region}
\label{Sss:arc_tent_satur}

The 10 cases representing the relative positions of the initial arc and the final tent are represented in Fig. \ref{Fig:arcarc_2}.

\begin{figure}
\resizebox{.75\textwidth}{!}{\input{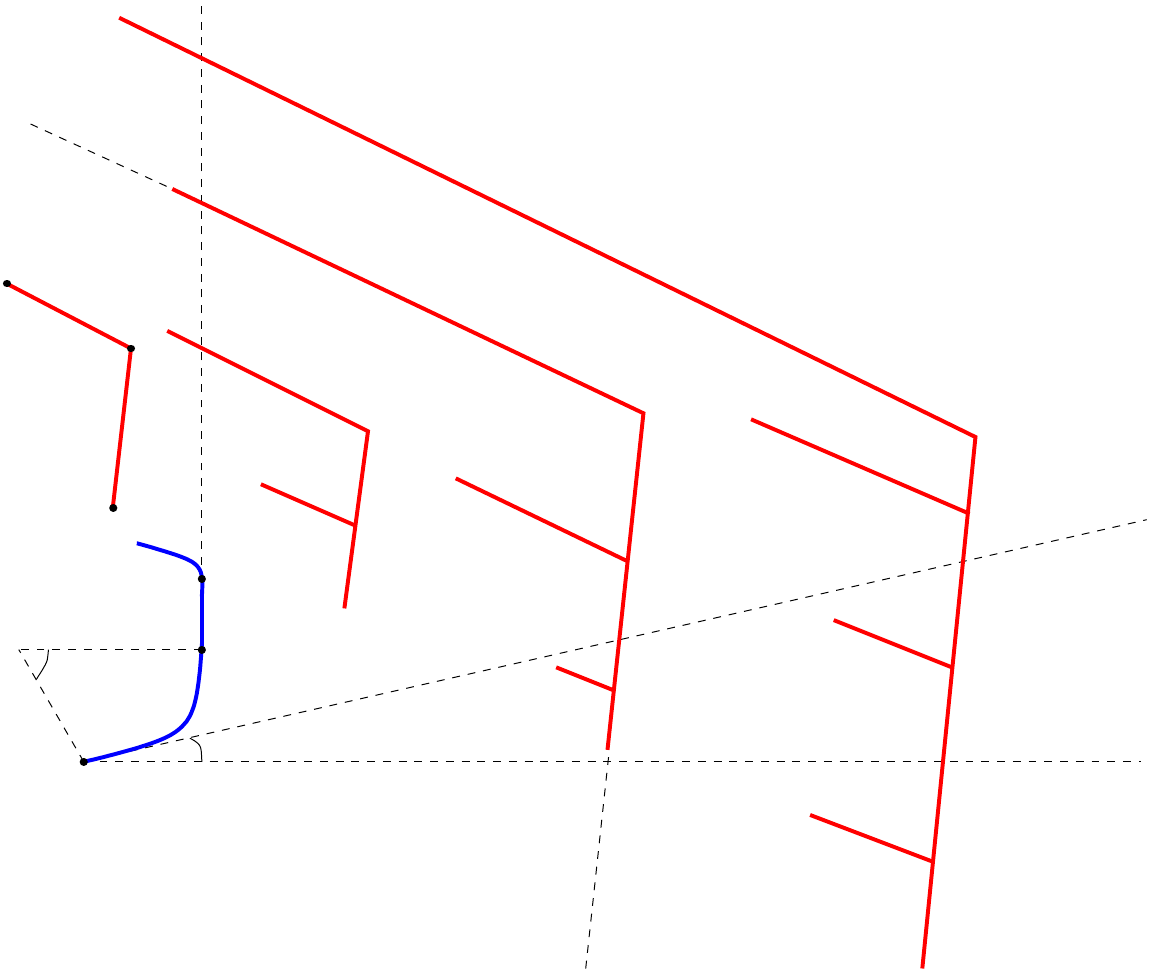_t}}
\caption{The 10 possible relative positions of the arc and the tent in the saturated region: in green the number of the case considered in Section \ref{Sss:arc_tent_satur}.}
\label{Fig:arc_segm_2}
\end{figure}

We will use the tent formulas of Section \ref{Ss:comput_tent_new}.

\begin{description}
\item[Case 1: $\bar \phi - 2(2\pi - \bar \alpha) - \phi_0 \in (\omega - \bar \alpha,2\pi + \frac{\pi}{2} - \theta - \hat \theta)$] in this case $\delta\hat Q_0 =0$, $\delta\hat Q_2=0$, $\delta \hat L=0$, so that (see formulas in Lemma \ref{Lem:relation_admissibility_tent})
\begin{equation*}
\hat \ell_0 = \delta \tilde r_0 \frac{1 - \cos(\hat \theta)}{\cos(\bar \alpha - \hat \theta) - \cos(\bar \alpha)},
\end{equation*}
\begin{equation}
\label{Equa:check_worst_arc_segm_case_1}
\delta \hat r_2 - \delta \tilde r_2 = \delta \tilde r_0 \bigg( \frac{\cos(\bar \alpha) - \cos(\bar \alpha + \hat \theta)}{\cos(\bar \alpha - \hat \theta) - \cos(\bar \alpha)} - e^{\cot(\bar \alpha) \hat \theta)} \bigg) > 0.00669 \delta \tilde r_0 > 0,
\end{equation}
and we thus just estimate
\begin{align*}
\delta \hat r(\bar \phi) &= \big( \delta \hat r_2 - \delta \tilde r_2) e^{\cot(\bar \alpha)(2\pi + \bar \alpha - \hat \theta)} + \delta \tilde r(\bar \phi) - \hat \ell_0 \\
&> \delta \tilde r(\bar \phi) - \delta \tilde r_0 \frac{1 - \cos(\hat \theta)}{\cos(\bar \alpha - \hat \theta) - \cos(\bar \alpha)}.
\end{align*}
A numerical evaluation of the above function is in Fig. \ref{Fig:arccase1} as a function of $\bar \tau = (1 - \varsigma) + \varsigma (2 - \frac{\theta + \Delta \phi + \hat \theta}{2\pi + \bar \alpha})$, $\varsigma \in (0,1)$, where one sees that it is larger than $.35 e^{\bar c \phi} \theta ( \theta + \Delta \phi)$.

\begin{figure}
\begin{subfigure}{.475\textwidth}
\resizebox{\textwidth}{!}{\includegraphics{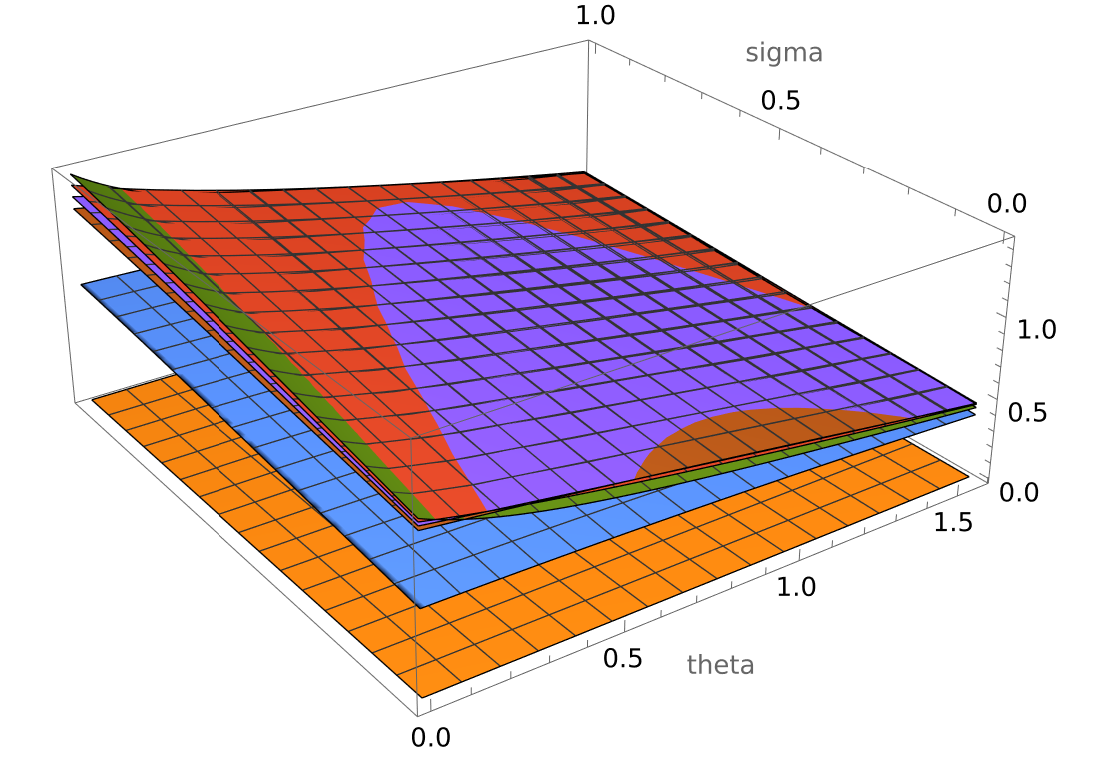}}
\end{subfigure} \hfill
\begin{subfigure}{.475\textwidth}
\resizebox{\textwidth}{!}{\includegraphics{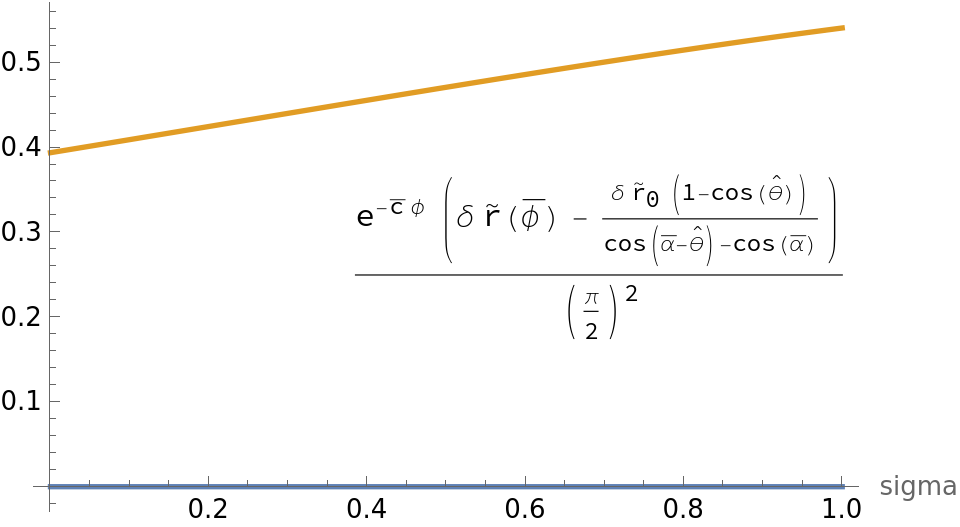}}
\end{subfigure}
\caption{Numerical plot of the function $e^{-\bar c \phi} (\delta \tilde r(\bar \phi) - \delta \tilde r_0 \frac{1 - \cos(\hat \theta)}{\cos(\bar \alpha - \hat \theta) - \cos(\bar \alpha)})$ and its worst case for $\theta = \frac{\pi}{2},\Delta \phi = 0$, Case 1 of Section \ref{Sss:arc_tent_satur}.}
\label{Fig:arccase1}
\end{figure}


\item[Case 2: $\bar \tau \in (1,1+\tau_1), \bar \tau + \hat \tau - 1 \in (\tau_1,\tau_2)$] here the end point of the tent is in the region $(\tau_1,\tau_2) = (1 - \frac{\theta + \Delta \phi}{2\pi + \bar \alpha},1 - \frac{\Delta \phi}{2\pi + \bar \alpha})$, while the initial point $\check P_0$ is in the saturated arc before, so that using $\tau = 0$ as the reference direction for computing the angles (i.e. $\phi_0 + \frac{\pi}{2} + \Delta \phi - \bar \alpha$ is mapped to $0$), we have:
\begin{equation*}
\bar \phi - 4 \pi - \bar \alpha < \bar \alpha - \Delta \phi - \theta, \quad \bar \alpha - \Delta \phi - \theta < \bar \phi - 4 \pi - \bar \alpha + \hat \theta < \bar \alpha - \Delta \phi,
\end{equation*}
\begin{equation*}
\delta \Delta Q = e^{\i(\bar \alpha - \Delta \phi - \theta)}, \quad \delta \Delta Q \cdot e^{\i (2\pi + \bar \alpha)(\bar \tau - 1 + \hat \tau)} = \cos \big( (2\pi + \bar \alpha)(\bar\tau - 2) + \Delta \phi + \theta + \hat \theta \big),
\end{equation*}
\begin{equation*}
\delta \Delta Q \cdot e^{\i (2\pi + \bar \alpha)(\bar \tau - 1 + \hat \tau) + \bar \alpha - \frac{\pi}{2}} = \cos \bigg( (2\pi + \bar \alpha)(\bar\tau - 2) + \Delta \phi + \theta + \hat \theta + \bar \alpha - \frac{\pi}{2} \bigg),
\end{equation*}
\begin{equation*}
\delta \hat L = 1, \quad \delta \hat r_0 = \delta \tilde r(\bar \phi - (2\pi + \bar \alpha)).
\end{equation*}
Recall that
\begin{equation*}
\hat \tau = \frac{\hat \theta}{2\pi + \bar \alpha}.
\end{equation*}
The plot of the function
\begin{equation}
\label{Equa:final_case_2}
\frac{e^{- \bar c (2\pi + \bar \alpha) \bar \tau} \delta \check r(\bar \phi)}{\theta(\theta + \Delta \phi)}, \quad \bar \tau = (1 - \varsigma) \bigg( 2 - \frac{\theta + \hat \theta + \Delta \phi}{2\pi + \bar \alpha} \bigg) + \varsigma \bigg( 2 - \frac{\max\{\theta,\hat \theta\} + \Delta \phi}{2\pi + \bar \alpha} \bigg), \ \varsigma \in (0,1),
\end{equation}
computed according to the formulas of Lemma \ref{Lem:relation_admissibility_tent}, is in Fig. \ref{Fig:PTarc_case2_1}, which is strictly positive:
\begin{equation*}
\frac{e^{- \bar c (2\pi + \bar \alpha) \bar \tau} \delta \hat r(\bar \phi)}{\theta(\theta + \Delta \phi)} \geq 0.54.
\end{equation*}

\begin{figure}
\begin{subfigure}{.475\textwidth}
\resizebox{\textwidth}{!}{\includegraphics{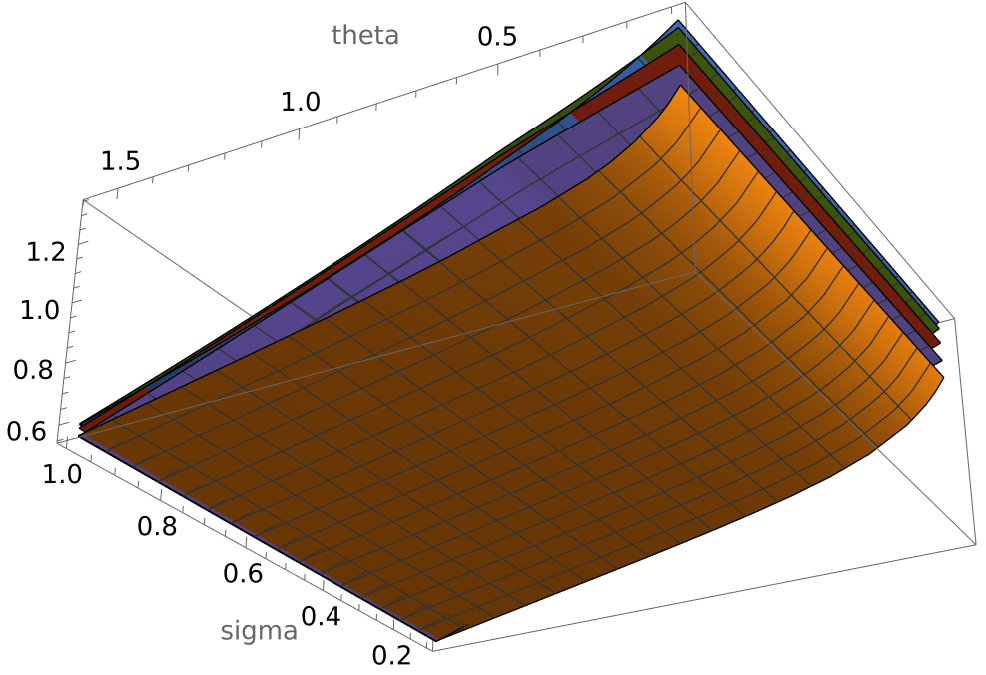}}
\end{subfigure} \hfill
\begin{subfigure}{.475\textwidth}
\resizebox{\textwidth}{!}{\includegraphics{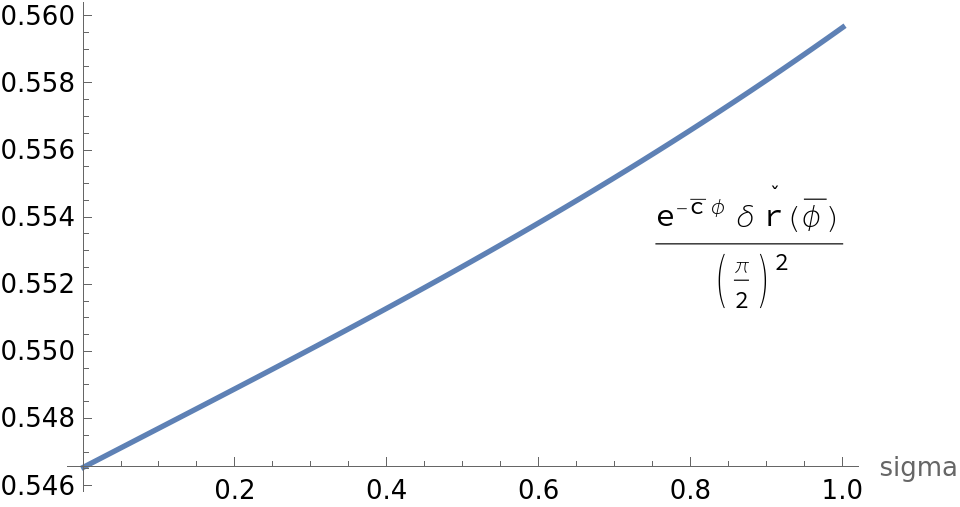}}
\end{subfigure}
\caption{Final value for Case 2 and of its minimum, (\ref{Equa:final_case_2}): the orange surface corresponds to $\Delta \phi = 0$ and the minimum to $\Delta \phi = 0, \theta = \frac{\pi}{2}$.}
\label{Fig:PTarc_case2_1}
\end{figure}

\item[Case 3: $\bar \tau \in (1,1+\tau_1), \bar \tau + \hat \tau - 1 \in (\tau_2,1)$] here the initial point of the tent is before $\tau_1 = 1 - \frac{\theta+\Delta \phi}{2\pi + \bar \alpha}$, and the final point is in $(\tau_2,1) = (1 - \frac{\theta}{2\pi + \bar \alpha},1)$. As before setting $\tau = 0$ as reference direction, we have thus
\begin{equation*}
\bar \phi - 4\pi - \bar \alpha < \bar \alpha - \Delta \phi - \theta, \quad \bar \alpha - \Delta \phi < \bar \phi - 4 \pi - \bar \alpha + \hat \theta < \bar \alpha, \quad \theta < \hat \theta,
\end{equation*}
\begin{equation*}
\delta \Delta Q = \sin(\theta) e^{\i (\hat \theta - \frac{\pi}{2})}, \quad \delta \hat L = 1 - \cos(\theta) + \sin(\theta) \bigg( \bar \phi - 4\pi - \bar \alpha + \hat \theta + \Delta \phi - \bar \alpha \bigg), \quad \delta \hat r_0 = \delta \tilde r(\bar \phi - (2\pi + \bar \alpha)).
\end{equation*}
The plot of the function
\begin{equation}
\label{Equa:case3_funct_fin}
\frac{e^{- \bar c (2\pi + \bar \alpha) \bar \tau} \delta \check r(\bar \phi)}{\theta(\theta + \Delta \phi)}, \quad \bar\tau = (1-\varsigma) \bigg( 2 - \frac{\hat \theta + \Delta \phi}{2\pi + \bar \alpha} \bigg) + \varsigma \bigg( 2 - \frac{\max\{\theta + \Delta \phi,\hat \theta\}}{2\pi + \bar \alpha} \bigg), \ \varsigma \in (0,1),
\end{equation}
is in Fig. \ref{Fig:PTarc_case3_1}, showing that
$$
e^{- \bar c (2\pi + \bar \alpha) \bar \tau} \delta \check r(\bar \phi) > 0.74 \theta(\theta + \Delta \phi).
$$

\begin{figure}
\begin{subfigure}{.475\textwidth}
\resizebox{\textwidth}{!}{\includegraphics{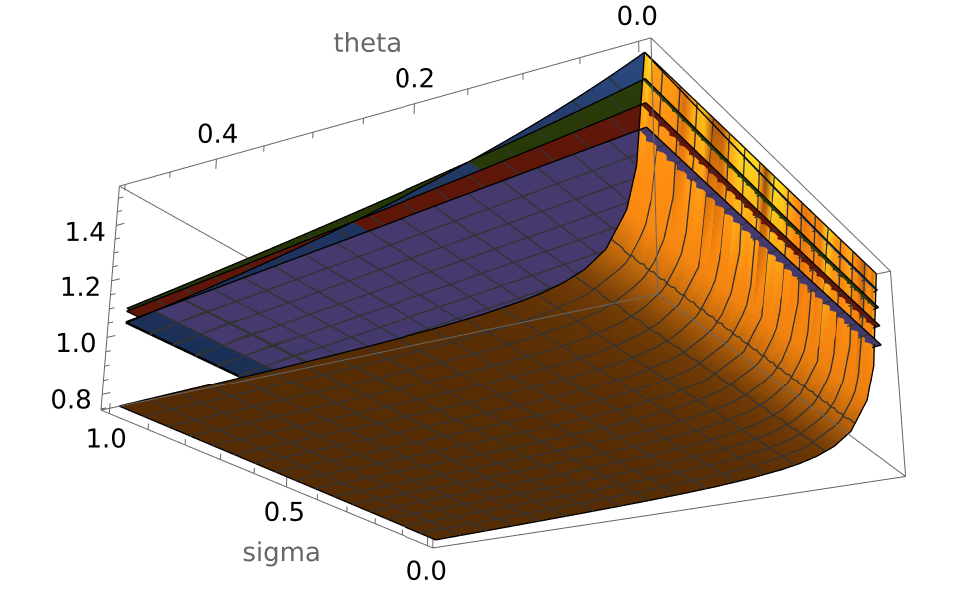}}
\end{subfigure} \hfill
\begin{subfigure}{.475\textwidth}
\resizebox{\textwidth}{!}{\includegraphics{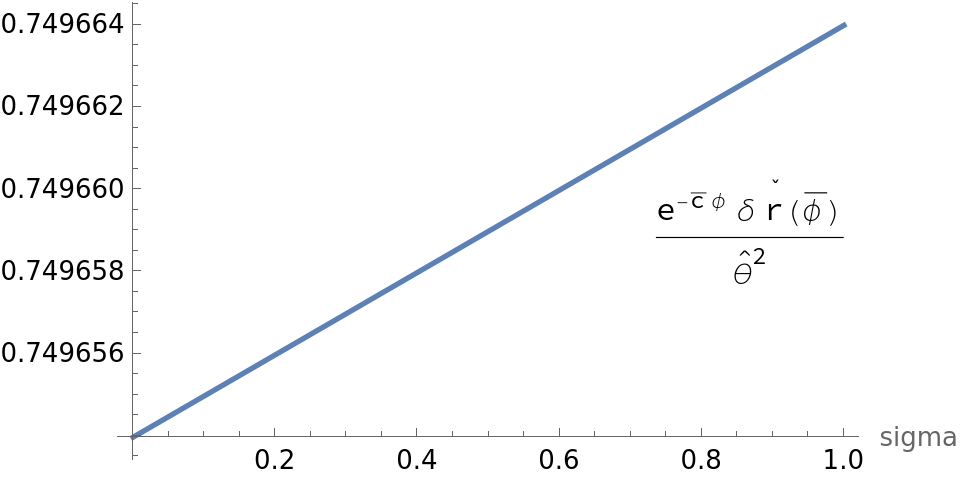}}
\end{subfigure}
\caption{Final value for Case 3, Equation (\ref{Equa:case3_funct_fin}): the orange surface corresponds to $\Delta \phi = 0$, and the minimal case is for $\Delta \phi = 0,\theta = \hat \theta$ (in the numerical plot we consider $\theta = \hat \theta - 10^{-4}$ to avoid numerical artifacts).}
\label{Fig:PTarc_case3_1}
\end{figure}

\item[Case 4: $\bar \tau \in (1,1+\tau_1), \bar \tau + \hat \tau > 2$] here the initial value is before $\tau_1 = 1 - \frac{\theta + \Delta \phi}{2\pi + \bar \alpha}$ region and the final value is after $\tau = 1$:
\begin{equation*}
\bar \phi - 4\pi - \bar \alpha < \bar \alpha - \Delta \phi - \theta, \quad \bar \phi > 2 (2\pi + \bar \alpha) - \hat \theta, \quad \Delta \phi + \theta < \hat \theta.
\end{equation*}
In particular $\delta\hat Q_0=0$, $\delta \Delta Q=\delta\hat Q_2$ which reads as
\begin{equation*}
\delta \Delta Q = \delta \tilde r(\bar \phi - 2(2\pi + \bar \alpha) + \hat \theta) e^{\i (\bar \phi - 2(2\pi + \bar \alpha) + \hat \theta)},
\end{equation*}
\begin{equation*}
\delta \hat L = \frac{1 - \cos(\theta) + \sin(\theta) \Delta \phi}{\sin(\bar \alpha)^2} e^{\cot(\bar \alpha)(\bar \phi - 2(2\pi + \bar \alpha) + \hat \theta)},
\end{equation*}
\begin{equation*}
\delta \hat r_0 = \delta \tilde r(\bar \phi - (2\pi + \bar \alpha)).
\end{equation*}
The plot of the function
\begin{equation}
\label{Equa:fina_case_4_arcsegm}
\frac{e^{- \bar c (2\pi + \bar \alpha) \bar \tau} \delta \check r(\bar \phi)}{\theta(\theta + \Delta \phi)}, \quad \bar\tau = (1 - \varsigma) \bigg( 2 - \frac{\hat \theta}{2\pi + \bar \alpha} \bigg) + \varsigma \bigg( 2 - \frac{\theta + \Delta \phi}{2\pi + \bar \alpha} \bigg), \ \varsigma \in (0,1),
\end{equation}
is in Fig. \ref{Fig:PTarc_case4_1}, which is strictly positive $> 0.74$.

\begin{figure}
\begin{subfigure}{.475\textwidth}
\resizebox{\textwidth}{!}{\includegraphics{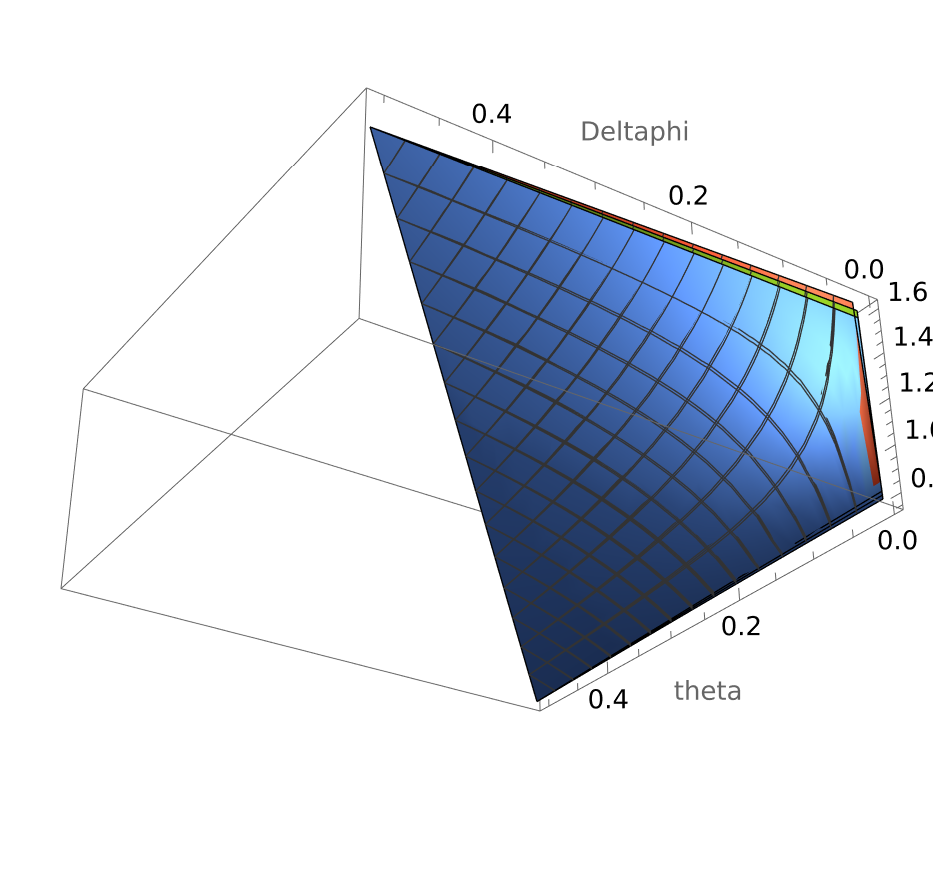}}
\end{subfigure} \hfill
\begin{subfigure}{.475\textwidth}
\resizebox{\textwidth}{!}{\includegraphics{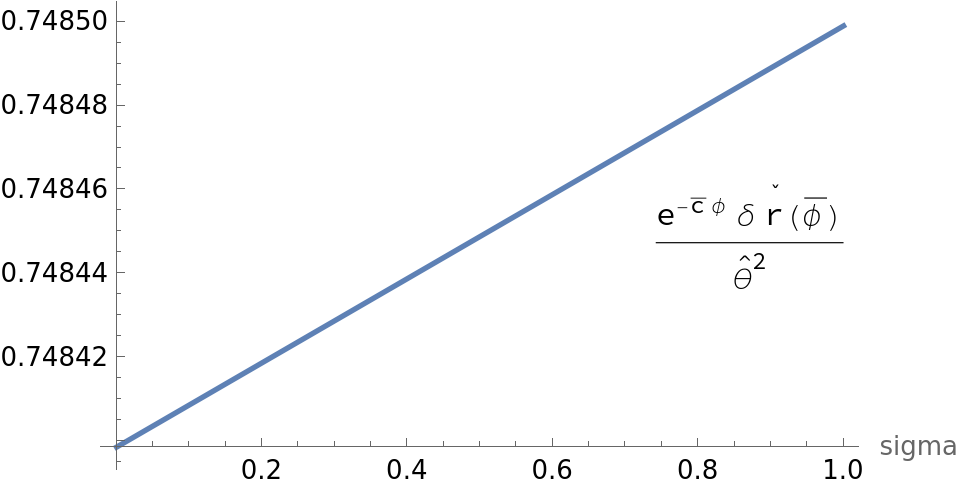}}
\end{subfigure}
\caption{Final value for Case 4, Equation (\ref{Equa:fina_case_4_arcsegm}): the minimal values occur for $\Delta \phi = 0,\theta = 0$.}
\label{Fig:PTarc_case4_1}
\end{figure}

\item[Case 5: $\bar \tau \in (1+\tau_1,1+\tau_2), \bar \tau + \hat \tau \in (1+\tau_1,1+\tau_2)$] here the tent is inside $(\tau_1,\tau_2) = (1 - \frac{\theta + \Delta \phi}{2\pi + \bar \alpha},1 - \frac{\Delta \phi}{2\pi + \bar \alpha})$:
\begin{equation*}
\bar \alpha - \Delta \phi - \theta < \bar \phi - 4\pi - \bar \alpha < \bar \phi - 4\pi - \bar \alpha + \hat \theta < \bar \alpha - \Delta \phi, \quad \theta > \hat \theta,
\end{equation*}
\begin{equation*}
\delta \Delta Q = 0, \quad \delta \hat L = 0, \quad \delta \hat r_0 = \delta \tilde r(\bar \phi - (2\pi + \bar \alpha)).
\end{equation*}
The above condition implies that $\theta \geq \hat \theta$. 
The plot of the function
\begin{equation}
\label{Equa:final_case_5}
\frac{e^{- \bar c (2\pi + \bar \alpha) \bar \tau} \delta \check r(\bar \phi)}{\theta(\theta + \Delta \phi)}, \quad \bar \tau = (1 - \varsigma) \bigg( 2 - \frac{\theta + \Delta \phi}{2\pi + \bar \alpha} \bigg) + \varsigma \bigg( 2 - \frac{\Delta \phi}{2\pi + \bar \alpha} \bigg), \ \varsigma \in (0,1),
\end{equation}
is in Fig. \ref{Fig:PTarc_case5_1}, which is strictly positive $> 0.55$.

\begin{figure}
\begin{subfigure}{.475\textwidth}
\resizebox{\textwidth}{!}{\includegraphics{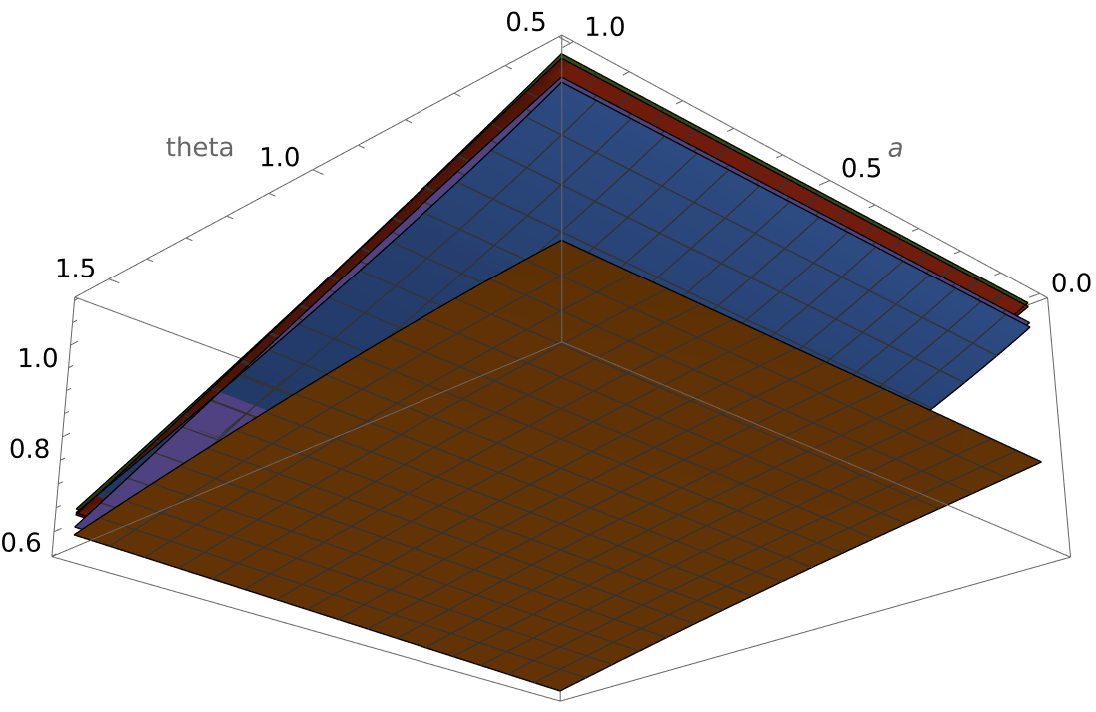}}
\end{subfigure} \hfill
\begin{subfigure}{.475\textwidth}
\resizebox{\textwidth}{!}{\includegraphics{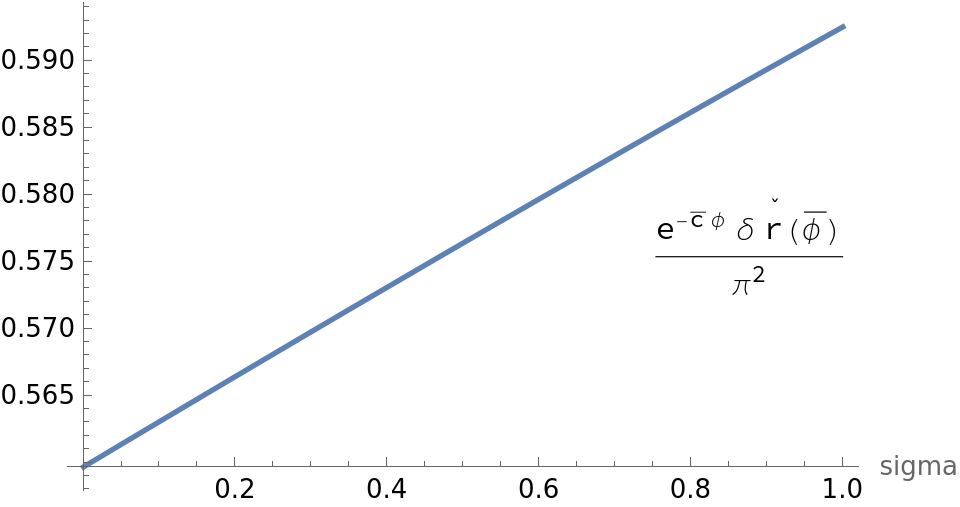}}
\end{subfigure}
\caption{Final value for Case 5, Equation (\ref{Equa:final_case_5}): the minimal values are obtained for $\Delta \phi = 0, \theta = \frac{\pi}{2}$.}
\label{Fig:PTarc_case5_1}
\end{figure}

\item[Case 6: $\bar \tau \in (1+\tau_1,1+\tau_2), \bar \tau + \hat \tau \in (1+\tau_2,1)$] here the tent starts inside $(\tau_1,\tau_2) = (1 - \frac{\theta + \Delta \phi}{2\pi + \bar \alpha},1 - \frac{\theta}{2\pi + \bar \alpha})$ and ends in $(\tau_2,1)$:
\begin{equation*}
\bar \alpha - \Delta \phi - \theta < \bar \phi - 4\pi - \bar \alpha < \bar \alpha - \Delta \phi, \quad \bar \alpha - \Delta \phi < \bar \phi - 4\pi - \bar \alpha + \hat \theta < \bar \alpha, \quad \Delta \phi + \theta > \hat \theta,
\end{equation*}
\begin{equation*}
\delta \Delta Q = \sin(\theta) e^{\i(\bar \phi - 4\pi - \bar \alpha + \hat \theta - \frac{\pi}{2})} - e^{\i(\bar \alpha - \Delta \phi - \theta)}, \quad \delta \hat L = - \cos(\theta) + \sin(\theta) (\bar \phi - 2(2\pi + \bar \alpha) + \hat \theta + \Delta \phi), \quad \delta \hat r_0 = \delta \tilde r(\bar \phi - (2\pi + \bar \alpha)).
\end{equation*}
The above condition implies that $\theta \geq \hat \theta$. The plot of the function
\begin{equation}
\label{Equa:final_case_6}
\frac{e^{- \bar c (2\pi + \bar \alpha) \bar \tau} \delta \check r(\bar \phi)}{\theta}, \quad \bar \tau = (1 - \varsigma) \bigg( 2 - \frac{\min\{\theta,\hat \theta\} + \Delta \phi}{2\pi + \bar \alpha} \bigg) + \varsigma \bigg( 2 - \frac{\max\{\Delta \phi,\hat \theta\}}{2\pi + \bar \alpha} \bigg), \ \varsigma \in (0,1),
\end{equation}
is in Fig. \ref{Fig:PTarc_case6_1}, which is strictly positive $>.38 \theta(\theta + \Delta \phi)$.

\begin{figure}
\begin{subfigure}{.475\textwidth}
\resizebox{\textwidth}{!}{\includegraphics{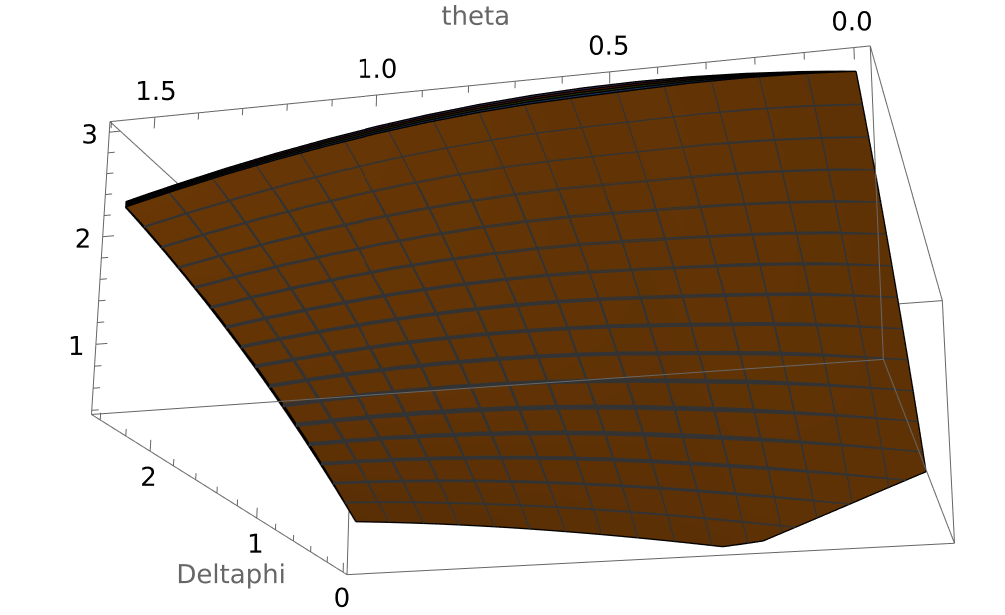}}
\end{subfigure} \hfill
\begin{subfigure}{.475\textwidth}
\resizebox{\textwidth}{!}{\includegraphics{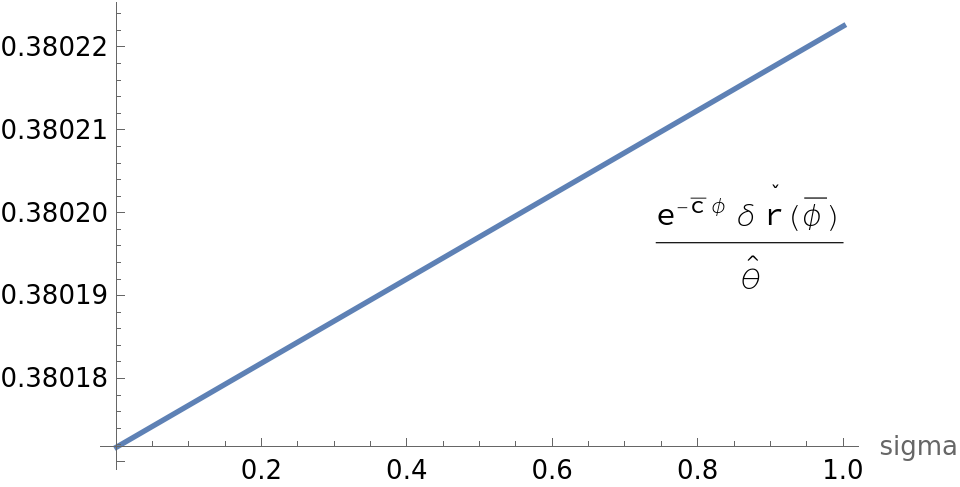}}
\end{subfigure}
\caption{Final value for Case 6, Equation (\ref{Equa:final_case_6}): the minimal values are for $\Delta \phi = 0, \theta = \hat \theta$.}
\label{Fig:PTarc_case6_1}
\end{figure}

\item[Case 7: $\bar \tau \in (1+\tau_1,1+\tau_2), \bar \tau + \hat \tau > 2$] here the tent starts inside ($\tau_1,\tau_2)$ and ends after $\tau = 1$:
\begin{equation*}
\bar \alpha - \Delta \phi - \theta < \bar \phi - 4\pi - \bar \alpha < \bar \alpha - \Delta \phi, \quad \bar \alpha < \bar \phi - 4\pi - \bar \alpha + \hat \theta, \quad \Delta \phi < \hat \theta,
\end{equation*}
\begin{equation*}
\delta \Delta Q = \delta \tilde r(\bar \phi - 2(2\pi+\bar \alpha) + \hat \theta) e^{\i(\bar \phi - 2(2\pi + \bar \alpha) + \hat \theta)} - e^{\i(\bar \alpha - \Delta \phi - \theta)},
\end{equation*}
\begin{equation*}
\delta \hat L = \frac{1 - \cos(\theta) + \sin(\theta) \Delta \phi}{\sin(\bar \alpha)^2} e^{\cot(\bar \alpha)(\bar \phi - 2(2 \pi + \bar \alpha) + \hat \theta)} - 1,
\end{equation*}
\begin{equation*}
\delta \hat r_0 = \delta \tilde r(\bar \phi - (2\pi + \bar \alpha)).
\end{equation*}
The above condition implies that $\Delta \phi < \hat \theta$. The plot of the function
\begin{equation}
\label{Equa:final_7_arcsegm}
\frac{e^{- \bar c (2\pi + \bar \alpha) \bar \tau} \delta \check r(\bar \phi)}{\theta(\theta + \Delta \phi)}, \quad \bar \tau = (1 - \varsigma) \bigg( 2 - \frac{\min\{\Delta \phi + \theta,\hat \theta\}}{2\pi + \bar \alpha} \bigg) + \varsigma \bigg( 2 - \frac{\Delta \phi}{2\pi + \bar \alpha} \bigg), \ \varsigma \in (0,1),
\end{equation}
is in Fig. \ref{Fig:PTarc_case7_1}, which is strictly positive $> 0.28$.

\begin{figure}
\begin{subfigure}{.475\textwidth}
\resizebox{\textwidth}{!}{\includegraphics{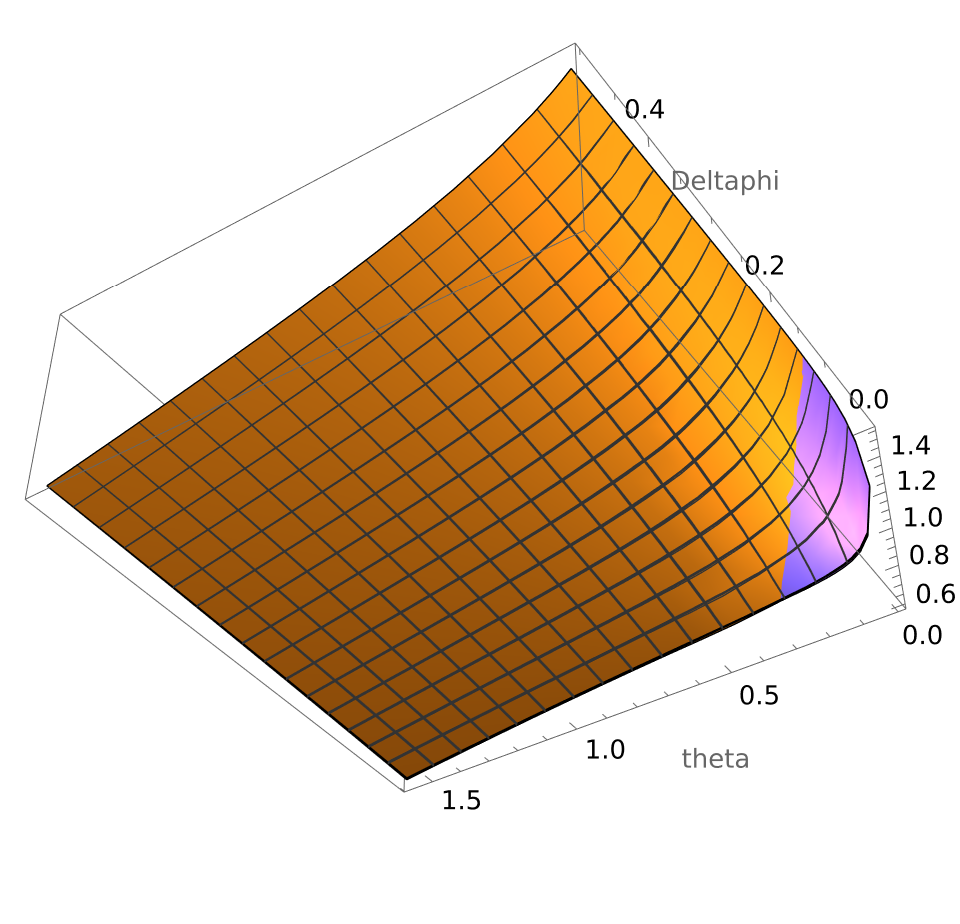}}
\end{subfigure} \hfill
\begin{subfigure}{.475\textwidth}
\resizebox{\textwidth}{!}{\includegraphics{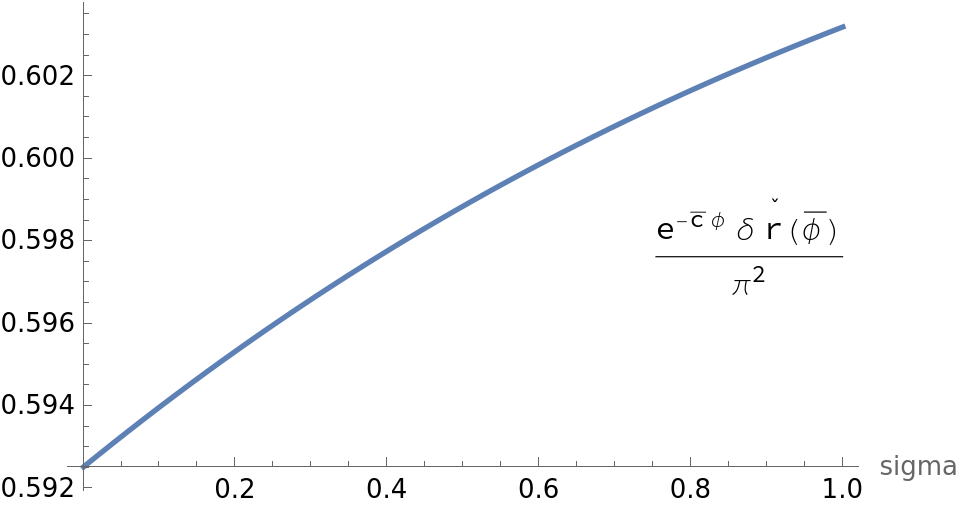}}
\end{subfigure}
\caption{Final value for Case 7, Equation (\ref{Equa:final_7_arcsegm}): the minimal values are for $\Delta \phi = 0,\theta = \frac{\pi}{2}$.}
\label{Fig:PTarc_case7_1}
\end{figure}

\item[Case 8: $\bar \tau > 1 + \tau_2,\bar \tau + \hat \tau < 2$] here the tent is inside $(\tau_2,1)$:
\begin{equation*}
\bar \alpha - \Delta \phi < \bar \phi - 4\pi - \bar \alpha < \bar \phi - 4\pi - \bar \alpha + \hat \theta < \bar \alpha, \quad \Delta \phi > \hat \theta,
\end{equation*}
\begin{equation*}
\delta \Delta Q = \sin(\theta) e^{\i (\bar \phi - 4\pi - \bar \alpha + \hat \theta - \frac{\pi}{2})} - \sin(\theta) e^{\i (\bar \phi - 4\pi - \bar \alpha - \frac{\pi}{2})}, \quad \delta \hat L = \sin(\theta) \hat \theta, \quad \delta \hat r_0 = \delta \tilde r(\bar \phi - (2\pi + \bar \alpha)).
\end{equation*}
The above condition implies that $\Delta \phi \geq \hat \theta$. The plot of the function
\begin{equation}
\label{Equa:final_case_8}
\frac{e^{- \bar c (2\pi + \bar \alpha) \bar \tau} \delta \check r(\bar \phi)}{\theta}, \quad \bar \tau = (1 - \varsigma) \bigg( 2 - \frac{\Delta \phi}{2\pi + \bar \alpha} \bigg) + \varsigma \bigg( 2 - \frac{\hat \theta}{2\pi + \bar \alpha} \bigg), \ \varsigma \in (0,1),
\end{equation}
is in Fig. \ref{Fig:PTarc_case8_1}, which is strictly positive $> 0.76$.

\begin{figure}
\begin{subfigure}{.475\textwidth}
\resizebox{\textwidth}{!}{\includegraphics{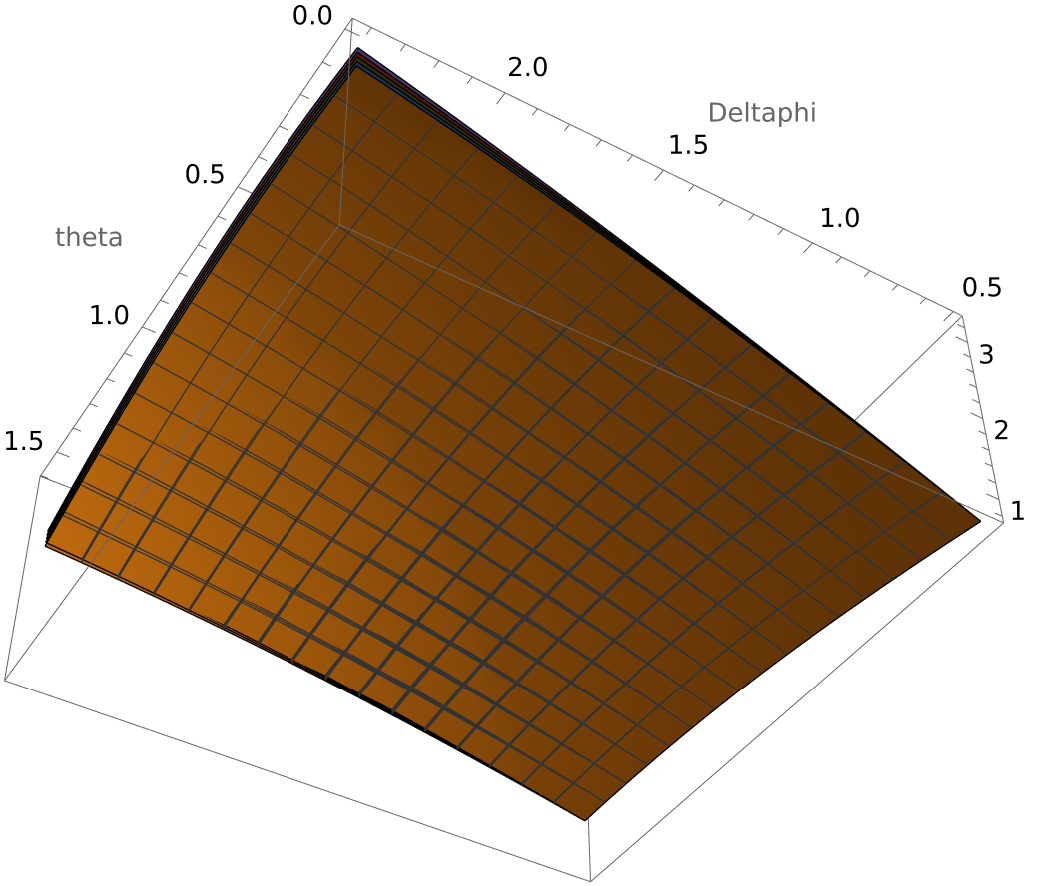}}
\end{subfigure} \hfill
\begin{subfigure}{.475\textwidth}
\resizebox{\textwidth}{!}{\includegraphics{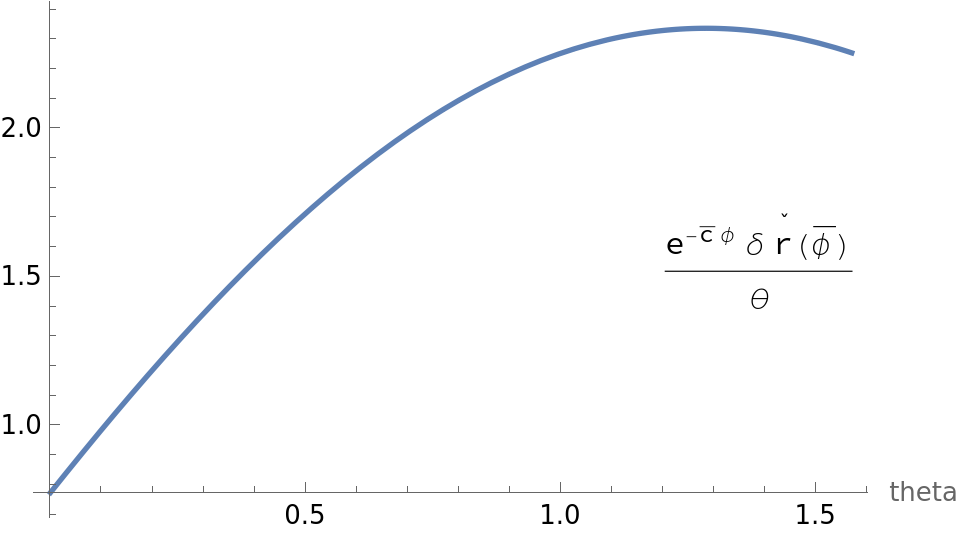}}
\end{subfigure}
\caption{Final value for Case 8, Equation (\ref{Equa:final_case_8}): the minimal values are for $\tau = 0,\Delta \phi = \hat \theta$.}
\label{Fig:PTarc_case8_1}
\end{figure}

\item[Case 9: $\bar \tau \in (1 + \tau_2,2),\bar \tau + \hat \tau > 2$] here the tent starts inside $(\tau_2,1)$ and ends after $\tau = 1$:
\begin{equation*}
\bar \alpha - \Delta \phi < \bar \phi - 4\pi - \bar \alpha < \bar \alpha, \quad \bar \alpha < \bar \phi - 4\pi - \bar \alpha + \hat \theta,
\end{equation*}
\begin{equation*}
\delta \Delta Q = \delta \tilde r(\bar \phi - 2(2\pi+\bar \alpha) + \hat \theta) e^{\i(\bar \phi - 2(2\pi + \bar \alpha) + \hat \theta)} - \sin(\theta) e^{\i (\bar \phi - 4\pi - \bar \alpha - \frac{\pi}{2})},
\end{equation*}
\begin{equation*}
\delta \hat L = \frac{1 - \cos(\theta) + \sin(\theta) \Delta \phi}{\sin(\bar \alpha)^2} (e^{\cot(\bar \alpha)(\bar \phi - 2(2 \pi + \bar \alpha) + \hat \theta)} - 1) + \sin(\theta) (- \bar \phi + 4\pi + \bar \alpha)), \quad \delta \hat r_0 = \delta \tilde r(\bar \phi - (2\pi + \bar \alpha)).
\end{equation*}
The above condition implies that $\theta \geq \hat \theta$. The plot of the function
\begin{equation}
\label{Equa:finalcase_9}
\frac{e^{- \bar c (2\pi + \bar \alpha) \bar \tau} \delta \check r(\bar \phi)}{\theta(\theta + \Delta \phi)}, \quad \bar \tau = (1 - \varsigma) \bigg( 2 - \frac{\theta + \hat \theta + \Delta \phi}{2\pi + \bar \alpha} \bigg) + \varsigma \bigg( 2 - \frac{\max\{\theta,\hat \theta\} + \Delta \phi}{2\pi + \bar \alpha} \bigg), \ \varsigma \in (0,1),
\end{equation}
is in Fig. \ref{Fig:PTarc_case9_1}, which is strictly positive $> 0.62$.

\begin{figure}
\begin{subfigure}{.475\textwidth}
\resizebox{\textwidth}{!}{\includegraphics{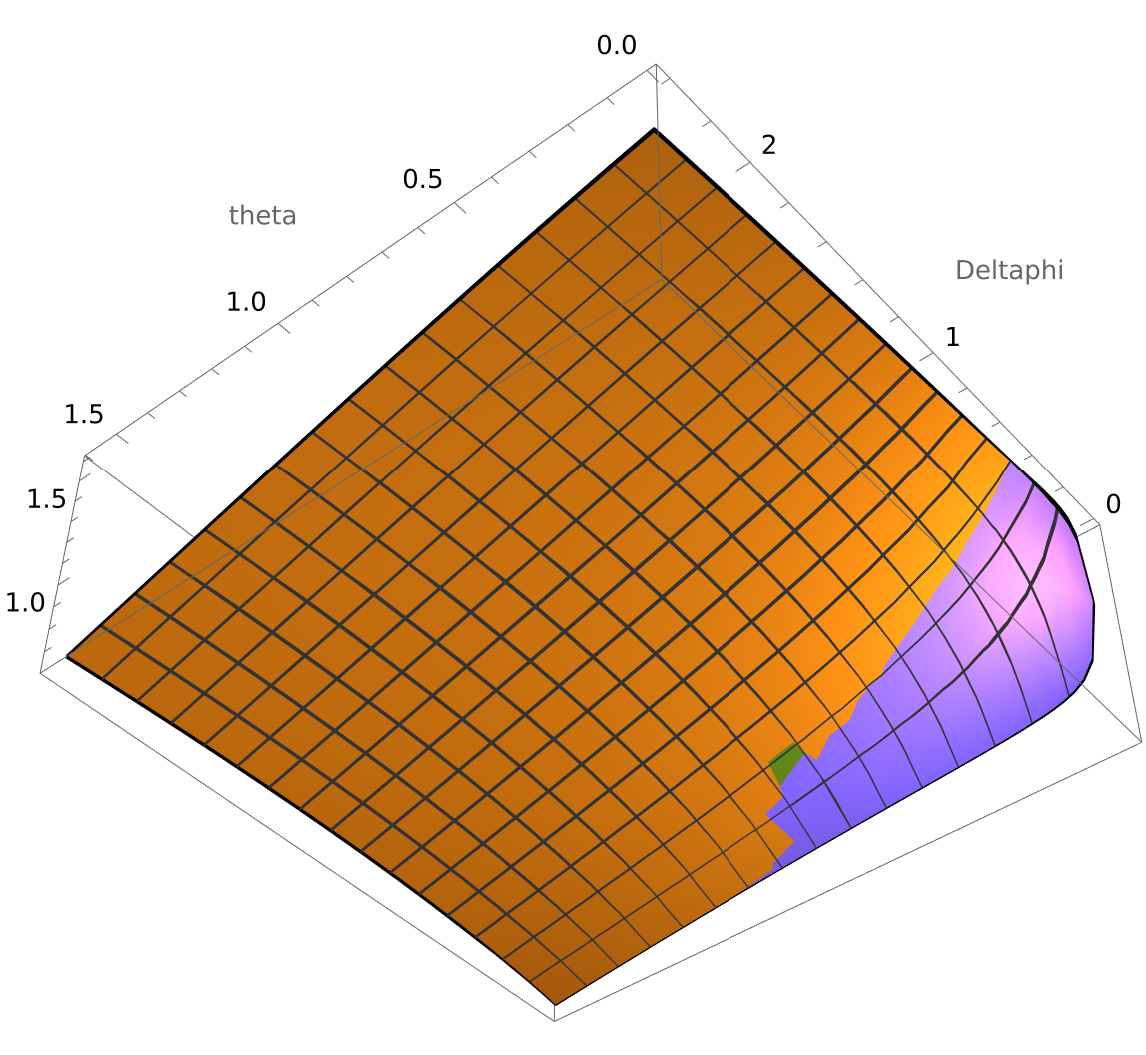}}
\end{subfigure} \hfill
\begin{subfigure}{.475\textwidth}
\resizebox{\textwidth}{!}{\includegraphics{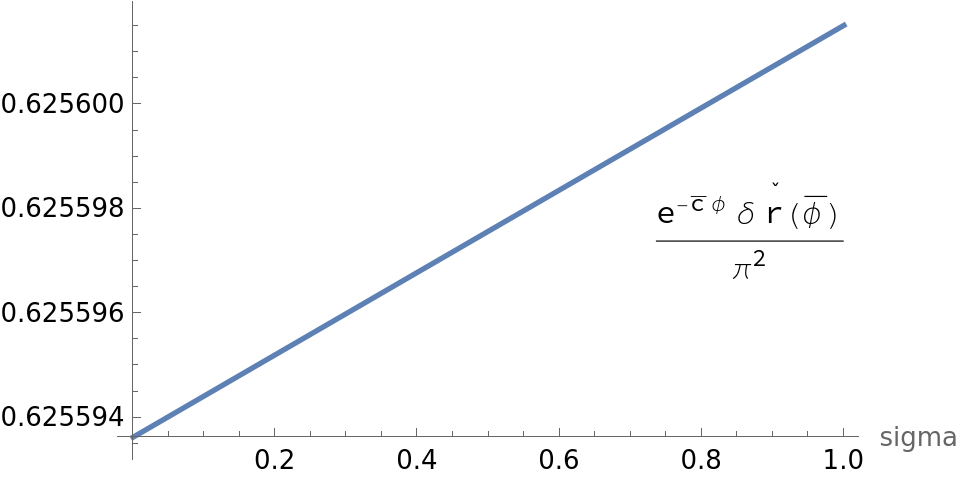}}
\end{subfigure}
\caption{Final value for Case 9, Equation (\ref{Equa:finalcase_9}): the minimal values are for $\Delta \phi = 0, \theta = \frac{\pi}{2}$.}
\label{Fig:PTarc_case9_1}
\end{figure}

\item[Case 10: $\bar \tau > 2$] here the base of the tent is after $\tau = 0$, and in particular the perturbed spiral $\delta \tilde r$ is positive. Similarly to Section \ref{Sss:segm_semg_after_neg}, we can thus use Lemma \ref{Lem:final_value_satu} and just study the function
\begin{equation}
\label{Equa:final_case_10}
\bar \tau \mapsto \frac{1}{\theta(\theta + \Delta \phi)} \bigg( \delta \rho(\bar \tau) - \frac{(1 - \cos(\hat \theta)) e^{-\bar c(2\pi + \bar \alpha)}}{\cos(\bar \alpha - \hat \theta) - \cos(\bar \alpha)} \delta \rho(\bar \tau - 1) \bigg),
\end{equation}
with $\rho(\tau)$ given by Corollary \ref{Cor:arc_rho_eq}. The numerical plot of the above function for $\tau \in (2,5)$ is in Fig. \ref{Fig:PTarc_case10_1} and of its $\tau$-derivative for $\tau \in (4,5)$ is in Fig. \ref{Fig:PTarc_case10bis_1}: we can thus apply Lemma \ref{lem:key} and deduce that
\begin{equation*}
\frac{1}{\theta(\theta + \Delta \phi)} \bigg( \delta \rho(\bar \tau) - \frac{(1 - \cos(\hat \theta)) e^{-\bar c(2\pi + \bar \alpha)}}{\cos(\bar \alpha - \hat \theta) - \cos(\bar \alpha)} \delta \rho(\bar \tau - 1) \bigg) > 0.59.
\end{equation*}

\begin{figure}
\begin{subfigure}{.475\textwidth}
\resizebox{\textwidth}{!}{\includegraphics{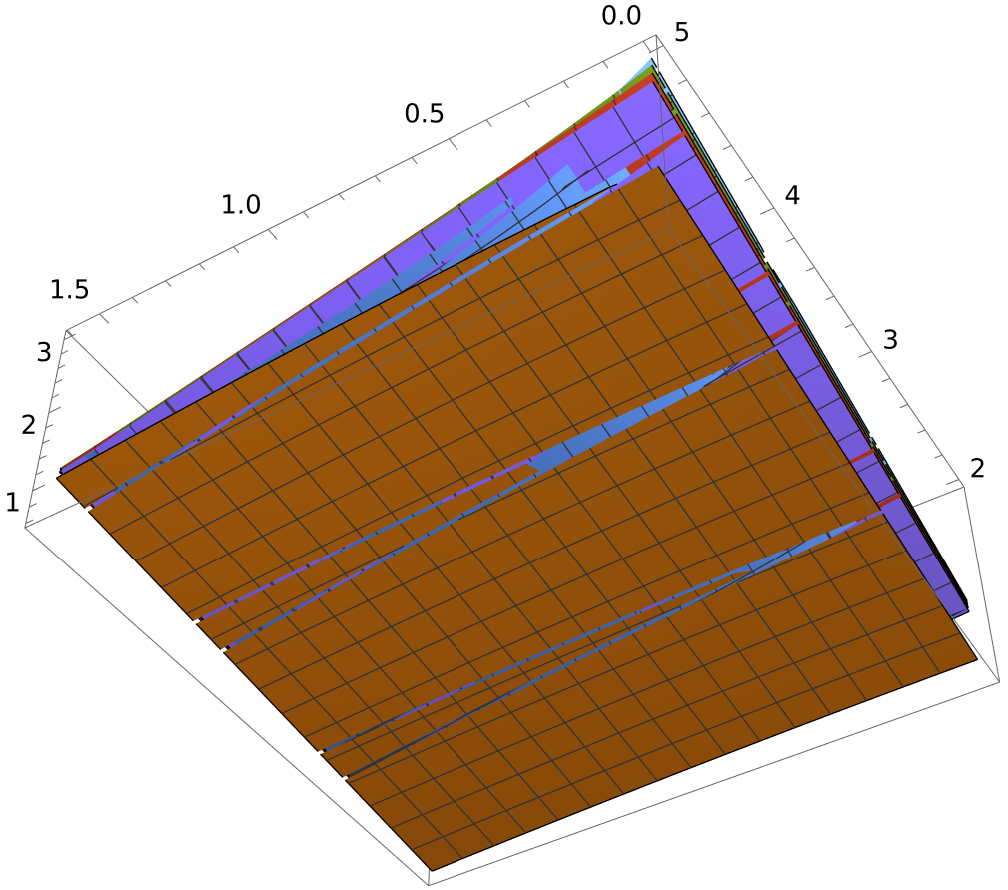}}
\end{subfigure} \hfill
\begin{subfigure}{.475\textwidth}
\resizebox{\textwidth}{!}{\includegraphics{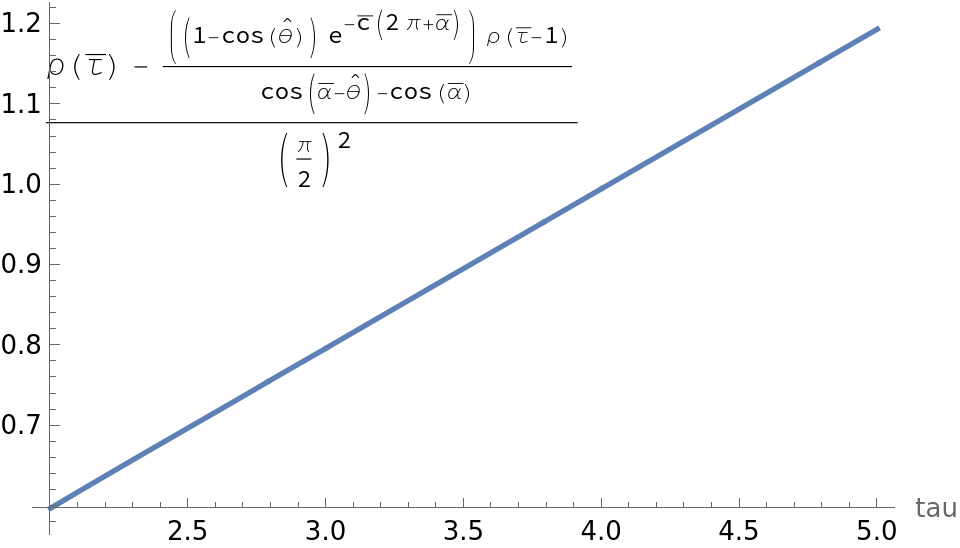}}
\end{subfigure}
\caption{Final value for Case 10, Equation (\ref{Equa:final_case_10}): the minimal values are for $\Delta \phi = 0, \theta = \frac{\pi}{2}$.}
\label{Fig:PTarc_case10_1}
\end{figure}

\begin{figure}
\begin{subfigure}{.475\textwidth}
\resizebox{\textwidth}{!}{\includegraphics{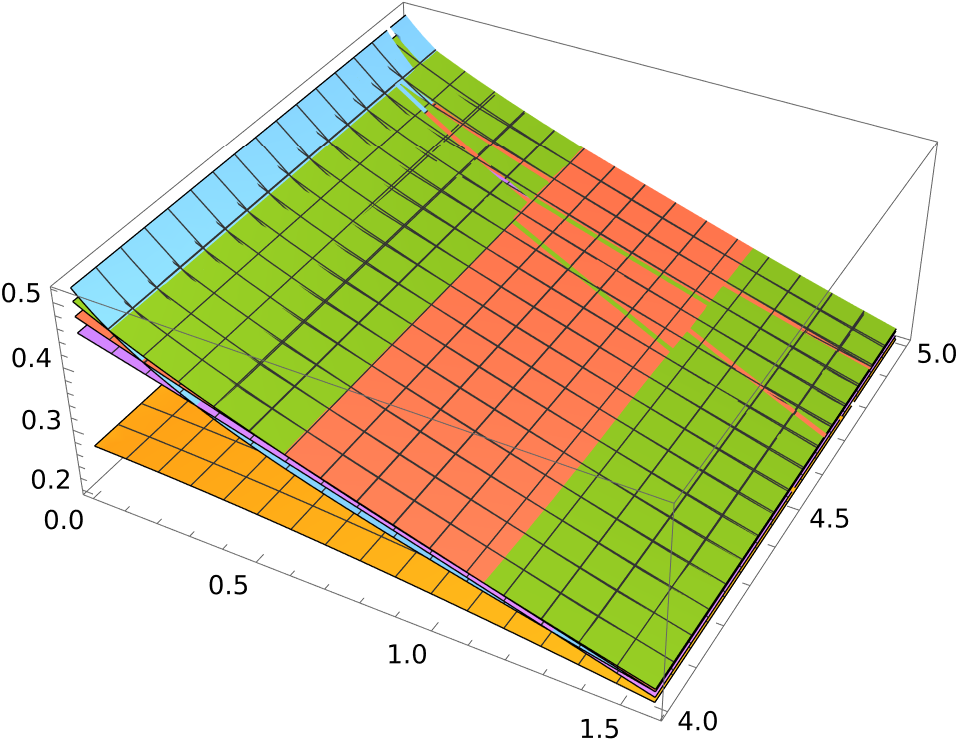}}
\end{subfigure} \hfill
\begin{subfigure}{.475\textwidth}
\resizebox{\textwidth}{!}{\includegraphics{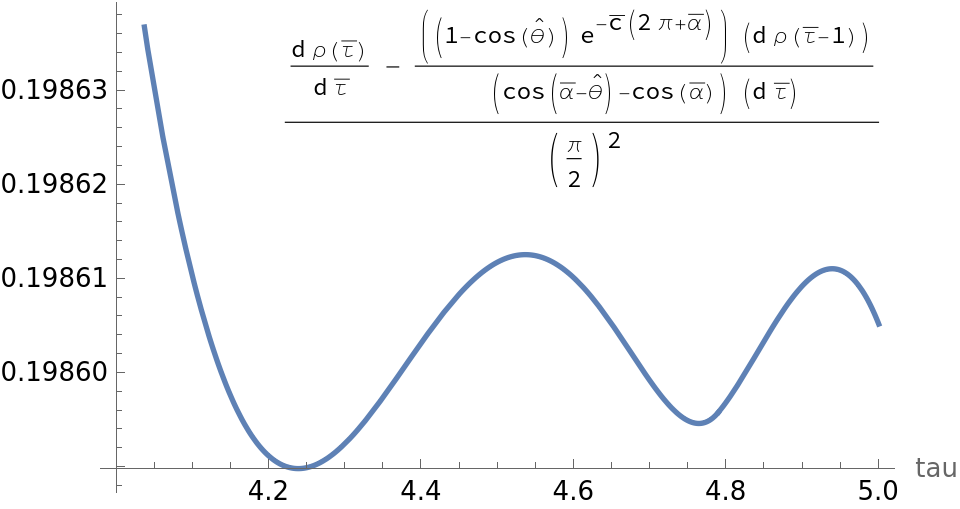}}
\end{subfigure}
\caption{Derivative for Case 10, Equation (\ref{Equa:final_case_10}): the minimal values are for $\Delta \phi = 0, \theta = \frac{\pi}{2}$.}
\label{Fig:PTarc_case10bis_1}
\end{figure}

\end{description}

We collect all the above statements into the following

\begin{lemma}
\label{Lem:arc_segm_satur}
If the final tent is in the saturated region of $\check r$, then $\delta \check r(\bar \phi) \geq 0$, and it is $0$ only if the angle $\theta = 0$.
\end{lemma}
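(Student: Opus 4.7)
The plan is to establish the lemma by exhausting the geometric configurations laid out in the 10 cases of Section \ref{Sss:arc_tent_satur}, which collectively cover every possible relative position of the initial arc of $\tilde r$ and the final tent of $\check r$ when the tip $\hat P_0$ of the tent lies in the saturated portion of $\tilde r$. I would begin by arguing that the partition is complete: the position of $\hat P_0$ in the $\bar\tau$ parameter and the position of $\hat P_2$ (shifted by $\hat\tau = \hat\theta/(2\pi+\bar\alpha)$) each fall into one of the four regimes $\bar\tau < 1+\tau_1$, $\bar\tau\in(1+\tau_1,1+\tau_2)$, $\bar\tau\in(1+\tau_2,2)$, and $\bar\tau > 2$, and that the geometric constraints (tent orientation, admissibility from Proposition \ref{Prop:tent_admissible}, and compatibility with $\theta \in [0,\pi]$, $\Delta\phi\in[0,\tan\bar\alpha]$) leave exactly the 10 enumerated configurations.

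Next, for each of Cases 1 through 9 I would apply the linearized tent formulas of Corollary \ref{Cor:tent_corol_for_pert} (i.e.\ the perturbative version of Lemma \ref{Lem:relation_admissibility_tent}) to express $\delta\check r(\bar\phi)$ in terms of $\delta\hat Q_0$, $\delta\hat Q_2$, $\delta\hat L$, and $\delta\hat r_0 = \delta\tilde r(\bar\phi-(2\pi+\bar\alpha))$. The quantities $\delta\hat Q_2-\delta\hat Q_0$ and $\delta\hat L$ are determined directly by which of the four $\tau$-regimes contains $\hat P_0$ and $\hat P_2$: in regimes where the base of the tent sits in a saturated arc they vanish or are given by a simple explicit $e^{i\omega}$, while in the regimes that intersect the negativity interval $[1-\frac{\theta+\Delta\phi}{2\pi+\bar\alpha},1]$ one substitutes the explicit expressions from Proposition \ref{Prop:regions_pos_neg_segm}/Proposition \ref{Prop:neg_region}. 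The final comparison with the saturated spiral, Equation (\ref{eq:comparaison:Saturated}), then produces an explicit combination of kernels and elementary trigonometric functions, which I would show numerically to satisfy $\delta\check r(\bar\phi)/[\theta(\theta+\Delta\phi)] \geq c > 0$ on the closure of the relevant parameter region; this is precisely the content of the plots Fig.\ \ref{Fig:arccase1}--\ref{Fig:PTarc_case9_1} and the minimum values $0.35,\ldots,0.74$ extracted there.

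Case 10 ($\bar\tau > 2$) is conceptually different, because the base of the tent now lies in the positivity region of $\delta\tilde r$, so the perturbed base is a convex curve. Here I would invoke Lemma \ref{Lem:final_value_satu} to reduce the positivity of $\delta\check r(\bar\phi)$ to the positivity of
\begin{equation*}
\delta\rho(\bar\tau) - \frac{(1-\cos\hat\theta)\,e^{-\bar c(2\pi+\bar\alpha)}}{\cos(\bar\alpha-\hat\theta)-\cos\bar\alpha}\,\delta\rho(\bar\tau-1),
\end{equation*}
then verify numerically on $\bar\tau\in[2,5]$ that this quantity is strictly positive and strictly increasing on $[4,5]$, and finally close the argument with Lemma \ref{lem:key} (together with Remark \ref{rmk:exponentially:explod}) to propagate positivity to all $\bar\tau \geq 2$. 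The equality characterization is immediate in every case: the explicit formulas show that the positive lower bound degenerates exactly when $\theta = 0$, which corresponds to $\delta s_0$ being tangent to the optimal spiral, i.e.\ to the trivial perturbation.

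The main obstacle is not conceptual but bookkeeping: one must be very careful in Cases 2--9, where $\hat P_0$ and $\hat P_2$ straddle a discontinuity of $\delta\tilde r$ (corresponding to $\tau_1$, $\tau_2$, or $1$), to pick the correct expression for $\delta\hat L$ and $\delta\hat Q_2-\delta\hat Q_0$ on each side of the discontinuity and to check continuity of $\delta\check r(\bar\phi)$ across the case boundaries; this continuity is automatic from the construction but has to be respected when assembling the numerical bounds. Once the case split is fixed, each individual verification is a one-variable (in $\varsigma$) numerical check on a compact domain in $(\theta,\Delta\phi)\in[0,\pi]\times[0,\tan\bar\alpha]$.
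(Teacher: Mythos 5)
Your proposal follows essentially the same route as the paper: the exhaustive 10-case decomposition of the relative positions of the arc and the tent, the linearized tent formulas of Corollary \ref{Cor:tent_corol_for_pert} combined with the comparison \eqref{eq:comparaison:Saturated} and numerical lower bounds for Cases 1--9, and the reduction via Lemma \ref{Lem:final_value_satu} plus Lemma \ref{lem:key} for Case 10, with equality only at $\theta = 0$. This is correct and matches the paper's argument.
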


This lemma, together with Lemmas \ref{Lem:segm_arc_2}, \ref{Lem:segm_arc_3}, \ref{Lem:arc_segm_1}, shows that $\delta \check r(\bar \phi) > 0$, and it is $0$ if and only if $\theta = 0$, concluding the proof of Theorem \ref{Theo:main_extended} using also the results of Section \ref{S:segment_case}.

\newpage

\appendix

\section{Numerical code for Wolfram Mathematica}
\label{S:num_code}

\noindent {\bf Section \ref{Sss:change_variables}}

\bigskip

\noindent Exponent $c(\alpha)$, Eq. \eqref{Equa:c_gene_defi}
\medskip
\begin{verbatim}
In[7]:= c[alpha_] := Log[(2 Pi + alpha)/Sin[alpha]]/(2 Pi + alpha)
\end{verbatim}
\medskip
Coefficient $a(\alpha)$, Eq. \eqref{Equa:a_const}
\medskip
\begin{verbatim}
a[alpha_] := (2 Pi + alpha) (Cot[alpha] - c[alpha])
\end{verbatim}
\medskip
Computation of the critical eigenvalue, Cor. \ref{Cor:eigen_angle}
\medskip
\begin{verbatim}
lam[alpha_] := Exp[(2 Pi + alpha) Cot[alpha] - 1]/(2 Pi + alpha) Sin[alpha] - 1
\end{verbatim}
\medskip
Critical angle $\bar \alpha$
\medskip
\begin{verbatim}
baralpha = FindRoot[lam[alpha], {alpha, 1.2}][[1, 2]]
1.1783
\end{verbatim}
\medskip
Critical speed $\bar \sigma$
\medskip
\begin{verbatim}
1/Cos[baralpha]
2.61443
\end{verbatim}
\medskip
Critical exponent $\bar c$
\medskip
\begin{verbatim}
barc = c[baralpha]
0.27995
\end{verbatim}
\medskip
Computations of the complex eigenvalues, Fig. \ref{Fig:complex_eigen_f}
\medskip
\begin{verbatim}
Plot[Exp[y Cot[y]] Sin[y]/y, {y, 0, 20}, PlotLegends -> 
  Placed[ToExpression["\\frac{e^{y \\cot(y)} \\sin(y)}{y}", TeXForm, 
    HoldForm], {Right, Top}], PlotRange -> {{-15, 15}}, 
 AxesLabel -> Automatic]
\end{verbatim}

\bigskip

\noindent{\bf Section \ref{ss:subset:saturated}}

\bigskip

\noindent Computation for Remark \ref{Rem:numer_rounds}
\medskip
\begin{verbatim}
Cot[baralpha] - 2 Pi/(2 Pi + baralpha)
-0.428111
\end{verbatim}
\medskip
Plot of the function \eqref{Equa:intern_prop_3.4}, Fig. \ref{Fig:proposition_3.4_proof}
\medskip \begin{verbatim}
Plot[{(Exp[Cot[alpha] 2 Pi] - 1) Exp[-2 Pi Cot[alpha] + a[alpha]] - 
   Exp[a[alpha] 2 Pi/(2 Pi + alpha)]}, {alpha, 1, 1.3}, 
 PlotLegends -> 
  Placed[Text[
    Style[ToExpression[
      "(e^{2\\pi \\cot(\\alpha)} - 1) e^{- 2\\pi \\cot(\\alpha) + \
a(\\alpha)} - e^{a(\\alpha) \\frac{2\\pi}{2\\pi + \\alpha}}", TeXForm,
       HoldForm]]], {Right, Top}]]
\end{verbatim}

\bigskip

\noindent{\bf Section \ref{Sss:green_kernels}}

\bigskip

\noindent Explicit form up to $\tau = 6$, Lemma \ref{Lem:explic_kernel_RDE}
\medskip
\begin{verbatim}
galpha[tau_, alpha_] := 
 Sum[(-1)^k  Exp[a[alpha] (tau - k)] (tau - k)^k/k! HeavisideTheta[
    tau - k], {k, 0, 5}]
\end{verbatim}
\medskip
Kernel in the critical case
\medskip
\begin{verbatim}
g[tau_] := galpha[tau, baralpha]
\end{verbatim}
\medskip
Asymptotic expansion, Point \ref{Point_2:expli_exp_kern} of Lemma \ref{Lem:aympt_g}
\medskip
\begin{verbatim}
gasympt[tau_] := 2 tau + 2/3
\end{verbatim}
\medskip
Plot for Fig. \ref{Fig:g_kernel_sympt}
\medskip
\begin{verbatim}
Plot[{galpha[tau, baralpha - .01], galpha[tau, baralpha], 
  galpha[tau, baralpha + .1]}, {tau, 0, 6}, 
 PlotLegends -> 
  Placed[ToExpression["g(\\tau,\\alpha)", TeXForm, HoldForm], {Center,
     Top}], AxesLabel -> Automatic]
\end{verbatim}
\begin{verbatim}
Plot[{g[tau] - gasympt[tau]}, {tau, 0, 5}, 
 PlotLegends -> 
  Placed[ToExpression["g(\\tau,\\bar \\alpha)-2\\tau-\\frac{2}{3}", 
    TeXForm, HoldForm], {Right, Top}], AxesLabel -> Automatic, 
 PlotRange -> {{-.1, 5}, {-.06, .06}}]
\end{verbatim}
\medskip
Construction of the solution $m$ of Lemma \ref{Lem:expli_source_RDE}
\medskip
\begin{verbatim}
m0alpha[tau_, tau1_, alpha_] := 
 Sum[(-1)^
     l (2 Pi + alpha) Exp[-c[alpha] (2 Pi + alpha) (tau - k)] Exp[
     Cot[alpha] (2 Pi + alpha) (tau - tau1 - 
        k)] (Cot[alpha] (2 Pi + alpha) (tau - tau1 - k))^
      l/(l! (Cot[alpha] (2 Pi + alpha))^{1 + k}) HeavisideTheta[
     tau - tau1 - k], {k, 0, 6}, {l, 1, k}] + 
  Sum[(2 Pi + alpha) Exp[-c[alpha] (2 Pi + alpha) (tau - k)] Max[0, 
      Exp[(Cot[alpha] (2 Pi + alpha)) (tau - tau1 - k)] - 
       1]/(Cot[alpha] (2 Pi + alpha))^{1 + k}, {k, 0, 5}]
\end{verbatim}
\begin{verbatim}
malpha[tau_, tau1_, tau2_, alpha_] := 
 m0alpha[tau, tau1, alpha] - m0alpha[tau, tau2, alpha]
\end{verbatim}
\medskip
In the critical case $\alpha = \bar \alpha$
\medskip
\begin{verbatim}
m[tau_, tau1_, tau2_] := malpha[tau, tau1, tau2, baralpha]
\end{verbatim}
\medskip
Asymptotic behavior of $M$ in the critical case, Lemma \ref{Lem:m_aymptotic}: coefficients for the linear and constant part
\medskip
\begin{verbatim}
masymptlin[tau1_, 
  tau2_] := -2 Exp[-barc (2 Pi + baralpha) tau2]/barc + 
  2 Exp[-barc (2 Pi + baralpha) tau1]/barc
\end{verbatim}
\begin{verbatim}
masymptconst[tau1_, tau2_] := 
 2 Exp[-barc (2 Pi + baralpha) tau2]/(barc^2 (2 Pi + 
        baralpha)) - (2 (-tau2) + 
     2/3) Exp[-barc (2 Pi + baralpha) tau2]/
    barc - (2 Exp[-barc (2 Pi + baralpha) tau1]/(barc^2 (2 Pi + 
      baralpha)) - (2 (-tau1) + 
       2/3) Exp[-barc (2 Pi + baralpha) tau1]/barc)
\end{verbatim}
\medskip
Asymptotic expansion of $m$
\medskip
\begin{verbatim}
masympt[tau_, tau1_, tau2_] := 
 2 Exp[-barc (2 Pi + baralpha) tau2]/(barc^2 (2 Pi + 
        baralpha)) - (2 (tau - tau2) + 
     2/3) Exp[-barc (2 Pi + baralpha) tau2]/
    barc - (2 Exp[-barc (2 Pi + baralpha) tau1]/(barc^2 (2 Pi + 
          baralpha)) - (2 (tau - tau1) + 
       2/3) Exp[-barc (2 Pi + baralpha) tau1]/barc)
\end{verbatim}
\medskip
Plot for Fig. \ref{Fig:m_m_sympt}
\medskip
\begin{verbatim}
Plot[{malpha[tau, 1/2, 1, baralpha - .01], 
  malpha[tau, 1/2, 1, baralpha], 
  malpha[tau, 1/2, 1, baralpha + .1]}, {tau, 0, 6}, 
 PlotLegends -> 
  Placed[ToExpression["m(\\tau,1/2,1,\\alpha)", TeXForm, 
    HoldForm], {Center, Top}], AxesLabel -> Automatic]
\end{verbatim}
\begin{verbatim}
Plot3D[{0, 
      m[tau, tau1, 1] - masymptlin[tau1, 1] tau - 
       masymptconst[tau1, 1]}, {tau, 0, 5}, {tau1, 
      1 - Tan[baralpha]/(2 Pi + baralpha), 1}, AxesLabel -> Automatic, 
     PlotRange -> {-.02, .02}]
\end{verbatim}
\medskip
Kernel in $\phi$, Remark \ref{Rem:oringal_phi_kernel}, up to $\phi = 6(2\pi+\alpha)$
\medskip
\begin{verbatim}
Galpha[phi_, alpha_] := 
  galpha[phi/(2 Pi + alpha), alpha] Exp[barc phi]
\end{verbatim}
\begin{verbatim}
Malpha[phi_, phi1_, phi2_, alpha_] := 
  malpha[phi/(2 Pi + alpha), phi1/(2 Pi + alpha), phi2/(2 Pi + alpha), 
    alpha] Exp[barc phi]
\end{verbatim}
\begin{verbatim}
G[phi_] := Galpha[phi, baralpha]
\end{verbatim}
\begin{verbatim}
M[phi_, phi1_, phi2_] := Malpha[phi, phi1, phi2, baralpha]
\end{verbatim}
\medskip
Plot of the solution for $\mathtt a = \sin(\bar \alpha)$, i.e. $\theta = \pi/2$, Fig. \ref{Fig:base_sol_intro}
\medskip
\begin{verbatim}
Plot[g[tau]/Sin[baralpha] - 
   Exp[-barc (2 Pi + baralpha - Pi/2)] g[
     tau - 1 + Pi/2/(2 Pi + baralpha)] - 
   Cot[baralpha] Exp[-barc (2 Pi + baralpha)] g[tau - 1], {tau, 0, 5}, 
  AxesLabel -> Automatic, 
  PlotLegends -> 
   Placed[ToExpression["\\rho(\\phi,\\sin(\\bar \\alpha))", TeXForm, 
     HoldForm], {Right, Center}]]
\end{verbatim}

\bigskip

\noindent{ \bf Section \ref{Sss:length_saturated}}

\bigskip

\noindent Primitive functions for the kernel $\mathcal G$
\medskip
\begin{verbatim}
Lgg[tau_, alpha_] := 
  Sum[1/Cos[
       alpha]^{k + 
        1} (Sum[(-1)^j Exp[
         Cot[alpha] (2 Pi + alpha) (tau - 
        k)] (Cot[alpha] (2 Pi + alpha) (tau - k))^j/j!, {j, 0, 
        k}] - 1) HeavisideTheta[tau - k], {k, 0, 5}]
 Lg[tau_] := Lgg[tau, baralpha]
\end{verbatim}
\medskip
This is $\mathfrak G$, used for the integral of $m$
\medskip
\begin{verbatim}
LLgg[tau_, alpha_] := 
  Sum[1/Cos[
       alpha]^{k + 
        1} (Sum[(Sum[(-1)^l Exp[
         Cot[alpha] (2 Pi + alpha) (tau - 
            k)] (Cot[alpha] (2 Pi + alpha) (tau - k))^l/l!, {l, 0, 
            j}]/Cot[alpha]) - 1/Cot[alpha], {j, 0, 
        k}] - (2 Pi + alpha) (tau - k)) HeavisideTheta[tau - k], {k, 0,
     5}]
 LLg[tau_] := LLgg[tau, baralpha]
\end{verbatim}
\medskip
This is the formula for the integral of $m$
\medskip
\begin{verbatim}
Lm[tau_, tau1_, tau2_] := LLg[tau - tau1] - LLg[tau - tau2]
\end{verbatim}

\bigskip

\noindent{\bf Section \ref{S:case:study}}

\bigskip

\noindent This is the plot of the derivative w.r.t. $\mathtt a$ in the arc case, Equation \eqref{Equa:ckrho_delph_casest} and Fig. \ref{Fig:ckrho_delph_casst}
\medskip
\begin{verbatim}
Plot[Cot[baralpha] g[tau] - 
  Exp[-barc Pi/2] g[tau - Pi/(2 (2 Pi + baralpha))] - 
  Exp[-barc (2 Pi + baralpha)] g[tau - 1]/Sin[baralpha], {tau, 1, 2}, 
 AxesLabel -> Automatic, 
 PlotLegends -> 
  Placed[ToExpression["\\check \\rho(\\tau)", TeXForm, 
    HoldForm], {Right, Center}]]
\end{verbatim}
\medskip
Evolution of the solution when $\mathtt a = 0$, Fig. \ref{Fig:ckrho_delph_casst}
\medskip
\begin{verbatim}
Plot[(G[phi]/Sin[baralpha] - 
    G[phi - (2 Pi + baralpha - Tan[baralpha] - Pi/2)] - 
    M[phi, 2 Pi + baralpha - Tan[baralpha], 2 Pi + baralpha] - 
    Cot[baralpha] G[phi - 2 Pi - baralpha]) Exp[-barc phi], {phi, 
  Pi/2 + Tan[baralpha] - baralpha, 4 Pi + 2 baralpha}, 
 AxesLabel -> Automatic, 
 PlotLegends -> 
  Placed[ToExpression[
    "\\tilde r_{\\mathtt a = 0}(\\phi) e^{- \\bar c \\phi}", TeXForm, 
    HoldForm], {Center, Top}]]
\end{verbatim}
\medskip
Segment case: plot of the function for the derivative w.r.t. $\mathtt a$, Fig. \ref{Fig:ckrho_delph_case_tg}
\medskip
\begin{verbatim}
Plot[g[tau] - Exp[-barc (2 Pi + baralpha)] g[tau - 1] + 
  2 Cos[baralpha] Exp[-2 barc (2 Pi + baralpha)] g[tau - 2], {tau, 0, 
  5}, AxesLabel -> Automatic, 
 PlotLegends -> 
  Placed[ToExpression[
    "g(\\tau) - 2\\cos(\\bar \\alpha) e^{-\\bar c(2\\pi + \\bar \
\\alpha)}g(\\tau-1)+e^{-2\\bar c(2\\pi+\\bar\\alpha)} g(\\tau-2)", 
    TeXForm, HoldForm], {Center, Top}]]
\end{verbatim}

\bigskip

\noindent{\bf Section \ref{Ss:length_spiral_est}}

\bigskip

\noindent Fastest saturated spiral with $\mathtt a = 0$, Equation \eqref{Equa:rhobase_01}
\medskip
\begin{verbatim}
rhobase[tau_] := 
 g[tau]/Sin[baralpha] - 
  Exp[-barc (2 Pi + baralpha - Pi/2 - Tan[baralpha])] g[
    tau - (2 Pi + baralpha - Pi/2 - Tan[baralpha])/(2 Pi + 
        baralpha)] - m[tau, 1 - Tan[baralpha]/(2 Pi + baralpha), 1] - 
  Cot[baralpha] Exp[-barc (2 Pi + baralpha)] g[tau - 1]
\end{verbatim}
\medskip
Length of the fastest saturated spiral, Equation \eqref{Equa:Lbase_01}
\medskip
\begin{verbatim}
Lbase[tau_] := 
 Piecewise[{{(2 Pi + baralpha) tau + Pi/2 - baralpha + 
     Tan[baralpha], -(Pi/2 - baralpha + Tan[baralpha])/(2 Pi + 
        baralpha) <= 
     tau < -(Pi/2 - baralpha)/(2 Pi + baralpha)}, {Tan[baralpha] + 
     Cot[baralpha - (2 Pi + baralpha) tau], -(Pi/2 - baralpha)/(2 Pi +
         baralpha) <= tau < 0}, {Tan[baralpha] + Cot[baralpha] + 
     Lg[tau]/Sin[baralpha] - 
     Lg[tau - (1 - (Pi/2 + Tan[baralpha])/(2 Pi + baralpha))] - 
     Lm[tau, 1 - Tan[baralpha]/(2 Pi + baralpha), 1] - 
     Cot[baralpha] Lg[tau - 1], tau >= 0}}]
\end{verbatim}
\medskip
Plot of the comparison of the ray and the length, Fig. \ref{Fig:rhobase_Lbase_comp}
\medskip
\begin{verbatim}
Plot[rhobase[tau] - 
  2.08 Exp[-barc (2 Pi + baralpha) tau ] Lbase[tau - 1], {tau, 0, 5}, 
 AxesLabel -> Automatic, 
 PlotLegends -> 
  Placed[ToExpression[
    "\\rho(\\tau) - 2.08 e^{-\\bar c(2\\pi + \\bar \\alpha) \\tau} L(\
\\tau-1)", TeXForm, HoldForm], {Center, Top}]]
\end{verbatim}
\begin{verbatim}
Plot[rhobase[tau] - 
  2.08 Exp[-barc (2 Pi + baralpha) tau ] Lbase[tau - 1], {tau, 3.5, 
  5}, AxesLabel -> Automatic, 
 PlotLegends -> 
  Placed[ToExpression[
    "\\rho(\\tau) - 2.08 e^{- \\bar c(2\\pi + \\bar \\alpha) \\tau} \
L(\\tau-1)", TeXForm, HoldForm], {Center, Top}]]
\end{verbatim}
\medskip
Estimate for the source of the length equation, Equation \eqref{Equa:SL_const}
\medskip
\begin{verbatim}
1/Sin[baralpha] - 1 - Tan[baralpha] - Cot[baralpha]

-2.7473
\end{verbatim}
\medskip
Asymptotic constant for the ratio $\frac{L(\tau-1)}{r_{\mathtt a=0}(\tau)}$, Equation \eqref{Equa:exact_Lbaserho}
\medskip
\begin{verbatim}
Exp[barc (2 Pi + baralpha)] barc Sin[baralpha]

2.08884
\end{verbatim}
\medskip

\bigskip

\noindent{\bf Section \ref{S:family}}

\bigskip

\noindent Arc (segment) coefficients: these are the coefficients for the general perturbation, Corollary \ref{Cor:arc_rho_eq}. The coefficients for the segment case are obtained from these by setting $\Delta \phi = 0$ as in Remark \ref{Rem:same_as_segment} and Corollary \ref{Cor:equation}
\smed
\begin{verbatim}
S0arc[theta_, Deltaphi_] := 
 Cot[baralpha] (1 - Cos[theta] + Sin[theta] Deltaphi)/Sin[baralpha]
\end{verbatim}
\begin{verbatim}
S1arc[theta_, 
  Deltaphi_] := -Sin[baralpha] Exp[
    barc (theta + Deltaphi)]/(2 Pi + baralpha)
\end{verbatim}
\begin{verbatim}
S2arc[theta_, Deltaphi_] := 
 Sin[baralpha] Cos[theta] Exp[barc Deltaphi]/(2 Pi + baralpha)
\end{verbatim}
\begin{verbatim}
S3arc[theta_] := - Sin[theta]
\end{verbatim}
\begin{verbatim}
S4arc[theta_, Deltaphi_] := 
 Cos[baralpha] (-2 Cot[
       baralpha] (1 - Cos[theta] + Sin[theta] Deltaphi) + 
     Sin[theta])/(2 Pi + baralpha)
\end{verbatim}
\begin{verbatim}
Darc[theta_, Deltaphi_] := 
 Sin[baralpha] (-Cot[baralpha] (1 - Cos[theta] + 
        Sin[theta] Deltaphi) + Sin[theta])/(2 Pi + baralpha)
\end{verbatim}
\begin{verbatim}
S5arc[theta_, 
  Deltaphi_] := -Sin[
    baralpha] (-Cot[baralpha] (1 - Cos[theta] + Sin[theta] Deltaphi) +
      Sin[theta])/(2 Pi + baralpha)^2
\end{verbatim}
\begin{verbatim}
tau1arc[theta_, Deltaphi_] := 1 - (theta + Deltaphi)/(2 Pi + baralpha)
\end{verbatim}
\begin{verbatim}
tau2arc[Deltaphi_] := 1 - Deltaphi/(2 Pi + baralpha)
\end{verbatim}
\medskip
This is the generic form of the solution, without the delta Dirac delta $\Diracd_{\tau=1}$, Corollaries \ref{Cor:equation} and \ref{Cor:arc_rho_eq}
\medskip
\begin{verbatim}
rhogeneric[tau_, theta_, Deltaphi_] := 
 S0arc[theta, Deltaphi] g[tau] + 
  S1arc[theta, Deltaphi] g[tau - tau1arc[theta, Deltaphi]] + 
  S2arc[theta, Deltaphi] g[tau - tau2arc[Deltaphi]] + 
  S3arc[theta] m[tau, tau2arc[Deltaphi], 1] + 
  S4arc[theta, Deltaphi] g[tau - 1] + 
  S5arc[theta, Deltaphi] g[tau - 2]
\end{verbatim}

\bigskip

\noindent{\bf Segment case, Section \ref{S:segment}}

\bigskip

\noindent A plot for the introduction, Fig. \ref{Fig:segment_intro_2}
\medskip
\begin{verbatim}
Plot[rhogeneric[tau, .3, 0], {tau, 0, 3}, AxesLabel -> Automatic, 
 PlotLegends -> 
  Placed[ToExpression["\\rho(\\tau)", TeXForm, HoldForm], {Center, 
    Top}]]
\end{verbatim}
\medskip
Computations for Proposition \ref{Prop:regions_pos_neg_segm} \\
First round segment: computation of the angle hat theta, Equation \eqref{Equa:hat_theta_value}
\medskip
\begin{verbatim}
hatheta = 
 FindRoot[
   Cot[baralpha] (1 - Cos[theta])/Sin[baralpha] Exp[
      Cot[baralpha] (2 Pi + baralpha - theta)] - 1, {theta, .5}][[1, 
  2]]
\end{verbatim}
\begin{verbatim}
0.506134
\end{verbatim}
\medskip
Plot of Equation \eqref{Equa:hat_theta_det_666}, Fig. \ref{Fig:hattheta_comput}
\medskip
\begin{verbatim}
Plot[Cot[baralpha] (1 - Cos[theta])/Sin[baralpha] - 
  Exp[-Cot[baralpha] (2 Pi + baralpha - theta)], {theta, 0, Pi}, 
 AxesLabel -> Automatic, 
 PlotLegends -> 
  Placed[ToExpression[
    "\\cot(\\bar \\alpha) \\frac{1 - \\cos(\\theta)}{\\sin(\\bar \
\\alpha)} - e^{-\\cot(\\bar \\alpha)(2\\pi + \\bar \\alpha - \
\\theta)}", TeXForm, HoldForm], {Center, Top}]]
\end{verbatim}
\medskip
Plot of the first round segment case, Fig. \ref{Fig:hattheta_comput}
\medskip
\begin{verbatim}
Plot3D[{0, 
  S0arc[theta, 0] Exp[tau] + 
   S1arc[theta, 0] Exp[tau - tau1arc[theta, 0]] HeavisideTheta[
     tau - tau1arc[theta, 0]]}, {tau, 0, 1}, {theta, 0, Pi}, 
 AxesLabel -> Automatic]
\end{verbatim}
\medskip
Second round segment: computation of the expansion for $\tau \in [1,1+\tau1]$ \eqref{Equa:first_round_exp_segm}
\medskip
\begin{verbatim}
Cot[baralpha] (Exp[1] - 1)/Sin[baralpha] - 
 1/((2 Pi + baralpha) Sin[baralpha]) - 
 2 Cot[baralpha] Cos[baralpha]/(2 Pi + baralpha)
\end{verbatim}
\begin{verbatim}
0.582367
\end{verbatim}
\medskip
Plot of the second round segment, Fig. \ref{Fig:segm_round_2}
\medskip
\begin{verbatim}
Plot3D[{0, 
  S0arc[theta, 0] g[tau] + 
   S1arc[theta, 0] g[tau - tau1arc[theta, 0]] + 
   S2arc[theta, 0] g[tau - tau2arc[0]] + 
   S4arc[theta, 0] g[tau - 1]}, {tau, 1, 2}, {theta, 0, Pi}, 
 AxesLabel -> Automatic]
\end{verbatim}
\begin{verbatim}
Plot3D[{0, rhogeneric[tau, theta, 0]/theta^2}, {tau, 1, 2}, {theta, 0,
   Pi}, AxesLabel -> Automatic]
\end{verbatim}
\medskip
Minimal value for the second round
\medskip
\begin{verbatim}
(S0arc[Pi, 0] g[1] + S1arc[Pi, 0] g[1 - tau1arc[Pi, 0]] + 
   S2arc[Pi, 0] + S4arc[Pi, 0])/(Pi)^2
\end{verbatim}
\begin{verbatim}
0.17959
\end{verbatim}
\medskip
Minimal value for $\tau = 1+\tau_1$
\medskip
\begin{verbatim}
(S0arc[Pi, 0] g[1 + tau1arc[Pi, 0]] + S1arc[Pi, 0] g[1] + 
   S2arc[Pi, 0] g[tau1arc[Pi, 0]] + 
   S4arc[Pi, 0] g[tau1arc[Pi, 0]])/(Pi)^2
\end{verbatim}
\begin{verbatim}
0.226635
\end{verbatim}
\medskip
Third round segment: plot of Fig. \ref{Fig:thordround_rhosegm}
\medskip
\begin{verbatim}
Plot3D[{0, rhogeneric[tau, theta, 0]}, {tau, 2, 3}, {theta, 0, Pi}, 
 AxesLabel -> Automatic]
\end{verbatim}
\begin{verbatim}
Plot3D[{0, rhogeneric[tau, theta, 0]/theta^2}, {tau, 2, 3}, {theta, 0,
   Pi}, AxesLabel -> Automatic]
\end{verbatim}
\medskip
Minimal value for the third round
\medskip
\begin{verbatim}
(S0arc[Pi, 0] g[2] + S1arc[Pi, 0] g[2 - tau1arc[Pi, 0]] + 
   S2arc[Pi, 0] g[1] + S4arc[Pi, 0] g[1] + S5arc[Pi, 0])/(Pi)^2
\end{verbatim}
\begin{verbatim}
0.262163
\end{verbatim}
\medskip
Increasing after the third round segment: plot of Fig. \ref{Fig:final_growth_rhosegm}
\medskip
\begin{verbatim}
Plot3D[{0, (rhogeneric[tau, theta, 0] - 
     rhogeneric[tau - 1, theta, 0])/theta^2}, {tau, 3, 4}, {theta, 0, 
  Pi}, AxesLabel -> Automatic]
\end{verbatim}
\medskip
Minimal value for the derivative in the third round
\medskip
\begin{verbatim}
(S0arc[Pi, 0] (g[3] - g[2]) + 
   S1arc[Pi, 0] (g[2 - tau1arc[Pi, 0]] - g[2 - tau1arc[Pi, 0]]) + 
   S2arc[Pi, 0] (g[2] - g[1]) + S4arc[Pi, 0] (g[2] - g[1]) + 
   S5arc[Pi, 0] (g[1] - 1))/(Pi)^2
\end{verbatim}
\begin{verbatim}
0.142304
\end{verbatim}
\medskip
Asymptotic behavior segment case: plot of Fig. \ref{Fig:final_growth_rhosegm}
\medskip
\begin{verbatim}
Plot[{2 (S0arc[theta, 0] + S1arc[theta, 0] + S2arc[theta, 0] + 
      S4arc[theta, 0] + S5arc[theta, 0])/theta^2}, {theta, 0, Pi}, 
 AxesLabel -> Automatic, 
 PlotLegends -> 
  Placed[ToExpression["2 \\frac{S_0 + S_1 + S_2 + S_3}{\\theta^2}", 
    TeXForm, HoldForm], {Right, Top}]]
\end{verbatim}
\medskip
Minimal value for the asymptotic value
\medskip
\begin{verbatim}
2 (S0arc[Pi, 0] + S1arc[Pi, 0] + S2arc[Pi, 0] + S4arc[Pi, 0] + 
    S5arc[Pi, 0])/(Pi)^2
\end{verbatim}
\begin{verbatim}
0.0816077
\end{verbatim}

\bigskip

\noindent{\bf Section \ref{Sss:additional_estim_opt_segm}}

\bigskip

\noindent Length of the perturbation, Equation \eqref{Equa:evolv_L_after_phi_2}
\medskip
\begin{verbatim}
Lrhosegment[tau_, theta_] := (1 - Cos[theta])/Sin[baralpha]^2 + 
  S0arc[theta, 0] Lg[tau] + 
  S1arc[theta, 0] Exp[barc (2 Pi + baralpha) tau1arc[theta, 0]] Lg[
    tau - tau1arc[theta, 0]] + 
  S2arc[theta, 0] Exp[barc (2 Pi + baralpha)] Lg[tau - 1] + 
  S4arc[theta, 0] Exp[barc (2 Pi + baralpha)] Lg[tau - 1] + 
  Darc[theta, 0] Exp[barc (2 Pi + baralpha)] HeavisideTheta[tau - 1]/
    Sin[baralpha] + 
  S5arc[theta, 0] Exp[barc (2 Pi + baralpha) 2] Lg[tau - 2]
\end{verbatim}
\medskip
Plot of Fig. \ref{Fig:Lpert_segm}
\medskip
\begin{verbatim}
Plot3D[{(rhogeneric[tau, theta, 0] - 
     2.08 Exp[-barc (2 Pi + baralpha) tau] Lrhosegment[tau - 1, 
       theta])/theta^2}, {tau, 1, 5}, {theta, 0.001, Pi}, 
 AxesLabel -> Automatic]
\end{verbatim}
\medskip
Here we verify some of the critical parts in order to avoid the Heaviside function
\medskip
\begin{verbatim}
Plot3D[{0, (rhogeneric[(1 - sigma) + sigma (1 + tau1arc[theta, 0]), 
      theta, 0] - 
     2.08 Exp[-barc (2 Pi + baralpha) ((1 - sigma) + 
          sigma (1 + tau1arc[theta, 0]))] Lrhosegment[(1 - sigma) + 
        sigma (1 + tau1arc[theta, 0]) - 1, theta])/theta^2}, {sigma, 
  0, 1}, {theta, 0, Pi}, AxesLabel -> Automatic]
\end{verbatim}
\begin{verbatim}
Plot3D[{0, (rhogeneric[(1 - sigma) (1 + tau1arc[theta, 0]) + sigma 2, 
      theta, 0] - 
     2.08 Exp[-barc (2 Pi + 
          baralpha) ((1 - sigma) (1 + tau1arc[theta, 0]) + 
          sigma 2)] Lrhosegment[(1 - sigma) (1 + tau1arc[theta, 0]) + 
        sigma 2 - 1, theta])/theta}, {sigma, 0, 1}, {theta, 0, Pi}, 
 AxesLabel -> Automatic]
\end{verbatim}
\medskip
This is to verify that the functional has positive derivative for $\tau \in [4,5]$, Fig. \ref{Fig:Lpert_segm}
\medskip
\begin{verbatim}
Plot3D[{0, ((rhogeneric[tau, theta, 0] - 
       2.08 Exp[-barc (2 Pi + baralpha) tau] Lrhosegment[tau - 1, 
         theta]) - (rhogeneric[tau - 1, theta, 0] - 
       2.08 Exp[-barc (2 Pi + baralpha)
          (tau - 1)] Lrhosegment[tau - 2, theta]))/theta^2}, {tau, 4, 
  5}, {theta, 0.0001, Pi}, AxesLabel -> Automatic]
\end{verbatim}
\medskip
Worst case
\medskip
\begin{verbatim}
Plot[{(rhogeneric[tau, Pi, 0] - 
     2.08 Exp[-barc (2 Pi + baralpha) tau] Lrhosegment[tau - 1, Pi])/
   Pi^2}, {tau, 1, 5}, AxesLabel -> Automatic, 
 PlotLegends -> 
  Placed[ToExpression[
    "\\frac{\\delta \\tilde r - 2.08 \\delta \\tilde L}{\\pi^2}", 
    TeXForm, HoldForm], {Right, Center}], PlotRange -> Full]
\end{verbatim}
\begin{verbatim}
(rhogeneric[2 - .001, Pi, 0] - 
   2.08 Exp[-barc (2 Pi + baralpha) (2 - .001)] Lrhosegment[
     2 - .001 - 1, Pi])/Pi^2
\end{verbatim}
\begin{verbatim}
{0.117205}
\end{verbatim}

\bigskip

\noindent{\bf Section \ref{S:arc}}

\bigskip

\noindent Study of the sign of the solution in the arc case, Proposition \ref{Prop:neg_region}. Function $h_1$ for first negativity region, Lemma \ref{Lem:study_g_1_arc}. Function $f_1$, Eq. \eqref{Equa:f_1_arc_first_neg}
\medskip
\begin{verbatim}
f1[theta_, Deltaphi_] := 
 S0arc[theta, Deltaphi] Exp[
    Cot[baralpha] (2 Pi + baralpha - Deltaphi - theta)] - 1
\end{verbatim}
\medskip
Derivative of $f_1$, Eq. \eqref{Equa:partialf_1_arc_first_neg}
\medskip
\begin{verbatim}
partialf1[theta_, 
  Deltaphi_] := -S0arc[theta, Deltaphi] Cot[baralpha] Exp[
    Cot[baralpha] (2 Pi + baralpha - Deltaphi - theta)] + 
  Cot[baralpha] (Sin[theta] + Deltaphi Cos[theta]) Exp[
     Cot[baralpha] (2 Pi + baralpha - Deltaphi - theta)]/Sin[baralpha]
\end{verbatim}
\medskip
Numerical plot of Fig. \ref{Fig:f_1_arc_first_neg}
\medskip
\begin{verbatim}
Plot3D[{0, f1[theta, Deltaphi]}, {theta, Pi/4, Pi}, {Deltaphi, 0, 
  Tan[baralpha]}, AxesLabel -> Automatic]
\end{verbatim}
\medskip
Minimal value
\smed
\begin{verbatim}
f1[Pi/4, 0]
\end{verbatim}
\begin{verbatim}
1.08109
\end{verbatim}
\begin{verbatim}
f1[Pi, Tan[baralpha]]
\end{verbatim}
\begin{verbatim}
0.971097
\end{verbatim}
\smed
Numerical plot of Fig. \ref{Fig:partialf_1_arc_first_neg}
\smed
\begin{verbatim}
Plot3D[{0, partialf1[theta, Deltaphi]/(theta + Deltaphi)}, {theta, 0, 
  Pi/4}, {Deltaphi, 0, Tan[baralpha]}]
\end{verbatim}
\smed
Minimum value
\smed
\begin{verbatim}
partialf1[Pi/4, Tan[baralpha]]/(Pi/4 + Tan[baralpha])
\end{verbatim}
\begin{verbatim}
1.29579
\end{verbatim}
Numerical plot of $h_1(\Delta \phi)$, Fig. \ref{Fig:h1Deltaphi_plot}
\smed
\begin{verbatim}
ContourPlot[
 f1[theta, Deltaphi] == 0, {Deltaphi, 0, Tan[baralpha]}, {theta, 
  0, .6}, Axes -> True, AxesLabel -> Automatic, Frame -> None, 
 PlotLegends -> 
  Placed[ToExpression["h_1(\\Delta \\phi)", TeXForm, 
    HoldForm], {Center, Top}]]
\end{verbatim}
\begin{verbatim}
ContourPlot[
 f1[theta - Deltaphi, Deltaphi] == 0, {Deltaphi, 0, 
  Tan[baralpha]}, {theta, 0, Tan[baralpha] + .1}, Axes -> True, 
 AxesLabel -> Automatic, Frame -> None, 
 PlotLegends -> 
  Placed[ToExpression["h_1(\\Delta \\phi) + \\Delta \\phi", TeXForm, 
    HoldForm], {Center, Top}]]
\end{verbatim}
\smed
Maximal value of $\omega = \Delta \phi + h_1(\Delta \phi)$
\smed
\begin{verbatim}
FindRoot[f1[theta, Tan[baralpha]], {theta, .15}]
\end{verbatim}
\begin{verbatim}
{theta -> 0.117533}
\end{verbatim}
\begin{verbatim}
Tan[baralpha] + 0.11753300046278962`
\end{verbatim}
\begin{verbatim}
2.53316
\end{verbatim}
\smed
This the implicit derivative of $h_1$, Eq. \eqref{Equa:dh1_dDeltaphi} and Fig. \ref{Fig:dh1_dDeltaphi}
\smed
\begin{verbatim}
dh1[theta_, Deltaphi_] := -1 + 
  Sin[baralpha] Cos[
    theta] Deltaphi/(Cos[baralpha - theta] - Cos[baralpha] + 
      Sin[baralpha - theta] Deltaphi)
\end{verbatim}
\smed
Maximal positive value
\smed
\begin{verbatim}
dh1[hatheta, Tan[baralpha]]
\end{verbatim}
\begin{verbatim}
0.0251844
\end{verbatim}
\smed
This the implicit second derivative of $h_1$, Eq. \eqref{Equa:d2h1_dDeltaphi} and Fig. \ref{Fig:d2h1_dDeltaphi}
\smed
\begin{verbatim}
d2h1[theta_, Deltaphi_] := 
 Sin[baralpha] (Cos[baralpha - theta] - Cos[baralpha]) Cos[
     theta]/(Cos[baralpha - theta] - Cos[baralpha] + 
       Sin[baralpha - theta] Deltaphi)^2 + 
  Sin[baralpha] Deltaphi (-Sin[baralpha] - Cos[baralpha] Sin[theta] + 
      Cos[baralpha] Deltaphi)/(Cos[baralpha - theta] - Cos[baralpha] +
        Sin[baralpha - theta] Deltaphi)^2 dh1[theta, Deltaphi]
\end{verbatim}
\begin{verbatim}
Plot3D[{0, d2h1[theta, Deltaphi]}, {theta, 0.1, hatheta}, {Deltaphi, 
  0, Tan[baralpha]}, PlotRange -> {-.1, 4}]
\end{verbatim}
\smed
Minimal value
\smed
\begin{verbatim}
Plot[{0, d2h1[theta, Tan[baralpha]]}, {theta, 0.1, hatheta}]
\end{verbatim}
\begin{verbatim}
d2h1[0.1, Tan[baralpha]]
\end{verbatim}
\begin{verbatim}
0.0168525
\end{verbatim}
Function $h_2$ for second negativity region. Second function $f_2$ for negative region, Eq. \eqref{Equa:f2_arc} and Fig. \ref{Fig:f_2_arc}
\smed
\begin{verbatim}
f2[theta_, Deltaphi_] := 
 f1[theta, Deltaphi] Exp[Cot[baralpha] theta] + Cos[theta]
\end{verbatim}
\begin{verbatim}
Plot3D[{0, f2[theta, Deltaphi]}, {theta, 0, .1}, {Deltaphi, 0, .05}, 
 AxesLabel -> Automatic]
\end{verbatim}
\smed
Function $f_3$ for second negative region, Eq. \eqref{Equa:f3_arc} and Fig. \ref{Fig:f_3_arc}
\smed
\begin{verbatim}
f3[theta_, Deltaphi_] := 
 Cot[baralpha] (1 - Cos[theta] + Sin[theta] Deltaphi) Exp[
     Cot[baralpha] (2 Pi + baralpha - Deltaphi)]/Sin[baralpha] - 
  Exp[Cot[baralpha] theta] + Cos[theta] - Sin[theta] Tan[baralpha]
\end{verbatim}
\begin{verbatim}
Plot3D[{0, f3[theta, Deltaphi]}, {theta, 0, .7}, {Deltaphi, 0, .4}, 
 AxesLabel -> Automatic]
\end{verbatim}
\smed
Implicit definition of the second negativity region at t = 1-, function $f_4$ of Eq. \eqref{Equa:f4_arc} and Fig. \ref{Fig:f_4_arc} and \ref{Fig:f234}
\smed
\begin{verbatim}
f4[theta_, Deltaphi_] := 
 Cot[baralpha] (1 - Cos[theta] + Sin[theta] Deltaphi) Exp[
     Cot[baralpha] (2 Pi + baralpha)]/Sin[baralpha] - 
  Exp[Cot[baralpha] (theta + Deltaphi)] + 
  Cos[theta] Exp[Cot[baralpha] Deltaphi] - 
  Sin[theta] Tan[baralpha] (Exp[Cot[baralpha] Deltaphi] - 1)
\end{verbatim}
\begin{verbatim}
Plot3D[{0, f4[theta, Deltaphi]}, {theta, 0, .1}, {Deltaphi, 0, .05}, 
 AxesLabel -> Automatic]
\end{verbatim}
\begin{verbatim}
ContourPlot[{f2[theta/3, Deltaphi/3] == 0, f3[theta, Deltaphi] == 0, 
  f4[theta/3, Deltaphi/3] == 0}, {Deltaphi, 0, .4}, {theta, 0, .7}, 
 AxesLabel -> Automatic, PlotLegends -> {"f2=0", "f3=0", "f4=0"}, 
 Frame -> None, Axes -> True]
\end{verbatim}
\smed
Computations for the second round arc case. Interval $[1,1+tau_1]$: numerical plot of $\rho/(\theta(\theta + \Delta \phi))$, Fig. \ref{Fig:rho_first_1_arc}
\smed
\begin{verbatim}
Plot3D[{0, 
  Evaluate@
   Table[rhogeneric[1.001 + tau tau1arc[theta, Deltaphi], theta, 
      Deltaphi]/(theta (theta + Deltaphi)), {tau, 0, 1, .2}]}, {theta,
   0, Pi}, {Deltaphi, 0, Tan[baralpha]}, PlotLegends -> Automatic, 
 AxesLabel -> Automatic]
\end{verbatim}
\smed
The function $\rho/(\theta(\theta + \Delta \phi))$ for $\theta = \pi/2$, minimal value
\smed
\begin{verbatim}
Plot3D[rhogeneric[tau, Pi, Deltaphi]/(Pi (Pi + Deltaphi)), {Deltaphi, 
  0, Tan[baralpha]}, {tau, 1, 1 + tau1arc[Pi, Deltaphi]}, 
 AxesLabel -> Automatic]
\end{verbatim}
\smed
The function $\rho/(\theta(\theta + \Delta \phi))$ for $\theta = \pi/2$, $\Delta \phi = \tan(\bar \alpha)$, minimal value Fig. \ref{Fig:rho_first_1_arc_worst}
\smed
\begin{verbatim}
Plot[rhogeneric[tau, Pi, 
   Tan[baralpha]]/(Pi (Pi + Tan[baralpha])), {tau, 1, 
  1 + tau1arc[Pi, Tan[baralpha]]}, AxesLabel -> Automatic, 
 PlotLegends -> 
  Placed[ToExpression[
    "\\frac{\\rho(\\tau,\\pi,\\tan(\\bar \
\\alpha))}{\\pi(\\pi+\\tan(\\bar \\alpha))}", TeXForm, 
    HoldForm], {Center, Top}]]
\end{verbatim}
\smed
Minimal value
\smed
\begin{verbatim}
rhogeneric[1 + tau1arc[Pi, Tan[baralpha] - .001], Pi, 
  Tan[baralpha]]/(Pi (Pi + Tan[baralpha]))
\end{verbatim}
\begin{verbatim}
{0.0406532}
\end{verbatim}
\smed
Interval $[1 + \tau_1,1 + \tau_2]$: here the singularity is due to the fact that at the previous round it converges to $-1$, and at the next round is like $\theta$
\smed
\begin{verbatim}
Plot3D[{0, 
  Evaluate@
   Table[rhogeneric[(1 - tau) (1 + tau1arc[theta, Deltaphi]) + 
       tau (1 + tau2arc[Deltaphi]), theta, 
      Deltaphi]/(theta (theta + Deltaphi)), {tau, 0, 1, .2}]}, {theta,
   0, Pi}, {Deltaphi, 0, Tan[baralpha]}, PlotLegends -> Automatic, 
 AxesLabel -> Automatic]
\end{verbatim}
\smed
This is for $\theta = \pi$
\smed
\begin{verbatim}
Plot3D[rhogeneric[(1 - tau) (1 + tau1arc[Pi, Deltaphi]) + 
    tau (1 + tau2arc[Deltaphi]), Pi, 
   Deltaphi]/(Pi (Pi + Deltaphi)), {tau, 0, 1}, {Deltaphi, 0, 
  Tan[baralpha]}, AxesLabel -> Automatic]
\end{verbatim}
\smed
This is for $\theta = \pi$, $\Delta \phi = \tan(\bar \alpha)$, Fig. \ref{Fig:rho_first_2_arc_worst}
\smed
\begin{verbatim}
Plot[rhogeneric[(1 - tau) (1 + tau1arc[Pi, Tan[baralpha]]) + 
    tau (1 + tau2arc[Tan[baralpha]]), Pi, 
   Tan[baralpha]]/(Pi (Pi + Tan[baralpha])), {tau, 0, 1}, 
 AxesLabel -> Automatic, 
 PlotLegends -> 
  Placed[ToExpression[
    "\\frac{\\rho(\\tau,\\pi,\\tan(\\bar \
\\alpha))}{\\pi(\\pi+\\tan(\\alpha))}", TeXForm, HoldForm], {Center, 
    Right}]]
\end{verbatim}
\smed
Interval $[1+\tau_2,2]$, Fig. \ref{Fig:rho_first_3_arc}, analysis of the minimal values and Fig. \ref{Fig:rho_first_3_arc_worst}
\smed
\begin{verbatim}
Plot3D[{0, 
  Evaluate@
   Table[rhogeneric[(1 - tau) (1 + tau2arc[Deltaphi]) + tau (2), 
      theta, Deltaphi]/(theta (theta + Deltaphi)), {tau, 0, 
     1, .2}]}, {theta, 0, Pi}, {Deltaphi, 0, Tan[baralpha]}, 
 PlotLegends -> Automatic, AxesLabel -> Automatic]
\end{verbatim}
\begin{verbatim}
Plot3D[{rhogeneric[(1 - tau) (1 + tau2arc[Deltaphi]) + tau (2), Pi, 
    Deltaphi]/(Pi (Pi + Deltaphi))}, {tau, 0, 1}, {Deltaphi, 0, 
  Tan[baralpha]}, AxesLabel -> Automatic]
\end{verbatim}
\begin{verbatim}
Plot[rhogeneric[tau, Pi, 
   Tan[baralpha]]/(Pi (Pi + Tan[baralpha])), {tau, 
  1 + tau2arc[Tan[baralpha]], 2}, AxesLabel -> Automatic, 
 PlotLegends -> 
  Placed[ToExpression[
    "\\frac{\\rho(\\tau,\\pi,\\tan(\\bar \
\\alpha))}{\\pi(\\pi+\\tan(\\bar\\alpha))}", TeXForm, 
    HoldForm], {Center, Center}]]
\end{verbatim}
\smed
Minimal value
\smed
\begin{verbatim}
rhogeneric[2 - .001, Pi, Tan[baralpha]]/(Pi (Pi + Tan[baralpha]))
\end{verbatim}
\begin{verbatim}
{0.0406695}
\end{verbatim}
\smed
Computations for the third round arc case. Interval $[2,2+\tau_1]$: Fig. \ref{Fig:rho_third_1_arc}, plot of the minimal values and Fig. \ref{Fig:rho_third_1_arc_worst}
\smed
\begin{verbatim}
Plot3D[{0, 
  Evaluate@
   Table[rhogeneric[(1 - tau) 2.01 + 
       tau (2 + tau1arc[theta, Deltaphi]), theta, 
      Deltaphi]/(theta (theta + Deltaphi)), {tau, 0, 1, .2}]}, {theta,
   0, Pi}, {Deltaphi, 0, Tan[baralpha]}, AxesLabel -> Automatic]
\end{verbatim}
\begin{verbatim}
Plot3D[{0, 
  rhogeneric[(1 - tau) 2 + tau (2 + tau1arc[Pi, Deltaphi]), Pi, 
    Deltaphi]/(Pi (Pi + Deltaphi))}, {tau, 0, 1}, {Deltaphi, 0, 
  Tan[baralpha]}, AxesLabel -> Automatic]
\end{verbatim}
\begin{verbatim}
Plot[rhogeneric[tau, Pi, 
   Tan[baralpha]]/(Pi (Pi + Tan[baralpha])), {tau, 2, 
  2 + tau1arc[Pi, Tan[baralpha]]}, AxesLabel -> Automatic, 
 PlotLegends -> 
  Placed[ToExpression[
    "\\frac{\\rho(\\tau,\\pi,\\tan(\\bar \
\\alpha))}{\\pi(\\pi+\\tan(\\bar\\alpha))}", TeXForm, 
    HoldForm], {Center, Top}]]
\end{verbatim}
\smed
Interval $[2+\tau_1,2+\tau_2]$: Fig. \ref{Fig:rho_third_2_arc} and \ref{Fig:rho_first_2_arc_worst}
\smed
\begin{verbatim}
Plot3D[{0, 
  Evaluate@
   Table[rhogeneric[(1 - tau) (2 + tau1arc[theta, Deltaphi]) + 
       tau (2 + tau2arc[Deltaphi]), theta, 
      Deltaphi]/(theta (theta + Deltaphi)), {tau, 0, 1, .2}]}, {theta,
   0, Pi}, {Deltaphi, 0, Tan[baralpha]}, PlotLegends -> Automatic, 
 AxesLabel -> Automatic]
\end{verbatim}
\begin{verbatim}
Plot[rhogeneric[tau, Pi, 
   Tan[baralpha]]/(Pi (Pi + Tan[baralpha])), {tau, 
  2 + tau1arc[Pi, Tan[baralpha]], 2 + tau2arc[Tan[baralpha]]}, 
 AxesLabel -> Automatic, 
 PlotLegends -> 
  Placed[ToExpression[
    "\\frac{\\rho(\\tau,\\pi,\\tan(\\bar \
\\alpha))}{\\pi(\\pi+\\tan(\\bar\\alpha))}", TeXForm, 
    HoldForm], {Right, Top}]]
\end{verbatim}
\begin{verbatim}
rhogeneric[2 + tau2arc[Tan[baralpha]], Pi, 
  Tan[baralpha]]/(Pi (Pi + Tan[baralpha]))
\end{verbatim}
\begin{verbatim}
{0.0407434}
\end{verbatim}
\smed
Interval $[2+\tau_2,3]$: Fig. \ref{Fig:rho_third_3_arc} and \ref{Fig:rho_third_3_arc_worst}
\smed
\begin{verbatim}
Plot3D[
 {0, Evaluate@
   Table[rhogeneric[(1 - tau) (2 + tau2arc[Deltaphi]) + 
       tau (1 + 1.99), theta, 
      Deltaphi]/(theta (theta + Deltaphi)), {tau, 0, 1, .2}]}, {theta,
   0, Pi}, {Deltaphi, 0, Tan[baralpha]}, PlotLegends -> Automatic, 
 AxesLabel -> Automatic]
\end{verbatim}
\begin{verbatim}
Plot[rhogeneric[tau, Pi, 
   Tan[baralpha]]/(Pi (Pi + Tan[baralpha])), {tau, 
  2 + tau2arc[Tan[baralpha]], 3}, AxesLabel -> Automatic, 
 PlotLegends -> 
  Placed[ToExpression[
    "\\frac{\\rho(\\tau,\\pi,\\tan(\\bar \
\\alpha))}{\\pi(\\pi+\\tan(\\bar\\alpha))}", TeXForm, 
    HoldForm], {Right, Top}]]
\end{verbatim}
\begin{verbatim}
rhogeneric[3, Pi, Tan[baralpha]]/(Pi (Pi + Tan[baralpha]))
\end{verbatim}
\begin{verbatim}
{0.0405639}
\end{verbatim}
\smed
After third round arc case we need to find the region where the function is strictly increasing: Fig. \ref{Fig:rho_fourth_1_arc}, \ref{Fig:rho_fourth_2_arc}, \ref{Fig:rho_fourth_3_arc}
\smed
\begin{verbatim}
Plot3D[{0, 
  Evaluate@
   Table[(rhogeneric[(1 - tau) (3.01) + tau (3.99), theta, Deltaphi] -
        rhogeneric[(1 - tau) (2.01) + tau (2.99), theta, 
        Deltaphi])/(theta (theta + Deltaphi)), {tau, 0, 
     1, .2}]}, {theta, 0, Pi}, {Deltaphi, 0, Tan[baralpha]}, 
 AxesLabel -> Automatic]
\end{verbatim}
\begin{verbatim}
Plot3D[{0, 
  Evaluate@
   Table[(rhogeneric[(1 - tau) (3.01) + tau (3.99), theta, Deltaphi] -
        rhogeneric[(1 - tau) (2.01) + tau (2.99), theta, 
        Deltaphi])/(theta (theta + Deltaphi)), {tau, 0, 
     1, .2}]}, {theta, Pi - .04, Pi}, {Deltaphi, Tan[baralpha] - .04, 
  Tan[baralpha]}, AxesLabel -> Automatic]
\end{verbatim}
\begin{verbatim}
Plot3D[{0, (rhogeneric[(3.01), theta, Deltaphi] - 
     rhogeneric[(2.01), theta, 
      Deltaphi])/(theta (theta + Deltaphi))}, {theta, Pi - .04, 
  Pi}, {Deltaphi, Tan[baralpha] - .04, Tan[baralpha]}, 
 AxesLabel -> Automatic]
\end{verbatim}
\smed
Asymptotic behavior in the arc case: Fig. \ref{Fig:asympt_arc}
\smed
\begin{verbatim}
Plot3D[{0, (S0arc[theta, Deltaphi] + S1arc[theta, Deltaphi] + 
     S2arc[theta, Deltaphi] + 
     S3arc[theta] (Exp[-barc (2 Pi + baralpha) tau2arc[Deltaphi]] - 
         Exp[-barc (2 Pi + baralpha)])/barc + S4arc[theta, Deltaphi] +
      S5arc[theta, Deltaphi])/(theta (theta + Deltaphi))}, {theta, 0, 
  Pi}, {Deltaphi, 0, Tan[baralpha]}]
\end{verbatim}
\begin{verbatim}
Plot3D[{0, 
  S0arc[theta, Deltaphi] + S1arc[theta, Deltaphi] + 
   S2arc[theta, Deltaphi] + 
   S3arc[theta] (Exp[-barc (2 Pi + baralpha) tau2arc[Deltaphi]] - 
       Exp[-barc (2 Pi + baralpha)])/barc + S4arc[theta, Deltaphi] + 
   S5arc[theta, Deltaphi]}, {theta, Pi - .01, Pi}, {Deltaphi, 
  Tan[baralpha] - .03, Tan[baralpha]}]
\end{verbatim}

\sbig

\noindent{\bf Section \ref{Sss:additional_estim_opt_arc}}

\sbig

\noindent This is the variation of length in the arc case \eqref{Equa:lenght_arc}
\smed
\begin{verbatim}
Lengtharc[tau_, theta_, 
  Deltaphi_] := (1 - Cos[theta] + Sin[theta] Deltaphi)/
   Sin[baralpha]^2 + S0arc[theta, Deltaphi] Lg[tau] + 
  S1arc[theta, Deltaphi] Exp[
    barc (2 Pi + baralpha) tau1arc[theta, Deltaphi]] Lg[
    tau - tau1arc[theta, Deltaphi]] + 
  S2arc[theta, Deltaphi] Exp[
    barc (2 Pi + baralpha) tau2arc[Deltaphi]] Lg[
    tau - tau2arc[Deltaphi]] + 
  S3arc[theta] Lm[tau, tau2arc[Deltaphi], 1] + 
  S4arc[theta, Deltaphi] Exp[barc (2 Pi + baralpha)] Lg[tau - 1] + 
  Darc[theta, Deltaphi] Exp[
    barc (2 Pi + baralpha)] HeavisideTheta[tau - 1] /Sin[baralpha] + 
  S5arc[theta, Deltaphi] Exp[barc (2 Pi + baralpha) 2] Lg[tau - 2]
\end{verbatim}
\smed
First plot of Fig. \ref{Fig:Lpert_arc}
\smed
\begin{verbatim}
Plot3D[{Evaluate@
   Table[(rhogeneric[tau, theta, Deltaphi] - 
       2.08 Exp[-barc (2 Pi + baralpha) tau] Lengtharc[tau - 1, theta,
          Deltaphi])/(theta (theta + Deltaphi)), {Deltaphi, 0, 
     Tan[baralpha], .5}]}, {theta, 0.001, Pi/2}, {tau, 1, 5}, 
 AxesLabel -> Automatic, PlotRange -> Full]
\end{verbatim}
\smed
A more detailed verification of the positivity: the worst case is for $\Delta \phi = 0$
\smed
\begin{verbatim}
Plot3D[{0, 
  Evaluate@
   Table[(rhogeneric[(1 - sigma) + 
         sigma (1 + tau1arc[theta, Deltaphi]), theta, Deltaphi] - 
       2.08 Exp[-barc (2 Pi + baralpha) ((1 - sigma) + 
            sigma (1 + tau1arc[theta, Deltaphi]))] Lengtharc[(1 - 
            sigma) + sigma (1 + tau1arc[theta, Deltaphi]) - 1, theta, 
         Deltaphi])/(theta (theta + Deltaphi)), {Deltaphi, 0, 
     Tan[baralpha], .5}]}, {theta, 0, Pi/2}, {sigma, 0, 1}, 
 AxesLabel -> Automatic]
\end{verbatim}
\begin{verbatim}
Plot3D[{0, 
  Evaluate@
   Table[(rhogeneric[(1 - sigma) (1 + tau1arc[theta, Deltaphi]) + 
         sigma (1 + tau2arc[Deltaphi]), theta, Deltaphi] - 
       2.08 Exp[-barc (2 Pi + 
            baralpha) ((1 - sigma) (1 + tau1arc[theta, Deltaphi]) + 
            sigma (1 + tau2arc[Deltaphi]))] Lengtharc[(1 - sigma) (1 +
              tau1arc[theta, Deltaphi]) + 
          sigma (1 + tau2arc[Deltaphi]) - 1, theta, 
         Deltaphi])/(theta), {Deltaphi, 0, 
     Tan[baralpha], .5}]}, {theta, 0, Pi/2}, {sigma, 0, 1}, 
 AxesLabel -> Automatic]
\end{verbatim}
\begin{verbatim}
Plot3D[{0, 
  Evaluate@
   Table[(rhogeneric[(1 - sigma) (1 + tau2arc[Deltaphi]) + 2 sigma, 
        theta, Deltaphi] - 
       2.08 Exp[-barc (2 Pi + 
            baralpha) ((1 - sigma) (1 + tau2arc[Deltaphi]) + 
            2 sigma)] Lengtharc[(1 - sigma) (1 + tau2arc[Deltaphi]) + 
          2 sigma - 1, theta, Deltaphi])/(theta (theta + 
         Deltaphi)), {Deltaphi, 0.001, Tan[baralpha], .5}]}, {theta, 
  0, Pi/2}, {sigma, 0, 1}, AxesLabel -> Automatic]
\end{verbatim}
\smed
Worst case and second plot of Fig. \ref{Fig:Lpert_arc}
\smed
\begin{verbatim}
Plot3D[{0, (rhogeneric[tau, Pi/2, Deltaphi] - 
     2.08 Exp[-barc (2 Pi + baralpha) tau] Lengtharc[tau - 1, Pi/2, 
       Deltaphi])/(Pi/2 (Pi/2 + Deltaphi))}, {Deltaphi, 0, 
  Tan[baralpha]}, {tau, 1, 5}, AxesLabel -> Automatic]
\end{verbatim}
\begin{verbatim}
Plot[{0, (rhogeneric[tau, Pi/2, 0] - 
     2.08 Exp[-barc (2 Pi + baralpha) tau] Lengtharc[tau - 1, Pi/2, 
       0])/(Pi/2 (Pi/2 + 0))}, {tau, 1, 5}, AxesLabel -> Automatic, 
 PlotLegends -> 
  Placed[ToExpression[
    "\\rho(\\tau,\\pi/2,0) - 2.08 e^{-\\bar c (2\\pi + \\bar \\alpha) \
\\tau} \\delta \\tilde L(\\tau-1)", TeXForm, HoldForm], {Right, 
    Center}]]
\end{verbatim}
\begin{verbatim}
Plot[{0, (rhogeneric[tau, Pi/2, Tan[baralpha]] - 
     2.08 Exp[-barc (2 Pi + baralpha) tau] Lengtharc[tau - 1, Pi/2, 
       Tan[baralpha]])/(Pi/2 (Pi/2 + Tan[baralpha]))}, {tau, 1, 5}, 
 AxesLabel -> Automatic]
\end{verbatim}
\smed
Increasing for $\tau \in [4,5]$, third plot of Fig. \ref{Fig:Lpert_arc}
\smed
\begin{verbatim}
Plot3D[{Evaluate@
   Table[(rhogeneric[tau, theta, Deltaphi] - 
       2.08 Exp[-barc (2 Pi + baralpha) tau] Lengtharc[tau - 1, theta,
          Deltaphi] - (rhogeneric[tau - 1, theta, Deltaphi] - 
         2.08 Exp[-barc (2 Pi + baralpha) (tau - 1)] Lengtharc[
           tau - 2, theta, Deltaphi]))/(theta (theta + 
         Deltaphi)), {Deltaphi, 0, Tan[baralpha], .5}]}, {theta, 
  0.001, Pi/2}, {tau, 4, 5}, AxesLabel -> Automatic]
\end{verbatim}
\smed
Worst case for increasing estimates
\smed
\begin{verbatim}
Plot3D[{0, (rhogeneric[tau, Pi/2, Deltaphi] - 
     2.08 Exp[-barc (2 Pi + baralpha) tau] Lengtharc[tau - 1, Pi/2, 
       Deltaphi] - (rhogeneric[tau - 1, Pi/2, Deltaphi] - 
       2.08 Exp[-barc (2 Pi + baralpha) (tau - 1)] Lengtharc[tau - 2, 
         Pi/2, Deltaphi]))/(Pi/2 (Pi/2 + Deltaphi))}, {Deltaphi, 0, 
  Tan[baralpha]}, {tau, 4, 5}, AxesLabel -> Automatic]
\end{verbatim}
\begin{verbatim}
Plot[{0, (rhogeneric[tau, Pi/2, 0] - 
     2.08 Exp[-barc (2 Pi + baralpha) tau] Lengtharc[tau - 1, Pi/2, 
       0] - (rhogeneric[tau - 1, Pi/2, 0] - 
       2.08 Exp[-barc (2 Pi + baralpha) (tau - 1)] Lengtharc[tau - 2, 
         Pi/2, 0]))/(Pi/2 (Pi/2 + 0))}, {tau, 4, 5}, 
 AxesLabel -> Automatic]
\end{verbatim}
\begin{verbatim}
Plot[{0, (rhogeneric[tau, Pi/2, Tan[baralpha]] - 
     2.08 Exp[-barc (2 Pi + baralpha) tau] Lengtharc[tau - 1, Pi/2, 
       Tan[baralpha]] - (rhogeneric[tau - 1, Pi/2, Tan[baralpha]] - 
       2.08 Exp[-barc (2 Pi + baralpha) (tau - 1)] Lengtharc[tau - 2, 
         Pi/2, Tan[baralpha]]))/(Pi/2 (Pi/2 + Tan[baralpha]))}, {tau, 
  4, 5}, AxesLabel -> Automatic]
\end{verbatim}
\begin{verbatim}
(rhogeneric[4.0001, Pi/2, Tan[baralpha]] - 
   2.08 Exp[-barc (2 Pi + baralpha) 4.0001] Lengtharc[4.0001 - 1, 
     Pi/2, Tan[baralpha]] - (rhogeneric[4.0001 - 1, Pi/2, 
      Tan[baralpha]] - 
     2.08 Exp[-barc (2 Pi + baralpha) (4.0001 - 1)] Lengtharc[
       4.0001 - 2, Pi/2, Tan[baralpha]]))/(Pi/
    2 (Pi/2 + Tan[baralpha]))
\end{verbatim}
\begin{verbatim}
{0.000786574}
\end{verbatim}

\sbig

\noindent{\bf Section \ref{S:optimal_solut}}

\sbig

\noindent Remark \ref{Rem:trans_segm_arc}
\smed
\begin{verbatim}
ArcSin[Tan[baralpha]]
\end{verbatim}
\begin{verbatim}
0.4226813
\end{verbatim}
\smed
Formulas for tent, Lemma \ref{Lem:relation_admissibility_tent}: these are the formulas for computing the final tent: here the entries are with the angle computed for $DeltaQ$
\smed
\begin{verbatim}
ell0tent[hatr0tent_, hatLtent_, 
  DeltaQtent_] := ((1 - Cos[hatheta]) hatr0tent + DeltaQtent - 
    hatLtent)/(Cos[baralpha - hatheta] - Cos[baralpha])
\end{verbatim}
\begin{verbatim}
hatr2tent[hatr0tent_, hatell0tent_, 
  DeltaQtentPi2_] := (hatell0tent Sin[hatheta] + 
    hatr0tent Sin[baralpha + hatheta] - DeltaQtentPi2)/Sin[baralpha]
\end{verbatim}
\begin{verbatim}
ell1tent[r2tent_, hatLtent_, hatr0tent_, 
  hatell0tent_] := (r2tent + hatLtent - hatr0tent - 
    Cos[baralpha] hatell0tent)/Cos[baralpha]
\end{verbatim}
\begin{verbatim}
hatrbarphi[hatr2tentv_, r2v_, rbarphiv_, 
  hatell0tent_] := (hatr2tentv - r2v) Exp[
    Cot[baralpha] (2 Pi + baralpha - hatheta)] + rbarphiv - 
  hatell0tent
\end{verbatim}
\smed
Time shift for tent, Eq. \eqref{Equa:hattau_def}
\smed
\begin{verbatim}
hattau := hatheta/(2 Pi + baralpha)
\end{verbatim}

\sbig

\noindent {\bf Admissibility of tent, Section \ref{Sss:having_tent_check}}: admissibility constant $K_\mathrm{tent}$, Lemma \ref{Lem:when_tent}

\sbig

\begin{verbatim}
Kadmitent = 
 Sin[baralpha] (Sin[baralpha + hatheta] - 
     Sin[baralpha])/(Cos[hatheta] (1 - Sin[baralpha]))
\end{verbatim}
\begin{verbatim}
0.966795
\end{verbatim}
\smed
Plot of the function \eqref{Equa:Loverr0_tent}, Fig. \ref{Fig:Loverr0_tent}
\smed
\begin{verbatim}
Plot[{Kadmitent, 
  Sin[baralpha] (Cos[hatphi] - 
      Cos[hatheta - hatphi])/(Sin[
       baralpha - hatphi] (1 - Cos[hatheta - hatphi]))}, {hatphi, 
  0, .25}, AxesLabel -> Automatic, 
 PlotLegends -> 
  Placed[ToExpression[
    "\\frac{\\sin(\\bar \\alpha) (\\cos(\\hat \\phi) - \\cos(\\hat \
\\theta - \\hat \\phi))}{\\sin(\\bar \\alpha - \\hat \\phi) (1 - \
\\cos(\\hat \\theta - \\hat \\phi))}", TeXForm, HoldForm], {Left, 
    Center}]]
\end{verbatim}
\smed
Computations for \eqref{Equa:hattau3_cond}
\smed
\begin{verbatim}
baralpha + hatheta - Pi/2 + 
 Tan[baralpha] Log[
   2.08/(Exp[Cot[baralpha] (baralpha + hatheta - Pi/2)]/Kadmitent - 
      1)]
\end{verbatim}
\begin{verbatim}
7.86127
\end{verbatim}
\begin{verbatim}
(baralpha + hatheta - Pi/2 + 
   Tan[baralpha] Log[
     2.08/(Exp[Cot[baralpha] (baralpha + hatheta - Pi/2)]/Kadmitent - 
        1)])/(2 Pi + baralpha)
\end{verbatim}
\begin{verbatim}
1.05358
\end{verbatim}
Computation for Remark \ref{Rem:inital_tent}
\smed
\begin{verbatim}
2 Cos[baralpha] - Cos[hatheta]
\end{verbatim}
\begin{verbatim}
-0.10964
\end{verbatim}

\sbig

\noindent{\bf Section \ref{S:optimal_sol_candidate}}

\sbig

\noindent {\bf Section \ref{Ss:optimal_cand_2}}

\sbig

\begin{verbatim}
checkellbase1[barphi_, hbase_] := 
 Sqrt[1 + hbase^2 + 2 hbase Cos[barphi - 2 Pi]]
\end{verbatim}
\begin{verbatim}
checkphibase1[barphi_, hbase_] := 
 ArcSin[hbase Sin[barphi - 2 Pi]/
    Sqrt[1 + hbase^2 + 2 hbase Cos[barphi]]]
\end{verbatim}
\smed
Formulas \eqref{Equa:opt_2_checkDeltaphi} and \eqref{Equa:ropt_2}
\smed
\begin{verbatim}
checkDeltaphibase1[barphi_, hbase_] := 
 Tan[baralpha] - hbase/Sqrt[1 + hbase^2 + 2 hbase Cos[barphi - 2 Pi]]
\end{verbatim}
\begin{verbatim}
checkrbarphibase1[barphi_, hbase_] := 
 checkellbase1[barphi, 
    hbase] Exp[
     Cot[baralpha] (barphi - (checkphibase1[barphi, hbase] + 
          checkDeltaphibase1[barphi, hbase] + Pi/2 - baralpha))]/
    Sin[baralpha] - Exp[Cot[baralpha] (barphi - 2 Pi)] - hbase
\end{verbatim}
\smed
Plot of Fig. \ref{Fig:opt_sol_1_2}
\smed
\begin{verbatim}
Plot3D[{Exp[-barc (barphi - Tan[baralpha] - Pi/2 + 
       baralpha)] checkrbarphibase1[barphi, hbase], 
  rhobase[(barphi - Tan[baralpha] - Pi/2 + baralpha)/(2 Pi + 
      baralpha)]}, {barphi, 2 Pi, 5 Pi/2}, {hbase, 0, .3}, 
 AxesLabel -> Automatic]
\end{verbatim}
\begin{verbatim}
Plot3D[{Exp[-barc (barphi - Tan[baralpha] - Pi/2 + 
       baralpha)] checkrbarphibase1[barphi, hbase], 
  rhobase[(barphi - Tan[baralpha] - Pi/2 + baralpha)/(2 Pi + 
      baralpha)]}, {barphi, 5 Pi/2 - .15, 5 Pi/2}, {hbase, 0, .24}, 
 AxesLabel -> Automatic]
\end{verbatim}
\smed
Computations for the initial point
\smed
\begin{verbatim}
2 Pi + Pi/2 - FindRoot[f1[theta, Tan[baralpha]], {theta, .15}][[1, 2]]
\end{verbatim}
\begin{verbatim}
7.73645
\end{verbatim}

\sbig

\noindent {\bf Section \ref{Ss:optimal_cand_3}}

\sbig

\begin{verbatim}
checkellbase2[hbase_] := Sqrt[1 + hbase^2 ]
\end{verbatim}
\begin{verbatim}
checkphibase2[hbase_] := ArcSin[hbase/checkellbase2[hbase]]
\end{verbatim}
\smed
Formula \eqref{Equa:opt_sol_2_arc_1}
\smed
\begin{verbatim}
checkDeltaphibase2[barphi_, hbase_] := 
 Tan[baralpha] - (barphi - 2 Pi - Pi/2 + hbase)/Sqrt[1 + hbase^2]
\end{verbatim}
\smed
Plots of Fig. \ref{Fig:opt_sol_2_arc_1}, \ref{Fig:opt_sol_2_arc_2}
\smed
\begin{verbatim}
Plot3D[{0, checkDeltaphibase2[barphi, hbase]}, {barphi, 5 Pi/2, 
  5 Pi/2 + Tan[baralpha]}, {hbase, 0, 3}, AxesLabel -> Automatic]
\end{verbatim}
\begin{verbatim}
Plot[{2 Pi + Pi/2 + Tan[baralpha] checkellbase2[hbase] - 
   hbase}, {hbase, 0, 1 }, AxesLabel -> Automatic, 
 PlotLegends -> 
  Placed[ToExpression[
    "2\\pi + \\frac{\\pi}{2} + \\tan(\\bar \\alpha) \\check \\ell - \
h", TeXForm, HoldForm], {Center, Top}]]
\end{verbatim}
\smed
Final value of the solution in the arc case, Eq. \eqref{Equa:opt_sol_3_finarc}
\smed
\begin{verbatim}
checkrbarphibase2[barphi_, hbase_] := 
 checkellbase2[
    hbase] Exp[
     Cot[baralpha] (2 Pi + 
        baralpha - (checkDeltaphibase2[barphi, hbase] + 
          checkphibase2[hbase]))]/Sin[baralpha] - 
  Exp[Cot[baralpha] (barphi - 2 Pi)] - (Exp[
      Cot[baralpha] (barphi - 2 Pi - Pi/2)] - 1)/Cot[baralpha] - hbase
\end{verbatim}
\smed
Plot of the boundary of the region Fig. \ref{Fig:opt_sol_2_arc_3}
\smed
\begin{verbatim}
Plot3D[{Exp[-barc (barphi - Pi/2 + baralpha - 
       Tan[baralpha])] checkrbarphibase2[barphi, hbase], 
  rhobase[(barphi - Pi/2 - Tan[baralpha] + baralpha)/(2 Pi + 
      baralpha)]}, {barphi, 5 Pi/2, 5 Pi/2 + Tan[baralpha]}, {hbase, 
  0, 1.5}, AxesLabel -> Automatic]
\end{verbatim}
\smed
For the tent case the length $h$ Eq. \eqref{Equa:opt_sol_3_h_tent} and Fig. \ref{Fig:opt_sol_2_tent_1}
\smed
\begin{verbatim}
checkhbase2[
  barphi_] := (Cos[baralpha] (barphi - 2 Pi - Pi/2) - 
    Sin[baralpha - hatheta])/(Cos[baralpha - hatheta] - Cos[baralpha])
\end{verbatim}
\begin{verbatim}
Plot[{checkhbase2[barphi], ArcTan[hatheta]}, {barphi, 2 Pi + Pi/2, 
  2 Pi + Pi/2 + Tan[baralpha]}, AxesLabel -> Automatic, 
 PlotLegends -> 
  Placed[ToExpression[
    "\\frac{\\cos(\\bar \\alpha) (\\bar \\phi - 2\\pi - \
\\frac{\\pi}{2}) - \\sin(\\bar \\alpha - \\hat \\theta)}{\\cos(\\bar \
\\alpha - \\hat \\theta) - \\cos(\\bar \\alpha)}", TeXForm, 
    HoldForm], {Right, Bottom}]]
\end{verbatim}
\smed
Computation of the beginning of the positivity region for the tent case
\smed
\begin{verbatim}
2 Pi + Pi/2 + 
 Tan[baralpha] - (Sin[baralpha] - Sin[baralpha - hatheta])/
  Cos[baralpha]
\end{verbatim}
\begin{verbatim}
9.48195
\end{verbatim}
\smed
Computation of admissibility of the tent Eq. \eqref{Equa:opt_sol_3_admi_tent}
\smed
\begin{verbatim}
Tan[hatheta]
\end{verbatim}
\begin{verbatim}
0.554294
\end{verbatim}
\begin{verbatim}
2 Pi + Pi/2 + Tan[baralpha]/Cos[hatheta] - Tan[hatheta]
\end{verbatim}
\begin{verbatim}
10.0616
\end{verbatim}
\smed
Position of the tent optimal value and the negativity region Fig. \ref{Fig:opt_sol_2_tent_2}
\smed
\begin{verbatim}
Plot[{5 Pi/
     2 + (Sin[baralpha] - 
      Sin[hatheta] Cos[baralpha])/(Cos[baralpha] Cos[hatheta]), 
  2 Pi + Pi/2 + Tan[baralpha] checkellbase2[hbase] - hbase, 
  2 Pi + Pi/2 + Sin[baralpha - hatheta]/Cos[baralpha] + 
   hbase (Cos[baralpha - hatheta] - Cos[baralpha])/
     Cos[baralpha]}, {hbase, 0, 1 }, AxesLabel -> Automatic]
\end{verbatim}
\smed
Final value of tent Eq. \eqref{Equa:opt_sol_3_tent_final} and Fig. \ref{Fig:opt_sol_2_tent_3}
\smed
\begin{verbatim}
checkrbarphibase2tent[barphi_, hbase_] := 
 Cos[ArcTan[hbase] - hatheta] Sqrt[
    1 + hbase^2] Exp[Cot[baralpha] (2 Pi + baralpha - hatheta)]/
    Sin[baralpha] - 
  Exp[Cot[baralpha] (barphi - 2 Pi)] - (Exp[
      Cot[baralpha] (barphi - 2 Pi - Pi/2)] - 1)/Cot[baralpha] - hbase
\end{verbatim}
\begin{verbatim}
Plot[Exp[-barc (barphi - Pi/2 + baralpha - 
      Tan[baralpha])] checkrbarphibase2tent[barphi, 
   checkhbase2[barphi]], {barphi, 
  2 Pi + Pi/2 + Tan[baralpha]/Cos[hatheta] - Tan[hatheta], 
  2 Pi + Pi/2 + Tan[baralpha]}, AxesLabel -> Automatic]
\end{verbatim}

\sbig

\noindent {\bf Section \ref{Ss:optimal_cand_4}}, Fig. \ref{Fig:opt_sol_3_pos}

\sbig

\begin{verbatim}
Plot[Kadmitent rhobase[tau] - 
  Exp[-barc (2 Pi + baralpha) tau] (Lbase[tau - 1 + hattau] - 
     Lbase[tau - 1]), {tau, 1, 5}]
\end{verbatim}

\sbig

\noindent {\bf Section \ref{Ss:optimal_asympt}}: asymptotic behavior for base solution with tent, computation of \gls{Kasymptosat}

\sbig

\begin{verbatim}
Kasympt = 
 2 (Exp[-barc (Tan[baralpha] + Pi/2 - baralpha)]/Sin[baralpha] - 
    Exp[-barc (2 Pi)] - (Exp[-barc (2 Pi + Pi/2)] - 
       Exp[-barc (2 Pi + Pi/2 + Tan[baralpha])])/barc - 
    Cot[baralpha] Exp[-barc (2 Pi + baralpha)])
\end{verbatim}
\smed
Vector for the base of the tent $\hat Q_2 - \hat Q_1$
\smed
\begin{verbatim}
Sqrt[(Exp[barc hatheta] Cos[hatheta] - 
      1)^2 + (Exp[barc hatheta] Sin[hatheta])^2]/(Sin[
    baralpha] (1 + barc^2))
\end{verbatim}
\begin{verbatim}
0.560686
\end{verbatim}
\begin{verbatim}
DeltaQhathetaaympt = (barc (Exp[barc hatheta] - Cos[hatheta]) + 
    Sin[hatheta])/(1 + barc^2)
\end{verbatim}
\begin{verbatim}
0.521632
\end{verbatim}
\begin{verbatim}
DeltaQell0 = DeltaQhathetaaympt/Sin[baralpha]
\end{verbatim}
\begin{verbatim}
0.564562
\end{verbatim}
\begin{verbatim}
DeltaQbarahathetasympt = (barc (Exp[barc hatheta] Sin[baralpha] - 
       Sin[baralpha + hatheta]) + (Exp[barc hatheta] Cos[baralpha] - 
      Cos[baralpha + hatheta]))/(1 + barc^2)
\end{verbatim}
\begin{verbatim}
0.532289
\end{verbatim}
\begin{verbatim}
DeltaQr2 = DeltaQbarahathetasympt/Sin[baralpha]
\end{verbatim}
\begin{verbatim}
0.576096
\end{verbatim}
\smed
Asymptotic length $L$
\smed
\begin{verbatim}
(Exp[barc hatheta] - 1)/barc
\end{verbatim}
\begin{verbatim}
0.543747
\end{verbatim}
\begin{verbatim}
DeltaL = (Exp[barc hatheta] - 1)/(Sin[baralpha] barc)
\end{verbatim}
\begin{verbatim}
0.588497
\end{verbatim}
\smed
Asymptotic $\hat \ell_0$
\smed
\begin{verbatim}
ell0asympt = (Exp[-barc (2 Pi + baralpha)] DeltaQell0 - 
    Exp[-barc (2 Pi + baralpha)] DeltaL + (1 - Cos[hatheta]))/(Cos[
     baralpha - hatheta] - Cos[baralpha]) 
\end{verbatim}
\begin{verbatim}
0.306042
\end{verbatim}
\smed
Asymptotic $\hat r_2$
\smed
\begin{verbatim}
r2asympt = (Sin[baralpha + hatheta] + Sin[hatheta] ell0asympt - 
    DeltaQr2 Exp[-barc (2 Pi + baralpha)])/Sin[baralpha]
\end{verbatim}
\begin{verbatim}
1.15869
\end{verbatim}
\smed
Final value $r_{\mathrm{opt}}(\bar \phi)$
\smed
\begin{verbatim}
rbarphasympt = (r2asympt - Exp[barc hatheta]) Exp[
    Cot[baralpha] (2 Pi + baralpha - hatheta) - 
     barc (2 Pi + baralpha)] + 1 - 
  ell0asympt Exp[-barc (2 Pi + baralpha)]
\end{verbatim}
\begin{verbatim}
0.976359
\end{verbatim}

\sbig

\noindent{\bf Section \ref{S:segment_case}}

\sbig

\noindent{\bf Section \ref{Sss:segm_semg_neg}}: plot of the final function Eq. \eqref{Equa:segm_segm_2_final}, Fig. \ref{Fig:segm_segm_case_2_final} and its minimal value

\sbig

\begin{verbatim}
Plot[((1 - Cos[theta + hatheta])/(1 - Cos[hatheta]) - 
    Exp[Cot[baralpha] theta])/theta, {theta, 0, Pi/2}, 
 AxesLabel -> Automatic, 
 PlotLegends -> 
  Placed[ToExpression[
    "\\frac{1}{\\theta} \\bigg( \\frac{1 - \\cos(\\hat \\theta + \
\\theta)}{1 - \\cos(\\hat \\theta)} - e^{\\cot(\\bar \\alpha) \
\\theta} \\bigg)", TeXForm, HoldForm], {Right, Center}]]
\end{verbatim}
\begin{verbatim}
Sin[hatheta]/(1 - Cos[hatheta]) - Cot[baralpha]
\end{verbatim}
\begin{verbatim}
3.45283
\end{verbatim}

\sbig

\noindent{\bf Section \ref{Sss:segm_semg_after_neg}}

\sbig

\noindent We first compute the relevant quantities for the tent
\smed
\begin{verbatim}
hatr0tentsegm1[tau_, theta_] := 
 rhogeneric[tau - 1, theta, 0] Exp[barc (2 Pi + baralpha) (tau - 1)]
\end{verbatim}
\begin{verbatim}
hatell0tentsegm1[tau_, theta_] := 
 ell0tent[hatr0tentsegm1[tau, theta], 0, 0]
\end{verbatim}
\begin{verbatim}
hatr2tentsegm1[tau_, theta_] := 
 hatr2tent[hatr0tentsegm1[tau, theta], hatell0tentsegm1[tau, theta], 
  0]
\end{verbatim}
\begin{verbatim}
hatell1tentsegm1[tau_, theta_] := 
 ell1tent[hatr2tentsegm1[tau, theta], 0, hatr0tentsegm1[tau, theta], 
  hatell0tentsegm1[tau, theta]]
\end{verbatim}
\begin{verbatim}
hatrbarphitentsegm1[tau_, theta_] := 
 hatrbarphi[hatr2tentsegm1[tau, theta], 
  S0arc[theta, 0] rhogeneric[tau - 1 + hattau, theta, 0] Exp[
    barc (2 Pi + baralpha) (tau - 1 + hattau)], 
  rhogeneric[tau, theta, 0] Exp[barc (2 Pi + baralpha) tau], 
  hatell0tentsegm1[tau, theta]]
\end{verbatim}
\smed
Plot of the function $\delta \check r(\bar \phi)$, Fig. \ref{Fig:segm_segm_after_neg_1} and its minimal value Fig. \ref{Fig:segm_segm_after_neg_2}
\smed
\begin{verbatim}
Plot3D[(hatrbarphitentsegm1[tau, 
     theta] Exp[-barc (2 Pi + baralpha) tau])/theta^2, {theta, 0, 
  Pi}, {tau, 1, 1 + tau1arc[theta, 0] - hattau - .001}, 
 AxesLabel -> Automatic]
\end{verbatim}
\begin{verbatim}
Plot[(hatrbarphitentsegm1[tau, Pi] Exp[-barc (2 Pi + baralpha) tau])/
  Pi^2, {tau, 1, 1 + tau1arc[Pi, 0] - hattau}, AxesLabel -> Automatic,
  PlotLegends -> 
  Placed[ToExpression[
    "\\frac{\\delta \\hat r_2(\\bar \\tau) e^{- \\bar c(2\\pi + \\bar \
\\alpha) \\tau}}{(\\pi/2)^2}", TeXForm, HoldForm], {Right, Center}]]
\end{verbatim}
\begin{verbatim}
(hatrbarphitentsegm1[1.00001, 
    Pi] Exp[-barc (2 Pi + baralpha) 1])/(Pi)^2
\end{verbatim}
\begin{verbatim}
{0.203057}
\end{verbatim}

\sbig

\noindent{\bf Section \ref{Sss:segm_semg_after_neg_2}}

\sbig

\noindent The angle in this case is before the perturbation at the beginning of the tent and inside the negativity cone for the final angle: the functions for $\Delta Q \cdot e^{\i\hat \theta}, \Delta \hat Q \cdot e^{i(\bar \alpha+ \hat \theta - \pi/2)}$ are
\smed
\begin{verbatim}
DeltaQtentsegm2[tau_, theta_] := 
 Cos[(2 Pi + baralpha) (2 - tau) - theta - hatheta]
\end{verbatim}
\begin{verbatim}
DeltaQtentsegmpi22[tau_, theta_] := 
 Cos[(2 Pi + baralpha) (2 - tau) - 
   theta - (baralpha + hatheta - Pi/2)]
\end{verbatim}
\begin{verbatim}
hatr0tentsegm2[tau_, theta_] := 
 rhogeneric[tau - 1, theta, 0] Exp[barc (2 Pi + baralpha) (tau - 1)]
\end{verbatim}
\begin{verbatim}
hatLtentsegm2[tau_, theta_] := 1
\end{verbatim}
\smed
Computation of the tent quantities
\smed
\begin{verbatim}
hatell0tentsegm2[tau_, theta_] := 
 ell0tent[hatr0tentsegm2[tau, theta], hatLtentsegm2[tau, theta], 
  DeltaQtentsegm2[tau, theta]]
\end{verbatim}
\begin{verbatim}
hatr2tentsegm2[tau_, theta_] := 
 hatr2tent[hatr0tentsegm2[tau, theta], hatell0tentsegm2[tau, theta], 
  DeltaQtentsegmpi22[tau, theta]]
\end{verbatim}
\begin{verbatim}
hatell1tentsegm2[tau_, theta_] := 
 ell1tent[hatr2tentsegm2[tau, theta], hatLtentsegm2[tau, theta], 
  hatr0tentsegm2[tau, theta], hatell0tentsegm2[tau, theta]]
\end{verbatim}
\begin{verbatim}
hatrbarphitentsegm2[tau_, theta_] := 
 hatrbarphi[hatr2tentsegm2[tau, theta], 
  rhogeneric[tau - 1 + hattau, theta, 0] Exp[
    barc (2 Pi + baralpha) (tau - 1 + hattau)], 
  rhogeneric[tau, theta, 0] Exp[barc (2 Pi + baralpha) tau], 
  hatell0tentsegm2[tau, theta]]
\end{verbatim}
\smed
Plot of the function $\delta \check r(\bar \phi)/\theta^2$, Fig. \ref{Fig:segm_segm_after_neg2_1}, \ref{Fig:segm_segm_after_neg2_2} and its minimal value
\smed
\begin{verbatim}
Plot3D[{0, 
  Exp[-barc (2 Pi + 
       baralpha) ((1 - 
          sigma) (2 - (theta + hatheta)/(2 Pi + baralpha)) + 
       sigma (2 - 
          Max[theta, 
            hatheta]/(2 Pi + baralpha)))] hatrbarphitentsegm2[((1 - 
          sigma) (2 - (theta + hatheta)/(2 Pi + baralpha)) + 
       sigma (2 - Max[theta, hatheta]/(2 Pi + baralpha))), theta]/
    theta^2}, {theta, 0.001, Pi}, {sigma, 0.001, 1 - .001}, 
 AxesLabel -> Automatic]
\end{verbatim}
\begin{verbatim}
Plot[{0, 
  Exp[-barc (2 Pi + 
       baralpha) ((1 - sigma) (2 - (Pi + hatheta)/(2 Pi + baralpha)) +
        sigma (2 - 
          Max[Pi, hatheta]/(2 Pi + 
             baralpha)))] hatrbarphitentsegm2[((1 - 
          sigma) (2 - (Pi + hatheta)/(2 Pi + baralpha)) + 
       sigma (2 - Max[Pi, hatheta]/(2 Pi + baralpha))), Pi]/
    Pi^2}, {sigma, 0, 1}, AxesLabel -> Automatic, 
 PlotLegends -> 
  Placed[ToExpression[
    "\\frac{\\delta \\hat r_2(\\bar \\tau) e^{- \\bar c(2\\pi + \\bar \
\\alpha) \\tau}}{(\\pi/2)^2}", TeXForm, HoldForm], {Right, Center}]]
\end{verbatim}
\begin{verbatim}
Exp[-barc (2 Pi + 
     baralpha) ((1 - .001) (2 - (Pi + hatheta)/(2 Pi + baralpha)) + 
     0.001 (2 - 
        Max[Pi, hatheta]/(2 Pi + 
           baralpha)))] hatrbarphitentsegm2[((1 - 
        0.001) (2 - (Pi + hatheta)/(2 Pi + baralpha)) + 
     0.001 (2 - Max[Pi, hatheta]/(2 Pi + baralpha))), Pi]/Pi^2
\end{verbatim}
\begin{verbatim}
{0.218402}
\end{verbatim}

\sbig

\noindent {\bf Section \ref{Sss:segm_semg_after_neg_3}}

\sbig

\noindent The angle in this case is before the perturbation at the beginning of the tent and after the negativity cone for the final angle: the relevant quantities are
\smed
\begin{verbatim}
DeltaQtentsegm3[tau_, theta_] := 
 rhogeneric[tau - 2 + hattau, theta, 0] Exp[
   barc (2 Pi + baralpha) (tau - 2 + hattau)] Cos[baralpha]
\end{verbatim}
\begin{verbatim}
DeltaQtentsegmpi23[tau_, theta_] := 
 rhogeneric[tau - 2 + hattau, theta, 0] Exp[
   barc (2 Pi + baralpha) (tau - 2 + hattau)] Cos[Pi/2 - 2 baralpha]
\end{verbatim}
\begin{verbatim}
hatr0tentsegm3[tau_, theta_] := 
 rhogeneric[tau - 1, theta, 0] Exp[barc (2 Pi + baralpha) (tau - 1)]
\end{verbatim}
\begin{verbatim}
hatLtentsegm3[tau_, theta_] := (1 - Cos[theta])/Sin[baralpha]^2 + 
  S0arc[theta, 
    0] (Exp[Cot[baralpha] (2 Pi + baralpha) (tau - 2 + hatheta)] - 1)/
    Cos[baralpha]
\end{verbatim}
\smed
Computation of the tent quantities
\smed
\begin{verbatim}
hatell0tentsegm3[tau_, theta_] := 
 ell0tent[hatr0tentsegm3[tau, theta], hatLtentsegm3[tau, theta], 
  DeltaQtentsegm3[tau, theta]]
\end{verbatim}
\begin{verbatim}
hatr2tentsegm3[tau_, theta_] := 
 hatr2tent[hatr0tentsegm3[tau, theta], hatell0tentsegm3[tau, theta], 
  DeltaQtentsegmpi23[tau, theta]]
\end{verbatim}
\begin{verbatim}
hatell1tentsegm3[tau_, theta_] := 
 ell1tent[hatr2tentsegm3[tau, theta], hatLtentsegm3[tau, theta], 
  hatr0tentsegm3[tau, theta], hatell0tentsegm3[tau, theta]]
\end{verbatim}
\begin{verbatim}
hatrbarphitentsegm3[tau_, theta_] := 
 hatrbarphi[hatr2tentsegm3[tau, theta], 
  rhogeneric[tau - 1 + hattau, theta, 0] Exp[
    barc (2 Pi + baralpha) (tau - 1 + hattau)], 
  rhogeneric[tau, theta, 0] Exp[barc (2 Pi + baralpha) tau], 
  hatell0tentsegm3[tau, theta]]
\end{verbatim}
\smed
Plot of the final value, Fig. \ref{Fig:segm_segm_after_neg3_1} and its minimal value for $\theta \searrow 0$ and $\theta = \hat \theta$, Fig. \ref{Fig:segm_segm_after_neg4_2}
\smed
\begin{verbatim}
Plot3D[{0, 
  Exp[-barc (2 Pi + baralpha) ((1 - sigma) (2 - hattau) + 
       sigma (2 - theta/(2 Pi + baralpha)))] hatrbarphitentsegm3[(1 - 
         sigma) (2 - hattau) + sigma (2 - theta/(2 Pi + baralpha)), 
     theta]/theta^2}, {theta, 0, hatheta - .001}, {sigma, 0, 1}, 
 AxesLabel -> Automatic]
\end{verbatim}
\begin{verbatim}
Plot[{0, 
  Exp[-barc (2 Pi + baralpha) ((1 - sigma) (2 - hattau) + 
       sigma (2 - (hatheta - .001)/(2 Pi + 
             baralpha)))] hatrbarphitentsegm3[(1 - sigma) (2 - 
         hattau) + 
      sigma (2 - (hatheta - .001)/(2 Pi + 
            baralpha)), (hatheta - .001)]/(hatheta - .001)^2}, {sigma,
   0, 1}, AxesLabel -> Automatic, 
 PlotLegends -> 
  Placed[ToExpression[
    "\\frac{\\delta \\hat r_2(\\bar \\tau) e^{- \\bar c(2\\pi + \\bar \
\\alpha) \\bar \\tau}}{\\hat \\theta^2}", TeXForm, HoldForm], {Right, 
    Center}]]
\end{verbatim}
\begin{verbatim}
Plot[{0, 
  Exp[-barc (2 Pi + baralpha) ((1 - sigma) (2 - hattau) + 
       sigma (2 - (.001)/(2 Pi + baralpha)))] hatrbarphitentsegm3[(1 -
          sigma) (2 - hattau) + 
      sigma (2 - (.001)/(2 Pi + baralpha)), (.001)]/(.001)^2}, {sigma,
   0, 1}, AxesLabel -> Automatic]
\end{verbatim}

\sbig

\noindent{\bf Section \ref{Sss:segm_semg_after_neg_4}}

\sbig

\noindent The tent is the negativity cone: computation of the relevant quantities
\smed
\begin{verbatim}
DeltaQtentsegm4[tau_, theta_] := 0
\end{verbatim}
\begin{verbatim}
DeltaQtentsegmpi24[tau_, theta_] := 0
\end{verbatim}
\begin{verbatim}
hatr0tentsegm4[tau_, theta_] := 
 rhogeneric[tau - 1, theta, 0] Exp[barc (2 Pi + baralpha) (tau - 1)]
\end{verbatim}
\begin{verbatim}
hatLtentsegm4[tau_, theta_] := 0
\end{verbatim}
\smed
Computation of the tent quantities
\smed
\begin{verbatim}
hatell0tentsegm4[tau_, theta_] := 
 ell0tent[hatr0tentsegm4[tau, theta], hatLtentsegm4[tau, theta], 
  DeltaQtentsegm4[tau, theta]]
\end{verbatim}
\begin{verbatim}
hatr2tentsegm4[tau_, theta_] := 
 hatr2tent[hatr0tentsegm4[tau, theta], hatell0tentsegm4[tau, theta], 
  DeltaQtentsegmpi24[tau, theta]]
\end{verbatim}
\begin{verbatim}
hatell1tentsegm4[tau_, theta_] := 
 ell1tent[hatr2tentsegm4[tau, theta], hatLtentsegm4[tau, theta], 
  hatr0tentsegm4[tau, theta], hatell0tentsegm4[tau, theta]]
\end{verbatim}
\begin{verbatim}
hatrbarphitentsegm4[tau_, theta_] := 
 hatrbarphi[hatr2tentsegm4[tau, theta], 
  rhogeneric[tau - 1 + hattau, theta, 0] Exp[
    barc (2 Pi + baralpha) (tau - 1 + hattau)], 
  rhogeneric[tau, theta, 0] Exp[barc (2 Pi + baralpha) tau], 
  hatell0tentsegm4[tau, theta]]
\end{verbatim}
\smed
Plot of the final value Fig. \ref{Fig:segm_segm_after_neg4_1}, Fig. \ref{Fig:segm_segm_after_neg5_2} and minimal value
\smed
\begin{verbatim}
Plot3D[{0, 
  Exp[-barc (2 Pi + 
       baralpha)] hatrbarphitentsegm4[(1 - sigma) (2 - 
         theta/(2 Pi + baralpha)) + sigma (2 - hattau), theta]/
    theta^2}, {theta, hatheta + .001, Pi}, {sigma, 0.001, 1}, 
 AxesLabel -> Automatic]
\end{verbatim}
\begin{verbatim}
Plot[{0, 
  Exp[-barc (2 Pi + 
       baralpha)] hatrbarphitentsegm4[(1 - sigma) (2 - 
         Pi/2/(2 Pi + baralpha)) + sigma (2 - hattau), 
     Pi/2]/(Pi/2)^2}, {sigma, 0.001, 1}, AxesLabel -> Automatic, 
 PlotLegends -> 
  Placed[ToExpression[
    "\\frac{\\delta \\hat r_2(\\bar \\tau) e^{- \\bar c(2\\pi + \\bar \
\\alpha) \\bar \\tau}}{\\hat \\theta^2}", TeXForm, HoldForm], {Right, 
    Center}]]
\end{verbatim}
\begin{verbatim}
Exp[-barc (2 Pi + 
     baralpha)] hatrbarphitentsegm4[(1 - .001) (2 - 
       Pi/2/(2 Pi + baralpha)) + .001 (2 - hattau), Pi/2]/(Pi/2)^2
\end{verbatim}
\begin{verbatim}
{2.91263}
\end{verbatim}

\sbig

\noindent{\bf Section \ref{Sss:segm_semg_after_neg_5}}

\sbig

\noindent The initial point of the tent is inside the negativity cone and the final point is outside: computation of the relevant quantities
\smed
\begin{verbatim}
DeltaQtentsegm5[tau_, theta_] := 
 rhogeneric[tau - 2 + hattau, theta, 0] Exp[
    barc (2 Pi + baralpha) (tau - 2 + hattau)] Cos[baralpha] - 
  Cos[-hatheta - theta - (2 Pi + baralpha) (tau - 2)]
\end{verbatim}
\begin{verbatim}
DeltaQtentsegmpi25[tau_, theta_] := 
 rhogeneric[tau - 2 + hattau, theta, 0] Exp[
    barc (2 Pi + baralpha) (tau - 2 + hattau)] Cos[
    Pi/2 - 2 baralpha] - 
  Cos[-(baralpha + hatheta - Pi/2) - 
    theta - (2 Pi + baralpha) (tau - 2)]
\end{verbatim}
\begin{verbatim}
hatr0tentsegm5[tau_, theta_] := 
 rhogeneric[tau - 1, theta, 0] Exp[barc (2 Pi + baralpha) (tau - 1)]
\end{verbatim}
\begin{verbatim}
hatLtentsegm5[tau_, 
  theta_] := -1 + (1 - Cos[theta]) Exp[
     Cot[baralpha] (2 Pi + baralpha) (tau - 2 + hattau)]/
    Sin[baralpha]^2
\end{verbatim}
\smed
Computation of the tent quantities
\smed
\begin{verbatim}
hatell0tentsegm5[tau_, theta_] := 
 ell0tent[hatr0tentsegm5[tau, theta], hatLtentsegm5[tau, theta], 
  DeltaQtentsegm5[tau, theta]]
\end{verbatim}
\begin{verbatim}
hatr2tentsegm5[tau_, theta_] := 
 hatr2tent[hatr0tentsegm5[tau, theta], hatell0tentsegm5[tau, theta], 
  DeltaQtentsegmpi25[tau, theta]]
\end{verbatim}
\begin{verbatim}
hatell1tentsegm5[tau_, theta_] := 
 ell1tent[hatr2tentsegm5[tau, theta], hatLtentsegm5[tau, theta], 
  hatr0tentsegm5[tau, theta], hatell0tentsegm5[tau, theta]]
\end{verbatim}
\begin{verbatim}
hatrbarphitentsegm5[tau_, theta_] := 
 hatrbarphi[hatr2tentsegm5[tau, theta], 
  rhogeneric[tau - 1 + hattau, theta, 0] Exp[
    barc (2 Pi + baralpha) (tau - 1 + hattau)], 
  rhogeneric[tau, theta, 0] Exp[barc (2 Pi + baralpha) tau], 
  hatell0tentsegm5[tau, theta]]
\end{verbatim}
\smed
Plot of the final value Fig. \ref{Fig:segm_segm_after_neg5_1}, Fig. \ref{Fig:segm_segm_after_neg6_2} and its minimal value
\smed
\begin{verbatim}
Plot3D[{0, 
  Exp[-barc (2 Pi + 
       baralpha) ((1 - sigma) (2 - 
          Min[theta, hatheta]/(2 Pi + baralpha)) + 
       2 sigma)] hatrbarphitentsegm5[(1 - sigma) (2 - 
         Min[theta, hatheta]/(2 Pi + baralpha)) + 2 sigma, theta]/
    theta^2}, {theta, 0, Pi}, {sigma, 0.001, 1}, 
 AxesLabel -> Automatic]
\end{verbatim}
\begin{verbatim}
Plot[{0, 
  Exp[-barc (2 Pi + 
       baralpha) ((1 - sigma) (2 - 
          Min[Pi/2, hatheta]/(2 Pi + baralpha)) + 
       2 sigma)] hatrbarphitentsegm5[(1 - sigma) (2 - 
         Min[Pi/2, hatheta]/(2 Pi + baralpha)) + 2 sigma, 
     Pi/2]/(Pi/2)^2}, {sigma, 0.001, 1}, AxesLabel -> Automatic, 
 PlotLegends -> 
  Placed[ToExpression[
    "\\frac{\\delta \\hat r_2(\\bar \\tau) e^{- \\bar c(2\\pi + \\bar \
\\alpha) \\bar \\tau}}{\\hat \\theta^2}", TeXForm, HoldForm], {Right, 
    Center}]]
\end{verbatim}
\begin{verbatim}
Exp[-barc (2 Pi + 
     baralpha) ((1 - .001) (2 - 
        Min[Pi/2, hatheta]/(2 Pi + baralpha)) + 
     2 (.001))] hatrbarphitentsegm5[(1 - .001) (2 - 
       Min[Pi/2, hatheta]/(2 Pi + baralpha)) + 2 (.001), 
   Pi/2]/(Pi/2)^2
\end{verbatim}
\begin{verbatim}
{2.91263}
\end{verbatim}

\sbig

\noindent{\bf Section \ref{Sss:segm_semg_after_neg_6}}

\sbig

\noindent Here we use only the standard functional of Lemma \ref{Lem:final_value_satu}, Eq. \eqref{Equa:rho_7_segm_segm} and Fig. \ref{Fig:segm_segm_after_neg7_1} and Fig. \ref{Fig:segm_segm_after_neg7_2}
\smed
\begin{verbatim}
Plot3D[{(rhogeneric[tau, theta, 0] - 
     Kadmitent Exp[-barc (2 Pi + baralpha) 2] rhogeneric[tau - 2, 
       theta, 0])/theta^2}, {tau, 2, 5}, {theta, 0.001, Pi}, 
 AxesLabel -> Automatic]
\end{verbatim}
\begin{verbatim}
Plot[{(rhogeneric[tau, Pi/2, 0] - 
     Kadmitent Exp[-barc (2 Pi + baralpha) 2] rhogeneric[tau - 2, 
       Pi/2, 0])/(Pi/2)^2}, {tau, 2, 5}, AxesLabel -> Automatic, 
 PlotLegends -> 
  Placed[ToExpression[
    "\\frac{\\rho(\\bar \\tau) - \\frac{(1 - \\cos(\\hat \\theta)) \
e^{- \\bar c (2\\pi + \\bar \\alpha) 2}}{\\cos(\\bar \\alpha - \\hat \
\\theta) - \\cos(\\bar \\alpha)} \\rho(\\bar \\tau - 2)}{(\\pi/2)^2}",
     TeXForm, HoldForm], {Left, Top}]]
\end{verbatim}

\sbig

\noindent {\bf Section \ref{S:arc}}

\sbig

\noindent {\bf Section \ref{Sss:segm_arc_3}}

\sbig

\noindent Maximal final angle, so that we are not in the perturbed region, Eq. \eqref{Equa:max_arc_arc}
\smed
\begin{verbatim}
Tan[baralpha] + Tan[baralpha] + 0.11753300046278962` + Pi/2 - baralpha
\end{verbatim}
\begin{verbatim}
5.34128
\end{verbatim}
\smed
Final value $\delta \check r(\bar \phi)$, Eq. \eqref{Equa:final_arc_arc}
\smed
\begin{verbatim}
arcarc[theta_, Deltaphi_, dtheta_, 
  dphi_] := (1 - Cos[theta + dtheta] + Sin[theta + dtheta] dphi + 
     Sin[theta] Deltaphi)/(1 - Cos[dtheta] + Sin[dtheta] dphi) - 
  Exp[Cot[baralpha] (Deltaphi + theta)] + 
  Cos[theta + baralpha] (Exp[Cot[baralpha] Deltaphi] - 1)/
    Cos[baralpha]
\end{verbatim}
\smed
Plot of the second derivative of the value function w.r.t. $\Delta \phi$, so that the minima are at the boundary
\smed
\begin{verbatim}
Plot[1 - 
  Cos[baralpha + theta] Exp[-Cot[baralpha] theta]/
    Cos[baralpha], {theta, 0, Pi/2}]
\end{verbatim}
\smed
Region where we have to study the final value: it is the boundary of the negativity region, Fig. \ref{Fig:arcarc_2_1}
\smed
\begin{verbatim}
Plot3D[{0, 
  Cot[baralpha] (1 - Cos[dtheta] + Sin[dtheta] dphi) Exp[
      Cot[baralpha] (2 Pi + baralpha - dtheta - dphi)]/
     Sin[baralpha] - 1}, {dphi, 0, Tan[baralpha]}, {dtheta, 
  Max[.11, (2.1 - dphi)/
    8, .08 + (hatheta - .1) (1 - 
        dphi/Tan[baralpha])^6], .14 + (hatheta - .1) (1 - 
       dphi/Tan[baralpha])^5}, AxesLabel -> Automatic, 
 PlotLegends -> Automatic]
\end{verbatim}
\smed
Plot of the final value for $\Delta \phi = \tan(\bar \alpha)$, Fig. \ref{Fig:arcarc_2_2}
\smed
\begin{verbatim}
Plot3D[{0, 
  Evaluate@
   Table[arcarc[theta, Tan[baralpha], dtheta, dphi]/theta, {theta, 0, 
     Pi/2, .4}]}, {dphi, 0, Tan[baralpha]}, {dtheta, 
  Max[.11, (2.1 - dphi)/
    8, .08 + (hatheta - .1) (1 - 
        dphi/Tan[baralpha])^6], .14 + (hatheta - .1) (1 - 
       dphi/Tan[baralpha])^5}, AxesLabel -> Automatic, 
 PlotLegends -> Automatic]
\end{verbatim}
\begin{verbatim}
Plot3D[{0, arcarc[Pi/2, Tan[baralpha], dtheta, dphi]/(Pi/2)}, {dphi, 
  0, Tan[baralpha]}, {dtheta, 
  Max[.11, (2.1 - dphi)/
    8, .08 + (hatheta - .1) (1 - 
        dphi/Tan[baralpha])^6], .14 + (hatheta - .1) (1 - 
       dphi/Tan[baralpha])^5}, AxesLabel -> Automatic, 
 PlotLegends -> Automatic]
\end{verbatim}
\begin{verbatim}
Plot[{0, 
  arcarc[Pi/2, Tan[baralpha], dtheta, Tan[baralpha]]/(Pi/2)}, {dtheta,
   Max[.11, (2.1 - Tan[baralpha])/
    8, .08 + (hatheta - .1) (1 - 
        Tan[baralpha]/Tan[baralpha])^6], .14 + (hatheta - .1) (1 - 
       Tan[baralpha]/Tan[baralpha])^5}, AxesLabel -> Automatic, 
 PlotLegends -> Automatic]
\end{verbatim}

\sbig

\noindent{\bf Section \ref{Sss:arc_segm_1}}

\sbig

\noindent Numerical plot of Fig. \ref{Fig:arc_segm_1_1}
\smed
\begin{verbatim}
Plot[((1 - Cos[theta + hatheta] + Sin[theta] Tan[baralpha])/(1 - 
       Cos[hatheta]) - Exp[1 + Cos[baralpha] theta] + 
    Cos[baralpha + theta] (Exp[1] - 1)/Cos[baralpha])/theta, {theta, 
  0, Pi/2}, AxesLabel -> Automatic, 
 PlotLegends -> 
  Placed[ToExpression[
    "(\\frac{1 - \\cos(\\theta + \\hat \\theta) + \\sin(\\theta) \
\\tan(\\bar \\alpha)}{1 - \\cos(\\hat \\theta)} - e^{1 + \\cot(\\bar \
\\alpha) \\theta} + \\frac{\\cos(\\theta + \\bar \
\\alpha)}{\\cos(\\bar \\alpha)} (e-1))/\\theta", TeXForm, 
    HoldForm], {Left, Center}]]
\end{verbatim}

\sbig

\noindent{\bf Section \ref{Sss:arc_tent_satur}}

\sbig

\noindent{\bf Case 1}: computation of \eqref{Equa:check_worst_arc_segm_case_1}
\smed
\begin{verbatim}
(Cos[baralpha] - Cos[baralpha + hatheta])/(Cos[baralpha - hatheta] - 
    Cos[baralpha]) - Exp[Cot[baralpha] hatheta]
\end{verbatim}
\begin{verbatim}
0.0066943
\end{verbatim}
\smed
Plot of the final value Fig. \ref{Fig:arccase1} and its minimal values
\smed
\begin{verbatim}
Plot3D[{0, 
  Evaluate@
   Table[(rhogeneric[(1 - sigma) 1 + 
         sigma (1 + tau1arc[theta, Deltaphi] - hattau), theta, 
        Deltaphi] - (1 - 
          Cos[hatheta]) Exp[-barc (2 Pi + 
            baralpha)] rhogeneric[(1 - sigma) 1 + 
           sigma (1 + tau1arc[theta, Deltaphi] - hattau) - 1, theta, 
          Deltaphi]/(Cos[baralpha - hatheta] - 
           Cos[baralpha]))/(theta (theta + Deltaphi)), {Deltaphi, 0, 
     Tan[baralpha], .5}]}, {theta, 0, Pi/2}, {sigma, 0, 1}, 
 AxesLabel -> Automatic]
\end{verbatim}
\begin{verbatim}
Plot3D[{0, (rhogeneric[(1 - sigma) 1 + 
       sigma (1 + tau1arc[theta, 0] - hattau), theta, 
      0] - (1 - 
        Cos[hatheta]) Exp[-barc (2 Pi + 
          baralpha)] rhogeneric[(1 - sigma) 1 + 
         sigma (1 + tau1arc[theta, 0] - hattau) - 1, theta, 
        0]/(Cos[baralpha - hatheta] - Cos[baralpha]))/(theta (theta + 
       0))}, {theta, 0, Pi/2}, {sigma, 0, 1}, AxesLabel -> Automatic]
\end{verbatim}
\begin{verbatim}
Plot[{0, (rhogeneric[(1 - sigma) 1 + 
       sigma (1 + tau1arc[Pi/2, 0] - hattau), Pi/2, 
      0] - (1 - 
        Cos[hatheta]) Exp[-barc (2 Pi + 
          baralpha)] rhogeneric[(1 - sigma) 1 + 
         sigma (1 + tau1arc[Pi/2, 0] - hattau) - 1, Pi/2, 
        0]/(Cos[baralpha - hatheta] - Cos[baralpha]))/(Pi/
      2 (Pi/2 + 0))}, {sigma, 0, 1}, AxesLabel -> Automatic, 
 PlotLegends -> 
  Placed[ToExpression[
    "\\frac{e^{-\\bar c \\phi}}{(\\pi/2)^2}(\\delta \\tilde r(\\bar \
\\phi) - \\delta \\tilde r_0 \\frac{1 - \\cos(\\hat \
\\theta)}{\\cos(\\bar \\alpha - \\hat \\theta) - \\cos(\\bar \
\\alpha)})", TeXForm, HoldForm], {Right, Center}]]
\end{verbatim}

\sbig

\noindent{\bf Case 2}: computations of the relevant quantities for the tent
\smed
\begin{verbatim}
DeltaQcase2arcarc[tau_, theta_, Deltaphi_] := 
 Cos[(2 Pi + baralpha) (tau - 2) + hatheta + theta + Deltaphi]
\end{verbatim}
\begin{verbatim}
DeltaQcase2arcarc2[tau_, theta_, Deltaphi_] := 
 Cos[(2 Pi + baralpha) (tau - 2) + hatheta + theta + Deltaphi + 
   baralpha - Pi/2]
\end{verbatim}
\begin{verbatim}
hatell0arc21[tau_, theta_, Deltaphi_] := 
 ell0tent[
  rhogeneric[tau - 1, theta, Deltaphi] Exp[
    barc (2 Pi + baralpha) (tau - 1)], 1, 
  DeltaQcase2arcarc[tau, theta, Deltaphi]]
\end{verbatim}
\begin{verbatim}
hatr2arc21[tau_, theta_, Deltaphi_] := 
 hatr2tent[
  rhogeneric[tau - 1, theta, Deltaphi] Exp[
    barc (2 Pi + baralpha) (tau - 1)], 
  hatell0arc21[tau, theta, Deltaphi], 
  DeltaQcase2arcarc2[tau, theta, Deltaphi]]
\end{verbatim}
\smed
Final value
\smed
\begin{verbatim}
hatrbarphiarc21[tau_, theta_, Deltaphi_] := 
 Exp[-barc (2 Pi + baralpha) tau] hatrbarphi[
   hatr2arc21[tau, theta, Deltaphi], 
   rhogeneric[tau - 1 + hattau, theta, Deltaphi] Exp[
     barc (2 Pi + baralpha) (tau - 1 + hattau)], 
   rhogeneric[tau, theta, Deltaphi] Exp[barc (2 Pi + baralpha) tau], 
   hatell0arc21[tau, theta, Deltaphi]]
\end{verbatim}
\smed
Numerical plot of Fig. \ref{Fig:PTarc_case2_1} and its minimal value
\smed
\begin{verbatim}
Plot3D[{Evaluate@
   Table[hatrbarphiarc21[(1 - 
          sigma) (2 + (-Deltaphi - theta - hatheta)/(2 Pi + 
             baralpha)) + 
       sigma (2 + (-Deltaphi - Max[theta, hatheta])/(2 Pi + 
             baralpha)), theta, 
      Deltaphi]/(theta (theta + Deltaphi)), {Deltaphi, 0.1, 
     Tan[baralpha], .5}]}, {theta, 0.1, Pi/2}, {sigma, 0.1, 1}, 
 AxesLabel -> Automatic]
\end{verbatim}
\begin{verbatim}
Plot3D[{hatrbarphiarc21[(1 - 
        sigma) (2 + (-0.0 - theta - hatheta)/(2 Pi + baralpha)) + 
     sigma (2 + (-0.0 - Max[theta, hatheta])/(2 Pi + baralpha)), 
    theta, 0.0]/(theta (theta + 0.0))}, {theta, 0.1, Pi/2}, {sigma, 
  0.1, 1}, AxesLabel -> Automatic]
\end{verbatim}
\begin{verbatim}
Plot[{hatrbarphiarc21[(1 - 
        sigma) (2 + (-0.0 - Pi/2 - hatheta)/(2 Pi + baralpha)) + 
     sigma (2 + (-0.0 - Max[Pi/2, hatheta])/(2 Pi + baralpha)), Pi/2, 
    0.0]/(Pi/2 (Pi/2 + 0.0))}, {sigma, 0.0, 1}, 
 AxesLabel -> Automatic, 
 PlotLegends -> 
  Placed[ToExpression[
    "\\frac{e^{-\\bar c \\phi} \\delta \\check r(\\bar \
\\phi)}{(\\pi/2)^2}", TeXForm, HoldForm], {Right, Center}]]
\end{verbatim}
\begin{verbatim}
hatrbarphiarc21[(1 - 
      0.0001) (2 + (-0 - Pi/2 - hatheta)/(2 Pi + baralpha)) + 
   0.0001 (2 + (-0 - Pi/2)/(2 Pi + baralpha)), Pi/2, 
  0]/(Pi/2 (Pi/2 + 0))
\end{verbatim}
\begin{verbatim}
{0.546565}
\end{verbatim}

\sbig

\noindent{\bf Case 3}: computation of the relevant quantities for the tent
\smed
\begin{verbatim}
hatell0arc2case3[tau_, theta_, Deltaphi_] := 
 ell0tent[
  rhogeneric[tau - 1, theta, Deltaphi] Exp[
    barc (2 Pi + baralpha) (tau - 1)], 
  1 - Cos[theta] + 
   Sin[theta] ((2 Pi + baralpha) (tau - 2) + hatheta + Deltaphi), 
  Sin[theta] Cos[-Pi/2]]
\end{verbatim}
\begin{verbatim}
hatr2arc2case3[tau_, theta_, Deltaphi_] := 
 hatr2tent[
  rhogeneric[tau - 1, theta, Deltaphi] Exp[
    barc (2 Pi + baralpha) (tau - 1)], 
  hatell0arc2case3[tau, theta, Deltaphi], Sin[theta] Cos[baralpha]]
\end{verbatim}
\begin{verbatim}
hatrbarphiarc2case3[tau_, theta_, Deltaphi_] := 
 Exp[-barc (2 Pi + baralpha) tau] hatrbarphi[
   hatr2arc2case3[tau, theta, Deltaphi], 
   rhogeneric[tau - 1 + hattau, theta, Deltaphi] Exp[
     barc (2 Pi + baralpha) (tau - 1 + hattau)], 
   rhogeneric[tau, theta, Deltaphi] Exp[barc (2 Pi + baralpha) tau], 
   hatell0arc2case3[tau, theta, Deltaphi]]
\end{verbatim}
\smed
Plot of the final value Fig. \ref{Fig:PTarc_case3_1}, first an enlargement about the singularity $\theta = \Delta \phi = 0$
\smed
\begin{verbatim}
Plot3D[{Evaluate@
   Table[hatrbarphiarc2case3[tau, theta, 
      Deltaphi]/(theta (theta + 
         Deltaphi)), {theta, .0001, .1, .02}]}, {Deltaphi, 
  0, .1}, {tau, 2 + (-Deltaphi - hatheta)/(2 Pi + baralpha), 
  2 - hatheta/(2 Pi + baralpha)}, AxesLabel -> Automatic]
\end{verbatim}
\begin{verbatim}
Plot3D[{{Evaluate@
    Table[hatrbarphiarc2case3[(1 - 
           sigma) (2 + (-Deltaphi - hatheta)/(2 Pi + baralpha)) + 
        sigma (2 + (-Max[Deltaphi + theta, hatheta])/(2 Pi + 
              baralpha)), theta, 
       Deltaphi]/(theta (theta + Deltaphi)), {Deltaphi, 0.01, 
      Tan[baralpha], .5}]}}, {theta, 0, hatheta - .01}, {sigma, 0, 1},
  AxesLabel -> Automatic]
\end{verbatim}
\begin{verbatim}
Plot3D[{hatrbarphiarc2case3[(1 - 
        sigma) (2 + (-0.001 - hatheta)/(2 Pi + baralpha)) + 
     sigma (2 + (-Max[0.001 + theta, hatheta])/(2 Pi + baralpha)), 
    theta, 0.001]/(theta (theta + 0.001))}, {theta, 0, 
  hatheta - .01}, {sigma, 0, 1}, AxesLabel -> Automatic]
\end{verbatim}
\begin{verbatim}
Plot3D[{hatrbarphiarc2case3[(1 - 
        sigma) (2 + (-0.001 - hatheta)/(2 Pi + baralpha)) + 
     sigma (2 + (-Max[0.001 + theta, hatheta])/(2 Pi + baralpha)), 
    theta, 0.001]/(theta (theta + 0.001))}, {theta, hatheta - .1, 
  hatheta - .01}, {sigma, 0, 1}, AxesLabel -> Automatic]
\end{verbatim}
\begin{verbatim}
Plot[{hatrbarphiarc2case3[(1 - 
        sigma) (2 + (-0.001 - hatheta)/(2 Pi + baralpha)) + 
     sigma (2 + (-Max[0.001 + (hatheta - .0001), hatheta])/(2 Pi + 
           baralpha)), hatheta - .0001, 
    0.001]/((hatheta - .0001) ((hatheta - .0001) + 0.001))}, {sigma, 
  0, 1}, AxesLabel -> Automatic, 
 PlotLegends -> 
  Placed[ToExpression[
    "\\frac{e^{-\\bar c \\phi} \\delta \\check r(\\bar \\phi)}{(\\hat \
\\theta)^2}", TeXForm, HoldForm], {Right, Center}]]
\end{verbatim}
\begin{verbatim}
hatrbarphiarc2case3[(1 - 
      0) (2 + (-0.001 - hatheta)/(2 Pi + baralpha)) + 
   0 (2 + (-Max[0.001 + (hatheta - .0001), hatheta])/(2 Pi + 
         baralpha)), hatheta - .0001, 
  0.001]/((hatheta - .0001) ((hatheta - .0001) + 0.001))
\end{verbatim}
\begin{verbatim}
{0.749654}
\end{verbatim}

\sbig

\noindent{\bf Case 4}: computation of the relevant quantities for the tent
\smed
\begin{verbatim}
hatell0arc2case4[tau_, theta_, Deltaphi_] := 
 ell0tent[
  rhogeneric[tau - 1, theta, Deltaphi] Exp[
    barc (2 Pi + baralpha) (tau - 1)], (1 - Cos[theta] + 
     Sin[theta] Deltaphi) Exp[
     Cot[baralpha] (2 Pi + baralpha) (tau - 2 + hattau)]/
    Sin[baralpha]^2, 
  rhogeneric[tau - 2 + hattau, theta, Deltaphi] Cos[baralpha] Exp[
    barc (2 Pi + baralpha) (tau - 2 + hattau)]]
\end{verbatim}
\begin{verbatim}
hatr2arc2case4[tau_, theta_, Deltaphi_] := 
 hatr2tent[
  rhogeneric[tau - 1, theta, Deltaphi] Exp[
    barc (2 Pi + baralpha) (tau - 1)], 
  hatell0arc2case4[tau, theta, Deltaphi], 
  rhogeneric[tau - 2 + hattau, theta, Deltaphi] Exp[
    barc (2 Pi + baralpha) (tau - 2 + hattau)] Sin[2 baralpha]]
\end{verbatim}
\begin{verbatim}
hatrbarphiarc2case4[tau_, theta_, Deltaphi_] := 
 Exp[-barc (2 Pi + baralpha) tau] hatrbarphi[
   hatr2arc2case4[tau, theta, Deltaphi], 
   rhogeneric[tau - 1 + hattau, theta, Deltaphi] Exp[
     barc (2 Pi + baralpha) (tau - 1 + hattau)], 
   rhogeneric[tau, theta, Deltaphi] Exp[barc (2 Pi + baralpha) tau], 
   hatell0arc2case4[tau, theta, Deltaphi]]
\end{verbatim}
\smed
Plot of the final value Fig. \ref{Fig:PTarc_case4_1} and computation of the minimal values
\smed
\begin{verbatim}
Plot3D[{Evaluate@
   Table[hatrbarphiarc2case4[(1 - sigma) (2 - hattau) + 
       sigma (2 + (-Deltaphi - theta)/(2 Pi + baralpha)), theta, 
      Deltaphi]/(theta (theta + Deltaphi)), {sigma, 
     0, .9, .3}]}, {theta, 0, hatheta - .001}, {Deltaphi, 0, 
  hatheta - theta - .001}, AxesLabel -> Automatic, 
 PlotLegends -> Automatic]
\end{verbatim}
\begin{verbatim}
Plot3D[hatrbarphiarc2case4[(1 - sigma) (2 - hattau) + 
    sigma (2 + (-0 - theta)/(2 Pi + baralpha)), theta, 
   0]/(theta (theta + 0)), {sigma, 0, 1}, {theta, 0.01, hatheta}, 
 AxesLabel -> Automatic, PlotLegends -> Automatic]
\end{verbatim}
\begin{verbatim}
Plot[{hatrbarphiarc2case4[(1 - sigma) (2 - hattau) + 
     sigma (2 + (-hatheta + .001 - .000)/(2 Pi + baralpha)), 
    hatheta - .001, 
    0]/((hatheta - .001) (hatheta - .001 + 0))}, {sigma, 0, 1}, 
 AxesLabel -> Automatic, 
 PlotLegends -> 
  Placed[ToExpression[
    "\\frac{e^{-\\bar c \\phi} \\delta \\check r(\\bar \\phi)}{(\\hat \
\\theta)^2}", TeXForm, HoldForm], {Right, Center}]]
\end{verbatim}
\begin{verbatim}
hatrbarphiarc2case4[(1 - 0.001) (2 - hattau) + 
   0.001 (2 + (-hatheta + .001 - .000)/(2 Pi + baralpha)), 
  hatheta - .001, 0]/((hatheta - .001) (hatheta - .001 + 0))
\end{verbatim}
\begin{verbatim}
{0.748399}
\end{verbatim}

\sbig

\noindent {\bf Case 5}: computation of the relevant quantities for the tent
\smed
\begin{verbatim}
hatell0arc2case5[tau_, theta_, Deltaphi_] := 
 ell0tent[
  rhogeneric[tau - 1, theta, Deltaphi] Exp[
    barc (2 Pi + baralpha) (tau - 1)], 0, 0]
\end{verbatim}
\begin{verbatim}
hatr2arc2case5[tau_, theta_, Deltaphi_] := 
 hatr2tent[
  rhogeneric[tau - 1, theta, Deltaphi] Exp[
    barc (2 Pi + baralpha) (tau - 1)], 
  hatell0arc2case5[tau, theta, Deltaphi], 0]
\end{verbatim}
\begin{verbatim}
hatrbarphiarc2case5[tau_, theta_, Deltaphi_] := 
 Exp[-barc (2 Pi + baralpha) tau] hatrbarphi[
   hatr2arc2case5[tau, theta, Deltaphi], 
   rhogeneric[tau - 1 + hattau, theta, Deltaphi] Exp[
     barc (2 Pi + baralpha) (tau - 1 + hattau)], 
   rhogeneric[tau, theta, Deltaphi] Exp[barc (2 Pi + baralpha) tau], 
   hatell0arc2case5[tau, theta, Deltaphi]]
\end{verbatim}
\smed
Plot of the final value Fig. \ref{Fig:PTarc_case5_1} and computation of the minimal values
\smed
\begin{verbatim}
Plot3D[{Evaluate@
   Table[hatrbarphiarc2case5[(1 - 
          sigma) (2 + (-Deltaphi - theta)/(2 Pi + baralpha)) + 
       sigma (2 + (-Deltaphi - hatheta)/(2 Pi + baralpha)), theta, 
      Deltaphi]/(theta (theta + Deltaphi)), {Deltaphi, 0, 
     Tan[baralpha], .5}]}, {theta, hatheta + .01, Pi/2}, {sigma, 0, 
  1}, AxesLabel -> Automatic]
\end{verbatim}
\begin{verbatim}
Plot[hatrbarphiarc2case5[(1 - 
       sigma) (2 + (-0 - Pi/2)/(2 Pi + baralpha)) + 
    sigma (2 + (-0 - hatheta)/(2 Pi + baralpha)), Pi/2, 
   0]/(Pi/2 (Pi/2 + 0)), {sigma, 0, 1}, AxesLabel -> Automatic, 
 PlotLegends -> 
  Placed[ToExpression[
    "\\frac{e^{-\\bar c \\phi} \\delta \\check r(\\bar \
\\phi)}{\\pi^2}", TeXForm, HoldForm], {Right, Center}]]
\end{verbatim}
\begin{verbatim}
hatrbarphiarc2case5[(1 - 0.001) (2 + (-0 - Pi/2)/(2 Pi + baralpha)) + 
   0.001 (2 + (-0 - hatheta)/(2 Pi + baralpha)), Pi/2, 
  0]/(Pi/2 (Pi/2 + 0))
\end{verbatim}
\begin{verbatim}
{0.559708}
\end{verbatim}

\sbig

\noindent{\bf Case 6}: computation of the relevant quantities for the tent
\smed
\begin{verbatim}
hatell0arc2case6[tau_, theta_, Deltaphi_] := 
 ell0tent[
  rhogeneric[tau - 1, theta, Deltaphi] Exp[
    barc (2 Pi + baralpha) (tau - 1)], -Cos[theta] + 
   Sin[theta] ((tau - 2) (2 Pi + baralpha) + hatheta + 
      Deltaphi), -Cos[(2 Pi + baralpha) tau - 4 Pi - baralpha + 
     hatheta - (baralpha - Deltaphi - theta)]]
\end{verbatim}
\begin{verbatim}
hatr2arc2case6[tau_, theta_, Deltaphi_] := 
 hatr2tent[
  rhogeneric[tau - 1, theta, Deltaphi] Exp[
    barc (2 Pi + baralpha) (tau - 1)], 
  hatell0arc2case6[tau, theta, Deltaphi], 
  Sin[theta] Cos[baralpha] - 
   Cos[(tau - 2) (2 Pi + baralpha) + hatheta + baralpha - Pi/2 + 
     Deltaphi + theta]]
\end{verbatim}
\begin{verbatim}
hatrbarphiarc2case6[tau_, theta_, Deltaphi_] := 
 Exp[-barc (2 Pi + baralpha) tau] hatrbarphi[
   hatr2arc2case6[tau, theta, Deltaphi], 
   rhogeneric[tau - 1 + hattau, theta, Deltaphi] Exp[
     barc (2 Pi + baralpha) (tau - 1 + hattau)], 
   rhogeneric[tau, theta, Deltaphi] Exp[barc (2 Pi + baralpha) tau], 
   hatell0arc2case6[tau, theta, Deltaphi]]
\end{verbatim}
\smed
Plot of the final value Fig. \ref{Fig:PTarc_case6_1} and computation of the minimal values
\smed
\begin{verbatim}
Plot3D[{Evaluate@
   Table[hatrbarphiarc2case6[(1 - 
          sigma) (2 + (-Deltaphi - Min[theta, hatheta])/(2 Pi + 
             baralpha)) + 
       sigma (2 + ( - Max[Deltaphi, hatheta])/(2 Pi + baralpha)), 
      theta, Deltaphi]/theta, {sigma, 0.01, .99, .245}]}, {theta, 
  0.0001, Pi/2}, {Deltaphi, Max[hatheta - theta, 0], Tan[baralpha]}, 
 AxesLabel -> Automatic]
\end{verbatim}
\begin{verbatim}
Plot3D[{hatrbarphiarc2case6[(1 - .001) (2 + (-Deltaphi - 
           Min[theta, hatheta])/(2 Pi + baralpha)) + .001 (2 + ( - 
           Max[Deltaphi, hatheta])/(2 Pi + baralpha)), theta, 
    Deltaphi]/theta}, {theta, 0.0001, Pi/2}, {Deltaphi, 
  Max[hatheta - theta, 0], Tan[baralpha]}, AxesLabel -> Automatic]
\end{verbatim}
\begin{verbatim}
Plot3D[hatrbarphiarc2case6[(1 - 
       sigma) (2 + (-Deltaphi - hatheta)/(2 Pi + baralpha)) + 
    sigma (2 + ( -  Max[Deltaphi, hatheta])/(2 Pi + baralpha)), 
   hatheta, Deltaphi]/hatheta, {Deltaphi, 0, .1}, {sigma, 0, 1}, 
 AxesLabel -> Automatic]
\end{verbatim}
\begin{verbatim}
Plot[{hatrbarphiarc2case6[(1 - .001) (2 + (-0.001 - 
           Min[theta, hatheta])/(2 Pi + baralpha)) + .001 (2 + ( - 
           Max[0.001, hatheta])/(2 Pi + baralpha)), theta, 0.001]/
   theta}, {theta, hatheta - .001, Pi/2}, AxesLabel -> Automatic]
\end{verbatim}
\begin{verbatim}
Plot[hatrbarphiarc2case6[(1 - 
       sigma) (2 + (-0.001 - hatheta)/(2 Pi + baralpha)) + 
    sigma (2 + ( -  Max[0.001, hatheta])/(2 Pi + baralpha)), hatheta, 
   0.001]/hatheta, {sigma, 0, 1}, AxesLabel -> Automatic, 
 PlotLegends -> 
  Placed[ToExpression[
    "\\frac{e^{-\\bar c \\phi} \\delta \\check r(\\bar \\phi)}{\\hat \
\\theta}", TeXForm, HoldForm], {Right, Center}]]
\end{verbatim}

\sbig

\noindent{\bf Case 7}: computation of the relevant quantities for the tent
\smed
\begin{verbatim}
hatell0arc2case7[tau_, theta_, Deltaphi_] := 
 ell0tent[
  rhogeneric[tau - 1, theta, Deltaphi] Exp[
    barc (2 Pi + baralpha) (tau - 1)], -1 + (1 - Cos[theta] + 
      Sin[theta] Deltaphi) Exp[
      Cot[baralpha] (2 Pi + baralpha) (tau - 2 + hattau)]/
     Sin[baralpha]^2, 
  rhogeneric[tau - 2 + hattau, theta, Deltaphi] Cos[baralpha] Exp[
     barc (2 Pi + baralpha) (tau - 2 + hattau)] - 
   Cos[(2 Pi + baralpha) tau - 4 Pi - baralpha + 
     hatheta - (baralpha - Deltaphi - theta)]]
\end{verbatim}
\begin{verbatim}
hatr2arc2case7[tau_, theta_, Deltaphi_] := 
 hatr2tent[
  rhogeneric[tau - 1, theta, Deltaphi] Exp[
    barc (2 Pi + baralpha) (tau - 1)], 
  hatell0arc2case7[tau, theta, Deltaphi], 
  rhogeneric[tau - 2 + hattau, theta, Deltaphi] Sin[2 baralpha] Exp[
     barc (2 Pi + baralpha) (tau - 2 + hattau)] - 
   Cos[(tau - 2) (2 Pi + baralpha) + hatheta + baralpha - Pi/2 + 
     Deltaphi + theta]]
\end{verbatim}
\begin{verbatim}
hatrbarphiarc2case7[tau_, theta_, Deltaphi_] := 
 Exp[-barc (2 Pi + baralpha) tau] hatrbarphi[
   hatr2arc2case7[tau, theta, Deltaphi], 
   rhogeneric[tau - 1 + hattau, theta, Deltaphi] Exp[
     barc (2 Pi + baralpha) (tau - 1 + hattau)], 
   rhogeneric[tau, theta, Deltaphi] Exp[barc (2 Pi + baralpha) tau], 
   hatell0arc2case7[tau, theta, Deltaphi]]
\end{verbatim}
\smed
Plot of the final value Fig. \ref{Fig:PTarc_case7_1} and computation of the minimal values
\smed
\begin{verbatim}
Plot3D[{Evaluate@
   Table[hatrbarphiarc2case7[(1 - 
          sigma) (2 + (-Min[Deltaphi + theta, hatheta])/(2 Pi + 
             baralpha)) + sigma (2 + (-Deltaphi)/(2 Pi + baralpha)), 
      theta, Deltaphi]/(theta (theta + Deltaphi)), {sigma, 0.01, 
     0.99, .245}]}, {theta, 0.01, Pi/2}, {Deltaphi, 0.01, 
  hatheta - .01}, AxesLabel -> Automatic, PlotLegends -> Automatic]
\end{verbatim}
\begin{verbatim}
Plot3D[hatrbarphiarc2case7[(1 - 
       sigma) (2 + (-Min[0 + theta, hatheta])/(2 Pi + baralpha)) + 
    sigma (2 + (-0)/(2 Pi + baralpha)), theta, 
   0]/(theta (theta + 0)), {sigma, 0, 1}, {theta, 0.01, Pi/2}, 
 AxesLabel -> Automatic, PlotLegends -> Automatic]
\end{verbatim}
\begin{verbatim}
Plot[hatrbarphiarc2case7[(1 - 
       sigma) (2 + (-Min[0 + Pi/2, hatheta])/(2 Pi + baralpha)) + 
    sigma (2 + (-0)/(2 Pi + baralpha)), Pi/2, 
   0]/(Pi/2 (Pi/2 + 0)), {sigma, 0, 1}, AxesLabel -> Automatic, 
 PlotLegends -> 
  Placed[ToExpression[
    "\\frac{e^{-\\bar c \\phi} \\delta \\check r(\\bar \
\\phi)}{\\pi^2}", TeXForm, HoldForm], {Right, Center}]]
\end{verbatim}

\sbig

\noindent{\bf Case 8}: computation of the relevant quantities for the tent
\smed
\begin{verbatim}
hatell0arc2case8[tau_, theta_, Deltaphi_] := 
 ell0tent[
  rhogeneric[tau - 1, theta, Deltaphi] Exp[
    barc (2 Pi + baralpha) (tau - 1)], 
  Sin[theta] hatheta, -Sin[theta] Cos[Pi/2 + hatheta]]
\end{verbatim}
\begin{verbatim}
hatr2arc2case8[tau_, theta_, Deltaphi_] := 
 hatr2tent[
  rhogeneric[tau - 1, theta, Deltaphi] Exp[
    barc (2 Pi + baralpha) (tau - 1)], 
  hatell0arc2case8[tau, theta, Deltaphi], 
  Sin[theta] Cos[baralpha] - Sin[theta] Cos[baralpha + hatheta]]
\end{verbatim}
\begin{verbatim}
hatrbarphiarc2case8[tau_, theta_, Deltaphi_] := 
 Exp[-barc (2 Pi + baralpha) tau] hatrbarphi[
   hatr2arc2case8[tau, theta, Deltaphi], 
   rhogeneric[tau - 1 + hattau, theta, Deltaphi] Exp[
     barc (2 Pi + baralpha) (tau - 1 + hattau)], 
   rhogeneric[tau, theta, Deltaphi] Exp[barc (2 Pi + baralpha) tau], 
   hatell0arc2case8[tau, theta, Deltaphi]]
\end{verbatim}
\smed
Plot of the final value Fig. \ref{Fig:PTarc_case8_1} and computation of the minimal values
\smed
\begin{verbatim}
Plot3D[{Evaluate@
   Table[hatrbarphiarc2case8[(1 - 
          sigma) (2 + (-Deltaphi)/(2 Pi + baralpha)) + 
       sigma (2 + (-hatheta)/(2 Pi + baralpha)), theta, Deltaphi]/
     theta, {sigma, 0.01, 0.99, .245}]}, {theta, 0.001, 
  Pi/2}, {Deltaphi, hatheta + .001, Tan[baralpha]}, 
 AxesLabel -> Automatic]
\end{verbatim}
\begin{verbatim}
Plot3D[hatrbarphiarc2case8[(1 - 
       sigma) (2 + (-hatheta - theta)/(2 Pi + baralpha)) + 
    sigma (2 + (-hatheta)/(2 Pi + baralpha)), theta, hatheta + theta]/
  theta, {sigma, 0, 1}, {theta, 0, Pi/2}, AxesLabel -> Automatic]
\end{verbatim}
\begin{verbatim}
Plot[hatrbarphiarc2case8[(1 - .0001) (2 + (-hatheta - theta)/(2 Pi + 
          baralpha)) + .0001 (2 + (-hatheta)/(2 Pi + baralpha)), 
   theta, hatheta + theta]/theta, {theta, 0, Pi/2}, 
 AxesLabel -> Automatic, 
 PlotLegends -> 
  Placed[ToExpression[
    "\\frac{e^{-\\bar c \\phi} \\delta \\check r(\\bar \
\\phi)}{\\theta}", TeXForm, HoldForm], {Right, Center}]]
\end{verbatim}

\sbig

\noindent{\bf Case 9}: computation of the relevant quantities for the tent
\smed
\begin{verbatim}
hatell0arc2case9[tau_, theta_, Deltaphi_] := 
 ell0tent[
  rhogeneric[tau - 1, theta, Deltaphi] Exp[
    barc (2 Pi + baralpha) (tau - 1)], (1 - Cos[theta] + 
      Sin[theta] Deltaphi) (Exp[
        Cot[baralpha] (2 Pi + baralpha) (tau - 2 + hattau)] - 1)/
     Sin[baralpha]^2 + 
   Sin[theta] (4 Pi + 2 baralpha - (2 Pi + baralpha) tau), 
  rhogeneric[tau - 2 + hattau, theta, Deltaphi] Cos[baralpha] Exp[
     barc (2 Pi + baralpha) (tau - 2 + hattau)] - 
   Sin[theta] Cos[Pi/2 + hatheta]]
\end{verbatim}
\begin{verbatim}
hatr2arc2case9[tau_, theta_, Deltaphi_] := 
 hatr2tent[
  rhogeneric[tau - 1, theta, Deltaphi] Exp[
    barc (2 Pi + baralpha) (tau - 1)], 
  hatell0arc2case9[tau, theta, Deltaphi], 
  rhogeneric[tau - 2 + hattau, theta, Deltaphi] Sin[2 baralpha] Exp[
     barc (2 Pi + baralpha) (tau - 2 + hattau)] - 
   Sin[theta] Cos[baralpha + hatheta]]
\end{verbatim}
\begin{verbatim}
hatrbarphiarc2case9[tau_, theta_, Deltaphi_] := 
 Exp[-barc (2 Pi + baralpha) tau] hatrbarphi[
   hatr2arc2case9[tau, theta, Deltaphi], 
   rhogeneric[tau - 1 + hattau, theta, Deltaphi] Exp[
     barc (2 Pi + baralpha) (tau - 1 + hattau)], 
   rhogeneric[tau, theta, Deltaphi] Exp[barc (2 Pi + baralpha) tau], 
   hatell0arc2case9[tau, theta, Deltaphi]]
\end{verbatim}
\smed
Plot of the final value Fig. \ref{Fig:PTarc_case9_1} and computation of the minimal values
\smed
\begin{verbatim}
Plot3D[{Evaluate@
   Table[hatrbarphiarc2case9[(1 - 
          sigma) (2 + (-Min[Deltaphi, hatheta])/(2 Pi + baralpha)) + 
       sigma (2), theta, Deltaphi]/(theta (theta + Deltaphi)), {sigma,
      0.01, 0.99, .245}]}, {theta, 0.01, Pi/2}, {Deltaphi, .01, 
  Tan[baralpha]}, AxesLabel -> Automatic]
\end{verbatim}
\begin{verbatim}
Plot3D[hatrbarphiarc2case9[(1 - 
       sigma) (2 + (-Min[.001, hatheta])/(2 Pi + baralpha)) + 
    sigma (2), theta, .001]/(theta (theta + .001)), {sigma, 0, 
  1}, {theta, 0.01, Pi/2}, AxesLabel -> Automatic]
\end{verbatim}
\begin{verbatim}
Plot[hatrbarphiarc2case9[(1 - 
       sigma) (2 + (-Min[.001, hatheta])/(2 Pi + baralpha)) + 
    sigma (2), Pi/2, .001]/(Pi/2 (Pi/2 + .001)), {sigma, 0, 1}, 
 AxesLabel -> Automatic, 
 PlotLegends -> 
  Placed[ToExpression[
    "\\frac{e^{-\\bar c \\phi} \\delta \\check r(\\bar \
\\phi)}{\\pi^2}", TeXForm, HoldForm], {Right, Center}]]
\end{verbatim}

\sbig

\noindent{\bf Case 10}: both points in the second negativity region, so that we use the simplified functional of Lemma \ref{Lem:final_value_satu}, Fig.\ref{Fig:PTarc_case10_1}
\smed
\begin{verbatim}
Plot3D[Evaluate@
  Table[(rhogeneric[tau, theta, 
       Deltaphi] - (1 - 
         Cos[hatheta]) Exp[-barc (2 Pi + baralpha)] rhogeneric[
         tau - 1, theta, 
         Deltaphi]/(Cos[baralpha - hatheta] - 
          Cos[baralpha]))/(theta (theta + Deltaphi)), {Deltaphi, 0, 
    Tan[baralpha], .5}], {theta, 0.001, Pi/2}, {tau, 2, 5}]
\end{verbatim}
\begin{verbatim}
Plot3D[(rhogeneric[tau, Pi/2, 
     Deltaphi] - (1 - 
       Cos[hatheta]) Exp[-barc (2 Pi + baralpha)] rhogeneric[tau - 1, 
       Pi/2, Deltaphi]/(Cos[baralpha - hatheta] - Cos[baralpha]))/(Pi/
     2 (Pi/2 + Deltaphi)), {Deltaphi, 0, Tan[baralpha]}, {tau, 2, 5}]
\end{verbatim}
\begin{verbatim}
Plot[(rhogeneric[tau, Pi/2, 
     0] - (1 - Cos[hatheta]) Exp[-barc (2 Pi + baralpha)] rhogeneric[
       tau - 1, Pi/2, 
       0]/(Cos[baralpha - hatheta] - Cos[baralpha]))/(Pi/
     2 (Pi/2 + 0)), {tau, 2, 5}, AxesLabel -> Automatic, 
 PlotLegends -> 
  Placed[ToExpression[
    "\\frac{1}{(\\pi/2)^2} \\bigg( \\rho(\\bar \\tau) - \\frac{(1 - \
\\cos(\\hat \\theta)) e^{-\\bar c(2\\pi + \\bar \
\\alpha)}}{\\cos(\\bar \\alpha - \\hat \\theta) - \\cos(\\bar \
\\alpha)} \\rho(\\bar \\tau - 1) \\bigg)", TeXForm, HoldForm], {Left, 
    Top}]]
\end{verbatim}
\smed
Derivative, Fig. \ref{Fig:PTarc_case10bis_1}
\smed
\begin{verbatim}
Plot3D[Evaluate@
  Table[(rhogeneric[tau, theta, 
       Deltaphi] - (1 - 
         Cos[hatheta]) Exp[-barc (2 Pi + baralpha)] rhogeneric[
         tau - 1, theta, 
         Deltaphi]/(Cos[baralpha - hatheta] - Cos[baralpha]) - 
      rhogeneric[tau - 1, theta, 
       Deltaphi] - (1 - 
         Cos[hatheta]) Exp[-barc (2 Pi + baralpha)] rhogeneric[
         tau - 2, theta, 
         Deltaphi]/(Cos[baralpha - hatheta] - 
          Cos[baralpha]))/(theta (theta + Deltaphi)), {Deltaphi, 0, 
    Tan[baralpha], .5}], {theta, 0.001, Pi/2}, {tau, 4, 5}]
\end{verbatim}
\begin{verbatim}
Plot3D[(rhogeneric[tau, theta, 
     0] - (1 - Cos[hatheta]) Exp[-barc (2 Pi + baralpha)] rhogeneric[
       tau - 1, theta, 0]/(Cos[baralpha - hatheta] - Cos[baralpha]) - 
    rhogeneric[tau - 1, theta, 
     0] - (1 - Cos[hatheta]) Exp[-barc (2 Pi + baralpha)] rhogeneric[
       tau - 2, theta, 
       0]/(Cos[baralpha - hatheta] - Cos[baralpha]))/(theta (theta + 
      0)), {theta, 0.001, Pi/2}, {tau, 4, 5}]
\end{verbatim}
\begin{verbatim}
Plot[(rhogeneric[tau, Pi/2, 
     0] - (1 - Cos[hatheta]) Exp[-barc (2 Pi + baralpha)] rhogeneric[
       tau - 1, Pi/2, 0]/(Cos[baralpha - hatheta] - Cos[baralpha]) - 
    rhogeneric[tau - 1, Pi/2, 
     0] - (1 - Cos[hatheta]) Exp[-barc (2 Pi + baralpha)] rhogeneric[
       tau - 2, Pi/2, 
       0]/(Cos[baralpha - hatheta] - Cos[baralpha]))/(Pi/
     2 (Pi/2 + 0)), {tau, 4, 5}, AxesLabel -> Automatic, 
 PlotLegends -> 
  Placed[ToExpression[
    "\\frac{1}{(\\pi/2)^2} \\bigg( \\frac{d\\rho(\\bar \\tau)}{d\\bar \
\\tau} - \\frac{(1 - \\cos(\\hat \\theta)) e^{-\\bar c(2\\pi + \\bar \
\\alpha)}}{\\cos(\\bar \\alpha - \\hat \\theta) - \\cos(\\bar \
\\alpha)} \\frac{d\\rho(\\bar \\tau - 1)}{d\\bar \\tau} \\bigg)", 
    TeXForm, HoldForm], {Right, Top}]]
\end{verbatim}

\newpage

\printglossaries

\newpage

\bibliographystyle{alpha}
\bibliography{fire}

\begin{thebibliography}{{Mys}51}

\bibitem[{Bia}]{FireSegment}
{Bianchini S., Zizza M.}
\newblock {A case study in fire confinement problems.}
\newblock {\em {preprint}}.

\bibitem[{Bre}]{Bressan_conj}
{Bressan A.}
\newblock {Prize offered for the solution of a dynamic blocking problem.}

\bibitem[{Bre}07]{Bressan_2007}
{Bressan A.}
\newblock {Differential inclusions and the control of forest fires}.
\newblock {\em {J. Differ. Equ.}}, (243):179--207, 2007.

\bibitem[{Bre}08]{Bressan_friends}
{Bressan A., Burago M., Friend A., Jou J.}
\newblock {Blocking strategies for a fire control problem}.
\newblock {\em Analysis and Applications}, (6):229--246, 2008.

\bibitem[{Bre}09]{Bressan_deLellis}
{Bressan A., De Lellis C.}
\newblock {Existence of optimal strategies for a fire confinement problem}.
\newblock {\em {Comm. Pure Appl. Math.}}, (62):789--830, 2009.

\bibitem[{Bre}10]{Bressan_equivalent}
{Bressan A., Wang T.}
\newblock Equivalent formulation and numerical analysis of a fire confinement
  problem.
\newblock {\em {ESAIM; Control Optim. Calc. Var}}, (16):974--1001, 2010.

\bibitem[{Bre}11]{Bressan_isotropic}
{Bressan A., Wang T.}
\newblock {On the optimal strategy for an isotropic blocking problem}.
\newblock {\em {Calc. Var.}}, (45):125--145, 2011.

\bibitem[{Bre}12]{Bressan_optnec}
{Bressan A., Wang T.}
\newblock Global necessary conditions for a dynamic blocking problem.
\newblock {\em {ESAIM; Control Optim. Calc. Var}}, (18):124--145, 2012.

\bibitem[{Bre}13]{Bressan_survey}
{Bressan A.}
\newblock {Dynamic Blocking Problems for a Model of Fire Propagation}.
\newblock {\em {Advances in Applied Mathematics, Modeling, and Computational
  Science}}, pages 11--40, 2013.

\bibitem[{Bre}22]{Bressan_Chiri}
{Bressan A., Chiri M. T., }.
\newblock {On the Regularity of Optimal Dynamic Blocking Strategies}.
\newblock {\em {Calc. Var. Partial Diff. Equations}}, 61(36), 2022.

\bibitem[{De }11]{delellisrobyr}
{De Lellis C.,Robyr R.}
\newblock {Hamilton-Jacobi equation with obstacles}.
\newblock {\em Archive for Rational Mechanics and Analysis}, (200):1051--1073,
  2011.

\bibitem[{Hal}93]{RDE}
{Hale, J.K., Verduyn Lunel, S.M.}
\newblock {\em {Introduction to functional differential equations.}}
\newblock Springer-Verlag, 1993.

\bibitem[{Kle}19]{firefighter}
{Klein, R., Langetepe E., Levcopoulos C., Lingas A., Schwarzwald B.}
\newblock {On a Firefighter's problem.}
\newblock {\em {Int. J. Foundations of Computer Science}}, {30}(02):231--246,
  2019.

\bibitem[{Mys}51]{RDErussi}
{Myshkis, A.D.}
\newblock {Linear Differential Equations with Retarded Argument}.
\newblock {\em {GITTL, Moscow-Leningrad}}, 1951.

\bibitem[RH10]{Royden:real}
Fitzpatrick~P.M. Royden~H.I.
\newblock {\em {Real Analysis}}.
\newblock {2010}.

\end{thebibliography}


\end{document}